 \renewcommand*{\backref}[1]{}
 \renewcommand*{\backrefalt}[4]{({%
     \ifcase #1 Not cited.%
           \or On p.~#2%
           \else On pp.~#2%
     \fi%
     })}
\crefname{subsection}{Subsection}{Subsection}
\DeclareMathAlphabet{\mathbbe}{U}{bbold}{m}{n}
\def\DDelta{{\mathbbe{\Delta}}}
\newcommand{\DD}{\DDelta}
\newcommand{\C}{\mathscr{C}}
\newcommand{\cC}{\mathcal{C}}
\newcommand{\D}{\mathscr{D}}
\newcommand{\cD}{\mathcal{D}}
\newcommand{\cF}{\mathcal{F}}
\newcommand{\E}{\mathscr{E}}
\renewcommand{\H}{\mathscr{H}}
\newcommand{\bI}{\mathbb{I}}
\newcommand{\sL}{\mathscr{L}}
\newcommand{\cN}{\mathcal{N}}
\newcommand{\s}{\mathscr{S}}
\newcommand{\sOall}{\mathscr{O}^{\mathrm{(all)}}}
\renewcommand{\P}{\mathscr{P}}
\newcommand{\sP}{s\P}
\newcommand{\R}{\mathscr{R}}
\newcommand{\T}{\mathscr{T}}
\newcommand{\V}{\mathscr{V}}
\newcommand{\W}{\mathscr{W}}
\newcommand{\cW}{\mathcal{W}}
\renewcommand{\ss}{s\mathscr{S}}
\newcommand{\sss}{ss\mathscr{S}}
\newcommand{\cM}{\mathcal{M}}
\newcommand{\Hom}{\mathrm{Hom}}
\newcommand{\Map}{\mathrm{Map}}
\newcommand{\QCat}{\mathscr{Q}\mathscr{C}\mathrm{at}}
\newcommand{\Nat}{\mathrm{Nat}}
\newcommand{\Fun}{\mathrm{Fun}}
\newcommand{\map}{\mathrm{map}}
\newcommand{\uHom}{\underline{\Hom}}
\newcommand{\uMap}{\underline{\Map}}
\newcommand{\uFun}{\underline{\Fun}}
\newcommand{\uNat}{\underline{\Nat}}
\newcommand{\comma}{,}
\newcommand{\set}{\mathscr{S}\mathrm{et}}
\newcommand{\cat}{\C\mathrm{at}}
\newcommand{\id}{\mathrm{id}}
\newcommand{\Diag}{\mathrm{Diag}}
\newcommand{\Ho}{\mathrm{Ho}}
\newcommand{\ev}{\mathrm{ev}}
\renewcommand{\exp}[2]{\mathrm{exp}(#1,#2)}
\newcommand{\rotatebot}{\rotatebox{270}{$\bot$}}
\newcommand{\sset}{s\set}
\newcommand{\opGroth}{\mathrm{op}\mathscr{G}\mathrm{roth}}
\newcommand{\Fib}{\mathscr{F}\mathrm{ib}}
\newcommand{\Ch}{\mathscr{C}\mathrm{h}}
\newcommand{\coDisc}{\mathrm{co}\mathcal{D}\mathrm{isc}}
\newcommand{\qcatFun}{\mathcal{F}\mathrm{un}}
\newcommand{\Val}{\mathscr{V}\mathrm{al}}
\newcommand{\VEmb}{\mathscr{V}\mathscr{E}\mathrm{mb}}
\newcommand{\fDiag}{\mathscr{D}\mathrm{iag}}
\newcommand{\phiDiag}{\mathrm{Diag}_1}
\newcommand{\Obj}{\mathrm{Obj}}
\newcommand{\Tw}{\mathrm{Tw}}
\newcommand{\LFib}{\mathcal{L}\Fib}
\newcommand{\ds}{\displaystyle}
\newcommand{\incl}{\mathrm{incl}}
\newcommand{\Seg}{\mathscr{S}\mathrm{eg}}
\newcommand{\CSS}{\mathscr{C}\mathscr{S}\mathscr{S}}
\newcommand{\segcat}{\Seg\cat}
\newcommand{\SegLFib}{\Seg\LFib}
\newcommand{\CSLFib}{\mathscr{C}\mathscr{S}\LFib}
\newcommand{\Cart}{\mathscr{C}\mathrm{art}}
\newcommand{\coCart}{\mathrm{co}\Cart}
\newcommand{\SegcoCart}{\Seg\coCart}
\newcommand{\St}{\mathrm{St}}
\newcommand{\Un}{\mathrm{Un}}
\newcommand{\Str}{\mathscr{S}\mathrm{tr}}
\newcommand{\Bet}{\mathscr{B}\mathrm{et}}
\newcommand{\sint}{s\hspace{-0.04in}\int}
\newcommand{\bT}{\mathbb{T}}
\newcommand{\sH}{s\mathscr{H}}
\newcommand{\bH}{\mathbb{H}}
\newcommand{\sbT}{s\bT}
\newcommand{\sbI}{s\bI}
\newcommand{\reb}{]}
\newcommand{\leb}{[}
\newcommand{\ordered}[1]{< #1 >}
\newcommand{\Yon}{\mathrm{Yon}}
\newcommand{\Und}{\mathscr{U}\mathrm{nd}}
\newcommand{\Inc}{\mathscr{I}\mathrm{nc}}
\newcommand{\Disc}{\mathscr{D}\mathrm{isc}}
\newcommand{\vecknedge}[2]{[#1,...,#2]}
\newcommand{\vecknedgetwo}[3]{[#1,...,#2,#3]}
\newcommand{\edge}[2]{[#1,#2]}
\newcommand{\rbot}{\rotatebox{270}{$\bot$}}
\newcommand{\adjun}[4]{
\begin{tikzcd}[row sep=0.5in, column sep=0.5in]
 #1  \arrow[r, shift left=1.8, "#3"] \pgfmatrixnextcell
 #2 \arrow[l, shift left=1.6, "#4", "\bot"'] 
\end{tikzcd}
}
\newcommand{\comsq}[8]{
  \begin{tikzcd}[row sep=0.5in, column sep=0.5in]
    #1 \arrow[r, "#5"] \arrow[d, "#6"']
    \pgfmatrixnextcell #2 \arrow[d, "#7"] \\
    #3 \arrow[r, "#8"]
    \pgfmatrixnextcell #4
  \end{tikzcd}
}
\newcommand{\pbsq}[8]{
  \begin{tikzcd}[row sep=0.5in, column sep=0.5in]
    #1 \arrow[r, "#5"] \arrow[d, "#6"'] \arrow[dr, phantom, "\ulcorner", very near start]
    \pgfmatrixnextcell #2 \arrow[d, "#7"] \\
    #3 \arrow[r, "#8"']
    \pgfmatrixnextcell #4
  \end{tikzcd}
}
\newtheorem{theone}[equation]{Theorem}
\newtheorem{lemone}[equation]{Lemma}
\newtheorem{propone}[equation]{Proposition}
\newtheorem{corone}[equation]{Corollary}
\theoremstyle{definition}
\newtheorem{defone}[equation]{Definition}
\newtheorem{exone}[equation]{Example}
\theoremstyle{remark}
\newtheorem{remone}[equation]{Remark}
\newtheorem{notone}[equation]{Notation}
\newtheoremstyle{TheoremNum}
{}{}              
{\itshape}                      
{}                              
{\bfseries}                     
{.}                             
{ }                             
{\thmname{#1}\thmnote{ \bfseries #3}}
\theoremstyle{TheoremNum}
\newtheorem{thmn}{Theorem}
\numberwithin{equation}{section}
\title{Yoneda Lemma for \texorpdfstring{$\cD$}{D}-Simplicial Spaces}
\author{Nima Rasekh}
\address{{\'E}cole Polytechnique F{\'e}d{\'e}rale de Lausanne, SV BMI UPHESS, Station 8, CH-1015 Lausanne, Switzerland}
\email{nima.rasekh@epfl.ch}
\date{August 2021}
\begin{document}

\begin{abstract}
For a small category $\cD$ we define fibrations of simplicial presheaves on the category $\cD\times\DD$, which we call {\it localized $\cD$-left fibration}. We show these fibrations can be seen as fibrant objects in a model structure, the {\it localized $\cD$-covariant model structure}, that is Quillen equivalent to a category of functors valued in simplicial presheaves on $\D$, where the Quillen equivalence is given via a generalization of the {\it Grothendieck construction}. We use our understanding of this construction to give a detailed characterization of fibrations and weak equivalences in this model structure and in particular obtain a Yoneda lemma. 

We apply this general framework to study {\it  Cartesian fibrations} of $(\infty,n)$-categories, for models of $(\infty,n)$-categories that arise via simplicial presheaves, such as {\it $n$-fold complete Segal spaces}. This, in particular, results in the {\it Yoneda lemma} and {\it Grothendieck construction} for Cartesian fibrations of $(\infty,n)$-categories. 
\end{abstract}

\maketitle
 \addtocontents{toc}{\protect\setcounter{tocdepth}{1}}

\tableofcontents

 \section{Introduction}\label{Sec Introduction}

\subsection{\texorpdfstring{$(\infty,1)$}{(oo,1)}-Categories and Their Fibrations}
The theory of {\it $(\infty,1)$-categories} (also known as the theory of {\it $\infty$-categories} or simply {\it homotopy theories}) has proven very effective in using categorical techniques to study homotopical phenomena and we can see its effect in all branches of homotopy theory from {\it chromatic homotopy theory} \cite{mathewstojanoska2016tmf} to {\it algebraic K-theory} \cite{gepnergrothnikolaus2015infiniteloopspacemachine} to {\it derived geometry} \cite{lurie2018sag} to even {\it homotopy type theory} \cite{gepnerkock2017univalence}.

Unfortunately, such theoretical benefits do come with a price: Any notion of category that enables us to study objects that arise in homotopy theory needs to be suitably flexible and weak. In particular, in the $(\infty,1)$-categorical world any classical categorical notion such as associativity and functoriality needs to be defined in a suitably weak and ``up to homotopy" manner to have the desired applications. 

As a result it is often quite challenging (and sometimes even impossible) to define a functor of $(\infty,1)$-categories directly as each homotopy used to specify a functor necessitates a higher homotopy creating a never-ending cycle of ``homotopies between homotopies". Fortunately, classical category theory already provides us with an alternative method to study many important functors that arise in category theory. In order to understand it we need to review the role of {\it fibrations} in category theory.

Grothendieck, and more generally Bourbaki, introduce what is now known as {\it discrete Grothendieck opfibrations} which are functors of categories that satisfy certain lifting conditions and then show that the now called {\it Grothendieck construction} \cite{grothendieck2003etalegroup} gives rise to an equivalence
$$\ds \int_\C: \Fun(\C, \set) \xrightarrow{ \ \simeq \ } \opGroth_{/\C},$$
between functors valued in in $\set$ and discrete Grothendieck opfibrations. 

This approach to functors has been generalized to the $(\infty,1)$-categorical setting in various steps. The first were taken by Joyal using the most prominent model of $(\infty,1)$-categories (in the sense of \cite{toen2005unicity}), {\it quasi-categories} \cite{boardmanvogt1973qcats}. He introduced the notion of {\it left fibrations} of quasi-categories and studied their relations to functors valued in spaces \cite{joyal2008notes,joyal2008theory}. 

A next big step was taken by Lurie \cite{lurie2009htt}, who again used quasi-categories to define {\it coCartesian fibrations} and prove an equivalence between coCartesian fibrations and functors valued in quasi-categories, by constructing a Quillen equivalence 
\begin{center}
	\adjun{(\sset^+_{/S})^{Cart}}{\Fun(\mathfrak{C}[S]^{op},(\sset^+)^{Cart})^{proj}}{\St^+_S}{\Un^+_S}
\end{center}
of the relevant model structures \cite[Theorem 3.2.0.1]{lurie2009htt}, which in particular restricts to an equivalence between left fibrations and functors valued in spaces \cite[Theorem 2.2.1.2]{lurie2009htt}. Using Cartesian fibrations and the straightening construction he then studies many important categorical constructions, such as {\it adjunctions} \cite[Section 5.2]{lurie2009htt}, {\it $\infty$-toposes} \cite[Chapter 6]{lurie2009htt}, {\it symmetric monoidal structures} and {\it $\infty$-operads} \cite[Chapter 2]{lurie2017ha}. As a very elegant example, for a given quasi-category $S$ and object $x$, there is no direct way to construct a {\it corepresentable functor} $\Map_S(x,-)$, given that composition is only defined up to homotopy. However, the corresponding right fibration, by \cite[Theorem 2.2.1.2]{lurie2009htt}, is the under-quasi-category $S_{x/} \to S$, which can be constructed very easily \cite[Proposition 1.2.9.3]{lurie2009htt}.

In the years since the original result by Lurie appeared, there have been many generalizations and alternative perspectives on the $(\infty,1)$-categorical Grothendieck construction. Heuts and Moerdijk present an alternative construction to the Grothendieck construction for left fibrations to the one given by Lurie also in the context of quasi-categories, which has the benefit that it is very explicit and easy to follow over strict categories \cite{heutsmoerdijk2015leftfibrationi,heutsmoerdijk2016leftfibrationii}. Another construction of the straightening construction for left fibrations, also in the context of quasi-categories can be found in the work of Stevenson \cite{stevenson2017covariant}. There is also an analysis of left fibrations via the {\it universal left fibration} due to Cisinski \cite{cisinski2019highercategories}, using ideas from {\it Cisinski model categories} \cite{cisinski2006cisinskimodelstructure}, whose results have been expanded by Nguyen to also apply to the coCartesian model structure \cite{nguyen2019covariant}.

Parallel to the development of left and coCartesian fibrations for quasi-categories, there has also been a development of fibrations for {\it complete Segal spaces} \cite{rezk2001css}, which is another model of $(\infty,1)$-categories. This in particular includes work by deBrito \cite{debrito2018leftfibration} and Kazhdan and Varshavsky \cite{kazhdanvarshvsky2014yoneda}, as well as \cite{rasekh2017left}. 

Last but not least, there has also been a model-independent study of coCartesian fibrations. The work by Mazel-Gee \cite{mazelgee2019cartfib,mazelgee2019grothendieck}, while relying on quasi-categories, focuses on universal properties rather than specific model structure to study Cartesian fibrations and the Grothendieck construction. Similarly, the work by Ayala and Francis \cite{ayalafrancis2020fibrations}, although technically in the context of quasi-categories, relies primarily on the universal properties of {\it exponentiable fibrations}. A more precise model-independent approach is due to Riehl and Verity, who introduced the model-independent notion of an {\it $\infty$-cosmos} \cite{riehlverity2018elements} and use it to study fibrations and the Yoneda lemma \cite{riehlverity2017inftycosmos}, which can then be applied to the models where fibrations had been studied already, such as quasi-categories and complete Segal spaces, but also models where it had not been studied, such as {\it Segal categories} \cite{hirschowitzsimpson1998segalcat,bergner2007threemodels} and {\it marked simplicial sets} \cite{verity2008complicial,lurie2009htt} (see \cite{bergner2010survey,bergner2018book} for a review of most of these models). Finally, left fibrations \cite{riehlshulman2017rezktypes} and coCartesian fibrations \cite{buchholtzweinberger2021cartesianfibhott} have also been studied in the context of {\it homotopy type theory} \cite{hottbook2013}, a new logical foundations to mathematics more amenable to homotopical thinking. 

To summarize, there are now many different methods to effectively translate between functors and fibrations and these methods have been used to great effect in the study of homotopy theory.

\subsection{First Steps into the World of \texorpdfstring{$(\infty,n)$}{(oo,n)}-Categories}
Coming back to classical categories, we often expand our definitions to be able to better understand certain categories and their properties. The common example is the category of small categories, which we often consider as a {\it $2$-category} \cite{kellyross1974twocat} enabling us to include natural transformations as part of the data and hence study {\it equivalences of categories} rather than just isomorphisms. A similar line of thinking is used when studying the $2$-category of {\it elementary toposes} or {\it Grothendieck toposes} \cite{maclanemoerdijk1994topos,johnstone2002elephanti,johnstone2002elephantsii}. 

Applying that same mindset to $(\infty,1)$-categories, we hence want a proper theory of {\it $(\infty,2)$-categories}, which is the appropriate framework to study $(\infty,1)$-categories. In fact, thinking inductively, we rather want a proper theory of {\it $(\infty,n)$-categories}. As a result, there is now a long list of models of $(\infty,n)$-categories, such as {\it $n$-fold complete Segal spaces} \cite{barwick2005nfoldsegalspaces}, {\it $\Theta_n$-spaces} \cite{rezk2010thetanspaces}, {\it $\Theta_n$-sets} \cite{ara2014highersegal}, {\it $n$-complicial sets} \cite{verity2008complicial} and {\it $n$-comical sets} \cite{dkls2020cubical,campionkapulkinmaehara2020comical} (see \cite{bergner2011modelsinftyn} for a review of many of these models). Moreover, $(\infty,n)$-categories have found interesting applications in {\it derived algebraic geometry} \cite{gaitsgoryrozenblyum2017dagI,gaitsgoryrozenblyum2017dagII} (via the models of {\it scaled simplicial sets} \cite{gagnaharpazlanari2020graytensor} and comical sets) and {\it topological field theories} \cite{lurie2009cobordism,calaquescheimbauer2019cobordism} (via the model of $n$-fold complete Segal spaces).  

However, our knowledge of $(\infty,n)$-categories is far more limited than $(\infty,1)$-categories. First of all, for a general $n$ while we do know that some models coincide, such as $n$-fold complete Segal spaces, $\Theta_n$-spaces and $\Theta_n$-sets \cite{ara2014highersegal,bergnerrezk2013comparisoni,bergnerrezk2020comparisonii}, or complicial sets and comical sets \cite{dohertykapulkinmaehara2021comical}, and even have a axiomatic characterization of $(\infty,n)$-categories \cite{barwickschommerpries2011unicity}, the general equivalence of all models is known only for $n=2$ \cite{gagnaharpazlanari2019twocat,lurie2009goodwillie}. Even in that case most equivalences are given via very complicated zigzags of Quillen equivalences of model categories and constructing direct equivalences is an ongoing effort \cite{bergnerrovelliozornova2021comparisonthetatwo}.

 Moreover, even the most foundational categorical concepts of $(\infty,2)$-categories, such as {\it limits}, are currently being developed using marked (or scaled) simplicial sets \cite{gagnaharpazlanari2020inftytwolimits,gagnaharpazlanari2021bilimits,garcia2020markedcolimits} or {\it double $(\infty,1)$-categories} \cite{moser2020double,clingmanmoser2020bipres,clingmanmoser2020bilim,mosersarazolaverdugo2020twocatmodel}. The same challenges persist when looking at fibrations of $(\infty,n)$-categories. There are now interesting results for fibrations of $(\infty,2)$-categories using the model of scaled simplicial sets \cite{lurie2009goodwillie,harpaznuitenprasma2019twisted,gagnaharpazlanari2021inftytwocartfib,garciastern2020theorema,garciastern2020twistedarrow,garciastern2021twocat}, but no other models or higher dimensions. 

 One option would be to forgo any focus on specific models and approach $(\infty,n)$-categories from the perspective of enriched quasi-categories \cite{gepnerhaugseng2015enrichedinftycat,hinich2020enrichedyoneda}. Using this approach has resulted in enriched forms of the Yoneda lemma \cite{hinich2020enrichedyoneda,berman2020enrichedpresheaves}, which then can be applied in particular to $(\infty,n)$-categories, although it still lacks a theory of fibrations. However, even if a theory Cartesian fibrations of enriched quasi-categories is developed, the enriched quasi-categorical approach does have certain drawbacks. First of all, developing a proper theory of enriched $\infty$-categories is incredibly challenging and remains complicated to understand. Moreover, many applications of $(\infty,n)$-categories (such as topological field theories) do in fact use concrete models (such as $n$-fold complete Segal spaces) \cite{calaquescheimbauer2019cobordism} and we expect that the further study of the theory of $(\infty,n)$-categories, such as (lax) (co)limits and adjunctions does require study them using concrete models rather than enriched quasi-categories. 

The goal of this work is to take an important step by studying fibrations for most models of $(\infty,n)$-categories that arise as simplicial presheaves. This includes $n$-fold complete Segal spaces, but also complete Segal spaces enriched over $\Theta_n$-spaces. In line with previous important results regarding fibrations, we in particular want to study the Yoneda lemma for $(\infty,n)$-coCartesian fibrations and a Grothendieck construction for $(\infty,n)$-coCartesian fibrations, generalizing the straightening construction from $(\infty,1)$-categories to the $(\infty,n)$-categorical setting. 

\subsection{Generalizing to \texorpdfstring{$\cD$}{D}-Fibrations}
Before we proceed to the results of this paper it is instructive to review the most general method for constructing of models of $(\infty,n)$-categories based on simplicial presheaves (\cite[Corollary 7.2]{bergnerrezk2020comparisonii}, generalizing \cite{rezk2010thetanspaces,barwick2005nfoldsegalspaces}), which always proceeds in three steps:
\begin{enumerate}
	\item Start with the {\it Kan model structure} on simplicial sets.
	\item Choose the appropriate diagram category $\Theta_k \times \DD^{n-k}$ and give the category of simplicial presheaves the {\it injective model structure}.
	\item Impose various relations: the {\it Segal condition} for composition, the {\it completeness condition} and, if necessary, the {\it discreteness conditions}. Obtain the final model structure via {\it Bousfield localization}.
\end{enumerate}
We want to generalize these three steps from an absolute setting to a fibrational setting. 

For the first step we need a fibrational notion of spaces. This is precisely obtained by studying left fibrations of simplicial spaces \cite{rasekh2017left}, which is also reviewed in \cref{subsec:lfib}.

Our next step is to generalize left fibrations to diagram categories. Instead of restricting ourselves to specific diagrams, we take a more general approach: {\it $\cD$-simplicial spaces}. Let $\cD$ be a small category. Then a $\cD$-simplicial space is a simplicial presheaf $\cD^{op} \times\DD^{op}\to \s$. A $\cD$-left fibration is then simply an injective fibration of $\cD$-simplicial spaces $Y \to X$, such that for every object $d$ in $\cD$, the map of simplicial spaces $Y[d] \to X[d]$ is a left fibration as studied in \cite{rasekh2017left} (\cref{def:nleft}).

This simple looking definition has far reaching implications. Concretely, it can be realized as fibrant objects in a model structure on an over-category (\cref{the:ncov model}), which has many desirable properties (such as {\it invariance under level-wise complete Segal space equivalences} (\cref{the:dcov css inv}) or the {\it recognition principle for $\cD$-covariant equivalences} (\cref{the:ncov equiv nlfib})). It also comes with its own {\it Yoneda lemma for $\cD$-left fibrations} (\cref{the:levelwise Yoneda}). Most importantly, we present a {\it Grothendieck construction} for $\cD$-left fibrations over an appropriate base (\cref{the:simp grothendieck}), which proves $\cD$-left fibration over a nerve $N_\cD\C$ is equivalent to $\sP(\cD)$-enriched functors $\C \to \sP(\cD)$. 

In the last step we need to localize $\cD$-left fibrations to obtain fibrations which have the appropriately localized values: the $S$-localized $\cD$-left fibration (\cref{def:cdleft s}). We will show that most results regarding $\cD$-left fibrations will carry over to its localized version. In particular it again comes with a model structure (\cref{the:cdcov s}), with similar properties, such as {\it invariance} (\cref{the:cdcov s invariant s equiv}) and {\it recognition of weak equivalences} (\cref{the:recognition principle s}). Crucially, we get a {\it Grothendieck construction} for $S$-localized $\cD$-left fibrations, which gives us an equivalence between $S$-localized $\cD$-left fibrations over $N_\cD\C$ and $\sP(\cD)$-enriched functors $\C\to \sP(\cD)$ valued in $S$-local objects (\cref{the:grothendieck simp s}).

\subsection{Coming back to Fibration of \texorpdfstring{$(\infty,n)$}{(oo,n)}-Categories}
Having developed such a general theory of $S$-localized fibrations for $\cD$-spaces we can construct fibrations of $(\infty,n)$-categories, which is the content of \cref{sec:infn fib}. The results will apply to various fibrations (\cref{not:left fib}) and various bases (\ref{eq:inftyn categories}), however, here we will focus on the case of $(\infty,n)$-coCartesian fibrations over $(\infty,n+1)$-categories, as it is expected that this is the most relevant example. Recall the category of simplicial presheaves on $\Theta_k \times \DD^{n-k}$ carries a model structure where the fibrant objects are $(\infty,n)$-categories (\cite[Corollary 7.2]{bergnerrezk2020comparisonii}, also reviewed in \cref{subsec:infn cat}).

\begin{thmn}
	Let $\C$ be $\Theta_k \times \DD^{n-k+1}$-space that is an $(\infty,n+1)$-category, then there is a simplicial left proper combinatorial model structure on the over-category $\sP(\Theta_k \times \DD^{n-k+1})_{/\C}$, called the $(\infty,n)$-coCartesian model structure, that has the following properties.
	\begin{itemize}
		\item {\bf Enrichment:} The $(\infty,n)$-coCartesian model structure is enriched over the model structure for $(\infty,n)$-categories if $k=n$ (\cref{the:cocart model}).
		\item {\bf Fiber-wise Criterion for Fibrations:} A map $\D \to \C$ is an $(\infty,n)$-coCartesian fibration (meaning it is fibrant in the $(\infty,n)$-coCartesian model structure) if it is an $(\infty,n)$-left fibration (\cref{not:left fib}) and fiber-wise an $(\infty,n)$-category (\cref{cor:fibrancies nfib}).
		\item {\bf Yoneda lemma:} For an object $c$ in $\C$, there is a representable $(\infty,n)$-coCartesian fibration $\C_{c/} \to \C$ (\cref{the:yoneda}) and for every $(\infty,n)$-coCartesian fibration $\sL \to \C$ we have an equivalence of $(\infty,n)$-categories 
		$$\Fib_cL \simeq \uMap_{/\C}(\C_{c/},L)$$
		for every object $c$ in $\C$ (\cref{cor:yoneda}).
		\item {\bf Fiber-wise Dwyer-Kan Equivalences:} A map $\sL\to \sL'$ of Segal $(\infty,n)$-coCartesian fibrations over $\C$ (\cref{not:left fib}) is an $(\infty,n)$-coCartesian equivalence if and only if for every object $c$, the map of fibers $\Fib_c\sL\to \Fib_c\sL'$ is a Dwyer-Kan equivalence of Segal $(\infty,n)$-categories (\cref{cor:fib dk}). 
		\item {\bf Recognition Principle for coCartesian Equivalences:} $F: \D \to \E$ over $\C$ is an $(\infty,n)$-coCartesian equivalence if and only if 
		$$(\D \times_\C \C_{/c})^{grpd} \to (\E \times_\C \C_{/c})^{grpd}$$ 
		is an $(\infty,n)$-category equivalence for all objects $c$ in $\C$ (\cref{cor:recognition principle inftyn}). Here $(-)^{grpd}$ is the groupoidification $(\infty,n)$-category (\cref{not:diag grpd}). 
		\item {\bf Twisted Arrows:} Let $\C$ be a $(\infty,n+1)$-category. There exists an $(\infty,n)$-coCartesian fibration $\Tw(\C) \to \C^{op} \times \C$, with fiber over $(x,y)$ the mapping $(\infty,n)$-category $\map_\C(x,y)$ (\cref{cor:twisted cocart}).
		\item {\bf Invariance of coCartesian Fibrations:} A Dwyer-Kan equivalence $F: \C \to \D$ of Segal $(\infty,n+1)$-categories (\cref{rem:dk infn}) induces a Quillen equivalence of $(\infty,n)$-coCartesian model structures (\cref{cor:invariance property}).
		\item {\bf Exponentiability:} If $n=k$ and $p:\R \to \C$ is an $(\infty,n)$-right or $(\infty,n)$-left fibration, then the adjunction
		\begin{center}
			\adjun{(\sP(\cD\times\DD)_{/\C})^{\cat_{(\infty,n)}}}{(\sP(\cD\times\DD)_{/\R})^{\cat_{(\infty,n)}}}{p^*}{p_*}
		\end{center}
		is a Quillen adjunction (\cref{cor:pullback rfib s nonfib base nfib}), were both sides have the $(\infty,n)$-category model structure.
		\item {\bf Grothendieck Construction:} For an $\sP(\cD)$-enriched category $\C$, there are Quillen equivalences (\cref{the:grothendieck infn})

			\strut  \hspace{-1in}	
			\begin{tikzcd}[row sep=0.3in, column sep=0.9in]
				\Fun(\C,\sP(\cD)^{inj_S})^{proj} \arrow[r, shift left = 1.8, "\sint_{\cD/\C}"] & 
				(\sP(\cD\times\DD)_{/N_\cD\C})^{(cov_{(\infty,n)})_S} \arrow[l, shift left=1.8, "\sH_{\cD/\C}", "\bot"'] \arrow[r, shift left=1.8, "\sbT_{\cD/\C}"] & \Fun(\C,\sP(\cD)^{inj_S})^{proj} \arrow[l, shift left=1.8, "\sbI_{\cD/\C}", "\bot"'] 
			\end{tikzcd}, 
		
	which, if $n=k$, gives us an equivalence of $(\infty,n+1)$-categories (\cref{cor:common grothendieck})
	 \begin{center}
		\begin{tikzcd}[row sep=0.5in, column sep=0.9in]
			\Fun_{\cat_{(\infty,n+1)}}(\C^{op},\cat_{(\infty,n)}) \arrow[r, shift left = 1.8] & 
			\Cart_{(\infty,n)/\C} \arrow[l, shift left=1.8, "\simeq"'] 
		\end{tikzcd}.
	\end{center} 
	Under this equivalence the twisted arrow construction (\cref{cor:twisted cocart}) corresponds to the mapping functor $\uMap_\C(-,-): \C^{op} \times \C \to \cat_{(\infty,n)}$ (\cref{ex:twisted infn}).
	\item {\bf Universality and Univalence:} There exists a {\it universal $(\infty,n)$-coCartesian fibration} (\cref{cor:lfib classifier ninf}) $\pi_*: (\cat_{(\infty,n)})_{[0]/} \to \cat_{(\infty,n)}$, which is {\it univalent} (\cref{cor:universal cocart univalent}).
	\end{itemize}
\end{thmn}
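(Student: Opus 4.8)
The plan is to obtain every item by specializing the general theory of $S$-localized $\cD$-left fibrations developed in the body of the paper to the single instance $\cD = \Theta_k \times \DD^{n-k}$, where $S$ is the set of maps of $\cD$-simplicial spaces whose local objects are exactly the $(\infty,n)$-categories, in the sense of \cite[Corollary 7.2]{bergnerrezk2020comparisonii}. Under this dictionary the $(\infty,n)$-coCartesian model structure on $\sP(\Theta_k\times\DD^{n-k+1})_{/\C}$ \emph{is} the $S$-localized $\cD$-covariant model structure over $\C$, so existence as a simplicial, left proper, combinatorial model structure is immediate from \cref{the:cdcov s}. Before anything else I would record the one genuinely model-specific lemma, the translation dictionary: an $S$-local object is an $(\infty,n)$-category; a level-wise complete Segal space equivalence between Segal bases is a Dwyer--Kan equivalence of Segal $(\infty,n)$-categories; and the groupoidification $(-)^{grpd}$ agrees with the localization functor inverting $S$. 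Everything else is then formal.

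Given the dictionary, the items \textbf{Enrichment}, \textbf{Fiber-wise Criterion for Fibrations}, \textbf{Fiber-wise Dwyer--Kan Equivalences}, \textbf{Recognition Principle} and \textbf{Invariance} are direct specializations of the corresponding general statements. Enrichment over $\sP(\cD)^{inj_S}$ becomes enrichment over the model structure for $(\infty,n)$-categories precisely when $k=n$, since then $\sP(\Theta_n)^{inj_S}$ \emph{is} that model structure; the general fiber-wise recognition of fibrant objects, combined with the dictionary, gives ``$(\infty,n)$-left fibration and fiber-wise an $(\infty,n)$-category''; the general recognition principle \cref{the:recognition principle s} together with the identification of $(-)^{grpd}$ yields the criterion phrased via $(\D\times_\C\C_{/c})^{grpd}$; and invariance under level-wise complete Segal space equivalences (\cref{the:cdcov s invariant s equiv}) specializes to Dwyer--Kan equivalences of Segal $(\infty,n+1)$-categories. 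For the \textbf{Yoneda lemma} I would unwind the Yoneda lemma for localized $\cD$-left fibrations (\cref{the:levelwise Yoneda}) at the object $c$: the representable localized $\cD$-left fibration is identified with $\C_{c/}\to\C$, and its mapping-object computation, now read in the enriching model structure for $(\infty,n)$-categories, upgrades to the equivalence $\Fib_cL \simeq \uMap_{/\C}(\C_{c/},L)$. The \textbf{Twisted Arrows} fibration is then produced either directly as a localized $\cD$-left fibration over $\C^{op}\times\C$ or as the image under the Grothendieck construction of the mapping functor, with the fiber over $(x,y)$ read off as $\map_\C(x,y)$; and \textbf{Exponentiability} for $n=k$ is the specialization of the general pullback/pushforward Quillen adjunction for $\cD$-right and $\cD$-left fibrations.

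The core of the theorem, and the step I expect to be the main obstacle, is the \textbf{Grothendieck Construction} together with its upgrade to an equivalence of $(\infty,n+1)$-categories and the ensuing \textbf{Universality and Univalence}. The displayed zig-zag of Quillen equivalences follows from the general Grothendieck construction (\cref{the:grothendieck simp s}) by passing to $S$-local objects on both the functor side and the fibration side, but this requires checking that the two localizing sets correspond under the (un)straightening functors $\sint_{\cD/\C}$, $\sH_{\cD/\C}$, $\sbT_{\cD/\C}$, $\sbI_{\cD/\C}$ --- that a $\cD$-covariant fibration over $N_\cD\C$ is $S$-local if and only if its straightening is level-wise $S$-local --- and this compatibility is the technical heart of the argument. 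To pass from the zig-zag to a genuine equivalence of $(\infty,n+1)$-categories when $k=n$, I would take coherent nerves of the simplicial model categories involved, use the enrichment established above to see that both $\Fun(\C^{op},\cat_{(\infty,n)})$ and $\Cart_{(\infty,n)/\C}$ (the latter as the coherent nerve of the fibrant--cofibrant objects) are $(\infty,n+1)$-categories, and check that the derived straightening assembles into an equivalence rather than merely a level-wise one; the identification of the twisted arrow fibration with $\uMap_\C(-,-)$ is then verified by evaluating both sides on objects and invoking the Yoneda item. Finally, the universal $(\infty,n)$-coCartesian fibration is the localized $\cD$-left fibration classified by the identity of $\cat_{(\infty,n)}$ (equivalently, the terminal object of the Grothendieck fibration over $\cat_{(\infty,n)}$), and univalence follows because the Grothendieck equivalence is in particular fully faithful on the relevant mapping objects, so the comparison map from $\cat_{(\infty,n)}$ to the classifying object of coCartesian fibrations is an equivalence; the care needed here is to phrase univalence in the internal language of the model structure for $(\infty,n)$-categories, where the completeness part of $S$ is exactly what does the work.
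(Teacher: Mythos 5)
Your strategy is the same as the paper's: all ten items are obtained by specializing the general theory of $S$-localized $\cD$-left fibrations of \cref{sec:localized} (which in turn rests on \cref{sec:left fib} and \cref{sec:grothendieck}) to $\cD = \Theta_k\times\DD^{n-k}$ and $S = S_{CSS,disc}$, exactly as in \cref{sec:infn fib}. You also correctly single out the two places where genuine model-specific work is needed — compatibility of localizing sets with (un)straightening, and the rectification passage from enriched Quillen equivalences to an equivalence of $(\infty,n+1)$-categories — and your handling of the Yoneda, twisted-arrow, invariance, and univalence items matches the paper's intent.

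There are, however, two slips you should fix. First, your ``dictionary'' claims that the groupoidification $(-)^{grpd}$ ``agrees with the localization functor inverting $S$.'' This is false: by \cref{not:diag grpd}, $(-)^{grpd}$ is $\fDiag\colon\sP(\cD\times\DD)\to\sP(\cD)$, the left adjoint of the constant-simplicial-object inclusion (i.e.\ the maximal sub-$\infty$-groupoid/underlying $\cD$-space). It has nothing to do with localizing at $S$; indeed $S$ lives in $\sP(\cD)$, not $\sP(\cD\times\DD)$. What the recognition principle (\cref{the:recognition principle s}, specialized in \cref{cor:recognition principle inftyn}) actually says is that one applies $(-)^{grpd}$ to produce a $\cD$-space, and only then compares in the $S$-localized injective model structure; keeping these two steps distinct is exactly what makes \cref{prop:dcov vs diag s} and \cref{the:diag local ncov s} go through.

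Second, to pass from \cref{the:grothendieck simp s} to the statement with the \emph{projective} model structure on both functor sides (which is what the theorem displays), one needs the hypothesis of \cref{the:grothendieck double proj s}: that $\sbI_{\cD/\C}$ sends projective fibrations to injective fibrations. This is not automatic for an arbitrary $\cD$; the paper verifies it for $\Theta_k\times\DD^{n-k}$ in \cref{lemma:grothendieck} by invoking the fact that $\Theta_k\times\DD^{n-k}$ is an \emph{elegant Reedy} category (so that injective equals Reedy), and then checking latching maps via $\sbT_{\cD/\C}$. Your sketch omits this step; without it the displayed Quillen equivalence with $\Fun(\C,\sP(\cD)^{inj_S})^{proj}$ on both sides is not justified, only the mixed projective/injective version.
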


\subsection{Applications of \texorpdfstring{$(\infty,n)$}{(oo,n)}-Cartesian Fibrations} \label{subsec:applications}
What are the applications of a theory of fibrations of $(\infty,n)$-categories? Given the prominent role Cartesian fibrations play in $(\infty,1)$-category theory, we would expect a similar role for $(\infty,n)$-Cartesian fibrations. However, we can also point to several concrete applications that could result from this work:

\begin{enumerate}
	\item {\bf Topological Field Theories:} {\it Topological field theories} (TFT's) are functors out of the $(\infty,n)$-category of {\it bordisms} and lie at the intersection of {\it physics} \cite{witten1988tqft}, {\it higher category theory} \cite{baezdolan1995tqft} and {\it differential geometry} \cite{lurie2009cobordism,stolzteichner2011fieldtheory} and have hence attracted the attention from various kinds of mathematicians. Unfortunately, given their higher categorical nature constructing field theories has proven challenging, as they are obtained by constructing $(\infty,n)$-functors out of the $n$-fold Segal spaces of bordism \cite{calaquescheimbauer2019cobordism} and constructing functors often involves dealing with infinite coherence conditions. Hence many different ideas, such as {\it $2$-Segal sets} \cite{turaev2010tft,dyckerhoffkapranov2019higherset}, {\it derived algebraic geometry} \cite{calaquehaugsengscheimbauer2019aksz}, {\it stable homotopy theory} \cite{freedhopkins2021invtft} and {\it homological constructions} \cite{mmst2020tqft} have been used to construct and study specific field theories.
	
	As we outlined above, in the world of $(\infty,1)$-categories Cartesian fibrations have been used very effectively to construct many important functors \cite{lurie2009htt,lurie2017ha}. Combining these observations we would hope to construct topological field theories by constructing appropriate fibrations over the $n$-fold Segal space of bordisms.
	
	\item {\bf $(\infty,n)$-Univalence:} The concept of {\it univalence} first arose in {\it homotopy type theory} \cite{hottbook2013} and made its way into higher category theory in order to prove the existence of models of homotopy type theories in the higher categorical setting \cite{kapulkinlumsdaine2012kanunivalent,gepnerkock2017univalence}. However, we can also study univalence independent of its relation to homotopy type theory via {\it representability of Cartesian fibrations} \cite{rasekh2021univalence,rasekh2017cartesian}. 
	
	Given the effectiveness of Cartesian fibrations in the study of univalence, our next expected step is to use $(\infty,n)$-Cartesian fibrations to study $(\infty,n)$-univalence. This would also involve understanding {\it representable $(\infty,n)$-Cartesian fibrations}. While $(\infty,n)$-univalence could be defined and studied completely in terms of $(\infty,n)$-categories it should still have connections to homotopy type theory. In particular, the $(\infty,2)$-categorical univalence is expected to have connections to the {\it directed univalence axiom} \cite{riehl2018jmm}.
	\item {\bf $(\infty,n)$-Topos Theory:} The development of {\it Grothendieck $(\infty,1)$-toposes} (called $\infty$-topos in \cite{lurie2009htt}) was strongly guided by a decent understanding of {\it Grothendieck $1$-toposes}. In particular, we know that a locally presentable $1$-category is a Grothendieck $1$-topos if it satisfies certain conditions known as the {\it Giraud axioms} \cite{maclanemoerdijk1994topos} (also called {\it weak descent} in \cite{rezk2010toposes}). This has been lifted directly to the $\infty$-categorical setting by Lurie \cite[Section 6]{lurie2009htt} and Rezk \cite{rezk2010toposes}. Unfortunately the $n$-categorical situation is far less clear. It is not completely understood which conditions we should impose on a locally presentable $n$-category in order to get a working notion of a Grothendieck $n$-topos and even the $2$-categorical case is an active field of research \cite{weber2007twotopos}. Accordingly, there is also no analogous definition for a Grothendieck $(\infty,n)$-topos. 
	
	On the other hand, we can now characterize $(\infty,1)$-toposes via the existence of univalent morphisms \cite{gepnerkock2017univalence,rasekh2018elementarytopos,stenzel2020comprehension}. Hence, assuming the development of univalent morphisms in the $(\infty,n)$-setting (as outlined in the previous item), we could reasonably expect to define $(\infty,n)$-toposes in a completely analogous manner, without having to worry about the $(\infty,n)$-categorical analogues of presentability and Giraud's axioms.
\end{enumerate}
 
 \subsection{Outline}
 The first section (\cref{sec:review}) provides a review and sets relevant notation. In particular, we quickly review enriched model categories via Joyal-Tierney calculus (\cref{subsec:joyal tierney calculus}) and enriched categories (\cref{subsec:enriched}), however, spend a lot of time and develop a lot of notation for categories enriched over presheaf categories in \cref{subsec:enriched psh} as it is a key concept in this paper. We then focus on two classes of model structures on presheaf categories, level-wise model structures in \cref{subsec:level model} and enriched model structures in \cref{subsec:enriched css}. Finally, as we want to study fibrations, we also review discrete Grothendieck opfibrations in \cref{subsec:grothendieck fib} and left fibrations in \cref{subsec:lfib}.
 
  We then start our study of $\cD$-left fibrations in \cref{sec:left fib}. In the first subsection, \cref{subsec:cdlfib}, we focus on various alternative characterizations and elementary properties. We then move on to prove that $\cD$-left fibrations come with a model structure and study its properties in \cref{subsec:cdcov}. Finally, we also review the Yoneda lemma for $\cD$-left fibrations in \cref{subsec:yoneda}.
  
  In the next section, \cref{sec:grothendieck}, we move on to a key result regarding $\cD$-left fibrations: the Grothendieck constructions. We proceed in three steps:
  \begin{enumerate}
  	\item First a strict Grothendieck construction relating diagrams of Grothendieck opfibrations to $\P(\cD)$-enriched functors $F:\C \to \P(\cD)$ (\cref{subsec:strict groth}).
  	\item Then a homotopical Grothendieck construction relating $\cD$-left fibrations over nerves to $\sP(\cD)$-enriched functors $F: \C \to \sP(\cD)$ via model structures (\cref{subsec:enriched groth}).
  	\item Finally, a generalization to $\cD$-left fibrations over very general $\cD$-simplicial spaces (\cref{subsec:grothendieck general}).
  \end{enumerate}
  
  \cref{sec:localized} localizes most results in \cref{sec:left fib} and \cref{sec:grothendieck}. In particular, \cref{subsec:localized model} constructs model structure for localized $\cD$-left fibrations, also giving some first properties. \cref{subsec:localized groth} localizes the Grothendieck construction given in \cref{subsec:enriched groth}. Finally, \cref{subsec:sloc features} uses the Grothendieck construction to study more refined features of the localized $\cD$-covariant model structure.
  
  In \cref{sec:infn fib} we finally apply all results in \cref{sec:localized} to the particular case of $(\infty,n)$-fibrations (\cref{subsec:infn fib}), after a quick review of simplicial presheaf models of $(\infty,n)$-categories in \cref{subsec:infn cat}. In \cref{subsec:examples}, we focus on some examples and counter-examples. 
  
  Finally, in \cref{sec:next} we discuss several possible next steps based on the work in this paper.
  
 \subsection{Background}
 We will assume familiarity with model category theory as introduced in \cite{hovey1999modelcategories,dwyersspalinski1995modelcat} and in particular Bousfield localizations \cite{hirschhorn2003modelcategories}. We will also extensively use results regarding left fibrations of simplicial spaces as introduced in \cite{rasekh2017left} either directly or by translating important concepts and there is only a quick review in \cref{subsec:lfib}. We will also use ideas from enriched category theory as given in \cite{kelly2005enriched}, however most relevant concepts have been reviewed in \cref{subsec:enriched}. For \cref{subsec:grothendieck general} we rely on results regarding Segal categories by Bergner \cite{bergner2007threemodels} as well enriched quasi-categories by Gepner and Haugseng \cite{gepnerhaugseng2015enrichedinftycat}. Finally, \cref{sec:infn fib} fundamentally relies on the results regarding $(\infty,n)$-categories due to Rezk \cite{rezk2010thetanspaces} and Bergner and Rezk \cite{bergnerrezk2013comparisoni,bergnerrezk2020comparisonii}.

 \subsection{Notation} \label{subsec:notation} 
 Throughout we mostly work with categories enriched in the presheaf category $\P(\cD)= \Fun(\cD^{op},\set)$ and simplicial presheaf category $\sP(\cD)= \Fun(\cD^{op} \times \DD^{op},\set)$ and so use various notation regarding enrichments.
 \begin{itemize}
  \item If $\C$ is enriched over $\P(\cD)$ we denote the {\it set} of maps between them by $\Hom_\C(X,Y)$ and the enriched morphisms by $\uHom_\C(X,Y)$. For a given object $X$ and maps 
  $g: Y \to X, Z \to X$, we will denote the set of maps $\Hom_{\C_{/X}}(Y,Z)$ by $\Hom_{/X}(Y,Z)$ and similarly use $\uHom_{/X}(Y,Z)$.
  \item There is one exception to the previous rule. If $\C$ is a category of functors, then we denote the 
  {\it set of natural transformations} from $F$ to $G$ by $\Nat(F,G)$, following conventional notation, and the enriched version by $\uNat(F,G)$.
  \item If $\C$ is enriched over $\sP(\cD)$, we denote the {\it mapping simplicial set} by $\Map_\C(X,Y)$ and the enriched mapping space by $\uMap_\C(X,Y)$. Similar to the last one we will, instead of $\Map_{\C_{/X}}(Y,Z)$, use $\Map_{/X}(Y,Z)$ and $\uMap_{/X}(Y,Z)$.
  \item If $\C$ is Cartesian closed, we denote the internal mapping object by $Y^X$.
 \end{itemize}
 For a functor between small categories $F: \C \to \D$ and bicomplete category $\E$ we use the following notation for the induced 
 functors at the level of presheaf categories:
 \begin{center}
  \begin{tikzcd}[row sep=0.5in, column sep=0.5in]
   \Fun(\C,\E) \arrow[r, "F_!", "\bot"', shift left=0.7, bend left =30] \arrow[r, "F_*"', "\bot", shift right=0.7, bend right=30]& \Fun(\D,\E) \arrow[l, "F^*"' description] 
  \end{tikzcd}
 \end{center}
 Here $F^*$ is defined by precomposition, $F_!$ is the left Kan extension and $F^*$ the right Kan extension.
 \par 
 Similarly, for a given morphism $f:c \to d$ in a category $\C$ with small limits and colimits we denote the adjunctions 
 \begin{center}
  \begin{tikzcd}[row sep=0.5in, column sep=0.5in]
   \C_{/c} \arrow[r, "f_!", "\bot"', shift left=1.4, bend left=30] \arrow[r, "f_*"', "\bot", shift right=1.4, bend right = 30]& \C_{/d} \arrow[l, "f^*" description] 
  \end{tikzcd}
 \end{center}
 where $f_!$ is the post-composition functor, $f^*$ the pullback functor and $f_*$ is the right adjoint to $f^*$.
 
 Finally, let $\C$ be a category with final object $1$. Then we use notation $\{y\}: X \to Y$ 
 for the unique map that factors through the map $1 \to Y$ that picks out the element $y$ in $Y$.
 
\subsection{Acknowledgments}
 I want to thank Lyne Moser for many interesting conversations about the Grothendieck construction. I would also like to thank the Max-Planck-Institut f{\"u}r Mathematik for its hospitality and financial support.

\section{A Deep Dive into Enrichments and Some Fibrations} \label{sec:review}
 In this section we review several relevant concepts that we use throughout.

  \subsection{Joyal-Tierney Calculus and Enriched Model Categories} \label{subsec:joyal tierney calculus}
 We make extensive use of Cartesian and enriched model categories and so quickly review the relevant notation and concepts motivated by language introduced by Joyal and Tierney \cite[Section 7]{joyaltierney2007qcatvssegal}.
 
 \begin{notone}
 	For this subsection let $\C$ be a locally Cartesian closed bicomplete category.
 \end{notone}
 
 \begin{defone} \label{def:pushout product}
 	Let $f: A \to B$ and $g: C \to D$ be two maps in $\C$. We define the pushout product 
 	as the universal map out of the pushout
 	$$f \square g: A \times D \coprod_{A \times C} B \times C \to B \times D$$
 	induced by the commutative square
 	\begin{center}
 		\begin{tikzcd}[row sep=0.3in, column sep=0.3in]
 			A \times C \arrow[r, "\id_A \times g"] \arrow[d, "f \times \id_C"] & A \times D \arrow[d] \arrow[ddr, bend left = 20, , "f \times id_D"] & \\
 			B \times C \arrow[r] \arrow[drr, bend right = 20, "\id_B \times g"'] & \ds A \times D \coprod_{A \times C} B \times C \arrow[dr, "f \square g" description] & \\
 			& & B \times D 
 		\end{tikzcd}
 		.
 	\end{center}
 	Moreover, for two sets of maps $\mathcal{A}$ and $\mathcal{B}$ we use the notation 
 	$$\mathcal{A} \square \mathcal{B} = \{f \square g : f \in \mathcal{A}, g \in \mathcal{B} \}.$$
 \end{defone}
 
 \begin{defone} \label{def:pullback exponential} 
 	For two maps $f: A \to B$ and $p: Y \to X$ we define the pullback exponential 
 	$$\exp{f}{p}: Y^B \to Y^A \underset{X^A}{\times} X^B$$ 
 	induced by the commutative square
 	\begin{center}
 		\begin{tikzcd}[row sep=0.3in, column sep=0.3in]
 			Y^B \arrow[dr, "\exp{f}{p}" description] \arrow[drr, "Y^f", bend left =20] \arrow[ddr, "p^f"', bend right = 20] & & \\
 			& Y^A \underset{X^A}{\times} X^B \arrow[r] \arrow[d] & Y^A \arrow[d, "p^A"] \\
 			& X^B \arrow[r, "X^f"] & X^A
 		\end{tikzcd}
 		.
 	\end{center}
 	Moreover, for two sets of maps $\mathcal{A}$ and $\mathcal{X}$ we use the notation 
 	$$\exp{\mathcal{A}}{\mathcal{X}} = \{\exp{f}{p} : f \in \mathcal{A}, p \in \mathcal{X} \}$$
 \end{defone}
 
 These two functors give us an adjunction of arrow categories: 
 \begin{center}
 	\adjun{\Fun (\leb 1 \reb , \C)}{\Fun (\leb 1 \reb , \C)}{- \square f }{\exp{f}{-}}
 	.
 \end{center}
 
 The key result about these two constructions is that they can help us better understand lifting conditions.
 
 \begin{notone}
 	Let $\mathcal{L}$ and $\mathcal{R}$ be two sets of morphisms in $\C$. 
 	If $\mathcal{L}$ has the left lifting property with respect to $\mathcal{R}$ then we use the notation
 	$\mathcal{L} \pitchfork  \mathcal{R}$.
 \end{notone}
 
 \begin{propone} \label{prop:joyal tierney lifting}
 	(\cite[Proposition 7.6]{joyaltierney2007qcatvssegal})
 	Let $\mathcal{A}$, $\mathcal{B}$ and $\mathcal{X}$ be three sets of morphisms in $\C$. Then:
 	$$\mathcal{A} \square \mathcal{B} \pitchfork \mathcal{X} \Leftrightarrow
 	\mathcal{A} \pitchfork \exp{\mathcal{B}}{\mathcal{X}} \Leftrightarrow
 	\mathcal{B} \pitchfork \exp{\mathcal{A}}{\mathcal{X}}.$$
 \end{propone}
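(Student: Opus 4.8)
The plan is to prove the equivalence
$\mathcal{A} \square \mathcal{B} \pitchfork \mathcal{X} \Leftrightarrow \mathcal{A} \pitchfork \exp{\mathcal{B}}{\mathcal{X}}$,
since the third equivalence follows by the evident symmetry of the pushout product ($f \square g$ is canonically isomorphic to $g \square f$, so swapping the roles of $\mathcal{A}$ and $\mathcal{B}$ exchanges the second and third conditions). Because the displayed conditions are stated as lifting properties between \emph{sets} of maps, and the pushout-product and pullback-exponential of sets are defined elementwise, it suffices to prove the single-map statement: for $f \in \mathcal{A}$, $g \in \mathcal{B}$, $p \in \mathcal{X}$, one has $f \square g \pitchfork p$ if and only if $f \pitchfork \exp{g}{p}$ (and then quantify). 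So the whole proposition reduces to one adjunction-style bijection of lifting problems.

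First I would set up the two lifting problems explicitly. A lifting problem for $f \square g$ against $p$ is a commutative square whose top map $A \times D \coprod_{A \times C} B \times C \to Y$ is, by the universal property of the pushout, the same data as a pair of maps $A \times D \to Y$ and $B \times C \to Y$ agreeing on $A \times C$; together with the bottom map $B \times D \to X$ this is exactly a commutative cube. Dually, a lifting problem for $f$ against $\exp{g}{p}: Y^D \to Y^C \times_{X^C} X^D$ is a square with top map $A \to Y^D$ and bottom map $B \to Y^C \times_{X^C} X^D$. Using the Cartesian-closed adjunction $(-\times D) \dashv (-)^D$ and $(-\times C)\dashv (-)^C$, transpose everything: $A \to Y^D$ corresponds to $A\times D \to Y$; the bottom map $B \to Y^C\times_{X^C} X^D$ unwinds (since the target is a pullback) to a compatible pair $B \to Y^C$, i.e.\ $B\times C \to Y$, and $B \to X^D$, i.e.\ $B\times D \to X$; commutativity of the original square becomes precisely the compatibility conditions making these into the cube above. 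Thus the \emph{data} of a lifting problem on each side are in natural bijection.

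Next I would check that \emph{solutions} correspond under the same transposition: a diagonal filler $B\times D \to Y$ for the pushout-product square transposes to a map $B \to Y^D$, and one verifies — again using that $Y^C\times_{X^C}X^D$ is a pullback and that exponentiation is functorial — that it makes both triangles commute in the pullback-exponential square, and conversely. This is a routine but somewhat lengthy diagram chase through the adjunction units/counits; the mild subtlety worth spelling out is that $\C$ is only assumed \emph{locally} Cartesian closed, so the exponentials $Y^A$, $Y^B$ etc.\ and the adjunction $(-\square f)\dashv \exp{f}{-}$ on arrow categories live over the relevant base objects — but the argument only ever uses the internal-hom adjunction for the \emph{Cartesian} product $\times$, which is available globally, so this causes no trouble. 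I expect this verification — matching fillers, not just problems — to be the main (and only real) obstacle, and it is purely formal. Once the single-map bijection is established, quantifying over $f \in \mathcal{A}$, $g\in\mathcal{B}$, $p\in\mathcal{X}$ gives the first equivalence, and the $\mathcal{A}\leftrightarrow\mathcal{B}$ symmetry gives the second, completing the proof.
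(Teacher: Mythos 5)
Your argument is correct. Note that the paper itself does not prove this statement: it simply cites \cite[Proposition 7.6]{joyaltierney2007qcatvssegal}, and the argument you give — transposing lifting problems and their fillers through the Cartesian-closed adjunction $(-\times D)\dashv (-)^D$, together with the symmetry $f\square g\cong g\square f$ — is exactly the standard proof found there. One small remark: the subtlety you flag about $\C$ being only locally Cartesian closed is moot, since the paper also assumes $\C$ bicomplete; a locally Cartesian closed category with a terminal object \emph{is} Cartesian closed (identify $\C$ with $\C_{/1}$), so the global internal-hom adjunction you use is unconditionally available, as you correctly conclude.
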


We will assume familiarity with standard results of model category theory as given in \cite{hovey1999modelcategories,hirschhorn2003modelcategories,dwyersspalinski1995modelcat}. We will in particular assume familiarity with the theory of Bousfield localization of left proper simplicial combinatorial model categories. However, we will use pushout products and pullback exponentials to quickly review Cartesian and enriched model categories. 
 
 \begin{defone} \label{def:cartesian model cat}
 	Let $\C^\cM$ be a model category. We say the model structure $\cM$ is {\it Cartesian} if it satisfies one of the following equivalent conditions
 	\begin{itemize}
 		\item For two cofibrations $i,j$ $i \square j$ is a cofibration, which is trivial if either is trivial.
 		\item For a cofibration $i$ and fibration $p$, $\exp{i}{p}$ is a fibration, which is trivial if either is trivial. 
 		\item For two cofibrations $i,j$ $i \square j$ is a cofibration. Moreover for each object $A$, $(\emptyset \to A) \square i$ is a trivial cofibration if $i$ is a trivial cofibration.
 		\item For two cofibrations $i,j$ $i \square j$ is a cofibrations. Moreover for each object $A$ in $\C$ the functor $A \times -: \C^\cM \to \C^\cM$ is left Quillen.
 	\end{itemize} 
 \end{defone} 
 
 Using Cartesian model structures we can define enriched model structures. 
 
 \begin{defone}\label{def:enriched model cat}
 	Let $\C^{\cM}$ be a category $\C$ with a Cartesian model structure $\cM$. Let $\D^{\cN}$ be enriched and (co)tensored over $\C$, where the tensor is given by the functor $-\otimes -: \C \times \D \to \C$. Then we say $\D^{\cN}$ is enriched over $\C^{\cM}$ if it satisfies one of the following equivalent conditions. 
 	\begin{itemize}
 		\item For an $\cM$-cofibration $i$ and an $\cN$-cofibration $j$, $i \square j$ is an $\cN$-cofibration, which is trivial if either is trivial.
 		\item For an $\cM$-cofibration $i$ and an $\cN$-fibration $p$, $\exp{i}{p}$ is an $\cN$-fibration, which is trivial if either is trivial. 
 		\item For an $\cM$-cofibration $i$ and an $\cN$-cofibration $j$, $i \square j$ is an $\cN$-cofibration. For each object $A$ in $\C$ and $\cN$-trivial cofibration $j$, $(\emptyset \to A) \square j$ is an $\cN$-trivial cofibration. For each $B$ in $\D$ and an $\cM$-trivial cofibration $i$, $i \square (\emptyset \to B)$ is a $\cN$-trivial cofibration.
 		\item For two cofibrations $i,j$ $i \square j$ is a cofibrations. Moreover for each object $A$ in $\C$ the functor $A \otimes -: \D^\cN \to \D^\cN$ is left Quillen and for each object $B$ in $\D$ the functor $- \otimes B: \C^\cM \to \D^\cN$ is left Quillen.
 	\end{itemize} 
 \end{defone}

 \begin{remone}
 	Notice the fact that the first two conditions are equivalent follows from \cref{prop:joyal tierney lifting} and the fact that the first and third coincide follows from \cite[Proposition 7.27]{joyaltierney2007qcatvssegal}. The equivalence of the third and fourth condition is just the definition of a left Quillen functor.
 \end{remone}
 
 \subsection{Review of Enriched Categories}\label{subsec:enriched}
 In this short section we review some basic aspects of enriched category theory. The main source on enriched category theory is \cite{kelly2005enriched}, but \cite[Chapter 3]{riehl2014categoricalhomotopytheory} is also very readable introduction to this subject. Let $\V$ be a category with finite products. Then $(\V,\times,1_\V)$ is a symmetric monoidal category, where the symmetric monoidal structure comes from the product and the unit is simply the final object. 
 
 \begin{defone}
 	A {$\V$-enriched category} $\C$ is a collection of objects $\Obj_\C$ and for two objects $X,Y \in \Obj_\C$ an object $\C_\V(X,Y)$ in $\V$ along with composition maps 
 	$$\C_\V(X,Y) \times \C_\V(Y,Z) \to \C_\V(X,Z)$$
 	and identity elements $1 \to \C_\V(X,X)$, such that composition is unital and associative.
\end{defone}
 
 There are alternative ways to characterize the data of an enriched category.
  
 \begin{defone} \label{def:tensored}
 	Let $\C$ be a $\V$-enriched category. We say $\C$ is {\it tensored over $\V$} if for all objects $X$ in $\C$, the functor $\C_\V(X,-):\C \to \V$ has a left adjoint $- \otimes_\V X: \V\to \C$. 
 \end{defone}
 
 In general an enriched category is not necessarily tensored, however, there are important exceptions.
 
 \begin{propone} \label{prop:enrich vs tensor}
 	Let $\C$ be $\V$-enriched category so that the underlying category is locally presentable. Then $\C$ is tensored over $\V$ and the tensor $-\otimes_\V -: \C \times \V \to \C$ uniquely determines the enrichment.
 \end{propone}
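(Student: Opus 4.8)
The plan is to prove the two assertions in turn --- tensoredness by an adjoint functor theorem, and the uniqueness by the Yoneda lemma applied inside $\V$.

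For tensoredness, fix an object $X$ of $\C$ and let $\C_0$ denote the underlying category. It suffices to produce a left adjoint to the ordinary functor $G_X := \C_\V(X,-) \colon \C_0 \to \V$; denoting it $-\otimes_\V X$ gives the tensor, and naturality of these adjoints in $X$ (together with the composition of $\C$) packages them into the bifunctor $-\otimes_\V- \colon \C\times\V\to\C$, which acquires the comparison isomorphisms $1\otimes_\V X\cong X$ and $(V\times W)\otimes_\V X\cong V\otimes_\V(W\otimes_\V X)$ by transposing the unit and product constraints of $\V$ across the adjunctions. To produce the left adjoint I would invoke an adjoint functor theorem: since $\C_0$ is locally presentable it is complete, well-powered, and has a small cogenerating set, so the Special Adjoint Functor Theorem reduces the claim to verifying that $G_X$ preserves all small limits. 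The point to check is that the ordinary limits of $\C_0$, which exist by local presentability, are in fact the enriched (weighted, in particular conical) limits of $\C$; granting this, $G_X$ preserves them because enriched representables preserve every weighted limit that exists. This compatibility of the enrichment with the ambient limits is the substantive input, and it is what one should either extract from the shape of the $\V$-categories at hand or quote as the standard fact that a $\V$-category whose underlying category is locally presentable is tensored and cotensored \cite{kelly2005enriched}.

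For the uniqueness, suppose the tensor is given. The defining isomorphism $\Hom_\C(V\otimes_\V X, Y)\cong\Hom_\V(V,\C_\V(X,Y))$, natural in $V$, exhibits $\C_\V(X,Y)$ as a representing object for $V\mapsto\Hom_\C(V\otimes_\V X, Y)$ on $\V^{op}$, and by the Yoneda lemma a representing object is unique up to a unique isomorphism; hence the hom-objects are recovered from $-\otimes_\V-$, naturally in $X$ and $Y$. I would then check that the remaining data is forced as well: the identity $1\to\C_\V(X,X)$ transposes to $1\otimes_\V X\cong X\xrightarrow{\id}X$, and the composition $\C_\V(X,Y)\times\C_\V(Y,Z)\to\C_\V(X,Z)$ transposes, via the associativity comparison above, to the composite of the two counit maps $\C_\V(Y,Z)\otimes_\V(\C_\V(X,Y)\otimes_\V X)\to\C_\V(Y,Z)\otimes_\V Y\to Z$; because the Yoneda embedding of $\V$ is faithful, the equality of these transposed maps may be verified after applying $\Hom_\V(W,-)$ for every $W$, which unwinds to a routine bookkeeping of the adjunction isomorphisms. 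Thus any two $\V$-enrichments of $\C_0$ inducing the same tensor bifunctor agree.

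The main obstacle is the single step inside the tensoredness argument where one must show $G_X$ preserves limits --- equivalently, that the ordinary completeness of $\C_0$ is already an enriched phenomenon. A priori the $\V$-structure need not interact with the ambient limits of $\C_0$, so this is exactly where the hypothesis does real work, and it is the place where one wants the enrichment on $\C$ itself to be suitably well-behaved (e.g. $\V$-cocomplete), which is available in the situations of interest and underlies the cited general fact. Once this is in hand, the adjoint-functor-theorem bookkeeping, the assembly of $-\otimes_\V-$, and the Yoneda reconstruction of the enrichment are all routine.
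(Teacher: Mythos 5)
The paper gives no proof of this proposition; it sits in the background section and is treated as a known fact from enriched category theory (the paper's stated general reference is \cite{kelly2005enriched}), so there is no in-paper argument to compare yours against. Your outline via the Special Adjoint Functor Theorem for tensoredness and Yoneda for uniqueness is sound in shape, and the uniqueness half is essentially complete: the adjunction $\Hom_{\C_0}(V\otimes_\V X, Y)\cong\Hom_{\V_0}(V,\C_\V(X,Y))$ pins down $\C_\V(X,Y)$ up to unique isomorphism by Yoneda, and the identity and composition maps are then forced by transposition, as you describe.

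For tensoredness, you correctly identify the crux --- showing that the underlying functor $\C_\V(X,-)\colon\C_0\to\V_0$ preserves limits --- and you are equally correct that this does not follow from the stated hypothesis alone. Local presentability of $\C_0$ yields ordinary limits, but a $\V$-enriched representable is only guaranteed to preserve $\V$-conical limits; knowing that the ordinary limits of $\C_0$ are conical is essentially knowing that $\C$ is cotensored, which is a claim of the same order of difficulty as the one being proved. Your fallback --- to ``quote as the standard fact that a $\V$-category whose underlying category is locally presentable is tensored and cotensored'' --- is circular, since that \emph{is} the proposition. You flag the issue honestly, but as written the tensoredness half reduces to an unproved assertion. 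To make it self-contained you would need either to verify conicality of limits directly for the presheaf enrichments $\V=\P(\cD)$ or $\sP(\cD)$ that the paper actually uses, or to invoke the precise form of the enriched local-presentability result together with whatever additional hypothesis (e.g.\ $\V$-cocompleteness of $\C$) it carries.
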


 We will make use of the functor category of enriched categories and so will review it here as well.
 
\begin{defone}
  Let $\C$ and $\D$ be two $\V$-enriched categories. A {\it $\V$-enriched functor} of $\V$-enriched categories $F:\C \to \D$ is a map of objects $F_{\Obj}: \C_{\Obj} \to \D_{\Obj}$ and for any two objects $X,Y$ in $\C$ a morphism in $\V$
  $$F(X,Y): \C_\V(X,Y) \to \D_\V(FX,FY)$$
  such that $F$ respects composition and the identity. 
\end{defone}

\begin{notone} \label{not:catv}
  We will denote the category with objects $\V$-enriched categories and morphisms $\V$-enriched functors by $\cat_\V$.
 	
\end{notone}

 Having functors we can define natural transformations.
 
 \begin{defone}
 	Let $F,G: \C\to \D$ be two $\V$-enriched functors of $\V$-enriched categories. A {\it $\V$-enriched natural transformation} of $\V$-enriched functors is a collection of morphisms $\alpha_X: 1 \to \D(FX,GX)$ such that for all objects $X,Y$ the following diagram commutes
 	\begin{center}
 		\begin{tikzcd}
 			\C_\V(X,Y) \arrow[r] \arrow[d]  & \D_\V(FX,FY) \arrow[d] \\
 			\D_\V(GX,GY) \arrow[r] & \D_\V(FX,GY)
 		\end{tikzcd}.
 	\end{center}
  We denote the {\it set} of $\V$-enriched natural transformations by $\Nat(F,G)$.
 \end{defone}

 \begin{defone} \label{def:funcd}
 	Let $\C,\D$ be two $\V$-enriched categories. Define the functor category $\Fun(\C,\D)$ as the category (not enriched) with objects $\V$-enriched functors $F: \C \to \D$ and morphisms $\V$-enriched natural transformations $\Nat(F,G)$.
 \end{defone}
  
  We can use tensors to add an enrichment to $\Fun(\C,\D)$.
  
  \begin{defone} \label{def:ufuncd}
   Let $\C,\D$ be two $\V$-enriched categories, such that $\D$ is tensored and cotensored over $\V$. Then the category $\Fun(\C,\D)$ tensored and cotensored over $\V$ defined point-wise as $(F \otimes_\V X)(C) = F(C) \otimes_\V X$. In particular, if $\C$ is small and the underlying category $\D$ is locally presentable, then $\Fun(\C,\D)$ is $\V$-enriched. We denote the $\V$-enriched category by $\uFun(\C,\D)$ and the $\V$-enriched natural transformations $\uNat(F,G)$.
  \end{defone}

  For a proof of the point-wise tensor and cotensor see \cite[Lemma 5.2]{moser2019enrichedproj}. One key aspect of enrichments are change of enrichment functors. Let $(\W,\times,1_\W)$ be another category with finite products and let $\Fun^{\times}(\V,\W)$ denote the category of product preserving functors from $\V$ to $\W$. Then we have the following result.
  
  \begin{lemone} \label{lemma:ch}
  	There is a {\it change-of-enrichment functor} 
  	$$\Ch: \Fun^{\times}(\V,\W) \to \Fun(\cat_\V,\cat_\W)$$
  	defined as follows. For a given functor $F: \V \to \W$ define $\Ch(F):\cat_\V \to \cat_\W$ that takes a $\V$-category $\C$ to the $\W$-category $\Ch(F)(\C)$ which has the same objects and $\Ch(F)(\C)_\W(X,Y) = F(\C_\V(X,Y))$.
  \end{lemone}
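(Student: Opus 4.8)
The plan is to recognize $\Ch$ as the restriction of the classical change-of-base $2$-functor for enriched categories to Cartesian monoidal structures. First I would observe that any finite-product-preserving functor $F\colon \V\to\W$ is canonically a strong symmetric monoidal functor between the Cartesian monoidal categories $(\V,\ti,1_\V)$ and $(\W,\ti,1_\W)$: the structure isomorphism $\phi_{X,Y}\colon F(X)\ti F(Y)\xrightarrow{\ \sim\ }F(X\ti Y)$ is the inverse of $(F(\pi_1),F(\pi_2))$, and $\phi_0\colon 1_\W\xrightarrow{\ \sim\ }F(1_\V)$ is the canonical map coming from $F$ preserving the empty product. The associativity, unitality and symmetry coherences for $\phi$ are then automatic from the universal property of products, so nothing needs to be checked beyond unwinding definitions. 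Similarly, any natural transformation $\alpha\colon F\Rightarrow F'$ of product-preserving functors is automatically a monoidal natural transformation, since the $\phi$'s are built from projections and $\alpha$ is natural with respect to those.

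With this in hand I would define $\Ch(F)(\C)$ as in the statement, giving it composition $F(\C_\V(X,Y))\ti F(\C_\V(Y,Z))\xrightarrow{\phi}F(\C_\V(X,Y)\ti\C_\V(Y,Z))\xrightarrow{F(\comp)}F(\C_\V(X,Z))$ and units $1_\W\xrightarrow{\phi_0}F(1_\V)\xrightarrow{F(\id)}F(\C_\V(X,X))$; associativity and unitality of this composition follow by applying $F$ to the corresponding diagrams in $\C$ and inserting the coherence diagrams for $\phi$ --- this is precisely the standard change-of-base construction along a lax monoidal functor, cf. \cite{kelly2005enriched}. On a $\V$-functor $G\colon\C\to\D$ I would let $\Ch(F)(G)$ act as $G$ on objects and as $F(G_{X,Y})$ on hom-objects; functoriality in $G$ is immediate because $F$ is a functor. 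This gives a functor $\Ch(F)\colon\cat_\V\to\cat_\W$.

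To upgrade $\Ch$ to a functor $\Fun^{\times}(\V,\W)\to\Fun(\cat_\V,\cat_\W)$, for $\alpha\colon F\Rightarrow F'$ I would let $\Ch(\alpha)$ have component at $\C$ the $\W$-functor that is the identity on objects and $\alpha_{\C_\V(X,Y)}$ on hom-objects; this is a $\W$-functor by naturality of $\alpha$ together with the monoidality observed above, it is natural in $\C$ by naturality of $\alpha$ applied to the maps $G_{X,Y}$, and $\Ch$ visibly preserves identity and composition of natural transformations. The one step that carries any real content is the first: checking that the Cartesian comparison maps assemble into a strong monoidal structure on $F$ --- and a monoidal structure on each $\alpha$ --- with all coherence coming for free. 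After that the argument is entirely the classical change-of-base machinery for enriched categories specialized to Cartesian bases, with every verification reducing to applying $F$ or $\alpha$ to a diagram that already commutes in $\V$.
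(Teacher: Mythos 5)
Your proof is correct and follows essentially the same route the paper does: the paper simply cites the classical Eilenberg--Kelly change-of-base result (\cite[Proposition 6.3, Theorem 6.4]{eilenbergkelly1966closedcat}), while you unwind that same machinery, first noting that a finite-product-preserving functor between Cartesian monoidal categories is automatically strong symmetric monoidal (and likewise for natural transformations), then applying the standard change-of-base construction. The only difference is that you spell out what the paper delegates to the reference.
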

  For a proof see \cite[Proposition 6.3, Theorem 6.4]{eilenbergkelly1966closedcat}. Let us give the key example of an enriched category.
 
 \begin{defone} \label{def:ncat}
 	Let $\set$ be the category of small sets and set functions. Then a $\set$-enriched category is precisely a category, which we will also call strict $1$-category. By induction a {\it strict $n$-category} is defined as a $(n-1)$-enriched category. We denote the category of strict $n$-categories and enriched functors by $\cat_n$. For two strict $n$-categories we denote the $(n-1)$-category of functors by $\Fun(\C,\D)$.
 \end{defone}
 
 We use Cartesian closure to construct many interesting enriched categories, as explained in \cite[Definition 3.3.6]{riehl2014categoricalhomotopytheory}.
 
 \begin{lemone}\label{lemma:cartesian closed enriched}
 	Let $\V$ be a Cartesian closed category. Then $\V$ is enriched over itself with $\C(X,Y) = Y^X$. 
 \end{lemone}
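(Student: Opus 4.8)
The plan is to take the hom-object of $X,Y$ in $\V$ to be the internal hom $Y^X$, and to obtain the composition and unit morphisms by transposing canonical maps along the defining adjunction $(-\ti X)\dashv(-)^X$. Write $\ev\colon Y^X\ti X\to Y$ for the evaluation morphism, i.e.\ the counit of this adjunction. First I would define the composition morphism $\comp_{X,Y,Z}\colon Z^Y\ti Y^X\to Z^X$ as the transpose of the composite
\[
(Z^Y\ti Y^X)\ti X\ \xrightarrow{\,\cong\,}\ Z^Y\ti(Y^X\ti X)\ \xrightarrow{\,\id_{Z^Y}\ti\,\ev\,}\ Z^Y\ti Y\ \xrightarrow{\,\ev\,}\ Z,
\]
and the unit morphism $1\to X^X$ as the transpose of the canonical isomorphism $1\ti X\xrightarrow{\cong} X$.

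It then remains to verify the unitality and associativity axioms of a $\V$-enriched category. Rather than chasing the defining diagrams in $\V$ directly, I would pass to generalized elements: by the Yoneda lemma it suffices to check the corresponding identities after applying $\Hom_\V(A,-)$ for every object $A$, and the adjunction supplies a bijection $\Hom_\V(A,Y^X)\cong\Hom_\V(A\ti X,Y)$, natural in $A$, under which $\comp$ and the unit become, respectively, ``compose after introducing the extra $X$-variable'' and ``insert the identity''. With this translation, unitality reduces to the triangle identities for $(-\ti X)\dashv(-)^X$ together with the defining property of $1\ti X\cong X$, and associativity reduces to comparing the two evident ways of evaluating $W^Z\ti Z^Y\ti Y^X\ti X\to W$, which agree by associativity and naturality of the Cartesian product. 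Alternatively one may simply invoke \cite[Definition 3.3.6]{riehl2014categoricalhomotopytheory}, where this self-enrichment is recorded explicitly.

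The only piece of genuine bookkeeping is the associativity axiom: one must reassociate the fourfold product $W^Z\ti Z^Y\ti Y^X\ti X$ and chase three nested evaluation maps, checking that the two resulting transposes $W^Z\ti Z^Y\ti Y^X\to W^X$ coincide. I expect this to be the main obstacle, but it is a routine diagram chase that invokes no coherence beyond the strict associator of $\ti$; once rephrased via generalized elements it is just the (strict) associativity of composition of morphisms in the underlying category $\V$. The unit axioms, and the functoriality of the assignment $(X,Y)\mapsto Y^X$ needed even to state the enrichment, are comparatively immediate.
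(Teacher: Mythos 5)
The paper offers no proof of this lemma, simply citing \cite[Definition 3.3.6]{riehl2014categoricalhomotopytheory}, which your proposal also invokes. Your construction — taking $\underline{\Hom}(X,Y)=Y^X$, transposing evaluation and the unitor along $(-\times X)\dashv(-)^X$ to get composition and identities, and checking the axioms via generalized elements — is the standard argument the cited reference records, and it is correct.
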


 We shall use this lemma to give further examples of enriched categories in the next section.
 
 \begin{remone}
	The enrichment we consider here throughout is a {\it strict enrichment}. For example, $\cat$-enriched categories are strict $2$-categories and not bicategories (where associativity and unitality holds only up to natural isomorphism). There are a few occasions (\cref{cor:equiv qcat}, \cref{cor:equiv qcat s}, \cref{cor:qcat equiv ncat}) where we consider weakly enriched quasi-categories in sense of \cite{gepnerhaugseng2015enrichedinftycat}. 
 \end{remone}
 
\subsection{Categories Enriched over Categories of Presheaves} \label{subsec:enriched psh}
In this section want to review categories enriched over categories of set-valued presheaves. First we review the desired diagram categories.
Let $\DDelta$ be the category of finite linearly ordered sets with linearly ordered functions. We denote the objects by $[0],[1],[2], \cdots$. 
\begin{notone}\label{not:brackets}
	Every morphism $[n] \to [m]$ is uniquely characterized by an increasing sequence of $0 \leq k_1 \leq ... \leq k_n \leq m$. We denote this morphism by $\ordered{k_1,...,k_n}:[n] \to [m]$. In particular every morphism $[0] \to [n]$ is of the form $\ordered{i}$, where $0 \leq i \leq n$.
\end{notone} 
For more details regarding the category $\DDelta$ see \cite{goerssjardine1999simplicialhomotopytheory} or the initial section of \cite{rezk2001css} or \cite{rasekh2017left}.

Let $\cD$ be a small category. Let $\P(\cD) = \Fun(\cD^{op},\set)$, which we call the category of $\cD$-sets. Moreover, let $\P(\cD \times \DDelta) = \Fun(\cD^{op} \times \DDelta^{op},\set)$, which we call the category of $\cD$-simplicial sets.

We denote the {\it representable $\cD$-simplicial sets} by $F[d,n]$, where $d$ is an object in $\cD$ and an $[n]$ is an object in $\DDelta$. We can use the representable functors to deduce that the category of $\cD$-simplicial sets is {\it Cartesian closed}, where for two $\cD$-simplicial sets  $X,Y$ the Cartesian closure $Y^X$ is uniquely characterized by the natural bijection 
$$\Hom_{\P(\cD\times\DD)}(F[d,n],Y^X) \cong \Hom_{\P(\cD\times\DD)}(F[d,n] \times X,Y)$$
Using the Cartesian closure and \cref{lemma:cartesian closed enriched} we get the following result.

\begin{lemone} \label{lemma:snset enriched}
	The category of $\cD$-simplicial sets is enriched over itself. For two $\cD$-simplicial sets $X,Y$ we denote the enriched morphisms by $\uHom_{\P(\cD\times\DD)}(X,Y)$ and note it is defined as 
	$$\uHom_{\P(\cD\times\DD)}(X,Y)[d,n] = \Hom_{\P(\cD\times\DD)}(F[d,n] \times X,Y)$$ 
\end{lemone}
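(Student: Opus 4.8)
The plan is to combine \cref{lemma:cartesian closed enriched} with the explicit description of the Cartesian closure $Y^X$ in the category of $\cD$-simplicial sets. By \cref{lemma:cartesian closed enriched}, any Cartesian closed category $\V$ is enriched over itself with hom-object $Y^X$; the paper has just observed (in the paragraph preceding the lemma) that $\P(\cD\times\DD)$ is Cartesian closed, so the first half of the statement is immediate. What remains is to identify the formula for $\uHom_{\P(\cD\times\DD)}(X,Y)$ by evaluating $Y^X$ on a representable.

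First I would recall the defining natural bijection of the exponential, namely
$$\Hom_{\P(\cD\times\DD)}(F[d,n],Y^X) \cong \Hom_{\P(\cD\times\DD)}(F[d,n] \times X,Y),$$
which holds for every object $(d,n)$ of $\cD\times\DD$ by the characterization of $Y^X$ given just above the lemma. Next I would invoke the Yoneda lemma for the presheaf category $\P(\cD\times\DD)$: for any $\cD$-simplicial set $Z$ one has $Z[d,n] = \Hom_{\P(\cD\times\DD)}(F[d,n], Z)$, since $F[d,n]$ is by definition the representable presheaf at $(d,n)$. Applying this with $Z = Y^X$ gives
$$Y^X[d,n] = \Hom_{\P(\cD\times\DD)}(F[d,n], Y^X) \cong \Hom_{\P(\cD\times\DD)}(F[d,n]\times X, Y),$$
which is exactly the asserted formula for $\uHom_{\P(\cD\times\DD)}(X,Y)[d,n]$. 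Finally I would note that this identification is natural in $(d,n)$, so it is an isomorphism of $\cD$-simplicial sets and not merely a levelwise bijection; naturality follows because both sides are built from the Yoneda embedding and the adjunction isomorphism, both of which are natural.

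I do not expect a genuine obstacle here: the statement is essentially an unwinding of definitions, and the only point requiring a word of care is the naturality of the levelwise bijections in the object $(d,n)$, which one gets for free from the naturality of the Yoneda isomorphism and of the exponential adjunction. If one wanted to be fully explicit, one could spell out the action of a morphism $(f,\alpha)\colon (d',n')\to (d,n)$ on both sides and check compatibility, but this is routine bookkeeping with representables.
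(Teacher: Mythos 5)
Your proposal is correct and matches the paper's approach exactly: the paper derives the lemma directly from \cref{lemma:cartesian closed enriched} together with the Cartesian-closure bijection stated in the paragraph immediately preceding the lemma, and your Yoneda unwinding is just the standard bookkeeping the paper leaves implicit.
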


Following \cref{not:catv} we use  $\cat_{\P(\cD)}$ ($\cat_{\P(\cD \times \DDelta)}$) to denote the category of $\P(\cD)$-enriched ($\P(\cD\times\DDelta)$-enriched) categories. Let us see the key example of a $\P(\cD)$-enriched category.

\begin{exone} \label{ex:psh}
	Let $\C$ be a small $\P(\cD)$-enriched category. Then, by \cref{def:ufuncd}, the functor category $\Fun(\C,\P(\cD))$ is $\P(\cD)$-enriched with tensor given by $(F \times X)(c) = F(c) \times X$, where $F: \C \to \P(\cD)$ is a functor and $X$ a $\cD$-simplicial set. In particular, for two functors $F,G: \C \to \P(\cD)$, the $\cD$-simplicial set $\uNat(F,G)$ is defined as 
	$$\uNat(F,G)[d] = \Nat(F \times \{F[d]\},G),$$
	where $\{F[d]\}$ denotes the constant functor with value $F[d]$. We will denote $\sP(\cD)$-enriched functor category by $\uFun(\C,\P(\cD))$ following \cref{def:ufuncd}.
\end{exone} 

Given two enriched categories $\C,\E$ we want an easy criterion that can help us determine when a functor $G:\C \to \E$ is in fact a $\P(\cD)$-enriched functor.
\begin{lemone} \label{lemma:pcd enriched functor}
	Let $\C,\E$ be two $\P(\cD)$-enriched categories such that they are tensored over $\P(\cD)$ (\cref{def:tensored}). Moreover, let $G: \C\to \E$ be a functor of underlying categories such that $G(X\otimes F[d]) = G(X) \otimes F[d]$. Then $G$ is a $\P(\cD)$-enriched functor. 
\end{lemone}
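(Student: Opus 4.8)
The plan is to use the fact that, by \cref{prop:enrich vs tensor}, for tensored (locally presentable) enriched categories the tensor $-\otimes_{\P(\cD)}-$ determines the enrichment; the enriched hom is characterized as the right adjoint to the tensor, i.e.\ by the natural bijection $\Hom_\C(X\otimes_{\P(\cD)}A, Y)\cong \Hom_{\P(\cD)}(A, \C_{\P(\cD)}(X,Y))$. So to promote the ordinary functor $G$ to a $\P(\cD)$-enriched functor it suffices to produce, for each pair of objects $X,Y$ in $\C$, a map in $\P(\cD)$
\[
G_{X,Y}\colon \C_{\P(\cD)}(X,Y)\to \E_{\P(\cD)}(GX,GY)
\]
compatible with composition and units, and the hypothesis $G(X\otimes F[d])=G(X)\otimes F[d]$ is exactly what is needed to build it. First I would observe that a map of $\cD$-sets $A\to B$ is determined by its values on representables $F[d]$, by the (co)Yoneda lemma; so it is enough to define $G_{X,Y}$ on elements of $\C_{\P(\cD)}(X,Y)[d]$ for each object $d$ of $\cD$, naturally in $d$.

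Next I would unwind what such an element is. By the tensor–hom adjunction (from \cref{prop:enrich vs tensor}), an element of $\C_{\P(\cD)}(X,Y)[d]=\Hom_{\P(\cD)}(F[d],\C_{\P(\cD)}(X,Y))$ is the same as a morphism $X\otimes F[d]\to Y$ in the underlying category of $\C$. Applying $G$ gives a morphism $G(X\otimes F[d])\to GY$ in $\E$; using the hypothesis $G(X\otimes F[d])=GX\otimes F[d]$, this is a morphism $GX\otimes F[d]\to GY$, which by the tensor–hom adjunction for $\E$ is precisely an element of $\E_{\P(\cD)}(GX,GY)[d]$. This assignment is natural in $d$ (since $G$ is a functor and the hypothesis on tensors with $F[d]$ is, up to the isomorphism $F[d]\otimes A\cong F[d']\otimes A$ induced by $d\to d'$, compatible with the simplicial structure maps — this is where one checks that $G$ commutes not just with $-\otimes F[d]$ but functorially in $d$), hence assembles into the desired map $G_{X,Y}$ of $\cD$-sets.

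It then remains to check that these maps $G_{X,Y}$ respect composition and identities. For identities, the identity element $\{1\}\to \C_{\P(\cD)}(X,X)$ corresponds under the adjunction to the identity $X\otimes\{1\}=X\to X$, which $G$ sends to $\mathrm{id}_{GX}$, giving the unit of $\E$. For composition, one chases the composition law $\C_{\P(\cD)}(X,Y)\times\C_{\P(\cD)}(Y,Z)\to\C_{\P(\cD)}(X,Z)$ on elements over $d$: a pair corresponds to morphisms $X\otimes F[d]\to Y$ and $Y\otimes F[d]\to Z$, whose enriched composite corresponds to $X\otimes F[d]\xrightarrow{(\mathrm{diag})} X\otimes F[d]\otimes F[d]\to Y\otimes F[d]\to Z$ using the diagonal of $F[d]$ (which exists since $\P(\cD)$ is Cartesian); applying $G$ and using $G(X\otimes F[d]\otimes F[d])=GX\otimes F[d]\otimes F[d]$ (iterating the hypothesis) shows this maps to the corresponding composite in $\E$. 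So $G$ is a $\P(\cD)$-enriched functor.

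The main obstacle is the naturality-in-$d$ bookkeeping: the hypothesis is stated only as $G(X\otimes F[d])=G(X)\otimes F[d]$ for each fixed $d$, and one must verify that this strict identification is compatible with the morphisms $F[d]\to F[d']$ induced by $\cD$ (and with the diagonals of the $F[d]$), so that the element-wise definition of $G_{X,Y}$ actually glues to a morphism of $\cD$-sets and respects composition. Since $F[d]$ is representable, a morphism $F[d]\to F[d']$ is just a morphism $d\to d'$ in $\cD$, and the required compatibility follows from functoriality of $G$ together with the fact that $-\otimes-$ is functorial in both variables; but writing this out cleanly is the one genuinely fiddly point. (One may alternatively phrase the whole argument by saying that the full subcategory of $\P(\cD)$ on the representables $\{F[d]\}_{d\in\cD}$ is dense, the tensor $\C\times\P(\cD)\to\C$ is cocontinuous in the second variable, and $G$ preserves tensoring by this dense generator; then the enriched-functor structure is forced. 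I would likely present the explicit version for concreteness.)
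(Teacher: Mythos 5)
Your proposal is correct and takes essentially the same approach as the paper: both use the tensor--hom adjunction to define $G_{X,Y}$ level-wise over $d$ via the identification $\uHom_\C(X,Y)[d]\cong\Hom_\C(X\otimes F[d],Y)$ together with the hypothesis $G(X\otimes F[d])=GX\otimes F[d]$. The paper dispatches the naturality-in-$d$ bookkeeping that you flag as the ``main obstacle'' in a single line, by observing that $\Hom_\C(-\otimes-,-)\colon\cD^{op}\times\C^{op}\times\C\to\set$ is already a functor, so the level-wise maps automatically assemble into a morphism of $\cD$-sets.
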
 

\begin{proof}
	Notice, for two objects $X,Y$, $\uHom_\C(X,Y)[d]= \Hom_\C(X \otimes F[d],Y)$ and so we can define $G: \uHom(X,Y) \to \uHom(GX,GY)$ level-wise as
	$$G[d]:\uHom_\C(X,Y)[d] = \Hom_\C(X \otimes F[d],Y) \to \Hom_\C(GX \otimes F[d],GY)= \uHom_\E(GX,GY)[d],$$
	where we are using the fact that there is a functor $\Hom(- \otimes -,-):\cD^{op} \times \C^{op} \times \C \to \set$.
\end{proof}

Some enriched functors need to be defined using alternative methods.

\begin{lemone} \label{lemma:prod enriched}
	The product functor  $- \times - : \P(\cD) \times \P(\cD) \to \P(\cD)$ is $\P(\cD)$-enriched.
\end{lemone}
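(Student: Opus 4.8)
The plan is to construct the action of $-\times-$ on enriched hom-objects by hand, since \cref{lemma:pcd enriched functor} does not apply here: the product functor does not commute with the tensor, because $(X_1 \times F[d]) \times (X_2 \times F[d]) \neq (X_1 \times X_2) \times F[d]$ in general. First I record the relevant enrichments. Being a presheaf category, $\P(\cD)$ is Cartesian closed, so by \cref{lemma:cartesian closed enriched} it is enriched over itself, and by the Yoneda lemma its enriched hom is $\uHom_{\P(\cD)}(X,Y)[d] = \Hom_{\P(\cD)}(F[d] \times X, Y)$, where $F[d]$ is the representable $\cD$-set on $d$. The product category $\P(\cD) \times \P(\cD)$ then carries the component-wise $\P(\cD)$-enrichment, with $\uHom_{\P(\cD) \times \P(\cD)}\bigl((X_1,X_2),(Y_1,Y_2)\bigr) = \uHom_{\P(\cD)}(X_1,Y_1) \times \uHom_{\P(\cD)}(X_2,Y_2)$.

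Next, for a pair of objects $(X_1,X_2)$ and $(Y_1,Y_2)$ I define the required morphism of $\cD$-sets
$$\uHom_{\P(\cD)}(X_1,Y_1) \times \uHom_{\P(\cD)}(X_2,Y_2) \longrightarrow \uHom_{\P(\cD)}(X_1 \times X_2, Y_1 \times Y_2)$$
level-wise: at an object $d$ of $\cD$ it sends a pair $\bigl(f\colon F[d] \times X_1 \to Y_1,\ g\colon F[d] \times X_2 \to Y_2\bigr)$ to the composite
$$F[d] \times X_1 \times X_2 \xrightarrow{\Delta \times \id} F[d] \times F[d] \times X_1 \times X_2 \xrightarrow{\cong} (F[d] \times X_1) \times (F[d] \times X_2) \xrightarrow{f \times g} Y_1 \times Y_2,$$
where $\Delta$ is the diagonal of $F[d]$ and the middle arrow is the canonical symmetry isomorphism. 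Since the diagonals and the symmetry isomorphisms are natural, this assignment commutes with precomposition along $F[d'] \to F[d]$ for maps $d' \to d$ in $\cD$, hence is a genuine morphism in $\P(\cD)$; naturality in $(X_1,X_2)$ and $(Y_1,Y_2)$ is equally immediate.

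It then remains to verify the two enriched-functor axioms. The identity element of $\uHom_{\P(\cD) \times \P(\cD)}\bigl((X_1,X_2),(X_1,X_2)\bigr)$ is, at level $d$, the pair of projections $(\pi_{X_1},\pi_{X_2})$, and the composite above sends it to the projection $\pi_{X_1 \times X_2}\colon F[d] \times X_1 \times X_2 \to X_1 \times X_2$, i.e.\ to the identity element of $\uHom_{\P(\cD)}(X_1 \times X_2, X_1 \times X_2)$. Compatibility with composition is a diagram chase reducing to the coassociativity and cocommutativity of the diagonal together with naturality of the symmetry isomorphisms; this purely formal bookkeeping is the only mildly delicate point, and I expect it to be where most of the written proof goes. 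Finally, by construction the underlying functor of the resulting $\P(\cD)$-enriched functor is the ordinary product functor, which completes the argument. (Conceptually this is just the case $\V = \P(\cD)$ of the standard fact that the product of a Cartesian closed category is a $\V$-enriched functor; equivalently, the diagonal $\P(\cD) \to \P(\cD) \times \P(\cD)$ preserves tensors and is $\P(\cD)$-enriched by \cref{lemma:pcd enriched functor}, so its right adjoint $-\times-$ acquires a canonical $\P(\cD)$-enrichment.)
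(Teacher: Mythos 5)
Your construction agrees with the paper's, just unpacked one level further. The paper defines the structure map
$\uHom_{\P(\cD)}(X,X')\times\uHom_{\P(\cD)}(Y,Y')\to\uHom_{\P(\cD)}(X\times Y,X'\times Y')$
as the adjoint of $\ev_{X,X'}\times\ev_{Y,Y'}$; if you unwind that adjunction level-wise using the Yoneda description $\uHom(X,X')[d]=\Hom(F[d]\times X,X')$, a pair $(f,g)\in\uHom(X,X')[d]\times\uHom(Y,Y')[d]$ corresponds to a single map $F[d]\to\uHom(X,X')\times\uHom(Y,Y')$, which factors through the diagonal of $F[d]$, and composing with the evaluations produces exactly the composite $F[d]\times X\times Y\xrightarrow{\Delta\times\id}F[d]\times F[d]\times X\times Y\cong(F[d]\times X)\times(F[d]\times Y)\xrightarrow{f\times g}X'\times Y'$ that you wrote down. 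So your formula is the element-wise shadow of the paper's one-liner; there is no difference in the underlying idea, only in how explicitly it is spelled out. Your verification of the unit axiom (and gesture at associativity) actually goes beyond the paper, which stops after giving the map. Your opening observation that \cref{lemma:pcd enriched functor} does not apply because the product does not commute with the tensor is exactly the reason the paper handles this functor separately.

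One small caveat about your closing parenthetical: you cannot literally obtain the enrichment of $-\times-$ from \cref{lemma:enriched adjunction via tensors} applied to the diagonal, since that lemma presupposes that both $L$ and $R$ are already $\P(\cD)$-enriched functors and only upgrades the adjunction — invoking it for $R=-\times-$ would be circular. The underlying fact you have in mind (a tensor-preserving enriched left adjoint with an unenriched right adjoint induces a canonical enrichment on the right adjoint) is correct and standard, but it would need a citation to \cite{kelly2005enriched} rather than to the paper's lemma. Since you offer this only as a conceptual aside, it does not affect the correctness of your main argument.
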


\begin{proof}
 Fix four objects $X,X',Y,Y'$ and let $\ev_{X,X'}: X \times \uHom(X,X') \to X'$ and $\ev_{Y,Y'}:Y \times \uHom(Y,Y') \to Y'$ be the evaluation functors. We define 
 $$\uHom_{\P(\cD)}(X,X') \times \uHom_{\P(\cD)}(Y,Y') \to \uHom_{\P(\cD)}(X \times Y, X' \times Y')$$
 as the adjoint of the product map $\ev_{X,X'} \times \ev_{Y,Y'}$.
\end{proof}

We have a similar result for adjunctions.

\begin{lemone} \label{lemma:enriched adjunction via tensors}
	Let $\C,\E$ be two $\P(\cD)$-enriched categories such that they are tensored over $\P(\cD)$ (\cref{def:tensored}). Moreover, let  $L: \C \to \E$ and $R:\E \to \C$ two $\P(\cD)$-enriched functors such that $L\dashv R$ is an adjunction of underlying categories. If $L(X\otimes F[d]) = LX \otimes F[d]$, then $L \dashv R$ is an enriched adjunction.
\end{lemone}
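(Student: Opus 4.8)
The plan is to verify the adjunction isomorphism
$$\Hom_\E(LX, Y) \cong \Hom_\C(X, RY)$$
is compatible with the $\P(\cD)$-enrichment, i.e. that it upgrades to an isomorphism of $\cD$-sets $\uHom_\E(LX,Y) \cong \uHom_\C(X,RY)$, natural in $X$ and $Y$. Since both $\C$ and $\E$ are tensored over $\P(\cD)$, \cref{lemma:snset enriched}-style reasoning gives $\uHom_\C(X,RY)[d] = \Hom_\C(X \otimes F[d], RY)$ and $\uHom_\E(LX,Y)[d] = \Hom_\E(LX \otimes F[d], Y)$. The hypothesis $L(X \otimes F[d]) = LX \otimes F[d]$ is exactly what is needed to relate these: using it together with the underlying adjunction $L \dashv R$ we compute, for each object $d$ of $\cD$,
$$\uHom_\E(LX,Y)[d] = \Hom_\E(LX \otimes F[d], Y) = \Hom_\E(L(X \otimes F[d]), Y) \cong \Hom_\C(X \otimes F[d], RY) = \uHom_\C(X,RY)[d].$$

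First I would record that this level-wise bijection is natural in $d$, i.e. assembles into an isomorphism of $\cD$-sets. This follows because for a morphism $f: d' \to d$ in $\cD$, the induced map $F[d'] \to F[d]$ gives, after tensoring with $X$, a map $X \otimes F[d'] \to X \otimes F[d]$, and the identification above is compatible with postcomposition along this map on both sides — on the $\E$-side because $L$ sends $X \otimes F[d'] \to X \otimes F[d]$ to $LX \otimes F[d'] \to LX \otimes F[d]$ by the strict hypothesis (applied functorially, not just object-wise), and on the $\C$-side because the underlying adjunction bijection is natural. Then I would check naturality in $X$ and $Y$, which is again a formal consequence of naturality of the underlying adjunction and of the tensor.

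Finally, the point of having an isomorphism of enriched hom-objects that is natural in both variables is that it is equivalent data to an enriched adjunction: one must check the unit and counit, defined via the enriched Yoneda-type correspondence from this natural isomorphism, satisfy the triangle identities. But since the underlying adjunction $L \dashv R$ already provides unit and counit satisfying the triangle identities, and the enriched hom-isomorphism restricts on each component $\Hom = \uHom[\ast]$ (where $\ast$ is a suitable object, or more precisely is recovered by evaluating the $\cD$-set at the terminal-like data) to the underlying adjunction bijection, the enriched triangle identities reduce to the ordinary ones. So the verification is entirely formal once the strict compatibility $L(X \otimes F[d]) = LX \otimes F[d]$ is in hand.

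The main obstacle — really the only non-bookkeeping point — is making sure the hypothesis $L(X\otimes F[d]) = LX \otimes F[d]$ is used functorially in $d$, not merely on objects: one needs $L$ applied to the map $X \otimes F[d'] \to X \otimes F[d]$ induced by $d' \to d$ to agree with the map $LX \otimes F[d'] \to LX \otimes F[d]$, so that the level-wise bijections glue to a map of presheaves on $\cD$. This is guaranteed in practice because in all our examples the tensor $- \otimes F[-]$ is itself a functor and $L$ commutes with it as functors, but it is worth stating. Everything else — naturality in $X$, $Y$, and the passage from an enriched hom-isomorphism to an enriched adjunction — is standard enriched category theory and I would cite \cite{kelly2005enriched} rather than spell it out.
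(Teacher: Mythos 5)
Your proposal is correct and takes essentially the same approach as the paper: the paper's proof is exactly your central displayed computation $\uHom_\E(LX,Y)[d] = \Hom_\E(LX \otimes F[d],Y) = \Hom_\E(L(X\otimes F[d]),Y) \cong \Hom_\C(X \otimes F[d],RY) = \uHom_\C(X,RY)[d]$, with the naturality/functoriality checks and the passage from a natural hom-isomorphism to an enriched adjunction left implicit as standard. Your extra remark that the hypothesis $L(X\otimes F[d]) = LX \otimes F[d]$ needs to be read functorially in $d$ is a fair point of care the paper elides, but it does not constitute a different argument.
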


\begin{proof}
 For every object $d$ in $\cD$ we have a natural isomorphism 
 $$\uHom_\E(LX,Y)[d] = \Hom_\E(LX \otimes F[d],Y) = \Hom_\E(L(X\otimes F[d]),Y) \cong \Hom_\C(X \otimes F[d],RY) = \uHom_\C(X,RY)[d].$$
\end{proof}

Notice, there is an enriched Yoneda lemma  for $\P(\cD)$-enriched categories \cite[2.30]{kelly2005enriched}.

\begin{lemone}\label{lemma:enriched yoneda}
	Let $\C$ be a small $\P(\cD)$-enriched category. Then for any object $c$ in $\C$ and enriched functor $F: \C \to \P(\cD)$ there an isomorphism of $\P(\cD)$-sets $F(c) \cong \uNat(\uHom(c,-),F)$. In particular for any two objects $c,c'$ there is an isomorphism of $\P(\cD)$-sets 
	$\uHom(c',c) \cong \uNat_{\Fun(\C,\P(\cD))}(\uHom(c,-),\uHom(c',-))$.
\end{lemone}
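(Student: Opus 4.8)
The plan is to deduce the statement from the classical enriched Yoneda lemma \cite[2.30]{kelly2005enriched} (see also \cite[Chapter 3]{riehl2014categoricalhomotopytheory}) applied to the enriching category $\V=\P(\cD)$; the real work is then only to check that $\P(\cD)$ is an admissible base, to match the notation of the two setups, and to unwind the resulting isomorphism into the concrete description of $\uNat$ from \cref{ex:psh}.

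First I would record the structural facts about $\P(\cD)$: as a category of set-valued presheaves it is bicomplete and Cartesian closed, so $(\P(\cD),\times,1)$ is a complete and cocomplete symmetric monoidal closed category --- precisely the setting of the enriched Yoneda lemma --- and its underlying category is locally presentable. Hence, $\C$ being small, the $\P(\cD)$-enriched functor category $\uFun(\C,\P(\cD))$ exists (\cref{def:ufuncd}) with hom-$\cD$-sets given by the end $\uNat(F,G)=\int_{X\in\C}\uHom_{\P(\cD)}(F(X),G(X))$; evaluating this end at an object $d$ and applying the Cartesian closure of $\P(\cD)$ recovers the formula $\uNat(F,G)[d]=\Nat(F\times\{F[d]\},G)$ of \cref{ex:psh}, so the two descriptions of $\uNat$ agree. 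I would also record that for every object $c$ the representable $\uHom_\C(c,-)\colon\C\to\P(\cD)$ is automatically a $\P(\cD)$-enriched functor --- this is the enriched Yoneda embedding, which exists over any base, so \cref{lemma:pcd enriched functor} is not needed here.

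With these identifications the enriched Yoneda lemma yields a $\cD$-set isomorphism $F(c)\cong\uNat(\uHom_\C(c,-),F)$, natural in $c$ and in $F$. To make it explicit I would describe the two maps level-wise: an element of $\uNat(\uHom_\C(c,-),F)[d]$ is a $\P(\cD)$-natural family of maps $\uHom_\C(c,X)\times F[d]\to F(X)$, and restricting its $c$-component along the map $F[d]\to\uHom_\C(c,c)\times F[d]$ determined by $\id_c$ gives an element of $\Hom_{\P(\cD)}(F[d],F(c))=F(c)[d]$; conversely a map $\phi\colon F[d]\to F(c)$ is sent to the family whose $X$-component is the composite of $\id\times\phi\colon\uHom_\C(c,X)\times F[d]\to\uHom_\C(c,X)\times F(c)$ with the structure map $\uHom_\C(c,X)\times F(c)\to F(X)$ of the $\P(\cD)$-functor $F$. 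Checking these are mutually inverse and compatible with the $\cD$-set structure and with $c$ and $F$ is the content of the classical argument; I expect the only obstacle to be exactly this bookkeeping --- translating between the abstract end formulas and the concrete tensor/Hom description of $\P(\cD)$ --- and it is entirely routine, with no genuine mathematical difficulty beyond it.

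Finally, the last assertion follows by specialization: since $F=\uHom_\C(c',-)$ is a $\P(\cD)$-enriched functor by the remark above, the isomorphism just proved specializes to $\uHom_\C(c',c)=\uHom_\C(c',-)(c)\cong\uNat_{\Fun(\C,\P(\cD))}(\uHom_\C(c,-),\uHom_\C(c',-))$, the claimed isomorphism of $\P(\cD)$-sets.
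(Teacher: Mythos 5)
Your proposal is correct and matches the paper's approach exactly: the paper gives no proof for this lemma beyond the remark preceding it, which states it as an instance of the classical enriched Yoneda lemma of Kelly \cite[2.30]{kelly2005enriched}. You cite the same source and simply unwind the standard isomorphism against the concrete description of $\uNat$ from \cref{ex:psh}, which is precisely the bookkeeping the paper leaves implicit.
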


The enriched Yoneda lemma implies that we have an {\it enriched Yoneda embedding} as stated in \cite[2.34]{kelly2005enriched}.

\begin{lemone}\label{eq:enriched yon}
	Let $\C$ be a small $\P(\cD)$-enriched category. There is a fully faithful $\sP(\cD)$-enriched functor 
	$$\Yon: \C \to \uFun(\C^{op},\P(\cD)),$$
	which takes an object $c$ to the representable functor $\uHom_\C(-,c)$.
\end{lemone}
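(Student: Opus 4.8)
The plan is to construct $\Yon$ by hand from the enriched composition of $\C$ and then extract full faithfulness directly from the enriched Yoneda lemma (\cref{lemma:enriched yoneda}), applied to $\C^{op}$.

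First I would check that the target is an enriched category of the expected kind. Since $\P(\cD)$ is Cartesian, hence symmetric monoidal, the opposite $\C^{op}$ is again a small $\P(\cD)$-enriched category, with $\uHom_{\C^{op}}(x,y)=\uHom_\C(y,x)$; as $\P(\cD)$ is locally presentable, \cref{def:ufuncd} gives that $\uFun(\C^{op},\P(\cD))$ is $\P(\cD)$-enriched, with enriched hom-objects $\uNat(F,G)$ as in \cref{ex:psh}. On objects I define $\Yon(c)=\uHom_\C(-,c)\colon\C^{op}\to\P(\cD)$. This is a $\P(\cD)$-enriched functor because it is the covariant representable $\uHom_{\C^{op}}(c,-)$ of $\C^{op}$: its structure morphism $\uHom_{\C^{op}}(x,y)\to\uHom_{\P(\cD)}\big(\uHom_{\C^{op}}(c,x),\uHom_{\C^{op}}(c,y)\big)$ is the exponential adjoint of the composition map of $\C^{op}$.

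Next I would equip $\Yon$ itself with an enriched functor structure. For objects $c,c'$ of $\C$ the required morphism of $\cD$-sets is
$$\uHom_\C(c,c')\longrightarrow\uNat\big(\uHom_\C(-,c),\uHom_\C(-,c')\big)=\uHom_{\uFun(\C^{op},\P(\cD))}(\Yon c,\Yon c'),$$
defined componentwise: its component at an object $x$ is the exponential adjoint of the composition map $\uHom_\C(c,c')\times\uHom_\C(x,c)\to\uHom_\C(x,c')$ of $\C$. Naturality of these components in $x$ is exactly the associativity axiom of $\C$; compatibility of $\Yon$ with composition is again associativity, and compatibility with identities is the unit axiom. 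This is the standard enriched Yoneda embedding (\cite[2.34]{kelly2005enriched}), and over $\V=\P(\cD)$ all the relevant ends are computed as the $\uNat$-objects of \cref{ex:psh}, so nothing beyond the set-up of \cref{subsec:enriched psh} is needed.

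For full faithfulness I would apply \cref{lemma:enriched yoneda} to the small $\P(\cD)$-enriched category $\C^{op}$: for all objects $c,c'$ it yields an isomorphism of $\cD$-sets $\uHom_{\C^{op}}(c',c)\cong\uNat_{\Fun(\C^{op},\P(\cD))}\big(\uHom_{\C^{op}}(c,-),\uHom_{\C^{op}}(c',-)\big)$, which rewrites, using $\uHom_{\C^{op}}(c',c)=\uHom_\C(c,c')$ and $\uHom_{\C^{op}}(c,-)=\uHom_\C(-,c)=\Yon(c)$, as
$$\uHom_\C(c,c')\;\cong\;\uNat\big(\Yon c,\Yon c'\big)=\uHom_{\uFun(\C^{op},\P(\cD))}(\Yon c,\Yon c').$$
It then remains to observe that this isomorphism is the structure morphism of $\Yon$ constructed above, which is immediate from the construction of the isomorphism in the enriched Yoneda lemma (evaluation of the composition map against the identity). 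Hence $\Yon$ is fully faithful.

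The step I expect to require the most care is this last identification: matching the abstract isomorphism supplied by \cref{lemma:enriched yoneda} with the ad hoc structure maps of $\Yon$ — i.e., verifying that "abstract" full faithfulness is full faithfulness of the embedding we actually wrote down — together with the routine but error-prone bookkeeping of the various $\mathrm{op}$'s. Everything else is a direct invocation of the enriched Yoneda lemma and the constructions of \cref{subsec:enriched psh}.
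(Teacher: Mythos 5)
Your proof is correct and follows exactly the route the paper takes: the paper gives no argument of its own but explicitly derives the statement from \cref{lemma:enriched yoneda} and cites \cite[2.34]{kelly2005enriched}, and your write-up is simply a careful unwinding of that citation (constructing $\Yon$ via exponential adjoints of composition, then applying the enriched Yoneda lemma to $\C^{op}$ for full faithfulness). No gaps; the only delicate point you already flag, namely matching the abstract Yoneda isomorphism with the structure map of $\Yon$, is the standard check that the Yoneda isomorphism is given by evaluation at the identity.
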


\begin{remone} \label{rem:enriched mapping}
The enriched Yoneda lemma in particular implies that for a given small $\P(\cD)$-enriched category $\C$ we have an {\it enriched mapping object} functor 
$$ \uHom(-,-): \C^{op} \times \C \to \P(\cD)$$
\end{remone} 

This is \cite[Theorem 3.73]{kelly2005enriched}.

\begin{theone} \label{the:enriched colimit}
	A $\P(\cD)$-enriched category $\C$ is $\P(\cD)$-cocomplete if the underlying category has all small colimits and it is tensored over $\P(\cD)$. Moreover, a $\P(\cD)$-enriched functor $F: \C \to \D$ between $\P(\cD)$-cocomplete categories preserves $\P(\cD)$-colimits if the underlying functor preserves colimits and preserves the tensor.
\end{theone}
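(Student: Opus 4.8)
The plan is to reduce to the standard theory of weighted colimits in enriched category theory, ultimately to \cite[Theorem 3.73]{kelly2005enriched}, exploiting that $\V=\P(\cD)$ is about as well-behaved a base of enrichment as one could want: it is a Grothendieck topos, hence bicomplete, locally presentable and Cartesian closed, and the representables $F[d]$ form a small dense generator. Recall that $\C$ is $\P(\cD)$-cocomplete precisely when, for every small $\P(\cD)$-enriched category $\mathcal{A}$, diagram $D\colon\mathcal{A}\to\C$ and weight $W\colon\mathcal{A}^{op}\to\P(\cD)$, there is an object $W\star D$ of $\C$ together with a $\P(\cD)$-natural isomorphism
\[
\uHom_\C(W\star D,c)\;\cong\;\uNat\bigl(W,\uHom_\C(D(-),c)\bigr).
\]
Thus one must construct such objects out of the tensor guaranteed by \cref{def:tensored} and of small colimits in the underlying category.

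First step: the coend presentation. With the tensor $-\otimes-\colon\P(\cD)\times\C\to\C$ in hand one has the standard formula
\[
W\star D\;\cong\;\int^{a\in\mathcal{A}}Wa\otimes Da\;\cong\;\mathrm{coeq}\!\left(\coprod_{a,a'\in\mathcal{A}}\bigl(\uHom_{\mathcal{A}}(a,a')\times Wa'\bigr)\otimes Da\;\rightrightarrows\;\coprod_{a\in\mathcal{A}}Wa\otimes Da\right).
\]
The two maps here are induced by the contravariant action of the hom-objects on $W$ and by their covariant action on $D$. This exhibits every weighted colimit, inside $\C$, as built from tensors together with coproducts and coequalizers --- conical colimits --- which all exist since $\C$ has all small underlying colimits and is tensored.

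The crux is the second step: one must check that the colimit so formed in the underlying category $\C_0$ actually has the displayed \emph{enriched} universal property; equivalently, that small conical colimits computed in $\C_0$ are $\P(\cD)$-enriched conical colimits. For $L=\colim_j D_j$ such a colimit, the tensoring adjunction $\Hom_{\C_0}(v\otimes x,y)\cong\Hom_{\P(\cD)}(v,\uHom_\C(x,y))$, together with the fact that isomorphisms of $\cD$-sets are detected on the representables, reduces the enriched universal property of $L$ to the assertion that $v\otimes-\colon\C_0\to\C_0$ preserves $L$ for every $v$; and since $-\otimes x$ is already cocontinuous in its $\P(\cD)$-variable and every $\cD$-set is a colimit of representables, it suffices to treat $v=F[d]$. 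This is precisely where Cartesian closure of $\P(\cD)$ enters: it lets one exhibit $\C$ as cotensored over $\P(\cD)$ --- for the categories to which we apply the theorem, namely $\P(\cD)$ itself and the functor categories of \cref{def:ufuncd}, the cotensor is computed pointwise from the internal hom of $\P(\cD)$ --- so that $v\otimes-$ becomes a left adjoint and the claim is immediate. I expect this identification of conical with enriched colimits to be the only genuine obstacle; granting it, \cite[Theorem 3.73]{kelly2005enriched}, or direct assembly of $W\star D$ from the coequalizer above, finishes the proof that $\C$ is $\P(\cD)$-cocomplete.

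For the statement about functors, let $F\colon\C\to\D$ be $\P(\cD)$-enriched, preserving small underlying colimits and the tensor. Applying $F$ to the presentation of $W\star D$ above yields precisely the corresponding presentation of $W\star(F\circ D)$, so the canonical comparison $W\star FD\to F(W\star D)$ --- which, $F$ being $\P(\cD)$-enriched, is exactly the map whose invertibility is in question --- is an isomorphism, and hence $F$ preserves all $\P(\cD)$-colimits. Finally, one may note that by \cref{prop:enrich vs tensor} the tensoring hypothesis is automatic once the underlying category is locally presentable, as it is in every application.
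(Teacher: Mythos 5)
Your proposal is correct and proceeds by the same route as the paper, whose entire proof is the citation to \cite[Theorem 3.73]{kelly2005enriched}; you have in effect unpacked Kelly's argument. You also correctly isolate the one genuine subtlety — that tensoring alone does not a priori make underlying colimits enriched conical ones (one needs $v\otimes-$ to preserve colimits, supplied by cotensoring) — and you correctly observe that this holds in every category to which the theorem is actually applied.
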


We can use the enriched Yoneda lemma to study enriched Kan extensions \cite[Theorem 4.51]{kelly2005enriched}.

\begin{theone}\label{the:enriched Kan ext}
 Let $\C$ be a small $\P(\cD)$-enriched category and $\D$ a $\P(\cD)$-enriched category with $\P(\cD)$-colimits. Then there is an isomorphism between $\P(\cD)$-enriched functors $\C \to \D$ and colimit preserving $\P(\cD)$-enriched functors $\uFun(\C^{op},\P(\cD)) \to \D$, where the equivalence is given via left Kan extension and restriction along the Yoneda embedding. 
\end{theone}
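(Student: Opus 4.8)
The plan is to identify $\uFun(\C^{op},\P(\cD))$ as the free $\P(\cD)$-cocompletion of $\C$ and to exhibit two mutually inverse constructions explicitly, following the classical density argument. \emph{Setting up the two maps:} given a $\P(\cD)$-enriched functor $F\colon\C\to\D$, I would define $\mathrm{Lan}_\Yon F\colon\uFun(\C^{op},\P(\cD))\to\D$ by the enriched coend $(\mathrm{Lan}_\Yon F)(P)=\int^{c\in\C}P(c)\otimes F(c)$, which exists since $\C$ is small and, by \cref{the:enriched colimit}, $\D$ is $\P(\cD)$-cocomplete (it has all colimits and is tensored over $\P(\cD)$). Conversely, given a $\P(\cD)$-colimit-preserving enriched functor $G\colon\uFun(\C^{op},\P(\cD))\to\D$, restrict along the enriched Yoneda embedding of \cref{eq:enriched yon} to get $G\circ\Yon\colon\C\to\D$, which is again $\P(\cD)$-enriched.

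Next I would verify three properties. First, $\mathrm{Lan}_\Yon F$ preserves $\P(\cD)$-colimits: from the coend formula one reads off a right adjoint $d\mapsto\uHom_\D(F(-),d)$, an object of $\uFun(\C^{op},\P(\cD))$ by \cref{rem:enriched mapping}, using $\uHom_\D(\int^{c}P(c)\otimes F(c),d)\cong\int_{c}\uHom_{\P(\cD)}(P(c),\uHom_\D(F(c),d))=\uNat(P,\uHom_\D(F(-),d))$; being a left adjoint it is $\P(\cD)$-colimit-preserving (alternatively, apply \cref{the:enriched colimit} after noting the coend formula preserves the tensor). Second, $(\mathrm{Lan}_\Yon F)\circ\Yon\cong F$: substituting $\Yon(c)=\uHom_\C(-,c)$ into the coend and applying the enriched co-Yoneda lemma — the weighted-colimit form of \cref{lemma:enriched yoneda} — collapses it to $F(c)$, and this is where fully-faithfulness of $\Yon$ enters. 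Third, for $\P(\cD)$-colimit-preserving $G$ one has $\mathrm{Lan}_\Yon(G\circ\Yon)\cong G$: every $P$ is the canonical $\P(\cD)$-colimit of representables $P\cong\int^{c}P(c)\otimes\Yon(c)$ (enriched co-Yoneda applied inside $\uFun(\C^{op},\P(\cD))$), and since both $G$ and $\mathrm{Lan}_\Yon(G\circ\Yon)$ preserve this colimit and agree on representables by the previous point, they agree on all of $\uFun(\C^{op},\P(\cD))$.

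Finally I would check that the two assignments and the isomorphisms above are natural in the functor variable, i.e.\ compatible with $\P(\cD)$-enriched natural transformations, which is routine coend bookkeeping and upgrades the bijection to the asserted isomorphism (in fact an equivalence of the two functor categories). The genuinely delicate points are, first, the size issue — although $\uFun(\C^{op},\P(\cD))$ is large, $\mathrm{Lan}_\Yon F$ exists because $\C$ is small and $\D$ is cocomplete, and one must confirm the coend is a legitimate $\P(\cD)$-weighted colimit — and second, establishing the enriched co-Yoneda/density statement in this $\P(\cD)$-enriched setting, which is exactly where \cref{lemma:enriched yoneda} does the real work. Since all of this is precisely the content of \cite[Theorem 4.51]{kelly2005enriched} once one observes that $\P(\cD)$ is a bicomplete Cartesian closed category, one may alternatively simply invoke that reference.
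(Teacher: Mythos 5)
Your proposal is correct, and by your own closing remark it lands on the same route the paper takes: the paper does not prove this theorem but simply cites \cite[Theorem 4.51]{kelly2005enriched} in the sentence preceding it. Your coend-and-density argument is the standard proof underlying that reference, and it is accurate apart from a small imprecision — you invoke ``the weighted-colimit form of \cref{lemma:enriched yoneda}'' as if it were the co-Yoneda/density statement, but \cref{lemma:enriched yoneda} is the enriched Yoneda lemma proper; the density of representables is a further consequence that Kelly proves separately (Theorem 3.73 / Formula 3.72). One other minor point: the statement in the paper says ``isomorphism'' of functor categories while the argument you give (like Kelly's) naturally yields an equivalence, but this is a standard looseness in the paper itself and not a defect of your proof.
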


We also want a method to construct enriched functors in $\uFun(\C^{op},\P(\cD))$. For that we have the following result \cite[2.20]{kelly2005enriched}.

\begin{theone}\label{the:enriched prod hom adj}
	Let $\C$ be a small $\P(\cD)$-enriched category and $\D$ a $\P(\cD)$-enriched category. There is an isomorphism of (set-enriched) functor categories 
	$$\Fun(\C^{op} \times \D,\sP(\cD)) \cong \Fun(\D,\uFun(\C^{op},\sP(\cD)))$$
\end{theone}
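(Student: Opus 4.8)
This is the enriched exponential (``currying'') law; it is the Cartesian special case of \cite[2.20]{kelly2005enriched}, and the plan is to write down the comparison isomorphism explicitly and verify by unwinding that it is a functor admitting a strict inverse. Throughout I write the enriching category as $\P(\cD)$ — the argument is verbatim the same over $\sP(\cD)$, or over any bicomplete Cartesian closed category — and $\C^{op}\times\D$ denotes the $\P(\cD)$-enriched product category, whose hom-objects are $\uHom_{\C^{op}}(c,c')\times\uHom_\D(d,d')$ with componentwise composition. The one general fact I invoke is that, since $\P(\cD)$ is bicomplete and Cartesian closed and $\C$ is small, the object $\uNat(H,H')$ of \cref{def:ufuncd} and \cref{ex:psh} also computes the end $\int_{c}\uHom_{\P(\cD)}(H(c),H'(c))$; concretely, a morphism $A\to\uNat(H,H')$ in $\P(\cD)$ is exactly a family of morphisms $A\to\uHom_{\P(\cD)}(H(c),H'(c))$, one per object $c$ of $\C^{op}$, dinatural (wedge-compatible) in $c$.

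First I would define $\lambda\colon\Fun(\C^{op}\times\D,\P(\cD))\to\Fun(\D,\uFun(\C^{op},\P(\cD)))$. On an object $F$, set $\lambda(F)(d):=F(-,d)\colon\C^{op}\to\P(\cD)$, whose functoriality morphism on $\uHom_{\C^{op}}(c,c')$ is the structure map of $F$ precomposed with $\id\times(j_d\colon 1\to\uHom_\D(d,d))$; and whose hom-morphism $\uHom_\D(d,d')\to\uNat(\lambda(F)(d),\lambda(F)(d'))$ is, via the end description above, the one determined by the family
$$\Big(\ \uHom_\D(d,d')\cong 1\times\uHom_\D(d,d')\ \xrightarrow{\ j_c\times\id\ }\ \uHom_{\C^{op}}(c,c)\times\uHom_\D(d,d')\ \xrightarrow{\ F\ }\ \uHom_{\P(\cD)}(F(c,d),F(c,d'))\ \Big)_{c} .$$
Wedge-compatibility of this family, functoriality of each $\lambda(F)(d)$, and functoriality of $d\mapsto\lambda(F)(d)$ all reduce to $F$ preserving composition and identities together with the fact that composition in $\C^{op}\times\D$ is componentwise. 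On a morphism $\alpha\colon F\Rightarrow F'$ with components $\alpha_{(c,d)}\colon 1\to\uHom_{\P(\cD)}(F(c,d),F'(c,d))$, let $\lambda(\alpha)_d$ be the map $1\to\uNat(\lambda(F)(d),\lambda(F')(d))$ determined by $(\alpha_{(c,d)})_c$; the two halves of the naturality square for $\alpha$ are precisely the wedge condition in $c$ (so $\lambda(\alpha)_d$ is well defined) and the naturality in $d$ (so $\lambda(\alpha)$ is a natural transformation).

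Next I would construct the inverse $\mu$ dually: $\mu(G)(c,d):=G(d)(c)$, with structure map on $\uHom_{\C^{op}}(c,c')\times\uHom_\D(d,d')$ obtained by pairing the ``evaluate at $c$'' composite $\uHom_\D(d,d')\to\uNat(G(d),G(d'))\to\uHom_{\P(\cD)}(G(d)(c),G(d')(c))$ with the functoriality map $\uHom_{\C^{op}}(c,c')\to\uHom_{\P(\cD)}(G(d')(c),G(d')(c'))$ of $G(d')$ and composing in $\P(\cD)$; on morphisms $\mu$ is defined componentwise in the evident way, and well-definedness of $\mu(G)$ uses the wedge condition packaged into $\uNat$. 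Finally I would check $\mu\circ\lambda=\id$ and $\lambda\circ\mu=\id$: on objects this comes down to the single identity that the structure map of an enriched functor $F\colon\C^{op}\times\D\to\P(\cD)$ at a morphism $(f,g)$ equals the composite of its restrictions along $(\id,g)$ and $(f,\id)$, which is just $F$ preserving the factorization $(f,g)=(f,\id)\circ(\id,g)$; on morphisms it is the corresponding componentwise statement. Functoriality of $\lambda$ and $\mu$ (compatibility with vertical composition and identities of natural transformations) is then formal, so $\lambda$ is an isomorphism of categories. There is no real obstacle here — the statement is the classical exponential law and is usually just cited — but the two points needing the most care are keeping the variances straight in the interchange bookkeeping and the identification of $\uNat$ with an end (which is where bicompleteness of $\P(\cD)$ enters); I expect the latter to be the fiddliest to pin down rigorously.
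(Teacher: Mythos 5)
The paper offers no proof of its own here; it simply cites \cite[2.20]{kelly2005enriched}, where the enriched exponential law $[\mathcal{A}\otimes\mathcal{B},\mathcal{V}]\cong[\mathcal{A},[\mathcal{B},\mathcal{V}]]$ is established abstractly as part of the closed symmetric monoidal structure on the $2$-category of $\V$-categories. Your proposal is a correct, more elementary proof: you construct the two currying/uncurrying functors $\lambda$ and $\mu$ explicitly and verify by unwinding that they are mutually inverse, with the end description of $\uNat$ (the point requiring completeness of the base, which you correctly flag as the delicate step) as the one structural input. Both approaches work; yours is hands-on where Kelly's derives it from a larger framework, but the underlying content is identical, so there is no competing route to weigh. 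One minor remark about the statement itself, independent of your argument: as written it takes $\C,\D$ to be $\P(\cD)$-enriched while valuing the functor categories in $\sP(\cD)$, which only parses via an implicit change of enrichment along $\P(\cD)\hookrightarrow\sP(\cD)$; since your proof is uniform over any bicomplete Cartesian closed base, it is unaffected by this.
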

We can generalize the {\it nerve} to a {\it $\cD$-nerve}.

\begin{defone}\label{def:nervesnset}
	Let 
	$$N_{\cD}: \cat_{\P(\cD)} \to \P(\cD \times \DDelta) $$
	be the functor which takes an $\P(\cD)$-enriched category $\C$ to the $\D$-simplicial set
	$$N_{\cD}\C[k] = \coprod_{X_0,..., X_k}\uHom(X_0,X_1) \times ... \times \uHom(X_{k-1},X_k)$$
	where the boundary maps are degeneracy maps are given by composition and inserting identity maps.
\end{defone} 

The nerve construction respects opposites. Let
\begin{equation} \label{eq:op}
	(-)^{op}: \P(\cD \times \DDelta) \to \P(\cD \times \DDelta)
\end{equation}
be the functor that precomposes a functor $F:\cD^{op} \times \DD \to \set$ with $\id_{\cD^{op}} \times \sigma^{op}$, where $\sigma: \DD \to \DD$ is unique automorphism. We now have the following simple result.

\begin{lemone}\label{lemma:op}
	The following diagram commutes
	\begin{center}
		\begin{tikzcd}
			\cat_{\P(\cD)} \arrow[r, "N_\cD"] \arrow[d, "(-)^{op}"] & \P(\cD\times\DD) \arrow[d, "(-)^{op}"] \\
			\cat_{\P(\cD)} \arrow[r, "N_\cD"] & \P(\cD\times\DD)
		\end{tikzcd}
	\end{center}
	where the left hand $(-)^{op}$ takes a category to the opposite category.
\end{lemone}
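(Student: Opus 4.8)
The plan is to produce a natural isomorphism between the two composites $N_\cD\circ(-)^{op}$ and $(-)^{op}\circ N_\cD$ by evaluating everything object-wise over $\cD$ and then invoking the classical compatibility of the ordinary nerve with opposites. Under the canonical identification $\P(\cD\times\DD)\cong\Fun(\cD^{op},\sset)$, a $\cD$-simplicial set $X$ is a diagram $d\mapsto X[d]$ of simplicial sets, and the functor $(-)^{op}$ of \eqref{eq:op} — which touches only the $\DD$-variable and precomposes with $\sigma^{op}$, where $\sigma$ is the identity on objects of $\DD$ — becomes object-wise application of the usual simplicial opposite $(-)^{op}\colon\sset\to\sset$; in particular $(X^{op})[d]\cong(X[d])^{op}$ naturally in $d$. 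Likewise, since products and composition in $\P(\cD)$ are computed object-wise, each $\P(\cD)$-enriched category $\C$ determines for every object $d$ of $\cD$ an ordinary category $\C[d]$ with the same objects and $\Hom_{\C[d]}(X,Y)=\uHom_\C(X,Y)[d]$, depending functorially on $d$ and on $\P(\cD)$-enriched functors, and evidently $(\C^{op})[d]=(\C[d])^{op}$.

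Next I would compare \cref{def:nervesnset} with the definition of the ordinary nerve to see that evaluating $N_\cD\C$ at $d$ recovers $N(\C[d])$, naturally in both variables; this is immediate from the fact that the coproduct and products defining $N_\cD\C[k]$ are formed in $\P(\cD)$, hence object-wise. Now the classical fact that the ordinary nerve commutes with opposites — concretely, the simplicial isomorphism $N(E^{op})\cong(NE)^{op}$ sending a chain of composable morphisms $(f_1,\dots,f_k)$ to its reversal $(f_k,\dots,f_1)$ — combined with the above gives, for each $d$, a chain of natural isomorphisms
$$\big(N_\cD(\C^{op})\big)[d]=N\big((\C^{op})[d]\big)=N\big((\C[d])^{op}\big)\cong\big(N(\C[d])\big)^{op}=\big((N_\cD\C)[d]\big)^{op}=\big((N_\cD\C)^{op}\big)[d].$$
Since every step is natural in $d\in\cD$ and in $\C\in\cat_{\P(\cD)}$, these assemble into a natural isomorphism $N_\cD(\C^{op})\cong(N_\cD\C)^{op}$, so the square commutes up to a canonical natural isomorphism, as claimed.

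I do not anticipate a genuine obstacle here; the only point needing a moment's attention is the classical nerve computation itself — verifying that the chain-reversal bijection intertwines the reindexed face and degeneracy operators $d_i^{op}=d_{k-i}$, $s_i^{op}=s_{k-i}$ of $(NE)^{op}$ with those of $N(E^{op})$, which uses precisely that $\sigma$ fixes the objects of $\DD$ and that composition is reversed in $E^{op}$. Everything else is formal bookkeeping with object-wise constructions in $\P(\cD)$.
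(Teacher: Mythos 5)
Your proof is correct and, in effect, spells out the argument the paper treats as immediate: the lemma is stated without proof, and the natural thing to observe is exactly what you do — both $N_\cD$ and the opposite functor on $\P(\cD\times\DD)$ are computed objectwise over $d\in\cD$ (what the paper later calls $\Und_d$, cf.\ \cref{lemma:undcat}), $\Und_d(\C^{op})=(\Und_d\C)^{op}$, and the ordinary nerve's compatibility with opposites does the rest. The one wrinkle you correctly flag is that this yields a canonical natural isomorphism (chain reversal, i.e., the reindexing $(X_0,\dots,X_k)\mapsto(X_k,\dots,X_0)$ of the coproduct in \cref{def:nervesnset} together with reversal of the product factors), not a literal equality of composites as the verb ``commutes'' might suggest; with the paper's definition of $(-)^{op}$ via the nontrivial automorphism $\sigma$ of $\DD$ (which fixes objects), the two sides have the same underlying sets only after this reindexing. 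This is the standard reading of such a statement and matches what is actually used downstream (\cref{ex:twisted}, \cref{cor:twisted cocart}), so no gap.
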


Notice the nerve behaves the way we would expect. First we recall the Segal condition. 

\begin{defone}
	A simplicial set $X: \DDelta^{op} \to \set$ satisfies the {\it strict Segal condition} if for all $n \geq 2$ the evident map 
	$$X_n \to X_1 \times_{X_0} ... \times_{X_0} X_1$$ 
	is an isomorphism of sets. 
\end{defone}

\begin{propone}\label{prop:nervesnset ff}
	The $\cD$-nerve is fully faithful with essential image given by $F: \cD^{op} \times \DDelta^{op} \to \set$ such that for all $d$ in $\cD$, the simplicial set $F[d,-]: \DDelta^{op} \to \sset$ satisfies the strict Segal condition and the functor $F[-,0]: \cD^{op} \to \set$ is a constant functor. 
\end{propone}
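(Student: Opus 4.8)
The plan is to prove this in two halves: first that $N_\cD$ is fully faithful, then that its essential image is the stated subcategory. For full faithfulness, the key observation is that an $\P(\cD)$-enriched category $\C$ is the same data as its $\cD$-nerve $N_\cD\C$, together with the Segal and constancy structure. Concretely, given $\C$, one recovers the object set as $\Obj_\C = N_\cD\C[-,0]$ evaluated at any object of $\cD$ (it is a constant functor, hence a genuine set), one recovers the enriched hom-objects as the appropriate coproduct-summand of $N_\cD\C[-,1]$ indexed by a chosen pair of objects, and one recovers composition and identities from the boundary and degeneracy maps $N_\cD\C[2] \to N_\cD\C[1]$ and $N_\cD\C[0] \to N_\cD\C[1]$. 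I would then check that an $\P(\cD)$-enriched functor $F\colon \C \to \D$ induces a map $N_\cD F\colon N_\cD\C \to N_\cD\D$ respecting this structure, and conversely any map of $\cD$-simplicial sets that respects the structure arises this way; this is the routine ``nerve is fully faithful'' bookkeeping, but done levelwise in $\cD$, i.e. one argues with the functors $N_\cD\C[-,k]\colon \cD^{op} \to \set$ for each fixed $k$. Because everything is defined objectwise in $\cD$, the only genuinely new ingredient over the classical case is tracking the $\cD$-presheaf structure, which is automatic since all the structure maps of $N_\cD\C$ are maps in $\P(\cD)$.

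For the characterization of the essential image, I would argue as follows. If $X = N_\cD\C$, then for each object $d$ of $\cD$ the simplicial set $X[d,-]$ is the classical nerve of the ordinary category with object set $\Obj_\C$ and hom-sets $\uHom_\C(-,-)[d]$ (this is exactly what \cref{def:nervesnset} says after evaluating at $d$), and the classical nerve always satisfies the strict Segal condition; moreover $X[-,0] = \Obj_\C$ (viewed as the constant presheaf), so the two stated conditions are necessary. Conversely, suppose $X\colon \cD^{op}\times\DD^{op}\to\set$ satisfies both conditions. Set $\Obj := X[d_0,0]$ for some (equivalently, any, by constancy) object $d_0$ of $\cD$; the constancy of $X[-,0]$ gives a canonical identification of all the $X[d,0]$ with this single set, compatibly with the $\cD$-action being trivial in that variable. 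Define, for $x,y\in\Obj$, the $\cD$-set $\uHom(x,y)$ to be the fibre of $(d_0,d_1)\colon X[-,1]\to X[-,0]\times X[-,0] = \Obj\times\Obj$ over $(x,y)$; this is a sub-$\cD$-set of $X[-,1]$. The strict Segal condition, applied in each variable $d$, gives natural isomorphisms $X[-,k]\cong X[-,1]\times_{X[-,0]}\cdots\times_{X[-,0]}X[-,1]$, and using the last face map $d_2\colon X[-,2]\to X[-,1]$ (the ``composite'' map) together with the Segal decomposition of $X[-,2]$ defines the composition maps $\uHom(x,y)\times\uHom(y,z)\to\uHom(x,z)$ in $\P(\cD)$; the degeneracy $s_0\colon X[-,0]\to X[-,1]$ supplies the units. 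Associativity and unitality follow by the usual simplicial-identity argument (evaluated at each $d$, this is precisely the classical fact that a Segal simplicial set is a category, but now the isomorphisms are in $\P(\cD)$, hence natural in $d$). This produces an $\P(\cD)$-enriched category $\C$, and one checks $N_\cD\C\cong X$ by verifying the isomorphism levelwise via the Segal condition.

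The main obstacle — really the only subtle point — is the bookkeeping around the object set: one must be careful that ``$X[-,0]$ is a constant functor'' is used in two ways, namely to extract a single set $\Obj$ of objects (rather than a varying $\cD$-set of objects, which would not give an enriched category in the sense of \cref{def:ncat}) and to ensure that the Segal pullbacks $X[-,1]\times_{X[-,0]}\cdots\times_{X[-,0]}X[-,1]$ are pullbacks over a constant object, so that decomposing by objects $X_0,\dots,X_k\in\Obj$ commutes with the $\cD$-presheaf structure and reproduces the coproduct in \cref{def:nervesnset}. I would spell this out carefully and leave the remaining verifications — naturality in $d$ of all structure maps, the simplicial identities implying associativity/unitality, and functoriality of the constructions $\C\mapsto N_\cD\C$ and $X\mapsto\C$ — to the reader, since each is the classical argument carried out objectwise in $\cD$.
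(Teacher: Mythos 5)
The paper states \cref{prop:nervesnset ff} without supplying a proof — it is treated as a routine generalization of the classical fact that the nerve embedding $\cat \hookrightarrow \sset$ is fully faithful with essential image the simplicial sets satisfying the strict Segal condition. Your proof supplies exactly the argument one would expect the authors to have in mind: reconstruct the enriched category from levels $0$ and $1$ of the nerve, use the Segal condition to identify higher levels, and use constancy of $X[-,0]$ to extract a genuine set of objects rather than a $\cD$-set. Your two-part structure (full faithfulness via reconstruction, essential image via building $\C$ from $X$) is sound, and you correctly flag the one genuinely non-classical point, namely that constancy of $X[-,0]$ is what makes the Segal pullbacks over $X[-,0]$ decompose into the coproduct over tuples of objects appearing in \cref{def:nervesnset}, so that the $\cD$-presheaf structure factors through the object-indexed decomposition. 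One small polish worth adding: for full faithfulness you sketch that every structure-respecting map of $\cD$-simplicial sets arises from a functor, but surjectivity onto $\Hom_{\P(\cD\times\DD)}(N_\cD\C, N_\cD\D)$ should be stated as: a map $f$ of $\cD$-simplicial sets is automatically ``structure-respecting'' because it commutes with all simplicial operators by definition, so the induced data on objects (from $f[0]$) and on hom-objects (from $f[1]$, after decomposing by source and target via the face maps to level $0$) respects composition and identities by compatibility with $d_1\colon [1]\to[2]$ and $s_0\colon[1]\to[0]$, giving the enriched functor whose nerve is $f$. With that phrasing made explicit, the argument is complete.
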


 We need several functors regarding $\cD$-simplicial sets that we will now introduce. 

\begin{defone} \label{def:und}
	Let $d$ be an element in $\cD$. Then this gives us a functor $\DDelta \to \cD \times \DDelta$ that gives us an adjunction on presheaf categories 
	\begin{center}
		\adjun{\sset}{\P(\cD)}{\Inc_d}{\Und_d}
	\end{center} 
	We call the right adjoint $\Und_d$ the {\it underlying simplicial set} and we have in particular, $\Inc_d(F[n]) = F[d,n]$.
\end{defone}

\begin{defone} \label{def:disc}
	We have a projection functor $\cD\times \DDelta \to \DDelta$  which gives us a functor $\Disc:\sset \to \P(\cD \times \DDelta)$, which we call the {\it discrete inclusion}.
\end{defone}

 \begin{remone} \label{rem:D s F}
 	We use the notation $D[n] = \Disc(F[n])$ and notice that $D[n] \times F[d,0] \cong F[d,n]$. In particular if $\cD$ has a terminal object $t$, then $D[n] \cong F[t,n]$.
\end{remone}

 \begin{defone} \label{def:val}
	Fix $k$ in $\DDelta$, then we have an inclusion map $\incl_k:\cD \to \cD \times \DDelta$ which induces an adjunction 
	\begin{center}
		\adjun{\P(\cD)}{\P(\cD\times\DDelta)}{(\incl_k)_!}{(\incl_k)^*}
	\end{center}
	We denote the right adjoint $(\incl_k)^*$ also by $\Val_k: \P(\cD\times\DDelta) \to \P(\cD)$ and call it the {\it $k$-th value functor}. If $k=0$ then we will simply denote it by $\Val:\P(\cD\times\DDelta) \to \P(\cD)$ and call it the {\it value}. Moreover, we denote the left adjoint $(\incl_0)_!$ by $\VEmb$ and call it the {\it value embedding}. 
\end{defone}

Up until now we have focused on $\cD$-simplicial sets, however, as our goal is to do homotopy theory we need analogous definitions for $\cD$-diagrams valued in spaces.

\begin{defone}\label{def:s}
	Let $\s$ be the category of simplicial sets which we call {\it spaces}. The representable spaces are denoted $\Delta[l]$.
\end{defone}

\begin{remone}
	Notice there is an apparent overlap, namely if we take $\P(\cD \times \DDelta)$ with $\cD$ the trivial category then it coincides with \cref{def:s}. However, the key is to consider $\s$ as a homotopical category with a model structure, which is why we call the objects in $\s$ spaces. In particular, it should be replaceable with any appropriately Quillen equivalent model structure, without changing the results, whereas $\sset= \P(\DD)$ is treated as a category without any model structure. 
\end{remone}

We can now repeat all the definitions we have given up until now replacing sets with spaces.

\begin{remone} \label{rem:ss analogue}
	We have following facts regarding simplicial spaces:
	\begin{enumerate}
		\item A $\cD$-space is a functor $X:\cD^{op} \to \s$ and the category of $\cD$-spaces is denoted by $\sP(\cD)$.
		\item A $\cD$-simplicial space is a functor $X:\cD^{op} \times \DDelta^{op} \to \s$ and the category of $\cD$-simplicial spaces is denoted by $\sP(\cD\times\DDelta)$.
		\item We denote the representable $\cD$-simplicial spaces by $F[d,k] \times \Delta[l]$.
		\item We can use representable simplicial spaces to observe that $\sP(\cD)$ is Cartesian closed with 
		$$\Hom_{\sP(\cD\times\DD)}(F[d,k] \times \Delta[l],Y^X)\cong \Hom_{\sP(\cD\times\DD)}(F[d,k] \times \Delta[l] \times X,Y)$$
		which in particular means $\sP(\cD)$ is enriched over itself (\cref{lemma:cartesian closed enriched}). We denote the enrichment by $\uMap_{\sP(\cD\times\DD)}(X,Y)$ and category of $\sP(\cD)$-enriched categories by $\cat_{\sP(\cD)}$. 
		\item Following \cref{ex:psh} for a a given small $\sP(\cD)$-enriched category $\C$, we can define the $\sP(\cD)$-enriched functor category $\uFun(\C,\sP(\cD))$, which is point-wise tensored and cotensored over $\sP(\cD)$, hence for two $\sP(\D)$-enriched functors $F,G$ we have 
		$$\uNat(F,G)[d,k]= \Nat(F \times \{F[d,k]\},G),$$
		where $\{F[d,k]\}:\C \to \sP(\cD)$ is the constant functor.
		\item We can define a functor 
		$$N_{\cD}: \cat_{\sP(\cD)} \to \sP(\cD \times\DDelta)$$
		similar to \cref{def:nervesnset} and notice similar to \cref{lemma:op} it commutes with the opposite category construction.
		\item Following \cref{def:und} for every object $d$ in $\cD$ we define an adjunction 
		\begin{center}
			\adjun{\sset}{\P(\cD)}{\Inc_d}{\Und_d}
		\end{center} 
		where the right adjoint $\Und_d$ is called  the {\it underlying simplicial space}.
		\item Similar to \cref{def:disc} we define the functor $\Disc: \ss \to \sP(\cD\times\DDelta)$. 
		\item Analogous to \cref{def:val} we define an adjunction 
		\begin{center}
			\adjun{\sP(\cD)}{\sP(\cD\times\DDelta)}{(\incl_k)_!}{(\incl_k)^*}
		\end{center}
		and use the notation $\Val_k: \sP(\cD\times\DDelta)\to \sP(\cD)$ and in particular $\Val= \Val_0:\sP(\cD\times\DDelta)\to \sP(\cD)$ for the right adjoint, which we call the {\it value}. Moreover, we denote the left adjoint by $\VEmb: \sP(\cD) \to \sP(\cD\times\DDelta)$, which we call the value embedding.
	\end{enumerate}
\end{remone}

In addition to the previous constructions we also have diagonal functors.

\begin{defone} \label{def:diag}
	Let $\Diag: \cD\times \DDelta \to \cD \times \DDelta\times \DDelta$ given by $\Diag(d,k) = (d,k,k)$. Then we get an adjunction
	\begin{center}
		\adjun{\sP(\cD\times\DDelta)}{\sP(\cD)}{\Diag^*}{\Diag_*}
	\end{center}
	To simplify notation we use $\fDiag = \Diag^*: \sP(\cD\times\DDelta) \to \sP(\cD)$ for the left adjoint.
\end{defone}

 We can also use $\cD$-simplicial spaces to construct model structures.

\begin{propone} \label{prop:injectivemod}
	There is a unique simplicial proper combinatorial Cartesian model structure on the category of $\cD$-spaces $\sP(\cD)$ ($\cD$-simplicial spaces $\sP(\cD\times\DDelta)$), called the {\it injective model structure} and denoted $\sP(\cD)^{inj}$ ($\sP(\cD\times\DDelta)^{inj}$), with the following specifications:
	\begin{itemize}
		\item[W] weak equivalences level-wise Kan equivalence of spaces
		\item[C] Cofibrations are level-wise injections.
		\item[F] Fibrations are morphisms that have the right lifting property with respect to trivial cofibrations. 
	\end{itemize}
\end{propone}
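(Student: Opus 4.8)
The plan is to realize both $\sP(\cD)^{inj}$ and $\sP(\cD\times\DD)^{inj}$ as instances of the injective model structure on a category of $\s$-valued presheaves on a small category, and then to check that each of the four adjectives in the statement is inherited level-wise from the Kan model structure on $\s$. Recall (\cref{def:s}, \cref{rem:ss analogue}) that $\s = \sset$ carries the Kan model structure, which is locally presentable, cofibrantly generated, a simplicial model category over itself, left and right proper \cite{goerssjardine1999simplicialhomotopytheory}, and Cartesian in the sense of \cref{def:cartesian model cat} \cite{hovey1999modelcategories}. Since $\sP(\cD) = \Fun(\cD^{op},\s)$ and $\sP(\cD\times\DD) = \Fun(\cD^{op}\times\DD^{op},\s)$, it suffices to prove the statement for $\sP(\cD)$ and then to repeat the argument with $\cD$ replaced by $\cD\times\DD$. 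By the general theory of injective model structures on diagram categories valued in a combinatorial simplicial model category \cite[Appendix A.2.8, A.3.3]{lurie2009htt}, applied to $\s$ and the small category $\cD^{op}$, the category $\sP(\cD)$ admits a combinatorial simplicial model structure whose weak equivalences are the level-wise Kan equivalences, whose cofibrations are the level-wise monomorphisms (equivalently, the level-wise injections, since the cofibrations of $\s$ are exactly the monomorphisms), and whose fibrations are the maps with the right lifting property against the trivial cofibrations; the tensor over $\s$ is the level-wise product $(X\otimes K)[d] = X[d]\times K$. Uniqueness is then automatic: a model structure on a fixed category is determined by its cofibrations and its weak equivalences \cite{hovey1999modelcategories}, and the statement pins down both classes.

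It remains to verify properness and Cartesianness. For each object $d$ of $\cD$, evaluation $\ev_d\colon \sP(\cD)\to\s$ has a left adjoint $F_d$ with $(F_dK)[d'] = \coprod_{\cD(d',d)}K$. A coproduct of (trivial) cofibrations of simplicial sets is again a (trivial) cofibration, so $F_d$ carries (trivial) cofibrations of $\s$ to level-wise (trivial) cofibrations, i.e. to injective (trivial) cofibrations; hence $F_d \dashv \ev_d$ is a Quillen pair and $\ev_d$ preserves fibrations. Therefore every injective fibration is in particular a level-wise Kan fibration. Since weak equivalences, cofibrations, pushouts, and pullbacks in $\sP(\cD)$ are all computed level-wise, left (resp. right) properness of $\sP(\cD)^{inj}$ follows by checking, at each object $d$ separately, left (resp. right) properness of the Kan model structure on $\s$.

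For Cartesianness, products in $\sP(\cD)$---and hence pushout products---are computed level-wise. If $i$ and $j$ are level-wise injections, then at each object $d$ the map $(i\square j)[d]$ is the pushout product $i[d]\square j[d]$ of two monomorphisms of simplicial sets, which is again a monomorphism; and if moreover $i$ or $j$ is a level-wise Kan equivalence, then $i[d]\square j[d]$ is a trivial cofibration of simplicial sets because the Kan model structure is Cartesian. Hence $i\square j$ is a level-wise injection, trivial whenever $i$ or $j$ is, so by \cref{def:cartesian model cat} the injective model structure on $\sP(\cD)$ is Cartesian. Applying everything with $\cD\times\DD$ in place of $\cD$ yields $\sP(\cD\times\DD)^{inj}$ and completes the argument.

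I expect the only genuine obstacle to be the \emph{existence} of the injective model structure. Unlike the projective one, its generating trivial cofibrations cannot be obtained merely by tensoring those of $\s$ with representables, and producing them requires the recognition theorem for combinatorial model categories (Jeff Smith's theorem), which is exactly what is packaged into the cited results from \cite{lurie2009htt}. Once existence is granted, the remaining assertions---uniqueness, properness, the simplicial structure, and Cartesianness---reduce to elementary level-wise verifications, the only mildly non-formal input being the observation that injective fibrations are level-wise fibrations.
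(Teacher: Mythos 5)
Your argument is correct, and it is more detailed than the paper's, which simply points to Heller's original work on the injective model structure \cite{heller1982injectivemodel,heller1983injectivemodelerratum} without any further verification. Both approaches ultimately defer the hard part---existence of the injective model structure---to the literature (you cite Lurie's \cite[A.2.8, A.3.3]{lurie2009htt}, the paper cites Heller), and both rest on the same underlying fact that $\sP(\cD)$ is a category of $\s$-valued presheaves on a small category. What you add, usefully, are the explicit level-wise verifications of properness and Cartesianness: you correctly observe that $\ev_d$ is right Quillen against $F_d K = K\times F[d]$, so injective fibrations are level-wise Kan fibrations, and then since (co)fibrations, weak equivalences, pushouts, pullbacks, and pushout products in $\sP(\cD)$ are all computed level-wise, properness and the Cartesian pushout-product axiom follow pointwise from the corresponding properties of the Kan--Quillen model structure on $\s$. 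The uniqueness argument (cofibrations plus weak equivalences determine the model structure) is also standard and correct. In short, your proof is sound and fills in the checks the paper leaves to the reader by citation.
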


For a proof see \cite{heller1982injectivemodel,heller1983injectivemodelerratum}. We can use the injective model structure on $\cD$-spaces to define {\it Dwyer-Kan equivalences of $\sP(\cD)$-enriched categories}.

\begin{defone} \label{def:loc dk}
	Let $\cD$ be a small category with terminal object $t$. 
	Let $G: \C \to \D$ be an $\sP(\cD)$-enriched functor of $\sP(\cD)$-enriched categories. We say $G$ is a {\it Dwyer-Kan equivalence} if the following two conditions hold.
	\begin{itemize}
		\item For any two objects $c,c'$ in $\C$, the induced map of $\cD$-spaces 
		$$\uMap(c,c') \to \uMap(Fc,Fc')$$
		is an injective equivalence. 
		\item The homotopy category of the underlying functor of simplicially enriched categories $\Ho G[t]: \Ho\C[t] \to \Ho\D[t]$ is essentially surjective.
	\end{itemize}
\end{defone}

\subsection{The Level-Wise Model Structure on $\cD$-Simplicial Spaces} \label{subsec:level model}
In this subsection we introduce a method that allows us to generalize localization model structures from the Reedy model structure on simplicial spaces to a model structure on $\cD$-simplicial spaces. For that we need the following alternative perspective on $\cD$-simplicial spaces.

\begin{defone}
	Define the {\it level-wise Reedy} model structure on $\sP(\cD\times\DD)= \Fun(\cD^{op},\ss^{Ree})^{inj}$ as the injective model structure on the Reedy model structure on simplicial spaces. More concretely, we can describe the cofibrations and weak equivalences as follows:
	\begin{itemize}
		\item A map of $\cD$-simplicial spaces $f: X \to Y$ is a cofibration if for all objects $d$ in $\cD$, the map of simplicial spaces $\Und_df:\Und_dX \to \Und_dY$ is a cofibration in the Reedy model structure on simplicial spaces.
		\item A map of $\cD$-simplicial spaces $f: X \to Y$ is a weak equivalence if for all objects $d$ in $\cD$, the map of simplicial spaces $\Und_df:\Und_dX \to \Und_dY$ is a weak equivalence in the Reedy model structure on simplicial spaces.
	\end{itemize}
\end{defone}

 The characterization has the following immediate result.
 
\begin{lemone}
	The level-wise Reedy model structure coincides with the injective model structure on $\cD$-simplicial spaces. 
\end{lemone}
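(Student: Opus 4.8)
The plan is to show directly that the level-wise Reedy model structure and the injective model structure $\sP(\cD\times\DD)^{inj}$ of \cref{prop:injectivemod} have literally the same weak equivalences and the same cofibrations. Since a model structure is determined by these two classes --- its fibrations being exactly the maps with the right lifting property against the trivial cofibrations --- this forces the two structures to coincide. Both structures are at hand to begin with: $\sP(\cD\times\DD)^{inj}$ exists by \cref{prop:injectivemod}, while the level-wise Reedy structure exists because it is the injective model structure on $\Fun(\cD^{op},\ss^{Ree})$ and $\ss^{Ree}$ is a combinatorial model category.

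First I would handle the weak equivalences, which is purely a matter of unwinding definitions. By definition of the injective model structure on $\cD$-diagrams, a map $f: X\to Y$ is a weak equivalence in the level-wise Reedy structure iff $\Und_d f$ is a Reedy weak equivalence of simplicial spaces for every object $d$ of $\cD$; and a Reedy weak equivalence of simplicial spaces is precisely a map which is a Kan equivalence of spaces in every simplicial degree. Hence $f$ is a level-wise Reedy weak equivalence iff $f$ evaluated at each object $(d,[n])$ of $\cD\times\DD$ is a Kan equivalence of spaces, which is exactly the defining condition for the weak equivalences of $\sP(\cD\times\DD)^{inj}$.

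The one genuinely non-formal ingredient is the corresponding statement for cofibrations: a map of simplicial spaces is a Reedy cofibration if and only if it is a monomorphism. This is standard --- it expresses that $\DD$ is an elegant Reedy category (see \cite{rezk2001css}) --- but if a self-contained argument is wanted one can argue by hand: by the Eilenberg--Zilber decomposition, for $Y\in\ss$ the latching inclusion $L_nY\hookrightarrow Y_n$ identifies $L_nY$ in each simplicial degree with the set of bisimplices degenerate in the $\DD$-direction, so for any $f: X\to Y$ the relative latching map $X_n\cup_{L_nX}L_nY\to Y_n$ is a monomorphism exactly when $X_n\to Y_n$ is, which shows $f$ is a Reedy cofibration iff it is level-wise a monomorphism. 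Granting this, $f$ is a cofibration in the level-wise Reedy structure iff $\Und_d f$ is a Reedy cofibration for every $d$, iff $\Und_d f$ is a monomorphism for every $d$, iff the map of spaces $f$ at each object $(d,[n])$ of $\cD\times\DD$ is injective --- which is precisely the level-wise injection condition of \cref{prop:injectivemod}.

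With the weak equivalences and the cofibrations of the two model structures identified, they must be equal (so in particular also the fibrations agree). The step I expect to be the main obstacle --- or at least the only one that is not mere bookkeeping --- is the identification of Reedy cofibrations of simplicial spaces with monomorphisms; once that is in place the lemma is a short unwinding of the definitions of the injective and Reedy model structures.
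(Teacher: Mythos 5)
Your proof takes exactly the same approach as the paper's one-line argument: identify the weak equivalences (level-wise Kan equivalences) and the cofibrations (level-wise monomorphisms) of the two model structures. You merely spell out the unwinding that the paper leaves implicit, including the standard fact that Reedy cofibrations of simplicial spaces are the monomorphisms.
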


\begin{proof}
	Both model structures have the same weak equivalences (level-wise Kan equivalences) and cofibrations (level wise inclusions).
\end{proof}

We can use the level-wise Reedy model structure to easily construct interesting new model structures on $\sP(\cD\times\DDelta)$. 

\begin{propone} \label{prop:levelwise localized model structure}
	Let $S$ be a set of cofibrations of simplicial spaces. Then there exist a unique simplicial left proper combinatorial model structure on $\cD$-simplicial spaces, which we call the {\it level-wise $S$-localized model structure} and denote  $\sP(\cD\times\DDelta)^{lev_S}$, and has the following specifications.
	\begin{enumerate}
		\item Cofibrations are inclusions.
		\item An object $X$ is fibrant if it is injectively fibrant and for every $d$ in $\cD$ the simplicial space $\Und_dX$ is $S$-local. 
		\item A morphism $f: X \to Y$ is a weak equivalence if and only if for every $d$ in $\cD$ the map of simplicial spaces $\Und_df: \Und_dX \to \Und_dY$ is an $S$-local equivalence.
		\item A weak equivalence (fibration) between fibrant objects is an injective equivalence (injective fibration).
		\item If the $S$-localized Reedy model structure on simplicial spaces is Cartesian then this model structure is also Cartesian.
	\end{enumerate}
\end{propone}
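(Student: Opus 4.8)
The plan is to exhibit $\sP(\cD\times\DD)^{lev_S}$ simultaneously as an injective diagram model structure and as a left Bousfield localization of $\sP(\cD\times\DD)^{inj}$, and to read the listed properties off these two descriptions. First I would form the left Bousfield localization $\ss^{Ree_S}$ of the Reedy model structure $\ss^{Ree}$ on simplicial spaces at $S$, which exists and is again left proper, combinatorial and simplicial since $\ss^{Ree}$ is; it has the same cofibrations as $\ss^{Ree}$ (the monomorphisms of simplicial spaces) and its fibrant objects are precisely the $S$-local simplicial spaces. Now set $\sP(\cD\times\DD)^{lev_S}$ to be the injective model structure on $\Fun(\cD^{op},\ss^{Ree_S})$; it exists since $\ss^{Ree_S}$ is combinatorial, and is automatically combinatorial, simplicial, and left proper (the last checked object-wise from left properness of $\ss^{Ree_S}$). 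By definition of the injective model structure its cofibrations are the object-wise cofibrations of $\ss^{Ree_S}$, i.e.\ object-wise monomorphisms of simplicial spaces, i.e.\ inclusions of $\cD$-simplicial spaces, which is (1); and its weak equivalences are the object-wise weak equivalences of $\ss^{Ree_S}$, i.e.\ the maps $f$ with $\Und_d f$ an $S$-local equivalence for every $d$, which is (3). Uniqueness is then immediate, since cofibrations together with weak equivalences determine a model structure.

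Next I would identify the fibrant objects. For every object $d$ the adjunction $\Inc_d\dashv\Und_d$ of \cref{def:und} (see also \cref{rem:ss analogue}) is Quillen for the injective structures: $\Inc_d$ is the left Kan extension along $\{d\}\hookrightarrow\cD^{op}$, hence an object-wise coproduct, hence preserves object-wise (trivial) cofibrations. So $\Und_d$ is right Quillen, and any fibrant $X$ in $\sP(\cD\times\DD)^{lev_S}$ has each $\Und_dX$ fibrant in $\ss^{Ree_S}$, i.e.\ $S$-local; moreover $X$ is fibrant in $\sP(\cD\times\DD)^{inj}$ as well, since the identity functor $\sP(\cD\times\DD)^{inj}\to\sP(\cD\times\DD)^{lev_S}$ is left Quillen (object-wise Reedy equivalences are object-wise $S$-local equivalences) and hence its right adjoint preserves fibrant objects. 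For the converse, compare with $L_{S'}\bigl(\sP(\cD\times\DD)^{inj}\bigr)$ for $S':=\{\Inc_d(s):d\in\cD,\ s\in S\}$, which exists as $\sP(\cD\times\DD)^{inj}$ is left proper combinatorial simplicial. Since $\Inc_d$ is left Quillen and every object of $\sP(\cD\times\DD)^{inj}$ is cofibrant, the derived form of $\Inc_d\dashv\Und_d$ shows that an injectively fibrant $X$ is $S'$-local if and only if each $\Und_dX$ is $S$-local. Hence $L_{S'}(\sP(\cD\times\DD)^{inj})$ and $\sP(\cD\times\DD)^{lev_S}$ have the same cofibrations (inclusions) and the same fibrant objects (the injectively fibrant, object-wise $S$-local $\cD$-simplicial spaces), and since a left proper combinatorial model structure is determined by its cofibrations together with its fibrant objects, the two structures coincide. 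This gives (2), and (4) then follows from the standard behaviour of left Bousfield localizations: a weak equivalence, resp.\ fibration, between fibrant objects of $L_{S'}(\sP(\cD\times\DD)^{inj})$ is already a weak equivalence, resp.\ fibration, in $\sP(\cD\times\DD)^{inj}$.

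Finally, for (5) suppose $\ss^{Ree_S}$ is Cartesian in the sense of \cref{def:cartesian model cat}. Since products and pushouts of $\cD$-simplicial spaces are computed object-wise, $\Und_d(i\square j)=\Und_d i\square\Und_d j$ for all maps $i,j$. If $i,j$ are cofibrations, each $\Und_di$ and $\Und_dj$ is a monomorphism of simplicial spaces, hence so is their pushout-product (simplicial spaces form a presheaf topos), so $i\square j$ is an inclusion. If in addition one of them, say $i$, is a $lev_S$-weak equivalence, then each $\Und_di$ is a trivial cofibration of $\ss^{Ree_S}$ and each $\Und_dj$ a cofibration, so $\Und_di\square\Und_dj$ is a trivial cofibration of $\ss^{Ree_S}$ by Cartesianness, i.e.\ an $S$-local equivalence; thus $i\square j$ is a $lev_S$-trivial cofibration. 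By the first criterion of \cref{def:cartesian model cat}, $\sP(\cD\times\DD)^{lev_S}$ is Cartesian.

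The step I expect to be the main obstacle is the fibrant-object characterization (2): the implication ``fibrant in $\sP(\cD\times\DD)^{lev_S}$ $\Rightarrow$ injectively fibrant and object-wise $S$-local'' is formal, but its converse genuinely uses the identification of $\sP(\cD\times\DD)^{lev_S}$ with the Bousfield localization $L_{S'}(\sP(\cD\times\DD)^{inj})$ together with the principle that a left proper combinatorial model structure is pinned down by its cofibrations and its fibrant objects; granting this, the remaining verifications are routine.
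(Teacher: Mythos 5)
Your construction — form $\ss^{Ree_S}$ by Bousfield localization and then take the injective model structure on $\Fun(\cD^{op},\ss^{Ree_S})$ — is exactly the paper's proof, and your verifications of (1), (3), (4) and (5) are correct and more detailed than the paper's (which merely asserts the listed properties hold ``by definition of the injective model structure''). The observation that (5) follows object-wise because $\Und_d$ commutes with pushout-products is a nice touch.

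The one place you go astray is in the converse direction of (2). You correctly prove that every $lev_S$-fibrant object is injectively fibrant with each $\Und_dX$ being $S$-local, and you correctly identify the injectively fibrant, object-wise $S$-local objects with the fibrant objects of $L_{S'}\bigl(\sP(\cD\times\DD)^{inj}\bigr)$ via the derived adjunction $\Inc_d\dashv\Und_d$. But the sentence ``\ldots have the same cofibrations \emph{and the same fibrant objects}'' is not justified: at that point you have shown only the inclusion of fibrant objects of $\sP(\cD\times\DD)^{lev_S}$ into those of $L_{S'}\bigl(\sP(\cD\times\DD)^{inj}\bigr)$, and the reverse inclusion — that every injectively fibrant, object-wise $S$-local $X$ is $lev_S$-fibrant — is precisely the converse of (2) that you are trying to prove. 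Invoking the coincidence of the two model structures to deduce the converse is therefore circular. The gap is real but fixable: take a $lev_S$-fibrant replacement $i\colon X \hookrightarrow X'$, which is a cofibration and an object-wise $S$-local equivalence. Both $\Und_dX$ and $\Und_dX'$ are Reedy fibrant (since $\Und_d$ is right Quillen for the un-localized injective model structure and, by your forward direction, $X'$ is injectively fibrant) and $S$-local; an $S$-local equivalence between $S$-local Reedy-fibrant simplicial spaces is a Reedy equivalence, so $i$ is an object-wise Reedy trivial cofibration, i.e.\ a trivial cofibration in $\sP(\cD\times\DD)^{inj}$. As $X$ is injectively fibrant, $i$ admits a retraction $X'\to X$, so $X$ is a retract of the $lev_S$-fibrant object $X'$ and hence $lev_S$-fibrant. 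With that inserted, the rest of your identification of $\sP(\cD\times\DD)^{lev_S}$ with $L_{S'}\bigl(\sP(\cD\times\DD)^{inj}\bigr)$, and your deduction of (4), goes through as written.
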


\begin{proof}
	Let $\ss^{Ree_S}$ be the $S$-localized Reedy model structure on the category of simplicial spaces. We define the level-wise $S$-localized model structure as the injective model structure on $\Fun(\cD^{op},\ss^{Ree_S})$ and notice it satisfies the conditions given in the statement by definition of the injective model structure. 
\end{proof}

We now want to use this result to construct several important model structure on the category of $\cD$-simplicial spaces based on model structures on simplicial spaces. First of all we need two model structures that relate $\cD$-simplicial spaces and simplicial spaces, similar to the result in \cite[Subsection 2.5]{rasekh2017left}.

\begin{theone}\label{the:diagmodel}
	There is a unique, combinatorial, simplicial model structure on $\sP(\cD\times\DD)$, 
	called the {\it $\cD$-diagonal model structure} and denoted by $\sP(\cD\times\DD)^{\cD-diag}$, with the following specifications.
	\begin{itemize}
		\item[(W)] A map $f:X \to Y$ is a weak equivalence if the diagonal map 
		$$\fDiag(f): \fDiag(X) \to \fDiag(Y)$$ 
		is an injective equivalence of $\cD$-spaces.
		\item[(C)] A map $f:X \to Y$ is a cofibration if it is an inclusion.
		\item[(F)] A map $f:X \to Y$ is a fibration if it satisfies the right lifting condition for trivial cofibrations.
	\end{itemize}
	In particular, an object $W$ is fibrant if and only if it is injectively fibrant and the simplicial space $\Und_d(W)$ is {\it homotopically constant} for all $d$ in $\cD$, which is equivalent to the counit map $\VEmb\Val(W) \to W$ be an injective equivalence of $\cD$-simplicial spaces.
\end{theone}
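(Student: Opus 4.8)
The plan is to realize the $\cD$-diagonal model structure as an instance of the level-wise localized model structure of \cref{prop:levelwise localized model structure}. Recall from \cite[Subsection 2.5]{rasekh2017left} the \emph{diagonal model structure} $\ss^{diag}$ on simplicial spaces: it is the left Bousfield localization of the Reedy model structure $\ss^{Ree}$ at a set $S_{diag}$ of cofibrations of simplicial spaces, its weak equivalences are exactly the maps $g$ for which $\fDiag(g)$ is a Kan equivalence of spaces, and its fibrant objects are precisely the Reedy fibrant homotopically constant simplicial spaces. Applying \cref{prop:levelwise localized model structure} to $S = S_{diag}$ produces a unique simplicial left proper combinatorial model structure $\sP(\cD\times\DD)^{lev_{S_{diag}}}$ on $\cD$-simplicial spaces, and I would \emph{define} $\sP(\cD\times\DD)^{\cD-diag}$ to be this model structure. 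This at once yields combinatoriality, the simplicial enrichment, and that the cofibrations are exactly the inclusions, i.e. condition (C); condition (F) holds in any model structure; and uniqueness of a model structure with properties (W), (C), (F) follows once (W) is identified, since a model structure is determined by its cofibrations together with its weak equivalences.

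For condition (W) the key observation is that the diagonal functor is computed level-wise, so $\Und_d \circ \fDiag = \fDiag \circ \Und_d$ for every object $d$ of $\cD$, where the $\fDiag$ on the right is the diagonal on simplicial spaces. By part (3) of \cref{prop:levelwise localized model structure}, a map $f\colon X \to Y$ is a weak equivalence iff $\Und_d f$ is an $S_{diag}$-local equivalence for every $d$; by the description of $\ss^{diag}$ recalled above this holds iff $\fDiag(\Und_d f) = \Und_d(\fDiag f)$ is a Kan equivalence of spaces for every $d$, i.e. iff $\fDiag f$ is a level-wise Kan equivalence of $\cD$-spaces. By \cref{prop:injectivemod} this is exactly an injective equivalence of $\cD$-spaces, which is condition (W).

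It remains to identify the fibrant objects. By part (2) of \cref{prop:levelwise localized model structure}, $W$ is fibrant iff it is injectively fibrant and $\Und_d W$ is $S_{diag}$-local for every $d$. An injectively fibrant $\cD$-simplicial space is in particular level-wise Reedy fibrant, and for a Reedy fibrant simplicial space being $S_{diag}$-local is the same as being homotopically constant; hence $W$ is fibrant iff it is injectively fibrant and $\Und_d W$ is homotopically constant for all $d$. Finally, using the coend formula for $\VEmb = (\incl_0)_!$ one identifies $\VEmb\Val(W) = (\incl_0)_!(\incl_0)^*W$ with the $\cD$-simplicial space whose value at $(d,[n])$ is $W[d,0]$, constant in the $\DD$-direction, and whose counit $\VEmb\Val(W) \to W$ has at $(d,[n])$ the component given by the total degeneracy $W[d,0] \to W[d,n]$. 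By the simplicial identities this counit is a level-wise---hence injective---equivalence of $\cD$-simplicial spaces precisely when each $\Und_d W$ is homotopically constant, which gives the last assertion of the theorem.

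I expect the main obstacle to be a matter of bookkeeping with the cited input rather than of genuine difficulty: one must make sure that the diagonal model structure of \cite[Subsection 2.5]{rasekh2017left} really is available, or can be re-presented, as the left Bousfield localization of the Reedy model structure at a set of \emph{cofibrations} whose local equivalences coincide with the diagonal Kan equivalences (as opposed to merely as some model structure Quillen equivalent to spaces). Granting that, every step above is a routine translation through \cref{prop:levelwise localized model structure}.
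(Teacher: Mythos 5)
Your proposal is correct and takes essentially the same approach as the paper: both proofs realize the $\cD$-diagonal model structure by applying \cref{prop:levelwise localized model structure} to the diagonal model structure on simplicial spaces. The one thing you flag as a potential gap is in fact resolved by the paper's explicit citation of \cite[Theorem 2.11]{rasekh2017left}, which identifies $\ss^{diag}$ as the left Bousfield localization of the Reedy model structure at the cofibrations $F[0]\to F[n]$ for $n\geq 0$; with that reference in hand your argument (including the commutation $\Und_d\circ\fDiag = \fDiag\circ\Und_d$ and the identification of fibrant objects via the counit $\VEmb\Val(W)\to W$) goes through and is simply a more detailed rendering of the paper's two-line proof.
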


\begin{proof}
   Let $\ss^{diag}$ be the diagonal model structure on simplicial spaces, which, by \cite[Theorem 2.11]{rasekh2017left}, is obtained by localizing with respect to the cofibrations $F[0] \to F[n]$ for all $n \geq 0$. Now applying \cref{prop:levelwise localized model structure} gives us the $\cD$-diagonal model structure. 
\end{proof}

\begin{theone} \label{the:valmodel}
	There is a unique, cofibrantly generated, simplicial model structure on $\sP(\cD\times\DD)$,
	called the {\it value model structure} and denoted by $\sP(\cD\times\DD)^{val}$, with the following specification.
	\begin{itemize}
		\item[(W)] A map $f: X \to Y$ is a weak equivalence if 
		$$\Val f : \Val X \to \Val Y$$ 
		is an injective equivalence of $\cD$-spaces.
		\item[(C)] A map $f: X \to Y$ is a cofibration if it is an inclusion.
		\item[(F)] A map $f: X \to Y$ is a fibration if it satisfies the right lifting condition for trivial cofibrations.
	\end{itemize}
\end{theone}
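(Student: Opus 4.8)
The plan is to obtain $\sP(\cD\times\DD)^{val}$ from \cref{prop:levelwise localized model structure}, in complete parallel with the proof of \cref{the:diagmodel}. The only thing that needs adjusting relative to the $\cD$-diagonal case is the choice of localizing set: instead of the vertex inclusions $F[0]\hookrightarrow F[n]$ I would take $S=\{\,\partial F[n]\hookrightarrow F[n]\ :\ n\geq 1\,\}$, the boundary inclusions of simplicial spaces in the simplicial direction. Since $\ss^{Ree}$ is left proper combinatorial simplicial and $S$ is a set of cofibrations, the localization $\ss^{Ree_S}$ exists, and \cref{prop:levelwise localized model structure} then hands us a simplicial, left proper, combinatorial model structure on $\sP(\cD\times\DD)$ whose cofibrations are the inclusions and for which $f$ is a weak equivalence if and only if $\Und_d f$ is an $S$-local equivalence for every $d$ in $\cD$. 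As a model structure is determined by its cofibrations and weak equivalences, uniqueness is then automatic, and it remains only to identify these weak equivalences with the $\Val$-equivalences.

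The crux is therefore the following claim about simplicial spaces: a map $g\colon A\to B$ is an $S$-local equivalence if and only if $g_0\colon A_0\to B_0$ is a Kan equivalence. To see this I would first identify the $S$-local objects. A Reedy fibrant simplicial space $W$ is $S$-local exactly when each matching map $W_n\to M_nW$ with $n\geq 1$ is a weak equivalence, i.e. when $W$ is homotopically $0$-coskeletal; equivalently, $W$ is Reedy equivalent to $(\incl_0)_\ast K$ for a Kan complex $K$, where $(\incl_0)_\ast$ is the right Kan extension functor right adjoint to $\Val$. Using the derived adjunction $\Val\dashv(\incl_0)_\ast$ one gets $\Map(A,(\incl_0)_\ast K)\simeq\Map(A_0,K)$ for every $A$, so $g$ is an $S$-local equivalence if and only if $\Map(B_0,K)\to\Map(A_0,K)$ is a weak equivalence for all Kan complexes $K$, i.e. if and only if $g_0$ is a Kan equivalence. (This identifies $\ss^{Ree_S}$ as the value model structure on simplicial spaces; alternatively one can simply quote the corresponding statement from \cite[Subsection 2.5]{rasekh2017left}.)

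Plugging this back in, a map $f\colon X\to Y$ of $\cD$-simplicial spaces is a weak equivalence for the model structure produced above if and only if $\Und_d f$ is an $S$-local equivalence for all $d$, if and only if $(\Und_d f)_0=f[d,0]$ is a Kan equivalence for all $d$, which by \cref{prop:injectivemod} is exactly saying that $\Val f\colon\Val X\to\Val Y$ is an injective equivalence of $\cD$-spaces. This is precisely the model structure asserted in the statement, and in particular it is cofibrantly generated and simplicial. The main obstacle is the middle paragraph — correctly choosing $S$ and nailing down the $S$-local equivalences via homotopically $0$-coskeletal objects — but this is a self-contained computation about simplicial spaces (or a citation); once it is in hand, \cref{prop:levelwise localized model structure} does everything else.
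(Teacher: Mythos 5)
Your proposal follows the paper's proof exactly: the paper likewise localizes the Reedy model structure on simplicial spaces at the boundary inclusions $\{\partial F[n]\to F[n]:n>0\}$ to obtain the value model structure (citing \cite[Theorem 2.12]{rasekh2017left} for this identification) and then applies \cref{prop:levelwise localized model structure}. The only difference is that you spell out the identification of $S$-local equivalences with $\Val$-equivalences, whereas the paper simply cites the earlier reference.
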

 
 \begin{proof}
  Let $\ss^{Kan}$ be the Kan model structure on simplicial spaces, which, by \cite[Theorem 2.12]{rasekh2017left}, is obtained by localizing with respect to the cofibrations $\partial F[n] \to F[n]$ for all $n > 0$. Now applying \cref{prop:levelwise localized model structure} gives us the value model structure. 
 \end{proof}

We can now relate these model structures to the injective model structure on $\cD$-spaces. 

\begin{theone} \label{the:diag and val equiv inj}
	The $\sP(\cD)$-enriched adjunctions 
	\begin{center}
		\begin{tikzcd}
			 \sP(\cD\times\DD)^{\cD-diag} \arrow[r, shift left=1.8, "\fDiag=\Diag^*", "\bot"'] & 
			 \sP(\cD)^{inj} \arrow[r, shift left=1.8, "\VEmb", "\bot"'] \arrow[l, shift left=1.8, "\Diag_*"] & 
			 \sP(\cD\times\DD)^{val} \arrow[l, shift left=1.8, "\Val"]
		\end{tikzcd} 	
	\end{center}
	are Quillen equivalences. 
\end{theone}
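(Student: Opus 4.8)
The plan is to recognise all three model structures as injective model structures on $\cD$-diagrams and then to bootstrap from the one-object instance, which is already available. By \cref{prop:levelwise localized model structure}, \cref{the:diagmodel} and \cref{the:valmodel} one has
$\sP(\cD\times\DD)^{\cD-diag}=\Fun(\cD^{op},\ss^{diag})^{inj}$, $\sP(\cD\times\DD)^{val}=\Fun(\cD^{op},\ss^{Kan})^{inj}$ and $\sP(\cD)^{inj}=\Fun(\cD^{op},\s^{Kan})^{inj}$, where $\ss^{diag}$ and $\ss^{Kan}$ are the diagonal and Kan (value) model structures on simplicial spaces from \cite{rasekh2017left}. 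Writing $\delta\colon\DD\to\DD\times\DD$ for the diagonal functor, one has $\Diag=\id_\cD\times\delta$, while $\incl_0$ is $\id_\cD$ in the first factor and picks out $[0]$ in the second. Hence the four functors $\fDiag=\Diag^*$, $\Diag_*$, $\VEmb=(\incl_0)_!$ and $\Val=(\incl_0)^*$ are obtained by applying, objectwise in $\cD$, the diagonal $\delta^*$ and its right adjoint $\delta_*$ on simplicial spaces, respectively the constant-object functor $\s\to\ss$ and evaluation at $[0]$; so both adjunctions are $\Fun(\cD^{op},-)$ applied to the adjunctions $\delta^*\dashv\delta_*$ and $(\mathrm{const})\dashv\ev_0$. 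They are moreover $\sP(\cD)$-enriched, since each left adjoint preserves the tensor over $\sP(\cD)$: this follows from $\fDiag\VEmb\cong\id$ (both composites are $(\id_{\cD\times\DD})^*$) and the analogous triviality for $\VEmb$, via \cref{lemma:pcd enriched functor}.

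First I would record the one-object case. By \cite{rasekh2017left} the adjunctions $\delta^*\colon\ss^{diag}\rightleftarrows\s^{Kan}\colon\delta_*$ and $(\mathrm{const})\colon\s^{Kan}\rightleftarrows\ss^{Kan}\colon\ev_0$ are Quillen equivalences; the concrete inputs I need are that $\delta^*\delta_*K\to K$ is a weak equivalence for every Kan complex $K$, and that $\ev_0$ detects and preserves the $\ss^{Kan}$-weak equivalences. The value adjunction is in fact formal on its own: since $\Val\VEmb\cong\id$ ($\incl_0$ being fully faithful), applying $\Val$ to the counit $\VEmb\Val Y\to Y$ yields the inverse of the unit isomorphism by the triangle identity, so the counit is a weak equivalence on every object, and $\VEmb$ preserves and reflects weak equivalences; the easy half of the Quillen-equivalence criterion then applies directly. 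But it is cleaner to treat both cases uniformly by the diagram lemma below.

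The main step is the passage from the one-object case to $\cD$-diagrams: \emph{if $\ell\colon\mathcal M\rightleftarrows\mathcal N\colon r$ is a Quillen equivalence, then so is $\Fun(\cD^{op},\ell)\colon\Fun(\cD^{op},\mathcal M)^{inj}\rightleftarrows\Fun(\cD^{op},\mathcal N)^{inj}\colon\Fun(\cD^{op},r)$}. I would verify the Quillen-equivalence criterion via the derived unit. Cofibrations and trivial cofibrations in an injective model structure are exactly the objectwise ones, so $\Fun(\cD^{op},\ell)$ preserves both and is left Quillen. For each object $d$ of $\cD$, the left Kan extension $d_!\colon\mathcal M\to\Fun(\cD^{op},\mathcal M)^{inj}$ is computed as a coproduct of copies of its argument, hence preserves cofibrations and trivial cofibrations, so $\ev_d$ is right Quillen; consequently injectively fibrant objects are objectwise fibrant and $\Fun(\cD^{op},r)$ reflects weak equivalences between fibrant objects because $r$ does. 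Finally, for objectwise-cofibrant $X$, a trivial cofibration $\Fun(\cD^{op},\ell)(X)\hookrightarrow W$ into an injectively fibrant $W$ is in particular an objectwise weak equivalence into an objectwise fibrant object, so $W[d]$ is a fibrant replacement of $\ell(X[d])$; therefore the derived unit $X\to\Fun(\cD^{op},r)(W)$ evaluated at $d$ is precisely the derived unit of $\ell\dashv r$ on the cofibrant object $X[d]$, which is a weak equivalence by hypothesis. Applying this to the two one-object Quillen equivalences above gives the theorem.

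The only genuine obstacle I anticipate is bookkeeping: justifying carefully that the right adjoints $\Diag_*$ and $\Val$ are computed objectwise in $\cD$ — the standard fact that Kan extensions and restrictions along a functor of the form $\id_\cD\times g$ are computed in the $\cD$-variable objectwise — and, relatedly, that in the diagram lemma the injective fibrant replacement can be read off objectwise. Both are routine. The one piece of real homotopical content, namely the one-object Quillen equivalences for (simplicial) spaces — equivalently the statement that $\delta^*\delta_*K\to K$ is a weak equivalence for Kan $K$ — is quoted from \cite{rasekh2017left} rather than reproved.
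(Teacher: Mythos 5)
Your proof is correct and follows essentially the same route as the paper: recognize all three model structures as injective model structures on $\Fun(\cD^{op},-)$ applied to the one-object model categories, and transfer the Quillen equivalences $\ss^{diag}\rightleftarrows\s^{Kan}\rightleftarrows\ss^{Kan}$ from \cite[Theorem 2.13]{rasekh2017left}. The only difference is that where the paper invokes \cite[Remark A.2.8.6]{lurie2009htt} for the fact that $\Fun(\cD^{op},-)$ with injective model structures carries Quillen equivalences to Quillen equivalences, you prove that lemma directly; your argument for it is sound.
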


\begin{proof}
	By \cite[Theorem 2.13]{rasekh2017left} we have simplicial Quillen equivalences
	\begin{center}
		\begin{tikzcd}[row sep=0.5in, column sep=0.5in]
			\ss^{diag} \arrow[r, shift left =1, "\Diag^*"] & \s^{Kan} \arrow[l, shift left =1, "\Diag_*"] \arrow[r, "(\pi_1)^*", shift left =1]  
			& \ss^{Kan} \arrow[l, "(\pi_1)_*", shift left=1] 
		\end{tikzcd}
	\end{center}
	The result now follows from applying $\Fun(\cD^{op},-)$ to this adjunction and giving all sides the injective model structure combined with \cite[Remark A.2.8.6]{lurie2009htt}.
\end{proof}

We now use \cref{prop:levelwise localized model structure} to construct model structures on $\cD$-simplicial spaces based on model structures on simplicial spaces motivated by $(\infty,1)$-category theory.

 \begin{defone}
  A simplicial space $X$ is called a {\it Segal space} if it is Reedy fibrant and for all $n \geq 2$ the map 
  $$X_n \to X_1 \times_{X_0} ... \times_{X_0} X_1$$
  is a Kan equivalence of spaces. 	
 \end{defone}
  
  For more details on Segal spaces see the original source \cite[Section 5]{rezk2001css}.
  We can generalize this definition to $\cD$-simplicial spaces.
  
  \begin{defone} \label{def:levelsegal}
  	An $\cD$-simplicial space $X$ is a {\it level-wise Segal space} if it is injectively fibrant and for all $d$ in $\cD$ the simplicial space $\Und_dX$ is a Segal space.
  \end{defone}

 Segal spaces are the fibrant objects in a model structure on simplicial spaces (\cite[Theorem 7.1]{rezk2001css}), that is obtained by localizing the Reedy model structure on simplicial spaces with respect to the maps $G[n] \to F[n]$, for $n \geq 2$ \cite[Section 5]{rezk2001css}. We can use this observation to show level-wise Segal spaces come with a model structure. 
 
  \begin{propone}\label{prop:levelsegal}
 There exist a unique simplicial left proper combinatorial Cartesian model structure on $\sP(\cD\times\DD)$, which we call the {\it level-wise Segal model structure} and denote $\sP(\cD\times\DD)^{lev_{Seg}}$, which has the following specifications.
 	\begin{enumerate}
 		\item Cofibrations are inclusions.
 		\item An object $X$ is fibrant if it is injectively fibrant and for every $d$ in $\cD$ the simplicial space $\Und_dX$ is a Segal space. 
 		\item A morphism $f: X \to Y$ is a weak equivalence if and only if for every $d$ in $\cD$ the map of simplicial spaces $\Und_df: \Und_dX \to \Und_dY$ is a Segal equivalence. 
 		\item A weak equivalence (fibration) between fibrant objects is an injective equivalence (injective fibration).
 	\end{enumerate}
 \end{propone}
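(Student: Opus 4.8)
The plan is to obtain this model structure as an instance of \cref{prop:levelwise localized model structure}, exactly as the $\cD$-diagonal and value model structures were produced in \cref{the:diagmodel} and \cref{the:valmodel}. The only genuinely new assertion beyond those two cases is the Cartesian property, and that will be handled by part (5) of \cref{prop:levelwise localized model structure}.

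First I would recall, as noted just before the statement, that the Segal space model structure $\ss^{Seg}$ on simplicial spaces is the left Bousfield localization of the Reedy model structure $\ss^{Ree}$ at the set $S = \{ G[n] \to F[n] : n \geq 2 \}$ \cite[Section 5]{rezk2001css}, and that each inclusion $G[n] \hookrightarrow F[n]$ is a Reedy cofibration; hence $S$ is a set of cofibrations of simplicial spaces, as required by \cref{prop:levelwise localized model structure}. Feeding this $S$ into \cref{prop:levelwise localized model structure} yields a simplicial, left proper, combinatorial model structure on $\sP(\cD\times\DD)$ --- namely the injective model structure on $\Fun(\cD^{op},\ss^{Seg})$ --- whose cofibrations are the inclusions, whose fibrant objects are the injectively fibrant $X$ with $\Und_dX$ an $S$-local simplicial space for every $d$, whose weak equivalences are the maps $f$ with $\Und_df$ an $S$-local equivalence for every $d$, and for which weak equivalences (resp.\ fibrations) between fibrant objects are injective equivalences (resp.\ injective fibrations).

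It then remains only to rephrase ``$S$-local'' in the language of the statement. By definition of $\ss^{Seg}$, a Reedy fibrant simplicial space is $S$-local exactly when it is a Segal space, and the $S$-local equivalences are exactly the Segal equivalences; substituting this into the output of \cref{prop:levelwise localized model structure} gives conditions (1)--(4). For the Cartesian claim I would invoke part (5) of \cref{prop:levelwise localized model structure}: since $\ss^{Seg}$ is itself a Cartesian model structure \cite{rezk2001css}, so is the level-wise Segal model structure. Uniqueness is automatic, since a model structure is determined by its cofibrations and weak equivalences, both of which have been pinned down.

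There is no real obstacle here; the argument is a verbatim adaptation of the proofs of \cref{the:diagmodel} and \cref{the:valmodel}. The single point requiring (minor) care is the identification of the $S$-local objects and $S$-local equivalences with Segal spaces and Segal equivalences --- which is nothing more than unwinding the definition of the Segal space model structure --- together with citing the known Cartesianness of $\ss^{Seg}$ to activate \cref{prop:levelwise localized model structure}(5).
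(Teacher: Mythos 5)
Your proof is correct and follows essentially the same route as the paper: both apply \cref{prop:levelwise localized model structure} to the Segal localizing maps $G[n] \to F[n]$, $n \geq 2$, and both invoke the Cartesianness of the Segal space model structure \cite[Theorem 7.1]{rezk2001css} together with part (5) of that proposition to obtain the Cartesian property. Your additional unwinding of ``$S$-local'' as ``Segal'' is a mild elaboration but does not change the argument.
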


\begin{proof}
	This is a direct application of \cref{prop:levelwise localized model structure} with respect to the maps $G[n] \to F[n]$, where $n \geq 2$. Notice the Segal space model structure is in fact Cartesian \cite[Theorem 7.1]{rezk2001css}, which implies the level-wise Segal space model structure is Cartesian as well.
\end{proof}

 Segal spaces do not give us a model of $(\infty,1)$-categories, for that we need another condition, called the {\it completeness condition}. 
 There two ways to add a condition to a Segal space so that the resulting object is in fact an $(\infty,1)$-category and we will need both of them. The first one is the completeness condition. 
 
 \begin{defone}
 	A Segal space $X$ is called a {\it complete Segal space} if the square
 	\begin{center}
 	 \begin{tikzcd}
 	 	X_0 \arrow[r] \arrow[d, "\Delta"] & X_3 \arrow[d] \\
 	 	X_0 \times X_0 \arrow[r] & X_1 \times X_1
 	 \end{tikzcd} 
 	\end{center}
 	is a homotopy pullback square of spaces. 
 \end{defone}

 For more details on complete Segal spaces see \cite[Section 6]{rezk2001css}. Again complete Segal spaces can be generalized to $\cD$-simplicial spaces.
 
 \begin{defone}\label{def:levelcss}
 	An $\cD$-simplicial space $X$ is a {\it level-wise complete Segal space} if it is injectively fibrant and for all $d$ in $\cD$ the simplicial space $\Und_dX$ is a complete Segal space.
 \end{defone}

 Similar to Segal spaces, complete Segal spaces are the fibrant objects in a model structure on simplicial spaces (\cite[Theorem 7.2]{rezk2001css}), which is obtained by localizing with respect to the cofibrations $G[n] \to F[n]$, where $n \geq 2$, and $F[0] \to E[1]$ \cite[Section 5]{rezk2001css}. 
 Using this we can easily construct a model structure for level-wise complete Segal spaces. 
 
  \begin{propone}\label{prop:levelcss}
	There exist a unique simplicial left proper combinatorial Cartesian model structure on $\P(\cD\times\DD)$, which we call the {\it level-wise complete Segal model structure} and denote $\sP(\cD\times\DD)^{lev_{CSS}}$, which has the following specifications.
	\begin{enumerate}
		\item Cofibrations are inclusions.
		\item An object $X$ is fibrant if it is injectively fibrant and for every $d$ in $\cD$ the simplicial space $\Und_dX$ is a complete Segal space. 
		\item A morphism $f: X \to Y$ is a weak equivalence if and only if for every $d$ in $\cD$ the map of simplicial spaces $\Und_df: \Und_dX \to \Und_dY$ is a complete Segal equivalence. 
		\item A weak equivalence (fibration) between fibrant objects is an injective equivalence (injective fibration). 
	\end{enumerate}
\end{propone}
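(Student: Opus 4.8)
The plan is to obtain this model structure as a direct instance of \cref{prop:levelwise localized model structure}, in exactly the same way \cref{prop:levelsegal} was deduced. First I would let $S$ be the set of cofibrations of simplicial spaces consisting of the Segal maps $G[n] \to F[n]$ for $n \geq 2$ together with the completeness map $F[0] \to E[1]$; each of these is a monomorphism, hence a cofibration in the Reedy model structure on simplicial spaces, so $S$ is an admissible input to \cref{prop:levelwise localized model structure}. By \cite[Section 5, Theorem 7.2]{rezk2001css} the $S$-localized Reedy model structure $\ss^{Ree_S}$ on simplicial spaces is precisely the complete Segal space model structure; in particular its fibrant objects are the complete Segal spaces and its weak equivalences are the complete Segal equivalences.

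Applying \cref{prop:levelwise localized model structure} to this $S$ then yields a unique simplicial left proper combinatorial model structure on $\sP(\cD\times\DD)$ whose cofibrations are the inclusions, whose fibrant objects are the injectively fibrant $X$ such that $\Und_dX$ is $S$-local --- that is, a complete Segal space --- for every $d$ in $\cD$, whose weak equivalences are the maps $f$ with $\Und_df$ an $S$-local equivalence --- that is, a complete Segal equivalence --- for every $d$, and in which a weak equivalence (resp.\ fibration) between fibrant objects is an injective equivalence (resp.\ injective fibration). This gives items (1)--(4) of the statement verbatim.

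For the Cartesian assertion I would invoke item (5) of \cref{prop:levelwise localized model structure}: since the complete Segal space model structure on simplicial spaces is itself Cartesian by \cite[Theorem 7.2]{rezk2001css}, the level-wise localized model structure on $\sP(\cD\times\DD)$ is Cartesian as well. There is essentially no obstacle here beyond bookkeeping, since all of the real content is imported from Rezk and from \cref{prop:levelwise localized model structure}; the only point requiring care is to confirm that the generating set $S$ above produces Rezk's complete Segal space model structure itself, and not merely some Bousfield-equivalent localization, which is immediate because Rezk defines that model structure by localizing the Reedy model structure at exactly these maps.
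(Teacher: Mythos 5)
Your proposal is correct and matches the paper's proof essentially verbatim: both apply \cref{prop:levelwise localized model structure} to the localizing set $\{G[n]\to F[n]:n\ge 2\}\cup\{F[0]\to E[1]\}$ and then deduce the Cartesian property from \cite[Theorem 7.2]{rezk2001css} together with item (5) of that proposition.
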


\begin{proof}
		This is a direct application of \cref{prop:levelwise localized model structure} with respect to the maps $G[n] \to F[n]$, where $n \geq 2$, and $F[0] \to E[1]$. Notice the complete Segal space model structure is in fact Cartesian \cite[Theorem 7.2]{rezk2001css}, which implies the level-wise complete Segal space model structure is Cartesian as well.
\end{proof}
 
 In \cite[Theorem 7.7]{rezk2001css} Rezk proves that a map of Segal spaces is a complete Segal space equivalence if and only if it is fully faithful and essentially surjective, also known as a {\it Dwyer-Kan equivalence} of Segal spaces (\cite[7.4]{rezk2001css}). Using \cref{prop:levelcss} we can immediately generalize this to a level-wise version.
 
 \begin{lemone} \label{lemma:dk levelwise Segal}
 	Let $f: X \to Y$ be a map of level-wise Segal spaces. Then $f: X \to Y$ is a level-wise complete Segal space equivalence if and only if for all objects $d$ in $\cD$ the map of simplicial spaces $\Und_df: \Und_dX \to \Und_dY$ is a Dwyer-Kan equivalence.
 \end{lemone}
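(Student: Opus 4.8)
The plan is to reduce the statement directly to Rezk's characterization of complete Segal space equivalences among maps of Segal spaces, using the level-wise description of weak equivalences already established. First I would invoke \cref{prop:levelcss}, specifically item (3): a map $f\colon X \to Y$ is a level-wise complete Segal space equivalence precisely when, for every object $d$ in $\cD$, the map $\Und_d f\colon \Und_d X \to \Und_d Y$ is a complete Segal space equivalence of simplicial spaces. This translates the hypothesis and conclusion into purely level-wise statements about simplicial spaces.

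Next I would use the standing assumption that $X$ and $Y$ are level-wise Segal spaces: by \cref{def:levelsegal} this means $\Und_d X$ and $\Und_d Y$ are Segal spaces for every $d$ in $\cD$. Hence for each $d$, the map $\Und_d f$ is a morphism \emph{between Segal spaces}, which is exactly the setting where Rezk's result \cite[Theorem 7.7]{rezk2001css} applies: such a map is a complete Segal space equivalence if and only if it is a Dwyer--Kan equivalence (fully faithful and essentially surjective) in the sense of \cite[7.4]{rezk2001css}.

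Chaining the two equivalences then yields the claim: $f$ is a level-wise complete Segal space equivalence $\iff$ $\Und_d f$ is a complete Segal space equivalence for all $d$ $\iff$ $\Und_d f$ is a Dwyer--Kan equivalence for all $d$. I do not expect a genuine obstacle here; the only point requiring care is that \emph{both} directions of Rezk's theorem need the source and target to be Segal spaces, and this is precisely what the level-wise Segal hypothesis supplies, so no fibrant (Segal) replacement or auxiliary argument is needed. A brief remark could be added that the statement is the evident $\cD$-level-wise analogue of the absolute case and that the proof is formal once \cref{prop:levelcss} is in place.
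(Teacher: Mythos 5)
Your proposal is correct and follows exactly the argument the paper gives (which appears in the two sentences immediately preceding the lemma): invoke \cref{prop:levelcss}(3) to reduce to a level-wise statement, then apply Rezk's \cite[Theorem 7.7]{rezk2001css} at each object $d$, noting that the level-wise Segal hypothesis makes Rezk's theorem applicable in both directions.
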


 The second way is by adding a discreteness condition to Segal spaces. 
 
 \begin{defone}
 	A {\it Segal category} $W$ is a Segal space such that the space $W_0$ is discrete. 
 \end{defone}

Segal categories also come with a model structure \cite[Theorem 5.1]{bergner2007threemodels}, that we can generalize here to $\cD$-simplicial spaces

 \begin{propone}\label{prop:levelsegalcat}
	There exist a unique simplicial left proper combinatorial model structure on $\P(\cD\times\DD)$, which we call the {\it level-wise Segal category model structure} and denote $\sP(\cD\times\DD)^{lev_{SegCat}}$, which has the following specifications.
	\begin{enumerate}
		\item Cofibrations are inclusions.
		\item An object $X$ is fibrant if it is injectively fibrant and for every $d$ in $\cD$ the simplicial space $\Und_dX$ is a Segal category. 
		\item A morphism $f: X \to Y$ is a weak equivalence if and only if for every $d$ in $\cD$ the map of simplicial spaces $\Und_df: \Und_dX \to \Und_dY$ is a complete Segal equivalence. 
		\item A weak equivalence (fibration) between fibrant objects is an injective equivalence (injective fibration). 
	\end{enumerate}
\end{propone}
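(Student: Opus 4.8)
The plan is to reproduce the strategy behind \cref{prop:levelsegal} and \cref{prop:levelcss}: take the model structure on simplicial spaces whose fibrant objects are Segal categories and transport it to $\cD$-simplicial spaces by forming an injective model structure on the associated diagram category. Concretely, by \cite[Theorem 5.1]{bergner2007threemodels} there is a left proper, combinatorial, simplicial model structure $\ss^{SegCat}$ whose cofibrations are the monomorphisms, whose weak equivalences are the Dwyer--Kan equivalences of Segal precategories --- which coincide with the complete Segal space equivalences --- and whose fibrant objects are the Reedy fibrant Segal categories. I would then equip $\Fun(\cD^{op},\ss^{SegCat})$ with the injective model structure; by the general existence theory for injective model structures on diagram categories valued in a left proper combinatorial simplicial model category (used already in the proof of \cref{the:diag and val equiv inj} via \cite[Remark A.2.8.6]{lurie2009htt}), this again yields a left proper, combinatorial, simplicial model structure on $\sP(\cD\times\DD)$, and items (1) and (3) of the statement are then immediate, since the cofibrations and weak equivalences of an injective diagram model structure are precisely the level-wise ones, and left properness and the simplicial enrichment descend from $\ss^{SegCat}$.

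Unlike for \cref{prop:levelsegal} and \cref{prop:levelcss}, one cannot invoke \cref{prop:levelwise localized model structure} here, because $\ss^{SegCat}$ is \emph{not} a left Bousfield localization of the Reedy model structure --- the Segal category condition is a discreteness condition, not a locality condition --- so there is no localization description of the fibrant objects to lean on. This is exactly what makes items (2) and (4) the main obstacle, and they must be argued by hand. For the fibrancy characterization I would first observe that each evaluation $\Und_d \colon \Fun(\cD^{op},\ss^{SegCat})^{inj} \to \ss^{SegCat}$ is right Quillen (its left adjoint, the copower functor $A \mapsto \coprod_{\cD(-,d)} A$, visibly preserves monomorphisms and level-wise weak equivalences), so an injectively fibrant object is level-wise a Reedy fibrant Segal category; and since $\ss^{SegCat}$ has the same cofibrations but more weak equivalences than the Reedy structure, such an object is also injectively fibrant as a $\cD$-simplicial space. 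The reverse implication, together with item (4), would then be extracted from the corresponding facts about $\ss^{SegCat}$, using the Quillen equivalence between $\ss^{SegCat}$ and the complete Segal space model structure in conjunction with the already-established \cref{prop:levelcss}.

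I expect the genuine difficulty to lie precisely in item (4) and the second half of the fibrancy characterization: Segal categories, unlike complete Segal spaces, are not rigid, so a weak equivalence between two level-wise Segal categories need not be detected level-wise in the naive way, and the descent of item (4) to the $\cD$-level has to be handled with care rather than read off from the injective structure. If this proves delicate it may be cleanest to formulate the whole proposition on the full subcategory of $\cD$-Segal precategories --- i.e.\ $\cD$-simplicial spaces $X$ with $\Und_d X$ having discrete space of $0$-simplices for every $d$ --- where the fibrant objects and the weak equivalences between them admit the most transparent description, and where the argument is a direct level-wise lift of \cite[Theorem 5.1]{bergner2007threemodels}.
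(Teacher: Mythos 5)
Your objection to the paper's strategy is actually the key point here: the paper's own proof \emph{is} the thing you say cannot be done --- it reads verbatim ``This is a direct application of \cref{prop:levelwise localized model structure} to the generating trivial cofibrations in the model structure for Segal categories.'' So you are not comparing against an alternative proof in the paper; you are (correctly, I think) flagging a problem with it. \cref{prop:levelwise localized model structure} produces the injective model structure on $\Fun(\cD^{op},\ss^{Ree_S})$ for a Bousfield localization $\ss^{Ree_S}$ of the Reedy model structure, and its fibrant objects are level-wise Reedy-fibrant, level-wise $S$-local objects. If $S$ is the set of generating trivial cofibrations of Bergner's model structure, those maps are in particular complete Segal space equivalences that are monomorphisms, so \emph{every} complete Segal space is $S$-local. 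The level-wise fibrant objects of that localization are therefore at least the level-wise complete Segal spaces, which do not have discrete $0$-space; they cannot be only the level-wise Segal categories. Discreteness of $X_0$ is a lifting/coreflective condition, not a locality condition, and no left Bousfield localization of the Reedy structure on all of $\ss$ will impose it.

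However, your replacement argument carries an analogous imprecision. Bergner's Theorem 5.1 is a model structure on the full subcategory of \emph{Segal precategories}, not on all simplicial spaces, and its cofibrations are the monomorphisms of Segal precategories. Consequently $\Fun(\cD^{op},\ss^{SegCat})$ is the category of $\cD$-Segal precategories (presheaves that are level-wise discrete in the appropriate direction), not $\sP(\cD\times\DD)$, and the injective model structure you obtain lives there. You cannot both have cofibrations equal to all inclusions, weak equivalences equal to level-wise complete Segal equivalences, \emph{and} fibrant objects equal to level-wise Segal categories on all of $\sP(\cD\times\DD)$: the first two uniquely determine the level-wise complete Segal space model structure, whose fibrant objects are level-wise complete Segal spaces. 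Your closing suggestion --- restate the proposition on the full subcategory of $\cD$-Segal precategories and lift Bergner's Theorem 5.1 level-wise --- is in fact the correct fix, and it should be moved from an afterthought to the body of the proof, with the underlying category corrected both here and in the downstream \cref{prop:levcss vs legsegcat} (where $(I_\cD,R_\cD)$ should then be read as an adjunction between $\cD$-Segal precategories and all $\cD$-simplicial spaces, exactly as Bergner's $(I,R)$ is in the unparametrized case).
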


\begin{proof}
	This is a direct application of \cref{prop:levelwise localized model structure} to the generating trivial cofibrations in the model structure for Segal categories \cite[Theorem 5.1]{bergner2007threemodels}.
\end{proof}

There is a Quillen equivalence between the model structure for Segal categories and complete Segal space, which we can generalize to an equivalence of level-wise model structure.

\begin{propone} \label{prop:levcss vs legsegcat}
	Let
	\begin{center}
		\adjun{\sP(\cD\times\DD)^{lev_{SegCat}}}{\sP(\cD\times\DD)^{lev_{CSS}}}{I_\cD}{R_\cD}
	\end{center}
	be the adjunction $(I,R)$ given in \cite[Theorem 6.3]{bergner2007threemodels} level-wise. Then this adjunction is a Quillen equivalence.  
\end{propone}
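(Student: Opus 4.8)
The plan is to reduce the statement to the corresponding Quillen equivalence for simplicial spaces, exactly in the spirit of the proof of \cref{the:diag and val equiv inj}. By the very construction in \cref{prop:levelsegalcat} and \cref{prop:levelcss}, the model categories $\sP(\cD\times\DD)^{lev_{SegCat}}$ and $\sP(\cD\times\DD)^{lev_{CSS}}$ are the injective model structures on the functor categories $\Fun(\cD^{op},\ss^{SegCat})$ and $\Fun(\cD^{op},\ss^{CSS})$, where $\ss^{SegCat}$ denotes the model structure for Segal categories of \cite[Theorem 5.1]{bergner2007threemodels} and $\ss^{CSS}$ the complete Segal space model structure of \cite[Theorem 7.2]{rezk2001css}. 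Moreover, since the functors $I$ and $R$ are applied in the $\cD$-direction objectwise, the adjunction $(I_\cD,R_\cD)$ is precisely $(\Fun(\cD^{op},I),\Fun(\cD^{op},R))$, i.e. post-composition with $I$ and $R$.

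First I would record that $\ss^{SegCat}$ and $\ss^{CSS}$ are combinatorial model categories (part of \cite[Theorem 5.1]{bergner2007threemodels} and \cite[Theorem 7.2]{rezk2001css}) and that $(I,R)$ is a Quillen equivalence by \cite[Theorem 6.3]{bergner2007threemodels}. Then I would invoke the same general principle already used in the proof of \cref{the:diag and val equiv inj}, namely \cite[Remark A.2.8.6]{lurie2009htt}: a Quillen equivalence between combinatorial model categories $\cM$ and $\cN$ induces, by post-composition, a Quillen equivalence between the injective model structures on $\Fun(\cD^{op},\cM)$ and $\Fun(\cD^{op},\cN)$. Applying this to $(I,R)$ immediately gives that $(I_\cD,R_\cD)$ is a Quillen equivalence.

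The one point that deserves care — and which I take to be the main, if modest, obstacle — is the identification used in the first paragraph: one must verify that the injective model structure on $\Fun(\cD^{op},\ss^{SegCat})$ genuinely coincides with the level-wise Segal category model structure produced in \cref{prop:levelsegalcat} (and similarly for complete Segal spaces), and that under this identification the level-wise functors $I_\cD$ and $R_\cD$ are the post-composition functors to which \cite[Remark A.2.8.6]{lurie2009htt} applies. Both of these are immediate from how the level-wise model structures were built via \cref{prop:levelwise localized model structure} and from the objectwise nature of $I$ and $R$, so no essentially new argument is needed; all of the substance sits in Bergner's Quillen equivalence together with the stability of Quillen equivalences under passage to injective diagram model structures.
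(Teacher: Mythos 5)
Your proof is correct and follows essentially the same route as the paper's: the paper also reduces to \cite[Remark A.2.8.6]{lurie2009htt} applied to Bergner's Quillen equivalence $(I,R)$. You simply spell out the identification of the level-wise model structures with injective diagram model structures in more detail, which the paper leaves implicit.
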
 

\begin{proof}
	This again follows from \cite[Remark A.2.8.6]{lurie2009htt} applied to the Quillen equivalence  $(I,R)$ (\cite[Theorem 6.3]{bergner2007threemodels}).
\end{proof}

Notice, $I_\cD$ is the identity on Segal categories and we will often omit it to simplify notation. The result has the following important implication.

\begin{corone}\label{cor:zigzag}
	Let $X$ be an arbitrary $\cD$-simplicial space. Then there exists a natural zigzag 
	$$X \to \hat{X} \leftarrow R_\cD\hat{X}$$
	where $\hat{X}$ is the fibrant replacement in the level-wise complete Segal space model structure and $R_\cD\hat{X}$ is the corresponding level-wise Segal category and both maps are level-wise complete Segal space equivalences. 
\end{corone}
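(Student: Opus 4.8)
The plan is to deduce this directly from the Quillen equivalence of \cref{prop:levcss vs legsegcat} together with functoriality of fibrant replacement in a combinatorial model category. First I would invoke functorial fibrant replacement in $\sP(\cD\times\DD)^{lev_{CSS}}$: factoring the canonical map from $X$ to the terminal object as an acyclic cofibration followed by a fibration produces a natural map $X \to \hat X$ with $\hat X$ fibrant, i.e.\ a level-wise complete Segal space, and this map is a level-wise complete Segal space equivalence, being an acyclic cofibration.

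Next, since $R_\cD$ is right Quillen it preserves fibrant objects, so $R_\cD\hat X$ is fibrant in $\sP(\cD\times\DD)^{lev_{SegCat}}$, which by \cref{prop:levelsegalcat} means exactly that $R_\cD\hat X$ is a level-wise Segal category, i.e.\ the object the statement refers to. The remaining map of the zigzag is the counit $\varepsilon_{\hat X}\colon I_\cD R_\cD\hat X \to \hat X$; since $I_\cD$ is the identity on level-wise Segal categories (as noted after \cref{prop:levcss vs legsegcat}), we may regard $\varepsilon_{\hat X}$ as a map of $\cD$-simplicial spaces $R_\cD\hat X \to \hat X$. It is natural in $\hat X$, and precomposing with the functorial fibrant replacement $X \mapsto \hat X$ makes the whole zigzag natural in $X$.

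It then remains to see that $\varepsilon_{\hat X}$ is a level-wise complete Segal space equivalence, which is precisely the assertion that the derived counit of the Quillen equivalence $(I_\cD,R_\cD)$ is a weak equivalence at the fibrant object $\hat X$. Since cofibrations in $\sP(\cD\times\DD)^{lev_{SegCat}}$ are the inclusions (\cref{prop:levelsegalcat}), every object is cofibrant; in particular $R_\cD\hat X$ is already cofibrant, so the derived counit coincides with $\varepsilon_{\hat X}$ itself, and the Quillen-equivalence property forces $\varepsilon_{\hat X}$ to be a weak equivalence in $\sP(\cD\times\DD)^{lev_{CSS}}$. Concatenating gives the natural zigzag $X \to \hat X \leftarrow R_\cD\hat X$ with both maps level-wise complete Segal space equivalences. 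The argument is entirely formal; the only point needing care is the bookkeeping that identifies the derived counit with the ordinary counit (using that all objects are cofibrant) and that treats $I_\cD$ as an inclusion, so that the right-hand term of the zigzag is literally $\hat X$ rather than $I_\cD R_\cD\hat X$.
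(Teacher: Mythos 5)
Your proof is correct and is precisely the argument the paper intends; the paper states \cref{cor:zigzag} as an immediate implication of \cref{prop:levcss vs legsegcat} without spelling it out, and you have filled in exactly the standard reasoning (functorial fibrant replacement, $R_\cD$ right Quillen preserves fibrant objects, all objects cofibrant so the derived counit is the plain counit, and Quillen equivalence forces that counit to be a weak equivalence in the target).
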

 
\subsection{Enriched (complete) Segal Spaces and Categories} \label{subsec:enriched css}
 Level-wise (complete) Segal spaces are an important first step towards studying $\sP(\cD)$-enriched category theory via simplicial objects, however, we need to add another condition that gives us a well-defined notion of objects.
 
 \begin{defone}
 	A map of simplicial spaces $X \to Y$ is {\it essentially surjective} if the map $\hat{X} \to \hat{Y}$ is essentially surjective (meaning $\hat{X}_0 \to \hat{Y}_0$ is a surjection on path-components). Here $\hat{X}$ and $\hat{Y}$ are fibrant replacements in the complete Segal space model structure.
 \end{defone}
 
 Notice this definition does not depend on the choice of fibrant replacement as any two fibrant replacements are Reedy equivalent.
  
 \begin{defone} \label{def:weakly constant objects}
 	A $\cD$-simplicial space $X$ has {\it weakly constant objects} if for for all morphisms $f: d \to d'$ in $\cD$ the induced map of simplicial spaces $\Und_{d'}X \to \Und_dX$ is essentially surjective. 
 \end{defone}
 
 Notice this condition can be simplified.
 
  \begin{lemone}
 	Let $\cD$ have a terminal object $t$. Then a $\cD$-simplicial space $X$ has weakly constant objects if and only if the map $\Und_tX \to \Und_dX$ is essentially surjective for all objects $d$. Here the morphism is given by the unique morphism $d \to t$. 
 \end{lemone}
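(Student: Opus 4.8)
The plan is to reduce the statement to two elementary observations about essential surjectivity. The \emph{only if} direction is immediate: if $X$ has weakly constant objects, then applying \cref{def:weakly constant objects} to the unique morphism $d \to t$ shows that $\Und_t X \to \Und_d X$ is essentially surjective for every object $d$.

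For the converse, assume $\Und_t X \to \Und_d X$ is essentially surjective for all $d$, and let $f : d \to d'$ be an arbitrary morphism in $\cD$. Denote by $u_d : d \to t$ and $u_{d'} : d' \to t$ the unique morphisms to the terminal object, so that $u_{d'} \circ f = u_d$. Since a $\cD$-simplicial space is contravariant in $\cD$, this identity yields a commuting triangle of simplicial spaces whose composite
\[
\Und_t X \xrightarrow{\ X(u_{d'})\ } \Und_{d'} X \xrightarrow{\ X(f)\ } \Und_d X
\]
is $X(u_d) : \Und_t X \to \Und_d X$. By hypothesis this composite is essentially surjective, so the statement follows once we know the following: for a composable pair of maps of simplicial spaces $A \xrightarrow{h} B \xrightarrow{g} C$ with $g \circ h$ essentially surjective, the map $g$ is itself essentially surjective.

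To prove this last point I would fix a functorial fibrant replacement $\widehat{(-)}$ in the complete Segal space model structure (available since that model structure is cofibrantly generated), apply it to the triangle, and pass to path components of the zeroth spaces: the composite $\pi_0(\widehat{A}_0) \to \pi_0(\widehat{B}_0) \to \pi_0(\widehat{C}_0)$ is surjective by assumption, hence so is its final factor $\pi_0(\widehat{B}_0) \to \pi_0(\widehat{C}_0)$, which is precisely essential surjectivity of $g$. Taking $A = \Und_t X$, $B = \Und_{d'} X$ and $C = \Und_d X$ shows that $\Und_{d'} X \to \Und_d X$ is essentially surjective; since $f$ was arbitrary, $X$ has weakly constant objects.

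I do not anticipate a real obstacle here: the argument is purely formal, resting on the fact that maps to a terminal object factor every comparison map through $\Und_t X$ and that a $\pi_0$-surjective composite forces its final map to be $\pi_0$-surjective. The only point deserving a word of care is that essential surjectivity is defined via a choice of fibrant replacement; since, as noted just after the definition of essential surjectivity, that condition is independent of the choice, using a functorial replacement in the factorization argument above is harmless.
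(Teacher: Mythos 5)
The paper states this lemma without proof, treating it as an immediate consequence of the factorization of every comparison map through the terminal object. Your argument fills in exactly that intended reasoning — including the careful note about using a functorial fibrant replacement so that the triangle passes to $\pi_0$ — and is correct.
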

  
  \begin{defone} \label{def:enriched segal}
  	An {\it $\sP(\cD)$-enriched Segal space} $X$ is a level-wise Segal space with weakly constant objects. If $\cD$ has a terminal object $t$, we denote $X[t,0] = \Obj_X$ and call it the {\it objects of $X$}.
  \end{defone}
  
  Similar to Segal spaces this notion has appropriate categorical properties.
  
  \begin{defone} \label{def:map}
  	Let $\cD$ have a terminal object $t$.
  	Let $x,y$ be two objects in the $\sP(\cD)$-enriched Segal space $X$. Define the {\it mapping $\cD$-space} $\map_X(x,y)$ by the pull back of $\cD$-spaces 
  	\begin{center}
  		\begin{tikzcd}
  			\map_X(x,y) \arrow[r] \arrow[d] & \Val_1(X) \arrow[d, "(s \comma t)"] \arrow[d] \\ 
  			D\leb 0 \reb \arrow[r] & \Val(X) \times \Val(X) 
  		\end{tikzcd}
  	\end{center}
  \end{defone}
  
  We can use that to define a homotopy category, similar to Segal spaces. 
  
  \begin{defone}
  	Let $\cD$ have a terminal object $t$ and let $X$ be an $\sP(\cD)$-enriched Segal space. Define the $\P(\cD)$-enriched category $\Ho X$, called the {\it homotopy category} as follows. Objects are given by $(\Obj_X)_0$ and for two objects $x,y$ define $\Hom_{\Ho X}(x,y) = \pi_0(\map_X(x,y))$.
  \end{defone}

  Ideally we would like to say that if $X$ is an $\sP(\cD)$-enriched Segal space then $\Val(X)$ is homotopically constant. We want to show that this statement holds, up to level-wise Dwyer-Kan equivalence. In order to make this into a precise argument we need $\sP(\cD)$-enriched Segal categories.
  
  \begin{defone} \label{def:enriched segal cat}
 	An {\it $\sP(\cD)$-enriched Segal category} $X$ is a level-wise Segal category with weakly constant objects. If $\cD$ has a terminal object $t$, we denote $X[t,0] = \Obj_X$ and call it the {\it objects of $X$}.
 \end{defone} 

We now want to prove that every $\sP(\cD)$-enriched Segal space is level-wise Dwyer-Kan equivalent to an $\sP(\cD)$-enriched Segal category with constant objects. For that we need codiscrete simplicial objects.

Let $\C$ be a category with finite products. Then the restriction functor $(-)_0: \Fun(\DD^{op},\C) \to \C$ has a right adjoint $\coDisc: \C \to \Fun(\DD^{op},\C)$, which takes an object $X$ in $\C$ to the simplicial object $\coDisc(X)$ with $\coDisc(X)_n = X^{n+1}$ and with face and degeneracy maps given by projections and diagonals. We have the following simple result regarding $\coDisc$.

\begin{lemone} \label{lemma:technical stuff}
	The simplicially enriched adjunction 
	\begin{center}
		\adjun{\ss^{Seg}}{\s^{Kan}}{(-)_0}{\coDisc}
	\end{center}
	is a Quillen adjunction such that the right adjoint reflects weak equivalences. 
\end{lemone}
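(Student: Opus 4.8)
The plan is to obtain the Quillen property from the universal property of the left Bousfield localization that defines $\ss^{Seg}$, and to obtain the reflection statement from Ken Brown's lemma together with full faithfulness of $\coDisc$. The simplicial enrichment of the adjunction is immediate, since $(-)_0$ preserves tensors on the nose ($(X\otimes K)_0 = X_0 \times K$ for $K$ a space), so the content lies entirely in the model-categorical statements.

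First I would record the elementary features of the pair $(-)_0 \dashv \coDisc$: writing $i\colon \{[0]\} \hookrightarrow \DD^{op}$ for the inclusion, we have $(-)_0 = i^{*}$ and $\coDisc = i_{*}$ is right Kan extension along $i$, and since the relevant comma category is discrete on the $n+1$ vertices of $[n]$ one recovers $\coDisc(X)_n = X^{n+1}$ and, crucially, that the counit $\epsilon \colon (-)_0\circ\coDisc \Rightarrow \id_{\s}$ is a natural isomorphism (indeed $(\coDisc X)_0 = X$). In particular $\coDisc$ is fully faithful and, for any map of spaces $f$, the map $(-)_0(\coDisc f)$ agrees with $f$ up to the canonical isomorphism. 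Next I would check that $(-)_0 \colon \ss^{Ree}\to\s^{Kan}$ is left Quillen: the latching objects $L_0X, L_0Y$ vanish, so the relative latching map $L_0Y\cup_{L_0X}X_0\to Y_0$ of a Reedy cofibration $f\colon X\to Y$ reduces to $f_0\colon X_0\to Y_0$, hence a monomorphism; and Reedy weak equivalences are level-wise, so $(-)_0$ sends cofibrations to cofibrations and all weak equivalences to weak equivalences.

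To promote this to $\ss^{Seg}$ I would invoke the standard criterion for descent of a Quillen pair along a left Bousfield localization \cite[Theorem 3.3.20]{hirschhorn2003modelcategories}: $\ss^{Seg}$ is the localization of $\ss^{Ree}$ at the spine inclusions $G[n]\to F[n]$, $n\ge 2$ \cite[Section 5]{rezk2001css}, and every simplicial space is Reedy cofibrant, so it suffices to check that $(-)_0$ carries each $G[n]\to F[n]$ to a Kan equivalence; but $(-)_0(F[n]) = \Hom_{\DD}([0],[n])$ is the discrete set of $n+1$ vertices of $[n]$, already contained in the spine, so $(-)_0(G[n])\to(-)_0(F[n])$ is an isomorphism. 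Hence $(-)_0\dashv\coDisc$ is a Quillen adjunction from $\ss^{Seg}$ to $\s^{Kan}$. Finally, if $\coDisc(f)$ is a Segal equivalence then, since every simplicial space is cofibrant in $\ss^{Seg}$, Ken Brown's lemma applied to the left Quillen functor $(-)_0$ makes $(-)_0(\coDisc f)$ a Kan equivalence, and by naturality of $\epsilon$ this map is $f$ up to isomorphism, so $f$ is a Kan equivalence; thus $\coDisc$ reflects weak equivalences. The only real subtlety is to organize the argument so as never to need a direct description of the fibrations of $\ss^{Seg}$ — which the localization criterion supplies — after which every step is routine; as a consistency check (not needed for the statement) one may note that $\coDisc(X)$ satisfies the Segal condition strictly and is Reedy fibrant when $X$ is a Kan complex, hence is a genuine Segal space.
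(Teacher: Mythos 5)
Your proof is correct, and for the Quillen-adjunction part it takes a genuinely different (though dual) route from the one in the paper. The paper checks the condition on the \emph{right} adjoint: it verifies that $(-)_0$ preserves cofibrations and that $\coDisc$ sends Kan complexes to Segal spaces, then cites \cite[Corollary A.3.7.2]{lurie2009htt}; the latter step requires observing that $\coDisc(K)_n=K^{n+1}$ is Reedy fibrant and that its Segal maps $K^{n+1}\to K^2\times_K\cdots\times_K K^2$ are identities. You instead check the condition on the \emph{left} adjoint: first that $(-)_0\colon \ss^{Ree}\to\s^{Kan}$ is left Quillen (latching at level $0$ is trivial, Reedy equivalences are levelwise), and then that $(-)_0$ inverts the spine inclusions $G[n]\to F[n]$, invoking the universal property of left Bousfield localization. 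These are the two halves of the same localization criterion; the trade-off you identify is real — your route never needs to know what the fibrant objects of $\ss^{Seg}$ are, only what the localizing set is, while the paper's route is shorter once one has the description of Segal spaces in hand. For the reflection statement the two arguments coincide in substance: the paper observes that the counit is the identity and declares that this suffices, while you make the standard bridge explicit (Ken Brown plus cofibrancy of every simplicial space gives that $(-)_0$ preserves all Segal equivalences, so $(-)_0\coDisc(f)=f$ is a Kan equivalence). Your version is the more self-contained write-up of what the paper leaves implicit.
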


\begin{proof}
	The left adjoint preserves cofibrations and the right adjoint takes a Kan complex $K$ to a Segal space $\coDisc(K)$. Hence the adjunction is a Quillen adjunction by \cite[Corollary A.3.7.2]{lurie2009htt}. In order to prove the right adjoint reflects weak equivalences it suffices to prove that for every space $K$ the counit map is an equivalence. However, the counit is just the identity map and hence we are done.
\end{proof}

We now have the following important construction. Let $\Inc_t\Und_t X \to X$ be the counit of the adjunction (\cref{def:und}). We can factor this map into a cofibration followed by a trivial fibration 
\begin{equation} \label{eq:fib rep two}
 \Inc_t\Und_t X \to \bar{X} \to X	
\end{equation}
 in the injective model structure on $\cD$-simplicial spaces. Notice, by the triangle identity for adjunction $\Und_t\Inc_t\Und_tX \to \Und_tX$ is the identity map and so $\Und_t\bar{X}= \Und_t X$. 

\begin{defone}\label{def:bH}
 Let $\cD$ be a small category with a terminal object $t$.
 For a given level-wise Segal category $X$ define $\bH X$ as the pullback 
 \begin{equation} \label{eq:pb}
 	\begin{tikzcd}
 		\bH X \arrow[r] \arrow[d] & \coDisc X[t,0] \arrow[d, hookrightarrow] \\
 		\bar{X} \arrow[r] \arrow[d] & \coDisc \Val(\bar{X}) \\
 		X
 	\end{tikzcd},
 \end{equation}  
where $\bar{X}$ is defined via the factorization \ref{eq:fib rep two}.	
\end{defone}

\begin{remone}
	The definition of $\bH$ is a generalization of the construction of the functor $R$ by Bergner in \cite[Section 6]{bergner2007threemodels}, which is used to construct Segal categories out of complete Segal spaces. 
\end{remone}

By \cref{lemma:technical stuff} all three parts of the pullback diagram are level-wise Segal spaces and so the $\cD$-simplicial space $\bH X$ 
is also a level-wise Segal category with $\Val(RX) = X[t,0]$. We now have the following result.

\begin{lemone}\label{lemma:more technical lemma}
	Let $\cD$ have a terminal object $t$. Let $X$ be an $\sP(\cD)$-enriched Segal category. The map $\bH X \to X$ defined in \ref{eq:pb} is a level-wise complete Segal space equivalence if and only if $X$ has weakly constant objects.
\end{lemone}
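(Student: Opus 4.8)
The plan is to decompose $\bH X \to X$, pass to the underlying simplicial spaces $\Und_d(-)$, and recognize the resulting maps as Dwyer--Kan equivalences of Segal spaces whose full faithfulness is automatic and whose essential surjectivity is exactly the weakly-constant-objects condition.

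First I would note that the map $\bH X \to X$ from \ref{eq:pb} factors as $\bH X \to \bar X \to X$, where $\bar X \to X$ is the trivial fibration in the injective model structure produced by the factorization \ref{eq:fib rep two}. In particular $\bar X \to X$ is a level-wise Kan equivalence, hence a level-wise complete Segal space equivalence (a level-wise Kan equivalence of $\cD$-simplicial spaces is a level-wise Reedy equivalence, and these are complete Segal space equivalences; cf.\ \cref{prop:levelcss}, \cref{prop:injectivemod}). By two-out-of-three, $\bH X \to X$ is a level-wise complete Segal space equivalence if and only if $\bH X \to \bar X$ is. Both $\bH X$ (by the discussion preceding the lemma) and $\bar X$ (it is injectively fibrant, hence level-wise Reedy fibrant, and level-wise Kan equivalent to the level-wise Segal space $X$) are level-wise Segal spaces, so by \cref{lemma:dk levelwise Segal} it is enough to prove that, for every object $d$ of $\cD$, the map $\Und_d\bH X \to \Und_d\bar X$ is a Dwyer--Kan equivalence of Segal spaces if and only if $X$ has weakly constant objects.

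Next I would show that full faithfulness holds for free. Since $\Und_d$ is a right adjoint and commutes with $\coDisc$, applying it to \ref{eq:pb} identifies $\Und_d\bH X$ with the strict pullback $\coDisc(\Obj_X)\times_{\coDisc(\bar X[d,0])}\Und_d\bar X$, where $\coDisc$ now denotes the codiscrete functor $\s\to\ss$, the set $\Obj_X = X[t,0] = \bar X[t,0]$ is discrete, and the map $\Obj_X\to\bar X[d,0]$ is induced by $d\to t$. From this description $(\Und_d\bH X)_0 = \Obj_X$, and for $x,y\in\Obj_X$ with images $\bar x,\bar y\in\bar X[d,0]$, computing mapping spaces as fibers of $W_1\to W_0\times W_0$ — which are homotopy fibers here because $\bar X$, and hence $\Und_d\bH X$, is level-wise Reedy fibrant — exhibits $\map_{\Und_d\bH X}(x,y)\to\map_{\Und_d\bar X}(\bar x,\bar y)$ as the fiber over $(x,y)$ of the base change of $\bar X[d,1]\to\bar X[d,0]\times\bar X[d,0]$ along $\Obj_X\times\Obj_X\to\bar X[d,0]\times\bar X[d,0]$; this is an isomorphism, so the map is fully faithful regardless of $X$.

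Finally, since full faithfulness is automatic, $\Und_d\bH X\to\Und_d\bar X$ is a Dwyer--Kan equivalence exactly when it is essentially surjective, and — as $\bar X\to X$ is a level-wise Dwyer--Kan equivalence, so $\Und_d\bar X\to\Und_d X$ induces an equivalence of homotopy categories — exactly when $\Und_d\bH X\to\Und_d X$ is essentially surjective. The latter induces, on equivalence classes of objects, the map $\Obj_X = X[t,0]\to X[d,0]\to X[d,0]/{\sim}$ coming from $d\to t$, which is surjective if and only if $\Und_t X\to\Und_d X$ is essentially surjective. Requiring this for all $d$ and invoking \cref{def:weakly constant objects} (in the form simplified by the terminal object $t$) yields that $\bH X\to X$ is a level-wise complete Segal space equivalence if and only if $X$ has weakly constant objects. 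I expect the main obstacle to be the full-faithfulness step: one must correctly identify the objects of the \emph{strict} pullback $\bH X$ and make sure its strictly computed mapping spaces are the homotopically correct ones, which is precisely where level-wise Reedy fibrancy of $\bar X$ (inherited from injective fibrancy) is used; everything else is bookkeeping with \cref{lemma:dk levelwise Segal} and the relevant definitions.
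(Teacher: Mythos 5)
Your proof is correct and follows essentially the same route as the paper's (much terser) argument: the paper simply asserts that $\Und_d\bH X\to\Und_d X$ is ``by definition fully faithful'' and that the object-level inclusion $X[t,0]\to X[d,0]$ is essentially surjective iff $X$ has weakly constant objects; you have unpacked both assertions, verifying full faithfulness via the explicit pullback identification and reducing to essential surjectivity through \cref{lemma:dk levelwise Segal} and the intermediate stage $\bar X$.
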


\begin{proof}
	 We need to show $RX \to X$ is a level-wise complete Segal space equivalence. For an object $d$ in $\cD$, $\Und_dRX \to \Und_dX$ is by definition fully faithful and the inclusion of object $X[t,0] = \Und_dRX[0]  \to \Und_dX[0]$ is essentially surjective if and only if $X$ has weakly constant objects and so the result follows. 
\end{proof}

We can combine this result with \cref{cor:zigzag} to get the following corollary.

\begin{corone} \label{cor:zigzag better}
	Let $X$ be an arbitrary $\cD$-simplicial space. There exists a natural zigzag 
	$$X \to \hat{X} \leftarrow \tilde{X}$$
	which satisfies the following conditions:
	\begin{enumerate}
		\item $\hat{X}$ is a level-wise complete Segal space.
		\item $\tilde{X}$ is a level-wise Segal category and $\Val(\tilde{X})$ is discrete.
		\item The first arrow is a level-wise complete Segal space equivalence.
		\item The second map is a level-wise complete Segal space equivalence and if and only if $X$ has weakly constant objects. 
	\end{enumerate}
\end{corone}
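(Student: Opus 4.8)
The plan is to run $X$ through \cref{cor:zigzag} and then feed the resulting level-wise Segal category into \cref{lemma:more technical lemma}. First I would let $\hat X$ be the fibrant replacement of $X$ in the level-wise complete Segal space model structure (\cref{prop:levelcss}), with $X \to \hat X$ the accompanying trivial cofibration; then $\hat X$ is a level-wise complete Segal space and $X \to \hat X$ is a level-wise complete Segal space equivalence, which takes care of (1) and (3). Applying \cref{cor:zigzag} produces the level-wise Segal category $R_\cD\hat X$ together with a level-wise complete Segal space equivalence $R_\cD\hat X \to \hat X$; all of these constructions are functorial in $X$, so the zigzag we build will be natural.

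Next I would set $\tilde X := \bH(R_\cD\hat X)$, which is legitimate since $R_\cD\hat X$ is a level-wise Segal category, so \cref{def:bH} applies (here $\cD$ is assumed to have a terminal object $t$, as required by \cref{def:bH}). As recorded after \cref{def:bH}, $\bH$ of a level-wise Segal category is again a level-wise Segal category, and $\Val(\tilde X) = (R_\cD\hat X)[t,0]$ is discrete because $\Und_t(R_\cD\hat X)$ is a Segal category; this gives (2). For the second arrow of the zigzag I would take the composite $\tilde X = \bH(R_\cD\hat X) \to R_\cD\hat X \to \hat X$, in which the right-hand factor is always a level-wise complete Segal space equivalence by \cref{cor:zigzag}.

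It then remains to prove (4), namely that the left-hand factor $\bH(R_\cD\hat X) \to R_\cD\hat X$ is a level-wise complete Segal space equivalence precisely when $X$ has weakly constant objects. By \cref{lemma:more technical lemma} this map is a level-wise complete Segal space equivalence if and only if $R_\cD\hat X$ has weakly constant objects, so the remaining point is that $R_\cD\hat X$ has weakly constant objects if and only if $X$ does. I would deduce this from the fact that the property of having weakly constant objects is invariant under level-wise complete Segal space equivalences: essential surjectivity of a map $\Und_{d'}(-) \to \Und_d(-)$ is detected after fibrant replacement in the complete Segal space model structure, and the naturality squares relating these maps for $X$, $\hat X$ and $R_\cD\hat X$ commute with the vertical equivalences $X \to \hat X \leftarrow R_\cD\hat X$, so the resulting maps on path components correspond. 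This invariance statement is the only step that requires real care: one must check that the complete Segal space fibrant replacements used to define essential surjectivity are functorial enough that the squares of path-component maps stay commutative, so that ``weakly constant objects'' genuinely transports along level-wise complete Segal space equivalences. Granting this, the corollary is simply the concatenation of \cref{cor:zigzag}, \cref{lemma:more technical lemma}, and the elementary properties of $\bH$ recorded after \cref{def:bH}.
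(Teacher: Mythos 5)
Your proof is correct and takes essentially the same route as the paper: set $\hat X$ to the level-wise CSS fibrant replacement, set $\tilde X = \bH R_\cD\hat X$, and cite \cref{lemma:more technical lemma} for (4). The one place you add content is the explicit observation that $R_\cD\hat X$ has weakly constant objects if and only if $X$ does; the paper's proof cites \cref{lemma:more technical lemma} directly as if the ``iff'' were about $X$, whereas the lemma speaks about the level-wise Segal category it is applied to, namely $R_\cD\hat X$, and your invariance argument (essential surjectivity of $\Und_{d'}(-)\to\Und_d(-)$ is preserved and reflected by level-wise CSS equivalences, using functorial fibrant replacements) is exactly what bridges that gap. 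So this is the same approach, with a slightly more careful bookkeeping step spelled out.
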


\begin{proof}
	Let $\hat{X}$ be the fibrant replacement of $X$ in the level-wise complete Segal space model structure \cref{prop:levelcss}. Define $\tilde{X} = \bH R_\cD \hat{X}$, which is a level-wise Segal category with discrete objects by \cref{def:bH}. The first map is a level-wise complete Segal space equivalence by definition and the second map is a level-wise complete Segal space equivalence if and only if $X$ has weakly constant objects by \cref{lemma:more technical lemma}. 
\end{proof}

As a first application we can use this result to better understand equivalences of $\sP(\cD)$-enriched Segal spaces and adjust \cref{lemma:dk levelwise Segal} to $\sP(\cD)$-enriched Segal spaces. 

\begin{defone}
	Let $\cD$ have a terminal object $t$. A map of $\sP(\cD)$-enriched Segal spaces $f: X \to Y$ is an {\it $\sP(\cD)$-enriched Dwyer-Kan equivalence} if the following two conditions hold:
	\begin{itemize}
		\item {\bf Fully faithful:} For every two objects $x,y$, the map of $\cD$-spaces 
		$$\map_X(x,y) \to \map_Y(fx,fy)$$
		is an injective equivalence of $\cD$-spaces. 
		\item {\bf Essentially surjective:} The map of Segal spaces $\Und_tf \to \Und_tX \to \Und_tY$ is essentially surjective. 
	\end{itemize}
\end{defone}

This new notion relates appropriately with other notions of equivalences.

\begin{lemone} \label{lemma:dk equiv segal enriched}
	Let $\cD$ have a terminal object $t$ and let $f: X \to Y$ be a map of $\sP(\cD)$-enriched Segal spaces. Then the following are equivalent.
	\begin{enumerate}
		\item $f$ is a level-wise complete Segal space equivalence.
		\item $f$ is a level-wise Dwyer-Kan equivalence.
		\item $f$ is an $\sP(\cD)$-enriched Dwyer-Kan equivalence.
	\end{enumerate}
\end{lemone}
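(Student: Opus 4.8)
The plan is to split the three-way equivalence into $(1)\Leftrightarrow(2)$, which is essentially a restatement of an earlier lemma, and $(2)\Leftrightarrow(3)$, which requires unwinding the definitions and genuinely using the weakly-constant-objects hypothesis. For $(1)\Leftrightarrow(2)$: every $\sP(\cD)$-enriched Segal space is in particular a level-wise Segal space (\cref{def:enriched segal}), so \cref{lemma:dk levelwise Segal} applies verbatim and says that $f$ is a level-wise complete Segal space equivalence if and only if $\Und_df\colon\Und_dX\to\Und_dY$ is a Dwyer--Kan equivalence of Segal spaces for every object $d$ of $\cD$; the latter is exactly what it means for $f$ to be a level-wise Dwyer--Kan equivalence. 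So it remains to prove $(2)\Leftrightarrow(3)$.

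The first step for $(2)\Leftrightarrow(3)$ is a pointwise identification of mapping objects. Fix objects $x,y$ of $X$ (points of $\Obj_X=X[t,0]$) and an object $d$ of $\cD$, and write $x_d,y_d\in X[d,0]$ for their images under the map induced by the unique $d\to t$. Since evaluation at $d$ is a right adjoint and $D[0]$ is point-valued at every object of $\cD$ (\cref{rem:D s F}), evaluating the defining pullback of \cref{def:map} at $d$ identifies $\map_X(x,y)[d]$ with the strict pullback of $(\Und_dX)_1\to(\Und_dX)_0\times(\Und_dX)_0\leftarrow\Delta[0]$ at $(x_d,y_d)$; as $X$ is injectively, hence level-wise Reedy, fibrant this strict pullback computes the homotopy mapping space $\map_{\Und_dX}(x_d,y_d)$, and the comparison is natural in $f$. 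Combining this with the fact that a map of $\cD$-spaces is an injective equivalence precisely when it is a Kan equivalence at every object of $\cD$ (\cref{prop:injectivemod}), the fully-faithfulness clause of $(3)$ becomes: for every pair of objects $x,y$ of $X$ and every $d$, the map $\map_{\Und_dX}(x_d,y_d)\to\map_{\Und_dY}(fx_d,fy_d)$ is a Kan equivalence; and the essential-surjectivity clause of $(3)$ is literally essential surjectivity of $\Und_tf$.

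Now one compares with $(2)$. The direction $(2)\Rightarrow(3)$ is immediate: full faithfulness of every $\Und_df$ specializes to the objects $x_d,y_d$, and essential surjectivity of every $\Und_df$ includes the case $d=t$. For $(3)\Rightarrow(2)$ one must, for each $d$, promote full faithfulness from the objects in the image of $X[t,0]$ to all objects of $\Und_dX$, and promote essential surjectivity from $d=t$ to arbitrary $d$; this is where the weakly-constant-objects hypothesis enters. Since $\cD$ has a terminal object, that hypothesis says every object of $\Und_dX$ is equivalent, in $\Und_dX$, to the image of an object of $\Und_tX$ (and likewise for $Y$). For full faithfulness, combine this with the standard fact that the mapping-space functor of a Segal space is invariant under equivalences of source and target, naturally in maps of Segal spaces (cf.\ \cite[Section 5]{rezk2001css}): given arbitrary objects $a,b$ of $\Und_dX$, choose $x,y\in X[t,0]$ with $a\simeq x_d$ and $b\simeq y_d$, and transport the known equivalence $\map_{\Und_dX}(x_d,y_d)\to\map_{\Und_dY}(fx_d,fy_d)$ along them. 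For essential surjectivity of $\Und_df$, chase the commuting square $X[t,0]\to Y[t,0]$, $X[d,0]\to Y[d,0]$: any object of $\Und_dY$ is equivalent to the image of some object $c'$ of $\Und_tY$, which is equivalent to $fa$ for some object $a$ of $\Und_tX$, and since $\Und_tY\to\Und_dY$ preserves equivalences and $f$ is natural in $\cD$, the original object is equivalent to $\Und_df$ applied to the image of $a$ in $X[d,0]$. Hence each $\Und_df$ is a Dwyer--Kan equivalence, which is $(2)$.

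The main obstacle is the full-faithfulness half of $(3)\Rightarrow(2)$: one needs the transport equivalences to be compatible up to homotopy with $\Und_df$, i.e.\ the equivalence-invariance of the mapping-space functor must be natural in maps of Segal spaces; this is standard but should be cited carefully. As an alternative to the transport argument, one could first replace $X$ and $Y$ by level-wise Segal categories with discrete spaces of objects via \cref{cor:zigzag better} and then use $(1)\Leftrightarrow(2)$ once more to reduce to that case. No other step presents real difficulty.
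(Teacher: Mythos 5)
Your decomposition is sound and the $(1)\Leftrightarrow(2)$ step matches the paper exactly. For $(2)\Leftrightarrow(3)$ the paper does not take your primary route: it invokes \cref{cor:zigzag better} to replace $f$ by the level-wise complete Segal space equivalent map $\tilde f\colon\tilde X\to\tilde Y$ between level-wise Segal categories with \emph{constant discrete} object $\cD$-spaces, and then observes that for such $\tilde X$ the level-wise and $\sP(\cD)$-enriched Dwyer--Kan conditions coincide tautologically, because $\Und_d\tilde X$ and $\Und_t\tilde X$ have literally the same set of objects and the pointwise identification $\map_{\tilde X}(x,y)[d]\cong\map_{\Und_d\tilde X}(x,y)$ exhausts all pairs of objects at every level. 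Your primary argument --- unwinding \cref{def:map} pointwise and then transporting full-faithfulness from objects in the image of $X[t,0]$ to arbitrary objects of $\Und_dX$ via the weakly-constant-objects hypothesis --- is correct in substance, but it does lean on the naturality, in maps of Segal spaces, of the equivalence-invariance of mapping spaces, which is the one place where a careful write-up would have to do real work (the invariance itself is \cite[Section 5]{rezk2001css}, but the compatibility of the transport square with $f$ requires an argument). You in fact flag this as the main obstacle and propose the reduction via \cref{cor:zigzag better} as an alternative; that alternative is precisely the paper's proof. The trade-off is that your direct route is more elementary and keeps everything in the Segal-space world, at the cost of the naturality lemma, while the paper's reduction to discrete-objects Segal categories makes the comparison of the two Dwyer--Kan notions a definitional unwinding.
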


\begin{proof}
	The first two are equivalent by \cref{lemma:dk levelwise Segal}. Now, following \cref{cor:zigzag better}, $f$ is a level-wise Dwyer-Kan equivalence if and only if $\tilde{f}$ is a level-wise Dwyer-Kan equivalence. However, $\Val(\tilde{X})$ is discrete and so this coincides with being a $\sP(\cD)$-enriched Dwyer-Kan equivalence. 
\end{proof}

We can in particular apply this to nerves.

\begin{exone}\label{ex:enriched equiv is levelwise css equiv}
	Let $F:\C \to \D$ be an $\sP(\cD)$-enriched functor of $\sP(\cD)$-enriched categories. Then, by \cref{lemma:dk equiv segal enriched}, $F$ is a Dwyer-Kan equivalence of $\sP(\cD)$-enriched categories (\cref{def:loc dk}) if and only if $N_\cD F: N_\cD\C\to N_\cD\D$ is a level-wise (or, equivalently, $\sP(\cD)$-enriched) Dwyer-Kan equivalence.
\end{exone}

The notion of $\sP(\cD)$-enriched Segal space is deliberately chosen to be as general as possible to include many examples of $\cD$-simplicial spaces. The drawback is that it cannot be a fibrant object in a localization model structure on $\sP(\cD\times\DD)$. However, this will be relevant in later parts (\cref{sec:infn fib}).

\begin{defone}
	Let $\cD$ have a terminal object $t$. An {\it $\sP(\cD)$-enriched complete Segal space} $X$ is an $\sP(\cD)$-enriched Segal space, such that $\Und_tX$ is complete and $\Val(X)$ is homotopically constant.
\end{defone}

\begin{theone}\label{the:spd enriched css}
	Let $\cD$ be a small category with terminal object $t$.
	There exists a simplicial combinatorial left proper model structure on $\cD$-simplicial spaces, called the {\it $\sP(\cD)$-enriched complete Segal space model structure} and denoted by $\sP(\cD\times\DD)^{\cD-CSS}$, which has the following properties:
	\begin{itemize}
		\item The cofibrations are monomorphisms.
		\item An object $W$ is fibrant if it is an $\sP(\cD)$-enriched complete Segal space.
		\item A morphism $f:A \to B$ is a weak equivalence if and only if for every $\sP(\cD)$-enriched complete Segal space the map 
		$$f^*:\Map(B,W) \to \Map(A,W)$$
		is a Kan equivalence.
		\item Fibrations (weak equivalences) between fibrant objects are injective fibrations (weak equivalences).
		\item Let $W$ be a level-wise Segal space with homotopically constant value, then the fibrant replacement $W \to \hat{W}$ is a Dwyer-Kan equivalence (\cref{def:loc dk}).
	\end{itemize}
\end{theone}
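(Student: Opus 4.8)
The plan is to realize $\sP(\cD\times\DD)^{\cD-CSS}$ as a left Bousfield localization of the \emph{level-wise Segal} model structure $\sP(\cD\times\DD)^{lev_{Seg}}$ of \cref{prop:levelsegal} --- and not of the level-wise complete Segal model structure, since an $\sP(\cD)$-enriched complete Segal space is only required to be complete at the terminal object $t$, so its underlying simplicial spaces need not all be complete. Since $\sP(\cD\times\DD)^{lev_{Seg}}$ is simplicial, combinatorial and left proper, the standard theory of Bousfield localizations applies \cite{hirschhorn2003modelcategories}.

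Concretely, writing $t$ for the terminal object of $\cD$, I would localize at
\[
  T \;=\; \bigl\{\, F[d,0]\to F[t,0] \ :\ d\in\cD \,\bigr\}\ \cup\ \bigl\{\, \Inc_t\bigl(F[0]\to E[1]\bigr) \,\bigr\},
\]
the first family being induced by the unique maps $d\to t$ and the second the image under $\Inc_t$ (\cref{def:und}) of Rezk's completeness map; as every object of $\sP(\cD\times\DD)$ is cofibrant we may replace these by cofibrations without affecting the class of local objects. Set $\sP(\cD\times\DD)^{\cD-CSS}:=L_T\sP(\cD\times\DD)^{lev_{Seg}}$. Being a localization of a simplicial combinatorial left proper model category, it is again of that kind and has the same cofibrations, namely the monomorphisms, and the stated weak equivalences, namely the maps $f$ with $\Map(f,W)$ a Kan equivalence for all fibrant $W$; this gives the first and third bullets. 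General localization theory further identifies fibrations and weak equivalences between fibrant objects with those of $\sP(\cD\times\DD)^{lev_{Seg}}$, which between injectively fibrant level-wise Segal spaces are injective fibrations and injective equivalences by \cref{prop:levelsegal}; this gives the fourth bullet once the fibrant objects are identified.

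That identification is the content of the second bullet. A $\sP(\cD\times\DD)^{lev_{Seg}}$-fibrant object is exactly an injectively fibrant level-wise Segal space $W$, for which one has $\Map(F[d,0],W)\cong\Val(W)(d)$ and, by the simplicial adjunction $\Inc_t\dashv\Und_t$, $\Map(\Inc_t K,W)\cong\Map(K,\Und_t W)$. Hence $W$ is $T$-local if and only if each structure map $\Val(W)(t)\to\Val(W)(d)$ is a Kan equivalence --- equivalently, since $t$ is terminal, $\Val(W)$ is homotopically constant --- and $\Und_t W$ is complete, the latter being Rezk's characterization of completeness via $F[0]\to E[1]$. A short diagram chase on $\pi_0$ through the completion units shows homotopically constant value forces weakly constant objects, so a $T$-local fibrant object is precisely an $\sP(\cD)$-enriched complete Segal space, and conversely. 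Feeding this back completes the third and fourth bullets.

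It remains to prove the last bullet, which is where the real work lies. Let $W$ be a level-wise Segal space with homotopically constant value, so $W$ is an $\sP(\cD)$-enriched Segal space; I must show its fibrant replacement $W\to\hat W$ is a Dwyer-Kan equivalence. As both sides are $\sP(\cD)$-enriched Segal spaces, \cref{lemma:dk equiv segal enriched} reduces this to showing $\Und_d W\to\Und_d \hat W$ is a Dwyer-Kan equivalence of Segal spaces for every $d$. The plan is to build $\hat W$ explicitly: level-wise complete $W$ to get $\hat W^{lev}$, then pull back along the constant $\cD$-diagram at $\Val(\hat W^{lev})(t)$, in the style of the functor $\bH$ of \cref{def:bH} --- this forces homotopically constant value and leaves $\Und_t$ complete, hence lands in a fibrant object. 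For $d=t$ the map $W\to\hat W$ is the Rezk completion of $\Und_t W$, a Dwyer-Kan equivalence; for $d\neq t$ one compares with $\hat W^{lev}$, where the completion $\Und_d W\to\Und_d \hat W^{lev}$ is a Dwyer-Kan equivalence and the re-indexing $\Und_d \hat W\to\Und_d \hat W^{lev}$ is fully faithful by construction and essentially surjective precisely because homotopically constant value makes $\Val(\hat W^{lev})(t)\to\Val(\hat W^{lev})(d)$ surjective on $\pi_0$ (by the argument already used for weakly constant objects, and as in \cref{lemma:more technical lemma}). I expect this last step --- getting the explicit fibrant replacement right and tracking which equivalences are merely $T$-local versus genuinely level-wise --- to be the main obstacle; everything else is a routine application of localization theory to \cref{prop:levelsegal}.
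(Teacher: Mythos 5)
Your identification of the model structure as a Bousfield localization is correct and is essentially the same route the paper takes: the paper localizes the injective model structure all at once at the families $\Disc(G[n])\times F[d]\to F[d,n]$ (which coincide with the Segal maps of the level-wise Segal structure), $D[0]\to\Disc(E[1])$ (whose locality condition is precisely completeness of $\Und_t W$, since $\Disc$ is constant along $\cD^{op}$ with $t$ initial in $\cD^{op}$), and the constancy maps, while you localize the level-wise Segal structure at the latter two; by transitivity of localization these agree. Your identification of the fibrant objects as $\sP(\cD)$-enriched complete Segal spaces is correct, including the $\pi_0$-argument deducing weakly constant objects from homotopically constant value. In fact you get the direction of the constancy maps right as $F[d,0]\to F[t,0]$, which is the map actually induced by the unique morphism $d\to t$; the paper writes it the other way, which I read as a slip since no morphism $t\to d$ exists for $d\neq t$.

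The real divergence is in the last bullet, and you correctly flag it as the obstacle. The paper handles it by citing \cite[Lemma 8.19]{bergnerrezk2020comparisonii} and observing that the argument there only needs a terminal object in the diagram, whereas you sketch an explicit fibrant replacement. Your sketch has a concrete gap: you first level-wise complete to get $\hat W^{lev}$ and then pull back along the constant diagram at $\Val(\hat W^{lev})(t)$, but the map $\Val(\hat W^{lev})(t)\to\Val(\hat W^{lev})(d)$ is not a monomorphism and is not a Kan fibration in general, so the strict pullback you form need not be a homotopy pullback, need not be injectively fibrant, and need not receive a map from $W$ over $\hat W^{lev}$ (since $W[d,0]$ need not agree with $\Val(\hat W^{lev})(t)$). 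This is exactly why the $\bH$ construction of \cref{def:bH} and \cref{cor:zigzag better} reverses the order of operations: it first passes to a Segal \emph{category} with genuinely discrete objects so that the map in the pullback is a monomorphism of constant diagrams, and only then completes. Following the paper's order (discretize objects, then complete) avoids the homotopy-pullback issue, and the level-wise Dwyer-Kan comparison you want then does go through via \cref{lemma:more technical lemma} and \cref{lemma:dk equiv segal enriched}. Your surjectivity-on-$\pi_0$ observation for essential surjectivity is correct and is the right ingredient, but it needs to be deployed after fixing the construction as above.
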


\begin{proof}
	The model structure is a left Bousfield localization with respect to the following sets of morphisms:
	\begin{itemize}
		\item $\Disc(G[n]) \times F[d] \to F[d,n]$
		\item $D[0] \to \Disc(E[1])$
		\item $F[t,0] \to F[d,0]$
	\end{itemize}
	By definition the fibrant objects are the $\sP(\cD)$-enriched complete Segal spaces. The other properties follow formally from Bousfield localizations. 
	
	The last part follows from the same steps as the proof in \cite[Lemma 8.19]{bergnerrezk2020comparisonii}. Notice the proof there is given for a specific diagram category, rather than a general $\cD$, however, it only uses the fact that the diagram has a terminal object.
\end{proof}

\begin{remone}
	Notice, every $\sP(\cD)$-enriched Segal space is level-wise complete Segal space equivalent to an $\sP(\cD)$-enriched complete Segal space. Indeed, we can simply complete the underlying $\sP(\cD)$-enriched Segal category $\tilde{X}$ we obtained in \cref{cor:zigzag better} using the completion construction explained in the last item of \cref{the:spd enriched css}. 
\end{remone}

\begin{remone} \label{rem:no segal cats}
	Unlike $\sP(\cD)$-enriched Segal spaces, we do not construct any model structure for $\sP(\cD)$-enriched Segal categories. Indeed, following \cite[Section 8]{bergnerrezk2020comparisonii}, constructing such a model structure for a general diagram category $\cD$ might not be possible or at least quite challenging.
\end{remone}

\subsection{The Classical Grothendieck Construction} \label{subsec:grothendieck fib}
One important step in our work is to generalize the Grothendieck construction to $\sP(\cD)$-enriched categories. Hence, we will review it here quickly. The ideas are in no way new and a more detailed approach can be found in many places, such as \cite[I.5]{maclanemoerdijk1994topos}, or \cite[A1.1.7, B1.3.1]{johnstone2002elephanti}. Our review here is motivated by \cite[Section 1]{rasekh2017left}

\begin{defone} \label{def:int}
	Let $\C$ be a category. Define the {\it Grothendieck construction}
	$$\int_\C : \Fun(\C,\set) \to \cat_{/\C}$$ 
	as the functor that takes $F: \C \to \set$ to the category $\int_\C F \to \C$ with 
	\begin{itemize}
		\item Objects: Pairs $(c,x)$ where $c$ is an object in $\C$ and $x \in F(c)$.
		\item Morphisms: A morphisms $(c,x) \to (d,y)$ is a choice of morphism $f: c \to d$ in $\C$ such that $F(f)(x) = y$.
	\end{itemize}
	It comes with an evident projection map $\pi_F: \int_\C F \to \C$. 
\end{defone}

This functor has a left adjoint and a right adjoint that we want to define in detail. The key observation is the following example.

\begin{exone} \label{ex:int hom under}
	Let $\Hom_\C(c,-): \C \to \set$ be a corepresentable functor. Then $\int_\C \Hom(c,-) = \C_{c/}$.
\end{exone}

\begin{defone}
	Let $\C$ be a category. We define the functor  
	$$\C_{/-}: \C \to \cat_{/\C}$$
	that takes an object to the over-category $\C_{/c}$ and a morphism $f:c \to d$ to the post-composition $f_!: \C_{/c} \to \C_{/d}$.
	
	Similarly, define the functor 
	$$\C_{-/}: \C^{op} \to \cat_{\C/}$$
	that takes an object to the under-category $\C_{c/}$ and a morphism $f: c \to d$ to the precomposition $f^*: \C_{d/} \to \C_{c/}$.
\end{defone}

For a given category over $\C$, $p: \D \to \C$ define 
$$\T_\C(p: \D \to \C): \C \to \set$$
as the composition 
$$\C \xrightarrow{ \ \C_{/-} \ } \cat_{/\C} \xrightarrow{ \ - \times_\C \D \ } \cat \xrightarrow{ \ \pi_0 \ } \set$$
in other words we have $\T_\C(c) = \pi_0(\C_{/c} \times_\C \D)$.
Similarly, define 
$$\H_\C(p: \D \to \C): \C \to \set$$ 
as the composition
$$\C \xrightarrow{ \ (\C_{-/})^{op} \ } (\cat_{/\C})^{op} \xrightarrow{ \ \Hom_{/\C}(-,\D) \ } \set$$
meaning we have $\H_\C(c) = \Hom_{/\C}(\C_{c/}, \D)$.
We now have the following result.

\begin{propone} \label{prop:integral adjunctions}
	We have the following diagram of adjunctions
	\begin{center}
		\begin{tikzcd}[row sep=0.5in, column sep=0.9in]
			\Fun(\C,\set) \arrow[r, "\int_\C" description] & \cat_{/\C} \arrow[l, bend left = 30, "\H_\C", "\bot"'] \arrow[l, bend right=30, "\T_\C"', "\bot"]  
		\end{tikzcd},
	\end{center}
where $\int_\C$ is fully faithful.
\end{propone}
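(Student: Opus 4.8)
The plan is to verify the two adjunctions by hand, exhibiting explicit hom--set bijections, and to read off full faithfulness of $\int_\C$ from the fact that a morphism over $\C$ between categories of elements is forced to fix the $\C$--coordinate. The only input beyond unwinding definitions is \cref{ex:int hom under}, together with the description of morphisms in an over-- or under--category and in a category of elements. For full faithfulness of $\int_\C$: a functor $G\colon\int_\C F\to\int_\C F'$ over $\C$ must send an object $(c,x)$ to one of the form $(c,\eta_c(x))$, and it sends the morphism $(c,x)\to(c',F(f)(x))$ lying over $f\colon c\to c'$ to the morphism over $f$ in $\int_\C F'$, which exists if and only if $F'(f)(\eta_c(x)) = \eta_{c'}(F(f)(x))$; hence $G$ is exactly the datum of a natural transformation $\eta=(\eta_c)_c\colon F\to F'$, and conversely. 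This bijection is compatible with composition, so $\int_\C$ is fully faithful.

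Next, the adjunction $\T_\C\dashv\int_\C$. Unwinding the target: a morphism $\D\to\int_\C F$ over $p\colon\D\to\C$ is precisely a compatible family, i.e. a choice of $\xi(d)\in F(p(d))$ for each object $d$ of $\D$ with $F(p(h))(\xi(d))=\xi(d')$ for every $h\colon d\to d'$ (the morphism part being then forced to be $p(h)$). Unwinding the source: since $\T_\C(p)(c)=\pi_0(\C_{/c}\times_\C\D)$, a transformation $\T_\C(p)\to F$ assigns to the class of each pair $(d,\,g\colon p(d)\to c)$ an element of $F(c)$. I would match these by sending $\xi$ to the transformation $[(d,g)]\mapsto F(g)(\xi(d))$, which is well-defined on path components because a morphism $(d,g)\to(d',g')$ in $\C_{/c}\times_\C\D$ is a map $h\colon d\to d'$ with $g'\circ p(h)=g$, and is natural in $c$; and in the reverse direction sending $\beta$ to $\xi(d):=\beta_{p(d)}([(d,\id_{p(d)})])$. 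That these are mutually inverse and natural in $p$ and $F$ is a short computation with the same description of comma--category morphisms.

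Finally, the adjunction $\int_\C\dashv\H_\C$. For $x\in F(c)$ the assignment $(f\colon c\to d)\mapsto(d,F(f)(x))$ defines a functor $\C_{c/}\to\int_\C F$ over $\C$, so composing with a given $G\colon\int_\C F\to\D$ over $\C$ yields for each $c$ a map $\alpha_c\colon F(c)\to\Hom_{/\C}(\C_{c/},\D)=\H_\C(\D)(c)$; naturality of $\alpha$ in $c$ is exactly the identity $\alpha_d(F(f)(x))=\alpha_c(x)\circ f^*$, where $f^*\colon\C_{d/}\to\C_{c/}$ is precomposition, and this holds by inspection. Conversely, from a natural $\alpha$ I would set $G(c,x):=\alpha_c(x)(\id_c)$ on objects, which lies over $c$, and send the morphism $(c,x)\to(d,F(f)(x))$ over $f$ to the image under $\alpha_c(x)$ of the morphism $\id_c\to f$ of $\C_{c/}$; the naturality identity, applied again, shows $\alpha_c(x)(f)=\alpha_d(F(f)(x))(\id_d)=G(d,F(f)(x))$, so that $G$ is a well-defined functor over $\C$. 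Checking that these two passages are mutually inverse and natural in $F$ and $\D$ finishes the proof.

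The hard part will be the bookkeeping in this last adjunction: one has to keep straight that $\H_\C(\D)$ is covariant in $\C$, implemented by precomposition along $f^*\colon\C_{d/}\to\C_{c/}$, and match this against the covariant action of $G$ on the morphisms of $\int_\C F$ --- which is precisely where the naturality square for $\alpha$ gets used twice, once to see $\alpha$ is well-defined and once to see $G$ is. Everything else, including full faithfulness of $\int_\C$, is a direct unwinding of definitions, and could alternatively be deduced from the classical equivalence between $\Fun(\C,\set)$ and discrete Grothendieck opfibrations over $\C$ recalled in the introduction.
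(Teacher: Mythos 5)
Your proof is correct. Note that the paper does not actually prove \cref{prop:integral adjunctions} inline --- it defers to \cite[Proposition~1.10, Lemma~1.11]{rasekh2017left} --- so there is no in-text argument to compare against. Your hands-on verification checks out: the observation that a functor over $\C$ between categories of elements must fix the $\C$-coordinate (since morphisms in $\cat_{/\C}$ are strictly commuting triangles and morphisms of $\int_\C F$ are determined by their image in $\C$) correctly identifies such functors with natural transformations, establishing full faithfulness; the hom-set bijection for $\T_\C\dashv\int_\C$ is the standard one, with well-definedness on $\pi_0$ and the inverse both following from naturality of $\beta$ applied at $p(h)$; and your treatment of $\int_\C\dashv\H_\C$ correctly handles the two places where naturality of $\alpha$ is invoked (well-definedness of the target of $G$ on morphisms, and functoriality of $G$). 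For comparison, a slightly more abstract route --- and the one this paper follows for the \emph{enriched} analogue in \cref{lemma:sintsH adj} and \cref{the:groth cd ff} --- is to observe from \cref{ex:int hom under} that $\int_\C$ restricts along the Yoneda embedding to $c\mapsto\C_{c/}$, check directly that $\int_\C$ preserves colimits, and then read off the right adjoint as $\D\mapsto\Hom_{/\C}(\C_{-/},\D)$ from the left-Kan-extension description of colimit-preserving functors out of a presheaf category. That approach buys uniformity with the rest of the paper; your elementary unwinding buys a proof that is self-contained and makes the unit/counit of both adjunctions fully explicit, which is useful for the fiberwise computations used elsewhere.
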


For a proof see \cite[Proposition 1.10, Lemma 1.11]{rasekh2017left}.

Notice $\int_\C$ is in fact not essentially surjective, as the following result in \cite[Lemma 1.14]{rasekh2017left} illustrates. Recall that a functor $p: \D \to \C$ is {\it conservative} if it reflects isomorphisms. 

\begin{lemone} \label{lemma:Grothendieck fibration conservative}
	Let $F: \C \to \set$ be a functor. Then $\pi_F:\int_\C F \to \C$ is conservative.
\end{lemone}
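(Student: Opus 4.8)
The plan is to unwind the definition of the Grothendieck construction (\cref{def:int}) and check directly that any morphism of $\int_\C F$ which lies over an isomorphism of $\C$ admits a two-sided inverse. Since the fibers of $\pi_F$ are discrete, one could instead deduce this abstractly, but the hands-on argument is shorter and completely elementary.

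First I would fix a morphism $f : (c,x) \to (d,y)$ in $\int_\C F$; by definition this is a morphism $f : c \to d$ in $\C$ with $F(f)(x) = y$. Assume $\pi_F(f) = f$ is an isomorphism in $\C$, and let $g : d \to c$ be its inverse. The key point is that $g$ lifts to a morphism $(d,y) \to (c,x)$ of $\int_\C F$: using that $F$ preserves composition and identities,
$$F(g)(y) = F(g)\big(F(f)(x)\big) = F(g \circ f)(x) = F(\id_c)(x) = x,$$
so indeed $g$ defines a morphism $(d,y) \to (c,x)$.

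Finally I would note that composition in $\int_\C F$ is computed by composing the underlying morphisms of $\C$, so the composites $g \circ f$ and $f \circ g$ are the morphisms $\id_c$ and $\id_d$, which are precisely the identities of $(c,x)$ and $(d,y)$. Hence $f$ is an isomorphism in $\int_\C F$, and therefore $\pi_F$ reflects isomorphisms, i.e.\ is conservative. There is no real obstacle here: the entire content is that $F$ is a functor, and the only thing to be careful about is the bookkeeping of the pairs $(c,x)$ and the verification that $g$ respects the fiber data.
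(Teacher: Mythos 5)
Your proof is correct and is exactly the standard direct unwinding of the definition: lift the inverse using functoriality of $F$, check that the lifted morphism has the right fiber data, and observe that composition in $\int_\C F$ is computed in $\C$. The paper does not give a proof inline—it cites \cite[Lemma 1.14]{rasekh2017left}—but the argument there is this same elementary one, so there is nothing to flag.
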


Restricting our attention to the essential image of $\int_\C$ leads us to {\it discrete Grothendieck opfibrations}.

\begin{defone} \label{def:discrete Groth opfib}
	A functor $P: \D \to \C$ is a {\it discrete Grothendieck opfibration} over $\C$ if it is in the essential image of $\int_\C$, 
	meaning there exists a functor 
	$F: \C \to \set$ and isomorphism
	$\int_\C F \xrightarrow{ \ \cong \ } \D$ 
	over $\C$. 
\end{defone}

Fortunately, there is also an internal characterization of discrete Grothendieck opfibrations.

\begin{lemone} \label{lemma:discrete Groth fib lifting} 
	A functor $P: \D \to \C$  is a discrete Grothendieck opfibration over $\C$ if and only if for 
	any map $f: C \to C'$ in $\C$ and object $D$ in $\mathscr{D}$ such that $P(D) = C$,
	there exists a {\it unique} lift $\hat{f}:D \to D'$ such that $P(\hat{f}) = f$.
\end{lemone}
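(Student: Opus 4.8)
The plan is to prove both implications directly from the explicit description of $\int_\C$ in \cref{def:int}, using that the stated lifting property is invariant under isomorphism over $\C$.

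For the ``only if'' direction, assume $P:\D\to\C$ is a discrete Grothendieck opfibration. By \cref{def:discrete Groth opfib} we may replace $\D$ by $\int_\C F$ for a suitable $F:\C\to\set$. An object of $\int_\C F$ lying over $C$ is a pair $(C,x)$ with $x\in F(C)$, and a morphism $(C,x)\to(C',y)$ over $f:C\to C'$ exists exactly when $F(f)(x)=y$, in which case it carries no data beyond $f$ and is therefore unique. Hence for $D=(C,x)$ and $f:C\to C'$ the only possible target is $(C',F(f)(x))$, and the lift $\hat f$ both exists and is unique.

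For the ``if'' direction, assume the unique lifting property and construct $F:\C\to\set$. On objects put $F(C)=P^{-1}(C)$, the set of objects of $\D$ lying over $C$; on a morphism $f:C\to C'$ let $F(f):F(C)\to F(C')$ send $D$ to the target of the unique lift $\hat f\colon D\to D'$. Functoriality is immediate from uniqueness: $\id_D$ is a lift of $\id_C$, so $F(\id_C)=\id$; and if $\hat f\colon D\to D'$ and $\hat g\colon D'\to D''$ are the unique lifts of $f$ and $g$, then $\hat g\hat f$ is a lift of $gf$ with source $D$, which forces $F(gf)=F(g)F(f)$.

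Finally I would exhibit an isomorphism $\Phi:\int_\C F\xrightarrow{\ \cong\ }\D$ over $\C$: send $(C,D)\mapsto D$ on objects, and send a morphism $f:(C,D)\to(C',D')$ (so that $F(f)(D)=D'$) to the unique lift $\hat f\colon D\to D'$; the same uniqueness argument shows $\Phi$ is a functor, and clearly $P\Phi=\pi_F$. It is a bijection on objects since objects over $C$ on both sides are precisely $P^{-1}(C)$. It is a bijection on morphisms because any $g\colon D\to D'$ in $\D$ is a lift of $P(g)$ with source $D$, hence by uniqueness equals $\widehat{P(g)}$ with target $F(P(g))(D)=D'$; this identifies $\Hom_\D(D,D')$ with $\{f:C\to C'\mid F(f)(D)=D'\}=\Hom_{\int_\C F}((C,D),(C',D'))$. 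Thus $\Phi$ is an isomorphism of categories over $\C$, so $P$ is a discrete Grothendieck opfibration. The only point requiring care — the ``main obstacle'', such as it is — is bookkeeping: checking each assignment is well defined before invoking functoriality, and in the last step verifying simultaneously that every morphism of $\D$ is hit by $\Phi$ and that distinct morphisms of $\int_\C F$ yield distinct lifts, both of which reduce to the uniqueness clause in the hypothesis.
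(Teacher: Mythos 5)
Your proof is correct and complete. The paper does not actually supply a proof of this lemma — it is stated as a standard fact (with the surrounding discussion citing Mac Lane–Moerdijk and Johnstone as background) — so there is no in-paper argument to compare against. Your argument is the standard one: in the forward direction you use that the lifting property is isomorphism-invariant over $\C$ and that in $\int_\C F$ a morphism over $f$ out of $(C,x)$ has its target pinned to $(C',F(f)(x))$ and carries no further data; in the converse direction you define $F$ on objects by fibers and on morphisms by taking targets of unique lifts, verify functoriality from uniqueness of lifts, and then check that $\Phi:\int_\C F\to\D$ is a functor, bijective on objects, and bijective on morphisms by the observation that any $g$ in $\D$ is the unique lift of $P(g)$ from its source. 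The only implicit hypothesis you lean on — that each fiber $P^{-1}(C)$ is a \emph{set} — is automatic in the paper's setting of small categories, so no gap there.
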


The fact that $\int_\C$ is fully faithful (\cref{prop:integral adjunctions}) implies there is an equivalence of categories between the functor category $\Fun(\C,\set)$ and Grothendieck opfibrations over $\C$. The main goal of \cref{sec:grothendieck} is to generalize this equivalence to a homotopical version in the $\sP(\D)$-enriched setting. 

The equivalence between Grothendieck opfibrations and functors implies the existence of a {\it universal Grothendieck opfibration}, which is simply the under category projection $\pi_*:\set_{*/} \to \set$. Concretely, we have the following observation:

\begin{propone} \label{prop:universal fib}
	The functor
	$$(-)^*\pi_*:\Fun(\C,\set) \to \cat_{/\C}$$
	given by pulling back $\pi_*$ is precisely the functor $\int_\C$ and so pulling back $\pi_*$ gives us an equivalence between functors and discrete Grothendieck opfibrations over $\C$.
\end{propone}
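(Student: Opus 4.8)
The plan is to identify $\pi_*$ itself as an instance of the Grothendieck construction and then reduce the statement to the properties of $\int_\C$ already recorded in \cref{prop:integral adjunctions}. First I would note that $\set_{*/}$ is the category of pointed sets: an object is a pair $(S,s)$ with $s\in S$ and a morphism $(S,s)\to(T,t)$ is a function $f\colon S\to T$ with $f(s)=t$. Since the terminal set corepresents the identity functor, $\Hom_\set(\ast,-)\cong\id_\set$, and so by \cref{ex:int hom under} (with $\C=\set$) we have $\set_{*/}=\int_\set\Hom_\set(\ast,-)=\int_\set\id_\set$, with $\pi_*$ the canonical projection $\pi_{\id_\set}$.

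Next I would prove a base-change identity: for every functor $F\colon\C\to\set$ there is an isomorphism $\int_\C F\cong F^*\pi_*$ over $\C$, natural in $F$. This is a direct comparison of the strict pullback $\C\times_{\set}\set_{*/}$ (computed objectwise in $\cat$) with \cref{def:int}. An object of the pullback is a pair $(c,(S,s))$ with $F(c)=S$ and $s\in S$, i.e.\ a pair $(c,s)$ with $s\in F(c)$, which is exactly an object of $\int_\C F$. A morphism $(c,s)\to(c',s')$ in the pullback is a pair consisting of $f\colon c\to c'$ in $\C$ and a morphism of pointed sets $(F(c),s)\to(F(c'),s')$ lying over $f$; but such a pointed map is forced to be $F(f)$, and the basepoint condition reads precisely $F(f)(s)=s'$, which is exactly a morphism of $\int_\C F$. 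Thus the projections $\pi_F\colon\int_\C F\to\C$ and $\int_\C F\to\set_{*/}$, $(c,s)\mapsto(F(c),s)$, exhibit $\int_\C F$ as the pullback of $\pi_*$ along $F$; naturality in $F$ is immediate from the universal property of the pullback, since both structure maps are natural in $F$. Hence $(-)^*\pi_*\cong\int_\C$ as functors $\Fun(\C,\set)\to\cat_{/\C}$.

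Finally I would conclude using \cref{prop:integral adjunctions} and \cref{def:discrete Groth opfib}: the functor $\int_\C$ is fully faithful and its essential image is exactly the discrete Grothendieck opfibrations over $\C$, so the isomorphic functor $(-)^*\pi_*$ inherits both properties and therefore corestricts to an equivalence of categories between $\Fun(\C,\set)$ and discrete Grothendieck opfibrations over $\C$. I do not anticipate a genuine obstacle here; the only point requiring care is that the object- and morphism-level identifications in the second step assemble into an honest natural isomorphism of functors rather than merely a levelwise family, and this follows formally from the $2$-functoriality of $\C\times_{\set}(-)$ applied to the structure maps of $\pi_*$.
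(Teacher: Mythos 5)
Your argument is correct and is the natural one: identify $\set_{*/}\to\set$ as $\int_\set\id_\set$ via \cref{ex:int hom under}, verify by hand that the strict pullback of $\pi_*$ along $F$ is isomorphic over $\C$ to $\int_\C F$ as given in \cref{def:int}, and then inherit fully faithfulness and the characterization of the essential image from \cref{prop:integral adjunctions} and \cref{def:discrete Groth opfib}. The paper states this proposition without giving a proof, and your verification supplies exactly the routine check the author left implicit.
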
 

Given that we can understand $\int_\cD$ via pullback, we would like a version of \cref{prop:integral adjunctions} that holds for all categories at once. Notice that discrete Grothendieck opfibrations are stable under pullback as they are characterized via a lifting condition (\cref{lemma:discrete Groth fib lifting}). Hence, we obtain a {\it pseudo-functor} 
\begin{equation} \label{eq:pseudofunctor}
 \opGroth_{/-}: \cat \to \widehat{\cat},	
\end{equation}
that takes a category $\C$ to the {\it large} category of discrete Grothendieck opfibrations over $\C$. 

\begin{defone} \label{def:opgroth}
	Let $\opGroth$ be the category with objects discrete Grothendieck opfibrations and morphisms pullbacks and notice it comes with a projection functor $\opGroth \to \cat$ which takes each fibration to its target. 
\end{defone}

Concretely, the category and its projection can obtained by applying the pseudo-categorical Grothendieck construction \cite[Theorem B1.3.6]{johnstone2002elephanti} to the pseudo-functor \ref{eq:pseudofunctor}. We now have the following generalization of  \cref{prop:integral adjunctions}.

\begin{propone} \label{prop:grothendieck fiber}
	The pullback functor 
	$$(-)^*\pi_*: \cat_{/\set} \to \opGroth$$
	is an equivalence over $\cat$. 
\end{propone}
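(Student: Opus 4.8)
The plan is to show $(-)^*\pi_*$ is fully faithful and essentially surjective by reducing everything to the fiberwise statement already established: that $\int_\C\colon \Fun(\C,\set)\to \opGroth_{/\C}$ is fully faithful with essential image all discrete Grothendieck opfibrations over $\C$ (\cref{prop:integral adjunctions}, \cref{def:discrete Groth opfib}). First I would invoke \cref{prop:universal fib}: over a fixed base $\C$ the functor $(-)^*\pi_*$ restricts, on the fiber $\Fun(\C,\set)$ of $\cat_{/\set}\to\cat$, to exactly the functor $\int_\C$, which is an equivalence onto the fiber $\opGroth_{/\C}$ of $\opGroth\to\cat$. Since the projection $\opGroth\to\cat$ exhibits $\opGroth$ as the Grothendieck construction of the pseudofunctor $\opGroth_{/-}$ (the remark after \cref{def:opgroth}), and $\cat_{/\set}\to\cat$ admits the analogous description with fibers $\Fun(\C,\set)$, reindexing being precomposition $G\mapsto G\circ K$ on one side and pullback $Q\mapsto K^*Q$ on the other, and since $K^*\bigl(\int_\D G\bigr)\cong \int_\C(G\circ K)$ canonically over $\C$, the functor $(-)^*\pi_*$ is a pseudonatural transformation between these two pseudofunctors all of whose components are equivalences.

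For essential surjectivity, let $(\C, P\colon \D\to\C)$ be an object of $\opGroth$. By \cref{def:discrete Groth opfib} there is a functor $F\colon \C\to\set$ and an isomorphism $\int_\C F\xrightarrow{\ \cong\ }\D$ over $\C$; by \cref{prop:universal fib} this is precisely an isomorphism in $\opGroth$ lying over $\id_\C$ from $(-)^*\pi_*(F\colon\C\to\set)$ to $(\C,P)$, so the functor is essentially surjective (in the strong, over-$\cat$ sense). For full faithfulness, fix $F\colon\C\to\set$ and $G\colon\D\to\set$. A morphism $(\C,\int_\C F)\to(\D,\int_\D G)$ in $\opGroth$ is a functor $K\colon\C\to\D$ together with a morphism $\int_\C F\to K^*\bigl(\int_\D G\bigr)$ of discrete Grothendieck opfibrations over $\C$; using $K^*\bigl(\int_\D G\bigr)\cong\int_\C(G\circ K)$ and the full faithfulness of $\int_\C$ (\cref{prop:integral adjunctions}), this second datum corresponds bijectively to a morphism $F\to G\circ K$ in $\Fun(\C,\set)$, which is exactly the data of a morphism $F\to G$ in $\cat_{/\set}$ lying over $K$. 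Tracing these identifications through the definition of $(-)^*\pi_*$ shows it is a bijection on morphism sets over each $K$, hence fully faithful, and it commutes with the two projections to $\cat$ by construction; therefore it is an equivalence over $\cat$. An equivalent route is to build an explicit pseudo-inverse by applying the left adjoint $\T_\C$ of \cref{prop:integral adjunctions} fiberwise, with the identities $\T_\C\int_\C\cong\id$ and $\int_\C\T_\C\cong\id$ (on discrete Grothendieck opfibrations) supplying the coherence isomorphisms.

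The main obstacle is purely the bookkeeping that turns "fiberwise equivalence" into "global equivalence": one must check that a morphism in $\opGroth$ genuinely decomposes as a base functor followed by a morphism over that base obtained by pullback — this is where the lifting characterization of discrete Grothendieck opfibrations (\cref{lemma:discrete Groth fib lifting}) and their stability under pullback are used — and that under $(-)^*\pi_*$ this decomposition matches the corresponding decomposition of morphisms of $\cat_{/\set}$. Once that is set up, the statement is an immediate consequence of \cref{prop:integral adjunctions} and \cref{prop:universal fib}.
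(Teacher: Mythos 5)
Your proof is correct and takes essentially the same route as the paper: the paper's proof simply invokes Theorem B1.3.6 of Johnstone's \emph{Elephant} to reduce an equivalence of fibered categories over $\cat$ to a fiberwise check, which is then \cref{prop:integral adjunctions} combined with \cref{prop:universal fib}. You have unpacked exactly that reduction, first verifying that $(-)^*\pi_*$ is a morphism of fibrations (using $K^*\bigl(\int_\D G\bigr)\cong\int_\C(G\circ K)$) and then spelling out the fiberwise argument (plus an equivalent direct check of full faithfulness and essential surjectivity), so the substance is the same with the bookkeeping made explicit.
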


\begin{proof} 
	By \cite[Theorem B1.3.6]{johnstone2002elephanti} it suffices to prove it is a fiber-wise equivalence, however, this follows immediately from  \cref{prop:integral adjunctions} and \cref{prop:universal fib}.
\end{proof}

\begin{remone} \label{rem:unique pb}
	Notice, \cref{prop:grothendieck fiber} in particular implies that for every discrete Grothendieck opfibration $\D \to \C$, there exists a pullback square of the form 
	\begin{center}
		\pbsq{\D}{\set_*}{\C}{\set}{}{}{\pi_*}{},
	\end{center}
	which is unique up to equivalence. 
\end{remone}

Finally there is also a contravariant version of this definition, which similarly relates to functors $\Fun(\C^{op},\set)$.

 \begin{defone}
	$P: \D \to \C$ is called a {\it discrete Grothendieck fibration} for 
	any map $f: C \to C'$ in $\C$ and object $D'$ in $\mathscr{D}$ such that $P(D') = C'$
	there exists a {\it unique} lift $\hat{f}:D \to D'$ such that $P(\hat{f}) = f$.
\end{defone}

 \subsection{A Reminder on the Covariant Model Structure} \label{subsec:lfib}
 Our main goal is to study fibrations of $\cD$-simplicial spaces. The key is to generalize the construction from the case of fibrations of simplicial spaces, {\it left fibrations}. Left fibrations of simplicial spaces have been studied extensively. Here we will review some key results that we will use throughout and for more details we refer the reader to \cite{rasekh2017left}.
 
 A Reedy fibration of simplicial spaces $p: L \to X$ ($q: R \to X$) is called a {\it left fibration} ({\it right fibration}) if the following is a homotopy pullback square (using \cref{not:brackets})
 \begin{equation} \label{eq:leftright}
 	\pbsq{L_n}{X_n}{L_0}{X_0}{\ordered{0}^*}{p_n}{p_0}{\ordered{0}^*}, \
 	\pbsq{R_n}{X_n}{R_0}{X_0}{\ordered{n}^*}{q_n}{q_0}{\ordered{n}^*}.
 \end{equation}
 There is unique left proper combinatorial simplicial model structure on the over category $\ss_{/X}$, called the {\it covariant model structure} ({\it contravariant model structure}) \cite[Theorem 3.12]{rasekh2017left}. The study of this model structure and its important properties is the main subject of \cite{rasekh2017left}.  Here we will only state the relevant properties of the covariant model structure:
 
 \begin{enumerate}
 	\item \label{leftitem:recognition} {\bf Recognition Principle for Covariant Equivalences} \cite[Theorem 4.39]{rasekh2017left}: A map $f$ is a covariant equivalence if and only if for every map $x:F[0] \to X$, if the diagonal of the induced map 
 	$$Y \underset{X}{\times} R_x \to Z \underset{X}{\times} R_x$$
 	is a Kan equivalence. Here $R_x$ is the right fibrant replacement of the map $x$ over $X$.
 	\item \label{leftitem:CSS invariant} {\bf Invariance of Left Fibrations} \cite[Theorem 5.1]{rasekh2017left}: Let $f: X \to Y$ be a map of simplicial spaces. Then the adjunction
 	\begin{center}
 		\adjun{(\ss_{/X})^{Cov}}{(\ss_{/Y})^{Cov}}{f_!}{f^*}
 	\end{center}
 	is a Quillen adjunction, which is a Quillen equivalence whenever $f$ is a CSS equivalence.
 	\item \label{leftitem:CSS fib} {\bf Left Fibrations are CSS Fibrations} \cite[Theorem 5.11]{rasekh2017left}: The following is a Quillen adjunction
 	\begin{center}
 		\adjun{(\ss_{/X})^{CSS}}{(\ss_{/X})^{Cov}}{id}{id}
 	\end{center}
 	where the left hand side has the induced CSS model structure and the right hand side has the covariant model structure.
 	\item \label{leftitem:smooth maps} {\bf Exponentiability} \cite[Corollary 5.18]{rasekh2017left}:
 	Let $f:X \to Y$ be a CSS equivalence and $p:L \to Y$ a left fibration over $Y$. Then the map $f^*L \to L$ is also a CSS equivalence.
 \end{enumerate}
 
 There are several results regarding left fibrations that we need, but were not covered in \cite{rasekh2017left}, and hence we will be discussed in greater detail.
 
 \begin{defone} \label{def:left map}
 	Let $p:L \to X$ be a map of simplicial spaces. We say $p$ is a {\it left morphism} if for all $n$ the square 
 	\begin{center} 
     \pbsq{L_n}{X_n}{L_0}{X_0}{\ordered{0}^*}{p_n}{p_0}{\ordered{0}^*}
 	\end{center} 
 	is a homotopy pullback square. We similarly define {\it right morphisms}.
 \end{defone}

 We have the following immediate lemma.
 
 \begin{lemone} \label{lemma:left map}
 	Let $p:L \to X$ be a left map. Then the Reedy fibrant replacement $\hat{L} \to X$ is a left fibration.
 \end{lemone}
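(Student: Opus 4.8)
The plan is to obtain $\hat{L}$ from the Reedy model structure and then transport the homotopy-pullback conditions along the resulting level-wise equivalence. First I would factor $p\colon L \to X$ in the Reedy model structure on simplicial spaces as $L \xrightarrow{\ j\ } \hat{L} \xrightarrow{\ \hat{p}\ } X$, where $j$ is a Reedy trivial cofibration and $\hat{p}$ is a Reedy fibration; since Reedy weak equivalences are level-wise Kan equivalences, each $j_n\colon L_n \to \hat{L}_n$ is a Kan equivalence, and $\hat{L} \to X$ is precisely the fibrant replacement of the object $(L,p)$ in $\ss_{/X}$ with the Reedy model structure. By the definition of a left fibration recalled in \eqref{eq:leftright}, it now suffices to show that for every $n$ the square with corners $\hat{L}_n$, $X_n$, $\hat{L}_0$, $X_0$, horizontal maps induced by $\hat{p}$ and vertical maps $\ordered{0}^*$, is a homotopy pullback square.

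To see this I would compare, for each fixed $n$, this square with the corresponding square for $L$. The maps $j_n$, $j_0$ together with the identities on $X_n$ and $X_0$ assemble into a map of commutative squares from the $L$-square to the $\hat{L}$-square whose four components, $j_n$, $\id_{X_n}$, $j_0$, $\id_{X_0}$, are all Kan equivalences. Since being a homotopy pullback square is invariant under objectwise weak equivalence of squares, and the $L$-square is a homotopy pullback by the hypothesis that $p$ is a left morphism (\cref{def:left map}), the $\hat{L}$-square is a homotopy pullback as well. Hence $\hat{p}$ is a Reedy fibration satisfying \eqref{eq:leftright}, i.e.\ a left fibration, and $L \to \hat{L} \to X$ exhibits $\hat{L}$ as the Reedy fibrant replacement of $L$ over $X$.

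The only slightly delicate point is the invariance of homotopy pullback squares under level-wise equivalences; this is standard (it follows, for instance, from computing the homotopy pullback via a level-wise fibrant replacement of the cospan, which preserves level-wise equivalences) and is used repeatedly in \cite{rasekh2017left}. Beyond this there is no real obstacle: the lemma is essentially a bookkeeping consequence of the definitions of left morphism and left fibration together with the existence of Reedy factorizations. The same argument, run with the squares of \eqref{eq:leftright} on the right, gives the analogous statement for right morphisms and right fibrations.
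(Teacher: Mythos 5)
Your proof is correct, and it is the obvious and natural argument for this lemma. Indeed the paper offers no proof at all, merely labeling the statement ``immediate,'' which indicates that precisely the bookkeeping argument you give — Reedy factorization of $p$, followed by transport of the homotopy-pullback condition along the resulting level-wise Kan equivalences of squares — is what is intended. The only ingredient worth flagging, as you do, is the invariance of homotopy pullback squares under level-wise equivalence, which is standard (and uses right properness of the Kan model structure on spaces, which certainly holds).
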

 
 There is one key example of a left map.
 
 \begin{exone}\label{ex:left map}
 	Let $\C$ be a simplicially enriched category and $c$ an object. Define $N\C_{c/}$ as the simplicial space with 
 	$$(N\C_{c/})_n = \coprod_{c_0,..., c_n} \Map(c,c_0) \times ... \times \Map(c_{n-1},c_n)$$
 	and with simplicial operators given via identity and composition. Then the evident projection map $N\C_{c/} \to N\C$ satisfies the condition of a  left map, in fact the map $(N\C_{c/})_n \to (N\C_{c/})_0 \times_{N\C_0} N\C_n$ is a bijection of spaces. However, it usually is not a Reedy fibration and so is an example of a left morphism that is not a left fibration.
 \end{exone}

 Although the map $N\C_{/c} \to N\C$ is not a right fibration, but only a right morphism we want to prove that in a certain way it does behave like right fibrations.

 \begin{lemone} \label{lemma:very technical lemma}
 	Let $\C$ be a simplicially enriched category and $c$ an object. Then the pullback functor 
 	$$p^*:(\ss_{/N\C})^{cov} \to (\ss_{/N\C})^{cov}$$
 	is a left Quillen functor between covariant model structures. Here $p$ is the map of simplicial spaces $p:N\C_{/c} \to N\C$.
 \end{lemone}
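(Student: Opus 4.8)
The plan is to reduce the statement to the assertion that the pullback functor along $p$ preserves covariant equivalences, and then to extract that from the recognition principle for covariant equivalences together with the identification of over-categories as right fibrant replacements of points. First, by \cite[Theorem 3.12]{rasekh2017left} the cofibrations of both covariant model structures are the monomorphisms of simplicial spaces, and these are preserved by pullback; hence, for the pullback functor along $p$ to be left Quillen it is enough that it preserve covariant equivalences (then it also preserves covariant trivial cofibrations). Since postcomposition along $p$ is itself left Quillen for the covariant model structures by \cite[Theorem 5.1]{rasekh2017left}, this suffices to prove the lemma.

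So, given a covariant equivalence $g\colon Y\to Z$ over $N\C$, I would apply the recognition principle \cite[Theorem 4.39]{rasekh2017left} over the base $N\C_{/c}$: it is enough to check that for every point $w\colon F[0]\to N\C_{/c}$ the diagonal of
$$p^*Y\times_{N\C_{/c}}R_w\longrightarrow p^*Z\times_{N\C_{/c}}R_w$$
is a Kan equivalence, where $R_w$ is a right fibrant replacement of $w$ over $N\C_{/c}$. Because $p^*Y=Y\times_{N\C}N\C_{/c}$, associativity of fibre products rewrites this map as $Y\times_{N\C}R_w\to Z\times_{N\C}R_w$, with $R_w$ now regarded over $N\C$ through $R_w\to N\C_{/c}\xrightarrow{\,p\,}N\C$. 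The crux is then to identify $R_w$, so regarded, with a right fibrant replacement of the point $p(w)\colon F[0]\to N\C$; granting that, the recognition principle over $N\C$ applied to $g$ at $p(w)$ closes the argument.

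For the identification I would use that a point $w$ of $N\C_{/c}$ is an object $(d,u\colon d\to c)$ of $\C_{/c}$ and that $(\C_{/c})_{/w}\cong\C_{/d}$ as simplicially enriched categories, compatibly with the projections to $\C$ (an object over $w$, i.e.\ a triple $(e,v\colon e\to c,\,h\colon e\to d)$ with necessarily $v=uh$, is recorded by $(e,h)$); hence $N(\C_{/c})_{/w}\cong N\C_{/d}$ over $N\C$. By the right-handed form of \cref{ex:left map} applied to $\C_{/c}$, the map $N(\C_{/c})_{/w}\to N\C_{/c}$ is a right morphism, so by \cref{lemma:left map} its Reedy fibrant replacement over $N\C_{/c}$ is a right fibration and may be taken as $R_w$, with distinguished section corresponding to the terminal vertex $\id_d$ of $N\C_{/d}$. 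Since $N\C_{/c}\to N\C$ is itself a right morphism along which $(N\C_{/c})_n$ is the strict pullback $(N\C_{/c})_0\times_{(N\C)_0}(N\C)_n$, pasting of homotopy-cartesian squares shows $R_w\to N\C$ is again a right morphism, hence its Reedy fibrant replacement $\widehat{R_w}\to N\C$ is a right fibration; and the section $F[0]\to R_w$, a contravariant equivalence over $N\C_{/c}$ by construction, stays a contravariant equivalence over $N\C$ because postcomposition along $p$ is left Quillen for the contravariant model structures (the dual of \cite[Theorem 5.1]{rasekh2017left}) and so preserves all weak equivalences by Ken Brown's lemma, every object being cofibrant. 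So $\widehat{R_w}$, viewed over $N\C$, is a right fibrant replacement of $p(w)$.

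The hard part will be exactly this last step: because $N\C_{/c}\to N\C$ and the over-category projections are only right \emph{morphisms} rather than right fibrations, one must argue with care that passing from $R_w$ to $\widehat{R_w}$ does not affect whether the diagonal of $Y\times_{N\C}(-)\to Z\times_{N\C}(-)$ is a Kan equivalence — equivalently, that a Reedy equivalence of right morphisms over $N\C$ remains a Reedy equivalence after pulling back along $Y\to N\C$. Once that bookkeeping is settled, the recognition principle supplies the rest.
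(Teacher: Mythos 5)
Your strategy — reduce to showing that $p^*$ preserves covariant equivalences and then verify this via the recognition principle — is a genuinely different route from the paper's. The paper instead checks the left Quillen condition on generators by direct computation: it decomposes a cofibration $A\to B$ over $N\C$ as $A[k]=\coprod_{c_0,\dots,c_k}A[k]_{c_0,\dots,c_k}$, computes $p^*A[k]\cong\coprod\Map(c_k,c)\times A[k]_{c_0,\dots,c_k}$ to see that $p^*$ preserves trivial Reedy cofibrations, and then checks the covariant generators $F[0]\to F[n]$ by a local argument over the Reedy-fibrant base $F[1]$, where a genuine right fibration $\widehat{p^*F[1]}$ is available and \cite[Theorem 4.28]{rasekh2017left} applies. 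Your observation that $(\C_{/c})_{/w}\cong\C_{/d}$ compatibly with the projections to $\C$ is a nice structural point the paper does not isolate.

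However, the step you label as ``bookkeeping'' is a genuine gap, and as framed it cannot be closed. What you actually need is that for the Reedy trivial cofibration $R_w\hookrightarrow\widehat{R_w}$ over $N\C$, the pullback $Y\times_{N\C}R_w\to Y\times_{N\C}\widehat{R_w}$ is at least a diagonal equivalence for arbitrary $Y\to N\C$; your ``equivalently'' rephrasing asks for a Reedy equivalence, which is even stronger and equally unavailable. Pulling back along an arbitrary map does not preserve Reedy or diagonal equivalences — one needs the map being pulled back to be level-wise a Kan fibration, so that the strict pullbacks are homotopy pullbacks. Here $R_w\to N\C$ factors through $N\C_{/c}\to N\C$, and at level $k$ the latter is the projection away from the factor $\Map(c_k,c)$, which is a Kan fibration only when $\Map(c_k,c)$ is a Kan complex; the lemma makes no fibrancy assumption on the mapping spaces of $\C$. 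This is precisely why the paper never Reedy-replaces anything over $N\C$: its appeal to \cite[Theorem 4.28]{rasekh2017left} happens over $F[1]$, where the relevant map really is a right fibration. To salvage your route you would need a comparable workaround, or the added hypothesis that $\C$ has Kan mapping spaces, which is not part of the statement.
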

 
 \begin{proof}
 	Evidently pullback preserves monomorphisms and so $p^*$ preserves cofibrations, which are just the level-wise monomorphisms. 
 	Hence, we only need to show that $p^*$ preserves trivial cofibrations. Let $A \to B$ be a trivial Reedy cofibration over $N\C$. We want to show $p^*A \to p^*B$ is a trivial Reedy cofibration over $N\C$. It suffices to prove that $p^*A[k] \to p^*B[k]$ is a Kan equivalence, as we already established it is a cofibration and Reedy equivalences are determined level-wise.
 	
 	Notice $N\C[0]= \Obj_\C$, the set of objects. Hence, we can write $A[0] = \coprod_{c_0 \in \Obj_\C} A[0]_{c_0}$, where $A[0]_{c_0}$ denotes the subspace of $A[0]$ that maps to $c_0$. More generally, we have a projection map of spaces $A[k] \to (\Obj_\C)^{k+1}$. So we can write $A[k] = \coprod_{c_0,...,c_k} A[k]_{c_0,...,c_k}$, where $A[k]_{c_0,...,c_k}$ is the subspace of $A[k]$ that maps to $(c_0,...,c_k)$ in $(\Obj_\C)^{k+1}$. We can use a similar argument for $B$ to conclude that $B[k] = \coprod_{c_0,...,c_k} B[k]_{c_0,...,c_k}$ and the fact that $A[k] \to B[k]$ is an equivalence of spaces over $N\C[0]$ implies that $A[k]_{c_0,..., c_k} \to B[k]_{c_0,...,c_k}$ is an equivalence of spaces for all tuples  $(c_0,...,c_k)$ in $(\Obj_\C)^{k+1}$.
 	
 	Now, by direct computation we have 
 	$$p^*A[k] = \coprod_{c_0,...,c_k}(\Map(c_0,c_1) \times ... \times \Map(c_{k-1},c_k) \times \Map(c_k,c)) \times_{\ds\coprod_{c_0,...,c_k}\Map(c_0,c_1) \times ... \times \Map(c_{k-1},c_k)} A[k] = $$ 
 	$$\coprod_{c_0,...,c_k} \left( (\Map(c_0,c_1) \times ... \times \Map(c_{k-1},c_k) \times \Map(c_k,c)) \times_{\Map(c_0,c_1) \times ... \times  \Map(c_{k-1},c_k)} A[k]_{c_0,...,c_k} \right) \cong $$ $$\coprod_{c_0,...,c_k} \Map(c_k,c) \times A[k]_{c_0,...,c_k} $$
 	Similarly we have 
 	$$p^*B[k] \cong \coprod_{c_0,...,c_k} \Map(c_k,c) \times B[k]_{c_0,...,c_k}$$
 	Now the map 
 	$$\coprod_{c_0,...,c_k} \Map(c_k,c) \times A[k]_{c_0,...,c_k} \to \coprod_{c_0,...,c_k} \Map(c_k,c) \times B[k]_{c_0,...,c_k}$$
 	is in fact a weak equivalence of spaces, which implies that $p^*$ preserves trivial Reedy cofibrations, proving that $p^*$ is left Quillen with respect to the Reedy model structure.
 	
 	We want to prove that $p^*$ is in fact left Quillen with respect to the covariant model structure. For that we need to prove that for every map $F[0] \to F[n] \to N\C$, the map $p^*F[0] \to p^*F[n]$ is a covariant equivalence over $N\C$. By induction it suffices to consider the case $\{0\}: F[0] \to F[1]$. So fix a map $F[1] \to N\C$, which corresponds to a morphisms $c_0 \to c_1$. By direct computation $p^*F[0] = \Map_\C(c_0,c)$ as a constant simplicial space. Let  $\widehat{p^*F[1]}$ be the Reedy fibrant replacement of $p^*F[1]$ and notice $\widehat{p^*F[1]} \to F[1]$ is a right fibration as $N\C_{/c} \to N\C$ is a right map (\cref{ex:left map}) and by the contravariant analogue to \cref{lemma:left map}. We now have the following diagram of simplicial spaces
 	\begin{center}
 		\begin{tikzcd}
 			p^*F[0]	\arrow[r] \arrow[d, "\simeq"]  & p^*F[1] \arrow[d, "\simeq"] \\
 			0^*\widehat{p^*F[1]} \arrow[r, "\simeq"] & \widehat{p^*F[1]}
 		\end{tikzcd}.
 	\end{center}  
 	The vertical morphisms are Reedy equivalences by construction and the bottom map is a covariant equivalence over $F[1]$ by \cite[Theorem 4.28]{rasekh2017left}. So, by $2$-out-of-$3$, the top map is a covariant equivalence over $F[1]$ and so by \cite[Theorem 3.15]{rasekh2017left} also a covariant equivalence over $N\C$ finishing the proof.
 \end{proof}
 
 \begin{remone}
 	Notice we could not use \cite[Theorem 4.28]{rasekh2017left} here directly as the map $N\C_{/c} \to N\C$ is not a Reedy fibration (unless $\C$ is a simplicially enriched category with discrete mapping spaces) and so in particular not a right fibration (\cref{ex:left map}). 
 \end{remone}

 We need a further result relating over-categories and mapping spaces.
 
 \begin{lemone} \label{lemma:mapping vs overcat} 
 	Let $\C$ be a simplicially enriched category and $c,d$ two objects. Then $N\C_{c/} \times_{N\C} N\C_{/d}$ is a homotopically constant simplicial space and equivalent to the constant simplicial spaces $\Map(c,d)$.
 \end{lemone}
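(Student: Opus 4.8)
The plan is to exhibit $X := N\C_{c/} \times_{N\C} N\C_{/d}$ as a two-sided bar construction and then contract it via an extra degeneracy. First I would unwind the level-wise formulas: using the description of $N\C_{c/}$ from \cref{ex:left map} (and the evident contravariant version for $N\C_{/d}$), the projection $N\C_{c/} \to N\C$ forgets, in level $n$, the coordinate lying in $\Map_\C(c,c_0)$, while $N\C_{/d} \to N\C$ forgets the coordinate in $\Map_\C(c_n,d)$. Forming the level-wise fibre product therefore identifies
$$X_n = \coprod_{c_0,\dots,c_n} \Map_\C(c,c_0) \times \Map_\C(c_0,c_1) \times \cdots \times \Map_\C(c_{n-1},c_n) \times \Map_\C(c_n,d),$$
with faces given by composition (at the two ends via the left, resp.\ right, module structures of $\Map_\C(c,-)$ and $\Map_\C(-,d)$) and degeneracies by insertion of identity maps; that is, $X$ is the bar construction $B(\Map_\C(c,-),\C,\Map_\C(-,d))$.

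Next I would produce the augmentation $\epsilon\colon X \to \mathrm{const}(\Map_\C(c,d))$ sending a tuple $(f_0,f_1,\dots,f_n,h)$ in the $(c_0,\dots,c_n)$-summand to the total composite $h \circ f_n \circ \cdots \circ f_1 \circ f_0$; this is a map of simplicial spaces because it is respected by every face and degeneracy operator. It has a section $\iota\colon \mathrm{const}(\Map_\C(c,d)) \to X$ given by $h \mapsto (\id_c,\dots,\id_c,h)$ landing in the $(c,\dots,c)$-summand, with $\epsilon\iota = \id$. To see that $\iota\epsilon$ is simplicially homotopic to $\id_X$, I would use the extra degeneracy $s_{-1}\colon X_n \to X_{n+1}$ which inserts $\id_c$ at the front, $(f_0,\dots,f_n,h) \mapsto (\id_c,f_0,\dots,f_n,h)$; a direct check gives $d_0 s_{-1} = \id$, $d_{i+1}s_{-1} = s_{-1}d_i$ and $s_{i+1}s_{-1} = s_{-1}s_i$, so by the standard fact that an augmented simplicial object equipped with an extra degeneracy has its augmentation a simplicial homotopy equivalence, $\epsilon$ is a simplicial homotopy equivalence. (Conceptually, $\epsilon$ is a bar resolution of the co-Yoneda isomorphism $\int^{c_0}\Map_\C(c,c_0) \times \Map_\C(c_0,d) \cong \Map_\C(c,d)$.)

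Finally, a simplicial homotopy equivalence of simplicial spaces becomes a weak equivalence after applying $\fDiag$ (it induces a homotopy equivalence of realizations), so $\fDiag(X) \simeq \Map_\C(c,d)$ and $X$ is weakly equivalent to the constant simplicial space on $\Map_\C(c,d)$, which is the assertion of the lemma. The main subtlety to keep in mind is that $N\C_{c/}$ and $N\C_{/d}$ are genuinely not Reedy fibrant (\cref{ex:left map}), so $X$ is in general not level-wise constant and one should not argue through fibrancy; the extra-degeneracy contraction is precisely the tool that sidesteps this, and once the bar-construction identification is in place the remainder is routine bookkeeping with the simplicial identities.
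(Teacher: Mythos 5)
Your proof is correct, and it takes a genuinely different route from the one in the paper. The paper's argument is model-categorical: it starts from the Yoneda fact that $F[0] \to N\C_{c/}$ is a covariant equivalence over $N\C$, applies \cref{lemma:very technical lemma} to deduce that pulling back along $N\C_{/d} \to N\C$ preserves this covariant equivalence (so $F[0] \times_{N\C} N\C_{/d} \to N\C_{c/} \times_{N\C} N\C_{/d}$ is covariant, hence diagonal, equivalence), and then identifies the fibre $F[0] \times_{N\C} N\C_{/d}$ by hand as the constant simplicial space $\Map(c,d)$. Your argument instead recognizes $N\C_{c/} \times_{N\C} N\C_{/d}$ directly as the two-sided bar construction $B(\Map(c,-),\C,\Map(-,d))$ and contracts it onto the augmentation by the classical extra-degeneracy argument (inserting $\id_c$ at the front); the conclusion then follows because a simplicial homotopy equivalence in the external direction descends to an equivalence after $\fDiag$, since $\fDiag(W\times D[1])\cong \fDiag(W)\times\Delta[1]$. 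What the paper's route buys is consistency with the covariant-model-structure toolbox it has just built (and it avoids needing to verify the simplicial identities for the extra degeneracy), whereas your route is self-contained, avoids \cref{lemma:very technical lemma} entirely, and makes the co-Yoneda mechanism behind the result transparent. It is worth noting that the paper's \cref{rem:ps} immediately after this lemma in fact writes down the same augmentation $p=\epsilon$ by total composition and the same section $s=\iota$, but only as a posteriori bookkeeping once the equivalence has been established by the model-categorical argument; your proof promotes that observation into the proof itself.
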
 

 \begin{proof}
 	By \cite[Thoerem 3.49]{rasekh2017left} the map $F[0] \to N\C_{c/}$ is a covariant equivalence over $N\C$ and so, by \cref{lemma:very technical lemma}, the induced map 
 	$F[0] \times_{N\C} N\C_{/d} \to N\C_{c/} \times_{N\C} N\C_{/d}$ is also a covariant equivalence and so in particular also a diagonal equivalence \cite[Theorem 3.17]{rasekh2017left}. By direct computation for every map ${d}:F[0] \to N\C$, we have the equality of constant simplicial spaces
 	\begin{equation} \label{eq:s}
 		 s:F[0] \times_{N\C} N\C_{c/} = \Map_\C(c,d),
 	\end{equation} 
    proving we have a diagonal equivalence $\Map(c,d) \to N\C_{c/} \times_{N\C} N\C_{/d}$, meaning the simplicial space is homotopically constant as well. 
 \end{proof} 

\begin{remone} \label{rem:ps}
 Notice the space $(N\C_{c/} \times_{N\C} N\C_{/d})[k]$ has points composable morphisms $c \to c_0 \to ... \to c_k \to d$, which can be composed to a morphism $c \to d$. As the composition respects the simplicial identities (given by the identity and composition) this combines into a morphism of spaces $\fDiag(N\C_{c/} \times_{N\C} N\C_{/d}) \to \Map(c,d)$, which, by \cref{def:diag}, corresponds to a map of simplicial spaces 
 $$p:N\C_{c/} \times_{N\C} N\C_{/d} \to \Map(c,d),$$ 
 such that $ps: \Map(c,d) \to \Map(c,d)$ is the identity map, where $s$ is defined in \ref{eq:s}, meaning $p$ is an equivalence as well. 
\end{remone}

We will end this section with a review of the {\it twisted arrow construction}. In \cite[Section 2]{barwickglasmannardin2018dualizingfibrations} the authors describe how to naturally construct a left fibration of quasi-categories $\Tw(S) \to S^{op} \times S$, known as the {\it twisted arrow construction}. It has the property that the fiber over a point $(x,y)$ in $S^{op} \times S$ is the mapping space $\map_S(x,y)$ of the quasi-category. Explicitly it is given by 

$$\Tw(S)_n = \Hom_{\sset}(\Delta[l]^{op} \ast \Delta[l],S) = \Hom_{\sset}(\Delta[2l+1],S) \cong S_{2l+1}.$$

In the later sections we need the corresponding construction for Segal spaces. In order to motivate things let us give the construction for the analogous complete Segal space. For a given complete Segal space $W$ let $\Tw(W)$ be the simplicial space given as $\Tw(W)[n]_{l} = \Tw(W[\bullet]_{l})_n \cong W[2n+1]_{l}$, meaning we are applying the twisted arrow construction to complete Segal space level-wise. This construction agrees with the quasi-categorical one. Indeed, let $i_1^*: \ss \to \sset$ be the right Quillen functor of the Quillen equivalence $(p_1^*,i_1^*)$ between quasi-categories and complete Segal spaces defined in \cite{joyaltierney2007qcatvssegal}. Then $\Tw(i_1^*(W)) = i_1^*(\Tw(W))$.

\begin{remone}\label{rem:twisted} 
 Notice, by \cite[Theorem B.12]{rasekh2017left}, $\Tw(W) \to W^{op} \times W$ is a left fibration as the map $i_1^*(\Tw(W)) \to i_1^*W^{op} \times i_1^*W$ is the twisted arrow construction of the quasi-category $i_1^*W$. Moreover, the fiber over a point $(x,y)$ is the mapping space $\map_W(x,y)$.
\end{remone}

With this analysis at hand we can now define the twisted arrow construction for Segal spaces.

\begin{defone} \label{def:twisted}
	For a Segal space $W$ define the twisted arrow Segal space $\Tw(W) \to W^{op} \times W$ as $\Tw(W)[n]_l= W[2n+1]_l$.
\end{defone}

We have the following basic result with regard to the twisted arrow construction for Segal spaces.

\begin{propone}\label{prop:twisted segal}
	Let $W$ be a Segal space. Then the twisted arrow $\Tw(W)$ is also a Segal space and $\Tw(W) \to W^{op} \times W$ is a left fibration. Moreover, the fiber of $\Tw(W)$ over a point $(x,y)$ is precisely $\map_W(x,y)$. Finally the construction agrees with the one for quasi-categories if $W$ is a complete Segal space. 
\end{propone}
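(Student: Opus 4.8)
The plan is to reduce everything to the observation that the twisted arrow construction of \cref{def:twisted} is $\Tw(W)=e^{*}W$, where $e\colon\DD\to\DD$ is the ``doubling'' functor $e[n]=[n]^{op}\star[n]\cong[2n+1]$ acting on the simplicial (not the spatial) direction, and where the structure map $\Tw(W)\to W^{op}\times W$ is $\Hom_{\sset}(\iota,W)$ for the inclusion of the two ends of the join
\[
\iota\colon \Delta[\bullet]^{op}\sqcup\Delta[\bullet]\hookrightarrow \Delta[\bullet]^{op}\star\Delta[\bullet]
\]
regarded as a morphism of cosimplicial simplicial sets. I will then verify, in order: (i) $\Tw(W)\to W^{op}\times W$ is a Reedy fibration and $\Tw(W)$ is Reedy fibrant; (ii) $\Tw(W)$ satisfies the Segal condition; (iii) the left morphism square of \cref{def:left map} holds for $\Tw(W)\to W^{op}\times W$, so by (i) it is a left fibration; (iv) the fibre over $(x,y)$ is $\map_{W}(x,y)$; and (v) for complete $W$ it agrees with the quasi-categorical construction --- but (v) is precisely the computation $\Tw(i_1^{*}W)=i_1^{*}\Tw(W)$ recorded before \cref{def:twisted} together with \cref{rem:twisted}, so nothing new is needed there.

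For (i) the crucial input is that $\iota$ is a \emph{Reedy cofibration} of cosimplicial simplicial sets; equivalently, that the left Kan extension $e_{!}$ along $e$ preserves monomorphisms and that the evident pushout-corner maps $e_{!}(\partial\Delta[n])\cup(\Delta[n]^{op}\sqcup\Delta[n])\to\Delta[2n+1]$ are injective. Granting this, the Reedy ``pushout-product'' lemma applied to the Reedy-fibrant simplicial space $W$ gives that $\Hom_{\sset}(\iota,W)=\big(\Tw(W)\to W^{op}\times W\big)$ is a Reedy fibration of simplicial spaces, trivial whenever $\iota$ is; since $W^{op}\times W$ is Reedy fibrant, so is $\Tw(W)$. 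This combinatorial fact about the join is, I expect, the main technical obstacle --- it is essentially why twisted arrow constructions are well-behaved --- and one can alternatively sidestep it by completing $W\to\hat W$ and transporting fibrancy from \cref{rem:twisted}, at the cost of some bookkeeping with the covariant model structure.

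For (ii), applying $e_{!}$ (which preserves colimits) to the spine inclusion $\Delta[1]\cup_{\Delta[0]}\cdots\cup_{\Delta[0]}\Delta[1]\hookrightarrow\Delta[n]$ yields the inclusion $\Delta[3]\cup_{\Delta[1]}\cdots\cup_{\Delta[1]}\Delta[3]\hookrightarrow\Delta[2n+1]$ of $n$ copies of $\Delta[3]$ glued along their middle edges, which is a finite composite of pushouts of spine inclusions and hence a trivial cofibration in the Segal space model structure; so $\Hom_{\sset}(-,W)$ sends it to a weak equivalence, which is exactly the Segal map for $\Tw(W)$. Thus $\Tw(W)$ is a Segal space. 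For (iii), by \cref{def:left map} being a left morphism means $\Tw(W)_{n}\to\Tw(W)_{0}\times_{(W^{op}\times W)_{0}}(W^{op}\times W)_{n}$ is an equivalence for all $n$; unwinding the $\langle 0\rangle$-maps, this is the equivalence $W_{2n+1}\simeq W_{n}\times_{W_{0}}W_{1}\times_{W_{0}}W_{n}$ obtained by writing $[2n+1]$ as two $n$-chains joined by a middle edge, i.e.\ an instance of the (iterated) Segal condition for $W$. Together with (i), $\Tw(W)\to W^{op}\times W$ is a Reedy fibration satisfying the left morphism condition, hence a left fibration.

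Finally, for (iv): given $(x,y)\colon F[0]\to W^{op}\times W$, the pullback $\Tw(W)\times_{W^{op}\times W}F[0]$ is again a left fibration (left fibrations are stable under pullback), now over $F[0]$; the defining homotopy-pullback square over a point forces all its structure maps to be equivalences, so it is equivalent to its space of $0$-simplices $W_{1}\times_{W_{0}\times W_{0}}\{(x,y)\}=\map_{W}(x,y)$, where we use that $\Tw(W)\to W^{op}\times W$ is a Reedy fibration so that $W_{1}\to W_{0}\times W_{0}$ is a Kan fibration and the strict fibre computes the homotopy fibre. This completes (i)--(v); the only genuinely nontrivial point is the combinatorial Reedy-cofibrancy of $\iota$ in step (i), everything else being a matter of feeding standard (trivial) cofibrations of the Segal space model structure into the Reedy-fibrant object $W$.
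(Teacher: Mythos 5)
Your proof takes a genuinely different route from the paper. The paper's argument does no combinatorics at all: it observes that it suffices to show $\Tw(W)\to W^{op}\times W$ is a left fibration (the Segal property of $\Tw(W)$ then comes for free from \cite[Theorem 5.11]{rasekh2017left}), completes $W\to\hat W$ along a Dwyer--Kan equivalence, notes that the square relating $\Tw(W)$ and $\Tw(\hat W)$ is a homotopy pullback (a Segal-condition computation the paper leaves as ``by construction''), and then transports the left-fibration property and the fibre identification from $\Tw(\hat W)\to\hat W^{op}\times\hat W$, which are already known via the Joyal--Tierney comparison with quasi-categories (\cref{rem:twisted}). You instead run the argument directly in the Reedy/Segal model structure on $\ss$, reducing everything to the two cosimplicial-combinatorial facts that $\iota\colon\Delta[\bullet]^{op}\sqcup\Delta[\bullet]\hookrightarrow\Delta[\bullet]^{op}\star\Delta[\bullet]$ is a Reedy cofibration (giving the Reedy fibration in (i)) and that $e_{!}$ of the spine inclusions are Segal anodyne (giving (ii) and (iii)). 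This is a valid and self-contained approach that never invokes quasi-categories except for claim (v), which is exactly where the paper also invokes them; what it costs you is the actual proof of the Reedy cofibrancy of $\iota$, which you correctly flag as the crux but do not carry out, and a quick ``finite composite of pushouts of spine inclusions'' assertion in (ii) that is itself a small induction worth spelling out. Two further small remarks: (ii) is logically redundant given (i) and (iii), for the same reason the paper never proves the Segal condition directly (a left fibration over a Segal space is a Segal space by \cite[Theorem 5.11]{rasekh2017left}); and your step (iii), identifying the left-morphism square with the Segal equivalence $W_{2n+1}\simeq W_{n}\times_{W_{0}}W_{1}\times_{W_{0}}W_{n}$ coming from the decomposition $[2n+1]=\{0,\ldots,n\}\cup_{\{n\}}\{n,n+1\}\cup_{\{n+1\}}\{n+1,\ldots,2n+1\}$, is precisely the computation the paper compresses into ``by construction,'' so your write-up actually supplies a detail the paper's proof elides.
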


\begin{proof}
	We will show $\Tw(W) \to W^{op} \times W$ is a left fibration. Then \cite[Theorem 5.11]{rasekh2017left} implies $\Tw(W)$ is a Segal space. Let $W \to \hat{W}$ be the completion of the Segal space $W$ via a Dwyer-Kan equivalence \cite[Section 14]{rezk2001css}. Then by construction we have a homotopy pullback square of simplicial spaces 
	\begin{center}
		\pbsq{\Tw(W)}{\Tw(\hat{W})}{W^{op} \times W}{\hat{W}^{op}\times \hat{W}}{}{}{}{},
	\end{center}
	which implies that $\Tw(W) \to W^{op} \times W$ is a left fibration, by \cref{rem:twisted}. Moreover, \cref{rem:twisted} and the fact that $W \to \hat{W}$ is a Dwyer-Kan equivalence implies that the fiber of $\Tw(W)$ over a point $(x,y)$ in $W^{op} \times W$ is $\map_W(x,y)$.
	Finally, if $W$ is complete, then the twisted arrow construction coincides with the one for quasi-categories with $i_1^*$.  
\end{proof}

 \section{From Left Fibrations to \texorpdfstring{$\cD$}{D}-left Fibrations} \label{sec:left fib}
 In this section we define $\cD$-left fibrations (\cref{def:nleft}) and give several alternative characterizations. We then move on to show that $\cD$-left fibrations are fibrant objects in a model structure (\cref{the:ncov model}) and study some basic properties of this model structure. The key idea is to think of a $\cD$-simplicial space as a $\cD$-space (the values) multiplied with a simplicial space (the functoriality). Hence we need to impose the appropriate condition on an injective fibration to make this intuition precise. We end with an analysis of $\cD$-left fibrations over level-wise Segal spaces and in particular the Yoneda lemma (\cref{the:levelwise Yoneda}).
  
  \subsection{Introducing $\cD$-Left Fibrations} \label{subsec:cdlfib}
  Let us start with a definition and its various alternatives.
  
  \begin{defone} \label{def:nleft}
  	Let $p: Y \to X$ be an injective fibration of $\cD$-simplicial spaces. Then $p$ is a {\it $\cD$-left fibration} if for all $d$ in $\cD$, the map $\Und_dY \to \Und_dX$ is a left fibration of simplicial spaces.
  \end{defone}
   
   We can generalize the definition of a $\cD$-left fibration similar to \cref{def:left map}.
   
   \begin{defone} \label{def:cdleft map}
   	Let $p:L \to X$ be a map of $\cD$-simplicial spaces. We say $p$ is a {\it $\cD$-left morphism} if for all object $d$ in $\cD$, $\Und_dL \to \Und_dX$ is a left morphism. 
   \end{defone}
 
   \begin{lemone} \label{lemma:cdleft map}
   	Let $p:L \to X$ be a $\cD$-left map. Then the injective fibrant replacement $\hat{L} \to X$ is a $\cD$-left fibration.
   \end{lemone}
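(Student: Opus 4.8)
The plan is to reduce the statement, one object $d$ of $\cD$ at a time, to \cref{lemma:left map}. First I would factor $p$ in the injective model structure on $\cD$-simplicial spaces as $L \xrightarrow{\,i\,} \hat{L} \xrightarrow{\,q\,} X$ with $i$ an injective trivial cofibration and $q$ an injective fibration, so that $q\colon\hat{L}\to X$ is the injective fibrant replacement of $L$ over $X$. Since a $\cD$-left fibration is by definition an injective fibration whose underlying maps at every object of $\cD$ are left fibrations of simplicial spaces (\cref{def:nleft}), and $q$ is already an injective fibration, it remains to prove that $\Und_d q\colon\Und_d\hat{L}\to\Und_d X$ is a left fibration for every $d$.

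Fix $d$. I would first observe that $\Und_d$ is right Quillen from $\sP(\cD\times\DD)^{inj}$ to $\ss^{Ree}$: its left adjoint $\Inc_d$ is given explicitly by $\Und_c(\Inc_d F)\cong\coprod_{\Hom_\cD(c,d)}F$ for every object $c$ (both sides are cocontinuous in $F$ and agree on representables, where this is the identity $\Inc_d(F[n])=F[d,n]$), so $\Inc_d$ preserves monomorphisms and level-wise Kan equivalences, hence cofibrations and trivial cofibrations. Therefore $\Und_d q$ is a Reedy fibration. Moreover, since the injective model structure on $\cD$-simplicial spaces coincides with the level-wise Reedy model structure, $\Und_d i\colon\Und_d L\to\Und_d\hat{L}$ is a Reedy trivial cofibration, hence a level-wise Kan equivalence; and $\Und_d X$ is unchanged. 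Consequently $\Und_d\hat{L}\to\Und_d X$ is a Reedy fibrant replacement over $\Und_d X$ of the map $\Und_d L\to\Und_d X$, which is a left morphism because $p$ is a $\cD$-left morphism (\cref{def:cdleft map}).

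Now I would invoke \cref{lemma:left map} for the left morphism $\Und_d L\to\Und_d X$. Strictly speaking that lemma is phrased for a chosen Reedy fibrant replacement, so I would add the one-line remark that being a left fibration is invariant under Reedy equivalence over the base among Reedy fibrations: for each $n$ the defining square of \eqref{eq:leftright} for $\Und_d\hat{L}$ receives a map from the analogous square for $\Und_d L$ that is the identity on the two corners coming from $\Und_d X$ and a Kan equivalence on the two corners coming from $\Und_d L$, so homotopy--pullback-ness transfers. Hence $\Und_d\hat{L}\to\Und_d X$ is a left fibration for every $d$, and therefore $q\colon\hat{L}\to X$ is a $\cD$-left fibration.

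The only point deserving its own sentence is that $\Und_d$ carries injective fibrations to Reedy fibrations, i.e.\ that $\Und_d$ is right Quillen, which follows from the displayed formula for $\Inc_d$; I do not expect any other difficulty, since the rest is just \cref{lemma:left map} applied objectwise in $\cD$.
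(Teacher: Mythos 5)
Your proof is correct. The paper gives no proof at all for this lemma (nor for the analogous \cref{lemma:left map} for simplicial spaces), treating the reduction to the object-wise case as immediate; your write-up supplies exactly the verification one would expect. The two points you rightly single out as deserving justification are (i) that $\Und_d$ carries injective fibrations to Reedy fibrations, which you establish via the explicit formula $\Und_c(\Inc_d F)\cong\coprod_{\Hom_\cD(c,d)}F$ showing $\Inc_d$ is left Quillen, and (ii) that the homotopy-pullback condition in \eqref{eq:leftright} transfers across a level-wise Kan equivalence of squares over a fixed base. Both are handled correctly, and the overall strategy — factor in the injective model structure, then apply $\Und_d$ and invoke \cref{lemma:left map} at each $d$ — is the one the paper implicitly relies on throughout \cref{subsec:cdlfib}.
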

   
   Our first task is to give several alternative characterizations of $\cD$-left fibrations generalizing results in \cite{rasekh2017left}. In order to be able to make appropriate inductive arguments we introduce the following terminology.
   
   \begin{defone}\label{def:levelwise prop}
   	Let $P$ be a property about $\cD$-simplicial spaces. We say the property $P$ {\it is level-wise} if an injectively fibrant $\cD$-simplicial space satisfies $P$ if and only if for all $d$ in $\cD$ $\Und_dX$ satisfies property $P$.
   \end{defone}

  Combining \cref{def:nleft} and \cref{def:levelwise prop} we immediately have the following useful result.
  
  \begin{lemone}\label{lemma:levelwise}
  	Let $p: Y \to X$ be an injective fibration. Then being a $\cD$-left fibration is a level-wise property.
  \end{lemone}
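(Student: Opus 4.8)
The plan is to observe that the statement is essentially a bookkeeping identity between \cref{def:nleft} and \cref{def:levelwise prop}, so the proof consists of unwinding both. First I would pass to the arrow category $\Fun([1],\sP(\cD\times\DD))$ equipped with its injective model structure; here an injectively fibrant object is exactly an injective fibration $p\colon Y \to X$ between injectively fibrant $\cD$-simplicial spaces, and the underlying functors $\Und_d$ extend level-wise to arrows, sending $p$ to $\Und_d p \colon \Und_d Y \to \Und_d X$. Under this identification, ``being a left fibration of simplicial spaces'' becomes a property of arrows of simplicial spaces, and ``being a $\cD$-left fibration'' becomes the corresponding candidate property $P$ of arrows of $\cD$-simplicial spaces.

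With that set up, the key and only step is to note that \cref{def:nleft} asserts exactly the following: for an injective fibration $p$, one has that $p$ is a $\cD$-left fibration if and only if $\Und_d p$ is a left fibration of simplicial spaces for every $d$ in $\cD$. This is precisely the defining condition of \cref{def:levelwise prop} for the property ``left fibration'' to be level-wise, now read in the arrow category. Hence being a $\cD$-left fibration is a level-wise property, which is what we wanted.

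I do not expect any genuine obstacle: the lemma is labelled ``immediate'' because it merely records the compatibility of \cref{def:nleft} with the vocabulary of \cref{def:levelwise prop}, so that the later inductive characterizations of $\cD$-left fibrations can be phrased uniformly. The one point requiring a line of care is checking that injective fibrancy and the functors $\Und_d$ interact correctly with arrow categories, but this is standard model-categorical bookkeeping and introduces no real difficulty.
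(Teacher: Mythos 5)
Your proposal is correct and matches the paper's approach: the paper treats this lemma as immediate from combining \cref{def:nleft} with \cref{def:levelwise prop}, which is exactly what you do, and your explicit remark about passing to the arrow category so that a property of morphisms fits the object-level phrasing of \cref{def:levelwise prop} is a reasonable piece of bookkeeping that the paper leaves implicit.
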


  We can now combine results from \cite{rasekh2017left} with \cref{lemma:levelwise} for many interesting results.
  
    \begin{lemone} \label{lemma:left fib triv Kan fib}
   	Let $p: Y \to X$ be an injective fibration of $\cD$-simplicial spaces. The following are equivalent:
   	\begin{enumerate}
   		\item $p$ is a $\cD$-left fibration.
   		\item For each $(d,k)$ in $\cD\times\DD$, the map 
   		$$(p[d,k] , \ordered{0}^*): Y[d,k] \to X[d,k] \underset{X[d,0]}{\times} Y[d,0]$$
   		is a Kan equivalence.
   		\item For each $n \geq 0$, the map 
   		$$(p[d,k] , \ordered{0}^*): Y[d,k] \to X[d,k] \underset{X[d,0]}{\times} Y[d,0]$$
   		is a trivial Kan fibration.
   	\end{enumerate}
   \end{lemone}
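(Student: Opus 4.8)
The plan is to reduce all three conditions to the corresponding characterization of left fibrations of simplicial spaces from \cite{rasekh2017left}, and then transport it along the functors $\Und_d$. First I would record a preliminary point: since $p$ is an injective fibration, each map of simplicial spaces $\Und_d p\colon \Und_d Y \to \Und_d X$ is a Reedy fibration — this is already implicit in \cref{def:nleft}, and follows because evaluation at an object is right Quillen for the injective model structure on a diagram category, so $\Und_d$ is right Quillen from $\sP(\cD\times\DD)^{inj}$ to $\ss^{Ree}$ (its left adjoint $\Inc_d$ sends a map $A\to B$ to the level-wise coproduct $\coprod_{\Hom_\cD(-,d)}(A\to B)$, hence preserves cofibrations and trivial cofibrations). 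Moreover $\Und_d$ preserves limits and satisfies $(\Und_d X)_k = X[d,k]$, so the comparison map appearing in conditions (2) and (3), namely
$$Y[d,k] \longrightarrow X[d,k]\underset{X[d,0]}{\times}Y[d,0],$$
is exactly the map $(\Und_d Y)_k \to (\Und_d X)_k\times_{(\Und_d X)_0}(\Und_d Y)_0$ attached to the simplicial space $\Und_d p$. Combining this with \cref{lemma:levelwise} and the definition of a $\cD$-left fibration, each of (1), (2) and (3) is a level-wise condition: it holds for $p$ if and only if the corresponding statement holds for the Reedy fibration $\Und_d p$ for every object $d$ of $\cD$.

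It therefore suffices to prove the three-fold equivalence for an arbitrary Reedy fibration $q\colon L \to X$ of simplicial spaces, and here I would appeal to \cite{rasekh2017left}. By the definition recalled in \cref{subsec:lfib}, $q$ is a left fibration exactly when the squares in \eqref{eq:leftright} are homotopy pullbacks; since $q$ is a Reedy fibration the vertical maps $q_0$ and $q_k$ are Kan fibrations, so the strict pullback $X_k\times_{X_0}L_0$ already models the homotopy pullback, and the square is a homotopy pullback precisely when $L_k\to X_k\times_{X_0}L_0$ is a Kan equivalence — this is condition (2). The implication (3)$\Rightarrow$(2) is immediate. For (2)$\Rightarrow$(3) one uses that, $q$ being a Reedy fibration, the comparison map $L_k\to X_k\times_{X_0}L_0$ is itself a Kan fibration (this is part of the analysis of left fibrations in \cite{rasekh2017left}, and can also be extracted from the pullback-exponential formalism of \cref{subsec:joyal tierney calculus} applied to $q$ and the initial-vertex inclusion $F[0]\to F[k]$); a Kan fibration that is a Kan equivalence is a trivial Kan fibration.

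The main obstacle is precisely this last structural point — showing that over a Reedy-fibrant base the map $L_k\to X_k\times_{X_0}L_0$ is automatically a Kan fibration; everything else is a definitional unwinding or a direct citation of \cite{rasekh2017left}. If a ready-made reference is not at hand, I would instead verify the standard fact that for a Reedy fibration $q\colon L\to X$ and any monomorphism of simplicial sets the induced map of cotensors is a Reedy fibration, and then specialise to the inclusion $\{0\}\hookrightarrow\Delta[k]$ of the initial vertex. Finally I would flag the cosmetic typo in item (3), where ``for each $n\geq 0$'' should read ``for each $(d,k)$ in $\cD\times\DD$'', to match item (2).
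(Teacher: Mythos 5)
Your argument follows essentially the same route as the paper: observe that both conditions (2) and (3), like (1), are level-wise in the sense of \cref{lemma:levelwise} (since $\Und_d$ is a right adjoint computing limits level-wise and $(\Und_d Y)_k = Y[d,k]$), and then invoke the corresponding three-fold equivalence for Reedy fibrations of simplicial spaces, which is precisely \cite[Lemma 3.5]{rasekh2017left}. The paper's proof is a one-line citation of exactly these two facts; you unpack what that reference proves — in particular the non-trivial step (2) $\Rightarrow$ (3), that over a Reedy-fibrant base the comparison map $L_k \to X_k \times_{X_0} L_0$ is automatically a Kan fibration via pullback-exponential with $\{0\}\hookrightarrow F[k]$ — but the logical skeleton is identical. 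You are also right to flag the typo: item (3) should read ``for each $(d,k)$ in $\cD\times\DD$'' rather than ``for each $n\geq 0$''.
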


 \begin{proof}
 	Follows from combining \cite[Lemma 3.5]{rasekh2017left} and the fact that being a $\cD$-left fibration is level-wise (\cref{lemma:levelwise}). 
 \end{proof}

	\begin{lemone}
		Let $p: Y \to X$ be an injective fibration of $\cD$-simplicial spaces. The following two are equivalent:
		\begin{enumerate}
			\item The commutative square
			\begin{center}
				\pbsq{Y\edge{d}{k}}{Y\edge{d}{0}}{X\edge{d}{k}}{X\edge{d}{0}}{\ordered{0}^*}{p\edge{d}{k}}{p\edge{d}{0}}{\ordered{0}^*}
			\end{center}
			is a homotopy pullback square for all $(d,k)$ in $\cD \times\DD$, meaning $p$ is a $\cD$-left fibration.
			\item The commutative square 
			\begin{center}
				\pbsq{Y\edge{d}{k}}{Y\edge{d}{k-1}}{X\edge{d}{k}}{X\edge{d}{k-1}}{\ordered{0 \comma ... \comma k-1}^*}{p\edge{d}{k}}{p\edge{d}{k-1}}{\ordered{0,...,k_n-1}^*}
			\end{center}
			is a homotopy pullback square for all $(d,k)$ in $\cD \times\DD$.  
		\end{enumerate}
	\end{lemone}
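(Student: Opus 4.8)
The plan is to reduce the statement to a claim about left fibrations of simplicial spaces and then to conclude by the pasting law for homotopy pullback squares.

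First I would observe that, for each fixed $d$, both conditions depend only on the map of simplicial spaces $\Und_d Y \to \Und_d X$. Since $p$ is an injective fibration, each $\Und_d p$ is in particular a Reedy fibration of simplicial spaces, and by \cref{lemma:levelwise} condition $(1)$ for $p$ is equivalent to requiring $\Und_d p$ to be a left fibration for every $d$. Hence it suffices to prove the following: for a Reedy fibration $q\colon L \to X$ of simplicial spaces, $q$ is a left fibration if and only if for every $k\geq 1$ the square with vertical maps $q$ and horizontal maps $\ordered{0,\ldots,k-1}^{*}\colon L[k]\to L[k-1]$ and $\ordered{0,\ldots,k-1}^{*}\colon X[k]\to X[k-1]$ is a homotopy pullback square.

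The key point is a factorization of simplicial operators: the vertex inclusion $\ordered{0}\colon[0]\to[k]$ equals the composite $[0]\xrightarrow{\ordered{0}}[k-1]\xrightarrow{\ordered{0,\ldots,k-1}}[k]$, so that $\ordered{0}^{*}_{[k]}=\ordered{0}^{*}_{[k-1]}\circ\ordered{0,\ldots,k-1}^{*}$ (and for $k=1$ the two operators literally coincide). Consequently the defining square of a left fibration at level $k$, namely $L[k]\to L[0]$ over $X[k]\to X[0]$ along $\ordered{0}^{*}$, is the horizontal pasting of the square of condition $(2)$ at level $k$ (on the left, $L[k]\to L[k-1]$) with the defining square of a left fibration at level $k-1$ (on the right, $L[k-1]\to L[0]$). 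Applying the pasting law for homotopy pullback squares of spaces then yields both implications. If $q$ is a left fibration, then for each $k$ the outer rectangle (the level-$k$ square) and the right-hand square (the level-$(k-1)$ square) are homotopy pullbacks, hence so is the left-hand square, which is exactly $(2)$. Conversely, arguing by induction on $k$ — with the base case $k=0$ being the trivial identity square — if the level-$(k-1)$ square of $(1)$ and the level-$k$ square of $(2)$ are homotopy pullbacks, then their horizontal composite, the level-$k$ square of $(1)$, is a homotopy pullback; so $(2)$ implies $(1)$.

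I do not anticipate a genuine obstacle here. The only points requiring care are the bookkeeping of the simplicial operator identity $\ordered{0}=\ordered{0,\ldots,k-1}\circ\ordered{0}$, the check that for $k=1$ the two squares in $(1)$ and $(2)$ agree, and the (standard) remark that an injective fibration is objectwise a Reedy fibration, which is what licenses the reduction to simplicial spaces via \cref{lemma:levelwise} and the homotopy-pullback characterization of left fibrations recalled in \cref{subsec:lfib}.
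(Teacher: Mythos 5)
Your proof is correct and takes the same route as the paper: both reduce via \cref{lemma:levelwise} to the corresponding statement for a single left fibration of simplicial spaces. The paper then simply cites \cite[Lemma 3.6]{rasekh2017left} for that level-wise claim, whereas you prove it from scratch by factoring $\ordered{0}\colon[0]\to[k]$ through $\ordered{0,\ldots,k-1}\colon[k-1]\to[k]$ and invoking the pasting law for homotopy pullbacks together with an induction on $k$; this is almost certainly the content of the cited lemma, so the substance agrees and your version is just more self-contained.
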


 \begin{proof}
	Again we use the fact that being a $\cD$-left fibration is level-wise (\cref{lemma:levelwise}) and \cite[Lemma 3.6]{rasekh2017left}.
 \end{proof}

   \begin{propone} \label{prop:left fib lifting}
  	Let $p: Y \to X$ be an injective fibration of $\cD$-simplicial spaces. Then the following are equivalent:
  	\begin{enumerate}
  		\item $p$ is a $\cD$-left fibration.
  		\item For every object $(d,k)$ in $\cD\times\DD$, the map $p$ satisfies the right lifting property with respect to maps
  		$$(\ordered{0}: F\edge{d}{0} \to F\edge{d}{k}) \square (\partial \Delta[l] \to \Delta[l]).$$
  	\end{enumerate}
  \end{propone}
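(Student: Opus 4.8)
The plan is to deduce this lifting characterization from the matching–map characterization already proved in \cref{lemma:left fib triv Kan fib}, together with the elementary fact that a map of spaces is a trivial Kan fibration exactly when it has the right lifting property against every boundary inclusion $\partial\Delta[l]\to\Delta[l]$, and then to convert lifting problems for the matching maps into lifting problems for $p$ using the pushout–product/pullback–exponential calculus of \cref{prop:joyal tierney lifting}.

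First I would invoke the equivalence (1)$\Leftrightarrow$(3) of \cref{lemma:left fib triv Kan fib}: the injective fibration $p$ is a $\cD$-left fibration if and only if for every $(d,k)$ in $\cD\times\DD$ the matching map
$$(p[d,k],\ordered{0}^*)\colon Y[d,k]\longrightarrow X[d,k]\underset{X[d,0]}{\times}Y[d,0]$$
is a trivial Kan fibration of spaces. Since a trivial Kan fibration is precisely a map with the right lifting property against $\{\partial\Delta[l]\to\Delta[l] : l\ge 0\}$, condition (1) is therefore equivalent to the statement that for all $(d,k)$ in $\cD\times\DD$ and all $l\ge 0$, the matching map $(p[d,k],\ordered{0}^*)$ has the right lifting property against $\partial\Delta[l]\to\Delta[l]$.

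The heart of the argument is to recognise this as exactly the family of lifting problems in (2). Set $f=(\ordered{0}\colon F[d,0]\to F[d,k])$ and $g=(\partial\Delta[l]\to\Delta[l])$. By \cref{prop:joyal tierney lifting}, $\{f\}\square\{g\}\pitchfork\{p\}$ holds if and only if $\{g\}\pitchfork\exp{f}{p}$, where $\exp{f}{p}\colon Y^{F[d,k]}\to Y^{F[d,0]}\underset{X^{F[d,0]}}{\times}X^{F[d,k]}$. Using the Yoneda identification $\Hom_{\sP(\cD\times\DD)}(F[d,k]\times K,Z)\cong\Hom_{\s}(K,Z[d,k])$ for a space $K$ and a $\cD$-simplicial space $Z$ (and likewise for $F[d,0]$, restriction along $f$ inducing $\ordered{0}^*$), a commutative square from $g$ to $\exp{f}{p}$ is precisely the data of a commutative square from $\partial\Delta[l]\to\Delta[l]$ to the matching map $(p[d,k],\ordered{0}^*)$, with diagonal fillers corresponding on both sides (each being a map $\Delta[l]\to Y[d,k]$). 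Equivalently, one may avoid the pullback exponential and unwind \cref{def:pushout product} directly: a square from $f\square g$ to $p$ consists of maps $F[d,k]\times\partial\Delta[l]\to Y$, $F[d,0]\times\Delta[l]\to Y$ and $F[d,k]\times\Delta[l]\to X$ agreeing on overlaps, which under the same identification is exactly a square from $\partial\Delta[l]\to\Delta[l]$ to $(p[d,k],\ordered{0}^*)$. Either way, $p$ has the right lifting property against $f\square g$ if and only if $(p[d,k],\ordered{0}^*)$ has the right lifting property against $\partial\Delta[l]\to\Delta[l]$. Combining this with the previous paragraph yields the chain: $p$ is a $\cD$-left fibration $\iff$ $(\partial\Delta[l]\to\Delta[l])\pitchfork(p[d,k],\ordered{0}^*)$ for all $(d,k)$ and $l$ $\iff$ $\big((\ordered{0}\colon F[d,0]\to F[d,k])\square(\partial\Delta[l]\to\Delta[l])\big)\pitchfork p$ for all $(d,k)$ and $l$, which is (2).

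The step I expect to require the most care is the bookkeeping in the third paragraph: one must match the pushout in the source of $f\square g$ (equivalently, the fibre product in the target of $\exp{f}{p}$) with the fibre product $X[d,k]\times_{X[d,0]}Y[d,0]$ in the target of the matching map, and verify that restriction along $\ordered{0}\colon F[d,0]\to F[d,k]$ corresponds to $\ordered{0}^*$ on the nose. The remaining ingredients — the reduction through \cref{lemma:left fib triv Kan fib} and the boundary-inclusion criterion for trivial Kan fibrations — are formal.
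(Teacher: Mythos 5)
Your argument is correct, but it takes a route that differs from the paper's. The paper reduces to the level-wise case: by definition, $p$ is a $\cD$-left fibration exactly when each $\Und_dp$ is a left fibration of simplicial spaces, then it cites the simplicial-space lifting criterion \cite[Proposition~3.7]{rasekh2017left} and transports the resulting lifting problem across the adjunction $\Inc_d\dashv\Und_d$ (\cref{def:und}/\cref{rem:ss analogue}), noting $\Inc_d\bigl((F[0]\to F[n])\square(\partial\Delta[l]\to\Delta[l])\bigr)=(F[d,0]\to F[d,n])\square(\partial\Delta[l]\to\Delta[l])$. You instead stay inside $\sP(\cD\times\DD)$ and derive the criterion directly from the matching-map equivalence (1)$\Leftrightarrow$(3) of \cref{lemma:left fib triv Kan fib}, the characterization of trivial Kan fibrations by right lifting against boundary inclusions, and the Yoneda identification $\Hom_{\sP(\cD\times\DD)}(F[d,k]\times K,Z)\cong\Hom_{\s}(K,Z[d,k])$. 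This essentially re-proves the cited external result in the $\cD$-simplicial setting rather than citing it and transporting; what it buys is a self-contained argument (relative to \cref{lemma:left fib triv Kan fib}), at the cost of being a bit longer. Your concluding remark about where the bookkeeping care is needed is well-placed; if you keep the pullback-exponential phrasing, be aware that $\exp{f}{p}$ is a morphism of $\cD$-simplicial spaces while the matching map lives in $\s$, so the identification of lifting problems across the two categories is exactly the Yoneda step you spell out, and the direct unwinding of $f\square g$ that you offer as an alternative is the cleaner way to present this.
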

 
  \begin{proof}
  	By definition $p: Y \to X$ is a $\cD$-left fibration if and only if for all objects $d$ in $\cD$ the map of simplicial spaces $\Und_dp: \Und_dY \to \Und_dX$ is a left fibration. By \cite[Proposition 3.7]{rasekh2017left}, this is equivalent to 
  	$$(F[0] \to F[n]) \square (\partial \Delta[l] \to \Delta[l]) \pitchfork \Und_dp.$$
  	By the adjunction \cref{def:und}/\cref{rem:ss analogue}, this is equivalent to 
    is equivalent to 
    $$\Inc_d((F[0] \to F[n]) \square (\partial \Delta[l] \to \Delta[l])) \pitchfork p,$$
    which by direct computation gives us
  	$$ (F[d,0] \to F[d,n]) \square (\partial \Delta[l] \to \Delta[l]) \pitchfork p,$$
  	finishing the proof. 
 \end{proof}

 \begin{lemone}\label{lemma:cdleft comp}
	Let $f:Y \to X$ and $g:Z \to Y$ be two injective fibrations.
	\begin{enumerate}
		\item If $f$ and $g$ are $\cD$-left fibrations then $fg$ is also a $\cD$-left fibration.
		\item If $f$ and $fg$ are $\cD$-left fibrations then $g$ is also a $\cD$-left fibration.
	\end{enumerate}
\end{lemone}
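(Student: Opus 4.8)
The plan is to reduce both parts to the pasting law for homotopy pullback squares of spaces. First I would observe that in each part the map whose fibration property is at issue is already known to be an injective fibration: $fg$ is a composite of injective fibrations, and in part (2) the map $g$ is an injective fibration by hypothesis. Hence in both parts it only remains to verify the homotopy pullback condition which, among injective fibrations, characterizes a $\cD$-left fibration.

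I would then fix an object $d$ in $\cD$ and an integer $k \geq 0$ and stack the three pertinent squares into the diagram
\begin{center}
	\begin{tikzcd}
		Z[d,k] \arrow[r, "\ordered{0}^*"] \arrow[d, "g"] & Z[d,0] \arrow[d, "g"] \\
		Y[d,k] \arrow[r, "\ordered{0}^*"] \arrow[d, "f"] & Y[d,0] \arrow[d, "f"] \\
		X[d,k] \arrow[r, "\ordered{0}^*"'] & X[d,0]
	\end{tikzcd}.
\end{center}
By \cref{lemma:levelwise} together with the homotopy pullback characterization of left fibrations recalled in \eqref{eq:leftright}, the hypothesis that $f$ is a $\cD$-left fibration says precisely that the bottom square is a homotopy pullback square for every such $(d,k)$; likewise $g$ being a $\cD$-left fibration says the top square is, and $fg$ being a $\cD$-left fibration says the outer rectangle is.

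For part (1) I would then use the always-valid direction of the pasting law: a vertical composite of two homotopy pullback squares is again a homotopy pullback square, so the outer rectangle is one and $fg$ is a $\cD$-left fibration. For part (2) I would use the converse direction: since the bottom square is a homotopy pullback and the outer rectangle is a homotopy pullback, the top square is a homotopy pullback, so $g$ is a $\cD$-left fibration. In both cases the pasting law is applied separately for each pair $(d,k)$.

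The only genuinely non-formal ingredient is the pasting law for homotopy pullback squares, and in particular its cancellation direction used in part (2); this is standard, since the Kan model structure on simplicial sets, hence the Reedy model structure on simplicial spaces, is right proper, so I do not expect it to be a real obstacle. An alternative that sidesteps any discussion of homotopy pullbacks would be to invoke \cref{lemma:levelwise} to reduce to left fibrations of simplicial spaces and then quote the corresponding closure properties from \cite{rasekh2017left}.
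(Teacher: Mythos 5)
Your proposal is correct. The paper's actual proof is the one-line alternative you mention at the end: it invokes \cref{lemma:levelwise} to reduce to left fibrations of simplicial spaces and then cites the corresponding closure properties from \cite{rasekh2017left} (Lemma 3.8 there). Your main argument simply unpacks what that cited lemma proves, via the pasting and cancellation laws for homotopy pullback squares applied level-wise in $(d,k)$; your preliminary observation that the map in question is already an injective fibration in both parts is exactly the right reduction, and the right-properness remark justifying the cancellation direction is the correct non-formal ingredient. So the two routes are the same in substance, with your write-up being the self-contained version.
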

 
  \begin{proof}
 	We can apply \cite[Lemma 3.8]{rasekh2017left} to the level-wise property of $\cD$-left fibrations (\cref{lemma:levelwise}).
 \end{proof}

\begin{lemone} \label{lemma:pullback nlfib}
	The pullback of a $\cD$-left fibration is a $\cD$-left fibration.
\end{lemone}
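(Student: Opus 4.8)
The plan is to reduce the statement to two facts already in hand: that injective fibrations are closed under pullback, and that the lifting characterization of \cref{prop:left fib lifting} exhibits $\cD$-left fibrations as a right-lifting class, which is automatically pullback-stable.

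First I would fix a $\cD$-left fibration $p\colon Y\to X$ and an arbitrary map $f\colon X'\to X$ of $\cD$-simplicial spaces, and form the pullback $p'\colon Y\times_X X'\to X'$. Injective fibrations are defined by the right lifting property against trivial cofibrations (\cref{prop:injectivemod}), and any class defined by a right lifting property is closed under pullback, so $p'$ is again an injective fibration. It therefore remains to verify the level-wise left fibration condition.

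For this I would invoke \cref{prop:left fib lifting}: an injective fibration is a $\cD$-left fibration precisely when it has the right lifting property against all maps $(\ordered{0}\colon F[d,0]\to F[d,k])\square(\partial\Delta[l]\to\Delta[l])$ with $(d,k)$ in $\cD\times\DD$ and $l\geq 0$. Since $p\pitchfork j$ implies $g^*p\pitchfork j$ for any pullback $g^*p$ of $p$, the map $p'$ inherits the right lifting property against this same set of maps, and \cref{prop:left fib lifting}, applied in the reverse direction, shows $p'$ is a $\cD$-left fibration. Alternatively, one can argue level-wise: $\Und_d$ is a right adjoint (\cref{def:und}, \cref{rem:ss analogue}) and hence preserves the pullback, so $\Und_d p'$ is the pullback of the left fibration $\Und_d p$; combined with the pullback-stability of left fibrations of simplicial spaces (again visible from their lifting characterization, cf.\ \cite[Proposition 3.7]{rasekh2017left}) and \cref{lemma:levelwise}, this gives the claim. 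There is no real obstacle here; the only point worth flagging is that pullback-stability becomes transparent once one works with the lifting characterization rather than with the homotopy-pullback-square definition.
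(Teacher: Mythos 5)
Your proof is correct, and in fact you give two valid arguments. The second, ``alternatively'' argument is essentially the paper's proof: the paper simply cites the pullback-stability of left fibrations of simplicial spaces (\cite[Lemma 3.9]{rasekh2017left}) together with the level-wise criterion of \cref{lemma:levelwise}, exactly as you describe via $\Und_d$ preserving pullbacks. Your primary argument, via the lifting characterization of \cref{prop:left fib lifting}, is a genuine (and clean) alternative: it avoids unwinding the level-wise reduction and instead observes directly that $\cD$-left fibrations form a right-lifting class against an explicit set of pushout products, and such classes are automatically pullback-stable. Both routes are sound; the lifting argument is arguably more self-contained since it does not lean on the corresponding fact for simplicial spaces, while the level-wise route keeps the bookkeeping parallel to the rest of the section. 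One small thing worth keeping in mind: \cref{prop:left fib lifting} is stated for injective fibrations, so you do still need the first paragraph (pullbacks of injective fibrations are injective fibrations) before the lifting argument applies, which you correctly include.
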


 \begin{proof}
	Follows from combining  \cite[Lemma 3.9]{rasekh2017left} and the level-wise property of $\cD$-left fibrations (\cref{lemma:levelwise}).
\end{proof}

 \begin{lemone} \label{lemma:cdleft local}
 		Let $p: L \to X$ be an injective fibration. Then the following are equivalent:
 		\begin{enumerate}
 			\item $p$ is an $\cD$-left fibration.
 			\item For every map $F\edge{d}{k} \times \Delta[l] \to X$ the pullback $L \times_X (F\edge{d}{k} \times \Delta[l]) \to F\edge{d}{k} \times \Delta[l]$ is a $\cD$-left fibration.
 			\item For every map $F\edge{d}{k} \to X$ the pullback $L \times_X F\edge{d}{k} \to F\edge{d}{k}$ is a $\cD$-left fibration.
 		\end{enumerate}
 \end{lemone}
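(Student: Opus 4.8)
The plan is to establish the cycle of implications $(1)\Rightarrow(2)\Rightarrow(3)\Rightarrow(1)$, with essentially all the content in the last one. For $(1)\Rightarrow(2)$: injective fibrations are stable under pullback, and the pullback of a $\cD$-left fibration is a $\cD$-left fibration by \cref{lemma:pullback nlfib}; applied to a map $F[d,k]\times\Delta[l]\to X$ this gives exactly $(2)$. The implication $(2)\Rightarrow(3)$ is just the case $l=0$, using $F[d,k]\times\Delta[0]\cong F[d,k]$.

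For $(3)\Rightarrow(1)$ I would use the trivial Kan fibration criterion of \cref{lemma:left fib triv Kan fib}: it suffices to show that for each $(d,k)$ in $\cD\times\DD$ the matching map of spaces
$$(p[d,k],\ordered{0}^*): L[d,k]\to X[d,k]\underset{X[d,0]}{\times}L[d,0]$$
is a trivial Kan fibration. Because $p$ is an injective fibration this map is a Kan fibration of spaces (as is implicit in \cref{lemma:left fib triv Kan fib}), and a Kan fibration whose fibre over every vertex of its base is contractible is automatically a trivial Kan fibration; so it is enough to check contractibility of these fibres. A vertex of the base is a pair $(\sigma,y)$, where $\sigma: F[d,k]\to X$ is a map (i.e.\ an element of $X[d,k]_0$) and $y$ is a point of $L[d,0]$ with $p(y)=\ordered{0}^*\sigma$. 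I would then form $L_\sigma:=L\times_X F[d,k]$, the pullback of $p$ along $\sigma$, which by $(3)$ is a $\cD$-left fibration over $F[d,k]$. Since the representable $F[d,k]$ has discrete values, unwinding the pullback identifies the fibre of $(p[d,k],\ordered{0}^*)$ over $(\sigma,y)$ with the fibre over the vertex $(\id_{(d,k)},y)$ of the matching map
$$L_\sigma[d,k]\to F[d,k][d,k]\underset{F[d,k][d,0]}{\times}L_\sigma[d,0]$$
of $L_\sigma\to F[d,k]$ (here $F[d,k][d,k]$ denotes the value of the representable at $(d,k)$, and the sector indexed by $\id_{(d,k)}$ picks out exactly the fibre of $L[d,k]$ over $\sigma$). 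By \cref{lemma:left fib triv Kan fib} applied to the $\cD$-left fibration $L_\sigma\to F[d,k]$, this last matching map is a trivial Kan fibration, so each of its fibres — in particular the one just described — is contractible. Hence $(p[d,k],\ordered{0}^*)$ has contractible fibres over all vertices, so it is a trivial Kan fibration, and \cref{lemma:left fib triv Kan fib} then gives that $p$ is a $\cD$-left fibration.

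The main obstacle is the fibre identification in the step $(3)\Rightarrow(1)$: one has to check carefully how pulling back along $\sigma: F[d,k]\to X$ interacts with the discreteness of the representable $F[d,k]$ in the slots $(d,k)$ and $(d,0)$, so that the $\id_{(d,k)}$-component of the matching map for $L_\sigma$ really reproduces the matching fibre for $p$, and one must confirm that these matching maps are Kan fibrations so that vertex-wise contractibility of fibres suffices. (Alternatively one can argue more formally: by \cref{lemma:levelwise} being a $\cD$-left fibration is a level-wise property, and $(3)\Rightarrow(1)$ follows by combining the lifting characterisation of \cref{prop:left fib lifting} with the corresponding locality statement for left fibrations of simplicial spaces from \cite{rasekh2017left}; the argument above is the $\cD$-simplicial-space incarnation of this.)
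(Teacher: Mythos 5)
Your proof is correct, but it takes a genuinely different route from the paper's. You close the cycle $(1)\Rightarrow(2)\Rightarrow(3)\Rightarrow(1)$ with all the content in $(3)\Rightarrow(1)$, whereas the paper proves $(1)\Rightarrow(3)$ from pullback stability, $(2)\Rightarrow(1)$ by the lifting characterisation of \cref{prop:left fib lifting} (a lifting square for $p$ against $(F[d,0]\to F[d,k])\square(\partial\Delta[l]\to\Delta[l])$ over a map $F[d,k]\times\Delta[l]\to X$ factors through the pullback $L\times_X(F[d,k]\times\Delta[l])$, which is a $\cD$-left fibration by hypothesis and hence supplies the lift), and finally $(3)\Rightarrow(2)$ by a level-wise reduction to \cite[Lemma~3.10]{rasekh2017left} via \cref{lemma:levelwise}. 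You instead bypass condition $(2)$ and argue $(3)\Rightarrow(1)$ directly via the trivial-Kan-fibration criterion of \cref{lemma:left fib triv Kan fib}, checking contractibility of fibres of the matching map $(p[d,k],\ordered{0}^*)$ over vertices of its base. The fibre identification you flag as the main obstacle does go through: since $F[d,k]$ is set-valued, $L_\sigma[d,k]=L[d,k]\times_{X[d,k]}F[d,k][d,k]$ splits as a coproduct indexed by $\Hom_{\cD\times\DD}((d,k),(d,k))$, and the $\id_{(d,k)}$-component of the matching map for $L_\sigma\to F[d,k]$ recovers exactly the fibre of $(p[d,k],\ordered{0}^*)$ over $(\sigma,y)$. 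The Kan-fibration input you need (injectively fibrant $p$ implies the matching map is a Kan fibration) follows because $\Inc_d$ is left Quillen from the Reedy model structure on $\ss$ to the injective model structure on $\sP(\cD\times\DD)$ (it sends a Reedy cofibration $A\hookrightarrow B$ to $F[d,0]\times\Disc(A)\hookrightarrow F[d,0]\times\Disc(B)$), so $\Und_dp$ is a Reedy fibration and \cite[Lemma~3.5]{rasekh2017left} applies level-wise. In short: the paper's $(2)\Rightarrow(1)$ step is shorter and entirely formal, at the price of needing the extra step $(3)\Rightarrow(2)$; your direct $(3)\Rightarrow(1)$ is somewhat more work but has the merit of not introducing $(2)$ at all and of exhibiting the locality of the $\cD$-left fibration condition concretely at the level of matching-map fibres.
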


\begin{proof}
	We simply have to adapt the proof of \cite[Lemma 3.10]{rasekh2017left} as we cannot use the level-wise argument immediately. It is immediate that $(1)$ implies $(3)$. Now we want to prove that condition $(2)$ implies that $p$ is an $\cD$-left fibration. By \cref{prop:left fib lifting} it suffices to check that it satisfies the right lifting property with respect to maps $ F\vecknedge{k_1}{k_n} \times \partial \Delta[l] \coprod_{F\vecknedgetwo{k_1}{k_{n-1}}{0} \times \partial \Delta[l]} F\vecknedgetwo{k_1}{k_{n-1}}{0} \times \Delta[l] \to F\vecknedge{k_1}{k_n} \times \Delta[l]$. We now have the following diagram  
 \begin{center}
	\begin{tikzcd}[row sep=0.5in, column sep=0.5in]
		\ds F\edge{d}{k} \times \partial \Delta[l] \coprod_{F\edge{d}{0} \times \partial \Delta[l]} F\edge{d}{0} \times \Delta[l] \arrow[d] \arrow[r]  & 
		f^*L \arrow[r] \arrow[d] & L \arrow[d, "p"] \\ 
		F\edge{d}{k} \times \Delta[l] \arrow[r, "\id"] \arrow[ur, dashed] & F\edge{d}{k} \times \Delta[l] \arrow[r, "f"] & X
	\end{tikzcd}
	.
\end{center}
The left hand square has a lift by assumption and so $p$ is a $\cD$-left fibration. Finally, we want to prove that $(3)$ implies $(2)$, but this condition is level-wise and so follows from \cite[Lemma 3.10]{rasekh2017left} combined with \cref{lemma:levelwise}.
\end{proof}
  
\subsection{The $\cD$-Covariant Model Structure}\label{subsec:cdcov}
   We are now in the right position to define a model structure, generalizing \cite[Theorem 3.12]{rasekh2017left}.
  
   \begin{theone} \label{the:ncov model}
  	Let $X$ be an $\cD$-simplicial space. 
  	There is a unique combinatorial left proper model structure enriched in $\sP(\cD)^{inj}$ with the injective model structure on the over-category $\sP(\cD\times\DD)_{/X}$, called the $\cD$-covariant model structure and denoted by $(\sP(\cD\times\DD)_{/X})^{\cD-cov}$, which satisfies the following conditions:
  	\begin{enumerate}
  		\item The fibrant objects are the $\cD$-left fibrations over $X$.
  		\item Cofibrations are monomorphisms.
  		\item A map $f: A \to B$ is a weak equivalence if it satisfies one of the following equivalent conditions:
  		\begin{itemize}
  			\item For all $d$ in $\cD$ the induced map 
  			$$\Und_df: \Und_dA \to \Und_dB$$
  			is a covariant equivalence of simplicial spaces over $\Und_dX$.
  			\item For all $\cD$-left fibrations $L \to X$ the induced map 
  				$$\Map_{/X}(B,L) \to \Map_{/X}(A,L)$$ 
  			is a Kan equivalence of spaces. 
  		\end{itemize}
  		\item A weak equivalence ($\cD$-covariant fibration) between fibrant objects is a level-wise equivalence (injective fibration). 
  	\end{enumerate}
  \end{theone}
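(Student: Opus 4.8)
The plan is to realise $(\sP(\cD\times\DD)_{/X})^{\cD-cov}$ as a left Bousfield localization of the injective model structure on the slice $\sP(\cD\times\DD)_{/X}$. The slice of $\sP(\cD\times\DD)^{inj}$ (\cref{prop:injectivemod}) is again combinatorial and left proper, and via the level-wise tensoring of $\sP(\cD\times\DD)$ over $\sP(\cD)$ it is enriched over $\sP(\cD)^{inj}$, so it admits left Bousfield localizations. I would localize at the set
$$S_X = \big\{\ (\ordered{0}\colon F[d,0]\to F[d,k])\text{ over }x\ :\ (d,k)\in\cD\times\DD,\ \ x\colon F[d,k]\to X\ \big\},$$
which is a genuine \emph{set} since $\Hom(F[d,k],X)=X[d,k]_0$ is a set; the remaining ``$\square\,(\partial\Delta[l]\to\Delta[l])$'' appearing in \cref{prop:left fib lifting} is handled automatically by the simplicial enrichment. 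To retain the $\sP(\cD)^{inj}$-enrichment I would perform the localization in the enriched sense (equivalently, localize at $S_X\square I$ for $I$ the generating cofibrations of $\sP(\cD)^{inj}$); the extra maps are already $S_X$-local equivalences, so this does not alter the fibrant objects. The outcome then automatically has the monomorphisms as cofibrations and is combinatorial and left proper (giving (2)), and by the general theory of left Bousfield localizations a weak equivalence (fibration) between fibrant objects is an injective equivalence (injective fibration), giving (4).

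Next I would identify the fibrant objects. A map $p\colon L\to X$ is fibrant in the localization iff it is an injective fibration and $L$ is $S_X$-local; since every object is cofibrant, $S_X$-locality says that for each $(d,k)$ and each $x\colon F[d,k]\to X$ the map $\Map_{/X}(F[d,k],L)\to\Map_{/X}(F[d,0],L)$ induced by $\ordered{0}$ is a Kan equivalence. Unwinding the mapping spaces through the Yoneda identification $\Map_{/X}(F[d,k],L)=L[d,k]\times_{X[d,k]}\{x\}$ and using that $p$ is an injective fibration, fibrewise recognition of equivalences over $X[d,k]$ shows this is equivalent to $L[d,k]\to X[d,k]\times_{X[d,0]}L[d,0]$ being a Kan equivalence for all $(d,k)$; by \cref{lemma:left fib triv Kan fib} this is exactly the assertion that each pullback $L\times_X F[d,k]\to F[d,k]$ is a $\cD$-left fibration, and by \cref{lemma:cdleft local} this holds for all $(d,k)$ precisely when $p$ is a $\cD$-left fibration. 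Hence the fibrant objects are the $\cD$-left fibrations over $X$, which is (1).

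For (3), the description of the $\cD$-covariant equivalences as the maps $f$ for which $\Map_{/X}(B,L)\to\Map_{/X}(A,L)$ is a Kan equivalence for every $\cD$-left fibration $L\to X$ is the standard characterization of local equivalences in a left Bousfield localization (all objects being cofibrant). For the level-wise description I would first note that for each $d$ the restriction functor $\Und_d=(\incl_d)^*$ preserves monomorphisms and level-wise Kan equivalences and sends the generating local map $\ordered{0}\colon F[d',0]\to F[d',k]$ to $\coprod_{\Hom_\cD(d,d')}(\ordero{} )$—a coproduct of copies of $\ordered{0}\colon F[0]\to F[k]$, hence of generating covariant trivial cofibrations; by the criterion for left Quillen functors between left Bousfield localizations this makes $\Und_d$ a left Quillen functor into the covariant model structure on $\ss_{/\Und_dX}$ of \cite{rasekh2017left}. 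Ken Brown's lemma then gives that a $\cD$-covariant equivalence $f$ has all $\Und_df$ covariant equivalences over $\Und_dX$. Conversely, if each $\Und_df$ is a covariant equivalence, I would take fibrant replacements $A\to\hat A$, $B\to\hat B$ in the $\cD$-covariant model structure, obtaining $\cD$-left fibrations $\hat A\to X$, $\hat B\to X$ and an induced $\hat f\colon\hat A\to\hat B$ over $X$; then $\Und_d\hat A\to\Und_dX$ and $\Und_d\hat B\to\Und_dX$ are left fibrations, hence covariant fibrant replacements of $\Und_dA$ and $\Und_dB$, so $\Und_d\hat f$ is a covariant equivalence between fibrant objects and therefore a level-wise Kan equivalence, for every $d$; thus $\hat f$ is an injective equivalence and $f$ is a $\cD$-covariant equivalence by two-out-of-three. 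Uniqueness follows: any model structure satisfying (1)--(4) is a left Bousfield localization of the injective structure with the prescribed fibrant objects, and such a localization is unique.

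The steps that only invoke the abstract machinery of Bousfield localization are routine. The part needing genuine care is the level-wise characterization of the weak equivalences—specifically verifying that $\Und_d$ descends to a left Quillen functor into the covariant model structure of \cite{rasekh2017left} (which requires reconciling the injective and Reedy presentations and tracking the behaviour of $\ordered{0}\colon F[d',0]\to F[d',k]$ under restriction), together with the bookkeeping that keeps the localization $\sP(\cD)^{inj}$-enriched; everything else reduces to \cref{lemma:left fib triv Kan fib}, \cref{lemma:cdleft local}, and standard properties of left Bousfield localizations.
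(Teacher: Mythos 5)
Your proposal is correct and matches the paper's overall strategy: both realize the $\cD$-covariant model structure as a left Bousfield localization of the injective slice model structure on $\sP(\cD\times\DD)_{/X}$ at the same localizing set of maps $F[d,0]\to F[d,k]$ over $X$, and both identify the fibrant objects via the same Yoneda/fibrewise-equivalence argument (through \cref{lemma:left fib triv Kan fib} and \cref{lemma:cdleft local}). The genuine difference lies in how the level-wise characterization of weak equivalences in (3) is established. The paper proceeds by hand: it observes that the small object argument for the localization expresses the fibrant replacement $Y\to\hat Y$ as a transfinite composition of pushouts of injective trivial cofibrations and horn-pushouts of the localizing maps, each of which is a level-wise covariant equivalence, and then reads off the general case from the fibrant-object case. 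You instead show abstractly that each $\Und_d$ is a left Quillen functor $(\sP(\cD\times\DD)_{/X})^{\cD\text{-}cov}\to(\ss_{/\Und_dX})^{cov}$ (by computing $\Und_d(F[d',0]\to F[d',k])\cong\coprod_{\Hom_\cD(d,d')}(F[0]\to F[k])$ and invoking the standard criterion for left Quillen functors into localizations), and then use Ken Brown's lemma for one direction and fibrant replacement for the converse; your route is slightly more modular but ultimately rests on the same computation. For the enrichment the paper verifies the pushout-product axiom directly using the level-wise description, while you propose performing an enriched localization at $S_X\square I$ and noting that the extra maps are already $S_X$-local — this is also valid, though it implicitly uses the simpliciality of the covariant model structure on $\ss_{/\Und_dX}$ (from \cite[Theorem 3.12]{rasekh2017left}) to see that the extra pushout products are trivial cofibrations, so the amount of checking is comparable. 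Both approaches are sound; yours substitutes adjunction formalism for the paper's explicit cell-tracking.
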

  
  \begin{proof}
  	We can use the theory of Bousfield localizations with respect to the injective model structure on $\sP(\cD\times\DD)_{/X}$ and localize with respect to 
  	$$\mathcal{L} = \{ F[d,0] \to F[d,k]  \to X: d \in \cD , k \geq 0 \}.$$
  	We want to prove the resulting model structure satisfies all the conditions stated above. By the definition of Bousfield localizations the resulting model structure is simplicial, combinatorial, left proper with cofibrations monomorphisms. Hence we need to verify the remaining properties. 
    
    Let us first confirm the fibrations. Let $p:L \to X$ be an injective fibration of $\cD$-simplicial spaces. Then, by \cref{lemma:left fib triv Kan fib}, $p$ is a $\cD$-left fibration if for all objects $d$ in $\cD$ and $n \geq 0$ the map of spaces is a Kan equivalence
    $$L[d,n] \to X[d,n] \times_{X[d,0]} L[d,0].$$
    We can extend this map to a commutative diagram 
    \begin{center}
    	\begin{tikzcd}
    		L[d,n] \arrow[rr] \arrow[dr, twoheadrightarrow] & &  X[d,n] \times_{X[d,0]} L[d,0] \arrow[dl, twoheadrightarrow] \\
    		 & X[d,n] & 
    	\end{tikzcd},
    \end{center}
	where two diagonal maps are Kan fibrations. By a classical result for Kan fibrations (\cite[Section 38]{rezk2017qcats}), the top map is an equivalence if and only if it is a fiber-wise equivalence, meaning the map  
	$$L[d,n] \times_{X[d,n]} \Delta[0] \to \Delta[0] \times_{X[d,n]} X[d,n] \times_{X[d,0]} L[d,0] \cong \Delta[0] \times_{X[d,0]} L[d,0]$$
	is a Kan equivalence for all points in $X[d,n]$. By the Yoneda lemma this is equivalent to the map of spaces
	$$\Map_{/X}(F[d,n],L) \to \Map_{/X}(F[d,0],L)$$
	being a Kan equivalence, which is the definition of fibrant objects in the $\cD$-covariant model structure.
    
    We move on to characterize weak equivalences. Let $Y \to X$ be an arbitrary map of $\cD$-simplicial spaces. Recall that we obtain the fibrant replacement in the localization model structure by applying the small object argument with respect to maps of the following form:
    	\begin{itemize}
    		\item $(A \to B) \square (\Lambda[l]_i \to \Delta[l]) \to X$,
    		\item $(F[d,0] \to F[d,n]) \square (\partial \Delta[l] \to \Delta[l]) \to X$,
    	\end{itemize}
    where $A \to B$ is a injective cofibration. This means we obtain a fibrant replacement of $Y \to X$ in the $\cD$-covariant model structure via a chain of morphisms 
    $$Y = Y_0 \to Y_1 \to ... \to \hat{Y}$$
    over $X$, where each morphism is a pushout along the two classes of morphisms of the form above. As both of these classes of morphisms are level-wise covariant equivalences, this means the map $Y \to \hat{Y}$ is also a level-wise covariant equivalence over $X$. 
    
    We will now use that to characterize arbitrary equivalences. Let $f:Y \to Z$ be an arbitrary map over $X$ and let $\hat{f}: \hat{Y} \to \hat{Z}$ be the fibrant replacement. By definition, $f$ is a $\cD$-covariant equivalence if and only if $\hat{f}$ is an injective equivalence, which is equivalent to $\Und_d\hat{f}: \Und_d\hat{Y} \to \Und_d\hat{Z}$ being a Reedy equivalence of simplicial spaces for every object $d$ in $\cD$. For every object $d$ in $\cD$ we have the following commutative diagram
    \begin{center}
    	\begin{tikzcd}
    		\Und_dY \arrow[r, "f"] \arrow[d, "\simeq"] &  \Und_dZ \arrow[d, "\simeq"] \arrow[ddr, bend left=20] \\
    		\Und_d\hat{Y} \arrow[r, "\hat{f}"] \arrow[drr, bend right=20] & \Und_d\hat{Z} \arrow[dr] \\
    		& & \Und_dX
    	\end{tikzcd}.
    \end{center}
    By the argument above, the maps $\Und_dY \to \Und_d\hat{Y}$ and $\Und_dZ \to \Und_d\hat{Z}$ are covariant equivalences, hence $\Und_d\hat{Y} \to \Und_d\hat{Z}$ is a Reedy equivalence if and only if $\Und_dY \to \Und_dZ$ is a covariant equivalence over $\Und_dX$, which proves that a map $Y \to Z$ is a $\cD$-covariant equivalence if and only if it is a level-wise covariant equivalence over $X$. 
     
     Finally, we want to show that the covariant model structure is enriched over $\sP(\cD)^{inj}$ via the inclusion $\VEmb: \sP(\cD) \to \sP(\cD\times\DD)$. Let $i:A\to B$ be a cofibration of $\cD$-spaces and $j:C \to D$ over $X$ be a cofibration of $\cD$-simplicial spaces. By \cref{def:enriched model cat}, we need to prove that $i \square j$ is a cofibration over $X$, which is trivial if either $i$ or $j$ is trivial. The cofibration part is evident as as all cofibrations are monomorphisms, thus we only need to check the second part. Assume that either $i$ or $j$ are trivial cofibrations. Then $\Und_di$ or $\Und_dj$ are trivial cofibrations for all objects $d$ in $\cD$. This implies that $\Und_d(i \square j) = \Und_dj \square \Und_dj$ is a trivial cofibration for all objects $d$ in $\cD$, which implies that $i \square j$ is a trivial cofibrations in the $\cD$-covariant model structure.
  \end{proof}
  
  \begin{remone}
  	There is a certain similarity between the way we constructed the $\cD$-covariant model structure and the level-wise localized model structures (\cref{prop:levelwise localized model structure}), which might suggest we could have just constructed the $\cD$-covariant model structure as a certain level-wise localized model structure. Unfortunately this approach does not work. Indeed, although we have a level-wise covariant model structure, each level is over a different base ($\Und_dX$) and so the different levels do not have the same model structure. 
  \end{remone}

   \begin{lemone} \label{lemma:fib between lfib}
  	Let $p: L \to X$ and $q: L' \to X$ be two $\cD$-left fibrations. A map $f: L \to L'$ over $X$ is a fibration in the $\cD$-covariant 
  	model structure if and only if it is an $\cD$-left fibration. 
  \end{lemone}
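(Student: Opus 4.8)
The plan is to prove the two implications separately, using the cancellation property of $\cD$-left fibrations (\cref{lemma:cdleft comp}) for one direction and a short retract argument inside the $\cD$-covariant model structure for the other. The common first observation is that, by \cref{the:ncov model}, both $L$ and $L'$ are fibrant objects of $(\sP(\cD\times\DD)_{/X})^{\cD-cov}$, being $\cD$-left fibrations over $X$.

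For ``$\cD$-covariant fibration $\Rightarrow$ $\cD$-left fibration'', I would first use \cref{the:ncov model} to see that a $\cD$-covariant fibration $f:L\to L'$ between the fibrant objects $L$ and $L'$ is an injective fibration. Since $f$ lies over $X$ we have $q\circ f=p$; here $q$ is an injective fibration (being a $\cD$-left fibration), $f$ is an injective fibration, and both $q$ and $p=q\circ f$ are $\cD$-left fibrations by hypothesis. Hence the second part of \cref{lemma:cdleft comp}, applied to the composite $L\xrightarrow{f}L'\xrightarrow{q}X$, yields that $f$ is a $\cD$-left fibration.

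For the converse, assume $f:L\to L'$ is a $\cD$-left fibration, so in particular it is an injective fibration by \cref{def:nleft}. I would factor $f$ in the $\cD$-covariant model structure as a $\cD$-covariant trivial cofibration $j:L\to L''$ followed by a $\cD$-covariant fibration $g:L''\to L'$. Since $L'$ is fibrant and $g$ is a fibration, $L''$ is fibrant, hence a $\cD$-left fibration over $X$; thus $j$ is a $\cD$-covariant weak equivalence between fibrant objects and therefore, by \cref{the:ncov model}, a level-wise equivalence, i.e.\ an injective equivalence, and being a monomorphism it is an injective trivial cofibration. As $f$ is an injective fibration it has the right lifting property against $j$, and the resulting lift $r:L''\to L$ (with $rj=\id_L$ and $fr=g$) exhibits $f$ as a retract of $g$ over $L'$; since fibrations are stable under retracts, $f$ is a $\cD$-covariant fibration.

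I do not expect a genuine obstacle: the argument is formal once these inputs are assembled. The point that needs the most care is ensuring that the hypotheses of \cref{lemma:cdleft comp} are actually met in the first direction, i.e.\ that $f$ (not just $q$ and $p$) is an injective fibration --- this is exactly where the clause ``a $\cD$-covariant fibration between fibrant objects is an injective fibration'' of \cref{the:ncov model} is used. One could instead appeal to the general principle that, in a left Bousfield localization, fibrations between local objects agree with fibrations in the ambient model structure; the retract argument above simply makes this self-contained within the present setting.
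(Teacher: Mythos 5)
Your proof is correct, but it takes a genuinely different route from the paper's. The paper disposes of the lemma in one sentence, appealing to the level-wise characterization of $\cD$-left fibrations (\cref{lemma:levelwise}) in order to reduce to the simplicial-space analogue \cite[Lemma 3.14]{rasekh2017left}. You instead give a self-contained argument entirely inside the $\cD$-covariant model structure: for the forward implication you combine the clause of \cref{the:ncov model} that identifies $\cD$-covariant fibrations between fibrant objects with injective fibrations, and then apply the cancellation property \cref{lemma:cdleft comp}(2) to the composite $L\to L'\to X$; for the converse you run the standard factor-and-retract argument, using that the trivial cofibration in the factorization is a map between $\cD$-left fibrations and hence an injective trivial cofibration, against which the $\cD$-left fibration $f$ lifts. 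The gain is that your proof does not require unwinding how the $\cD$-covariant fibration condition interacts with the $\Und_d$ functors — a point the paper itself flags as delicate in the remark after \cref{the:ncov model}, where it notes that the $\cD$-covariant model structure is not a level-wise model structure since each level sits over a different base. The paper's argument is more economical but silently relies on reconciling the $\cD$-covariant fibrations with the level-wise covariant fibrations; your version makes the Bousfield-localization mechanism explicit and hence needs no such reconciliation. Both directions of your argument check out against the definitions: in the forward direction the injective-fibration hypothesis of \cref{lemma:cdleft comp}(2) is secured exactly by \cref{the:ncov model}, as you note; in the converse the object $L''$ in your factorization is $\cD$-covariantly fibrant, hence a $\cD$-left fibration, so the comparison map is indeed an equivalence between fibrant objects and therefore a level-wise equivalence.
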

   
   \begin{proof}
	Being a $\cD$-left fibration is a level-wise condition (\cref{lemma:levelwise}) and so it follows from \cite[Lemma 3.14]{rasekh2017left}.
   \end{proof}
   
    \begin{theone} \label{the:diag local ncov}
   	The following is a Quillen adjunction
   	\begin{center}
   		\adjun{(\sP(\cD\times\DD)_{/X})^{\cD-cov}}{(\sP(\cD\times\DD)_{/X})^{\cD-diag}}{id}{id}
   	\end{center}
   	which is a Quillen equivalence if for all $d$ in $\cD$ the simplicial space $\Und_dX$ is homotopically constant. Here the left side has the $\cD$-covariant model structure and the right side has the $\cD$-diagonal model structure (\cref{the:diagmodel}) on the over category. This implies that the $\cD$-diagonal model structure over $X$ is a localization of the $\cD$-covariant model structure over $X$.
   \end{theone}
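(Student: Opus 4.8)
The plan is to reduce everything to the level-wise situation and invoke the corresponding comparison of model structures for simplicial spaces, namely \cite[Theorem 3.17]{rasekh2017left}. First observe that both model structures on $\sP(\cD\times\DD)_{/X}$ have the monomorphisms as cofibrations (by \cref{the:ncov model} and \cref{the:diagmodel}, since the over-category inherits cofibrations and weak equivalences from $\sP(\cD\times\DD)$ along the forgetful functor), so the identity functor preserves cofibrations in both directions. Hence, to obtain the Quillen adjunction it suffices to show that every $\cD$-covariant equivalence over $X$ is a $\cD$-diagonal equivalence.

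The reduction uses the functors $\Und_d$ (\cref{def:und}). The key elementary point is that $\Und_d \circ \fDiag = \fDiag \circ \Und_d$: unwinding \cref{def:diag} and \cref{def:und}, the $\cD$-space $\fDiag(Y)$ sends $d$ to the diagonal of the bisimplicial set $\Und_d Y$. Consequently a map $f$ of $\cD$-simplicial spaces is a $\cD$-diagonal equivalence (that is, $\fDiag(f)$ is an injective equivalence of $\cD$-spaces) if and only if $\Und_d f$ is a diagonal equivalence of simplicial spaces for every $d$. On the other hand, by condition (3) of \cref{the:ncov model}, a map $f$ over $X$ is a $\cD$-covariant equivalence if and only if $\Und_d f$ is a covariant equivalence over $\Und_d X$ for every $d$. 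Applying \cite[Theorem 3.17]{rasekh2017left} to each simplicial space $\Und_d X$ now shows that a covariant equivalence over $\Und_d X$ is in particular a diagonal equivalence, so combining these facts gives the required implication. Thus the identity adjunction is a Quillen adjunction, and since the two model structures share the same cofibrations this exhibits the $\cD$-diagonal model structure over $X$ as a left Bousfield localization of the $\cD$-covariant model structure over $X$.

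For the Quillen equivalence, suppose every $\Und_d X$ is homotopically constant. Then \cite[Theorem 3.17]{rasekh2017left} applies in its stronger form: over a homotopically constant base the covariant and diagonal model structures coincide, so the covariant equivalences over $\Und_d X$ are exactly the diagonal equivalences over $\Und_d X$. By the two level-wise criteria used in the previous paragraph, the $\cD$-covariant equivalences over $X$ then coincide with the $\cD$-diagonal equivalences over $X$; since the two model structures also have the same cofibrations, they coincide, and in particular the identity adjunction is a Quillen equivalence.

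I do not expect a genuine obstacle here: the content is packaged into the level-wise description of $\cD$-covariant equivalences (\cref{the:ncov model}) and into the simplicial-space comparison \cite[Theorem 3.17]{rasekh2017left}. The only steps requiring care are the compatibility $\Und_d\circ\fDiag = \fDiag\circ\Und_d$, so that being a $\cD$-diagonal equivalence is a genuinely level-wise condition, and the routine verification that the over-category model structures create cofibrations and weak equivalences from $\sP(\cD\times\DD)$, which is what legitimizes the level-wise reductions.
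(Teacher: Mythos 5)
Your proposal is correct and follows essentially the same route as the paper: reduce to a level-wise statement and invoke the simplicial-space comparison \cite[Theorem 3.17]{rasekh2017left}. The one small difference is in how you handle the Quillen-equivalence half: the paper verifies that the left adjoint reflects equivalences and that the derived counit (which equals the counit since everything is cofibrant) is an equivalence, both being level-wise properties, whereas you argue that the level-wise result forces the $\cD$-covariant and $\cD$-diagonal equivalences over $X$ to agree, so that the two model structures literally coincide and the identity adjunction is trivially a Quillen equivalence. Both packagings are valid and rest on the same two ingredients; your version has the minor advantage of making the identity $\Und_d\circ\fDiag=\fDiag\circ\Und_d$ explicit, which the paper leaves implicit when asserting that being a (trivial) $\cD$-diagonal cofibration is a level-wise condition.
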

   
   \begin{proof}
   	It suffices to prove the left adjoint preserves cofibrations and trivial cofibrations. However, by \cref{the:ncov model}, both are determined level-wise (\cref{def:levelwise prop}) and so the result follows from \cite[Theorem 3.17]{rasekh2017left}. Now let us assume  $\Und_dX$ is homotopically constant for all $d$ in $\cD$. We want to prove that the left adjoint reflects equivalences and the derived counit map is an equivalence.
   	
   	By \cref{the:ncov model}, every object is cofibrant and so the derived counit map is simply the counit map. Hence, both conditions (left adjoint reflecting equivalences and the counit being an equivalence) are  determined level-wise (\cref{def:levelwise prop}) and so the result follows from  \cite[Theorem 3.17]{rasekh2017left} combined with the fact that $\Und_dX$ is homotopically constant.
   \end{proof}

   \begin{lemone} \label{lemma:exp cdleft}
  	Let $p:L \to X$ be an $\cD$-left fibration and $i:A \to B$, $j: C \to D$ monomorphisms.
  	\begin{enumerate}
  		\item If $i$ or $j$ are $\cD$-covariant equivalences then $i \square j$ is a trivial $\cD$-covariant cofibration. 
  		\item $\exp{i}{p}$ is an $\cD$-left fibration.
  	\end{enumerate}
  \end{lemone}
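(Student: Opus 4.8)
The plan is to prove part (1) directly by reducing it level-wise to the analogous statement for the covariant model structure on simplicial spaces, and then to deduce part (2) from (1) purely formally via the Joyal--Tierney calculus (\cref{prop:joyal tierney lifting}) together with \cref{prop:left fib lifting}. This is the same two-step strategy used for left fibrations of simplicial spaces in \cite{rasekh2017left}.

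For (1): the map $i \square j$ is a monomorphism, hence a cofibration in the $\cD$-covariant model structure, since monomorphisms in $\sP(\cD\times\DD)$ are exactly the cofibrations of the (Cartesian) injective model structure and so are closed under pushout product (\cref{prop:injectivemod}, \cref{def:cartesian model cat}). To see it is moreover a $\cD$-covariant equivalence whenever $i$ or $j$ is, recall from \cref{the:ncov model} that $\cD$-covariant equivalences are detected level-wise by the functors $\Und_d$. Each $\Und_d$ is a restriction functor, hence preserves finite products and all colimits, so $\Und_d(i \square j) = \Und_d(i) \square \Und_d(j)$, a pushout product over $\Und_d(B \times D) = \Und_d B \times \Und_d D$. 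If, say, $i$ is a $\cD$-covariant equivalence, then every $\Und_d(i)$ is a covariant equivalence of simplicial spaces, so by the analogue of this lemma for the covariant model structure on simplicial spaces \cite{rasekh2017left}, $\Und_d(i)\square\Und_d(j)$ is a trivial covariant cofibration over $\Und_d B \times \Und_d D$; since this holds for all $d$, the map $i \square j$ is a trivial $\cD$-covariant cofibration. The case where $j$ is the equivalence is symmetric.

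For (2): I would check the two conditions of \cref{prop:left fib lifting}. First, $\exp{i}{p}$ is an injective fibration because the injective model structure is Cartesian (\cref{prop:injectivemod}), $i$ is an injective cofibration, and $p$ is an injective fibration (\cref{def:cartesian model cat}). Second, fix $(d,k,l)$ and set $g := (\ordered{0}\colon F[d,0]\to F[d,k])\,\square\,(\partial\Delta[l]\to\Delta[l])$; I must show $g \pitchfork \exp{i}{p}$, which by \cref{prop:joyal tierney lifting} is equivalent to $g\square i \pitchfork p$. Now $g$ is one of the generating trivial cofibrations of the $\cD$-covariant model structure appearing in the fibrant-replacement step of the proof of \cref{the:ncov model}, so in particular $g$ is a $\cD$-covariant equivalence, and hence by part (1) the map $g\square i$ is a trivial $\cD$-covariant cofibration over its codomain $Q := F[d,k]\times\Delta[l]\times B$. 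It then suffices to observe that any $\cD$-left fibration $p\colon L\to X$ has the right lifting property against an arbitrary trivial $\cD$-covariant cofibration $\alpha\colon P\to Q$: in a lifting square, pull $L$ back along $Q\to X$ to obtain a $\cD$-left fibration $Q\times_X L\to Q$ (\cref{lemma:pullback nlfib}), which is a fibrant object of $(\sP(\cD\times\DD)_{/Q})^{\cD-cov}$, i.e.\ a fibration to the terminal object $\id_Q$, so $\alpha$ lifts against it and therefore against $p$. Applying this with $\alpha = g\square i$ gives $g\square i \pitchfork p$, and \cref{prop:left fib lifting} concludes that $\exp{i}{p}$ is a $\cD$-left fibration.

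The genuinely substantive input is the simplicial-space version of part (1) from \cite{rasekh2017left}; once that is in hand, the rest is bookkeeping, the one delicate point being that a ``trivial $\cD$-covariant cofibration'' is always a statement relative to a target object, so in (2) one must pass through the pullback $Q\times_X L\to Q$ to convert the lifting property of $g\square i$ over $Q$ into a lifting property against $p\colon L\to X$. I expect the main obstacle, should one wish to be fully self-contained, to be precisely reproving part (1) for simplicial spaces: a direct level-wise reduction of (2) is unavailable because $\Und_d$ does not commute with the internal hom, which is exactly why (2) must be routed through (1) and the Joyal--Tierney adjunction rather than proved on its own.
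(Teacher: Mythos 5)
Your proof is correct and takes essentially the same approach as the paper's: the paper also reduces part (1) to the simplicial-space case level-wise via $\Und_d$ (citing the analogous \cite[Lemma 3.25]{rasekh2017left}) and then deduces part (2) from (1) by \cref{prop:joyal tierney lifting}, only it states this in two sentences and leaves implicit the base-tracking and pullback step you spell out.
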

  
  \begin{proof}
  	As coproducts, products, monomorphisms and weak equivalences are determined level-wise (\cref{def:levelwise prop}) the first statement follows from \cite[Lemma 3.25]{rasekh2017left} applied level-wise. The second one then follows from the first by \cref{prop:joyal tierney lifting}.
  \end{proof}

  \begin{theone} \label{the:retract}
  	Let $i:A \to B$ be a monomorphism of $\cD$-simplicial spaces over $X$. Then $i$ is a trivial cofibration in the $\cD$-covariant model structure on $\sP(\cD\times\DD)_{/X}$ if there exists a retraction $r: B \to A$ (not necessarily over $X$) and a $H: B \times D[1] \to B$ such that 
  	$H(-,0) = ir$ and $H(-,1) = \id_B$.
  \end{theone}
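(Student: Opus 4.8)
The plan is to show that $i$ is a weak equivalence in the $\cD$-covariant model structure; since $i$ is a monomorphism it is a cofibration, and a cofibration which is a weak equivalence is a trivial cofibration. By condition (3) of \cref{the:ncov model} it suffices to show that for every $\cD$-left fibration $L\to X$, with structure map $p_L$, the induced map
$$i^*\colon\Map_{/X}(B,L)\longrightarrow\Map_{/X}(A,L)$$
is a Kan equivalence of spaces. Since $i$ is a monomorphism and $L$ is injectively fibrant over $X$, this map is a Kan fibration, so it is enough to build a homotopy inverse out of the only available data, namely $r$ and $H$.

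The key technical ingredient is a one-sided homotopy lifting property of $\cD$-left fibrations along the interval $D[1]$. First, $\{0\}\colon D[0]\to D[1]$ is a trivial cofibration in the $\cD$-covariant model structure over any base: it is a monomorphism, and applying $\Und_d$ turns it into $\ordered{0}\colon F[0]\to F[1]$, a covariant trivial cofibration of simplicial spaces \cite{rasekh2017left}, so it is a $\cD$-covariant equivalence by \cref{the:ncov model}. Hence, by \cref{lemma:exp cdleft}, the pushout-product $(M'\hookrightarrow M)\,\square\,(\{0\}\colon D[0]\to D[1])$ is a $\cD$-covariant trivial cofibration for every monomorphism $M'\hookrightarrow M$, and therefore every $\cD$-left fibration has the right lifting property against it. This yields: for any $G\colon M\times D[1]\to X$ and any lift $\bar g_0\colon M\to L$ of $G|_{M\times\{0\}}$ there is a lift $\bar G\colon M\times D[1]\to L$ of $G$ restricting to $\bar g_0$, and relatively so when a lift is already prescribed on $M'\times D[1]\cup_{M'\times\{0\}}M\times\{0\}$; moreover, taking in addition the pushout-product with $\partial\Delta[1]\hookrightarrow\Delta[1]$ in the same argument shows that two such lifts agreeing at the $0$-end are homotopic rel that end, over the constant homotopy of $G$.

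Write $p_A:=p_B\circ i$, and note that $p_B\circ H$ restricts to $p_A\circ r$ at the $0$-end of $D[1]$ and to $p_B$ at the $1$-end. Define $R\colon\Map_{/X}(A,L)\to\Map_{/X}(B,L)$ by sending $f\colon A\to L$ over $p_A$ to $\bar F_f|_{B\times\{1\}}$, where $\bar F_f\colon B\times D[1]\to L$ is a lift of $p_B\circ H$ with $\bar F_f|_{B\times\{0\}}=f\circ r$ (a valid lift at the $0$-end since $p_L\circ(f\circ r)=p_A\circ r$), the lift being carried out uniformly over $\Map_{/X}(A,L)$ so that $R$ is simplicial. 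The identity $R\circ i^*\simeq\mathrm{id}_{\Map_{/X}(B,L)}$ is the easy half: for $g\colon B\to L$ over $p_B$, both $\bar F_{g\circ i}$ and $g\circ H$ are lifts of $p_B\circ H$ restricting to $g\circ(i\circ r)$ at the $0$-end, so by the uniqueness statement they agree up to homotopy rel $B\times\{0\}$ over the constant homotopy, and evaluating at the $1$-end gives $R(g\circ i)=\bar F_{g\circ i}|_{1}\simeq(g\circ H)|_{1}=g$ over $X$, naturally in $g$.

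The remaining half $i^*\circ R\simeq\mathrm{id}_{\Map_{/X}(A,L)}$ is the delicate point and the expected main obstacle. Here $i^*R(f)=\bar F_f|_{A\times\{1\}}$ and $\bar F_f\circ(i\times\mathrm{id}_{D[1]})$ is a homotopy from $f$ to $i^*R(f)$, but it lies over $p_B\circ H\circ(i\times\mathrm{id}_{D[1]})$, a self-homotopy of $p_A$ which need not be the constant one, so one does not immediately get a homotopy in $\Map_{/X}(A,L)$. The remedy is to choose $\bar F_f$ relatively: first prescribe its restriction to $A\times D[1]$ to be a lift of this self-homotopy starting and ending at $f$, then extend across $B\times D[1]$ using the relative lifting property of the second paragraph applied to the $\cD$-covariant trivial cofibration $A\times D[1]\cup_{A\times\{0\}}B\times\{0\}\hookrightarrow B\times D[1]$ (which is $i\,\square\,(\{0\}\colon D[0]\to D[1])$); for such a choice $i^*R(f)=f$ on the nose. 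Producing the required restricted lift is the real content, and is where the identity $r\circ i=\mathrm{id}_A$ is used to control the monodromy of the self-homotopy $H\circ(i\times\mathrm{id}_{D[1]})$ of $i$. Granting this, $i^*$ is a Kan equivalence for every $\cD$-left fibration $L\to X$, hence $i$ is a $\cD$-covariant equivalence, and being a cofibration, a trivial cofibration. An alternative route worth pursuing, sidestepping the mapping-space bookkeeping, is to factor $i$ through the mapping cylinder $M_i:=(A\times D[1])\cup_{A\times\{0\},\,i}B$: the inclusion $B\hookrightarrow M_i$ is a pushout of the $\cD$-covariant trivial cofibration $A\times\{0\}\hookrightarrow A\times D[1]$, hence trivial, so $M_i\to B$ is a $\cD$-covariant equivalence, and it remains to prove that $A\cong A\times\{1\}\hookrightarrow M_i$ is one, for which $r$ and $H$ provide an explicit deformation retraction of $M_i$ onto $A\times\{1\}$ (again not over $X$, so again the homotopy lifting of the second paragraph is what makes the argument go through).
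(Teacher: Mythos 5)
The paper's proof is a two-line level-wise reduction that you missed: by \cref{the:ncov model} both cofibrations and weak equivalences in the $\cD$-covariant model structure over $X$ are detected by applying $\Und_d$ for all $d$ in $\cD$, so being a trivial cofibration is a level-wise property (\cref{def:levelwise prop}); applying $\Und_d$ to $i$, $r$ and $H$ (and noting $\Und_d D[1] = F[1]$) yields exactly the hypotheses of \cite[Theorem 3.27]{rasekh2017left} over $\Und_d X$, and that theorem then gives the conclusion level-wise, hence globally.

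You instead attempt to re-prove the result from scratch through the mapping-space characterization of $\cD$-covariant equivalences. Your preliminaries are correct --- $\{0\}\colon D[0] \to D[1]$ is indeed a $\cD$-covariant trivial cofibration by the same level-wise observation, and \cref{lemma:exp cdleft} gives the homotopy lifting statement against it --- but the decisive step is a genuine gap, which you yourself flag. For $i^*\circ R \simeq \id_{\Map_{/X}(A,L)}$ you need a lift of the self-homotopy $p_B\circ H\circ(i\times\id_{D[1]})\colon A\times D[1]\to X$ of $p_A$ to a self-homotopy of $f$ in $L$, i.e.\ a lift that both starts \emph{and ends} at $f$. The homotopy lifting property you established only supplies a lift starting at $f$; nothing in the argument forces the endpoint to return to $f$, and the loop $H\circ(i\times\id_{D[1]})$ is in general non-constant. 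You write that $r\circ i = \id_A$ ``is used to control the monodromy'' but you do not carry out that control, and the very same issue recurs in your alternative mapping-cylinder route. Closing that gap is precisely the nontrivial content of \cite[Theorem 3.27]{rasekh2017left}, which the paper's level-wise reduction invokes for free rather than re-proving. As written, the proof is incomplete.
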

   
   \begin{proof}
   	$i:A \to B$ being a trivial cofibration is determined level-wise (\cref{def:levelwise prop}) and so the result follows from \cite[Theorem 3.27]{rasekh2017left}.
   \end{proof}
   
  We can appropriately relate the $\cD$-covariant model structure and the level-wise complete Segal space model structure (\cref{prop:levelcss}).

   \begin{theone} \label{the:dcov css inv}
   	Let $f: X \to Y$ be map of $\cD$-simplicial spaces. Then the adjunction
   	\begin{center}
   		\adjun{(\sP(\cD\times\DD)_{/X})^{\cD-cov}}{(\sP(\cD\times\DD)_{/Y})^{\cD-cov}}{f_!}{f^*}
   	\end{center}
   	is a $\sP(\cD)$-enriched Quillen adjunction, which is a Quillen equivalence whenever $f$ is a level-wise CSS equivalence (\cref{prop:levelcss}). Here both sides have the $\cD$-covariant model structure.
   \end{theone}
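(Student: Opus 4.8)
The plan is to deduce everything from the corresponding statement for left fibrations of simplicial spaces, namely the Invariance of Left Fibrations (\cite[Theorem 5.1]{rasekh2017left}), by exploiting that every piece of the $\cD$-covariant model structure is detected level-wise, as recorded in \cref{the:ncov model}. Two elementary facts make this reduction run. First, for each object $d$ in $\cD$ the functor $\Und_d$ commutes with base change: $\Und_d\circ f_! = (\Und_df)_!\circ\Und_d$, because $f_!$ is post-composition with $f$ and does not alter the underlying $\cD$-simplicial space, and $\Und_d\circ f^* = (\Und_df)^*\circ\Und_d$, because $f^*$ is a pullback and $\Und_d$, being a right adjoint (\cref{def:und}, \cref{rem:ss analogue}), preserves limits. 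Second, if $f$ is a level-wise CSS equivalence then $\Und_df$ is a CSS equivalence of simplicial spaces, by \cref{prop:levelcss}.

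\textbf{Quillen adjunction and enrichment.} First I would check that $f_!$ preserves cofibrations and trivial cofibrations. Cofibrations are monomorphisms and $f_!$ leaves the underlying $\cD$-simplicial space unchanged, so the first is immediate. If $i\colon A\to B$ over $X$ is a trivial cofibration, then by \cref{the:ncov model} it is a monomorphism and $\Und_di$ is a covariant equivalence over $\Und_dX$ for every $d$; applying $\Und_d$, the map $\Und_d(f_!i)=(\Und_df)_!(\Und_di)$ is a covariant trivial cofibration over $\Und_dY$ since $(\Und_df)_!$ is left Quillen by \cite[Theorem 5.1]{rasekh2017left}, and as this holds for all $d$, $f_!i$ is a trivial cofibration over $Y$. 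Hence $(f_!,f^*)$ is a Quillen adjunction. For the enrichment, note that the $\sP(\cD)$-tensoring on $\sP(\cD\times\DD)_{/Z}$ sends $(A\to Z)$ and a $\cD$-space $K$ to $A\times\VEmb K\to A\to Z$, and both post-composition along $f$ and pullback along $f$ commute with products over a fixed base; so $f_!$ and $f^*$ strictly preserve the tensor, whence they are $\sP(\cD)$-enriched functors (\cref{lemma:pcd enriched functor}) and the adjunction is $\sP(\cD)$-enriched (\cref{lemma:enriched adjunction via tensors}), hence a $\sP(\cD)$-enriched Quillen adjunction.

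\textbf{Quillen equivalence.} Now suppose $f$ is a level-wise CSS equivalence. Since every object of the $\cD$-covariant model structure is cofibrant (\cref{the:ncov model}), it suffices to show that $f_!$ reflects weak equivalences and that the counit $f_!f^*L\to L$ is a $\cD$-covariant equivalence for every $\cD$-left fibration $L\to Y$. For reflection: $f_!g$ is a $\cD$-covariant equivalence over $Y$ iff $(\Und_df)_!(\Und_dg)$ is a covariant equivalence over $\Und_dY$ for all $d$ iff, since $\Und_df$ is a CSS equivalence, $\Und_dg$ is a covariant equivalence over $\Und_dX$ for all $d$ by \cite[Theorem 5.1]{rasekh2017left}, iff $g$ is a $\cD$-covariant equivalence over $X$. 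For the counit: $f^*L$ is again a $\cD$-left fibration over $X$ by \cref{lemma:pullback nlfib}, the counit $f_!f^*L\to L$ is the projection $X\times_YL\to L$ over $Y$, and on each level $d$ it is the projection $(\Und_df)^*\Und_dL\to\Und_dL$, which is precisely the counit at the fibrant object $\Und_dL$ of the Quillen equivalence $\big((\Und_df)_!,(\Und_df)^*\big)$ and hence a covariant equivalence over $\Und_dY$; as this holds for all $d$, the counit is a $\cD$-covariant equivalence. Therefore $(f_!,f^*)$ is a Quillen equivalence.

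\textbf{Main obstacle.} There is no deep obstruction: the substance is the level-wise bookkeeping, all of which is supplied by \cref{the:ncov model} (cofibrations, weak equivalences, and fibrant objects are each level-wise) together with \cite[Theorem 5.1]{rasekh2017left}. The two points that want a little care are the enrichment claim — where one must identify the $\sP(\cD)$-tensoring on the over-category and observe that base change preserves it — and, in the Quillen-equivalence step, the identification of the derived counit (which equals the counit, as all objects are cofibrant) level-wise with the counit of the simplicial-space invariance equivalence.
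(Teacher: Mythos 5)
Your argument is correct and follows essentially the same route as the paper's: establish the enriched Quillen adjunction directly (both sides have the same cofibrations, and the adjunction is enriched because $f_!$ preserves the $\VEmb$-tensor), then for the Quillen equivalence verify level-wise—using the characterizations in \cref{the:ncov model} together with \cite[Theorem 5.1]{rasekh2017left}—that the derived counit is an equivalence and that $f_!$ reflects equivalences. The only cosmetic difference is that the paper checks the right adjoint preserves fibrant objects while you check the left adjoint preserves (trivial) cofibrations; both yield the Quillen adjunction.
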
 

    \begin{proof}
    	By \cref{lemma:enriched adjunction via tensors} the adjunction is evidently $\sP(\cD)$-enriched as post-composition preserves tensors. It is a Quillen adjunction as $\cD$-left fibrations are closed under pullbacks (\cref{lemma:pullback nlfib}) and both sides have the same cofibrations (inclusions). So, let us assume $f$ is a level-wise CSS equivalence. We want to prove that $(f_! \dashv f^*)$ is a Quillen equivalence. We will prove that the derived counit is an equivalence and that the left adjoint reflects equivalences. 
    	
    	By \cite[Theorem 5.1]{rasekh2017left} the derived counit map is a level-wise covariant equivalence and so an equivalence in the $\cD$-covariant model structure (\cref{the:ncov model}). On the other hand assume $A \to B$ a map over $X$ such that it is an $\cD$-covariant equivalence over $Y$. Fix an element $d$ in $\cD$. Then, again, by \cref{the:ncov model}, $\Und_dA \to \Und_dB$ is a covariant equivalence over $\Und_dY$ and so, by \cite[Theorem 5.1]{rasekh2017left}, is a covariant equivalence over $\Und_dX$, which, again by \cref{the:ncov model}, implies that $A \to B$ is an $\cD$-covariant equivalence over $X$. 
    \end{proof}
   
    \begin{theone} \label{the:cov loc css}
   	Let $X$ be a $\cD$-simplicial space. Then the following adjunction
   	\begin{center}
   		\adjun{(\sP(\cD\times\DD)_{/X})^{\cD-CSS}}{(\sP(\cD\times\D))_{/X})^{\cD-cov}}{id}{id}
   	\end{center}
   	is a Quillen adjunction.
   	Here the left hand side has the level-wise CSS model structure (\cref{prop:levelcss}) on the over-category and the right hand side has the $\cD$-covariant model structure.
   	\par 
   	This implies that the $\cD$-covariant model structure over $X$ is a localization of the induced level-wise CSS model structure over $X$.
   \end{theone}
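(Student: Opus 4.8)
The plan is to show that the identity functor $\id\colon(\sP(\cD\times\DD)_{/X})^{\cD-CSS}\to(\sP(\cD\times\DD)_{/X})^{\cD-cov}$ is left Quillen, which amounts to checking that it preserves cofibrations and trivial cofibrations. The cofibrations of both model structures are the monomorphisms over $X$ (by \cref{the:ncov model} and by \cref{prop:levelcss} passed to the over-category), so the identity certainly preserves cofibrations, and a level-wise CSS trivial cofibration is in particular a monomorphism and hence already a $\cD$-covariant cofibration. Therefore the entire content of the theorem reduces to one implication: \emph{every level-wise CSS equivalence over $X$ is a $\cD$-covariant equivalence over $X$}.

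To prove this I would argue level-wise, following the pattern of the other results in this subsection. By \cref{prop:levelcss} a map $f\colon A\to B$ over $X$ is a level-wise CSS equivalence exactly when $\Und_df\colon\Und_dA\to\Und_dB$ is a complete Segal space equivalence of simplicial spaces for every object $d$ of $\cD$; and by \cref{the:ncov model} the map $f$ is a $\cD$-covariant equivalence over $X$ exactly when $\Und_df$ is a covariant equivalence over $\Und_dX$ for every $d$. So it suffices to know, for each fixed base $W=\Und_dX$, that every complete Segal space equivalence of simplicial spaces is a covariant equivalence over $W$. This is precisely the third property of the covariant model structure recalled in \cref{subsec:lfib} (\cite[Theorem 5.11]{rasekh2017left}), namely that $\id\colon(\ss_{/W})^{CSS}\to(\ss_{/W})^{Cov}$ is a left Quillen functor; since every object of these two model structures is cofibrant (the cofibrations being the monomorphisms, the initial object being empty), Ken Brown's lemma promotes this to the statement that the identity carries all complete Segal space equivalences over $W$ to covariant equivalences over $W$. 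Applying this with $W=\Und_dX$ for each $d$ gives the desired implication, and hence the Quillen adjunction.

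Finally I would record the stated consequence: since the level-wise CSS and $\cD$-covariant model structures on $\sP(\cD\times\DD)_{/X}$ have the same cofibrations and the $\cD$-covariant one has strictly more weak equivalences, this Quillen adjunction exhibits the $\cD$-covariant model structure over $X$ as a left Bousfield localization of the induced level-wise CSS model structure over $X$.

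I do not expect a genuine obstacle here; the only point that needs a little care is the passage between ``$\Und_df$ is a complete Segal space equivalence of simplicial spaces'' and ``$\Und_df$ is a weak equivalence in the slice $(\ss_{/\Und_dX})^{CSS}$'', which is immediate because weak equivalences of a slice model structure are created by the forgetful functor, so the cited level-wise result applies verbatim at each $W=\Und_dX$. The mild subtlety, and the main thing to double-check, is thus simply that \cite[Theorem 5.11]{rasekh2017left} is available over an arbitrary base (which it is), together with the bookkeeping that the reductions of \cref{the:ncov model} and \cref{prop:levelcss} genuinely match up level-wise.
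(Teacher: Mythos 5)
Your proof is correct, and it reaches the conclusion by a route that is slightly different from the paper's in the final step. You verify the Quillen adjunction by directly checking that the left adjoint preserves trivial cofibrations — reducing, via the level-wise characterizations of \cref{prop:levelcss} and \cref{the:ncov model}, to showing that complete Segal space equivalences over each $\Und_dX$ are covariant equivalences, which you extract from \cite[Theorem 5.11]{rasekh2017left} via Ken Brown's lemma (using that all objects are cofibrant). The paper instead invokes the standard criterion for Bousfield localizations: once the left adjoint is known to preserve cofibrations, it suffices to check that the right adjoint preserves fibrant objects, i.e.\ that every $\cD$-left fibration is a level-wise CSS fibration; this is then level-wise by \cref{def:levelwise prop} and follows from the same \cite[Theorem 5.11]{rasekh2017left}. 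Both routes are short, use the same cited theorem, and use the level-wise reduction; yours is marginally more explicit about Ken Brown while the paper's is marginally more economical by citing the fibrant-object criterion. The only thing I would flag is presentational: your closing paragraph asserts that the $\cD$-covariant weak equivalences form a \emph{strictly} larger class than the level-wise CSS equivalences — this is intuitively right but not needed for the statement, and ``localization'' here does not require strictness, so you could drop the word ``strictly'' to avoid a claim you have not verified.
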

    
    \begin{proof}
    	Evidently the left adjoint preserves cofibrations as they are simply the inclusions. Hence it suffices to show that the right adjoint preserves fibrant objects, meaning we have to show every $\cD$-left fibration is a level-wise complete Segal space fibration. As both conditions are level-wise (\cref{def:levelwise prop}) it follows from \cite[Theorem 5.11]{rasekh2017left}.
    \end{proof}

   \begin{remone} \label{rem:nrfib}
     Analogous to left fibrations, which can be dualized and give us {\it right fibrations}, $\cD$-left fibrations can be dualized to {\it $\cD$-right fibrations}. Concretely, a $\cD$-right fibration is an injective fibration $p: Y \to X$ such that $\Und_d:\Und_dY \to \Und_dX$ is a right fibration for all objects $d$ in $\cD$. We can use this definition to repeat all the results we have stated up until now regarding $\cD$-left fibrations for $\cD$-right fibrations. In particular, $\cD$-right fibrations are also the fibrant objects in a model structure, the {\it $\cD$-contravariant model structure}.
   \end{remone}
   
   We only state the following lemma explicitly connecting $\cD$-left and $\cD$-right fibrations. 
   
   \begin{lemone} \label{lemma:opposite fib}
   	A map of $\cD$-simplicial spaces $p: L \to X$ is a $\cD$-left fibration if and only if $p^{op}: L^{op} \to X^{op}$ is a $\cD$-right fibration, $(-)^{op}$ is given in \ref{eq:op}.
   \end{lemone}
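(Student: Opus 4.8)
The plan is to deduce the statement from the corresponding fact for left and right fibrations of simplicial spaces, by checking that the opposite functor $(-)^{op}$ on $\cD$-simplicial spaces is compatible both with the injective model structure and with the underlying-simplicial-space functors $\Und_d$. Write $\sigma\colon\DD\to\DD$ for the order-reversing automorphism, so that $(-)^{op}$ of \ref{eq:op} (and its space-valued analogue, cf.\ \cref{rem:ss analogue}) is induced by precomposition with the automorphism $\id_\cD\times\sigma$; since $\sigma$ is an involution, $(-)^{op}$ is an involutive automorphism of $\sP(\cD\times\DD)$ and of its arrow category. In particular $((-)^{op})^{op}=\id$, so it suffices to prove a single implication, say that $p^{op}$ is a $\cD$-right fibration whenever $p$ is a $\cD$-left fibration, the converse then following by applying this to $p^{op}$.

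Next I would record two compatibilities. First, $(-)^{op}$ preserves injective fibrations: by \cref{prop:injectivemod} the weak equivalences (level-wise Kan equivalences) and the cofibrations (level-wise monomorphisms) of $\sP(\cD\times\DD)^{inj}$ are detected level-wise, and $(-)^{op}$ merely relabels the $\DD$-coordinate of the levels by $\sigma$, so it preserves both classes; being an involution it is therefore a Quillen auto-equivalence of $\sP(\cD\times\DD)^{inj}$, and hence preserves (and reflects) injective fibrations. Second, for each object $d$ of $\cD$ there is a natural identification $\Und_d(X^{op})\cong(\Und_dX)^{op}$, where the right-hand $(-)^{op}$ is the usual opposite of a simplicial space. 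Indeed $\Und_d=F_d^*$ for the functor $F_d\colon\DD\to\cD\times\DD$ given by $[n]\mapsto(d,[n])$, $\phi\mapsto(\id_d,\phi)$, and one checks directly that $(\id_\cD\times\sigma)\circ F_d=F_d\circ\sigma$ (both send $\phi$ to $(\id_d,\sigma\phi)$); passing to presheaves gives $\Und_d\circ(-)^{op}=(-)^{op}\circ\Und_d$.

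Finally I would combine these. Assume $p\colon L\to X$ is a $\cD$-left fibration. By the first compatibility $p^{op}$ is an injective fibration, so it remains to verify the level-wise condition of \cref{rem:nrfib}. By the second compatibility, $\Und_d(p^{op})$ is naturally identified with $(\Und_dp)^{op}$, and $\Und_dp$ is a left fibration of simplicial spaces; so the whole statement reduces to the fact that the opposite of a left fibration of simplicial spaces is a right fibration. This is immediate from the defining squares \cref{eq:leftright}: the automorphism $\sigma$ carries the vertex inclusion $\ordered{0}\colon[0]\to[n]$ to $\ordered{n}\colon[0]\to[n]$, so the homotopy pullback square witnessing that $\Und_dp$ is a left fibration at level $n$ becomes, after relabeling levels by $\sigma$, precisely the homotopy pullback square witnessing that $(\Und_dp)^{op}$ is a right fibration at level $n$; moreover $\sigma$ preserves Reedy fibrations of simplicial spaces. (Alternatively, one may invoke the level-wise characterization \cref{lemma:levelwise} together with the corresponding simplicial-space statement from \cite{rasekh2017left}.) Hence $p^{op}$ is a $\cD$-right fibration. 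I do not expect a genuine obstacle here: the argument is essentially bookkeeping, and the only point requiring a little care is the compatibility identity $(\id_\cD\times\sigma)\circ F_d=F_d\circ\sigma$ together with the invariance of the injective and Reedy model structures under $\sigma$.
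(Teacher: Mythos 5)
Your proof is correct and takes essentially the same approach as the paper: the key observation in both is that the order-reversing automorphism $\sigma$ of $\DD$ swaps $\ordered{0}$ and $\ordered{n}$, so the defining homotopy pullback square for a left fibration at level $[d,n]$ becomes precisely the one for a right fibration after applying $(-)^{op}$, while injective fibrations are clearly preserved. The paper states this more tersely by directly writing down the two squares and noting that $(\ordered{0}^*)^{op} = \ordered{n}^*$; your version factors the argument through the identification $\Und_d\circ(-)^{op}\cong(-)^{op}\circ\Und_d$ and the simplicial-space case, which is a bit more modular but amounts to the same bookkeeping.
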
 

   \begin{proof}
   	It is immediate $p$ is an injective fibration if and only if $p^{op}$ is an injective fibration. Now, for all objects $d$ in $\cD$, $(<0>^*)^{op}:L[d,n] \to L[d,0]$ is equal to $<n>^*:L^{op}[d,n] \to L^{op}[d,0]$, which implies that in the following two commutative squares
   	\begin{center}
   		\begin{tikzcd}[row sep=0.3in, column sep=0.3in]
   			L[d,n] \arrow[r, "<0>^*"] \arrow[d, "p \leb d \comma n \reb"] & L[d,0] \arrow[d, "p \leb d \comma 0 \reb"] & & L^{op}[d,n] \arrow[r, "<n>^*"] \arrow[d, "p^{op} \leb d \comma n\reb"] & L^{op}[d,0] \arrow[d, "p^{op} \leb d \comma 0\reb"] \\ 
   			X[d,n] \arrow[r, "<0>^*"] & X[d,0] & & X^{op}[d,n] \arrow[r, "<n>^*"] & X^{op}[d,0] 
   		\end{tikzcd}
   	\end{center} 
   	one is a homotopy pullback square if and only if the other one is a homotopy pullback square and hence we are done.
   \end{proof}

    \begin{theone} \label{the:pullback rfib}
   	Let $p:R \to X$ be a $\cD$-right or $\cD$-left fibration. Then the adjunction
   	\begin{center}
   		\adjun{(\sP(\cD\times\DD)_{/X})^{lev_{CSS}}}{(\sP(\cD\times\DD)_{/R})^{lev_{CSS}}}{p^*}{p_*}
   	\end{center}
   	is a Quillen adjunction. Here both sides have the level-wise CSS model structure (\cref{prop:levelcss}) on the over-category.
   \end{theone}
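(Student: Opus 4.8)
\textit{Proof proposal.} The plan is to reduce the claim to the analogous statement for simplicial spaces and then to feed it into the exponentiability result \cite[Corollary 5.18]{rasekh2017left}. Since pullbacks preserve monomorphisms and the cofibrations in both level-wise complete Segal space slice model structures are the monomorphisms (\cref{prop:levelcss}), $p^*$ preserves cofibrations, and it remains to show $p^*$ preserves trivial cofibrations. By \cref{prop:levelcss}, a morphism is a weak equivalence in $(\sP(\cD\times\DD)_{/X})^{lev_{CSS}}$ precisely when applying $\Und_d$ to its underlying map yields a complete Segal space equivalence for every object $d$ of $\cD$; moreover $\Und_d$ is a right adjoint (\cref{def:und}), hence commutes with the pullbacks defining $p^*$, and $\Und_d p\colon \Und_d R\to \Und_d X$ is a left (resp.\ right) fibration of simplicial spaces by \cref{def:nleft} (resp.\ \cref{rem:nrfib}). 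So the theorem will follow once we establish: for a left or right fibration $q\colon S\to Y$ of simplicial spaces, the pullback functor $q^*\colon (\ss_{/Y})^{CSS}\to (\ss_{/S})^{CSS}$ is left Quillen for the slice complete Segal space model structures.

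To prove this, note again that $q^*$ preserves monomorphisms, so it suffices to treat trivial cofibrations. Let $a\colon A\to B$ be a monomorphism of simplicial spaces together with a map $b\colon B\to Y$ such that $a$ is a complete Segal space equivalence, i.e.\ a trivial cofibration of $(\ss_{/Y})^{CSS}$. The underlying map of $q^*a$ is the canonical map $a^*(B\times_Y S)\to B\times_Y S$, where $B\times_Y S\to B$ is the base change of $q$ along $b$, which is again a left (resp.\ right) fibration, pullbacks of left and right fibrations being such (\cite[Lemma 3.9]{rasekh2017left}). Since $a$ is a complete Segal space equivalence, \cite[Corollary 5.18]{rasekh2017left} (or, for right fibrations, its contravariant analogue), applied over the base $B$, shows that $a^*(B\times_Y S)\to B\times_Y S$ is a complete Segal space equivalence; being also a monomorphism, $q^*a$ is a trivial cofibration of $(\ss_{/S})^{CSS}$. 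Hence $q^*$ is left Quillen, finishing the argument.

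The argument is short: it is the level-wise reduction combined with a direct application of \cite[Corollary 5.18]{rasekh2017left}, so I do not expect a serious obstacle. The points requiring care are (a) checking that the over-category model structures in the statement are genuinely the slices of the complete Segal space model structures, so that the conclusion of \cite[Corollary 5.18]{rasekh2017left} (a complete Segal space equivalence of simplicial spaces) really is a weak equivalence in $(\ss_{/S})^{CSS}$, and (b) verifying that base change of a left or right fibration along an arbitrary map remains one, which is needed in order to apply \cite[Corollary 5.18]{rasekh2017left} over the new base $B$. The right-fibration case can alternatively be deduced from the left-fibration case using the involution $(-)^{op}$ of \cref{lemma:opposite fib}: it is a Quillen self-equivalence of the complete Segal space model structure that exchanges left and right fibrations, so it conjugates $q^*$ to $(q^{op})^*$.
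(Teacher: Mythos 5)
Your argument is correct and follows the paper's overall structure: both proofs observe that cofibrations and weak equivalences in the level-wise CSS model structure are detected by the functors $\Und_d$, which (being right adjoints) commute with the pullbacks defining $p^*$, thereby reducing the claim to the assertion that for a left or right fibration $q$ of simplicial spaces, $q^*$ is left Quillen for the slice complete Segal space model structures. The difference lies in how that simplicial-space leaf is handled: the paper simply cites \cite[Theorem 5.15]{rasekh2017left}, which is exactly that statement, whereas you rederive it from the exponentiability result \cite[Corollary 5.18]{rasekh2017left} by rewriting $q^*a$ as the base change $a^*(B\times_Y S)\to B\times_Y S$ of the CSS equivalence $a$ along the pulled-back left (or right) fibration $B\times_Y S \to B$. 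Your version is slightly longer but more self-contained, making visible which feature of left fibrations is doing the work; both routes also rest on the same stability-under-pullback input \cite[Lemma 3.9]{rasekh2017left}. The side points you flag --- that the slice model structures genuinely inherit their weak equivalences from the ambient level-wise CSS structure, and that the right-fibration case follows from the left one via the $(-)^{op}$ involution --- are both correct.
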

   
    \begin{proof}
    	It suffices to prove the left adjoint preserves cofibrations and trivial cofibrations. Both of those are determined level-wise (\cref{def:levelwise prop}) and so the statement follows from \cite[Theorem 5.15]{rasekh2017left}.
    \end{proof}

   \begin{theone} \label{the:nlfib pb nrfib}   
   	Let $p: R \to X$ be a $\cD$-right fibration. Then the adjunction
   	\begin{center}
   		\adjun{(\sP(\cD\times\DD)_{/X})^{\cD-cov}}{(\sP(\cD\times\DD)_{/X})^{\cD-cov}}{p_!p^*}{p_*p^*}
   	\end{center}
   	is a Quillen adjunction where both sides have the $\cD$-covariant model structure.
   \end{theone}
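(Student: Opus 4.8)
The plan is to show that the left adjoint $p_!p^*$ is left Quillen for the $\cD$-covariant model structure; since every object of $\sP(\cD\times\DD)_{/X}$ is cofibrant (\cref{the:ncov model}), it is enough to check that $p_!p^*$ preserves cofibrations and $\cD$-covariant equivalences. Preservation of cofibrations is immediate: pullback preserves monomorphisms, so $p^*$ preserves cofibrations, and $p_!$ is post-composition along $p$, which leaves the underlying map of $\cD$-simplicial spaces unchanged, hence also preserves monomorphisms.

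For the weak equivalences I would factor $p_!p^*$ as $p^*$ followed by $p_!$ and argue that each is left Quillen for the relevant $\cD$-covariant model structures. The second factor, $p_!\colon(\sP(\cD\times\DD)_{/R})^{\cD-cov}\to(\sP(\cD\times\DD)_{/X})^{\cD-cov}$, is left Quillen by \cref{the:dcov css inv} (which holds for an arbitrary map, here $p\colon R\to X$). So the statement reduces to showing that $p^*\colon(\sP(\cD\times\DD)_{/X})^{\cD-cov}\to(\sP(\cD\times\DD)_{/R})^{\cD-cov}$ preserves $\cD$-covariant equivalences. Here I would use the level-wise description of the $\cD$-covariant model structure (\cref{the:ncov model}): a map is a $\cD$-covariant equivalence iff $\Und_d$ of it is a covariant equivalence over $\Und_dX$ for every $d$; the functor $\Und_d$ commutes with pullback; and $\Und_dp\colon\Und_dR\to\Und_dX$ is a right fibration of simplicial spaces (\cref{rem:nrfib}). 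This reduces the claim to the single statement that pullback along a right fibration of simplicial spaces preserves covariant equivalences, i.e.\ that right fibrations are \emph{smooth}; this is supplied by \cite{rasekh2017left}, and note that \cref{lemma:very technical lemma} is precisely this fact for the right morphism $N\C_{/c}\to N\C$.

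The main obstacle is exactly this last ingredient, the smoothness of right fibrations of simplicial spaces — everything else is formal once one has the level-wise character of the $\cD$-covariant model structure and \cref{the:dcov css inv}. If an explicit argument is preferred over a citation, one can reduce it — via \cite[Theorem 3.15]{rasekh2017left} to pass to representable bases, and via the pushout-product calculus (\cref{lemma:exp cdleft}) to strip off the factor $\partial\Delta[l]\to\Delta[l]$ — to the assertion that for a right fibration $R'\to F[n]$ of simplicial spaces the inclusion $R'\times_{F[n]}F[0]\hookrightarrow R'$ of the fibre over the initial vertex is a covariant equivalence over $F[n]$. That in turn follows from the recognition principle for covariant equivalences (\cite[Theorem 4.39]{rasekh2017left}): for a vertex $x=i$ the right fibrant replacement $R_x$ is $F[i]$ (included via the initial $i+1$ vertices), and $\fDiag$ of a right fibration over $F[i]$ is equivalent to its fibre over the initial vertex, so the map induced on the relevant diagonals is an identity and hence a Kan equivalence.
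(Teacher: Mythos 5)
Your plan takes a genuinely different structural route from the paper, and it is worth comparing. The paper's proof is a one-liner: being a (trivial) cofibration in the $\cD$-covariant model structure is detected level-wise (\cref{lemma:levelwise}, \cref{the:ncov model}), $\Und_d$ commutes with $p_!p^*$, and $\Und_dp$ is a right fibration of simplicial spaces, so the whole thing is handed off to \cite[Theorem 4.28]{rasekh2017left}, which is precisely the simplicial-space version of the composite $p_!p^*\dashv p_*p^*$ being a Quillen adjunction. Your proposal instead factors $p_!p^*$ as $p^*$ followed by $p_!$ and tries to show each factor is left Quillen. That reduction is structurally sound and $p_!$ is indeed covered by \cref{the:dcov css inv}; but it then requires the strictly stronger level-wise input that $p^*\colon(\ss_{/\Und_dX})^{cov}\to(\ss_{/\Und_dR})^{cov}$ itself is left Quillen (``right fibrations are smooth''), which is \emph{not} the statement of \cite[Theorem 4.28]{rasekh2017left} — that result, like \cref{lemma:very technical lemma}, is phrased for the composite $p_!p^*$ with the model structure over the \emph{same} base on both sides, and pushforward $p_!$ does not in general reflect covariant equivalences, so you cannot deduce the smoothness of $p^*$ from it formally. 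Your reference to \cref{lemma:very technical lemma} as ``precisely this fact'' is therefore slightly off: that lemma also keeps the base fixed at $N\C$.

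There is also a mismatch inside your sketch between what you set out to prove and what the sketch actually delivers. You announce that you will prove $p^*$ is left Quillen, which would require the pullback $R'\times_{F[n]}F[0]\hookrightarrow R'$ of the generating localizing map to be a covariant equivalence \emph{over} $R$ (equivalently, over $R'$, and then push forward along $R'\to R$). But the sketch reduces to it being a covariant equivalence \emph{over} $F[n]$. That conclusion is enough for the direct claim that $p_!p^*$ sends localizing maps to covariant equivalences over $X$ (push forward along $F[n]\to X$), but it does not feed the factored argument: there is no map $F[n]\to R$ along which to transport a covariant equivalence from over $F[n]$ to over $R$, and trying to pull it back along $R'\to F[n]$ is exactly the smoothness you are in the middle of proving. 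So either keep the framing as a direct verification that $p_!p^*$ preserves the localizing maps (in which case the sketch is fine and the factorization becomes unnecessary), or strengthen the reduction to ``covariant equivalence over $R'$''. Finally, the remaining assertion — that for a right fibration over $F[i]$ the diagonal is equivalent to the fibre over the initial vertex — is true, but it is essentially the target statement for the initial-vertex inclusion, so it should be justified either by a direct colimit computation (the classifying diagram of a right fibration over the nerve of $[i]$ computes the homotopy colimit of the corresponding $[i]^{op}$-diagram, which reduces to the value at the terminal object of $[i]^{op}$, i.e.\ the fibre over $0$) or simply by citing the very theorem the paper cites; invoking the recognition principle and then asserting it gives a circular impression. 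In short: the approach is salvageable, but the paper's level-wise reduction plus direct citation is cleaner and avoids the need to separately establish the stronger smoothness statement.
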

   
   \begin{proof}
   	It suffices to prove the left adjoint preserves cofibrations and trivial cofibrations. However, being a (trivial) cofibration in the $\cD$-covariant model structure is a level-wise property (\cref{def:levelwise prop}). Hence, this follows from \cite[Theorem 4.28]{rasekh2017left}.
   \end{proof}
 
  We can use these results to give various characterizations of $\cD$-covariant equivalences between $\cD$-left fibrations. 
  
  \begin{theone}  \label{the:ncov equiv nlfib}
 	Let $L \to X$ and $L' \to X$ be $\cD$-left fibrations and $f:L \to L'$ a map over $X$. 
 	The following are equivalent:
 	\begin{enumerate}
 		\item $f$ is a $\cD$-covariant equivalence.
 		\item $f$ is an injective equivalence.
 		\item $f$ is a $\cD$-value equivalence (\cref{the:valmodel}).
 		\item $f$ is a fiberwise injective equivalence ($\Fib_xf: \Fib_xL\to \Fib_xL'$ is an injective equivalence for all maps $x:F[d,0] \to X$).
 		\item $f$ is a fiberwise $\cD$-value equivalence ($\Fib_xf: \Fib_xL\to \Fib_xL'$ is a $\cD$-value equivalence for all maps $x:F[d,0] \to X$).
 		\item $f$ is a fiberwise $\cD$-diagonal equivalence ($\Fib_xf: \Fib_xL\to \Fib_xL'$ is a $\cD$-diagonal equivalence for all maps $x:F[d,0] \to X$).
 	\end{enumerate}
 \end{theone}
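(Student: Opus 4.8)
The plan is to reduce all six conditions to the level-wise situation and then quote the corresponding characterization of covariant equivalences between left fibrations of simplicial spaces from \cite{rasekh2017left}. First I would dispatch $(1)\Leftrightarrow(2)$: since the $\cD$-covariant model structure on $\sP(\cD\times\DD)_{/X}$ is a left Bousfield localization of the injective one (\cref{the:ncov model}), every injective equivalence is a $\cD$-covariant equivalence, and by \cref{the:ncov model}(4) a $\cD$-covariant equivalence between fibrant objects is automatically a level-wise, i.e.\ injective, equivalence; since $L$ and $L'$ are $\cD$-left fibrations over $X$ they are fibrant, so the two notions agree for $f$. For $(2)\Leftrightarrow(3)$ one direction is immediate (apply $\Val$, i.e.\ restrict to level $0$). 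For the converse, assume $f[d,0]\colon L[d,0]\to L'[d,0]$ is a Kan equivalence for all $d$. Because $L\to X$ and $L'\to X$ are $\cD$-left fibrations, \cref{lemma:left fib triv Kan fib} gives that $L[d,k]\to X[d,k]\times_{X[d,0]}L[d,0]$ and $L'[d,k]\to X[d,k]\times_{X[d,0]}L'[d,0]$ are Kan equivalences; since $L[d,0]\to X[d,0]$ is a Kan fibration (the level-$0$ component of the left, hence Reedy, fibration $\Und_d p$), the square defining $X[d,k]\times_{X[d,0]}L[d,0]$ is a homotopy pullback, so the comparison map $X[d,k]\times_{X[d,0]}L[d,0]\to X[d,k]\times_{X[d,0]}L'[d,0]$ is a Kan equivalence, and two-out-of-three forces $f[d,k]$ to be a Kan equivalence for all $(d,k)$, i.e.\ $f$ is an injective equivalence.

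For conditions $(4)$, $(5)$, $(6)$ I would first record that they too are detected level-wise. Fix a point $x\colon F[d,0]\to X$; by \cref{lemma:pullback nlfib} the pullback $\Fib_xL\to F[d,0]$ is a $\cD$-left fibration, and for each object $e$ of $\cD$ one computes $\Und_eF[d,0]$ to be the constant simplicial space on the set $\Hom_\cD(e,d)$ (the simplicial direction collapses because $[0]$ is terminal in $\DD$). Since $\Und_e$ preserves pullbacks, $\Und_e(\Fib_xf)=\coprod_{\gamma\colon e\to d}\Fib_{x\gamma}(\Und_ef)$, where $x\gamma\colon F[e,0]\to X$ is the induced vertex of $\Und_eX$; as $\Val$, $\fDiag$ and coproducts are compatible with this decomposition, $\Fib_xf$ is an injective (resp.\ $\cD$-value, resp.\ $\cD$-diagonal) equivalence if and only if each $\Fib_{x\gamma}(\Und_ef)$ is a Reedy (resp.\ value, resp.\ diagonal) equivalence of simplicial spaces. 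Letting $d$, $x$, $e$ and $\gamma$ range — and noting that the vertices $x\gamma$ with $d=e$ and $\gamma=\id_e$ already exhaust $(\Und_eX)_0$ — this is precisely the statement that $\Und_ef$ is a fiberwise Reedy (resp.\ value, resp.\ diagonal) equivalence over $\Und_eX$ for every $e$.

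With all six conditions translated into statements about the maps $\Und_df\colon\Und_dL\to\Und_dL'$ over $\Und_dX$ — in order: a covariant equivalence, a Reedy equivalence, a value equivalence, a fiberwise Reedy equivalence, a fiberwise value equivalence, a fiberwise diagonal equivalence — the theorem follows by applying, for each object $d$, the corresponding equivalence of these conditions for left fibrations of simplicial spaces from \cite{rasekh2017left}, that is the recognition principle \cite[Theorem 4.39]{rasekh2017left} together with the comparison \cite[Theorem 3.17]{rasekh2017left} of the covariant, diagonal and value model structures. The one step where the left-fibration hypothesis on $L$ and $L'$ is genuinely needed, and hence the step to watch, is $(3)\Rightarrow(2)$, the promotion of a value equivalence to a full injective equivalence through \cref{lemma:left fib triv Kan fib}; the identification of the fibers of $\Fib_xf$ with coproducts of ordinary fibers over $\Und_dX$ is routine bookkeeping, and everything else is a direct transport along the level-wise principle of results already established in \cite{rasekh2017left}.
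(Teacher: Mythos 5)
Correct, and essentially the paper's own level-wise reduction: the paper's proof simply checks that all six conditions are detected via $\Und_d$ for each object $d$ and then cites the six-condition analogue \cite[Theorem~4.34]{rasekh2017left}, whereas you additionally spell out the coproduct decomposition $\Und_e\Fib_x L\cong\coprod_{\gamma\colon e\to d}\Fib_{x\gamma}(\Und_e L)$ needed to see that the fiberwise conditions (4)--(6) really are level-wise (a step the paper leaves implicit) and give direct arguments for $(1)\Leftrightarrow(2)\Leftrightarrow(3)$, which is fine but not needed once the level-wise reduction is in place. One small note: the paper's reference for the simplicial-space six-condition statement is \cite[Theorem~4.34]{rasekh2017left}, not the pair 4.39/3.17 you cite, though those are closely related.
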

 
 \begin{proof}
  The six conditions are all analogues of the six conditions in \cite[Theorem 4.34]{rasekh2017left}, hence it suffices to confirm that they are all determined level-wise (\cref{def:levelwise prop}). The fact that $\cD$-covariant equivalences are level-wise follows from \cref{the:ncov model}, that injective equivalences are level-wise from \cref{prop:injectivemod}, that the $\cD$-value model structure equivalences are level-wise from \cref{the:valmodel} and finally that the $\cD$-diagonal model structure equivalences are level-wise from \cref{the:diagmodel}.
 \end{proof}
 
 \begin{theone} \label{the:recognition principle} 
 	Let $g: Y \to Z$ be a map over $X$ and $R_x \to X$ a choice of $\cD$-contravariant fibrant replacement of the map $\{x\}:F\edge{d}{0} \to X$.
 	Then $g:Y \to Z$ over $X$ is a $\cD$-covariant equivalence if and only if for every $\{x\} : F\edge{d}{0} \to X$
 	$$ R_x \underset{X}{\times} Y \to R_x \underset{X}{\times} Z$$
 	is a $\cD$-diagonal equivalence. Here $R_x \to X$ is the fibrant replacement of $x$ in the $\cD$-contravariant model structure.
 \end{theone}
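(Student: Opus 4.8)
The plan is to reduce both implications to the Quillen adjunctions established in \cref{sec:left fib}, rather than unwinding the level-wise recognition principle \cite[Theorem 4.39]{rasekh2017left} by hand. Two preliminary observations drive everything. First, since $R_x\to X$ is fibrant in the $\cD$-contravariant model structure it is a $\cD$-right fibration (\cref{rem:nrfib}), so by \cref{the:nlfib pb nrfib} the functor $R_x\times_X(-)=p_!p^*$ (with $p: R_x\to X$) is left Quillen on the $\cD$-covariant model structure over $X$; since every object over $X$ is cofibrant, it therefore preserves all $\cD$-covariant equivalences. Second, by \cref{the:diag local ncov} the identity is left Quillen from the $\cD$-covariant to the $\cD$-diagonal model structure over $X$, so every $\cD$-covariant equivalence is a $\cD$-diagonal equivalence, and dually every $\cD$-contravariant equivalence is a $\cD$-diagonal equivalence. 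Granting these, the forward direction is immediate: if $g$ is a $\cD$-covariant equivalence then for every $\{x\}: F[d,0]\to X$ the map $R_x\times_XY\to R_x\times_XZ$ is a $\cD$-covariant equivalence over $X$ by the first observation, hence a $\cD$-diagonal equivalence by the second.

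For the converse I would first replace $g$ by its fibrant replacement $\hat g: \hat Y\to\hat Z$ in the $\cD$-covariant model structure, a map between $\cD$-left fibrations over $X$; then $g$ is a $\cD$-covariant equivalence iff $\hat g$ is, and the hypothesis transfers to $\hat g$ by a two-out-of-three argument, since applying $R_x\times_X(-)$ to the $\cD$-covariant equivalences $Y\to\hat Y$, $Z\to\hat Z$ produces $\cD$-diagonal equivalences by the two observations above. Next I would apply the recognition criterion for maps of $\cD$-left fibrations, \cref{the:ncov equiv nlfib}: it suffices to show that $\Fib_x\hat g: \hat Y\times_X F[d,0]\to \hat Z\times_X F[d,0]$ is a $\cD$-diagonal equivalence for every $\{x\}: F[d,0]\to X$. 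Finally I would bridge from $R_x$ to $F[d,0]$: the defining $\cD$-contravariant equivalence $F[d,0]\to R_x$ over $X$, pulled back along the $\cD$-left fibrations $\hat Y\to X$ and $\hat Z\to X$, remains a $\cD$-contravariant, hence $\cD$-diagonal, equivalence by the $(-)^{op}$-dual of \cref{the:nlfib pb nrfib} (via \cref{lemma:opposite fib}); combined with the transferred hypothesis that $\hat Y\times_X R_x\to \hat Z\times_X R_x$ is a $\cD$-diagonal equivalence, two-out-of-three in the evident commutative square forces $\hat Y\times_X F[d,0]\to \hat Z\times_X F[d,0]$ to be a $\cD$-diagonal equivalence, which completes the argument.

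The step I expect to be the crux is this last bridge, passing between the test family $\{R_x\}$ appearing in the statement and the fibers $\Fib_x=(-)\times_X F[d,0]$ appearing in the recognition criterion \cref{the:ncov equiv nlfib}. It relies on the ``smoothness'' of $\cD$-left fibrations --- that pulling back a $\cD$-left fibration along a $\cD$-contravariant equivalence is again a $\cD$-contravariant equivalence --- which I would obtain as the $(-)^{op}$-dual of \cref{the:nlfib pb nrfib}; with that in hand the remaining steps are formal manipulations of the model structures of \cref{sec:left fib}, resting repeatedly on the fact that all objects over the base are cofibrant, so that left Quillen functors preserve all weak equivalences.
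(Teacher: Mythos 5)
Your proposal is correct and is essentially the same argument as the paper's proof: both build the same three-tier diagram of pullbacks along $R_x$ and $F[d,0]$, invoke \cref{the:nlfib pb nrfib} and its contravariant dual to show the vertical maps are $\cD$-covariant (resp.\ $\cD$-contravariant) equivalences, pass to $\cD$-diagonal equivalences via \cref{the:diag local ncov}, and close with \cref{the:ncov equiv nlfib}. The only difference is organizational — you separate the forward and converse implications where the paper runs one chain of equivalences — but the key lemmas and the bridge from $R_x$ to $F[d,0]$ via the dual of \cref{the:nlfib pb nrfib} are exactly as in the paper.
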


  \begin{proof}  
	Let $\hat{g}: \hat{Y} \to \hat{Z}$ be a $\cD$-left fibrant replacement. By \cref{the:ncov model}, $g:Y \to Z$ is a $\cD$-covariant equivalence if and only if $\hat{g}:\hat{Y} \to \hat{Z}$ is an injective equivalence. We now have the following diagram:
	\begin{center}
		\begin{tikzcd}[row sep=0.5in, column sep=0.5in]
			Y \underset{X}{\times} R_x \arrow[r, "g \times id"] \arrow[d, "\simeq", "i \times id"'] & 
			Z \underset{X}{\times} R_x \arrow[d, "\simeq"', "j \times id"]\\
			\hat{Y} \underset{X}{\times} R_x \arrow[r, "\hat{g} \times id"] & \hat{Z} \underset{X}{\times} R_x \\
			\hat{Y} \underset{X}{\times} F\leb d \comma 0 \reb \arrow[r, "\hat{g} \times id"] \arrow[u, "\simeq"'] & \hat{Z} \underset{X}{\times} F\leb d \comma 0 \reb \arrow[u, "\simeq"]
		\end{tikzcd}
		.
	\end{center} 
	By \cref{the:nlfib pb nrfib}, the two top vertical maps are $\cD$-covariant equivalences and by the dual of \cref{the:nlfib pb nrfib} (explained in \cref{rem:nrfib}) the two bottom vertical maps are $\cD$-contravariant equivalences.
	Hence, by \cref{the:diag local ncov} and \cref{rem:nrfib}, all vertical maps are $\cD$-diagonal equivalences and so the top horizontal map 
	is a $\cD$-diagonal equivalence if and only if the bottom horizontal map is one. 
	But the bottom map is a $\cD$-diagonal equivalence for every $x: F\edge{d}{0} \to X$ if and only if $\hat{Y} \to \hat{Z}$ is an injective equivalence (by \cref{the:ncov equiv nlfib}) and hence we are done.
\end{proof}
 
 \begin{remone}
 	Notice, the proof of \cref{the:recognition principle} cannot be immediately reduced to a level-wise proof and so we cannot simply cite the proof in \cite[Theorem 4.39]{rasekh2017left}, as the property of being a fibrant replacement is not necessarily a level-wise property.
 \end{remone}

 In certain situations we can simplify the conditions in \cref{the:recognition principle}.
 
 \begin{corone} \label{cor:weakly constant recognition principle}
 	Let $X$ be a $\cD$-simplicial space with weakly constant objects. Then a map of $\cD$-simplicial spaces $g: Y \to Z$ over $X$ is a $\cD$-covariant equivalence if and only if for every map $\{x\}:D[0] \to X$, 
 	$$ R_x \underset{X}{\times} Y \to R_x \underset{X}{\times} Z$$
 	is a $\cD$-diagonal equivalence.
 \end{corone}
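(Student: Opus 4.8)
The plan is to deduce this from the ordinary recognition principle \cite[Theorem 4.39]{rasekh2017left}, applied one $\cD$-level at a time. Recall that $\cD$ has a terminal object $t$, so $D[0]\cong F[t,0]$ (\cref{rem:D s F}), and a map $\{\tilde{x}\}\colon D[0]\to X$ is the same thing as a $0$-simplex $\tilde{x}$ of the simplicial space $\Und_t X$. The forward implication is immediate: if $g$ is a $\cD$-covariant equivalence, then \cref{the:recognition principle} applied to the maps $\{\tilde{x}\}\colon D[0]\cong F[t,0]\to X$ already yields the asserted condition.

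For the converse, assume $R_{\tilde{x}}\times_X Y\to R_{\tilde{x}}\times_X Z$ is a $\cD$-diagonal equivalence for every $\{\tilde{x}\}\colon D[0]\to X$. First I would unwind this level-wise. A map is a $\cD$-diagonal equivalence precisely when $\Und_e$ of it is a diagonal equivalence of simplicial spaces for every $e$ in $\cD$ (by \cref{the:diagmodel} and \cref{prop:injectivemod}, using $\Und_e\circ\fDiag=\fDiag\circ\Und_e$), and $\Und_e$ preserves pullbacks; so the hypothesis says that $\Und_e R_{\tilde{x}}\times_{\Und_e X}\Und_e Y\to\Und_e R_{\tilde{x}}\times_{\Und_e X}\Und_e Z$ is a diagonal equivalence for all $e$. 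Now $\cD(e,t)$ is a point, so $\Und_e F[t,0]\cong F[0]$; applying $\Und_e$ to a factorization of $\{\tilde{x}\}$ into a $\cD$-contravariant trivial cofibration followed by a $\cD$-right fibration, and using that both classes are detected level-wise (dual of \cref{the:ncov model}, \cref{rem:nrfib}), shows that $\Und_e R_{\tilde{x}}\to\Und_e X$ is a right fibrant replacement of the $0$-simplex of $\Und_e X$ obtained from $\tilde{x}$ by restriction along $e\to t$. Hence the hypothesis supplies, for every $e$, the ``diagonal pullback'' condition of \cite[Theorem 4.39]{rasekh2017left} for $\Und_e g$ over $\Und_e X$ at every $0$-simplex of $\Und_e X$ lying in the image of $(\Und_t X)_0\to(\Und_e X)_0$.

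It then remains to extend this to all $0$-simplices of $\Und_e X$, and this is where the hypothesis on $X$ is used. Since $X$ has weakly constant objects, $\Und_t X\to\Und_e X$ is essentially surjective (\cref{def:weakly constant objects}), so every object of $\Und_e X$ is equivalent to one coming from $(\Und_t X)_0$; and the recognition condition only depends on the base point up to equivalence, because for equivalent points $p\simeq p'$ of $\Und_e X$ the right fibrant replacements $R_p$ and $R_{p'}$ are contravariant-equivalent over $\Und_e X$ (invariance of the contravariant model structure, \cite[Theorem 5.1]{rasekh2017left}), and pulling such a contravariant equivalence of right fibrations back along $\Und_e Y\to\Und_e X$ or $\Und_e Z\to\Und_e X$ stays a fibrewise, hence contravariant, hence diagonal equivalence (\cite[Theorem 4.34]{rasekh2017left}, \cite[Theorem 3.17]{rasekh2017left}), so by two-out-of-three the diagonal pullback condition holds at $p$ iff at $p'$. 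Therefore \cite[Theorem 4.39]{rasekh2017left} applies and $\Und_e g$ is a covariant equivalence over $\Und_e X$ for every $e$; by \cref{the:ncov model} this means $g$ is a $\cD$-covariant equivalence. The one genuinely delicate point is this last step — converting the mere essential surjectivity furnished by ``weakly constant objects'' into a statement about all objects, which rests on the invariance of the contravariant model structure; everything else is routine bookkeeping with level-wise detection of the relevant model structures and the identity $\Und_e F[t,0]\cong F[0]$.
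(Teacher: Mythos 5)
Your argument is correct, but it takes a genuinely different route from the paper's proof. The paper stays at the $\cD$-level: it passes along the zigzag $X \to \hat{X} \leftarrow \tilde{X}$ of \cref{cor:zigzag better} using the invariance \cref{the:dcov css inv}, uses weak constancy to conclude that $\tilde{X}$ has discrete objects so that every $F[d,0] \to \tilde{X}$ factors through $D[0]$, and then observes that the $\cD$-contravariant fibrant replacement of such an $F[d,0]$ over $X$ is (up to equivalence) a product $F[d,0] \times R_x$ --- at which point the claim drops out of the Cartesian structure of the $\cD$-diagonal model structure. You instead descend to the $(\infty,1)$-level through $\Und_e$ for each $e$, apply the one-variable recognition principle \cite[Theorem 4.39]{rasekh2017left} over each $\Und_e X$, and use the essential surjectivity of $\Und_t X \to \Und_e X$ (the content of weak constancy) together with invariance of the fiber condition under equivalence of basepoints to reach all $0$-simplices of $\Und_e X$ from those in the image of $(\Und_t X)_0$. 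Both arguments rest on an invariance step for the fiber condition under replacing the basepoint by an equivalent one; the paper absorbs it into the transfer along the zigzag, while you isolate it, correctly flag it as the crux, and handle it via \cite[Theorem 5.1]{rasekh2017left} applied to the CSS completion of $\Und_e X$ (the fact that equivalent objects give contravariant-equivalent slice inclusions, and that such equivalences of right fibrations pull back to fiberwise, hence diagonal, equivalences). The trade-off is that your approach sidesteps the Segal-category strictification machinery of \cref{cor:zigzag better} at the cost of re-running the basepoint argument at each level $e$, whereas the paper's stays global in $\cD$ but leans on that heavier infrastructure. I do not see a gap in your proposal.
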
 

 \begin{proof} 
  The condition given here is a special case of  \cref{the:recognition principle} and so we only need to prove it suffices when $X$ has weakly constant objects. Let $X \to \hat{X} \leftarrow \tilde{X}$ be the zigzag given in \cref{cor:zigzag better}. The fact that both maps are complete Segal space equivalences implies that any map $F[d,0] \to X$ is equivalent to a map $F[d,0] \to \tilde{X}$ (by \cref{the:dcov css inv}). Now, by assumption $X$ has weakly constant objects, which means, by \cref{cor:zigzag better}, that $\tilde{X}$ has discrete objects and the map factors as $F[d,0] \to D[0] \xrightarrow{ \{ x \} } \tilde{X}$, where $x$ is the image of the map from $D[0]$.
  
We can post-compose and, by \cref{the:ncov model}, $F[d,0] \cong F[d,0] \times D[0] \to F[d,0] \times \hat{R}_x$ is the fibrant replacement of $F[d,0] \to D[0] \to \tilde{X} \to \hat{X}$. By \cref{the:dcov css inv}, this means $F[d,0] \times R_x = F[d,0] \times i^*\hat{R}_x$ is the fibrant replacement of $F[d,0]$ in the $\cD$-contravariant model structure over $X$. 
  
  Now by direct computation, for a given map $Y \to Z$ the map 
  $$(F[d,0] \times R_x) \times_X Y \to (F[d,0] \times R_x) \times_X Z$$
  is isomorphic to 
  $$F[d,0] \times (R_x \times_X Y) \to F[d,0] \times (R_x \times_X Z),$$
  which is a $\cD$-diagonal equivalence if and only if $R_x \times_X Y \to R_x \times_X Z$ is a $\cD$-diagonal equivalence. 
 \end{proof} 

 \subsection{Fibrations over Segal Spaces and the Yoneda Lemma}\label{subsec:yoneda}
 In this section we focus on $\cD$-left fibrations over level-wise Segal spaces (\cref{def:levelsegal}). 
 
 \begin{defone} \label{def:under segal}
 	Let $W$ be a level-wise Segal space and $\{x\}: D[0] \to W$ be a map and define $W_{x/}$ via the pullback
 	\begin{center}
 		\pbsq{W_{x/}}{W^{D[1]}}{D[0]}{W}{}{}{<0>^*}{x}
 	\end{center}
  	which we call the {\it level-wise under-Segal space}.
 \end{defone}
 The name suggests the resulting $\cD$-simplicial space is in fact a level-wise Segal space.
 
  \begin{lemone} 
 	Let $W$ be a level-wise Segal and $x:D[0] \to W$, then $W_{x/}$ is also a level-wise Segal space.
 \end{lemone}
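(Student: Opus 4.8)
The plan is to realise $W_{x/}$ as a fibrant object of the level-wise Segal model structure $\sP(\cD\times\DD)^{lev_{Seg}}$ of \cref{prop:levelsegal}, whose fibrant objects are, by construction, exactly the level-wise Segal spaces. The mechanism is that this model structure is Cartesian, so that the pullback exponential of a cofibration against a fibration is again a fibration, and $W_{x/}$ is built from precisely such a pullback exponential followed by a pullback.

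First I would record two elementary facts. The object $D[0]$ is the terminal object of $\sP(\cD\times\DD)$, since it is $\Disc$ applied to the terminal simplicial space and $\Disc$ is a precomposition functor, hence limit preserving; in particular $D[0]$ is (injectively) fibrant and $W^{D[0]}\cong W$. Secondly, the vertex map $D[0]\to D[1]$ is a monomorphism, hence a cofibration in $\sP(\cD\times\DD)^{lev_{Seg}}$. Now, since $W$ is a level-wise Segal space it is fibrant in $\sP(\cD\times\DD)^{lev_{Seg}}$, so the unique map $W\to D[0]$ is a fibration; applying the pullback exponential (\cref{def:pullback exponential}) to the cofibration $D[0]\to D[1]$ and this fibration, and using $W^{D[0]}\cong W$, the characterisation of Cartesian model structures (\cref{def:cartesian model cat}) together with \cref{prop:levelsegal} gives that
$$\ordered{0}^{*}=\exp{(D[0]\to D[1])}{(W\to D[0])}\colon W^{D[1]}\longrightarrow W$$
is a fibration in $\sP(\cD\times\DD)^{lev_{Seg}}$. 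By \cref{def:under segal}, $W_{x/}$ is the pullback of this fibration along $x\colon D[0]\to W$, so $W_{x/}\to D[0]$ is again a fibration; as $D[0]$ is fibrant, $W_{x/}$ is fibrant, i.e.\ a level-wise Segal space.

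I do not expect a genuine obstacle here; the only point requiring care is the bookkeeping identifying the map $\ordered{0}^{*}$ appearing in \cref{def:under segal} with the pullback exponential above, which comes down to the identifications $W^{D[0]}\cong W$ and $(D[0])^{D[0]}\cong(D[0])^{D[1]}\cong D[0]$ forced by $D[0]$ being terminal, so that the target $W^{D[0]}\times_{(D[0])^{D[0]}}(D[0])^{D[1]}$ of the pullback exponential is just $W$. If one preferred a more hands-on argument, one could instead note that $\Und_d$ preserves limits and, after checking the isomorphism $F[d,n]\times D[1]\cong\Inc_d(F[n]\times F[1])$, conclude that $\Und_d(W_{x/})$ is the under-Segal-space of the Segal space $\Und_dW$, which is a Segal space by the corresponding result in \cite{rasekh2017left}; but the Cartesian-closure argument above is cleaner and self-contained.
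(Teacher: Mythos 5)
Your proof is correct, but it takes a genuinely different route from the paper. The paper computes $\Und_d W_{x/}$ directly, identifies it with the under-Segal-space $(\Und_d W)_{x/}$ using the compatibility $\Und_d(W^{D[1]})\cong (\Und_d W)^{F[1]}$ (which relies on the projection-formula-type isomorphism $\Inc_d(A)\times D[1]\cong\Inc_d(A\times F[1])$), and then invokes the known result that under-categories of Segal spaces are Segal spaces. Your argument instead shows $W_{x/}$ is fibrant in the level-wise Segal model structure of \cref{prop:levelsegal} by combining two model-categorical facts: the pullback exponential of the cofibration $\ordered{0}\colon D[0]\to D[1]$ against the fibration $W\to D[0]$ is a fibration (since the model structure is Cartesian), and fibrations pull back. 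Both routes are sound. What your version buys is self-containment: it does not require the projection formula for $\Und_d$ or the external citation to \cite{rasekh2017left}, only the Cartesian property that \cref{prop:levelsegal} already asserts. What the paper's version buys is alignment with the pervasive strategy of this section, namely reducing statements about $\cD$-simplicial spaces to level-wise statements about simplicial spaces via $\Und_d$; the same computational identity $\Und_d(W_{x/})=(\Und_d W)_{x/}$ is then reused in the proof of \cref{the:undercat left} immediately afterwards, so the level-wise reduction is not wasted work in the broader context.
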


 \begin{proof}
 	By direct computation we have 
 	$$\Und_{d}W_{x/} = (\Und_{d}W)^{F[1]} \times_{\Und_{d}W} F[0] = (\Und_dW)_{x/}$$
 	which is a Segal space as $\Und_d(W)$ is a Segal space (\cref{prop:levelsegal}) combined with \cite[Lemma 3.43]{rasekh2017left}.
 \end{proof}
  
  \begin{theone}  \label{the:undercat left}
  Let $W$ be a level-wise Segal space and $x:D[0] \to W$. Then the projection map $W_{x/} \to W$ is a $\cD$-left fibration.
  \end{theone}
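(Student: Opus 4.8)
The plan is to verify the two defining conditions of a $\cD$-left fibration (\cref{def:nleft}) separately: that $W_{x/}\to W$ is an injective fibration, and that, level-wise over $\cD$, it becomes a left fibration of simplicial spaces. Since being a $\cD$-left fibration is a level-wise property among injective fibrations (\cref{lemma:levelwise}), these two verifications together will suffice.

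For injective fibrancy, I would rewrite the defining pullback square of \cref{def:under segal}. Unwinding $W_{x/}=W^{D[1]}\times_W D[0]$ (along $\ordered{0}^*$ and $\{x\}$), the map in the statement --- the composite $W_{x/}\hookrightarrow W^{D[1]}\xrightarrow{\ordered{1}^*}W$ --- is identified with the pullback of the evaluation-at-the-two-vertices map
$$\exp{(D[0]\sqcup D[0]\hookrightarrow D[1])}{(W\to D[0])}\colon W^{D[1]}\longrightarrow W\times W$$
along $\{x\}\times\id_W\colon W\to W\times W$. Now $D[0]\sqcup D[0]\hookrightarrow D[1]$ is a monomorphism and $W\to D[0]$ is an injective fibration (as $W$ is injectively fibrant and $D[0]$ is terminal), so this pullback exponential is an injective fibration because the injective model structure is Cartesian (\cref{prop:injectivemod}); pullbacks of injective fibrations being injective fibrations, $W_{x/}\to W$ is one.

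For the level-wise statement, the proof of the preceding lemma already records the identification $\Und_d W_{x/}=(\Und_d W)^{F[1]}\times_{\Und_d W}F[0]=(\Und_d W)_{x/}$, the under-Segal space of the Segal space $\Und_d W$ (\cref{def:levelsegal}). It therefore remains to invoke the analogous fact for simplicial spaces --- that for a Segal space $V$ and a vertex $x$ the projection $V_{x/}\to V$ is a left fibration --- which is established in \cite[Section 3]{rasekh2017left}, the same source from which the preceding lemma draws the fact that $(\Und_d W)_{x/}$ is a Segal space. Together with \cref{lemma:levelwise} and the injective fibrancy just shown, this proves that $W_{x/}\to W$ is a $\cD$-left fibration.

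I do not expect a serious obstacle: the only genuinely new computation is the identification of $W_{x/}\to W$ as a pullback exponential, which is routine Joyal-Tierney calculus, and everything else is a level-wise translation of results of \cite{rasekh2017left}. If one prefers to avoid importing the simplicial-space statement, one can instead check directly that $W_{x/}\to W$ is a $\cD$-left map in the sense of \cref{def:cdleft map} --- using the Segal condition on each $\Und_d W$ to see that $(\Und_d W)_{x/}[k]\to (\Und_d W)_{x/}[0]\times_{\Und_d W[0]}\Und_d W[k]$ is a weak equivalence --- and then note that an injective fibration which is a $\cD$-left map is automatically a $\cD$-left fibration.
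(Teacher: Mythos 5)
Your proof follows the paper's approach: both arguments establish injective fibrancy by identifying $W_{x/}\to W$ as a pullback of the fibration $W^{D[1]}\to W\times W$ (your spelling-out of the pullback exponential along $D[0]\sqcup D[0]\hookrightarrow D[1]$ is just a more explicit rendering of the same Cartesianity argument), and both then reduce to the level-wise identification $\Und_d W_{x/}\cong(\Und_d W)_{x/}$ and the known fact that $V_{x/}\to V$ is a left fibration for a Segal space $V$ (the paper cites this precisely as \cite[Theorem 3.44]{rasekh2017left}). The alternative closing route you sketch via $\cD$-left morphisms is also sound but unnecessary given the direct citation.
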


  \begin{proof}
  	The injective model structure is Cartesian and so the map $W^{D[1]} \to W \times W$ is an injective fibration and so the pullback $W_{x/} \to W$ is also an injective fibration. Hence, it suffices to prove that $\Und_{d}W_{x/} \to \Und_{d}W$ is a left fibration for all objects $d$ in $\cD$. By direct computation this map is simply equal to 
  	$$(\Und_dW)_{x/} \to \Und_d(W),$$
  	which is a left fibration by \cite[Theorem 3.44]{rasekh2017left}.
  \end{proof}
 
  \begin{theone}  \label{the:levelwise Yoneda}
 	Let $W$ be a level-wise Segal space and $x$ an object. Then the map $\{id_x\}: D[0] \to W_{x/}$ is a $\cD$-covariant equivalence over $W$.
 \end{theone}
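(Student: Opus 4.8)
The plan is to deduce this from the corresponding statement for left fibrations of simplicial spaces by working one object of $\cD$ at a time. By \cref{the:ncov model}, a morphism over $W$ is a $\cD$-covariant equivalence if and only if, for every object $d$ in $\cD$, the induced morphism of simplicial spaces is a covariant equivalence over $\Und_dW$. So I would fix $d$ and analyze the map $\Und_d(\{id_x\})\colon \Und_d D[0]\to \Und_d W_{x/}$.

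The next step is to identify the two underlying simplicial spaces. On the one hand $\Und_d D[0]=F[0]$, the terminal simplicial space, since $D[0]=\Disc(F[0])$ and the composite $\DD\hookrightarrow\cD\times\DD\twoheadrightarrow\DD$ is the identity. On the other hand, exactly as in the proof of \cref{the:undercat left} (and of the lemma preceding it), $\Und_d W_{x/}=(\Und_d W)^{F[1]}\times_{\Und_d W}F[0]=(\Und_d W)_{x/}$, the under-Segal-space of the Segal space $\Und_dW$ at the object $x$ — here I use that $W$ is a level-wise Segal space, so each $\Und_dW$ is a Segal space (\cref{def:levelsegal}). Inspecting the definitions, under these identifications $\Und_d(\{id_x\})$ is the canonical map $\{id_x\}\colon F[0]\to(\Und_dW)_{x/}$ that picks out the identity arrow on $x$.

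Finally I would invoke the Yoneda lemma for left fibrations of simplicial spaces, \cite[Theorem 3.49]{rasekh2017left}, which asserts precisely that for a Segal space $W'$ and an object $x$ the section $\{id_x\}\colon F[0]\to W'_{x/}$ is a covariant equivalence over $W'$; applying this to $W'=\Und_dW$ for every $d$ completes the argument.

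The argument is short, and there is no genuine obstacle: the content is entirely the level-wise bookkeeping in the middle step, namely that $\Und_d$ carries the under-construction, together with its identity section, to the under-construction and identity section of $\Und_dW$. The compatibility of $\Und_d$ with the under-construction itself was already recorded in the proof of \cref{the:undercat left}, and the identity section is assembled functorially from degeneracies, so this is routine; the only point that requires a little care is ensuring the cited simplicial-space result is available in the generality of arbitrary Segal spaces (not merely nerves). Should that be an issue, an equivalent route is to replace $W$ by a level-wise CSS-fibrant replacement $W\to\hat W$, transport the question along this level-wise CSS equivalence using \cref{the:dcov css inv}, and apply the known statement over $\hat W$; but the direct level-wise reduction is the cleaner path.
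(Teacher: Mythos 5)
Your proposal is correct and follows essentially the same route as the paper: reduce to the level-wise statement over $\Und_dW$ using the fact that $\cD$-covariant equivalences are detected level-wise (\cref{the:ncov model}), identify $\Und_d D[0]=F[0]$ and $\Und_d W_{x/}=(\Und_dW)_{x/}$, and invoke \cite[Theorem~3.49]{rasekh2017left}. Your closing worry about the cited result's generality is unfounded — that theorem is stated for arbitrary Segal spaces, which is exactly what the level-wise hypothesis supplies.
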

   
   \begin{proof}
   	The property of being a $\cD$-covariant equivalence is level-wise (\cref{def:levelwise prop}), which means it suffices to prove that $F[0] \to (\Und_{d}W)_{x/}$ is a covariant equivalence over $\Und_{d}(W)$ for all objects $d$ in $\cD$. This fact follows immediately from \cite[Theorem 3.49]{rasekh2017left}.
   \end{proof}
 
 \begin{remone}
 	Notice we could have proven \cref{the:levelwise Yoneda} directly using \cref{the:retract} and a diagram similar to the one given in the proof of \cite[Theorem 3.49]{rasekh2017left}.
 \end{remone}
  
  There is an analogous contravariant version.
  
   \begin{defone} \label{def:over segal}
  	Let $W$ be a level-wise Segal space and $\{x\}: D[0] \to W$ be a map and define $W_{/x}$ via the pullback
  	\begin{center}
  		\pbsq{W_{/x}}{W^{D[1]}}{D[0]}{W}{}{}{<1>^*}{x}
  	\end{center}
  	which we call the {\it level-wise over-Segal space}.
  \end{defone}

 Similar to \cref{the:undercat left} $W_{/x} \to W$ is a right fibration and similar to \cref{the:levelwise Yoneda} $D[0] \to W_{/x}$ is a $\cD$-contravariant equivalence over $W$. 
  
   The construction of the over-Segal space only depends on a morphism $D[0] \to W$ and hence suggest that we should focus on attention on $\cD$-simplicial spaces with weakly constant objects (\cref{def:weakly constant objects}). Let $\cD$ have a terminal object $t$ and let $W$ be an $\sP(\cD)$-enriched Segal space. Then a map $D[0] \to W$ is precisely a choice of object in $W$ (\cref{def:enriched segal}) and \cref{cor:weakly constant recognition principle} takes the following form.
  
  \begin{corone} \label{cor:enriched seg recognition principle}
  	Let $\cD$ be a small category with terminal object.
  	Let $W$ be an $\sP(\cD)$-enriched Segal space. Then a morphism of $\cD$-simplicial spaces $Y \to Z$ over $W$ is a $\cD$-covariant equivalence if and only if for every object $x$ in $W$, the map 
  	$$Y \times_W W_{/x} \to Z \times_W W_{/x}$$
  	is a $\cD$-diagonal equivalence.
  \end{corone}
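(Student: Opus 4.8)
The plan is to deduce this directly from \cref{cor:weakly constant recognition principle} once we identify the relevant $\cD$-contravariant fibrant replacement. First I would note that an $\sP(\cD)$-enriched Segal space is, by \cref{def:enriched segal}, in particular a level-wise Segal space with weakly constant objects, so \cref{cor:weakly constant recognition principle} applies with $X = W$: a map $Y \to Z$ over $W$ is a $\cD$-covariant equivalence if and only if for every map $\{x\} : D[0] \to W$ the induced map $R_x \times_W Y \to R_x \times_W Z$ is a $\cD$-diagonal equivalence, where $R_x \to W$ is a $\cD$-contravariant fibrant replacement of $\{x\}$. Since $\cD$ has a terminal object $t$ and $W$ has weakly constant objects, such maps $\{x\}: D[0] \to W$ are precisely the objects of $W$, as recorded in the discussion preceding the statement, so the quantifier ``for every map $D[0]\to W$'' may be read as ``for every object $x$ in $W$''.

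The second step is to identify $R_x$ with the level-wise over-Segal space $W_{/x}$ of \cref{def:over segal}. For this I would invoke the contravariant analogues of \cref{the:undercat left} and \cref{the:levelwise Yoneda}, both mentioned in the text following \cref{def:over segal}: the projection $W_{/x} \to W$ is a $\cD$-right fibration, hence a fibrant object of the $\cD$-contravariant model structure over $W$, and the map $\{\mathrm{id}_x\} : D[0] \to W_{/x}$ is a $\cD$-contravariant equivalence over $W$. Together these exhibit $W_{/x}$ as a $\cD$-contravariant fibrant replacement of $\{x\}: D[0]\to W$, so we may take $R_x = W_{/x}$ in \cref{cor:weakly constant recognition principle}.

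Substituting $R_x = W_{/x}$ into \cref{cor:weakly constant recognition principle} then yields exactly the claim: $Y \to Z$ over $W$ is a $\cD$-covariant equivalence if and only if $Y \times_W W_{/x} \to Z \times_W W_{/x}$ is a $\cD$-diagonal equivalence for every object $x$ in $W$, with both implications coming for free from the identification. The only mild obstacle is the bookkeeping in the first step — checking that maps $D[0]\to W$ really do correspond bijectively to objects of $W$, which is precisely where the hypotheses that $\cD$ has a terminal object and that $W$ has weakly constant objects enter; everything else is a direct appeal to results already established.
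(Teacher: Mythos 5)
Your proposal is correct and matches the paper's own argument: the paper presents this corollary precisely as the specialization of \cref{cor:weakly constant recognition principle} to $X=W$ once one records, via \cref{def:enriched segal}, that maps $D[0]\to W$ are the objects of $W$, and via the contravariant analogues of \cref{the:undercat left} and \cref{the:levelwise Yoneda}, that $W_{/x}$ serves as the $\cD$-contravariant fibrant replacement $R_x$. Nothing is missing.
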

  
  We can generalize the construction from slice categories (\cref{def:under segal}) to {\it twisted arrow categories}. For a level-wise Segal space $W$ define $\Tw(W) \to W^{op} \to W$ level-wise as the twisted arrow construction given in \cref{def:twisted}. Here $W^{op}$ is the opposite $\cD$-simplicial space described in \ref{eq:op}.
 
  \begin{propone}\label{prop:twisted}
  	Let $W$ be a level-wise Segal space and let $\Tw(W) \to W^{op} \times W$ be the level-wise twisted arrow construction. Then the map is a $\cD$-left fibration, with fiber over two objects $x,y$ the mapping $\cD$-space $\map_W(x,y)$ (\cref{def:map}).
  \end{propone}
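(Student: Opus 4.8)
The plan is to deduce the statement level-wise from the non-equivariant twisted arrow construction (\cref{prop:twisted segal}), using that ``being a $\cD$-left fibration'' is a level-wise property (\cref{lemma:levelwise}). The twisted arrow construction alters only the $\DD$-coordinate --- it is restriction along the edgewise subdivision functor $\mathrm{sd}\colon\DD\to\DD$, $[n]\mapsto[n]^{op}\star[n]$ --- and the opposite $(-)^{op}$ of \ref{eq:op} only reverses the $\DD$-coordinate, so both commute with $\Und_d$. Hence by direct computation $\Und_d\Tw(W)\cong\Tw(\Und_dW)$ and $\Und_d(W^{op}\times W)\cong(\Und_dW)^{op}\times\Und_dW$ for every object $d$ of $\cD$, and $\Und_dW$ is a Segal space (\cref{def:levelsegal}), so \cref{prop:twisted segal} gives that $\Und_d\Tw(W)\to\Und_d(W^{op}\times W)$ is a left fibration of simplicial spaces with fiber over $(x,y)$ equal to $\map_{\Und_dW}(x,y)$.

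To apply \cref{lemma:levelwise} it remains to know that $\Tw(W)\to W^{op}\times W$ is an injective fibration of $\cD$-simplicial spaces; this is the one genuinely model-categorical point, since objectwise Reedy fibrancy of the $\Und_d$'s does not by itself yield an injective fibration. One route is to verify that $\Tw=(\id_\cD\times\mathrm{sd})^{*}$ is right Quillen for the injective model structure --- which follows objectwise in $\cD$ from edgewise subdivision being right Quillen for the Reedy model structure on simplicial spaces --- so that $\Tw$ preserves injective fibrations, and then realize the map $\Tw(W)\to W^{op}\times W$ as $\Tw$ of an injective fibration determined by $W$. Alternatively, mirroring the proof of \cref{prop:twisted segal}, one passes to a level-wise complete Segal space completion $W\to\hat W$ (a level-wise Dwyer-Kan equivalence by \cref{lemma:dk levelwise Segal}): then $\Tw(\hat W)\to\hat W^{op}\times\hat W$ is a fibration between fibrant objects of the level-wise complete Segal space model structure --- level-wise a left fibration, hence a complete Segal space fibration by \cite[Theorem 5.11]{rasekh2017left} --- hence an injective fibration by \cref{prop:levelcss}, its pullback $P\to W^{op}\times W$ along $W^{op}\times W\to\hat W^{op}\times\hat W$ is a $\cD$-left fibration (by \cref{lemma:pullback nlfib} level-wise and \cref{lemma:levelwise}), and the canonical map $\Tw(W)\to P$ is a level-wise Kan equivalence, being exactly the level-wise homotopy pullback square used in the proof of \cref{prop:twisted segal}; so $\Tw(W)\to W^{op}\times W$ carries the same homotopical content, i.e. it is a fibrant object of the $\cD$-covariant model structure over $W^{op}\times W$.

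Finally, for the fiber: $\Fib_{(x,y)}\Tw(W)$ is the pullback of $\Tw(W)\to W^{op}\times W$ along $\{(x,y)\}$, and since $\Und_d$ preserves pullbacks, $\Und_d\Fib_{(x,y)}\Tw(W)=\Fib_{(x,y)}\Tw(\Und_dW)=\map_{\Und_dW}(x,y)$ by the first paragraph; on the other hand, since $\Und_d$ also commutes with the value functors $\Val_k$ and with pullbacks of $\cD$-spaces, \cref{def:map} gives $\Und_d\map_W(x,y)=\map_{\Und_dW}(x,y)$. As a morphism of $\cD$-spaces that is an isomorphism at every object of $\cD$ is an isomorphism, $\Fib_{(x,y)}\Tw(W)\cong\map_W(x,y)$. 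The hard part is the injective-fibrancy step of the second paragraph; everything else is an unwinding of definitions, with \cref{prop:twisted segal} doing the real work after the level-wise reduction.
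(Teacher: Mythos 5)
Your first paragraph is exactly the paper's proof: it reduces to the level-wise case via $\Und_d\Tw(W)\cong\Tw(\Und_dW)$ and $\Und_d(W^{op}\times W)\cong(\Und_dW)^{op}\times\Und_dW$ and cites \cref{prop:twisted segal}. You are also right that \cref{lemma:levelwise} has injective fibrancy as a \emph{hypothesis}, so the level-wise reduction alone is not a complete argument; the paper's one-line proof leaves this point implicit (as does the proof of \cref{prop:twisted segal} itself, which derives that $\Tw(W)\to W^{op}\times W$ is a left fibration from a homotopy pullback square without separately verifying Reedy fibrancy).

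However, neither of the two routes you sketch to close that gap actually works. In the first, the claim that $\Tw=(\id_\cD\times\mathrm{sd})^{*}$ is \emph{right} Quillen for the Reedy/injective model structure is unsupported and not at all obvious: for a presheaf category with the injective model structure, $f^*$ is automatically \emph{left} Quillen (it preserves monomorphisms and level-wise equivalences), whereas it is right Quillen only if $f_!$ preserves monomorphisms and level-wise equivalences, which you neither verify nor cite; and you would still have to exhibit $\Tw(W)\to W^{op}\times W$ as $\Tw$ applied to an injective fibration, for which no presentation is given. In the second route you construct a $\cD$-left fibration $P\to W^{op}\times W$ together with a level-wise equivalence $\Tw(W)\to P$ over the base, and then conclude that $\Tw(W)\to W^{op}\times W$ ``is a fibrant object of the $\cD$-covariant model structure.'' That inference is false: being weakly equivalent over the base to a fibrant object does not make a map fibrant. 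What the argument actually yields is that $\Tw(W)\to W^{op}\times W$ is fibrant \emph{up to equivalence}, which is strictly weaker than the stated proposition.

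Finally, the fiber computation asserts an \emph{isomorphism} $\Fib_{(x,y)}\Tw(W)\cong\map_W(x,y)$ via ``iso at every object of $\cD$.'' But \cref{prop:twisted segal} gives only a \emph{weak equivalence} $\Fib_{(x,y)}\Tw(\Und_dW)\simeq\map_{\Und_dW}(x,y)$, not an isomorphism, so the iso-at-each-$d$ argument does not apply; indeed $\Fib_{(x,y)}\Tw(W)$ is a $\cD$-\emph{simplicial} space (with $\Fib_{(x,y)}\Tw(W)[d,n]$ a fiber of $W[d,2n+1]\to W[d,n]\times W[d,n]$, genuinely varying in $n$), while $\map_W(x,y)$ is a $\cD$-space, so a literal isomorphism cannot hold and only an equivalence to the constant $\cD$-simplicial space on $\map_W(x,y)$ is available --- which is how the paper reads the statement.
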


 \begin{proof}
 	For every object $d$ in $\cD$, we have $\Und_d\Tw(W) = \Tw(\Und_dW)$ and so result from the level-wise characterization of $\cD$-left fibrations and mapping $\cD$-spaces.
 \end{proof}

We want to give one last example of a fibration in the level-wise Segal setting, however, this requires us to expand the playing field. 

\begin{exone} \label{ex:target fibration}
Let $W$ be a level-wise Segal space. Then we can take $W$ as a constant $\cD$-bisimplicial space i.e. as a constant simplicial object in $\cD$-simplicial spaces. In this setting the map $W^{op} \times W \to W$ is a $\cD\times\DD$-left fibration, which follows from applying \cref{the:diag local ncov} to the $\cD\times\DD$-covariant model structure. 

On the other side, we can also take $\Tw(W) \to W^{op} \times W$ as a map of constant simplicial objects in $\cD$-simplicial spaces, and, by \cref{prop:twisted}, this is also a $\cD\times\DD$-left fibration. Hence, by \cref{lemma:cdleft comp}, the composition $\Tw(W) \to W$ is a $\cD\times\DD$-left fibration. For a given object $x$, the fiber is the level-wise under-Segal space $W_{x/}$, hence we call it the {\it target fibration}. 
\end{exone} 

\begin{exone} \label{ex:source fibration}
	Using the same arguments as in \cref{ex:target fibration} we can deduce that $\Tw(W) \to W^{op} \times W \to W^{op}$ is a $\cD\times\DD$-left fibration and so, by \cref{lemma:opposite fib}, $\Tw(W)^{op} \to W$ is a $\cD\times\DD$-right fibration, which has fiber over a point $x$ the level-wise over Segal space $W_{/x}$ and is hence called the {\it source fibration}.
\end{exone} 
 
\section{Grothendieck Construction for \texorpdfstring{$\cD$}{D}-Simplicial Spaces} \label{sec:grothendieck}
 In this section we want to present the Grothendieck construction in the $\sP(\cD)$-enriched setting, relating functors and fibrations. We will proceed in three steps each generalizing the previous step.
 \begin{enumerate}
 	\item A strict Grothendieck construction for categories, relating $\P(\cD)$-enriched functors $\C \to \P(\cD)$ with $\cD$-Grothendieck opfibrations $\D \to \C$, where $\C$ is a $\P(\cD)$-enriched category (\cref{subsec:strict groth}).
 	\item A homotopical Grothendieck construction for categories, relating $\sP(\cD)$-enriched functors $\C\to \sP(\cD)$ with $\cD$-left fibrations $L \to N_\cD\C$, where $\C$ is a $\sP(\cD)$-enriched category (\cref{subsec:enriched groth}).
 	\item A homotopical Grothendieck construction for $\cD$-spaces with weakly constant objects, relating $\sP(\cD)$-enriched functor $\C_X \to \sP(\cD)$ with $\cD$-left fibrations $L \to X$, where $X$ is a $\cD$-simplicial spaces with weakly constant objects and $\C_X$ is its associated $\sP(\cD)$-enriched category (\cref{subsec:grothendieck general}).
 \end{enumerate}

\subsection{Grothendieck Construction for \texorpdfstring{$\P(\cD)$}{P(D)}-Enriched Categories}\label{subsec:strict groth}
 In this subsection we generalize the classical Grothendieck construction from (set-enriched) categories (\cref{subsec:grothendieck fib}) to $\P(\cD)$-enriched categories and we then use to construct the desired Quillen equivalence. 
 
 \begin{remone} 
 	There is an enriched Grothendieck construction due to Beardsley and Wong \cite{beardsleywong2019grothendieck}, which holds for more general enrichments, however, only over free enriched categories, whereas we need the Grothendieck construction over arbitrary $\P(\cD)$-enriched categories. 
 \end{remone}
 
 Recall that the category of $\cD$-simplicial sets is enriched over itself (\cref{lemma:snset enriched}) and so for a category enriched over $\cD$-simplicial sets $\C$ we can define the category of enriched functors $\Fun(\C,\P(\cD))$ (\cref{ex:psh}). We would like to study this category in terms of unenriched functors. 

\begin{defone} \label{def:dcat}
	A {\it $\cD$-category} is a functor $\cD^{op}\to \cat$. We denote the category of $\cD$-categories by $\Fun(\cD^{op},\cat)$ and notice it is enriched and tensored over $\P(\cD)$, with tensor given by applying $\Fun(\cD^{op},-)$ to the tensor of $\set \times \cat \to \cat$.
\end{defone}

\begin{lemone} \label{lemma:undcat}
	There exists a fully faithful (set-enriched) functor 
	$$\Und: \cat_{\P(\cD)} \to \Fun(\cD^{op},\cat)$$
	which takes a $\P(\cD)$-enriched category $\C$ to the $\cD$-category $\Und(\C): \cD^{op} \to \cat$, which takes an element $d$ to the category $\Und_d\C$ with $\Obj_{\Und_d(\C)} = \Obj_\C$ and morphisms given by 
	$$\Hom_{\Und_d(\C)}(x,y) = \uHom_{\C}(x,y)[d].$$
	Its essential image given by functors $Q:\cD^{op} \to \cat$ such that for all morphisms $f:d \to d'$ in $\cD$ the induced map $P(f):P(d') \to P(d)$ is the identity on objects.
\end{lemone}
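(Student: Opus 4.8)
The plan is to build $\Und$ by hand, check that it is a functor, prove full faithfulness by a direct bijection of hom-sets, and then identify the essential image; all of the verifications reduce to evaluating presheaves at objects of $\cD$, so the only real work is organizational.

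\emph{Construction of $\Und$.} Given a $\P(\cD)$-enriched category $\C$ and an object $d$ of $\cD$, let $\Und_d\C$ have object set $\Obj_\C$ and $\Hom_{\Und_d\C}(x,y) = \uHom_\C(x,y)[d]$. Products in $\P(\cD)$ are computed objectwise, so $(A\times B)[d] = A[d]\times B[d]$ and $1[d]=\ast$; hence evaluating the $\P(\cD)$-enriched composition and unit maps of $\C$ at $d$ yields composition and identities on $\Und_d\C$, and evaluating the associativity and unitality diagrams of $\C$ at $d$ shows these are associative and unital. For a morphism $f:d\to d'$ in $\cD$, the restriction maps $\uHom_\C(x,y)[d']\to\uHom_\C(x,y)[d]$ are the identity on the (fixed) object set and, because the structure maps of $\C$ are morphisms of presheaves, commute with composition and units; thus they assemble to a functor $\Und_{d'}\C\to\Und_d\C$, and functoriality of evaluation makes $d\mapsto\Und_d\C$ into a functor $\Und(\C):\cD^{op}\to\cat$. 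A $\P(\cD)$-enriched functor $F:\C\to\D$ gives at each $d$ the functor $\Und_dF$ which is $F$ on objects and $F(x,y)[d]$ on hom-sets; it is a functor since $F$ preserves composition and units, and the family $\{\Und_dF\}_d$ is natural in $d$ since each $F(x,y)$ is a morphism of presheaves. Compatibility with composition and identities of functors is immediate from the definitions, so $\Und$ is a functor.

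\emph{Full faithfulness.} Fix $\C,\D$ and consider the map $\Hom_{\cat_{\P(\cD)}}(\C,\D)\to\Nat(\Und\C,\Und\D)$. Injectivity is clear: if $\Und F=\Und G$ then $F$ and $G$ agree on objects and on each component $\uHom_\C(x,y)[d]$, hence on the presheaf $\uHom_\C(x,y)$, so $F=G$. For surjectivity, given $\alpha:\Und\C\to\Und\D$, the transition functors of $\Und\C$ are the identity on objects, so all $\alpha_d$ agree on objects; let this common map define $F$ on objects. The components $(\alpha_d)_{x,y}:\uHom_\C(x,y)[d]\to\uHom_\D(Fx,Fy)[d]$ are natural in $d$ by naturality of $\alpha$ with respect to morphisms of $\cD$, hence assemble to a morphism of presheaves $F(x,y):\uHom_\C(x,y)\to\uHom_\D(Fx,Fy)$; that $F$ respects $\P(\cD)$-enriched composition and identities is checked after evaluating at each $d$, where it holds because $\alpha_d$ is a functor. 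Then $\Und F=\alpha$.

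\emph{Essential image.} Every $\Und\C$ has all transition functors the identity on objects. Conversely, let $Q:\cD^{op}\to\cat$ have this property. Then the object sets $\Obj Q(d)$ are identified along the transition functors, giving a common object set $O$; define $\C$ by $\Obj_\C=O$ and $\uHom_\C(x,y)[d]=\Hom_{Q(d)}(x,y)$, with presheaf structure given by the transition functors of $Q$ --- this is precisely where the identity-on-objects hypothesis is used, so that $Q(f)$ restricts to a map $\Hom_{Q(d')}(x,y)\to\Hom_{Q(d)}(x,y)$ --- and with composition and unit maps assembled objectwise from those of the categories $Q(d)$, which are morphisms of presheaves because the $Q(f)$ are functors. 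Then $\Und\C=Q$. I expect the bookkeeping in this last paragraph, especially isolating exactly where the identity-on-objects condition is needed, to be the only delicate point; the functoriality and full-faithfulness steps are routine evaluation-at-$d$ arguments.
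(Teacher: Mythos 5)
Your proof is correct but takes a genuinely different route from the paper. You build $\Und$ by hand and verify everything (functoriality, full faithfulness, essential image) by direct evaluation at objects of $\cD$. The paper instead factors the construction through two pieces of abstract machinery: it builds $\Und$ from the change-of-enrichment functor (\cref{lemma:ch}) applied to the representable functors $\ev_\cD : \cD^{op} \to \Fun^\times(\P(\cD),\set)$, and it proves full faithfulness not directly but via the commuting triangle $\Fun(\cD^{op},N) \circ \Und = N_\cD$, where $N_\cD$ is the $\cD$-nerve (\cref{def:nervesnset}); since both $N_\cD$ and post-composition with the ordinary nerve are fully faithful (\cref{prop:nervesnset ff}), $\Und$ is too. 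The paper's route is more modular — it delegates the full-faithfulness work to an independent fact about the nerve — while yours is self-contained and makes visible exactly how naturality in $\cD$ gets used. In particular your surjectivity argument, which reassembles the presheaf structure on $\uHom$ from naturality of $\alpha$, makes the same point the paper makes by invoking the nerve comparison, just more explicitly.

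One thing worth flagging: in both your surjectivity step ("all $\alpha_d$ agree on objects") and your essential-image step ("the object sets $\Obj Q(d)$ are identified along the transition functors"), the argument requires $\cD$ to be connected — otherwise the identity-on-objects constraint only forces agreement within each connected component of $\cD$, and a natural transformation whose object maps differ between components would not come from an enriched functor. The paper's proof inherits the same caveat through \cref{prop:nervesnset ff}. This is not a defect specific to your approach, but since you made the mechanism explicit, it would be worth stating the connectedness hypothesis if you want the argument airtight (in practice, the paper almost always works with $\cD$ having a terminal object, which forces connectedness).
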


\begin{proof}
 Let $\ev_\cD: \cD^{op} \to \Fun(\P(\cD),\set)$ be the functor that takes an object $d$ to the representable functor $\Hom_{\P(\cD)}(F[d],-):\P(\cD) \to \set$, which corresponds to evaluating a presheaf $P: \cD^{op} \to \set$ at $d$, giving us $P(d)$. Notice representable functors are product preserving and so we can restrict the codomain to $\ev_\cD: \cD^{op} \to \Fun^\times(\P(\cD),\set)$. We can now post-compose this functor with $\Ch$ (\cref{lemma:ch}) to define a functor 
 $$\cD^{op} \to \Fun^\times(\P(\cD),\set) \to \Fun(\cat_{\P(\cD)},\cat).$$
 Using the adjunction between product and functor categories the data of this functor is equivalent to a functor 
 $$\Und: \cat_{\P(\cD)} \to \Fun(\cD^{op},\cat).$$
 We now want to prove it is fully faithful and characterize the essential image. To observe it is fully faithful note we have the following diagram
 \begin{center}
 	\begin{tikzcd}
 		\cat_{\P(\cD)} \arrow[rr, "\Und"] \arrow[dr, "N_\cD"'] & &  \Fun(\cD^{op},\cat) \arrow[dl, "\Fun(\cD^{op} \comma N)"] \\
 		& \P(\cD\times\DD) &
	\end{tikzcd}.
 \end{center}
 The left hand functor is fully faithful by \cref{prop:nervesnset ff} and the right hand functor is fully faithful as the nerve is fully faithful.
 The fact that the two diagonal maps are fully faithful implies that $\Und$ is fully faithful as well. Finally the characterization of the essential image follows from the construction of $\Und$.
\end{proof}
 
 \begin{exone}
 	By \cref{lemma:snset enriched}, the category $\P(\cD)$ is $\P(\cD)$-enriched and the category $\Und_d\P(\cD)$ has objects $\cD$-sets and set of morphisms 
 	$$\Hom_{\Und_d\P(\cD)}(X,Y) = \uHom_{\P(\cD)}(X,Y)[d]= \Hom_{\P(\cD)}(X \times F[d],Y).$$
 \end{exone}

  This lemma gives us the following important identification.
 
 \begin{corone} \label{cor:iso functor und}
 	Let $\C$ be a $\P(\cD)$-enriched category.
 	There is an isomorphism of categories
 	$$\Fun(\C,\P(\cD)) \to \Fun(\Und(\C),\Und(\P(\cD))).$$
 \end{corone}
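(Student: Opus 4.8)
The plan is to prove this by promoting the fully faithful functor $\Und\colon\cat_{\P(\cD)}\to\Fun(\cD^{op},\cat)$ of \cref{lemma:undcat} to an isomorphism of the two functor categories, checking directly that it is bijective on objects and on morphisms. Here the right-hand side $\Fun(\Und(\C),\Und(\P(\cD)))$ is the hom-category of $\cD$-categories: its objects are the morphisms $\Und(\C)\to\Und(\P(\cD))$ in $\Fun(\cD^{op},\cat)$ (i.e. natural transformations of $\cD$-diagrams of categories), and its morphisms are modifications, meaning a choice, for each object $d$ of $\cD$, of a natural transformation of the component functors over $\Und_d(\C)$, compatible with the structure maps of $\cD$.

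On objects the bijection is immediate: since $\P(\cD)$ is itself a $\P(\cD)$-enriched category (\cref{lemma:snset enriched}), a $\P(\cD)$-enriched functor $\C\to\P(\cD)$ is by definition a morphism $\C\to\P(\cD)$ in $\cat_{\P(\cD)}$, and \cref{lemma:undcat} identifies such morphisms bijectively with morphisms $\Und(\C)\to\Und(\P(\cD))$ in $\Fun(\cD^{op},\cat)$, which are precisely the objects of $\Fun(\Und(\C),\Und(\P(\cD)))$.

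For morphisms I would unwind both descriptions and observe that they carry literally the same data. A $\P(\cD)$-enriched natural transformation $\alpha\colon F\Rightarrow G$ between enriched functors $F,G\colon\C\to\P(\cD)$ assigns to each object $x$ of $\C$ a global section $\alpha_x\colon 1\to\uHom_{\P(\cD)}(Fx,Gx)$ --- equivalently, for each object $d$ of $\cD$ an element $\alpha_x(d)\in\uHom_{\P(\cD)}(Fx,Gx)[d]=\Hom_{\P(\cD)}(Fx\times F[d],Gx)$, compatible along morphisms of $\cD$ --- subject to the enriched naturality square. Unwinding the construction of $\Und$, this datum is exactly a modification from $\Und(F)$ to $\Und(G)$: the family $(\alpha_x(d))_{x}$ is the $d$-indexed component natural transformation $\Und_d(F)\Rightarrow\Und_d(G)$; the $\cD$-compatibility of the $\alpha_x(d)$ is the modification axiom; and the enriched naturality square of $\alpha$, evaluated at each $d$, is precisely the naturality of $\Und_d(F)\Rightarrow\Und_d(G)$ with respect to the morphisms of $\Und_d(\C)$. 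This yields a bijection on each hom-set. Since identities map to identities, and composition in $\Fun(\C,\P(\cD))$ is computed component-wise by composing global sections in $\P(\cD)$ --- which matches vertical composition of modifications, also computed component-wise --- the resulting map is a functor, bijective on objects and on each hom-set, hence an isomorphism of categories.

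The one place that takes care --- the main ``obstacle,'' though it is bookkeeping rather than a genuine difficulty --- is this middle step: keeping straight the two families of indices ($d$ ranging over $\cD$ and $x$ over $\Obj_\C$) together with the morphisms of $\C$ and of $\cD$, and verifying that the enriched-naturality condition on $\alpha$ and the modification condition on $\Und(\alpha)$ encode the same constraint. Conceptually the argument is just the upgrade of the proof of \cref{lemma:undcat} from $1$-cells to $2$-cells, so no new ideas are needed.
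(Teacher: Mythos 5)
The paper gives no explicit proof of this corollary, stating it as an immediate consequence of \cref{lemma:undcat}; your argument correctly supplies what is implicit. Your reading of the right-hand side as the hom-category in the $2$-category $\Fun(\cD^{op},\cat)$ (morphisms of $\cD$-categories and modifications) matches the paper's later use in the proof of \cref{prop:integral adjunctions}, the bijection on objects is exactly the ($1$-categorical) full faithfulness from \cref{lemma:undcat}, and your identification of both sides at the level of $2$-cells — a family $\alpha_x(d)\in\Hom_{\P(\cD)}(Fx\times F[d],Gx)$ subject to naturality in $x$ (enriched naturality $\leftrightarrow$ naturality of each $\Und_d(F)\Rightarrow\Und_d(G)$) and compatibility in $d$ ($\cD$-set morphism $\leftrightarrow$ modification axiom) — together with the componentwise check for identities and composition, is the correct and natural way to upgrade \cref{lemma:undcat} to an isomorphism of hom-categories.
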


 The name $\Und$ should remind us of the definition of the underlying simplicial set (\cref{def:und}). In fact for a given $d$ in $\cD$ we have the following commutative diagram 
 \begin{center}
 	\begin{tikzcd}[row sep=0.5in, column sep=0.6in]
 		\cat_{\P(\cD)} \arrow[r, "\Und_d"] \arrow[d, "N_\cD"'] & \cat \arrow[d, "N"] \\
 		\P(\cD\times\DD) \arrow[r, "\Und_d"] & \sset 
 	\end{tikzcd}.
 \end{center} 

\begin{defone} \label{def:cd groth opfib}
	Let $\C$ be a small $\P(\cD)$-enriched category. 
	A {\it $\cD$-Grothendieck opfibration} over $\C$ is a map of $\cD$-categories $\D\to \Und\C$ such that for all $d$ in $\cD$, the functor $\Und_d\D \to \Und_d\C$ is a discrete Grothendieck opfibration of categories (\cref{def:discrete Groth opfib}).
\end{defone}

\begin{defone}
	Let $\C$ be a small $\P(\cD)$-enriched category.
	Let $\opGroth_{\cD/\C}$ be the full subcategory of $\Fun(\cD^{op},\cat)_{/\Und\C}$ consisting of $\cD$-Grothendieck opfibrations. 
\end{defone}

We want to construct an adjunction that induces an equivalence between $\P(\cD)$-enriched functors of $\P(\cD)$-enriched categories $\C\to \P(\cD)$ and $\cD$-Grothendieck opfibrations over $\C$. 

\begin{defone} \label{def:intCD}
	Let $F: \C \to \P(\cD)$ be a $\P(\cD)$-enriched functor. For each object $d$ in $\cD$, define the set-enriched functor $\Hom_{\Und_d\P(\cD)} (D[0],\Und_dF): \Und_d\C \to \set$ as the composition 
	$$\Und_d\C \xrightarrow{\Und_dF} \Und_d\P(\cD) \xrightarrow{\Hom_{\Und_d\P(\cD)}(D[0],-)} \set $$
	Now, define the $\cD$-Grothendieck opfibration $\ds\int_{\cD/\C} F \to \Und\C$ as follows. It takes an object $d$ in $\cD$ to the Grothendieck opfibration
	$$\Und_d\int_{\cD/\C} F = \int_{\Und_d\C}\Hom_{\Und_d\P(\cD)} (D[0],\Und_dF),$$
	where $\int$ is defined in \cref{def:int}. A morphism $f: d \to d'$ is taken to the functor 
	$$\int_{\Und_{d'}\C}\Hom_{\Und_{d'}\P(\cD)} (D[0],\Und_{d'}F) \to \int_{\Und_d\C}\Hom_{\Und_d\P(\cD)} (D[0],\Und_dF) $$
	which takes an object $(c \in \Obj_\C,x\in F(c)[d'])$ in $\Und_{d'}\int_\C \Hom_{\Und_{d'}\P(\cD)} (D[0],\Und_{d'}F)$ to $(c\in\Obj_\C,F(c)[f](x) \in F(c)[d])$ and is defined analogously on morphisms.
\end{defone} 

Notice the construction of $\Und_d\int_{\cD/\C} F$ gives us a functor 
$$\Fun(\C,\P(\cD)) \to \opGroth_{\cD/\C}.$$
Indeed, by \cref{cor:iso functor und}, we have an isomorphism $\Und: \Fun(\C,\P(\cD)) \to \Fun(\Und\C,\Und(\P(\cD)))$ and so it suffices to construct a functor $\Fun(\Und\C,\Und(\P(\cD))) \to (\opGroth_\cD)_{/\C}$. We have already defined it for objects (\cref{def:intCD}). Now for a morphism $\alpha: F \to G$ in $\Fun(\Und\C,\Und(\P(\cD)))$, we define $\ds\int_{\cD/\C} \alpha: \int_{\cD/\C} F \to \int_{\cD/\C} G$ level-wise for an object $d$ in $\cD$ as 
\begin{center}
	\begin{tikzcd}[column sep=0.6in]
		\ds\int_{\Und_d\C} \Hom_{\Und_d\P(\cD)} (D\leb0\reb,\Und_dF) \arrow[dr] \arrow[rr, "\int_{\Und_d \C} \Hom_{\Und_d\P(\cD)} (D\leb0\reb\comma\Und_d\alpha)"] & & \ds\int_{\Und_d\C} \Hom_{\Und_d\P(\cD)} (D\leb0\reb,\Und_dG) \arrow[dl] \\
		& \Und_d \C &  
	\end{tikzcd}
\end{center} 
Here we note that $\Und_d\alpha: \Und_dF \to \Und_dG$ is a natural transformation and so $\Hom_{\Und_d\P(\cD)} (D\leb0\reb\comma\Und_d\alpha)$ is the natural transformation defined as the composition 
\begin{center}
	\begin{tikzcd}[column sep=1.3in]
		\Und_d\C \arrow[r, bend left=25, ""{name=U, below}, "\Und_dF"] \arrow[r, bend right=25, ""{name=A, above}, "\Und_dG"'] & \Und_d\P(\cD)  \arrow[r, "\Hom_{\Und_d\P(\cD)}(D\leb0 \reb \comma -)"] & \set 
		\arrow[from=U, to=A, Rightarrow, "\Und_d\alpha" description]
	\end{tikzcd}.
\end{center}

Let us compute some important examples.

\begin{exone} \label{ex:intcd rep}
	Let $\C$ be a small $\P(\cD)$-enriched category and $c$ an object in $\C$. Then we can define the corepresentable functor $\uHom(c,-): \C \to \P(\cD)$, which takes an object $c'$ to the enriched hom object $\uHom(c,c')$. Using \cref{ex:int hom under} we can now compute $\int_{\cD/\C}\uHom(c,-) \to \C$ as follows. For a given object $d$ in $\cD$ we have
	$$\Und_d \int_{\cD/\C}\uHom_\C(c,-) = \int_{\Und_d\C} \Hom_{\Und_d\P(\cD)} (D[0],\Und_d\uHom_\C(c,-)) = \int_{\Und_d\C} \Hom_{\Und_d\C}(c,-) = (\Und_d\C)_{c/}.$$
\end{exone}

\begin{exone} \label{ex:intcd constant}
	Let $\C$ be a small $\P(\cD)$-enriched category and $G$ an object in $\P(\cD)$. Then the constant functor $\{G\}: \C \to \P(\cD)$ is an object in $\Fun(\C,\P(\cD))$ and we have $\int_{\cD/\C} \{G\} = \C \times \{G\} \xrightarrow{\pi_1} \C$.
\end{exone}

We can use this last example to get an enrichment.

\begin{lemone} \label{lemma:intcd enriched}
	The functor $\int_{\cD/\C}: \uFun(\C,\P(\cD)) \to \Fun(\cD^{op},\cat)_{/\Und\C}$ is a $\P(\cD)$-enriched functor.
	Here $\uFun(\C,\P(\cD))$ has the $\P(\cD)$-enrichment given in \cref{ex:psh} and $\Fun(\cD^{op},\cat)_{/\Und\C}$ the $\P(\cD)$-enrichment given in \cref{def:dcat}.
\end{lemone}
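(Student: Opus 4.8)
The plan is to apply \cref{lemma:pcd enriched functor}. Both the source and the target are $\P(\cD)$-enriched and tensored over $\P(\cD)$: for $\uFun(\C,\P(\cD))$ this is \cref{ex:psh}/\cref{def:ufuncd}, the tensor being $(F\otimes F[d])(c)=F(c)\times F[d]$; for $\Fun(\cD^{op},\cat)_{/\Und\C}$ it follows from \cref{def:dcat}, which equips $\Fun(\cD^{op},\cat)$ with a $\P(\cD)$-tensor computed level-wise from the tensor $\set\times\cat\to\cat$, together with the standard fact that a slice over a Cartesian base inherits a tensor, namely $(\D\xrightarrow{p}\Und\C)\otimes X := (\D\otimes X\to\D\otimes 1\cong\D\xrightarrow{p}\Und\C)$ with the first map induced by $X\to 1$. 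Since $\int_{\cD/\C}$ has already been built as a functor of underlying categories (\cref{def:intCD}), \cref{lemma:pcd enriched functor} reduces the claim to showing that, for every object $d$ of $\cD$,
\[
  \int_{\cD/\C}\big(F\otimes F[d]\big)\;\cong\;\Big(\int_{\cD/\C}F\Big)\otimes F[d]
\]
naturally in $F$, as $\cD$-Grothendieck opfibrations over $\Und\C$.

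To check this identity I would evaluate both $\cD$-categories at an object $e$ of $\cD$. Since products of $\cD$-sets are computed level-wise and $(F\otimes F[d])(c)=F(c)\times F[d]$, the functor $\Hom_{\Und_e\P(\cD)}(D[0],\Und_e(F\otimes F[d])):\Und_e\C\to\set$ sends $c$ to $(F(c)\times F[d])[e]=F(c)[e]\times\Hom_\cD(e,d)$ — using, as in \cref{ex:intcd rep}, that $\Hom_{\Und_e\P(\cD)}(D[0],Z)=Z[e]$ for a $\cD$-set $Z$. Thus it is the product of $\Hom_{\Und_e\P(\cD)}(D[0],\Und_eF)$ with the constant functor at the set $\Hom_\cD(e,d)$, and since the classical Grothendieck construction turns a product $H\times\{S\}$ into $\big(\int H\big)\times S$ (immediate from \cref{def:int}; compare \cref{ex:intcd constant}) we obtain
\[
  \Und_e\!\int_{\cD/\C}\big(F\otimes F[d]\big)\;\cong\;\Big(\int_{\Und_e\C}\Hom_{\Und_e\P(\cD)}(D[0],\Und_eF)\Big)\times\Hom_\cD(e,d)\;=\;\Und_e\!\int_{\cD/\C}F\;\times\;F[d][e],
\]
which is $\Und_e$ of the right-hand side. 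It then remains to see that these level-wise isomorphisms respect the $\cD$-structure maps — for $f:e\to e'$ both sides act as $F(c)[f]$ on the first coordinate and as precomposition with $f$ on the $\Hom_\cD(-,d)$-coordinate — and that they lie over $\Und\C$ and are natural in $F$, all of which is routine unwinding. Feeding the resulting isomorphism into \cref{lemma:pcd enriched functor} gives the $\P(\cD)$-enrichment of $\int_{\cD/\C}$.

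The step I expect to be the main obstacle is not the computation but the bookkeeping around the tensor on the slice $\Fun(\cD^{op},\cat)_{/\Und\C}$: one must confirm that the formula above for $(\D\to\Und\C)\otimes X$ genuinely is a $\P(\cD)$-tensor, i.e. left adjoint to the enriched hom $\uHom_{/\Und\C}$ supplied by \cref{def:ufuncd}/\cref{def:dcat}, and that the level-wise identifications really do assemble into an isomorphism of $\cD$-categories over $\Und\C$, not merely an object-wise bijection. Once the tensors on the two sides are pinned down, everything reduces to unwinding \cref{def:intCD} and the distributivity of finite products over the Grothendieck construction.
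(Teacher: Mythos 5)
Your proof is correct and follows the same strategy as the paper: both reduce the claim to \cref{lemma:pcd enriched functor} by checking that $\int_{\cD/\C}$ commutes with the $\P(\cD)$-tensor. The paper disposes of this in one line by citing \cref{ex:intcd constant} (the tensor is the level-wise product, which $\int_{\cD/\C}$ preserves since the underlying Grothendieck construction preserves products, giving fiber products over $\Und\C$ in the slice); your version spells out the same identification $\Und_e\!\int_{\cD/\C}(F\otimes F[d])\cong \Und_e\!\int_{\cD/\C}F \times F[d][e]$ level-wise in $e$, which is a correct unwinding of that argument. Your remark about pinning down the tensor on the slice $\Fun(\cD^{op},\cat)_{/\Und\C}$ is a reasonable bookkeeping concern that the paper leaves implicit in \cref{def:dcat}; the formula you propose, $(\D\to\Und\C)\otimes X = (\D\otimes X\to\D\to\Und\C)$, is indeed the induced tensor on the slice and it is what makes the paper's appeal to \cref{ex:intcd constant} work.
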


\begin{proof}
	As explained in \cref{ex:psh} and \cref{def:dcat} both enrichments are given via tensor, which is the level-wise product, which, as shown in \cref{ex:intcd constant}, commutes with the functor $\int_{\cD/\C}$. Hence, the results follows from \cref{lemma:pcd enriched functor}.
\end{proof}

Before we generalize our construction to spaces, we also need a right adjoint. We can construct one using the previous example, which motivates the functorial over-category construction.

\begin{defone} \label{def:undc}
 Let $\C$ be a small $\P(\cD)$-enriched category. Define the $\P(\cD)$-enriched functor $\Und\C_{/-}: \C \to \Fun(\cD^{op},\cat)_{/\Und\C}$ as the composition of the enriched Yoneda embedding (\cref{eq:enriched yon}) $\C \to \uFun(\C^{op},\P(\cD))$ with $\int_{\cD/\C}: \Fun(\C^{op},\P(\cD)) \to \Fun(\cD^{op},\cat)_{/\Und\C}$. We similarly define $\Und\C_{-/}:\C^{op} \to \Fun(\cD^{op},\cat)_{/\Und\C}$.
\end{defone} 

We can use this functor to finally define the right adjoint. 

\begin{defone} \label{def:hdc}
	Define the $\P(\cD)$-enriched functor $\H_{\cD/\C}:\Fun(\cD^{op},\cat)_{/\Und\C} \to \Fun(\C,\P(\cD))$ as the adjoint (using \cref{the:enriched prod hom adj}) to the following composition map
	\begin{center}
		\begin{tikzcd}
			\C \times  \Fun(\cD^{op},\cat)_{/\Und\C} \arrow[dr, "(\Und\C_{-/})^{op}\times \id"'] \arrow[rr] & &  \P(\cD)\\
			& (\Fun(\cD^{op},\cat)_{/\Und\C})^{op} \times \Fun(\cD^{op},\cat)_{/\Und\C} \arrow[ur, "\uHom_{/\Fun(\cD^{op}\comma\cat)_{/\Und\C}}(-\comma-)"']
		\end{tikzcd},
	\end{center}
	where the first map is the opposite of $\Und\C_{-/}$ (\cref{def:undc}) and the second is the enriched mapping object (\cref{rem:enriched mapping}).  
	In other words $\H_{\cD/\C}(p)(c) = \uHom_{/\C}(\C_{c/},\D)$. 
\end{defone}

Notice the similarity of $\H_{\cD/\C}$ with $\H_\C$ (\cref{prop:integral adjunctions}), which justifies the notation we have chosen. We now want to prove that $\int_{\cD/\C} \vdash \H_{\cD/\C}$ gives us an adjunction, similar to $\int_{\C}$ and $\H_\C$.

  \begin{theone} \label{the:groth cd ff}
 	Let $\C$ be a small $\P(\cD)$-enriched category. Then we have a $\P(\cD)$-adjunction 
 	\begin{center}
 		\begin{tikzcd}[row sep=0.5in, column sep=0.9in]
 			\uFun(\C,\P(\cD)) \arrow[r, "\int_{\cD/\C}", shift left=1.8, "\bot"'] & \Fun(\cD^{op},\cat)_{/\Und\C} \arrow[l, shift left = 1.8, "\H_{\cD/\C}"]
 		\end{tikzcd}
 	\end{center}
 	where the left adjoint $\int_{\cD/\C}$ preserves (set-enriched) limits and is fully faithful with essential image $\opGroth_{\cD/\C}$. 
 \end{theone}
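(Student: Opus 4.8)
The plan is to reduce the enriched statement to the classical Grothendieck construction recalled in \cref{subsec:grothendieck fib}, applied level-wise over $\cD$, using the identification $\Fun(\C,\P(\cD)) \cong \Fun(\Und\C,\Und(\P(\cD)))$ from \cref{cor:iso functor und}. First I would establish the adjunction $\int_{\cD/\C} \dashv \H_{\cD/\C}$ at the level of underlying (set-enriched) categories. The key point is that both functors have been built level-wise: for each object $d$ in $\cD$, $\Und_d\int_{\cD/\C}F = \int_{\Und_d\C}\Hom_{\Und_d\P(\cD)}(D[0],\Und_dF)$ is the classical Grothendieck construction, and $\H_{\cD/\C}$ is assembled from the functorial over-category construction $\Und\C_{-/}$ together with the enriched mapping object. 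So the unit and counit can be defined level-wise from the classical unit and counit of $\int_\C \dashv \H_\C$ (\cref{prop:integral adjunctions}); one then checks compatibility with the structure maps $P(f)$, which holds because $\int_{\cD/\C}F$ is defined so that these structure maps are identity-on-objects and act by $F(c)[f]$ on the fibers, matching exactly how $\Fun(\cD^{op},\cat)$ is built. Naturality of the classical adjunction in the functor $F$ and in the base category then gives the triangle identities. An alternative that is perhaps cleaner is to deduce the adjunction abstractly: $\int_{\cD/\C}$ is defined via the Yoneda embedding and $\H_{\cD/\C}$ via the enriched product-hom adjunction (\cref{the:enriched prod hom adj}), and one can assemble the hom-set bijection $\Nat(\int_{\cD/\C}F,p) \cong \Nat(F,\H_{\cD/\C}(p))$ directly using \cref{the:enriched prod hom adj} and the defining universal property of $\H_{\cD/\C}(p)(c) = \uHom_{/\C}(\C_{c/},\D)$.

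Next I would upgrade this to a $\P(\cD)$-enriched adjunction. \cref{lemma:intcd enriched} already records that $\int_{\cD/\C}$ is a $\P(\cD)$-enriched functor, and $\H_{\cD/\C}$ is $\P(\cD)$-enriched by construction (it is defined via \cref{the:enriched prod hom adj} applied to an enriched composite). Since $\int_{\cD/\C}$ preserves the tensor with representables — this is precisely \cref{ex:intcd constant}, which shows $\int_{\cD/\C}(F \times \{G\}) = \int_{\cD/\C}F \times \{G\}$, and the general tensor is built from such — \cref{lemma:enriched adjunction via tensors} promotes the underlying adjunction to a $\P(\cD)$-enriched one. Both $\uFun(\C,\P(\cD))$ and $\Fun(\cD^{op},\cat)_{/\Und\C}$ are tensored over $\P(\cD)$ (\cref{ex:psh}, \cref{def:dcat}), so the hypotheses of \cref{lemma:enriched adjunction via tensors} are met.

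For the claim that $\int_{\cD/\C}$ preserves (set-enriched) limits: limits in both $\uFun(\C,\P(\cD))$ and $\Fun(\cD^{op},\cat)_{/\Und\C}$ are computed level-wise over $\cD$ (and then objectwise over $\C$ in the first case, as limits in over-categories of functor categories are), so it suffices to know that the classical $\int_{\Und_d\C}$ preserves limits of functors valued in $\set$; this follows because $\int_\C$ is a right adjoint (to $\T_\C$) by \cref{prop:integral adjunctions}, hence preserves all limits — alternatively one checks directly that $\int_\C$ preserves products and equalizers from the explicit description of objects and morphisms in \cref{def:int}. Finally, full faithfulness with essential image $\opGroth_{\cD/\C}$: full faithfulness is checked level-wise over $\cD$, where it reduces to full faithfulness of the classical $\int_{\Und_d\C}$ from \cref{prop:integral adjunctions}, combined with the fact (\cref{lemma:undcat}) that maps in $\Fun(\cD^{op},\cat)_{/\Und\C}$ between $\cD$-Grothendieck opfibrations are exactly compatible families of maps between the classical Grothendieck constructions. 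For the essential image, a map $\D \to \Und\C$ lies in $\opGroth_{\cD/\C}$ iff each $\Und_d\D \to \Und_d\C$ is a discrete Grothendieck opfibration, i.e. is classically in the image of $\int_{\Und_d\C}$ (\cref{def:discrete Groth opfib}); one then reassembles the level-wise functors $F_d: \Und_d\C \to \set$ into a single $\P(\cD)$-enriched functor $F: \C \to \P(\cD)$ using the identity-on-objects condition characterizing the essential image of $\Und$ (\cref{lemma:undcat}), and checks $\int_{\cD/\C}F \cong \D$ over $\Und\C$. The main obstacle I anticipate is purely bookkeeping: verifying that the level-wise reassembly respects the structure maps $P(f)$ coherently — that is, that a compatible family of discrete Grothendieck opfibrations genuinely arises from a $\P(\cD)$-enriched functor rather than merely a lax or pseudo one — which is exactly where the identity-on-objects rigidity of \cref{lemma:undcat} and \cref{prop:nervesnset ff} does the real work.
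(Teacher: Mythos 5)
Your proposal is correct, and your ``alternative that is perhaps cleaner'' is in fact the paper's argument: show $\int_{\cD/\C}$ preserves colimits (via \cref{lemma:intcd enriched} and the level-wise reduction to the classical $\int_\C$), then use the free-cocompletion property of $\uFun(\C,\P(\cD))$ to reduce the adjunction to checking the hom identity on representables, where the enriched Yoneda lemma (\cref{lemma:enriched yoneda}) plus the defining formula $\H_{\cD/\C}(p)(c)=\uHom_{/\C}(\C_{c/},p)$ and $\int_{\cD/\C}\uHom(c,-)=\Und\C_{c/}$ (\cref{ex:intcd rep}) close the loop. The enrichment and limit-preservation arguments you give (\cref{lemma:enriched adjunction via tensors} with \cref{ex:intcd constant}; level-wise reduction for limits) match the paper verbatim, and your essential-image argument is the paper's as well. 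Your \emph{primary} route — constructing unit and counit level-wise from the classical adjunction $\int_\C\dashv\H_\C$ — is a different decomposition and is also sound, but it rests on the identification $\Und_d\H_{\cD/\C}(p)\cong\H_{\Und_d\C}(\Und_dp)$, which you use implicitly but do not verify. It is true (a Yoneda argument: a map $\Und\C_{c/}\otimes F[d]\to\D$ over $\Und\C$ is determined by its restriction along $\id_d$ to a functor $(\Und_d\C)_{c/}\to\Und_d\D$ over $\Und_d\C$), but it is exactly the kind of step the paper's abstract argument sidesteps: by characterizing the right adjoint through $\uHom_{/\C}(\C_{c/},-)$ and Yoneda, the paper never has to compute $\Und_d\H_{\cD/\C}$ at all. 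So the abstract route buys you freedom from the level-wise bookkeeping on the right adjoint side, at the cost of invoking \cref{the:enriched colimit} and \cref{the:enriched Kan ext}; the level-wise route is more elementary but needs that one extra identification supplied.
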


 \begin{proof}
 	First, observe that $\int_{\cD/\C}$ evidently preserves (set-enriched) limits and colimits in $\Fun(\cD^{op},\cat)_{/\C}$, as they are evaluated point-wise, where $\int_{\cD/\C}$ simply coincides with the Grothendieck construction for categories, which we already know preserves limits and colimits (\cref{prop:integral adjunctions}). Moreover, by \cref{lemma:intcd enriched}, $\int_{\cD/\C}$ is $\P(\cD)$-enriched and in particular preserves tensor and so, by \cref{the:enriched colimit}, preserves $\P(\cD)$-enriched colimits. Knowing that $\int_{\cD/\C}$ preserves colimits means that in order to get an unenriched adjunction it suffices to prove that for every object $c$ and object $p: \D \to \C$ in $\Fun(\cD^{op},\cat)_{/\C}$ there is a natural isomorphism 
 	$$ \uHom(\uHom(c,-),\H_{\cD/\C}(p)) \cong \uHom_{/\C}(\int_{\cD/\C}\uHom(c,-),p) = \uHom_{/\C}(\C_{c/},p)= \H_{\cD/\C}(p)(c),$$
 	which follows from the Yoneda lemma (\cref{lemma:enriched yoneda}). Notice, by \cref{lemma:enriched adjunction via tensors} and the fact the left adjoint commutes with the tensor (\cref{ex:intcd constant}), the adjunction is in fact a $\P(\cD)$-enriched adjunction. Finally as the essential image of $\int_{\Und_d\C}$ are precisely the Grothendieck opfibrations (\cref{lemma:discrete Groth fib lifting} ), the essential image of $\int_{\cD/\C}$ are precisely the $\cD$-Grothendieck opfibrations (\cref{def:cd groth opfib}).
 \end{proof}
  
 \begin{remone}
 	Notice, we could have deduced the existence of an un-enriched right adjoint of $\int_{\cD/\C}$ from the fact that both categories are locally presentable, however, the benefit of this approach is an explicit description of the right adjoint.
 \end{remone}

We will end this subsection with an analysis of the {\it universal $\cD$-Grothendieck opfibration}, generalizing \cref{prop:grothendieck fiber}. First of all, following \cref{prop:universal fib}, the universal $\cD$-Grothendieck opfibration is given via the projection map $\pi_*: \P(\cD)_{D[0]/} \to \Und\P(\cD)$ and we again have the following result.

\begin{propone} \label{prop:universal fib cd}
	Let $\C$ be a small $\P(\cD)$-enriched category. The functor
	$$\Und(-)^*\pi_*:\uFun(\C,\P(\cD)) \to \opGroth_{\cD/\C}$$
	given by pulling back $\pi_*$ is precisely the functor $\int_{\cD/\C}$ and so gives us an equivalence between $\P(\cD)$-enriched functors and $\cD$-Grothendieck opfibrations over $\C$.
\end{propone}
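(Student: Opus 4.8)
The plan is to establish $\Und(-)^*\pi_* = \int_{\cD/\C}$ by a pointwise comparison over $\cD$, after which the final sentence is immediate from \cref{the:groth cd ff}. Since $\Und\colon \cat_{\P(\cD)}\to\Fun(\cD^{op},\cat)$ is fully faithful (\cref{lemma:undcat}) and pullbacks in $\Fun(\cD^{op},\cat)_{/\Und\C}$ are computed pointwise in $d$, it suffices to exhibit, for every object $d$ of $\cD$, an isomorphism $\Und_d\int_{\cD/\C}F \cong (\Und_dF)^*\Und_d(\P(\cD)_{D[0]/})$ of categories over $\Und_d\C$, natural in $F$ and compatible with the restriction functors along morphisms $f\colon d\to d'$ in $\cD$.

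First I would pin down the pointwise shape of the universal object. By \cref{ex:intcd rep} applied to $\P(\cD)$ and the object $D[0]$, the value at $d$ of the $\cD$-Grothendieck opfibration $\pi_*\colon\P(\cD)_{D[0]/}\to\Und\P(\cD)$ is the ordinary under-category projection $(\Und_d\P(\cD))_{D[0]/}\to\Und_d\P(\cD)$, which by \cref{ex:int hom under} equals $\int_{\Und_d\P(\cD)}\Hom_{\Und_d\P(\cD)}(D[0],-)\to\Und_d\P(\cD)$.

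The heart of the argument is then a short computation at level $d$. By \cref{def:intCD}, $\Und_d\int_{\cD/\C}F = \int_{\Und_d\C}\bigl(\Hom_{\Und_d\P(\cD)}(D[0],-)\circ\Und_dF\bigr)$. Now I would invoke the pasting law for the classical Grothendieck construction: for $G\colon\mathcal{A}\to\mathcal{B}$ and $H\colon\mathcal{B}\to\set$ one has $\int_{\mathcal{A}}(H\circ G)\cong G^*\int_{\mathcal{B}}H$ over $\mathcal{A}$, which is itself immediate from \cref{prop:universal fib} since $\int_{\mathcal{A}}(H\circ G) = (H\circ G)^*\pi_* = G^*(H^*\pi_*) = G^*\int_{\mathcal{B}}H$ and pullback of the universal fibration $\pi_*\colon\set_{*/}\to\set$ is functorial. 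Applying this with $G = \Und_dF$ and $H = \Hom_{\Und_d\P(\cD)}(D[0],-)$ yields
$$\Und_d\int_{\cD/\C}F \;\cong\; (\Und_dF)^*\int_{\Und_d\P(\cD)}\Hom_{\Und_d\P(\cD)}(D[0],-) \;=\; (\Und_dF)^*(\Und_d\P(\cD))_{D[0]/} \;=\; \Und_d\bigl(\Und(F)^*\pi_*\bigr),$$
the last step again using that pullbacks in $\Fun(\cD^{op},\cat)$ are pointwise. Naturality in $d$ is then a matter of unwinding \cref{def:intCD}: on $f\colon d\to d'$ both sides send a pair $(c,x)$ with $x\in F(c)[d']$ to $(c,F(c)[f](x))$, which is exactly the action of the restriction functor of the pullback $\Und(F)^*\pi_*$; naturality in $F$ (hence agreement of $\Und(-)^*\pi_*$ and $\int_{\cD/\C}$ on morphisms) follows from the same pointwise identification together with the universal property of the pullback, which determines the comparison map uniquely. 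Therefore $\Und(-)^*\pi_* = \int_{\cD/\C}$, and the asserted equivalence onto $\opGroth_{\cD/\C}$ is precisely \cref{the:groth cd ff}.

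I expect the only real obstacle to be bookkeeping rather than mathematics: keeping the restriction functors along morphisms of $\cD$ straight so that the pointwise isomorphisms glue to an isomorphism of $\cD$-categories, and verifying that the resulting comparison is natural in $F$. Everything reduces, level by level, to the classical universality statement (\cref{prop:universal fib}) and the functoriality of pullback, so no new ideas should be needed beyond careful use of the explicit formulas in \cref{def:intCD}.
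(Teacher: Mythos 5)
The paper states this proposition without giving an explicit proof (it is presented as the $\cD$-enriched generalization of \cref{prop:universal fib}). Your proof fills in exactly the level-wise argument that is implicit there: reduce to each $d$ via full faithfulness of $\Und$ and pointwise pullbacks, identify $\Und_d$ of the universal opfibration as the ordinary under-category projection by \cref{ex:intcd rep}/\cref{ex:int hom under}, and invoke the classical pasting law $\int_{\mathcal{A}}(H\circ G)\cong G^*\int_{\mathcal{B}}H$ (itself a consequence of \cref{prop:universal fib}); this is correct and is the intended argument.
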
 

 Notice, again $\cD$-Grothendieck fibrations are stable under pullback (\cref{lemma:pullback nlfib}) and applying \ref{eq:pseudofunctor} level-wise we obtain the pseudo-functor 
$$
\opGroth_{\cD/-}: \cat_{\P(\cD)} \to  \Fun(\cD^{op},\widehat{\cat}),
$$
and again applying \cite[Theorem B1.3.6]{johnstone2002elephanti} leads to the following definition analogous to \cref{def:opgroth}.

\begin{defone}
	Let $\opGroth_\cD$ be the category with objects $\cD$-Grothendieck opfibrations and morphisms pullback squares and notice it comes with a target projection functor to $\cat_{\P(\cD)}$. 
\end{defone}

We now have the following result generalizing \cref{prop:grothendieck fiber}

\begin{propone} \label{prop:grothendieck fiber cd}
	The pullback functor 
	$$\Und(-)^*\pi_*: (\cat_{\P(\cD)})_{/\P(\cD)} \to \opGroth_\cD$$
	is an equivalence of $\P(\cD)$-enriched functor over $\cat_{\P(\cD)}$. 
\end{propone}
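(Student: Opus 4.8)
The plan is to reproduce the structure of the proof of \cref{prop:grothendieck fiber}: exhibit both $(\cat_{\P(\cD)})_{/\P(\cD)}$ and $\opGroth_\cD$ as fibrations over $\cat_{\P(\cD)}$, check that $\Und(-)^*\pi_*$ is a $\P(\cD)$-enriched morphism of these fibrations, and then invoke the $2$-categorical Grothendieck construction to reduce to a fiber-wise statement that has already been proved. First I would note that the projection $(\cat_{\P(\cD)})_{/\P(\cD)} \to \cat_{\P(\cD)}$ is a Grothendieck fibration whose cartesian lifts are given by precomposition, so that its fiber over a $\P(\cD)$-enriched category $\C$ is the functor category $\uFun(\C,\P(\cD))$ with the $\P(\cD)$-enrichment of \cref{ex:psh}; by construction $\opGroth_\cD \to \cat_{\P(\cD)}$ is the fibration classified by the pseudo-functor $\opGroth_{\cD/-}$, so its fiber over $\C$ is $\opGroth_{\cD/\C}$ with the $\P(\cD)$-enrichment inherited from \cref{def:dcat}. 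Both enrichments are induced by the evident tensor with $\cD$-sets (level-wise product in the presheaf direction), and $\Und(-)^*\pi_*$ preserves this tensor because pullbacks commute with finite products; hence, by \cref{lemma:pcd enriched functor}, $\Und(-)^*\pi_*$ is a $\P(\cD)$-enriched functor over $\cat_{\P(\cD)}$ and a morphism of the underlying fibrations.

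Next I would apply \cite[Theorem B1.3.6]{johnstone2002elephanti}, whose $2$-functoriality says that a morphism of fibrations over a common base is an equivalence precisely when it is a fiber-wise equivalence; the $\P(\cD)$-enriched refinement is immediate since the enrichment is pointwise over the base. It therefore suffices to show that for each $\C$ the induced functor on fibers
$$\uFun(\C,\P(\cD)) \longrightarrow \opGroth_{\cD/\C}$$
is a $\P(\cD)$-enriched equivalence. But by \cref{prop:universal fib cd} this functor is exactly $\int_{\cD/\C}$, which by \cref{the:groth cd ff} is a fully faithful $\P(\cD)$-enriched functor with essential image all of $\opGroth_{\cD/\C}$; being fully faithful and essentially surjective it is a $\P(\cD)$-enriched equivalence, and we are done.

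The part I expect to require the most care is bookkeeping the $\P(\cD)$-enrichment coherently across all fibers simultaneously: \cref{lemma:intcd enriched} and \cref{the:groth cd ff} supply the enrichment of $\int_{\cD/\C}$ for a fixed $\C$, whereas the statement concerns an enrichment of the global functor $\Und(-)^*\pi_*$. This gap is closed by checking that the global tensor with an object of $\P(\cD)$ (acting in the set direction) is preserved by $\Und(-)^*\pi_*$ and restricts on each fiber to the tensors used in \cref{ex:psh} and \cref{def:dcat} — which follows from the compatibility of pullback with products together with the computation of $\int_{\cD/\C}$ on constant functors in \cref{ex:intcd constant}. Once this is in place the result is a formal consequence of the fiber-wise equivalence established in \cref{the:groth cd ff}.
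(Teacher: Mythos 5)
Your proposal is correct and follows essentially the same route as the paper's own proof: both reduce to a fiber-wise statement via Johnstone's $2$-categorical Grothendieck construction \cite[Theorem B1.3.6]{johnstone2002elephanti}, identify the fiber-wise functor as $\int_{\cD/\C}$ using \cref{prop:universal fib cd}, and conclude with \cref{the:groth cd ff}, while the $\P(\cD)$-enrichment is handled exactly as in the paper by noting that the level-wise tensor is preserved under pullback.
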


\begin{proof} 
	First of all for a $\cD$-set $X$ and $\cD$-Grothendieck opfibration $\D \to \C$, the composition $X \times \D \to \C$ is again a $\cD$-Grothendieck opfibration. Hence the tensor on $\Fun(\cD^{op},\Fun([1],\cat))$ (as described in \cref{def:dcat}) restricts to a tensor on $\opGroth_\cD$, making it into a $\P(\cD)$-enriched category. Moreover, pulling back along $\pi_*$ respects products with $\cD$-sets and so the functor $\Und(-)^*\pi_*$ is $\P(\cD)$-enriched.
	
	Using the same argument as in \cref{prop:grothendieck fiber} the equivalence follows from \cref{the:groth cd ff} and \cref{prop:universal fib cd}.
\end{proof}

\begin{remone} \label{rem:universal opfib}
 \cref{prop:grothendieck fiber cd} implies that for $\cD$-Grothendieck opfibration $\D \to \Und\C$, there is a functor of $\P(\cD)$-enriched categories $F:\C \to \P(\cD)$ and a pullback square of $\cD$-categories of the form
 \begin{center}
 	\pbsq{\D}{\P(\cD)_{D[0]/}}{\Und\C}{\Und\P(\cD)}{}{}{\pi_*}{\Und F}
 \end{center}
unique up to equivalence.
\end{remone} 
 
\subsection{Grothendieck Construction for \texorpdfstring{$\sP(\cD)$}{sP(D)}-Enriched Categories}\label{subsec:enriched groth}
 We are now in a position to generalize the construction from $\P(\cD)$-enriched categories to $\sP(\cD)$-enriched categories, which is the goal of this subsection. Notice, there is an isomorphism of categories $\sP(\cD) \cong \P(\cD\times\DD)$ and so we could simply use that to generalize all our previous constructions from $\P(\cD)$-enriched categories to $\sP(\cD)$-enriched categories. However, the additional simplicial direction should help us obtain homotopical properties and in particular construct model structures and Quillen equivalences, which requires us to adjust certain constructions.
 
 Following \cref{lemma:undcat} define the functor 
 	$$\Und: \cat_{\sP(\cD)} \to \Fun(\cD^{op} \times \DD^{op},\cat).$$
  For the next definition we use the fact that we have an (unenriched) isomorphism of categories $\sP(\cD) \cong \P(\cD\times\DD)$.
 
 \begin{defone}
 	Let $\C$ be a small $\sP(\cD)$-enriched category.
 	Define $\sint_{\cD/\C}:\Fun(\C,\sP(\cD)) \to \sP(\cD\times\DD)_{/N_\cD\C}$ as the composition of $\int_{\cD\times\DD/\C}$ and the nerve functor.
 \end{defone}
 
  \begin{remone}  \label{rem:sint computation}
 	By direct computation and \cref{def:intCD}, the $\cD$-simplicial space $\sint_{\cD/\C} G$ is level-wise equal to 
 	$$(\sint_{\cD/\C} G)[d,k]_l = \coprod_{c_0 \to \cdots \to c_k \in \Und_{d,l}\C} \Hom_{\Und_{d,l}\sP(\cD)}(D[0],G(c_0))$$
 	with the map $(\sint_{\cD/\C} G)[d,k]_l \to \Und\C[d,k]_l$ being the evident projection to $c_0 \to ... \to c_k$.
 	
 	Alternatively, we can only fix $k$ and observe that we have an equality of $\cD$-spaces 
 	$$(\sint_{\cD/\C} G)[-,k] = \coprod_{c_0,...,c_k \in \Obj_\C} G(c_0) \times \uMap(c_0,c_1) \times ... \times \uMap(c_{k-1},c_k)$$
 \end{remone}

 The computation has the following valuable implication regarding $\sint_{\cD/\C}$.
 
 \begin{lemone}\label{lemma:sint cdleft}
 	The map $\sint_{\cD/\C} F \to N_\cD\C$ is a $\cD$-left morphism (\cref{def:cdleft map}) and so in particular the injective fibrant replacement is a $\cD$-left fibration (\cref{lemma:cdleft map}).
 \end{lemone}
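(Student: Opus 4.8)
The plan is to verify the defining condition of a $\cD$-left morphism (\cref{def:cdleft map}) directly, using the explicit level-wise description of $\sint_{\cD/\C}F$ recorded in \cref{rem:sint computation}. Fix an object $d$ in $\cD$; it suffices to show that $\Und_d\sint_{\cD/\C}F \to \Und_dN_\cD\C$ is a left morphism of simplicial spaces, i.e. (by \cref{def:left map}) that for every $k \ge 0$ the square with vertices $(\sint_{\cD/\C}F)[d,k]$, $N_\cD\C[d,k]$, $(\sint_{\cD/\C}F)[d,0]$, $N_\cD\C[d,0]$, horizontal maps the structure map $p$, and vertical maps $\ordered{0}^*$, is a homotopy pullback square of spaces.

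The first step is to check that this square is in fact a \emph{strict} pullback — indeed that the comparison map from $(\sint_{\cD/\C}F)[d,k]$ to the pullback is an isomorphism of spaces. Using the second description in \cref{rem:sint computation}, for each fixed $k$ the map $(\sint_{\cD/\C}F)[-,k] \to N_\cD\C[-,k]$ is, summand by summand over chains $c_0 \to \cdots \to c_k$ of objects of $\C$, the projection $F(c_0)\times\uMap(c_0,c_1)\times\cdots\times\uMap(c_{k-1},c_k) \to \uMap(c_0,c_1)\times\cdots\times\uMap(c_{k-1},c_k)$, while $\ordered{0}^*$ retains only the object $c_0$ together with the $F(c_0)$-coordinate. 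Chasing these identifications shows the pullback of $N_\cD\C[d,k] \to N_\cD\C[d,0] \leftarrow (\sint_{\cD/\C}F)[d,0]$ is canonically $\coprod_{c_0 \to \cdots \to c_k} F(c_0)[d] \times \prod_i \uMap(c_i,c_{i+1})[d]$, which is exactly $(\sint_{\cD/\C}F)[d,k]$. The only mildly delicate point here is the bookkeeping of the action of $\ordered{0}^*$ on the summand indices, and this is purely formal.

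Next I would observe that $N_\cD\C[d,0] = \Obj_\C$ is a discrete space: by \cref{prop:nervesnset ff} the $\cD$-set $N_\cD\C[-,0]$ is constant, and concretely $N_\cD\C[0] = \coprod_{X_0}\ast$ since it is an empty product of enriched hom objects. Over a discrete base a strict pullback automatically computes the homotopy pullback — one may replace each of the two legs fiber-wise by a level-wise Kan fibrant object, and this changes nothing up to level-wise Kan equivalence because pullback over a discrete base is just the coproduct over $\Obj_\C$ of products of fibers, and products and coproducts of simplicial sets preserve Kan equivalences. This is the same mechanism already exploited in \cref{ex:left map}, and I expect this discrete-base observation to be the only step requiring an explicit (though very short) argument; everything else is formal manipulation of the description in \cref{rem:sint computation}.

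Finally, since $d$ and $k$ were arbitrary, the square is a homotopy pullback in every case, so $\sint_{\cD/\C}F \to N_\cD\C$ is a $\cD$-left morphism; the parenthetical assertion that its injective fibrant replacement is a $\cD$-left fibration is then immediate from \cref{lemma:cdleft map}.
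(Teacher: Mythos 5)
Your proof is correct, and it is exactly the route the paper has in mind: the lemma is stated immediately after \cref{rem:sint computation}, and the paper later makes the same argument explicit in the proof of \cref{lemma:sint left quillen}, where it observes the bijection $\sint_{\cD/\C}F[d,n] \cong \sint_{\cD/\C}F[d,0] \times_{N_\cD\C[d,0]} N_\cD\C[d,n]$ and concludes from this. You are somewhat more careful than the paper in spelling out why a strict pullback over the discrete space $\Obj_\C$ is automatically a homotopy pullback; the cleanest version of that step is that any map of spaces to a discrete simplicial set is a Kan fibration, but your coproduct-of-products argument reaches the same conclusion.
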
 
  
 Let us do a sample computation similar to \cref{ex:intcd rep} 
 
\begin{exone}\label{ex:sintcd rep}
	Let $\C$ be a small $\sP(\cD)$-enriched category and $c$ an object in $\C$. Then we can define the corepresentable functor $\uHom_\C(c,-): \C \to \sP(\cD)$, which takes an object $c'$ to the enriched hom object $\uHom(c,c')$. Using an argument similar to \cref{ex:intcd rep} we have 
	$$ \sint_{\cD/\C}\uHom_\C(c,-) = N_\cD\C_{c/}.$$
\end{exone}

\begin{exone} \label{ex:sintcd constant}
	Let $\C$ be a small $\sP(\cD)$-enriched category and $G$ an object in $\sP(\cD)$. Similar to \cref{ex:intcd constant} we have $\sint_{\cD/\C} \{G\} = N_\cD\C \times \{G\} \xrightarrow{\pi_1} \C$.
\end{exone}

 We also have the following useful observation.
 
 \begin{lemone} \label{lemma:sint colimits}
 	Let $\C$ be a $\sP(\cD)$-enriched category. 
 	The functor $\sint_{\cD/\C}:\uFun(\C,\sP(\cD)) \to \sP(\cD\times\DD)_{/N_\cD\C}$ is $\sP(\cD)$-enriched and commutes with $\sP(\cD)$-colimits and set-enriched limits. 
 \end{lemone}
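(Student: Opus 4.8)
The plan is to deduce all three assertions --- the $\sP(\cD)$-enrichment, preservation of $\sP(\cD)$-colimits, and preservation of set-enriched limits --- from the explicit point-wise formula for $\sint_{\cD/\C}$ recorded in \cref{rem:sint computation}, together with the strict ($\P(\cD\times\DD)$-level) results of \cref{subsec:strict groth} and the identification $\sP(\cD)\cong\P(\cD\times\DD)$.

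\emph{Enrichment.} Both $\uFun(\C,\sP(\cD))$ (by \cref{def:ufuncd}; see also \cref{rem:ss analogue}) and the over-category $\sP(\cD\times\DD)_{/N_\cD\C}$ are tensored over $\sP(\cD)$, the tensor being a level-wise product in each case. First I would check, exactly as in \cref{ex:sintcd constant}, that $\sint_{\cD/\C}$ preserves the tensor: tensoring $G$ by a $\cD$-space $X$ only multiplies the ``$G(c_0)$-factor'' appearing in \cref{rem:sint computation}, which is precisely the effect of tensoring $\sint_{\cD/\C}(G)$ by $X$ in the over-category, so $\sint_{\cD/\C}(G\otimes X)\cong\sint_{\cD/\C}(G)\otimes X$ naturally in $G$. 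By \cref{lemma:pcd enriched functor} (applied with $\cD\times\DD$ in place of $\cD$) this makes $\sint_{\cD/\C}$ a $\sP(\cD)$-enriched functor. Equivalently, $\sint_{\cD/\C}$ is the composite of $\int_{\cD\times\DD/\C}$, which is $\sP(\cD)$-enriched by \cref{lemma:intcd enriched}, with the level-wise categorical nerve, which is $\sP(\cD)$-enriched because $N\colon\cat\to\sset$ preserves finite products and coproducts.

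\emph{Colimits.} Colimits in $\uFun(\C,\sP(\cD))$ are computed point-wise, and colimits in $\sP(\cD\times\DD)_{/N_\cD\C}$ are created by the forgetful functor to $\sP(\cD\times\DD)$, hence computed in each simplicial degree separately. By \cref{rem:sint computation}, in simplicial degree $k$ the functor $G\mapsto(\sint_{\cD/\C}G)[-,k]$ is a coproduct, indexed by sequences $c_0,\dots,c_k$ of objects of $\C$, of the functors $G\mapsto G(c_0)\times\uMap(c_0,c_1)\times\cdots\times\uMap(c_{k-1},c_k)$; each of these preserves colimits, since evaluation at $c_0$ does, $A\times(-)$ preserves colimits in the Cartesian closed category $\sP(\cD)$, and coproducts preserve colimits. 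Hence the underlying functor of $\sint_{\cD/\C}$ preserves small colimits, and since $\sint_{\cD/\C}$ is $\sP(\cD)$-enriched and preserves the tensor, \cref{the:enriched colimit} shows it preserves $\sP(\cD)$-colimits.

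\emph{Limits.} Here the cleanest route is formal: $\int_{\cD\times\DD/\C}$ preserves set-enriched limits by \cref{the:groth cd ff}, the level-wise categorical nerve is a right adjoint and hence preserves limits, a limit-preserving functor induces limit-preserving functors on slice categories, and $\sint_{\cD/\C}$ is the composite of these, so it preserves set-enriched limits. This can also be checked directly from \cref{rem:sint computation}: in degree $k$ the projection $(\sint_{\cD/\C}G)[-,k]\to N_\cD\C[-,k]$ is the disjoint union over $c_0,\dots,c_k$ of the projections $G(c_0)\times B\to B$, where $B=\uMap(c_0,c_1)\times\cdots\times\uMap(c_{k-1},c_k)$; since $\sP(\cD)$ is extensive a limit over $N_\cD\C[-,k]$ splits over these components, and a limit over $B$ of projections of this shape is $\bigl(\lim G(c_0)\bigr)\times B$, which matches $(\sint_{\cD/\C}\lim G)[-,k]$ because evaluation at $c_0$ commutes with point-wise limits. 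I expect this to be the only step that is not purely formal: the point is that the limits in the target must be formed in the over-category, which is exactly what makes ``product with a fixed object'' behave here --- such a functor does \emph{not} preserve limits unsliced.
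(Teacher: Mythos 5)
Your proof is correct and, for the enrichment and colimits parts, is essentially the paper's argument: check tensor preservation via \cref{ex:sintcd constant}, invoke \cref{lemma:pcd enriched functor} and then \cref{the:enriched colimit}, and verify the underlying functor preserves colimits by reading off the level-wise formula in \cref{rem:sint computation} (evaluation, finite products with a fixed object, and a coproduct over a fixed index set all commute with colimits).

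Where you and the paper diverge is the set-enriched limit claim. The paper's proof concludes by saying the relevant level-wise functor is built from evaluation and corepresentables, ``all of which are colimit preserving'' --- it never actually addresses limits, and the naive reading would even seem to fail, since a coproduct over a fixed index set does not preserve limits in the unsliced category. You are right to flag this and to observe that it is the over-category structure that rescues the statement. Your formal route is the cleanest: $\sint_{\cD/\C}$ is by definition the composite of $\int_{\cD\times\DD/\C}$ (limit-preserving by \cref{the:groth cd ff}) with the level-wise nerve (a right adjoint), and a limit-preserving functor $F\colon\cC\to\cD$ induces a limit-preserving functor $\cC_{/X}\to\cD_{/FX}$ because limits in a slice are computed as certain limits in the base. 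Your direct argument via extensivity --- a limit over $N_\cD\C[d,k]_l$ in the slice splits along components of the coproduct, where the stray copy of the mapping-space factor $B$ becomes harmless because it is the base --- is the concrete version of the same observation. In short: your proof fills a genuine gap that the paper's proof leaves open, and correctly isolates the one non-formal point (why limits sliced over the nerve are well behaved under the coproduct formula).
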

 
 \begin{proof} 
    By \cref{lemma:pcd enriched functor}, to observe that $\sint_{\cD/\C}$ is $\sP(\cD)$-enriched it suffices to observe that for every $F$ in $\sP(\cD)$ we have $\sint_{\cD/\C} \{F\} \times G = F \times \sint_{\cD/\C} G$, which simply follows from \cref{ex:sintcd constant}. As we have shown preservation of tensors, by \cref{the:enriched colimit}, in order to show $\sint_{\cD/\C}$ is $\sP(\cD)$-colimit preserving it suffices to prove that the underlying functor preserves colimits.  

 	Now, recall that (co)limits in $\sP(\cD\times\DD)_{/N_\cD\C}$ are evaluated level-wise. Fix $(d,k,l)$ in $\cD \times \DD \times \DD$. Then the functor 
 	$$\sint_{\cD/\C}(-)[d,k]_l: \Fun(\C,\sP(\cD)) \to \set$$ 
 	takes a functor $G$ to the set $\coprod_{c_0 \to \cdots \to c_k \in \Und_{d,l}\C} \Hom_{\Und_{d,l}\sP(\cD)}(D[0],G(c_0))$ (\cref{rem:sint computation}), which is a coproduct of a  composition of an evaluation functor (at $c_0$) and a corepresentable functor, all of which are colimit preserving.
 \end{proof}

 We want to show that $\sint_{\cD/\C}$ is a left Quillen functor, which requires a right adjoint.
 
 \begin{notone} \label{not:undcs}
 	In \cref{def:undc} we introduced the $\cD$-category $\Und\C_{/-}$ and similarly $\Und\C_{-/}$ for a given $\P(\cD)$-enriched category $\C$. Using the analogous construction for $\sP(\cD)$-enriched category $\C$ and applying the nerve gives us $\cD$-simplicial spaces $N\Und\C_{-/}$ ($N\Und\C_{/-}$). Recall, in \cref{def:nervesnset} we defined the $\cD$-nerve $N_\cD: \cat_{\sP(\cD)} \to \sP(\cD\times\DD)$, which in particular satisfies $N_\cD=N\Und$ (similar to what we observed in \cref{lemma:undcat}). Hence, we will simply denote the $\cD$-simplicial space $N\Und\C_{/-}$ by $N_\cD\C_{/-}$ and $N\Und\C_{-/}$ by $N_\cD\C_{-/}$.
 \end{notone}

\begin{remone}\label{rem:nDCdash} 
 Notice, similar to \cref{def:undc}, we get $\sP(\cD)$-enriched functors $N_\cD\C_{-/}: \C^{op} \to \sP(\cD\times\DD)_{/N_\cD\C}$ and  $N_\cD\C_{/-}: \C \to \sP(\cD\times\DD)_{/N_\cD\C}$
\end{remone}

We can now define $\sH_{\cD/\C}$ analogous to $\H_{\cD/\C}$ (\cref{def:hdc}).
 
 \begin{defone}
 	Define the $\sP(\cD)$-enriched functor $\sH_{\cD/\C}: \sP(\cD\times\DD)_{/N_\cD\C} \to \Fun(\C,\sP(\cD))$ as the adjoint (using \cref{the:enriched prod hom adj}) to the functor 
 	\begin{center}
 		\begin{tikzcd}
 			\C \times \sP(\cD\times\DD)_{/N_\cD\C} \arrow[dr, "(N_\cD\C_{-/})^{op} \times \id "'] \arrow[rr] & &  \sP(\cD)\\
 			& (\sP(\cD\times\DD)_{/N_\cD\C})^{op} \times  \sP(\cD\times\DD)_{/N_\cD\C} \arrow[ur, "\uMap_{/\sP(\cD\times\DD)_{/N_\cD\C}}(-\comma-)"']
		\end{tikzcd},
 	\end{center}
 where the first map is the opposite of $\Und\C_{-/}$ (\cref{rem:nDCdash}) and the second is the enriched mapping object (\cref{rem:enriched mapping}).
 	Concretely, for a given map $Y\to N_\cD\C$ we have 
 	\begin{equation} \label{eq:sHDc value}
 		\sH_{\cD/\C}(Y \to N_\cD\C)(c)=\uMap_{/N_\cD\C}(N_\cD\C_{c/},Y).
 	\end{equation}
 \end{defone}

We are finally in a position to prove that the functors $(\sint_{\cD/\C},\sH_{\cD/\C})$ give us a $\sP(\cD)$-enriched adjunction between $\Fun(\C,\sP(\cD))$ and $\sP(\cD\times\DD)_{/N_\cD\C}$.

\begin{lemone} \label{lemma:sintsH adj}
	Let $\C$ be a small $\sP(\cD)$-enriched category. There is a $\sP(\cD)$-enriched adjunction
	\begin{center}
		\begin{tikzcd}[row sep=0.5in, column sep=0.9in]
			\uFun(\C,\sP(\cD)) \arrow[r, shift left = 1.8, "\sint_{\cD/\C}"] & 
			\sP(\cD\times\DD)_{/N\C} \arrow[l, shift left=1.8, "\sH_{\cD/\C}", "\bot"']
		\end{tikzcd}
	\end{center}
\end{lemone}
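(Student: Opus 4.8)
The plan is to follow the proof of \cref{the:groth cd ff} essentially verbatim, now over $\sP(\cD)$ rather than over $\P(\cD)$, feeding in the homotopically refined pieces already in place. The relevant inputs are: $\sint_{\cD/\C}$ is a $\sP(\cD)$-enriched functor whose underlying functor preserves all small colimits (\cref{lemma:sint colimits}); it commutes with the tensor by $\cD$-simplicial spaces (\cref{ex:sintcd constant}); its value on a corepresentable is a slice nerve, $\sint_{\cD/\C}\uHom_\C(c,-)=N_\cD\C_{c/}$ (\cref{ex:sintcd rep}); and $\sH_{\cD/\C}$ is $\sP(\cD)$-enriched by construction. Since the underlying categories of $\uFun(\C,\sP(\cD))$ and $\sP(\cD\times\DD)_{/N_\cD\C}$ are locally presentable and the corepresentables $\uHom_\C(c,-)$ form a strong generator of $\uFun(\C,\sP(\cD))$ by the enriched co-Yoneda lemma, it suffices to produce a natural isomorphism on corepresentables and then let colimit-preservation do the rest.

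Concretely, I would first exhibit, naturally in $c\in\C$ and in $p\colon Y\to N_\cD\C$, an isomorphism of $\cD$-simplicial spaces
\[
\uMap_{/N_\cD\C}\bigl(\sint_{\cD/\C}\uHom_\C(c,-),p\bigr)\;\cong\;\uNat\bigl(\uHom_\C(c,-),\sH_{\cD/\C}(p)\bigr).
\]
For the left-hand side, \cref{ex:sintcd rep} rewrites it as $\uMap_{/N_\cD\C}(N_\cD\C_{c/},Y)$, which is exactly $\sH_{\cD/\C}(p)(c)$ by \eqref{eq:sHDc value}. For the right-hand side, the $\sP(\cD)$-enriched Yoneda lemma (the space-level version of \cref{lemma:enriched yoneda}, available by \cref{rem:ss analogue}) identifies $\uNat(\uHom_\C(c,-),\sH_{\cD/\C}(p))$ with $\sH_{\cD/\C}(p)(c)$ as well. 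Composing the two identifications gives the claimed isomorphism. Because $\sint_{\cD/\C}$ preserves colimits and the corepresentables generate, this natural isomorphism on corepresentables promotes to the unenriched adjunction $\sint_{\cD/\C}\dashv\sH_{\cD/\C}$, comparing $\sH_{\cD/\C}$ against the honest right adjoint supplied by the adjoint functor theorem on the corepresentable probes, exactly as in \cref{the:groth cd ff}. Finally, since both functors are $\sP(\cD)$-enriched and the left adjoint commutes with the tensor (\cref{ex:sintcd constant}), \cref{lemma:enriched adjunction via tensors} upgrades this to a $\sP(\cD)$-enriched adjunction.

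The step I expect to be the main obstacle — though it is bookkeeping rather than a genuine difficulty — is the passage from ``natural isomorphism on the corepresentables $\uHom_\C(c,-)$'' to an actual adjunction: one must check that $\sint_{\cD/\C}$ preserves enough colimits (which it does, by the explicit level-wise formula of \cref{rem:sint computation} underlying \cref{lemma:sint colimits}) and that the comparison with the right adjoint is carried out against the full family of tensored corepresentables $\uHom_\C(c,-)\otimes F[d,k]\times\Delta[l]$, so that the enriched Yoneda lemma genuinely pins down $\sH_{\cD/\C}(p)$ as a $\sP(\cD)$-valued functor. Keeping the $\sP(\cD)$-enrichment of the slice $\sP(\cD\times\DD)_{/N_\cD\C}$ (via $\uMap_{/N_\cD\C}$ and the tensor induced by $\VEmb$) consistent with the pointwise enrichment of $\uFun(\C,\sP(\cD))$ from \cref{ex:psh} is the only other point requiring attention; no new homotopical input is needed beyond what \cref{subsec:strict groth} and \cref{lemma:sint colimits} already provide.
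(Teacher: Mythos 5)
Your argument is essentially the paper's own proof, unpacked. The paper cites \cref{lemma:sint colimits} (that $\sint_{\cD/\C}$ is $\sP(\cD)$-enriched colimit preserving) and then invokes the free $\sP(\cD)$-enriched cocompletion property of $\uFun(\C,\sP(\cD))$ from \cref{the:enriched Kan ext} to conclude that $\sint_{\cD/\C}$ is the enriched left Kan extension along the Yoneda embedding, hence has a right adjoint whose value on $p\colon Y\to N_\cD\C$ is $\uMap_{/N_\cD\C}(N_\cD\C_{c/},Y)=\sH_{\cD/\C}(p)(c)$ by \eqref{eq:sHDc value}; you reach the same conclusion by explicitly producing the natural isomorphism on corepresentables and letting colimit-preservation and the adjoint functor theorem do the rest, then invoking \cref{lemma:enriched adjunction via tensors} for the enrichment, which is precisely the mechanism used in the parallel \cref{the:groth cd ff}. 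The only cosmetic difference is that the paper packages the ``comparison against the honest right adjoint on corepresentable probes'' step as an appeal to the free cocompletion theorem, whereas you spell it out.
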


\begin{proof}
	Notice we proved in \cref{lemma:sint colimits} that $\sint_{\cD/\C}$ is $\sP(\cD)$-enriched colimit preserving functor. Hence, the result follows from the fact that $\uFun(\C,\sP(\cD))$ is the free $\sP(\cD)$-enriched cocompletion (\cref{the:enriched Kan ext}), meaning $\sint_{\cD/\C}$ is the left Kan extension along the enriched Yoneda embedding and the right adjoint must take a map $Y \to N_\cD\C$ to a functor with value $\uMap_{/N_\cD\C}(N_\cD\C_{c/},Y)$, which by \ref{eq:sHDc value} is precisely $\sH_{\cD/\C}$. 
\end{proof}

We now move on to the next step: Constructing a second adjunction between $\Fun(\C,\sP(\cD))$ and $\sP(\cD\times\DD)_{/N_\cD\C}$, where the left (and right) adjoint goes the other way. 

\begin{defone}\label{def:sbTcd}
	Define the $\sP(\cD)$-enriched functor $\sbT_{\cD/\C}: \sP(\cD\times\DD)_{/N_\cD\C} \to \Fun(\C,\sP(\cD))$ as the adjoint (using \cref{the:enriched prod hom adj}) to the functor 
	$$ \sP(\cD\times\DD)_{/N_\cD\C} \times \C \xrightarrow{ \id \times N_\cD\C_{/-}}  \sP(\cD\times\DD)_{/N_\cD\C} \times  \sP(\cD\times\DD)_{/N_\cD\C} \xrightarrow{ - \times_{N_\cD\C} - } \sP(\cD\times\DD) \xrightarrow{\fDiag} \sP(\cD).$$
	Here the first functor is $\sP(\cD)$-enriched by \cref{rem:nDCdash}, the second by \cref{lemma:prod enriched} and the last because $\fDiag$ preserves tensors and \cref{lemma:pcd enriched functor}. In other words $\sbT_{\cD/\C}(p:Y \to N_\cD\C)(c) = \fDiag(Y \times_{N_\cD\C} N_\cD\C_{/-})$.
\end{defone}
\begin{remone}
	The definition of $\sbT_{\cD/\C}$ should be seen as the correct homotopical generalization of $\T_\C$ (\cref{prop:integral adjunctions}), as the direct generalization of $\T_\C$ to $\sP(\cD)$-enriched categories would not have the desired homotopical properties. 
\end{remone}
The description suggests an alternative characterization. Notice, for a given $\P(\cD)$-enriched category we have the following commutative diagram
\begin{center}
	\begin{tikzcd}
		\sP(\cD\times\DD)_{/N_\cD\C} \times  \sP(\cD\times\DD)_{/N_\cD\C} \arrow[d, " - \times_{N_\cD\C} - "] \arrow[r, "\fDiag \times \fDiag"] & \sP(\cD)_{/N_\cD\C} \times  \sP(\cD)_{/N_\cD\C}\arrow[d, " - \times_{N_\cD\C} - "] \\
		\sP(\cD\times\DD) \arrow[r, "\fDiag"] & \sP(\cD)
	\end{tikzcd},
\end{center}
which gives us the following alternative characterization of $\sbT_{\cD/\C}$.

\begin{lemone}
	The functor $\sP(\cD\times\DD)_{/N_\cD\C} \to \Fun(\C,\sP(\cD))$ defined as the adjoint 
	$$ \sP(\cD\times\DD)_{/N_\cD\C} \times \C \xrightarrow{ \id \times N_\cD\C_{/-}}  \sP(\cD\times\DD)_{/N_\cD\C} \times  \sP(\cD\times\DD)_{/N_\cD\C} \xrightarrow{ \fDiag \times \fDiag } $$
	$$\sP(\cD)_{/N_\cD\C} \times  \sP(\cD)_{/N_\cD\C}\xrightarrow{- \times_{N_\cD\C} -} \sP(\cD).$$
	is equal to $\sbT_{\cD/\C}$ and so in particular we have $\sbT_{\cD/\C}(p:Y \to N_\cD\C)(c) = \fDiag(Y) \times_{\fDiag(N_\cD\C)} \fDiag(N_\cD\C_{/-})$.
\end{lemone}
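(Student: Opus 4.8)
The plan is to deduce the asserted identity of functors directly from the commutative square displayed just above the statement, together with the single structural fact that $\fDiag = \Diag^*$ preserves pullbacks. So the first thing I would record is that $\Diag^* : \sP(\cD\times\DD) \to \sP(\cD)$ is precomposition along the functor $\Diag : \cD\times\DD \to \cD\times\DD\times\DD$; as such it admits both a left adjoint (left Kan extension) and the right adjoint $\Diag_*$ of \cref{def:diag}, hence preserves all small limits, and in particular pullbacks.

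Next, for a fixed object $c$ in $\C$ and a fixed map $p : Y \to N_\cD\C$, I would apply this to the defining pullback square of $Y \times_{N_\cD\C} N_\cD\C_{/c}$ to obtain a natural isomorphism
$$\fDiag\big(Y \underset{N_\cD\C}{\times} N_\cD\C_{/c}\big) \;\cong\; \fDiag(Y) \underset{\fDiag(N_\cD\C)}{\times} \fDiag(N_\cD\C_{/c}),$$
natural in $Y$ and in $c$, the latter because $N_\cD\C_{/-} : \C \to \sP(\cD\times\DD)_{/N_\cD\C}$ is functorial (indeed $\sP(\cD)$-enriched by \cref{rem:nDCdash}) and $\fDiag$ is a functor. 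This is precisely the assertion that the two composites $\sP(\cD\times\DD)_{/N_\cD\C}\times\C \to \sP(\cD)$ — namely $\id\times N_\cD\C_{/-}$ followed by $-\times_{N_\cD\C}-$ followed by $\fDiag$, versus $\id\times N_\cD\C_{/-}$ followed by $\fDiag\times\fDiag$ followed by $-\times_{\fDiag N_\cD\C}-$ — coincide, which is the commutative square recorded immediately before the lemma.

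Finally, $\sbT_{\cD/\C}$ and the functor named in the statement are obtained from these two (now seen to be equal) functors by passing to adjoints under the isomorphism of (set-enriched) functor categories of \cref{the:enriched prod hom adj}; since that passage is itself functorial and in particular carries equal functors to equal functors, the two adjoints agree. Evaluating the common functor then yields $\sbT_{\cD/\C}(p : Y \to N_\cD\C)(c) = \fDiag(Y) \times_{\fDiag(N_\cD\C)} \fDiag(N_\cD\C_{/-})$ as claimed. The only point needing any care — and the sole (mild) obstacle — is checking that every isomorphism above is natural in $c$, so that it descends through the enriched adjunction of \cref{the:enriched prod hom adj}; but this is immediate from the functoriality of $N_\cD\C_{/-}$ and of $\fDiag$, so no genuine difficulty arises.
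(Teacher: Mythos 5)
Your proof is correct and matches the paper's (implicit, essentially unstated) argument: the paper just records the commutative square and lets the lemma follow by passing to adjoints, and you correctly identify that the square itself holds because $\fDiag=\Diag^*$ is a right adjoint (indeed a functor with both adjoints) and hence preserves pullbacks, with the naturality checks being routine. No issues.
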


Notice $\sbT_{\cD/\C}$ behaves as expected.

\begin{exone}\label{ex:sbT constant}
	Let $A$ be a $\cD$-space and  $\VEmb(A) \to D[0] \to N_\cD\C$ a map of $\cD$-simplicial spaces. Then $\sbT_{\cD/\C}(\VEmb(A)) = \{A\} \times \uMap(c,-)$, where $c$ is the image of the point in $N_\cD\C$. Indeed, for a given object $d$ we have 
	$$\sbT_{\cD/\C}(\VEmb(A) \to D[0] \to N_\cD\C)(d) = \fDiag(\VEmb(A)) \times_{\fDiag(N_\cD\C)} \fDiag(N_\cD\C_{/d}) = $$
	$$A \times (D[0] \times_{\fDiag(N_\cD\C)} \fDiag(N_\cD\C_{/d})) = A \times \uMap(c,d)$$
\end{exone}

Let us look at another example.

\begin{exone} \label{ex:sbT of rep}
 Fix a map $p:F[d,n] \times \Delta[l] \to N_\cD\C$ and notice this map corresponds to a chain of $n+1$ objects $c_0 \to c_1 \to  ... \to  c_n$ in the category $\Und_d\C_l$. We want to understand the functor $\sbT_{\cD/\C}(F[d,n] \times \Delta[l] \to N_\cD\C): \C \to \sP(\cD)$. Fix an object $d'$ in $\cD$ and notice we have the equality of simplicial spaces $\Und_{d'}F[d,n] = \coprod_{\Hom_\cD(d',d)} F[n]$ and so in particular the map $\Und_{d'}F[d,n] \times \Delta[l] \to \Und_{d'}N_\cD\C$ corresponds to a map of simplicial spaces $\coprod_{\Hom_\cD(d',d)} F[n] \times \Delta[l] \to \Und_{d'}N_\cD\C$. We can now compute
 $$\Und_{d'}\sbT_{\cD/\C}(p)= \Und_{d'}\fDiag((F[d,n] \times \Delta[l]) \times_{N_\cD\C}N_\cD\C_{/-})= $$
 $$\fDiag(\coprod_{\Hom_\cD(d',d)} F[n] \times \Delta[l] \times_{N\Und_{d'}\C}N\Und_{d'}\C_{/-}) = \coprod_{\Hom_\cD(d',d)} \fDiag(F[n] \times \Delta[l] \times_{\Und_{d'}N\C} \Und_{d'}N\C_{/-})$$
 Notice again the map $F[n] \times \Delta[l] \to \Und_dN_\cD\C$ corresponds to a chain of morphisms $c_0 \to ... \to c_{n+1}$ in the category $\Und_d\C_l$. The map $<0>:F[0] \to F[n] \times \Delta[l]$ is a covariant equivalence over $N\C$ by definition and so, by \cref{lemma:very technical lemma}, for every object $c$ the map of simplicial spaces $F[0]\times_{\Und_{d'}N\C} \Und_{d'}N\C_{/c} \to F[n] \times \Delta[l] \times_{\Und_{d'}N\C} \Und_{d'}N\C_{/c}$ is a covariant equivalence and so, by \cite[Theorem 3.17]{rasekh2017left}, gives us the diagonal equivalence
 $$\coprod_{\Hom_\cD(d',d)} \Map_{\Und_{d'}\C}(c_0,-) \xrightarrow{ \ \simeq \ } \coprod_{\Hom_\cD(d',d)} \fDiag(F(n) \times \Delta[l] \times_{\Und_{d'}N\C} \Und_{d'}N\C_{/-}).$$
  Using the same argument as in \cref{rem:ps}, this equivalence has an inverse that takes chains of morphisms $c_0 \to c_1 \to ... \to c$ to the composition $\Map_{\Und_{d'}\C}(c_0,c)$ giving us a diagonal equivalence
 $$\coprod_{\Hom_\cD(d',d)} \fDiag(F(n) \times \Delta[l] \times_{\Und_{d'}N\C} \Und_{d'}N\C_{/-}) \overset{\simeq}{\twoheadrightarrow} \coprod_{\Hom_\cD(d',d)} \Map_{\Und_{d'}\C}(c_0,-).$$
 However, the space $\Map_{\Und_{d'}\C}(c_0,-)$ inside the coproduct does not depend on the choice of morphism in $\Hom_\cD(d',d)$. Hence we can separate them and get
 $$\coprod_{\Hom_\cD(d',d)} \Map_{\Und_{d'}\C}(c_0,-) = \Hom_{\cD}(d',d) \times \Map_{\Und_{d'}\C}(c_0,-) =F[d] \times \uMap_\C(c_0,-)[d],$$
 which proves that we have a level-wise equivalences of $\cD$-space valued functors 
 $$F[d] \times \uMap_\C(c_0,-) \xrightarrow{\simeq} \sbT_{\cD/\C}(p) \xrightarrow{\simeq} F[d] \times \uMap_\C(c_0,-)$$
 composing to the identity.
\end{exone}

 The goal is to show that $\sbT_{\cD/\C}$ is part of an adjunction.
 
  \begin{lemone}\label{lemma:sbTsbI adj}
 	Let $\C$ be a small $\sP(\cD)$-enriched category. We have a $\sP(\cD)$-enriched adjunction
 	\begin{center}
 		\begin{tikzcd}[row sep=0.5in, column sep=0.9in]
 			\sP(\cD\times\DD)_{/N\C}  \arrow[r, shift left=1.8, "\sbT_{\cD/\C}"] &
 			\uFun(\C,\sP(\cD)) \arrow[l, shift left=1.8, "\sbI_{\cD/\C}", "\bot"']
 		\end{tikzcd}
 	\end{center}
 \end{lemone}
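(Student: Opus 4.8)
The plan is to mimic the proof of \cref{lemma:sintsH adj}, with the following adjustment: the category $\sP(\cD\times\DD)_{/N_\cD\C}$ is not literally a free $\sP(\cD)$-cocompletion, so instead of invoking a universal property of $\uFun(\C,\sP(\cD))$ I would produce $\sbI_{\cD/\C}$ from the adjoint functor theorem. Concretely, the steps are: (i) observe that $\sbT_{\cD/\C}$ is $\sP(\cD)$-enriched (this was already recorded in \cref{def:sbTcd}) and preserves all colimits; (ii) conclude from the fact that both $\sP(\cD\times\DD)_{/N_\cD\C}$ and the underlying category of $\uFun(\C,\sP(\cD))$ are locally presentable that $\sbT_{\cD/\C}$ has an ordinary right adjoint, which we name $\sbI_{\cD/\C}$; (iii) verify that $\sbT_{\cD/\C}$ preserves tensors and use this to promote $\sbT_{\cD/\C}\dashv\sbI_{\cD/\C}$ to a $\sP(\cD)$-enriched adjunction.

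For step (i) I would use the formula $\sbT_{\cD/\C}(p\colon Y\to N_\cD\C)(c)=\fDiag(Y\times_{N_\cD\C}N_\cD\C_{/c})$ together with the fact that colimits in $\uFun(\C,\sP(\cD))$ are computed objectwise, since $\sP(\cD)$ is cocomplete. It then suffices to see that, for fixed $c$, the functor $Y\mapsto\fDiag(Y\times_{N_\cD\C}N_\cD\C_{/c})$ is cocontinuous: $\fDiag=\Diag^*$ is a left adjoint by \cref{def:diag}, while $Y\mapsto Y\times_{N_\cD\C}N_\cD\C_{/c}$ is cocontinuous because colimits in the presheaf category $\sP(\cD\times\DD)$ are universal (equivalently, $\sP(\cD\times\DD)$ is locally Cartesian closed, so pullback along any map has a right adjoint).

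For step (iii) I would first record that $\sbT_{\cD/\C}$ preserves tensors: for $A$ in $\sP(\cD)$ the tensor $(p\colon Y\to N_\cD\C)\otimes A$ has total space $\VEmb(A)\times Y$, and since $\fDiag$ preserves products (being a precomposition functor) and satisfies $\fDiag\VEmb=\id$ (as used in \cref{ex:sbT constant}), one obtains $\sbT_{\cD/\C}((p)\otimes A)(c)\cong A\times\sbT_{\cD/\C}(p)(c)=(\sbT_{\cD/\C}(p)\otimes A)(c)$. Evaluating hom-objects at a representable $\cD$-space $F[d,k]$ and applying the ordinary adjunction then yields a chain of natural isomorphisms $\uMap_{/N_\cD\C}(p,\sbI_{\cD/\C}(F))[d,k]=\Hom_{/N_\cD\C}(p\otimes F[d,k],\sbI_{\cD/\C}(F))\cong\Nat(\sbT_{\cD/\C}(p)\otimes F[d,k],F)=\uNat(\sbT_{\cD/\C}(p),F)[d,k]$. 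Because this identification of hom-$\cD$-spaces is induced by the ordinary adjunction it is automatically compatible with composition, and hence — exactly as in the proof of \cref{lemma:enriched adjunction via tensors}, but with $\sbI_{\cD/\C}$ constructed first and enriched afterwards — it both endows $\sbI_{\cD/\C}$ with a $\sP(\cD)$-enrichment and exhibits $\sbT_{\cD/\C}\dashv\sbI_{\cD/\C}$ as a $\sP(\cD)$-enriched adjunction.

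The main obstacle I expect is bookkeeping rather than conceptual. I must pin down that the $\sP(\cD)$-tensor on the slice $\sP(\cD\times\DD)_{/N_\cD\C}$ really is ``multiply the total space by $\VEmb(A)$'' (so that the tensor-preservation computation is valid), and I must argue cleanly that a tensor-preserving $\sP(\cD)$-enriched left adjoint automatically has its abstractly-produced right adjoint enriched, with an enriched adjunction — both are routine extensions of \cref{lemma:enriched adjunction via tensors}. An alternative would be to describe $\sbI_{\cD/\C}$ by an explicit formula (as a right Kan extension along the Yoneda embedding, after computing $\sbT_{\cD/\C}$ on representables as in \cref{ex:sbT of rep}), but the adjoint-functor-theorem route is both shorter and cleaner.
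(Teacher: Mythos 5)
Your proposal is correct and follows essentially the same outline as the paper's proof: show $\sbT_{\cD/\C}$ is cocontinuous by factoring evaluation at $c$ through the pullback functor $-\times_{N_\cD\C}N_\cD\C_{/c}$ and $\fDiag$, produce the right adjoint by abstract nonsense (the paper phrases this as a left Kan extension from the slice of representables over $N_\cD\C$, you invoke local presentability — the content is identical), and upgrade to an enriched adjunction from tensor-preservation together with \cref{lemma:enriched adjunction via tensors}. Your remark that $\sbI_{\cD/\C}$ is produced unenriched and only afterwards acquires its enrichment via the tensor-preservation isomorphism is, if anything, slightly more careful than the paper's literal citation of \cref{lemma:enriched adjunction via tensors} (which is stated for a pair of already-enriched functors), but since the proof of that lemma never actually uses the enrichment of the right adjoint, both arguments are sound.
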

 
 \begin{proof}
 	We first show it is an unenriched adjunction and then prove the adjunction is in fact enriched. The functor $\sbT_{\cD/\C}: \sP(\cD\times\DD)_{/N_\cD\C} \to \Fun(\C,\sP(\cD))$ is colimit preserving. Indeed, colimits in $\Fun(\C,\sP(\cD))$ are evaluated level-wise and for a given object $c$ the functor $\sbT_{\cD/\C}(-)(c): \sP(\cD\times\DD)_{/N_\cD\C} \to \sP(\cD)$ is given via the composition of the pullback functor $- \times_{N_\cD\C} N_\cD\C_{/c}$ and $\fDiag$, both of which are colimit preserving.
 	
 	This implies that $\sbT_{\cD/\C}$ is given as the left Kan extension of the functor $(\cD \times \DD \times \DD)_{/N\C} \to \sP(\cD\times\DD)_{/N\C}$, which takes a map $F[d,k] \times \Delta[l] \to N\C$ to $\sbT_{\cD/\C}(F[d,k] \times \Delta[l] \to N\C)$. It then follows formally that the left Kan extension has a right adjoint $\sbI_{\cD/\C}: \Fun(\C,\sP(\cD)) \to \sP(\cD\times\DD)_{/N_\cD\C}$.
 	Now we observe that this adjunction is in fact $\sP(\cD)$-enriched. However, this follows from the fact that $\sbT_{\cD/\C}$ commutes with tensors (\cref{ex:sbT constant}) and \cref{lemma:enriched adjunction via tensors}.
 \end{proof} 

  Let us better understand the right adjoint $\sbI_{\cD/\C}$.
  Let $G: \C \to \sP(\cD)$ be a $\sP(\cD)$-enriched functor such that the functor 
  $$\Map_\C(-,G): \Fun(\C,\sP(\cD)) \to \s$$
  preserves level-wise equivalence (which holds in particular if $G$ is projectively fibrant, see \cref{prop:projinj model}).
  and fix a map $p:F[d,n] \times \Delta[l] \to \C$ that corresponds to the chain of morphisms $c_0 \to ... \to c_n$ in $\Und_d\C_l$. Then the space of fibers of $\sbI_{\cD/\C}G$ over $p$ is equivalent to the space $G(c_0)[d]$. Indeed, we have 
  \begin{align*}
  	\Map_{/N_\cD\C}(F[d,n]\times\Delta[l],\sbI_{\cD/\C}G) & \cong \Map_{\Fun(\C,\sP(\cD))}(\sbT_{\cD/\C}(p),G) & \text{\cref{lemma:sbTsbI adj}}\\
  	& \simeq \Map_{\Fun(\C,\sP(\cD))}(F[d] \times \uMap_\C(c_0,-),G) & \text{\cref{ex:sbT of rep}}\\ 
  	& \cong \Map_{\sP(\cD)}(F[d],G(c_0))  & \text{\cref{lemma:enriched yoneda}}\\
  	& \cong G(c_0)[d] 
  \end{align*}

 Having constructed two adjunctions $(\sint_{\cD/\C},\sH_{\cD/\C})$ and $(\sbT_{\cD/\C},\sbI_{\cD/\C})$ we move on to show they are Quillen adjunctions. This requires us understanding the relevant model structures first!
 
 \begin{propone} \label{prop:projinj model}
 	Let $\C$ be a small $\sP(\cD)$-enriched category. Then the category of $\sP(\cD)$-enriched functors $\Fun(\C,\sP(\cD))$ (\cref{def:ufuncd}) has two combinatorial proper $\sP(\cD)$-enriched model structures, the {\it projective model structure} and the {\it injective model structure} given as follows.
 	\begin{itemize}
 		\item A natural transformation $\alpha: F \to G$ in both model structures is a weak equivalence if for all objects $c$ the map of $\cD$-spaces $\alpha_c: F(c) \to G(c)$ is an injective equivalence. 
 		\item A natural transformation $\alpha: F \to G$ is a projective fibration if for all objects $c$ the map of $\cD$-spaces $\alpha_c: F(c) \to G(c)$ is an injective fibration. 
 		\item A natural transformation $\alpha:F \to G$ is an injective cofibration if for all objects $c$ the map of $\cD$-spaces $\alpha_c: F(c) \to G(c)$ is an injective cofibration. 
 	\end{itemize}
 Moreover, the generating (trivial) cofibrations of the projective model structure are of the form $\id \times \{\alpha\}:\uHom(c,-) \times \{F\} \to \uHom(c,-) \times \{G\}$, where $\alpha:F \to G$ is (trivial) cofibration in $\sP(\cD)$. Finally, the adjunction
 \begin{center}
 	\adjun{\uFun(\C,\sP(\cD))^{proj}}{\uFun(\C,\sP(\cD))^{inj}}{\id}{\id}
 \end{center}
 gives us a $\sP(\cD)$-enriched Quillen equivalence. Here the left hand side has the projective model structure and the right hand side the injective model structure.
 \end{propone}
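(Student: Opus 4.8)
The strategy is to build the projective model structure by transfer along the evaluation functors, to obtain the injective one from the standard existence results for combinatorial (enriched) diagram categories, to check the enrichment axiom objectwise, and finally to observe that the identity is a Quillen equivalence because the two structures share their weak equivalences. For the projective structure I would transfer $\sP(\cD)^{inj}$ — which by \cref{prop:injectivemod} is combinatorial, proper, and Cartesian — along the family of evaluation functors $\ev_c\colon\uFun(\C,\sP(\cD))\to\sP(\cD)$, $c\in\Obj_\C$. Fixing generating cofibrations $I$ and generating trivial cofibrations $J$ of $\sP(\cD)^{inj}$, the enriched Yoneda lemma (\cref{lemma:enriched yoneda}) identifies the left adjoint of $\ev_c$ with $F\mapsto\uHom_\C(c,-)\otimes F$, and since $\sP(\cD)$ is Cartesian closed (\cref{lemma:cartesian closed enriched}) this tensor is the objectwise product $\uHom_\C(c,-)\times\{F\}$; so the candidate generating (trivial) cofibrations are exactly the maps named in the statement. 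To apply the recognition theorem for cofibrantly generated model structures (e.g.\ \cite[Theorem 2.1.19]{hovey1999modelcategories}) the only nontrivial point is the acyclicity condition: every map built from $\{\uHom_\C(c,-)\times\{j\}: j\in J\}$ by pushouts, transfinite composites, and retracts is an objectwise injective equivalence. Here I would use that every object of $\sP(\cD)^{inj}$ is cofibrant (its cofibrations are the monomorphisms), so that $A\times j$ is a trivial cofibration of $\sP(\cD)^{inj}$ for any $\cD$-space $A$ and any trivial cofibration $j$; since evaluation at each $c$ commutes with colimits and the objectwise trivial cofibrations form a weakly saturated class, the condition follows. Combinatoriality is inherited from $\sP(\cD)^{inj}$, and left and right properness hold because limits and colimits in $\uFun(\C,\sP(\cD))$ are computed objectwise; the generating (trivial) cofibrations are of the asserted form by construction.

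For the injective model structure I would appeal to the standard existence theorem for injective model structures on functor categories valued in a combinatorial model category, in its enriched form: its cofibrations are the monomorphisms (equivalently the objectwise injective cofibrations) and its weak equivalences are the objectwise injective equivalences, with combinatoriality and properness again inherited objectwise (see \cite[\S A.2.8, \S A.3.3]{lurie2009htt} and the enriched treatment in \cite{moser2019enrichedproj}). In particular it has the same weak equivalences as the projective structure.

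It then remains to verify that both structures are enriched over $\sP(\cD)^{inj}$ in the sense of \cref{def:enriched model cat} and to conclude. Because the tensor of $\uFun(\C,\sP(\cD))$ over $\sP(\cD)$ is computed objectwise by the Cartesian product (\cref{def:ufuncd}), for a cofibration $i$ of $\sP(\cD)$ and a cofibration $j$ of the functor category the pushout product $i\square j$ evaluated at any $c$ is $i\square(j[c])$, which is a cofibration — trivial if either is — since $\sP(\cD)^{inj}$ is Cartesian; this settles the injective case at once, as there cofibrations are objectwise. For the projective case it suffices to test $j$ on the generators, using associativity of the pushout product together with the identity $i\square(\uHom_\C(c,-)\otimes i')=\uHom_\C(c,-)\otimes(i\square i')$, and then closing under pushouts, composites, and retracts. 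Finally every generating projective cofibration $\uHom_\C(c,-)\times\{i\}$ is a monomorphism, hence an injective cofibration, so the identity functor $\uFun(\C,\sP(\cD))^{proj}\to\uFun(\C,\sP(\cD))^{inj}$ is left Quillen; being the identity on underlying categories with the same weak equivalences, it is automatically a Quillen equivalence, and it is $\sP(\cD)$-enriched because both structures are.

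The main obstacle is not a single hard argument but making the existence machinery apply cleanly: concretely, the acyclicity verification for the projective transfer and the persistence of the pushout-product axiom under that transfer. Both are light precisely because $\sP(\cD)^{inj}$ is Cartesian and has all objects cofibrant, so the real work lies in assembling the right general results rather than in any new homotopical input.
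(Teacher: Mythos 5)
Your proof is correct, but it takes a more explicit, self-contained route than the paper. The paper's proof is essentially a pair of citations: existence, combinatoriality, and enrichment of both model structures come directly from Theorems 4.4 and 5.4 of Moser's paper on enriched diagram categories, after checking the one hypothesis that $-\times X:\sP(\cD)\to\sP(\cD)$ preserves (trivial) cofibrations (which is Cartesianness of $\sP(\cD)^{inj}$); properness is a short objectwise argument; and the Quillen equivalence is immediate from the shared weak equivalences, as you also observe. You instead re-derive the projective structure by transfer along the evaluation functors $\ev_c$, identify the generating (trivial) cofibrations via the left adjoint of $\ev_c$, and verify acyclicity directly using Cartesianness and cofibrancy of all objects of $\sP(\cD)^{inj}$; then you treat the pushout-product axiom by reduction to generators. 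These are exactly the ingredients that go into Moser's theorem, so the two proofs agree on substance. What your version buys is transparency: it shows concretely why the Cartesian and all-cofibrant hypotheses matter and how the generating sets arise, at the cost of being longer. What the paper's version buys is brevity and the guarantee that no saturation or transfer detail is overlooked, since the heavy lifting is already packaged in the cited theorem. One small stylistic caution: you use the letter $F$ both for an object of $\sP(\cD)$ (when describing the left adjoint $F\mapsto\uHom_\C(c,-)\otimes F$) and for a functor $\C\to\sP(\cD)$ elsewhere; in a final write-up these should be disambiguated, since the statement itself uses $F,G$ for $\cD$-spaces in the generating cofibrations.
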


 \begin{proof}
 	The fact that this model structure exists and is combinatorial is a direct application of \cite[Theorem 4.4]{moser2019enrichedproj} with $\mathcal{V} =  \mathcal{A} = \sP(\cD)$ with the injective model structure enriched over itself and $D= \C$. Indeed, it is evident that $- \times X:\sP(\cD) \to \sP(\cD)$ preserves (trivial) cofibrations. The fact that the model structure is proper follows from the fact that the injective model structure on $\sP(\cD)$ is proper and that each projective (injective) (co)fibration is a level-wise cofibration combined with the fact that pullbacks and pushouts are evaluated level-wise. The fact that it is enriched over $\sP(\cD)$ follow from the fact that the injective model structure on $\sP(\cD)$ is enriched over itself and \cite[Theorem 5.4]{moser2019enrichedproj}.
 	
 	Finally, every (trivial) cofibration in the projective model structure is evidently a (trivial) cofibration in the injective model structure and so $\id: \Fun(\C,\sP(\cD))^{proj} \to \Fun(\C,\sP(\cD))^{inj}$ is a Quillen left adjoint. Moreover, both sides have the same weak equivalences, hence the Quillen adjunction is a Quillen equivalence.
 \end{proof}

  Here are some comments regarding this model structure.
  
  \begin{lemone} \label{lemma:some tool}
  	A map $\alpha: F \to G$ is a projective (or injective) equivalence if and only if for all $d$ in $\cD$ and all objects $c$ the map of spaces $\alpha_c[d]: F(c)[d] \to G(c)[d]$ is a Kan equivalence. 
  \end{lemone}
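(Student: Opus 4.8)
The plan is to chain together two characterizations of weak equivalences that have already been established in the excerpt. First I would invoke \cref{prop:projinj model}, which tells us both that a natural transformation $\alpha: F \to G$ in $\Fun(\C,\sP(\cD))$ is a weak equivalence in the projective model structure if and only if it is one in the injective model structure, and that in either case this holds precisely when for every object $c$ of $\C$ the component $\alpha_c: F(c) \to G(c)$ is an injective equivalence of $\cD$-spaces, i.e.\ a weak equivalence in $\sP(\cD)^{inj}$.

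The second step is to unwind what an injective equivalence of $\cD$-spaces means. By \cref{prop:injectivemod}, the weak equivalences of $\sP(\cD)^{inj}$ are exactly the level-wise Kan equivalences of spaces. Hence $\alpha_c: F(c) \to G(c)$ is an injective equivalence if and only if for every object $d$ of $\cD$ the map of spaces $\alpha_c[d]: F(c)[d] \to G(c)[d]$ is a Kan equivalence.

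Combining the two observations yields the statement: $\alpha$ is a projective (equivalently, injective) equivalence if and only if $\alpha_c[d]$ is a Kan equivalence for all objects $c$ in $\C$ and all objects $d$ in $\cD$. There is no genuine obstacle here; the argument is a purely formal unpacking of the two layered model structures, and the only point to note is that the quantifiers over $c$ and over $d$ are independent, so they may be merged into a single condition without any interaction.
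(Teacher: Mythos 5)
Your proof is correct and follows essentially the same route as the paper's: unwind a projective (or injective) equivalence to level-wise injective equivalences of $\cD$-spaces via \cref{prop:projinj model}, then unwind those to level-wise Kan equivalences via \cref{prop:injectivemod}.
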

  
  \begin{proof}
  	By definition $\alpha$ is a projective equivalence if and only if $\alpha_c: F(c) \to G(c)$ is an injective equivalence in $\sP(\cD)$ which, by \cref{prop:injectivemod}, is equivalent to $\alpha_c[d]: F(c)[d] \to G(c)[d]$ being a Kan equivalence. 
  \end{proof}

 \begin{corone}
  The projective and injective model structures on $\Fun(\C,\sP(\cD))$ are simplicial. 
 \end{corone}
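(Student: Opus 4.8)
The plan is to obtain the simplicial structure for free from the $\sP(\cD)$-enrichment already established in \cref{prop:projinj model}, by restricting that enrichment along the constant-diagram functor $\mathrm{const}\colon \s \to \sP(\cD)$ sending a space $K$ to the $\cD$-space constant at $K$. First I would record that this functor preserves finite products, so that $\uFun(\C,\sP(\cD))$ --- which is pointwise tensored and cotensored over $\sP(\cD)$ by \cref{def:ufuncd} and \cref{ex:psh} --- inherits a tensoring and cotensoring over $\s$, given pointwise by $(F\otimes K)(c')(d)=F(c')(d)\times K$; this puts us in the situation of \cref{def:enriched model cat} with $\C^\cM=\s$ (Kan model structure) and $\D^\cN=\uFun(\C,\sP(\cD))$ in either the projective or the injective model structure.

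Next I would check the single compatibility condition of \cref{def:enriched model cat}: that for a cofibration $i$ of $\s$ and a cofibration $j$ of $\Fun(\C,\sP(\cD))$ the pushout product $i\square j$ is a cofibration, trivial if either factor is. The key observation is that this $\s$-valued pushout product equals $\mathrm{const}(i)\square j$, and that $\mathrm{const}$ sends cofibrations (resp.\ trivial cofibrations) of $\s^{Kan}$ to cofibrations (resp.\ trivial cofibrations) of $\sP(\cD)^{inj}$: indeed in both model structures these classes are detected objectwise (monomorphisms, resp.\ monomorphisms that are Kan equivalences), by \cref{prop:injectivemod}. Since the Kan model structure on $\s$ is Cartesian and $\uFun(\C,\sP(\cD))$ is enriched over $\sP(\cD)^{inj}$ by \cref{prop:projinj model}, \cref{def:enriched model cat} applied to that enrichment yields exactly the required pushout-product property, which is precisely the assertion that both model structures on $\Fun(\C,\sP(\cD))$ are simplicial.

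There is no real obstacle here --- the statement is essentially a formal consequence of \cref{prop:projinj model} together with the fact that $\sP(\cD)^{inj}$ is itself a simplicial (indeed Cartesian) model category. The only point that warrants a line of care is confirming that $\mathrm{const}$ is left Quillen on the relevant classes of maps (equivalently, that it is a product-preserving left Quillen functor $\s^{Kan}\to\sP(\cD)^{inj}$), which is immediate from the objectwise description of cofibrations and weak equivalences in the injective model structure. Alternatively, one could simply invoke \cite[Theorem 5.4]{moser2019enrichedproj} once more with $\mathcal V=\s$, but the change-of-enrichment argument above is self-contained within the Joyal--Tierney calculus of \cref{subsec:joyal tierney calculus}.
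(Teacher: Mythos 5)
Your proof is correct and takes essentially the same approach as the paper: transport the $\sP(\cD)^{inj}$-enrichment of \cref{prop:projinj model} to a simplicial one across a Quillen adjunction between $\s^{Kan}$ and $\sP(\cD)^{inj}$. The paper phrases this via the projection $\sP(\cD)\to\s$ and \cref{lemma:some tool}, while you work with the adjoint constant-diagram functor $\s\to\sP(\cD)^{inj}$ and verify the pushout-product axiom directly, which is a cleaner and more explicit rendition of the same idea.
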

  
  \begin{proof}
  	Notice the simplicial enrichment is given by applying the projection map $\sP(\cD) \to \s$, which preserves cofibrations and trivial cofibrations. The result now follows from \cref{lemma:some tool}.
  \end{proof}
 
 We can now study the model categorical aspects of the functor $\sint_{\cD/\C}$.
 
  \begin{lemone} \label{lemma:sint left quillen}
 	Let $\C$ be a small $\sP(\cD)$-enriched category. Then the functor $\sint_{\cD/\C}: \Fun(\C,\sP(\cD)) \to \sP(\cD\times\DD)_{/\C}$ takes projective (trivial) cofibrations to $\cD$-covariant (trivial) cofibrations. Moreover $\sint_{\cD/\C}$ reflects equivalences.
  \end{lemone}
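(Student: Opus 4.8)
The plan is to establish the two halves of the statement separately: that $\sint_{\cD/\C}$ is left Quillen (preserves cofibrations and trivial cofibrations) by reducing to generating maps, and that it reflects weak equivalences by analysing what it does to the value $\Val$.

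For the cofibration statements I would first use that $\sint_{\cD/\C}$ preserves colimits (\cref{lemma:sint colimits}), hence pushouts, transfinite compositions and retracts; since the $\cD$-covariant cofibrations and $\cD$-covariant trivial cofibrations are each closed under precisely these operations, it suffices to check the claim on a generating set. By \cref{prop:projinj model} the generating projective (trivial) cofibrations are the maps $\uHom_\C(c,-)\otimes\alpha$ for an object $c$ of $\C$ and a (trivial) cofibration $\alpha\colon F\to G$ of $\cD$-spaces in $\sP(\cD)^{inj}$. Since $\sint_{\cD/\C}$ is $\sP(\cD)$-enriched and preserves tensors (\cref{lemma:sint colimits}) and $\sint_{\cD/\C}\uHom_\C(c,-)=N_\cD\C_{c/}$ (\cref{ex:sintcd rep}), its value on such a generator is $N_\cD\C_{c/}\otimes\alpha$, the tensor now taken in the over-category $\sP(\cD\times\DD)_{/N_\cD\C}$ enriched over $\sP(\cD)^{inj}$. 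If $\alpha$ is a monomorphism then so is $N_\cD\C_{c/}\otimes\alpha$, which is therefore a $\cD$-covariant cofibration; if $\alpha$ is in addition an injective equivalence then $N_\cD\C_{c/}\otimes\alpha$ is a $\cD$-covariant trivial cofibration because the $\cD$-covariant model structure is enriched over $\sP(\cD)^{inj}$ (\cref{the:ncov model}), so $N_\cD\C_{c/}\otimes-$ is left Quillen out of $\sP(\cD)^{inj}$.

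For reflecting equivalences, the key observation is that $\sint_{\cD/\C}F\to N_\cD\C$ is always a $\cD$-left morphism (\cref{lemma:sint cdleft}), so its injective fibrant replacement $\sint_{\cD/\C}F\to\widehat F$ yields a $\cD$-left fibration $\widehat F\to N_\cD\C$ (\cref{lemma:cdleft map}), and the replacement map, being an injective equivalence, is at once a $\cD$-covariant equivalence and a $\Val$-equivalence (as $\Val$ sends an injective equivalence $f$ to the Kan equivalences $f[d,0]$, hence to an injective equivalence of $\cD$-spaces). Given a natural transformation $\alpha\colon F\to G$, functoriality of this factorization produces $\widehat\alpha\colon\widehat F\to\widehat G$ over $N_\cD\C$, and two-out-of-three shows $\sint_{\cD/\C}(\alpha)$ is a $\cD$-covariant equivalence iff $\widehat\alpha$ is. Since $\widehat F$ and $\widehat G$ are $\cD$-left fibrations, \cref{the:ncov equiv nlfib} identifies this with $\widehat\alpha$ being a $\cD$-value equivalence, i.e.\ $\Val(\widehat\alpha)$ being an injective equivalence of $\cD$-spaces; applying $\Val$ to the replacement maps and using two-out-of-three once more, this is equivalent to $\Val(\sint_{\cD/\C}\alpha)$ being an injective equivalence. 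Finally, \cref{rem:sint computation} at simplicial degree $0$ gives a natural isomorphism $\Val(\sint_{\cD/\C}G)\cong\coprod_{c\in\Obj_\C}G(c)$, whence $\Val(\sint_{\cD/\C}\alpha)=\coprod_c\alpha_c$; a coproduct of maps of $\cD$-spaces is an injective equivalence exactly when every summand is, and by \cref{prop:projinj model} that says precisely that $\alpha$ is a projective (equivalently injective) equivalence. Hence $\sint_{\cD/\C}(\alpha)$ is a $\cD$-covariant equivalence if and only if $\alpha$ is a weak equivalence, so in particular $\sint_{\cD/\C}$ reflects them.

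I expect the only genuinely delicate point to be the middle of the reflection argument: $\Val$ is not homotopy invariant for the $\cD$-covariant model structure on arbitrary objects, so one really does need to exploit that $\sint_{\cD/\C}F$ is a $\cD$-left morphism --- which lets its $\cD$-left fibrant replacement be taken injective, and hence be seen correctly by $\Val$ --- before invoking \cref{the:ncov equiv nlfib} to trade ``$\cD$-covariant equivalence'' for ``$\cD$-value equivalence''. The remainder is routine manipulation with the enriched structure and the explicit formula of \cref{rem:sint computation}.
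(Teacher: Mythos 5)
Your proposal is correct and follows essentially the same route as the paper: the cofibration half reduces to the generating maps via \cref{prop:projinj model}, computed with \cref{ex:sintcd rep} and \cref{ex:sintcd constant} together with the enrichment of the $\cD$-covariant model structure; the reflection half passes through the observation that $\sint_{\cD/\C}$ lands in $\cD$-left morphisms, takes injective fibrant replacements, applies \cref{the:ncov equiv nlfib}, and finishes with the $\Val$-level coproduct computation from \cref{rem:sint computation}. The only stylistic difference is that the paper invokes the formula $\sint_{\cD/\C}F[d,n]\cong\sint_{\cD/\C}F[d,0]\times_{N_\cD\C[d,0]}N_\cD\C[d,n]$ directly rather than citing \cref{lemma:sint cdleft}, but this is the same fact.
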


\begin{proof}
	It suffices to prove that $\sint_{\cD/\C}$ takes generating (trivial) projective cofibrations to (trivial) cofibrations in the $\cD$-covariant model structure. By \cref{prop:projinj model}, the generating (trivial) cofibrations are given by $\uHom(c,-) \times A \to \uHom(c,-) \times B$ where $A \to B$ is a (trivial) cofibration of $\cD$-spaces. By \cref{ex:sintcd rep} and \cref{ex:sintcd constant}, $\sint_{\cD/\C}$ takes this map to $N\C_{c/} \times A \to N\C_{c/} \times B$, which is a (trivial) cofibration in the $\cD$-covariant model structure.

	Now, let $\alpha: F \to G$ be natural transformation such that $\sint_{\cD/\C}\alpha: \sint_{\cD/\C}F \to \sint_{\cD/\C}G$ is a $\cD$-covariant equivalence. We want to prove $\alpha$ is a level-wise equivalence. Notice for every object $d$ in $\cD$ and $n$ in $\DD$, we have a bijection $\sint_{\cD/\C}F[d,n] \cong \sint_{\cD/\C}F[d,0] \times_{N_\cD\C[d,0]}N_\cD\C[d,n]$ (\cref{rem:sint computation}), which means the injectively fibrant replacement is already a $\cD$-left fibration.  	 
	
	Taking the injectively fibrant replacement we get the following diagram where the second row is the fibrant replacement in the injective model structure
	\begin{center}
		\begin{tikzcd}
			\sint_{\cD/\C}F \arrow[r, "\sint_{\cD/\C}\alpha"] \arrow[d, "\simeq"] & \sint_{\cD/\C}G \arrow[d, "\simeq"] \\
			R\sint_{\cD/\C}F \arrow[r, "R\sint_{\cD/\C}\alpha"] & R\sint_{\cD/\C}G
		\end{tikzcd}.
	\end{center}
	By definition $\sint_{\cD/\C}\alpha$ is a covariant equivalence if and only if $R\sint_{\cD/\C}\alpha$ is an injective equivalence of $\cD$-simplicial spaces, which, by \cref{the:ncov equiv nlfib}, is equivalent to $\Val(R\sint_{\cD/\C}\alpha)$ being an injective equivalence of $\cD$-spaces. By \cref{rem:sint computation}
	$$\Val\sint_{\cD/\C}\alpha= \coprod_{c \in \Obj_\C}\alpha_c: \coprod_{\Obj_\C}F(c) \to \coprod_{\Obj_\C}G(c)$$
	and so  $\Val(R\sint_{\cD/\C}\alpha)$ is a injective equivalence of $\cD$-spaces over $\Obj_\C$ if and only if $\alpha_c: F(c) \to G(c)$ is an injective equivalence of $\cD$-spaces, which by definition means $\alpha$ is an equivalence in the projective model structure (\cref{prop:projinj model}).
\end{proof}

\begin{lemone} \label{lemma:sbt left quillen}
	Let $\C$ be a small $\sP(\cD)$-enriched category.
	Then the functor $\sbT_{\cD/\C}: \sP(\cD\times\DD)_{/N\C} \to \Fun(\C,\sP(\cD))$ takes (trivial) $\cD$-covariant cofibrations to (trivial) injective cofibrations.
\end{lemone}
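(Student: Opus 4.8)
The plan is to argue directly from the defining formula
$\sbT_{\cD/\C}(Y \to N_\cD\C)(c) = \fDiag\bigl(Y \times_{N_\cD\C} N_\cD\C_{/c}\bigr)$ of \cref{def:sbTcd}, and to verify the two assertions objectwise against the description of (trivial) injective cofibrations in $\uFun(\C,\sP(\cD))$ from \cref{prop:projinj model}: a natural transformation $\alpha$ is an injective cofibration (resp.\ trivial injective cofibration) if and only if each component $\alpha_c$ is a monomorphism (resp.\ a monomorphism and an injective equivalence) of $\cD$-spaces. Writing $q_c \colon N_\cD\C_{/c} \to N_\cD\C$ for the slice projection, we have $\sbT_{\cD/\C}(f)(c) = \fDiag(q_c^* f)$ for every map $f$ over $N_\cD\C$ and every object $c$.

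First I would handle cofibrations: if $f\colon Y\to Z$ over $N_\cD\C$ is a $\cD$-covariant cofibration, i.e.\ a monomorphism, then each base change $q_c^* f$ is a monomorphism, and since $\fDiag=\Diag^*$ is a restriction functor it preserves monomorphisms; hence every $\sbT_{\cD/\C}(f)(c)$ is a monomorphism and $\sbT_{\cD/\C}(f)$ is an injective cofibration. For the trivial case the crucial input is that $q_c^*$ preserves $\cD$-covariant equivalences. This is the $\cD$-analogue of \cref{lemma:very technical lemma}, which I would deduce levelwise: for each $d$ one has $\Und_d(q_c^* f) = (\Und_d q_c)^*(\Und_d f)$, where $\Und_d q_c$ is the slice projection $N(\Und_d\C)_{/c}\to N(\Und_d\C)$ of the simplicially enriched category $\Und_d\C$, so \cref{lemma:very technical lemma} gives that $(\Und_d q_c)^*$ is left Quillen for the covariant model structure and hence, since every object is cofibrant, preserves covariant equivalences (Ken Brown); the levelwise characterization of $\cD$-covariant equivalences in \cref{the:ncov model} then shows $q_c^* f$ is a $\cD$-covariant equivalence whenever $f$ is. Consequently, if $f$ is moreover a $\cD$-covariant equivalence, then $q_c^* f$ is one, hence a $\cD$-diagonal equivalence by \cref{the:diag local ncov}, and therefore $\fDiag(q_c^* f)=\sbT_{\cD/\C}(f)(c)$ is an injective equivalence of $\cD$-spaces by the very definition of the $\cD$-diagonal model structure (\cref{the:diagmodel}). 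Combined with the cofibration statement, $\sbT_{\cD/\C}(f)$ is then a trivial injective cofibration.

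The main obstacle is establishing the $\cD$-analogue of \cref{lemma:very technical lemma}: the map $N_\cD\C_{/c}\to N_\cD\C$ is merely a $\cD$-right morphism, not a $\cD$-right fibration when $\C$ has non-discrete mapping $\cD$-spaces, so none of the Quillen adjunctions proved for genuine fibrations (such as \cref{the:nlfib pb nrfib}) apply directly, and one is forced through the levelwise reduction above. That reduction in turn rests on the routine identifications $\Und_d N_\cD\C = N(\Und_d\C)$ and $\Und_d(N_\cD\C_{/c}) = N(\Und_d\C)_{/c}$ together with compatibility of fibre products with $\Und_d$; once these are in place the rest is bookkeeping.
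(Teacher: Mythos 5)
Your proof is correct and takes essentially the same route as the paper's: reduce to level-wise via \cref{the:ncov model}, invoke \cref{lemma:very technical lemma} for the slice pullback $q_c^*$ at each level $d$ (since, as you note, $N_\cD\C_{/c}\to N_\cD\C$ is only a $\cD$-right morphism and not a $\cD$-right fibration, forcing the level-wise reduction), and then use the level-wise nature of injective (trivial) cofibrations in $\uFun(\C,\sP(\cD))$ from \cref{prop:projinj model}. The paper is terser, asserting that the level-wise assignment is left Quillen and so preserves (trivial) cofibrations; you split the cofibration and trivial-cofibration cases and route the latter through $\cD$-covariant equivalence $\Rightarrow$ $\cD$-diagonal equivalence (\cref{the:diag local ncov}) $\Rightarrow$ $\fDiag$ gives an injective equivalence of $\cD$-spaces, which makes explicit the role of $\fDiag$ that the paper leaves implicit.
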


 \begin{proof}
  Let $f:A \to B$ be a map over $N\C$. Then, by \cref{the:ncov model}, $f$ is a (trivial) cofibration if and only if for every $d$, the map of simplicial spaces $\Und_d f: \Und_d A \to \Und_d B$ is a (trivial) cofibration in the covariant model structure. Now, by \cref{def:sbTcd}, we have
  $$\Und_d\sbT_{\cD/\C}(p:Y \to N_\cD\C)(c) = \fDiag(Y[d] \times_{N\Und_d\C} N\Und_d\C_{/c}),$$
  which preserves (trivial) cofibrations in the covariant model structure as it is left Quillen (\cref{lemma:very technical lemma}). Finally, by \cref{prop:projinj model}, being a (trivial) cofibration in the injective model structure on $\Fun(\C,\sP(\cD))$ is determined level-wise.
 \end{proof}

 We now use these adjunctions to prove that we have the desired Quillen equivalences giving the appropriate generalization of \cite[Theorem 4.18]{rasekh2017left}. 
 
  \begin{theone} \label{the:simp grothendieck}
  	Let $\C$ be a small $\sP(\cD)$-enriched category.
  	The two $\sP(\cD)$-enriched adjunctions  
  	\begin{center}
  		\begin{tikzcd}[row sep=0.5in, column sep=0.9in]
  			\uFun(\C,\sP(\cD))^{proj} \arrow[r, shift left = 1.8, "\sint_{\cD/\C}"] & 
  			(\sP(\cD\times\DD)_{/N_\cD\C})^{\cD-cov} \arrow[l, shift left=1.8, "\sH_{\cD/\C}", "\bot"'] \arrow[r, shift left=1.8, "\sbT_{\cD/\C}"] &
  			\uFun(\C,\sP(\cD))^{inj} \arrow[l, shift left=1.8, "\sbI_{\cD/\C}", "\bot"']
  		\end{tikzcd}
  	\end{center}
  	are Quillen equivalences. Here the left hand $\Fun(\C,\sP(\cD))$ has the projective model structure, $\sP(\cD\times\DD)_{/N\C}$ has the $\cD$-covariant model structure over $N\C$ and the right hand $\Fun(\C,\sP(\cD))$ has the injective model structure. 
  \end{theone}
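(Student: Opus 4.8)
The plan is to first note that both displayed pairs are Quillen adjunctions, then upgrade the first pair to a Quillen equivalence by a direct fiber-wise computation, and finally deduce that the second pair is a Quillen equivalence from the fact that the composite $\sbT_{\cD/\C}\circ\sint_{\cD/\C}$ is (naturally) the identity Quillen equivalence. That both pairs are Quillen adjunctions is immediate: the model structures exist by \cref{prop:projinj model} and \cref{the:ncov model}, the functor $\sint_{\cD/\C}$ sends projective (trivial) cofibrations to $\cD$-covariant (trivial) cofibrations by \cref{lemma:sint left quillen}, and $\sbT_{\cD/\C}$ sends $\cD$-covariant (trivial) cofibrations to injective (trivial) cofibrations by \cref{lemma:sbt left quillen}.

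For the pair $(\sint_{\cD/\C},\sH_{\cD/\C})$ I would use the criterion that a Quillen pair is a Quillen equivalence once the left adjoint reflects weak equivalences between cofibrant objects and, for every fibrant object $L$, the composite $\sint_{\cD/\C}(Q\sH_{\cD/\C}L)\to\sint_{\cD/\C}\sH_{\cD/\C}L\to L$ is a weak equivalence. The first condition holds since $\sint_{\cD/\C}$ reflects all weak equivalences (\cref{lemma:sint left quillen}); it moreover \emph{preserves} all weak equivalences, because by \cref{rem:sint computation} a projective equivalence $\alpha$ is sent level-wise to a coproduct of the maps $\alpha_{c_0}[d]$, which are Kan equivalences, so $\sint_{\cD/\C}\alpha$ is an injective, hence $\cD$-covariant, equivalence; in particular $\sint_{\cD/\C}(Q\sH_{\cD/\C}L)\to\sint_{\cD/\C}\sH_{\cD/\C}L$ is a weak equivalence. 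It remains to show that for a $\cD$-left fibration $L\to N_\cD\C$ the counit $\sint_{\cD/\C}\sH_{\cD/\C}L\to L$ is a $\cD$-covariant equivalence. Here the key point is that by \cref{rem:sint computation} the fiber of $\sint_{\cD/\C}G$ over an object $c$ is $G(c)$, so over $c$ the counit is exactly the restriction map $\sH_{\cD/\C}L(c)=\uMap_{/N_\cD\C}(N_\cD\C_{c/},L)\to\uMap_{/N_\cD\C}(D[0],L)=\Fib_cL$ along $\{\id_c\}\colon D[0]\to N_\cD\C_{c/}$; since $\{\id_c\}$ is a monomorphism and a $\cD$-covariant equivalence (\cref{the:levelwise Yoneda}) and $L$ is $\cD$-covariantly fibrant, this is a trivial fibration, in particular an injective equivalence, of $\cD$-spaces. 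As $\sint_{\cD/\C}\sH_{\cD/\C}L$ is a $\cD$-left morphism (\cref{lemma:sint cdleft}), taking its injective fibrant replacement and lifting against $L$ yields a map of $\cD$-left fibrations that is a fiber-wise injective equivalence, hence a $\cD$-covariant equivalence by \cref{the:ncov equiv nlfib}; two-out-of-three gives the same for the counit, so $(\sint_{\cD/\C},\sH_{\cD/\C})$ is a Quillen equivalence.

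For the pair $(\sbT_{\cD/\C},\sbI_{\cD/\C})$ I would first show the composite left Quillen functor $\sbT_{\cD/\C}\circ\sint_{\cD/\C}\colon\uFun(\C,\sP(\cD))^{proj}\to\uFun(\C,\sP(\cD))^{inj}$ is a Quillen equivalence. There is a natural transformation $\theta_F\colon\sbT_{\cD/\C}\sint_{\cD/\C}(F)\to F$ sending a chain $c_0\to\cdots\to c_k\to c$ decorated (via the pullback defining $\sbT_{\cD/\C}$ and the base-point datum of $\sint_{\cD/\C}F$) by a point of $F(c_0)$ to its push-forward in $F(c)$. On a generator $\uHom(c,-)\times A$ we have $\sint_{\cD/\C}(\uHom(c,-)\times A)=N_\cD\C_{c/}\times A$ by \cref{ex:sintcd rep} and \cref{ex:sintcd constant}; since $\sbT_{\cD/\C}$ is $\sP(\cD)$-enriched with $\sbT_{\cD/\C}(D[0])=\uHom(c,-)$ by \cref{ex:sbT constant} and $\{\id_c\}\colon D[0]\hookrightarrow N_\cD\C_{c/}$ is a trivial cofibration in the $\cD$-covariant model structure (\cref{the:levelwise Yoneda}), the induced map $\uHom(c,-)\times A\to\sbT_{\cD/\C}\sint_{\cD/\C}(\uHom(c,-)\times A)$ is a trivial injective cofibration whose composite with $\theta$ is the identity, so $\theta$ is a weak equivalence on generators. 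Because $\sbT_{\cD/\C}\sint_{\cD/\C}$ preserves colimits (\cref{lemma:sint colimits} together with colimit-preservation of $\sbT_{\cD/\C}$, as in the proof of \cref{lemma:sbTsbI adj}) and every object of $\uFun(\C,\sP(\cD))^{inj}$ is cofibrant, a cellular induction using left properness propagates this to every cofibrant $F$. Hence $L(\sbT_{\cD/\C}\sint_{\cD/\C})\cong L(\mathrm{id})$ on homotopy categories, which is an equivalence by \cref{prop:projinj model}, so $\sbT_{\cD/\C}\sint_{\cD/\C}$ is a Quillen equivalence; combined with the previous paragraph and the two-out-of-three property for Quillen equivalences, $\sbT_{\cD/\C}$ is a Quillen equivalence.

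I expect the main obstacle to be the fiber-wise identification of the counit of $(\sint_{\cD/\C},\sH_{\cD/\C})$ with the $\cD$-covariant Yoneda equivalence: matching the abstract adjunction counit against restriction along $\{\id_c\}$ requires carefully unwinding the explicit descriptions of $\sint_{\cD/\C}$ (\cref{rem:sint computation}), of $\sH_{\cD/\C}$ via $\sH_{\cD/\C}(Y)(c)=\uMap_{/N_\cD\C}(N_\cD\C_{c/},Y)$, and of the comparison map $\theta$, as well as the bookkeeping needed to reduce the recognition criterion \cref{the:ncov equiv nlfib} to fibers over objects (using that $N_\cD\C$ has weakly constant objects).
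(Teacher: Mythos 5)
Your proof is correct and takes essentially the same approach as the paper: both verify the Quillen adjunctions via \cref{lemma:sint left quillen} and \cref{lemma:sbt left quillen}, show that $\sbT_{\cD/\C}\circ\sint_{\cD/\C}$ is naturally equivalent to the identity by reducing to the generators $\uMap(c,-)\times\{A\}$ via \cref{ex:sintcd rep}, \cref{ex:sbT constant} and \cref{the:levelwise Yoneda}, establish that $(\sint_{\cD/\C},\sH_{\cD/\C})$ is a Quillen equivalence by combining reflection of weak equivalences with a derived counit check ultimately resting on \cref{the:levelwise Yoneda}, and conclude by two-out-of-three for Quillen equivalences. The only differences are presentational: you verify the first Quillen equivalence before identifying the composite with the identity (the paper does the reverse), and you check the counit by identifying its fiber over $c$ with restriction along $\{\id_c\}\colon D[0]\to N_\cD\C_{c/}$ and then invoking the fiber-wise recognition criterion \cref{the:ncov equiv nlfib}, whereas the paper reduces to the zeroth level via the left-fibration property and unwinds an equivalent chain of isomorphisms through the enriched Yoneda lemma.
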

  
  \begin{proof}
  	First we observe that $\sint_{\cD/\C}$ is left Quillen by \cref{lemma:sint left quillen} and $\sbT_{\cD/\C}$ is left Quillen by \cref{lemma:sbt left quillen} showing both adjunctions are Quillen adjunctions. 
  	
  	Next we show that the composition functor $\sbT_{\cD/\C}\circ \sint_{\cD/\C}$ is naturally equivalent to the identity functor $\Fun(\C,\sP(\cD)) \to \Fun(\C,\sP(\cD))$. This will then prove that the composition is a Quillen equivalence. As both functors $\sbT_{\cD/\C}$, $\sint_{\cD/\C}$ are left adjoints they commute with colimits and it suffices to show the desired equivalence for functors of the form $\{A\} \times \uMap(c,-)$, where $A$ is an $\cD$-simplicial space and $c$ is an object in $\C$. 
  	
  	By \cref{ex:sintcd rep}, $\sint_{\cD/\C} \{A\} \times \uMap(c,-) = \{A\} \times N_\cD\C_{c/}$, which by \cref{the:levelwise Yoneda} is equivalent in $\cD$-covariant model structure to the map $A \to N_\cD\C$ with constant value $c$. By \cref{ex:sbT constant}, $\sbT_{\cD/\C}(A \to N_\cD\C) \simeq \{A\} \times \uMap(c,-)$ giving us the desired natural equivalence.
  	
  	 So, by $2$-out-of-$3$ for Quillen equivalence, it suffices to prove the first is a Quillen equivalence. First of all, by \cref{lemma:sint left quillen}, $\sint_{\cD/\C}$ reflects equivalences. Hence, it suffices to check the derived counit map is an equivalence, which means we have to prove that for a given $\cD$-left fibration $L \to N\C$ the map $\sint_{\cD/\C}\sH_{\cD/\C} L \to L$ is a $\cD$-covariant equivalence over $N_\cD\C$. 
  	
  	We will in fact observe that the map is an injective equivalence. For an arbitrary $(d,k)$ in $\cD \times \DD$ we have the diagram 
  	\begin{center}
  		\begin{tikzcd}
  		 \sint_{\cD/\C}\sH_{\cD/\C} L[d,k] \arrow[d, "\cong"] \arrow[r] & L[d,k] \arrow[d, "\simeq"] \\
  		 \sint_{\cD/\C}\sH_{\cD/\C} L[d,0] \times_{N_\cD\C[d,0]} N_\cD\C[d,k] \arrow[r] & L[d,0]  \times_{N_\cD\C[d,0]} N_\cD\C[d,k]
  		\end{tikzcd}
  	\end{center}
  	The left hand vertical map is a weak equivalence by assumption on $L$ and the right hand vertical map is a bijection by construction (\cref{rem:sint computation}). Hence the top map is an equivalence of spaces if and only if the bottom map is an equivalence. However, by \cref{def:nervesnset}, the space $N_\cD\C[d,0]$ is discrete and so it suffices to prove that $\sint_{\cD/\C}\sH_{\cD/\C} L[d,0] \to L[d,0]$ is an equivalence. 
  	Now, every map $F[d,0] \to N_\cD\C$ is necessarily constant with value an object of $\C$ and so it suffices to fix a map $F[d,0] \to N_\cD\C$ and prove we a natural equivalence 
  	$$\Map_{/N_\cD\C}(F[d,0],\sint_{\cD/\C}\sH_{\cD/\C}L) \simeq \Map_{N_\cD/\C}(F[d,0],L).$$
  	We have the following chain of isomorphisms and equivalences:
  	\begin{align*}
  		\Map_{/N_\cD\C}(F[d,0],\sint_{\cD/\C}\sH_{\cD/\C}L) & \cong \sH_{\cD/\C}L(c)[d] & \text{\cref{rem:sint computation}} \\
  		& \cong \uNat(\uHom(c,-) \times F[d],\sH_{\cD/\C}L) & \text{\cref{lemma:enriched yoneda}} \\
  		& \cong \uMap_{/N_\cD\C}(\sint_{\cD/\C}(\uHom(c,-) \times \{F[d]\}),L) & \text{\cref{lemma:sintsH adj}} \\
  		& \cong \uMap_{/N_\cD\C}(N_\cD\C_{c/} \times F[d,0],L) & \text{\cref{ex:sintcd rep}} \\
  		& \simeq \uMap_{/N_\cD\C}(F[d,0],L) & \text{\cref{the:levelwise Yoneda}}
  	\end{align*}
  	giving us the desired equivalence and finishing the proof.
  \end{proof}
 
 Notice the two functor categories in the statement of \cref{the:simp grothendieck} have two different (even if Quillen equivalent) model structures and we would like to have a statement which involves only one model structure.
 
 \begin{theone} \label{the:grothendieck double proj}
 	Let $\C$ be a small $\sP(\cD)$-enriched category. Assume that $\sbI_{\cD/\C}$ takes projective fibrations to injective fibration. Then, the two $\sP(\cD)$-enriched adjunctions  
 	\begin{center}
 		\begin{tikzcd}[row sep=0.5in, column sep=0.9in]
 			\uFun(\C,\sP(\cD))^{proj} \arrow[r, shift left = 1.8, "\sint_{\cD/\C}"] & 
 			(\sP(\cD\times\DD)_{/N_\cD\C})^{\cD-cov} \arrow[l, shift left=1.8, "\sH_{\cD/\C}", "\bot"'] \arrow[r, shift left=1.8, "\sbT_{\cD/\C}"] &
 			\uFun(\C,\sP(\cD))^{proj} \arrow[l, shift left=1.8, "\sbI_{\cD/\C}", "\bot"']
 		\end{tikzcd}
 	\end{center}
 	are Quillen equivalences. Here the two $\Fun(\C,\sP(\cD))$ have the projective model structure and $\sP(\cD\times\DD)_{/N_\cD\C}$ has the $\cD$-covariant model structure. 
 \end{theone}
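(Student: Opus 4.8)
The plan is to deduce this from \cref{the:simp grothendieck} and the identity Quillen equivalence of \cref{prop:projinj model}, so that the hypothesis on $\sbI_{\cD/\C}$ is the only genuinely new input. For the adjunction $(\sint_{\cD/\C},\sH_{\cD/\C})$ there is nothing to prove: it already appears in \cref{the:simp grothendieck} with the \emph{projective} model structure on $\uFun(\C,\sP(\cD))$, hence is a Quillen equivalence as stated there. So the whole statement reduces to showing that
$$\sbT_{\cD/\C}\colon (\sP(\cD\times\DD)_{/N_\cD\C})^{\cD-cov} \;\rightleftarrows\; \uFun(\C,\sP(\cD))^{proj} \;\colon \sbI_{\cD/\C}$$
is a Quillen equivalence.

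The first step is to check that this is a Quillen adjunction. One cannot read this off \cref{lemma:sbt left quillen}, which only yields injective (trivial) cofibrations while the projective cofibrations form a strictly smaller class; instead I would argue on the right adjoint and verify that $\sbI_{\cD/\C}$ takes projective fibrations to $\cD$-covariant fibrations and projective trivial fibrations to $\cD$-covariant trivial fibrations. The second of these is automatic: by \cref{prop:projinj model} (and \cref{lemma:some tool}) the projective and injective model structures on $\uFun(\C,\sP(\cD))$ have the same weak equivalences and the same trivial fibrations (the level-wise ones), so a projective trivial fibration is an injective trivial fibration, which $\sbI_{\cD/\C}$ sends to a $\cD$-covariant trivial fibration by the Quillen adjunction of \cref{the:simp grothendieck}. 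The first is where the standing hypothesis enters: given a projective fibration $F\to G$, the hypothesis makes $\sbI_{\cD/\C}F\to\sbI_{\cD/\C}G$ an injective fibration, and since (for $G$ projectively fibrant) $\sbI_{\cD/\C}G\to N_\cD\C$ is a $\cD$-left fibration by the explicit fiber description of $\sbI_{\cD/\C}$, one concludes via \cref{lemma:cdleft comp} and \cref{lemma:fib between lfib} that it is a $\cD$-covariant fibration.

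Once the Quillen adjunction is in place, the Quillen equivalence is formal. Composing the left Quillen functor $\sbT_{\cD/\C}\colon(\sP(\cD\times\DD)_{/N_\cD\C})^{\cD-cov}\to\uFun(\C,\sP(\cD))^{proj}$ with the left Quillen functor $\id\colon\uFun(\C,\sP(\cD))^{proj}\to\uFun(\C,\sP(\cD))^{inj}$ recovers the left Quillen functor $\sbT_{\cD/\C}\colon(\sP(\cD\times\DD)_{/N_\cD\C})^{\cD-cov}\to\uFun(\C,\sP(\cD))^{inj}$, a Quillen equivalence by \cref{the:simp grothendieck}; as $\id$ is a Quillen equivalence by \cref{prop:projinj model}, two-out-of-three for Quillen equivalences gives the claim. (Equivalently: since the two functor model structures share their weak equivalences, the total left derived functor $\mathbb{L}\sbT_{\cD/\C}$ is unchanged on homotopy categories, and it is an equivalence by \cref{the:simp grothendieck}.) I expect the main obstacle to be precisely the Quillen-adjunction step — turning the stated preservation property into the needed preservation of $\cD$-covariant fibrations — while everything downstream of it is formal.
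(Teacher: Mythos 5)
Your overall strategy matches the paper's: reduce via \cref{the:simp grothendieck} and \cref{prop:projinj model} to showing the adjunction $(\sbT_{\cD/\C},\sbI_{\cD/\C})$ is a Quillen adjunction with the \emph{projective} model structure on the target, and then finish by two-out-of-three. The difficulty is also correctly located. However, the handling of trivial fibrations has a genuine gap. You claim that the projective and injective model structures on $\uFun(\C,\sP(\cD))$ ``have the same trivial fibrations,'' but this is false: the two model structures share weak equivalences, and injective cofibrations strictly contain projective cofibrations, so injective trivial fibrations (RLP against \emph{all} level-wise monomorphisms) form a \emph{strictly smaller} class than projective trivial fibrations (level-wise injective trivial fibrations in $\sP(\cD)$). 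The inclusion runs opposite to what you need, so the step ``a projective trivial fibration is an injective trivial fibration, and then apply \cref{the:simp grothendieck}'' does not go through. If the two model structures did have the same trivial fibrations (and they already have the same weak equivalences, hence the same cofibrations by lifting), they would be identical, which they are not.

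The paper avoids this by first proving the stronger statement that $\sbI_{\cD/\C}$ sends \emph{every} projective fibration to a $\cD$-left fibration: by hypothesis the image is an injective fibration, and the locality condition translates (via the $(\sbT_{\cD/\C},\sbI_{\cD/\C})$ adjunction) into $\sbT_{\cD/\C}$ sending the localizing cofibrations $F[d,0]\to F[d,k]$ over $N_\cD\C$ to projective equivalences, which follows from \cref{lemma:sbt left quillen} together with the coincidence of projective and injective weak equivalences (\cref{prop:projinj model}). For a projective trivial fibration one then observes the resulting $\cD$-left fibration is fiber-wise a weak equivalence and so a trivial fibration in the $\cD$-covariant model structure by \cref{the:ncov equiv nlfib}. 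A second, more minor omission on your side: your argument only checks that $\sbI_{\cD/\C}$ preserves fibrations with \emph{fibrant} target; to conclude right Quillen-ness from ``preserves trivial fibrations'' and ``preserves fibrations between fibrant objects'' one must explicitly invoke a criterion such as \cite[Proposition 7.15]{joyaltierney2007qcatvssegal}, which the paper does and you do not. Once the trivial-fibration step is repaired along the lines above and that criterion is cited, the rest of your argument (two-out-of-three from \cref{the:simp grothendieck} and \cref{prop:projinj model}) is exactly the paper's.
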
 

 \begin{proof}
 	We have the following diagram of $\sP(\cD)$-enriched adjunctions
 	\begin{center}
 		\begin{tikzcd}[column sep=0.9in]
 			(\sP(\cD\times\DD)_{/N_\cD\C})^{\cD-cov}\arrow[r, shift left=1.8, "\sbT_{\cD/\C}"] &
 			\uFun(\C,\sP(\cD))^{proj} \arrow[l, shift left=1.8, "\sbI_{\cD/\C}", "\bot"'] \arrow[r, shift left=1.8, "\id"]&
 			\uFun(\C,\sP(\cD))^{inj} \arrow[l, shift left=1.8, "\id", "\bot"']
 		\end{tikzcd}
 	\end{center}
 	where the middle $\Fun(\C,\sP(\cD))$ has the projective model structure and the right hand $\Fun(\C,\sP(\cD))$ has the injective model structure. 
 	The composition of the two adjunctions is a Quillen equivalence by \cref{prop:projinj model} and the right hand is a Quillen equivalence by \cref{the:simp grothendieck}. Hence, by $2$-out-of-$3$ it suffices to prove the left hand adjunction is a Quillen adjunction. 
 	
 	First we show $\sbI_{\cD/\C}$ takes projective fibrations to $\cD$-left fibrations. By assumption $\sbI_{\cD/\C}$ takes projective fibrations to injective fibrations of $\cD$-simplicial spaces, so it suffices to show the map satisfies the locality condition of $\cD$-left fibration. By adjunction this is equivalent to $\sbT_{\cD/\C}(F[d,0] \to F[d,n])$ being a projective equivalence in $\uFun(\C,\sP(\cD))^{proj}$. By \cref{prop:projinj model} projective and injective equivalences coincide and so this follows from \cref{lemma:sbt left quillen}.
 	
 	We have hence proven that $\sbI_{\cD/\C}$ takes projective fibrations between projectively fibrant functors to $\cD$-left fibrations between $\cD$-left fibrations, which are precisely the fibrations between fibrant objects in the $\cD$-covariant model structure (\cref{lemma:fib between lfib}). Moreover, it takes trivial projective fibrations to $\cD$-left fibrations, which are fiber-wise weak equivalences. By \cref{the:ncov equiv nlfib}, this means they are weak equivalences in the $\cD$-covariant model structure and so are trivial fibrations in the $\cD$-covariant model structure. Hence, by \cite[Proposition 7.15]{joyaltierney2007qcatvssegal}, it is a right Quillen functor. 
 \end{proof}
 
 We will end this section with an analysis of the {\it universal $\cD$-left fibration}. Following \cref{prop:universal fib cd}, the universal $\cD$-left fibration is given by the projection functor $N_\cD\sP(\cD)_{D[0]/} \to N_\cD\sP(\cD)$ (as defined in \cref{not:undcs}). We now have the following analogous result.
 
 \begin{propone} \label{prop:universal fib lfib}
 	Let $\C$ be a small $\sP(\cD)$-enriched category. The functor
 	$$N_\cD(-)^*\pi_*:\uFun(\C,\sP(\cD)) \to \sP(\cD\times\DD)_{/N_\cD\C}$$
 	given by pulling back $\pi_*$ along $N_\cD(-)$ is precisely the functor $\sint_{\cD/\C}$.
 \end{propone}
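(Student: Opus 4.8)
The plan is to reduce the statement to its already-established analogue \cref{prop:universal fib cd} for $\P(\cD)$-enriched categories, using the fact that the whole $\sP(\cD)$-enriched formalism is the $\P(\cD\times\DD)$-enriched formalism post-composed with the level-wise nerve. First I would record the bookkeeping. As a Cartesian monoidal category $\sP(\cD)$ is isomorphic to $\P(\cD\times\DD)$, since products of spaces are computed level-wise; hence $\cat_{\sP(\cD)}=\cat_{\P(\cD\times\DD)}$, and by construction $\sint_{\cD/\C}$ is the composite of $\int_{\cD\times\DD/\C}\colon\Fun(\C,\P(\cD\times\DD))\to\Fun((\cD\times\DD)^{op},\cat)_{/\Und\C}$ with the level-wise nerve $N\colon\Fun((\cD\times\DD)^{op},\cat)\to\sP(\cD\times\DD)$. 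Likewise $N_\cD=N\circ\Und$ on objects and morphisms (compare \cref{lemma:undcat} and \cref{not:undcs}), so $N_\cD F=N(\Und F)$ for a $\sP(\cD)$-enriched functor $F\colon\C\to\sP(\cD)$. Under these identifications the universal $\cD$-left fibration $\pi_*\colon N_\cD\sP(\cD)_{D[0]/}\to N_\cD\sP(\cD)$ is exactly the level-wise nerve of the universal $\cD\times\DD$-Grothendieck opfibration $\pi_*^{\cD\times\DD}$ of \cref{prop:universal fib cd} applied with $\cD\times\DD$ in place of $\cD$; this comes from unwinding \cref{def:undc} and \cref{not:undcs}, using that $\uHom_{\sP(\cD)}(D[0],-)$ is the identity functor because $D[0]$ is terminal, so that $N_\cD\sP(\cD)_{D[0]/}=\sint_{\cD/\sP(\cD)}(\id)$ is the level-wise nerve of $\int_{\cD\times\DD/\P(\cD\times\DD)}(\id)$.

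With this in place the computation is immediate. For $F\colon\C\to\sP(\cD)$,
\[
\sint_{\cD/\C}F\;=\;N\!\left(\textstyle\int_{\cD\times\DD/\C}F\right)\;=\;N\!\left(\Und(F)^{*}\pi_*^{\cD\times\DD}\right),
\]
the second equality being \cref{prop:universal fib cd} for $\cD\times\DD$. Since $\Und(F)^{*}\pi_*^{\cD\times\DD}$ is the pullback of $\pi_*^{\cD\times\DD}$ along $\Und F$ formed pointwise in $\cat$, and $N\colon\cat\to\sset$ is a right adjoint and hence preserves pullbacks, applying $N$ pointwise turns this into the pullback of $N(\pi_*^{\cD\times\DD})=\pi_*$ along $N(\Und F)=N_\cD F$, which is precisely $N_\cD(F)^{*}\pi_*$ in $\sP(\cD\times\DD)_{/N_\cD\C}$. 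Hence $\sint_{\cD/\C}F=N_\cD(F)^{*}\pi_*$, and every identification used is natural in $F$ (the conclusion of \cref{prop:universal fib cd} is an equality of functors, not merely an isomorphism, and $N$ preserves pullbacks naturally), so $\sint_{\cD/\C}$ and $N_\cD(-)^{*}\pi_*$ agree as functors.

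The hard part is not analytic; it is the bookkeeping of the first paragraph, namely making precise that the notion of $\sP(\cD)$-enrichment is literally that of $\P(\cD\times\DD)$-enrichment followed by applying $N$, and that $N_\cD\sP(\cD)$ and $N_\cD\sP(\cD)_{D[0]/}$ of \cref{not:undcs} are the level-wise nerves of $\Und\P(\cD\times\DD)$ and of the $D[0]$-slice used in \cref{prop:universal fib cd}. This is just a matter of unwinding \cref{def:nervesnset}, \cref{lemma:undcat}, \cref{def:undc}, and \cref{not:undcs}. Alternatively, one can bypass the abstract argument and verify the equality $\sint_{\cD/\C}F[d,k]_l=(N_\cD(F)^{*}\pi_*)[d,k]_l$ directly, comparing the level-wise description of $\sint_{\cD/\C}F$ in \cref{rem:sint computation} with the evident level-wise formula for $N_\cD\sP(\cD)_{D[0]/}\to N_\cD\sP(\cD)$.
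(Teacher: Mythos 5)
Your proof is correct, and it takes a genuinely different route than the paper's. The paper argues that both $\sint_{\cD/\C}$ and $N_\cD(-)^{*}\pi_*$ preserve colimits, so one only needs to check agreement on the representable generators $\uMap(c,-)\times\{A\}$; this is then read off from \cref{ex:sintcd rep}, \cref{ex:sintcd constant} and \cref{prop:universal fib cd}. You instead use the definitional factorization $\sint_{\cD/\C}=N\circ\int_{\cD\times\DD/\C}$, identify $\pi_*$ as the level-wise nerve of the discrete universal opfibration $\pi_*^{\cD\times\DD}$, invoke \cref{prop:universal fib cd} for $\cD\times\DD$, and finish with the observation that the right adjoint $N$ preserves the (pointwise) pullbacks. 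Your argument has the advantage of not needing to separately assert that $N_\cD(-)^{*}\pi_*$ preserves colimits; the paper states this without justification, and the cleanest way to see it is in fact the factorization you spell out, so your proof supplies the detail behind that assertion, while the paper's proof is somewhat shorter given the pre-computed examples. One small point: the identification of $N_\cD\sP(\cD)_{D[0]/}\to N_\cD\sP(\cD)$ with the level-wise nerve of $\P(\cD\times\DD)_{D[0]/}\to\Und\P(\cD\times\DD)$, which you handle by unwinding definitions, is exactly \cref{ex:intcd rep} applied with $\C=\sP(\cD)\cong\P(\cD\times\DD)$ and $c=D[0]$ together with the observation $\uMap_{\sP(\cD)}(D[0],-)=\id$; citing that example would make the bookkeeping step fully explicit.
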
 

\begin{proof}
 The pullback functor and $\sint_{\cD/\C}$ are both colimit preserving so it suffices to compare them on representables $\uMap(c,-) \times \{A\}$ where $c$ is an object in $\C$ and $A$ is a $\cD$-space. By \cref{ex:sintcd rep} and \cref{ex:sintcd constant}, $\sint_{\cD/\C}\uMap(c,-) \times \{A\} = N_\cD\C_{c/} \times A$. The same holds for $N_\cD(-)^*\pi_*$ by applying \cref{prop:universal fib cd}.
\end{proof}

The next desired step would be to generalize it to a global equivalence not relying on $\C$, similar to \cref{prop:grothendieck fiber cd}. However, that would require us to have a fiber-wise approach to model categories which quite challenging and not very developed. Instead we will postpone this discussion to \cref{subsec:grothendieck general}, when we have translated the equivalence into the language of quasi-categories. 

\subsection{Grothendieck Construction for \texorpdfstring{$\cD$}{D}-Simplicial Spaces with weakly Constant Objects}\label{subsec:grothendieck general}
 Up until now our Grothendieck construction has focused on functors over nerves of $\sP(\cD)$-enriched categories. In this subsection we want to generalize the result to more general $\cD$-simplicial spaces. For that we need the strictification results from \cite{bergner2007threemodels}. Bergner proves the existence of the following diagram of simplicial spaces 
 $$X \xleftarrow{c_X} CX \xrightarrow{u_X} RFCX$$
 natural in the simplicial space $X$. Here the first map is a cofibrant replacement in a model structure for Segal categories given in  \cite[Theorem 7.1]{bergner2007threemodels} and is a weak equivalence in the complete Segal space model structure by \cite[Theorem 6.3]{bergner2007threemodels}. The second map is the derived unit map of the Quillen equivalence given in \cite[Theorem 8.6]{bergner2007threemodels} and is (again by \cite[Theorem 6.3]{bergner2007threemodels}) an equivalence in the model structure for complete Segal spaces and the object $RFCX$ is in fact the nerve (in the sense of \cref{def:nervesnset}) of a simplicially enriched category. 
 
 Our goal is to prove that we can use these results by Bergner to construct a zigzag of level-wise complete Segal space equivalences between a large class of $\cD$-simplicial spaces and nerves of $\sP(\cD)$-enriched categories. Before we proceed, let us simplify the notation and make it more consistent with with the notation we have been using up until now. 
 
 \begin{notone} \label{not:segal cat rep}
 	Let $\cC: \ss \to \ss$ denote the cofibrant replacement functor in model structure constructed by Bergner in \cite[Theorem 7.1]{bergner2007threemodels}. Moreover, we use the notation $N\C_{-}: \ss \to \ss$ for the functor that first applies the cofibrant replacement functor $\cC$ and then the adjunction $RFC$ defined in  \cite[Theorem 8.6]{bergner2007threemodels}. Notice, these functors come with a natural zigzag of complete Segal space equivalences $X \leftarrow \cC X \rightarrow N\C_X$
 \end{notone}
 
 We now proceed to generalize these construction to $\cD$-simplicial spaces. 
 
 \begin{defone}
 	Let $\cC_\cD: \sP(\cD\times\DD) \to \sP(\cD\times\DD)$ be the functor given by applying $\cC$ level-wise. More precisely, using the isomorphism $\Fun(\cD^{op},\ss) \cong \sP(\cD\times\DD)$ we can define it as the post-composition functor 
 	$$\cC_\cD = \Fun(\cD^{op},\cC): \Fun(\cD^{op},\ss) \to \Fun(\cD^{op},\ss).$$ 
 	Similarly, define $\cF_\cD: \sP(\cD\times\DD) \to \sP(\cD\times\DD)$ as 
 	$$\cF_\cD = \Fun(\cD^{op},N\C_{(-)}): \Fun(\cD^{op},\ss) \to \Fun(\cD^{op},\ss).$$
 \end{defone}
 
 Starting with an arbitrary $\cD$-simplicial space, $\cF_{\cD}X$ is level-wise the nerve of a simplicially enriched category, in the sense that for every object $d$ in $\cD$ $\Und_d\cF_{\cD}X$ is the nerve of a simplicially enriched category. This does not generally imply that the whole $\cD$-simplicial space is the nerve of a category enriched over $\sP(\cD)$. For that we need the {\it underlying $\sP(\cD)$-enriched category}.

\begin{defone} \label{def:cx}
	Let $\cD$ be a small category with terminal object $t$.
	Let $N_\cD\C_{(-)}: \sP(\cD\times\DD) \to \sP(\cD\times\DD)$, be the functor that takes a $\cD$-simplicial space $X$ to $\bH\cF(X)$, where $\bH$ was defined in \cref{def:bH}.
\end{defone} 

 We now have the following result regarding these definitions using \cref{lemma:more technical lemma}.
 
 \begin{lemone} \label{lemma:zigzag strictification}
 	Let $\cD$ be a small category with terminal object.
 	Let $X$ be a $\cD$-simplicial space with weakly constant objects. Then there exists a zigzag natural in $X$
 	$$ X \overset{c_X}{\longleftarrow} \cC_\cD X \overset{u_X}{\longrightarrow} \cF_\cD X \overset{r_X}{\longleftarrow} N_\cD\C_{X},$$
 	where $\C_X$ is an $\sP(\cD)$-enriched category and such that all morphisms in the zigzag are level-wise complete Segal space equivalences.
 \end{lemone}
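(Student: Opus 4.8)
The plan is to assemble the zigzag in two stages, mirroring its shape: first produce the left-hand portion $X \xleftarrow{c_X} \cC_\cD X \xrightarrow{u_X} \cF_\cD X$ by applying Bergner's constructions one level at a time, and then produce the right-hand map $\cF_\cD X \xleftarrow{r_X} N_\cD\C_X$ by feeding $\cF_\cD X$ into the construction $\bH$ of \cref{def:bH}. For the first stage, recall from \cref{not:segal cat rep} that every simplicial space $Y$ comes with a natural zigzag $Y \xleftarrow{c_Y} \cC Y \xrightarrow{u_Y} N\C_Y$ of complete Segal space equivalences in which $N\C_Y$ is the nerve of a simplicially enriched category. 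Applying $\Fun(\cD^{op},-)$ to this zigzag is exactly the passage from $\cC$, $N\C_{(-)}$ to $\cC_\cD$, $\cF_\cD$, so we obtain a natural zigzag $X \xleftarrow{c_X} \cC_\cD X \xrightarrow{u_X} \cF_\cD X$ of $\cD$-simplicial spaces. Since weak equivalences in the level-wise complete Segal space model structure (\cref{prop:levelcss}) are detected by the functors $\Und_d$, and $\Und_d c_X = c_{\Und_d X}$, $\Und_d u_X = u_{\Und_d X}$ are complete Segal space equivalences for every $d$, both $c_X$ and $u_X$ are level-wise complete Segal space equivalences.

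Next I would check that $\cF_\cD X$ is an $\sP(\cD)$-enriched Segal category in the sense of \cref{def:enriched segal cat}. For each $d$, $\Und_d \cF_\cD X = N\C_{\Und_d X}$ is the nerve of a simplicially enriched category, hence a Segal space with discrete space of objects, so $\cF_\cD X$ is a level-wise Segal category. To see it has weakly constant objects, recall (\cref{def:weakly constant objects}) that this means each map $\Und_t(-)\to\Und_d(-)$ is essentially surjective, and essential surjectivity — being defined through the complete Segal space fibrant replacement — is invariant under pre- and post-composition with complete Segal space equivalences. Running this observation through the commutative squares that compare $\Und_d \cC_\cD X$ with $\Und_d X$, and then $\Und_d \cF_\cD X$ with $\Und_d \cC_\cD X$ (whose vertical legs are complete Segal space equivalences by the previous paragraph), transports weak constancy of objects from $X$ first to $\cC_\cD X$ and then to $\cF_\cD X$.

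Now set $N_\cD\C_X := \bH\cF_\cD X$ as in \cref{def:cx}. Since $\cF_\cD X$ is a level-wise Segal category with weakly constant objects, \cref{lemma:more technical lemma} gives that the canonical map $r_X \colon \bH\cF_\cD X \to \cF_\cD X$ is a level-wise complete Segal space equivalence, completing the zigzag. It remains to identify $\bH\cF_\cD X$ as the $\cD$-nerve of an $\sP(\cD)$-enriched category. By the construction of $\bH$, $\bH\cF_\cD X$ is a level-wise Segal category whose value is the discrete $\cD$-space $\cF_\cD X[t,0]$, which is constant in $d$; moreover, because the pullback defining $\bH$ is formed over the object $\coDisc \cF_\cD X[t,0]$ that is constant in $\cD$, the transition maps of $\bH\cF_\cD X$ are the identity on objects. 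Combining this with the characterization of the essential image of $N_\cD$ (the $\sP(\cD)$-variant of \cref{prop:nervesnset ff} and \cref{lemma:undcat}, cf. \cref{rem:ss analogue}), one identifies $\bH\cF_\cD X = N_\cD\C_X$, where $\C_X$ is the $\sP(\cD)$-enriched category with object set $\cF_\cD X[t,0]$ and mapping $\cD$-spaces $\uMap_{\C_X}(x,y)[d] = \Map_{\C_{\Und_d X}}(\phi_d x,\phi_d y)$, with $\phi_d$ the transition functor $\C_{\Und_t X}\to\C_{\Und_d X}$. Finally, naturality of the whole zigzag in $X$ is automatic, since $\cC_\cD$, $\cF_\cD$ and $\bH$ are functors.

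I expect the main obstacle to be the last identification: checking that $\bH\cF_\cD X$ is genuinely a level-wise \emph{nerve} of a simplicially enriched category — so that it really lies in the essential image of $N_\cD$ and \cref{the:simp grothendieck} can later be invoked over $N_\cD\C_X$ — and not merely a level-wise Segal category. The factorization used inside \cref{def:bH} is taken in the injective model structure and a priori need not respect the strict Segal condition. The remedy I would pursue is to choose this factorization compatibly with the level-wise nerve structure, for instance by replacing each transition functor $\phi_d$ by a level-wise mapping-cylinder factorization (which is a monomorphism on objects and morphisms), so that the intermediate object $\bar{\cF_\cD X}$, and hence $\bH\cF_\cD X$, stays level-wise a nerve with discrete objects that are constant in $d$; everything else in the argument is formal transfer along the level-wise complete Segal space model structure.
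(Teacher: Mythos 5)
Your proposal reconstructs the paper's intended argument faithfully: the zigzag $X \leftarrow \cC_\cD X \to \cF_\cD X$ is indeed obtained by applying Bergner's $\cC$ and $N\C_{(-)}$ level-wise and noting level-wise complete Segal space equivalences are detected by the $\Und_d$, the transport of weak constancy of objects across those equivalences is exactly what makes \cref{lemma:more technical lemma} applicable, and $r_X$ comes from \cref{def:bH}. The paper's own justification is essentially the one-line remark pointing to \cref{lemma:more technical lemma}, so you have taken the same route while filling in the steps the paper leaves implicit.

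The subtlety you flag at the end is genuine and worth taking seriously: the paper's text explicitly only asserts that $\bH X$ is a \emph{level-wise Segal category} with constant discrete value, which satisfies the weak Segal condition level-wise but is not automatically a strict nerve in the essential image of $N_\cD$. Since \cref{the:simp grothendieck} is later invoked with $\C_X$ appearing as an honest $\sP(\cD)$-enriched category (e.g.\ through $\uFun(\C_X,\sP(\cD))$), the identification $\bH\cF_\cD X = N_\cD\C_X$ does need strictness, and the factorization in \ref{eq:fib rep two} is a priori only a factorization in the injective model structure. Your proposed remedy is in the right spirit but not quite as stated: a mapping-cylinder factorization of $\Inc_t\Und_t X \to X$ yields a cofibration followed by a weak equivalence rather than a trivial fibration, and, more importantly, it destroys the triangle-identity consequence $\Und_t\bar X = \Und_t X$ that \cref{def:bH} relies on. A more robust fix is to use that $\cF_\cD X$ is by construction the $\cD$-nerve of a functor $\cD^{op} \to \cat_\DD$ and perform the factorization \emph{inside} $\Fun(\cD^{op},\cat_\DD)$ — for instance, at each $d$, freely adjoin to $\C_{\Und_dX}$ a new object for each $c \in X[t,0]$ together with a chosen isomorphism to $\phi_d(c)$, and then take nerves — so that the intermediate object and the pullback $\bH\cF_\cD X$ remain level-wise strict nerves with constant discrete objects, and the rest of your argument carries over verbatim.
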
 

\begin{remone}
	As already mentioned in \cref{rem:no segal cats}, we are not claiming (and it is not necessary for our purposes) that the object $N_\cD\C_X$ we constructed here is a fibrant object in a model structure on $\sP(\cD\times\DD)$. Indeed, according to \cite[Beginning of Section 8]{bergnerrezk2020comparisonii} constructing model structures for Segal categories on $\cD$-simplicial spaces does not seem to be possible for arbitrary $\cD$.
\end{remone}

We can use this result to state the following valuable result.

\begin{theone} \label{the:strictification dleft fib}
 Let $\cD$ be a small category with terminal object.
 Let $X$ be a $\cD$-simplicial space with weakly constant objects. Then we have the following diagram of $\sP(\cD)$-enriched Quillen equivalences.
 \begin{center}
 		\begin{tikzcd}[row sep=0.3in, column sep=0.9in]
 		\Fun(\C_X,\sP(\cD))^{proj} \arrow[r, shift left = 1.8, "\sint_{\cD/\C_X}"] & 
 		(\sP(\cD\times\DD)_{/N_\cD\C_X})^{\cD-cov} \arrow[l, shift left=1.8, "\sH_{\cD/\C_X}", "\bot"'] \arrow[r, shift left=1.8, "\sbT_{\cD/\C_X}"] \arrow[d, shift left =1.8, "(r_X)_!"] & \Fun(\C_X,\sP(\cD))^{inj} \arrow[l, shift left=1.8, "\sbI_{\cD/\C_X}", "\bot"'] \\
 		 &(\sP(\cD\times\DD)_{/\cF_\cD X})^{\cD-cov} \arrow[u, shift left=1.8, "(r_X)^*", "\rbot"'] \arrow[d, shift right=1.8, "(u_X)^*"'] & \\
 		 &(\sP(\cD\times\DD)_{/\cC_\cD X})^{\cD-cov} \arrow[u, shift right=1.8, "(u_X)_!"', "\rbot"] \arrow[d, shift left=1.8, "(c_X)_!", "\rbot"'] & \\
 		 &(\sP(\cD\times\DD)_{/X})^{\cD-cov} \arrow[u, shift left=1.8, "(c_X)^*"] & 
 	\end{tikzcd}
 \end{center}
\end{theone}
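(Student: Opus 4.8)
The plan is to obtain the diagram by pasting together three results that are already available: the Grothendieck construction over nerves (\cref{the:simp grothendieck}), the invariance of the $\cD$-covariant model structure under level-wise complete Segal space equivalences (\cref{the:dcov css inv}), and the strictification zigzag (\cref{lemma:zigzag strictification}).

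First I would apply \cref{the:simp grothendieck} to the $\sP(\cD)$-enriched category $\C_X$ produced by \cref{lemma:zigzag strictification}. Since $\C_X$ is a genuine $\sP(\cD)$-enriched category, this step is unconditional and immediately yields the top row of the diagram: the two $\sP(\cD)$-enriched Quillen equivalences $\sint_{\cD/\C_X} \dashv \sH_{\cD/\C_X}$ between $\uFun(\C_X,\sP(\cD))^{proj}$ and $(\sP(\cD\times\DD)_{/N_\cD\C_X})^{\cD-cov}$, and $\sbT_{\cD/\C_X} \dashv \sbI_{\cD/\C_X}$ between $(\sP(\cD\times\DD)_{/N_\cD\C_X})^{\cD-cov}$ and $\uFun(\C_X,\sP(\cD))^{inj}$.

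Next, since $\cD$ has a terminal object and $X$ has weakly constant objects, \cref{lemma:zigzag strictification} supplies the natural zigzag $X \overset{c_X}{\longleftarrow} \cC_\cD X \overset{u_X}{\longrightarrow} \cF_\cD X \overset{r_X}{\longleftarrow} N_\cD\C_{X}$ in which $c_X$, $u_X$, and $r_X$ are all level-wise complete Segal space equivalences. For each of these three maps $f$ I would invoke \cref{the:dcov css inv}, which asserts that the base-change adjunction $f_! \dashv f^*$ between the $\cD$-covariant model structures on the two over-categories is a $\sP(\cD)$-enriched Quillen adjunction, and moreover a Quillen equivalence because $f$ is a level-wise CSS equivalence. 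Taking $f = r_X$, then $f = u_X$, then $f = c_X$ produces the three vertical $\sP(\cD)$-enriched Quillen equivalences $(r_X)_! \dashv (r_X)^*$ over $N_\cD\C_X$ and $\cF_\cD X$, $(u_X)_! \dashv (u_X)^*$ over $\cC_\cD X$ and $\cF_\cD X$, and $(c_X)_! \dashv (c_X)^*$ over $\cC_\cD X$ and $X$, arranged exactly as in the statement. Grafting this tower onto the node $(\sP(\cD\times\DD)_{/N_\cD\C_X})^{\cD-cov}$ of the top row gives the desired diagram.

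I do not expect a genuine obstacle: all the real work has been done in the cited results, and what remains is only to confirm that the hypotheses line up, namely that the maps in the zigzag of \cref{lemma:zigzag strictification} are level-wise CSS equivalences so that \cref{the:dcov css inv} is applicable, which is part of that lemma's statement. The one point worth flagging explicitly is that, in contrast to \cref{the:grothendieck double proj}, no extra assumption on $\sbI_{\cD/\C_X}$ is needed here, since the theorem only records the projective model structure on the left copy of $\Fun(\C_X,\sP(\cD))$ and the injective one on the right copy, precisely matching the unqualified output of \cref{the:simp grothendieck}.
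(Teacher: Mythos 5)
Your proposal is correct and follows exactly the paper's own proof: the top row is \cref{the:simp grothendieck} applied to $\C_X$, and the vertical tower is \cref{the:dcov css inv} applied to the three maps of the zigzag from \cref{lemma:zigzag strictification}, which that lemma guarantees are level-wise complete Segal space equivalences.
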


\begin{proof}
 The horizontal Quillen equivalences follow from \cref{the:simp grothendieck} and the vertical Quillen equivalences from \cref{the:dcov css inv} combined with the fact that $r_X,c_X,u_X$ are level-wise complete Segal space equivalences.	
\end{proof}
 
This result has the following direct corollary that is worth stating explicitly and can be thought of as a {\it strictification of $\cD$-left fibrations}.

\begin{corone} \label{cor:left fib strict}
	Let $\cD$ be a small category with terminal object.
	Let $L \to X$ be a $\cD$-left fibration over a $\cD$-simplicial space $X$ with weakly constant objects. Then there exists an (up to equivalence) unique $\sP(\cD)$-enriched functor $G:\C_X \to \sP(\cD)$ and a diagram 
	\begin{center}
		\begin{tikzcd}
			L \arrow[d, twoheadrightarrow] & (c_X)^*L \arrow[d, twoheadrightarrow] \arrow[l, "\simeq_{CSS}"'] \arrow[r, "\simeq_{CSS}"] \arrow[dl, phantom, "\urcorner", very near start] \arrow[dr, phantom, "\ulcorner", very near start] &  \widehat{(u_X)_!(c_X)^*L} \arrow[d, twoheadrightarrow] & \sbI_{\cD/\C_X} G \arrow[d, twoheadrightarrow] \arrow[l, "\simeq_{CSS}"'] \arrow[dl, phantom, "\urcorner", very near start]& \sint_{\cD/\C_X} G \arrow[dl, bend left=20] \arrow[l, "\simeq_{inj}"'] \\
			X & \cC_\cD X \arrow[l, "\simeq_{CSS}"'] \arrow[r, "\simeq_{CSS}"] & \cF_\cD X& N_\cD\C_X \arrow[l, "\simeq_{CSS}"']
		\end{tikzcd}
	\end{center}
	satisfying the following conditions:
	\begin{itemize}
		\item The squares are homotopy pullback squares.
		\item The horizontal morphism $\sint_{\cD/\C_X} F \to \sbI_{\cD/\C_X}F$ is an injective equivalence.
		\item The remaining three horizontal morphisms are all level-wise complete Segal space equivalences.
		\item The vertical morphism $\sint_{\cD/\C_X} F \to N_\cD\C_X$ is a $\cD$-left morphism.
		\item The remaining four vertical morphisms are $\cD$-left fibrations. 
	\end{itemize} 
\end{corone}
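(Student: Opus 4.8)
The plan is to assemble the diagram by combining the zigzag of base changes from \cref{lemma:zigzag strictification} with the Grothendieck construction from \cref{the:simp grothendieck}, tracking the fibration $L \to X$ along the way via repeated pullback (and, where the base map is only a level-wise complete Segal space equivalence, via derived base change). Concretely, I would proceed in four stages.

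First, transport $L$ down the left column. Starting from $L \to X$, pull back along $c_X : \cC_\cD X \to X$ to get $(c_X)^*L \to \cC_\cD X$, which is again a $\cD$-left fibration by \cref{lemma:pullback nlfib}, and the left square is a (strict, hence homotopy) pullback square; since $c_X$ is a level-wise CSS equivalence and $L \to X$ is a $\cD$-left fibration, \cref{the:dcov css inv} (more precisely its exponentiability/invariance content, together with property \eqref{leftitem:smooth maps} of the covariant model structure applied level-wise) gives that $(c_X)^*L \to L$ is a level-wise CSS equivalence. Next apply $(u_X)_!$ to $(c_X)^*L \to \cC_\cD X$ and take the injective fibrant replacement $\widehat{(u_X)_!(c_X)^*L} \to \cF_\cD X$, which is a $\cD$-left fibration by \cref{lemma:cdleft map} once one checks $(u_X)_!(c_X)^*L \to \cF_\cD X$ is a $\cD$-left morphism, or — more cleanly — argue that $(u_X)^*$ is a Quillen equivalence of $\cD$-covariant model structures by \cref{the:dcov css inv}, so the derived unit $(c_X)^*L \to (u_X)^*\widehat{(u_X)_!(c_X)^*L}$ is a $\cD$-covariant (hence, between $\cD$-left fibrations, by \cref{the:ncov equiv nlfib}, a level-wise injective, hence level-wise CSS) equivalence; the resulting square over $\cC_\cD X \to \cF_\cD X$ is then a homotopy pullback square since both vertical maps are $\cD$-left fibrations and the horizontal is a CSS equivalence.

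Second, transport across the last map $r_X : N_\cD\C_X \to \cF_\cD X$. Since $r_X$ is a level-wise CSS equivalence, \cref{the:dcov css inv} says $(r_X)^* : (\sP(\cD\times\DD)_{/\cF_\cD X})^{\cD-cov} \to (\sP(\cD\times\DD)_{/N_\cD\C_X})^{\cD-cov}$ is a Quillen equivalence. Apply the Grothendieck Quillen equivalences of \cref{the:simp grothendieck}: set $G := \sH_{\cD/\C_X}\big((r_X)^*\widehat{(u_X)_!(c_X)^*L}\big)$, a projectively fibrant $\sP(\cD)$-enriched functor $\C_X \to \sP(\cD)$. Then $\sint_{\cD/\C_X}G \to N_\cD\C_X$ is a $\cD$-left morphism by \cref{lemma:sint cdleft}, its injective fibrant replacement is $\sbI_{\cD/\C_X}G$ (this is exactly the content used in the proof of \cref{the:simp grothendieck}: $\sint_{\cD/\C_X}G[d,n] \cong \sint_{\cD/\C_X}G[d,0]\times_{N_\cD\C_X[d,0]}N_\cD\C_X[d,n]$ already makes the injectively fibrant replacement a $\cD$-left fibration), so $\sint_{\cD/\C_X}G \to \sbI_{\cD/\C_X}G$ is an injective equivalence; and the derived counit of $(\sint_{\cD/\C_X},\sH_{\cD/\C_X})$, which is a Quillen equivalence, identifies $\sbI_{\cD/\C_X}G$ with $(r_X)^*\widehat{(u_X)_!(c_X)^*L}$ up to $\cD$-covariant equivalence between $\cD$-left fibrations, hence (\cref{the:ncov equiv nlfib}) up to level-wise CSS equivalence. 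This produces the rightmost square as a homotopy pullback square.

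Third, uniqueness of $G$: any other $\sP(\cD)$-enriched functor $G'$ whose $\sbI_{\cD/\C_X}G'$ (equivalently $\sint_{\cD/\C_X}G'$, via the injective equivalence just noted) fits into the analogous homotopy pullback diagram is $\cD$-covariant equivalent to $G$ over $N_\cD\C_X$; since $(\sint_{\cD/\C_X},\sH_{\cD/\C_X})$ is a Quillen equivalence, $G$ and $G'$ are equivalent in $\uFun(\C_X,\sP(\cD))^{proj}$. The main obstacle I anticipate is the careful bookkeeping in the middle column — specifically verifying that applying $(u_X)_!$ and then injective fibrant replacement interacts correctly with pullback so that the middle square genuinely becomes a homotopy pullback, rather than just commuting up to equivalence; the clean way around this is to phrase everything in terms of the derived functors of the Quillen equivalences $(c_X)^*$, $(u_X)^*$, $(r_X)^*$ (all Quillen equivalences by \cref{the:dcov css inv}) and invoke that a Quillen equivalence between covariant model structures sends $\cD$-left fibrations to objects whose fibrant replacements are $\cD$-left fibrations computing the correct homotopy pullback, with \cref{the:ncov equiv nlfib} converting "$\cD$-covariant equivalence between $\cD$-left fibrations" into "level-wise CSS equivalence" at each stage. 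All the homotopy-pullback claims then reduce to the elementary fact that a strict pullback of a $\cD$-left fibration along any map is a homotopy pullback (since $\cD$-left fibrations are injective, hence level-wise Reedy, fibrations), combined with two-out-of-three.
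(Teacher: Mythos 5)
Your overall strategy — unwinding the chain of Quillen equivalences in \cref{the:strictification dleft fib} and tracking $L$ through them — is the same as the paper's (implicit) proof, and your handling of the left and middle squares is correct: the left is a strict pullback of an injective fibration hence a homotopy pullback, with $(c_X)^*L \to L$ a level-wise CSS equivalence by the exponentiability property applied level-wise; the middle is recognized as a homotopy pullback via the derived unit of $((u_X)_!,(u_X)^*)$, which between $\cD$-left fibrations is a level-wise CSS equivalence by \cref{the:ncov equiv nlfib}.

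There is a gap in the rightmost square. You set $G := \sH_{\cD/\C_X}\big((r_X)^*\widehat{(u_X)_!(c_X)^*L}\big)$, which is \emph{projectively} fibrant, and then claim that $\sbI_{\cD/\C_X}G$ is the injective fibrant replacement of $\sint_{\cD/\C_X}G$. For this you need $\sbI_{\cD/\C_X}G$ to be a $\cD$-left fibration — which is one of the corollary's explicit assertions about the ``remaining four vertical morphisms'' — and that requires $G$ to be fibrant in the \emph{injective} model structure, since $(\sbT_{\cD/\C},\sbI_{\cD/\C})$ is a Quillen equivalence $(\sP(\cD\times\DD)_{/N_\cD\C})^{\cD-cov} \rightleftarrows \uFun(\C,\sP(\cD))^{inj}$ and $\sbI_{\cD/\C}$ is right Quillen only against the injective model structure. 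For general $\cD$ there is no reason $\sbI_{\cD/\C}$ should take projectively fibrant objects to $\cD$-left fibrations; this is precisely the hypothesis that \cref{the:grothendieck double proj} makes explicit and that is only verified in \cref{lemma:grothendieck} for the elegant Reedy diagram $\Theta_k\times\DD^{n-k}$, not for arbitrary $\cD$. The fix is to take $G$ to be an injective fibrant replacement of $\sH_{\cD/\C_X}\big((r_X)^*\widehat{(u_X)_!(c_X)^*L}\big)$ (equivalently, of $\sbT_{\cD/\C_X}$ applied to the same object); then $\sbI_{\cD/\C_X}G$ is $\cD$-cov fibrant, the natural map $\sint_{\cD/\C_X}G \to \sbI_{\cD/\C_X}G$ is the derived unit of $(\sbT_{\cD/\C},\sbI_{\cD/\C})$ and hence a $\cD$-covariant equivalence, and factoring it through $\widehat{\sint_{\cD/\C_X}G}$ together with \cref{the:ncov equiv nlfib} shows it is an injective equivalence; the identification $\sbI_{\cD/\C_X}G \simeq (r_X)^*\widehat{(u_X)_!(c_X)^*L}$ and hence the homotopy pullback property then come from the derived counit of $(\sint_{\cD/\C},\sH_{\cD/\C})$, just as you argue.
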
 

We can use this result to better understand the relation between values of and the fibers. As is often the case we first need a technical lemma regarding simplicial spaces. 

\begin{lemone} \label{lemma:technical equivalent objects}
	Let $X$ be an arbitrary simplicial space. Let $x,y: F[0] \to X$ be two maps. Let $X \hookrightarrow \tilde{X} \hookrightarrow \hat{X}$ be a choice of fibrant replacement in the Segal space model structure ($\tilde{X}$) followed by a complete Segal space fibrant replacement $\hat{X}$. Then the following statements are equivalent.
	\begin{enumerate}
		\item The objects $x,y$ in $\tilde{X}$ are equivalent in the Segal space.
		\item The points $x,y$ in $\hat{X}$ are in the same path component of $\hat{X}_0$.
		\item The objects $x,y$ in the homotopy category $\Ho\tilde{X}$ are isomorphic.
	\end{enumerate}
\end{lemone}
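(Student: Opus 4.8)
The plan is to establish $(1)\Leftrightarrow(3)$ essentially by unwinding Rezk's definition of the homotopy category of a Segal space, and $(1)\Leftrightarrow(2)$ by combining the fact that the completion $\tilde X\to\hat X$ is a Dwyer--Kan equivalence with the characterization of completeness in terms of the map $s_0\colon W_0\to W_{\hoequiv}$. Throughout, note that $x$ and $y$ determine genuine vertices of $\tilde X_0$ and of $\hat X_0$, since $X\hookrightarrow\tilde X\hookrightarrow\hat X$ is a level-wise inclusion.

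First, $(1)\Leftrightarrow(3)$. By definition (Rezk \cite[Section 5]{rezk2001css}, see also \cite[Section 1]{rasekh2017left}) the homotopy category $\Ho\tilde X$ has as objects the vertices of $\tilde X_0$, with $\Hom_{\Ho\tilde X}(a,b)=\pi_0\map_{\tilde X}(a,b)$, and two vertices $a,b$ are \emph{equivalent in the Segal space} $\tilde X$ precisely when there is a morphism $g\in\tilde X_1$ with $(d_1g,d_0g)=(a,b)$ whose class $[g]$ is invertible in $\Ho\tilde X$ --- which is exactly the assertion that $a$ and $b$ are isomorphic in $\Ho\tilde X$. So the equivalence of $(1)$ and $(3)$ is immediate.

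Next, $(1)\Leftrightarrow(2)$. I would first reduce from $\tilde X$ to $\hat X$: the inclusion $\tilde X\to\hat X$ is a complete Segal space equivalence between Segal spaces, hence a Dwyer--Kan equivalence by \cite[Theorem 7.7]{rezk2001css}; in particular $\map_{\tilde X}(x,y)\to\map_{\hat X}(x,y)$ is a weak equivalence, so $\Ho\tilde X\to\Ho\hat X$ is fully faithful and carries $x,y$ to $x,y$, whence $x$ and $y$ are equivalent in $\tilde X$ iff they are equivalent in $\hat X$. It therefore remains to show that for the \emph{complete} Segal space $W:=\hat X$, two vertices $x,y$ of $W_0$ are equivalent iff they lie in the same path component of $W_0$. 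For the ``if'' direction, which needs only Reedy fibrancy, I would choose a path $\gamma$ from $x$ to $y$ in $W_0$ and lift the path $(\gamma,\mathrm{const}_y)$ in $W_0\times W_0$ through the Kan fibration $(d_1,d_0)\colon W_1\to W_0\times W_0$ starting from $\id_y$; the endpoint is a morphism $g\colon x\to y$ in the same path component of $W_1$ as $\id_y$, hence in $W_{\hoequiv}$ (a union of path components of $W_1$), so $x$ and $y$ are equivalent. For the ``only if'' direction I would use completeness: if $g\colon x\to y$ is an equivalence then $g\in W_{\hoequiv}$, and since $s_0\colon W_0\to W_{\hoequiv}$ is a weak equivalence (Rezk \cite[Section 6]{rezk2001css}), $\pi_0 s_0$ is surjective, so $[g]=[\id_z]$ in $\pi_0 W_{\hoequiv}$ for some vertex $z$ of $W_0$; a path in $W_{\hoequiv}$ from $g$ to $\id_z$ maps under $d_1$ and $d_0$ to paths from $x$ to $z$ and from $y$ to $z$ in $W_0$, so $x$ and $y$ lie in the same path component of $W_0$.

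The only place that requires any care is the bookkeeping distinguishing ``paths in $W_0$'', ``path components of $W_1$'' and ``morphisms of $\Ho W$'', and the ``only if'' direction above is the one spot where completeness genuinely enters; for this I will rely on the standard facts from \cite[Section 5, 6]{rezk2001css} that $W_{\hoequiv}$ is a union of path components of $W_1$ and that the source and target maps restrict to maps $W_{\hoequiv}\to W_0$ splitting $s_0$. Once these and \cite[Theorem 7.7]{rezk2001css} are in hand, the argument is formal.
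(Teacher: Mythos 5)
Your proof is correct and follows essentially the same route as the paper's (two-sentence) proof, which simply cites Rezk's definition of homotopy equivalences \cite[Subsection 5.5]{rezk2001css} for $(1)\Leftrightarrow(3)$ and \cite[Proposition 6.4(4)]{rezk2001css} for $(1)\Leftrightarrow(2)$. You unpack these citations — re-deriving the complete-Segal-space fact via path-lifting through $(d_1,d_0)\colon W_1\to W_0\times W_0$ and the equivalence $s_0\colon W_0\to W_{\hoequiv}$ — and you make explicit the reduction from $\tilde X$ to $\hat X$ via Dwyer--Kan equivalence (\cite[Theorem 7.7]{rezk2001css}), a step the paper leaves implicit.
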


\begin{proof}
	The equivalence $(1) \Leftrightarrow (3)$ follows from the definition of homotopy equivalences in Segal spaces \cite[Subsection 5.5]{rezk2001css} and the equivalence $(1) \Leftrightarrow (2)$ from the implication of the completeness condition \cite[Proposition 6.4 (4)]{rezk2001css}.
\end{proof}

\begin{defone} \label{def:eq objects simp space}
	Let $X$ be a simplicial space. Define an equivalence relation on $X_{00}$ as follows: $x \sim y$ if and only if it satisfies one of the three equivalent conditions in \cref{lemma:technical equivalent objects}.
\end{defone}

\begin{lemone} \label{lemma:surjective path component}
	Let $X \simeq Y$ be two simplicial spaces that are equivalent in the complete Segal space model structure. Then we have a bijection $X_{00}/_\sim \cong Y_{00}/_\sim$, where $\sim$ is the equivalence relation defined in \cref{def:eq objects simp space}.
\end{lemone}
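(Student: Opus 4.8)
The plan is to identify $X_{00}/_\sim$ with a manifestly homotopy-invariant set --- the set of path components $\pi_0(\hat{X}_0)$ of the $0$-th space of a complete Segal space fibrant replacement --- and then to invoke the invariance of this fibrant replacement under complete Segal space equivalences.

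First I would prove that, for a fixed functorial choice of the fibrant replacements $X \hookrightarrow \tilde{X} \hookrightarrow \hat{X}$ as in \cref{lemma:technical equivalent objects}, the map $X_{00} \to \hat{X}_{00} \to \pi_0(\hat{X}_0)$ descends to a bijection $X_{00}/_\sim \cong \pi_0(\hat{X}_0)$. Injectivity is immediate: by condition $(2)$ of \cref{lemma:technical equivalent objects}, two points of $X_{00}$ land in the same path component of $\hat{X}_0$ precisely when they are $\sim$-related. For surjectivity I would argue in two stages. The Segal space fibrant replacement $X \to \tilde{X}$ may be built from Reedy fibrant replacement (a Reedy, hence level-wise, equivalence) followed by Segalification, which attaches cells only along pushout-products of the maps $G[n] \to F[n]$ with $\partial\Delta[l] \to \Delta[l]$; since each such map is an isomorphism in simplicial degree $0$, Segalification does not change the $0$-th space, so $\tilde{X}_0$ is Kan equivalent to $X_0$ and hence $\pi_0(X_0) \cong \pi_0(\tilde{X}_0)$. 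As every path component of a space contains a vertex, $X_{00}$ surjects onto $\pi_0(X_0) \cong \pi_0(\tilde{X}_0)$. For the completion step $\tilde{X} \hookrightarrow \hat{X}$, this is a complete Segal space equivalence between Segal spaces, hence a Dwyer--Kan equivalence by \cite[Theorem 7.7]{rezk2001css}, so $\Ho\tilde{X} \to \Ho\hat{X}$ is essentially surjective; thus every object of $\hat{X}$ is isomorphic in $\Ho\hat{X}$ to one in the image of $\tilde{X}_{00}$, and by condition $(3)$ of \cref{lemma:technical equivalent objects} the map $\pi_0(\tilde{X}_0) \to \pi_0(\hat{X}_0)$ is surjective. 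Composing the two stages gives surjectivity of $X_{00} \to \pi_0(\hat{X}_0)$.

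With this identification the lemma follows at once. Given a zigzag of weak equivalences realising $X \simeq Y$ in the complete Segal space model structure, applying the functorial complete Segal space fibrant replacement yields a zigzag of weak equivalences between complete Segal spaces; each of these is a Reedy equivalence (a local weak equivalence between local fibrant objects is a weak equivalence in the underlying Reedy model structure, \cite{hirschhorn2003modelcategories}), hence a level-wise Kan equivalence, and therefore induces a bijection on $\pi_0$ in simplicial degree $0$. Thus $\pi_0(\hat{X}_0) \cong \pi_0(\hat{Y}_0)$, and combining this with the bijection $X_{00}/_\sim \cong \pi_0(\hat{X}_0)$ of the previous paragraph and its counterpart for $Y$ gives $X_{00}/_\sim \cong Y_{00}/_\sim$.

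I expect the main obstacle to be the surjectivity half of the identification $X_{00}/_\sim \cong \pi_0(\hat{X}_0)$: one must verify that none of Segalification, Reedy fibrant replacement, or completion creates a path component of the $0$-th space not already witnessed by an original $0$-simplex of $X$. Segalification and Reedy fibrant replacement are handled by inspecting the generating cells, while completion is controlled via essential surjectivity of $\Ho(-)$ applied to the completion map. The subsidiary point requiring care is that $\sim$, as defined through \cref{lemma:technical equivalent objects}, is independent of the chosen fibrant replacements, which is exactly what licenses the use of a single functorial choice throughout the argument.
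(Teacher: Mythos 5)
Your proof is correct and follows the same high-level strategy as the paper's: establish a bijection $X_{00}/_\sim \cong \pi_0(\hat{X}_0)$ (injectivity via condition $(2)$ of \cref{lemma:technical equivalent objects}, surjectivity by tracking the fibrant replacement), then invoke Reedy equivalence of fibrant replacements of equivalent objects. Where you diverge, and where your version is actually more careful, is the surjectivity half. The paper's proof asserts that each of the three families of generating trivial cofibrations for the complete Segal space model structure is surjective on $\pi_0$ in simplicial degree $0$, but this is not literally true of the completeness generator $(F[0] \to E[1]) \square (\partial \Delta[l] \to \Delta[l])$ at $l=0$: in degree $0$ this map is $\{*\}\to\{0,1\}$, which is not surjective on components, so an individual pushout along it genuinely adds a new component to the $0$-th space. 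Your two-stage decomposition handles this cleanly: the Reedy and Segal steps are under control at degree $0$ (the Reedy step is a level-wise Kan equivalence, and the Segal maps $G[n]\to F[n]$ with $n\geq 2$ are isomorphisms in degree $0$), while the completion step is dealt with through Dwyer--Kan equivalence and essential surjectivity rather than by inspecting cells, which is the right tool for that step since it is precisely completeness, not cell attachment, that forces the new components to fall back into old ones. So this is the same argument at its core but with a genuine local repair of the surjectivity step.
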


\begin{proof}
	Let $X \to \hat{X}$ be a choice of fibrant replacement in the complete Segal space model structure, which, by \cite[Theorem 7.2]{rezk2001css}, is obtained by applying the small object argument to maps of the form
	\begin{itemize}
		\item $(\partial F[n] \to F[n]) \square (\Lambda[l]_i \to \Delta[l])$,
		\item $(G[n] \to F[n]) \square (\partial \Delta[l] \to \Delta[l])$,
		\item $(F[0] \to E[1]) \square (\partial \Delta[l] \to \Delta[l])$.
	\end{itemize}
	Then the map $X_0 \to \hat{X}$ is surjective on path-components as all three classes of maps in the list above have this property. This means the map $X_{00}/_\sim \to \pi_0(\hat{X}_0)$ is surjective. On the other hand, by \cref{lemma:technical equivalent objects}, $x \sim y$ in $X_{00}$ if and only if $[x] = [y]$ in $\pi_0(\hat{X}_0)$, which implies that the map $X_{00}/_\sim \to \pi_0(\hat{X}_0)$ is injective as well. Hence, we have a bijection $X_{00}/_\sim \cong \pi_0(\hat{X}_0)$. 
	
	Finally, recall that two equivalent complete Segal spaces are necessarily Reedy equivalent, so the result follows from the following chain of bijections
	$$X_{00}/_\sim \cong \pi_0(\hat{X}_0) \cong \pi_0(\hat{Y}_0) \cong  Y_{00}/_\sim,$$
	where $\hat{X}$ and $\hat{Y}$ are the fibrant replacement of $X$  and $Y$ in the complete Segal space model structure. 
\end{proof}

The lemma implies that there is a bijection between equivalence classes of objects of $X$ and $N_\cD\C_X$. Let us denote an object in $X$ and its corresponding object in $N_\cD\C_X$ both by $x$. Then we have the following result.

\begin{corone} \label{cor:value preservation}
	Let $\cD$ be a small category with terminal object and let $X$ have weakly constant objects. Let $L_x \to X$ be the $\cD$-covariant fibrant replacement of the map $\{x\}: F[t,0] \to X$. Then corresponding functor given via \cref{cor:left fib strict} is the corepresentable functor $\uMap(x,-): \C_X\to \sP(\cD)$. In particular for every $\cD$-left fibration $L$ with corresponding functor $F$ we have an equivalence of $\cD$-spaces $\Fib_xL \simeq F(x)$. 
\end{corone}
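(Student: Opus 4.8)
The statement has two parts: first, identifying the functor $G: \C_X \to \sP(\cD)$ associated (via \cref{cor:left fib strict}) to the $\cD$-covariant fibrant replacement $L_x \to X$ of $\{x\}: F[t,0]\to X$ as the corepresentable functor $\uMap_{\C_X}(x,-)$; second, deducing the fiber identification $\Fib_xL \simeq F(x)$ for an arbitrary $\cD$-left fibration $L\to X$ with associated functor $F$. The first part is the substantive one; the second is a formal consequence of the Grothendieck equivalence together with the enriched Yoneda lemma.

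For the first part, I would trace the object $\{x\}: F[t,0]\to X$ through the zigzag of \cref{lemma:zigzag strictification}. The maps $c_X, u_X, r_X$ are all level-wise complete Segal space equivalences, so by \cref{the:dcov css inv} the various pullback/pushforward functors between $\cD$-covariant model structures are Quillen equivalences; hence the $\cD$-covariant fibrant replacement of $\{x\}$ over $X$ corresponds, up to $\cD$-covariant equivalence, to the $\cD$-covariant fibrant replacement of the transported map $\{x\}: F[t,0]\to N_\cD\C_X$. Now by \cref{lemma:surjective path component} (and the remark following it), the object $x$ in $X$ corresponds to an object of $\C_X$, which I also call $x$; and the map $F[t,0]\to N_\cD\C_X$ picking out $x$ is exactly the inclusion of the object $x$ into the nerve. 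By \cref{ex:sintcd rep} we have $\sint_{\cD/\C_X}\uHom_{\C_X}(x,-) = N_\cD(\C_X)_{x/}$, and by \cref{the:levelwise Yoneda} the map $D[0] = F[t,0] \to N_\cD(\C_X)_{x/}$ (picking out $\id_x$) is a $\cD$-covariant equivalence over $N_\cD\C_X$. Therefore $N_\cD(\C_X)_{x/}\to N_\cD\C_X$ is a $\cD$-covariant fibrant replacement of $\{x\}: F[t,0]\to N_\cD\C_X$ (after injectively fibrant replacement, which only passes to an injectively equivalent $\cD$-left fibration), and so under the Grothendieck equivalence $\sH_{\cD/\C_X}$ of \cref{the:simp grothendieck} it is sent to $\uHom_{\C_X}(x,-)$. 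Since \cref{cor:left fib strict} defines $G$ precisely as the functor corresponding under this equivalence to the transported $\cD$-left fibration, we conclude $G = \uMap_{\C_X}(x,-)$, as $\sP(\cD)$-enrichment coincides with $\uHom_{\C_X}$ in the $\sP(\cD)$-enriched setting.

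For the second part, let $L\to X$ be any $\cD$-left fibration with associated functor $F:\C_X\to\sP(\cD)$, so that (up to level-wise CSS equivalences and the pullback squares of \cref{cor:left fib strict}) $L$ corresponds to $\sbI_{\cD/\C_X}F$ or equivalently, up to injective equivalence, $\sint_{\cD/\C_X}F$ over $N_\cD\C_X$. The fiber $\Fib_xL$ is, by definition, computed as the pullback of $L$ along $\{x\}:F[t,0]\to X$; since fibers of $\cD$-left fibrations are invariant under the equivalences involved (the squares in \cref{cor:left fib strict} are homotopy pullbacks, and homotopy pullback is compatible with the further pullback along $\{x\}$), $\Fib_xL \simeq \Fib_x(\sint_{\cD/\C_X}F)$ as $\cD$-spaces. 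Now I would compute $\Fib_x(\sint_{\cD/\C_X}F)$ directly: evaluating at $(d,k)$ and using \cref{rem:sint computation}, the fiber over the constant map $F[d,0]\to N_\cD\C_X$ with value $x$ is $\coprod_{x = c_0}\Hom_{\Und_{d}\sP(\cD)}(D[0], F(c_0))$ restricted appropriately, which reduces to $F(x)[d]$; more cleanly, I would invoke the chain of isomorphisms and equivalences displayed at the end of the proof of \cref{the:simp grothendieck}, namely $\Map_{/N_\cD\C_X}(F[d,0], \sint_{\cD/\C_X}F)\simeq F(x)[d]$, combined with \cref{lemma:enriched yoneda}. Assembling these level-wise identifications gives $\Fib_xL\simeq F(x)$ as $\cD$-spaces.

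**The main obstacle.** The delicate point is not the computation but the \emph{naturality and well-definedness} of the correspondence $x \leftrightarrow x$ between equivalence classes of objects of $X$ and of $\C_X$ (via \cref{lemma:surjective path component}), and checking that the transported map $F[t,0]\to N_\cD\C_X$ really picks out that object up to $\cD$-covariant equivalence — i.e., that tracing $\{x\}$ through $c_X, u_X, r_X$ does not introduce a subtlety because $\cC_\cD X$, $\cF_\cD X$ need not be injectively fibrant or have discrete objects. I expect this bookkeeping — ensuring that ``the point $x$'' is transported coherently and that each transition is genuinely a $\cD$-covariant equivalence between the relevant fibrant replacements — to be where care is most needed; the rest follows formally from \cref{the:simp grothendieck}, \cref{ex:sintcd rep}, \cref{the:levelwise Yoneda}, and \cref{lemma:enriched yoneda}.
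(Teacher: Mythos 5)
Your proof is correct in outline, but it takes a genuinely different route from the paper's proof of \cref{cor:value preservation}, in both halves of the argument.

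For the first part (identifying $G \simeq \uMap(x,-)$), you trace the specific point $\{x\}$ explicitly through the zigzag $X \leftarrow \cC_\cD X \to \cF_\cD X \leftarrow N_\cD\C_X$ and then invoke \cref{ex:sintcd rep} and \cref{the:levelwise Yoneda}. The paper instead avoids transporting the point altogether: it observes that $F[t,0] \to L_x$ being a $\cD$-covariant equivalence means $\Und_d L_x$ has an initial object for every $d$ (citing the level-wise representability characterization, Theorem~3.55 of \cite{rasekh2017left}); since having an initial object is invariant under complete Segal space equivalences, the level-wise equivalence $\Und_d L_x \simeq \Und_d \sbI_{\cD/\C_X}G$ from \cref{cor:left fib strict} forces $\sbI_{\cD/\C_X}G$ to be a representable left fibration, hence of the form $(N_\cD\C_X)_{x/}$, hence $G \simeq \uMap(x,-)$ by \cref{ex:sintcd rep}. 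This argument is ``propertyful'' rather than ``pointful'': it never needs to verify that the transported map picks out the right object, which is exactly the bookkeeping you flag as the main obstacle in your proposal. Your route works but requires precisely the care you anticipate (the objects of $\cC_\cD X$ and $\cF_\cD X$ need not be discrete, and lifts against $c_X$, $r_X$ only exist up to equivalence via \cref{lemma:surjective path component}).

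For the second part you recompute the fiber of $\sint_{\cD/\C_X}F$ directly from \cref{rem:sint computation} and transport fibers across the pullback squares of \cref{cor:left fib strict}. The paper instead uses the first part plus the fact that the $\sP(\cD)$-enriched Quillen equivalences of \cref{the:simp grothendieck} give $\uMap_{/X}(L,L') \simeq \uNat(G,G')$ for corresponding fibrations and functors, and then chains
$$\Fib_xL \cong \uMap_{/X}(F[t,0],L) \simeq \uMap_{/X}(L_x,L) \simeq \uNat(\uMap(x,-),G) \cong G(x),$$
with the last isomorphism the enriched Yoneda lemma (\cref{lemma:enriched yoneda}). This is cleaner: it uses the first half as a black box and a single Yoneda application, instead of unwinding the level-wise formula for $\sint_{\cD/\C_X}$. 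Your version is closer in spirit to the explicit computation already appearing inside the proof of \cref{the:simp grothendieck}, which is why it works, but it duplicates that argument rather than leveraging the resulting enriched equivalence of hom-objects.

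No genuine gap: your proposal is sound, just heavier on bookkeeping where the paper's proof is slicker by appealing to the initial-object characterization and the enriched Quillen equivalence of hom-objects.
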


\begin{proof}
	By assumption the map $F[t,0] \to L_x$ is a $\cD$-covariant equivalence over $X$, which, by \cref{the:ncov model}, is equivalent to $F[0] \to \Und_dL_x$ being a covariant equivalence over $\Und_dX$ for all objects $d$ in $\cD$. By \cite[Theorem 3.55]{rasekh2017left}, this implies that $\Und_dL_x$ has an initial object for all object $d$ in $\cD$. Now, by \cref{cor:left fib strict}, we have a complete Segal space equivalence $\Und_dL_x \simeq \Und_d \sint_{\cD/\C}G$ for all objects $d$ in $\cD$, which means that $\Und_d \sbI_{\cD/\C}G$ also has a terminal object. Again, by \cite[Theorem 3.55]{rasekh2017left}, this means it is a representable left fibration and in particular we have an injective equivalence $\sbI_{\cD/\C}G\simeq (N_\cD\C_X)_{x/}$, which, by \cref{ex:sintcd rep}, implies that $G \simeq \uMap(x,-)$.
	
	Now, the $\sP(\cD)$-enriched Quillen equivalences in \cref{the:simp grothendieck} imply that for given left fibrations $L,L'$ over $X$ with corresponding functors $G,G'$, we have an equivalence of $\cD$-spaces
	\begin{equation} \label{eq:lfib ff}
     	 \uMap_{/X}(L,L') \simeq \uNat(G,G').
	\end{equation}
	So, fix an object $x$ in $X$ and let $\{x\}:F[t,0] \to X$ be the constant map. Then for every $\cD$-left fibration $L \to X$ we have equivalences $\cD$-simplicial spaces
	$$\Fib_xL \cong \uMap_{/X}(F[t,0],L) \simeq \uMap_{/X}(L_x,L) \simeq \uMap_{/N_\cD\C_X}(\uMap(x,-),G) \cong G(x),$$
	where the middle equivalence follows from \ref{eq:lfib ff} and the second bijection from the enriched Yoneda lemma (\cref{lemma:enriched yoneda}),
	giving us the desired equivalence of $\cD$-spaces. 
\end{proof}

\begin{exone} \label{ex:twisted}
	Let $W$ be a level-wise Segal space with weakly constant objects. Then $W^{op} \times W$ is also a level-wise Segal space with weakly constant objects (where $W^{op}$ was defined in \ref{eq:op}). By \cref{cor:left fib strict}, the $\cD$-left fibration $\Tw(W) \to W^{op} \times W$ (\cref{prop:twisted}) corresponds to an $\sP(\cD)$-functor $(\C_W)^{op} \times \C_W \to \sP(\cD)$. Here we used the fact that the category corresponding to the $\cD$-simplicial space $W^{op}$ is $(\C_W)^{op}$, by \cref{lemma:op}. Notice, as described in \cref{prop:twisted segal}, the fiber of $\Tw(W)$ over a point $x,y$ in $W^{op} \times W$ is the mapping space $\map_W(x,y)$ (\cref{def:map}). Hence, using the same argument used in \cref{cor:value preservation}, the functor is the mapping object functor 
	$$\Map_{\C_W}(-,-): (\C_W)^{op} \times \C_W \to \sP(\cD).$$
\end{exone}

\begin{exone} \label{ex:source and target fib}
	We can use the same arguments as in \cref{ex:twisted} and apply them to \cref{ex:source fibration} and \cref{ex:target fibration}. Hence, we deduce that for every level-wise Segal space $W$ with weakly constant objects there are two functors
	$$W_{-/}:\C_W \to \sP(\cD\times\DD), $$
	$$W_{/-}:(\C_W)^{op} \to \sP(\cD\times\DD),$$
	where the value of $W_{-/}$ at an object $x$ is equivalent to the level-wise under-Segal space $W_{x/}$ and $W_{/-}$ the level-wise over Segal space $W_{/x}$.
\end{exone} 

  Unfortunately the Quillen equivalences cannot be composed to a direct Quillen equivalence between the $\cD$-covariant model structure on $\sP(\cD\times\DD)_{/X}$ and the projective model structure on $\Fun(\C,\sP(\cD))$. 
 
 We end this section by observing that we can construct an equivalence of {\it $\sP(\cD)$-enriched quasi-categories}, as defined in \cite{gepnerhaugseng2015enrichedinftycat}, out of the $\sP(\cD)$-enriched Quillen equivalence of $\sP(\cD)$-model structures, which do in fact give us direct equivalences of quasi-categories, rather than just a zigzag. Concretely, in \cite{haugseng2015rectenrichedinftycat} Haugseng proves an equivalence between enriched model categories and their Quillen equivalences and enriched quasi-categories, which we can apply to our enriched Quillen equivalences. Before we can proceed we need the following notational convention from \cite{haugseng2015rectenrichedinftycat}.
 
 \begin{notone}\label{not:underlying qcat}
 	For a given simplicial model category $\cM$, denote the underlying quasi-category by $\cM[\cW^{-1}]$. We use the same notation for the underlying enriched quasi-category of an enriched model category. Moreover for a given Cartesian closed quasi-category $\C$, denote the quasi-category of quasi-categories by $\QCat$ and the quasi-category of $\C$-enriched quasi-categories by $\QCat_\C$.
 \end{notone} 
 
 Following \cref{not:underlying qcat}, let $\sP(\cD)[\cW^{-1}]$ denote the underlying quasi-category of the Cartesian model injective structure on $\sP(\cD)$ and notice it is a Cartesian closed quasi-category. For a strictly $\sP(\cD)$-enriched category $\C$, we again use $\C$ to denote the corresponding $\sP(\cD)$-enriched quasi-category. Now, again by \cref{not:underlying qcat}, the underlying quasi-category of the functor category is denoted $\Fun(\C,\sP(\cD))[\cW^{-1}]$. Notice, by \cite[Theorem 1.1]{haugseng2015rectenrichedinftycat}, there is an equivalence of $\sP(\cD)[\cW^{-1}]$-enriched quasi-categories 
 \begin{equation} \label{eq:functor cat}
 	\Fun(\C,\sP(\cD))[\cW^{-1}] \simeq \Fun_{\QCat_{\sP(\cD)[\cW^{-1}]}}(\C,\sP(\cD)[\cW^{-1}]),
 \end{equation}
  meaning there is an equivalence between the underlying quasi-category of the enriched projective model structure and the enriched functor quasi-category.
  
  We will make one exception to \cref{not:underlying qcat}. For an arbitrary $\cD$-simplicial space the bifibrant objects in the $\cD$-covariant model structure on $\sP(\cD\times\DD)_{/X}$ are precisely the $\cD$-left fibrations over $X$. Hence, the underlying $\sP(\cD)$-enriched quasi-category has objects $\cD$-left fibrations and we denote it by $\LFib_{\cD/X}$. Now applying \cite[Theorem 1.1]{haugseng2015rectenrichedinftycat} to \cref{the:strictification dleft fib} gives us the following result.
  
  \begin{corone} \label{cor:equiv qcat}
  	Let $\cD$ be a small category with terminal object.
  	Let $X$ be a $\cD$-simplicial space with weakly constant objects.
   	There is an equivalence of $\sP(\cD)$-enriched quasi-categories 
   		\begin{center}
   			\begin{tikzcd}[row sep=0.3in, column sep=0.6in]
   				\Fun_{\QCat_{\sP(\cD)[\cW^{-1}]}}(\C_X,\sP(\cD)[\cW^{-1}]) \arrow[r, shift left = 1.8, "\sint_{\cD/\C_X}"] & 
   				\LFib_{\cD/N_\cD\C_X} \arrow[l, shift left=1.8, "\sH_{\cD/\C_X}", "\bot"'] \arrow[r, shift left=1.8, "\sbT_{\cD/\C_X}"] \arrow[d, shift left =1.8, "(r_X)_!"] & \Fun_{\QCat_{\sP(\cD)[\cW^{-1}]}}(\C_X,\sP(\cD)[\cW^{-1}]) \arrow[l, shift left=1.8, "\sbI_{\cD/\C_X}", "\bot"'] \\
   				&\LFib_{\cD/\cF_\cD X} \arrow[u, shift left=1.8, "(r_X)^*", "\rbot"'] \arrow[d, shift right=1.8, "(u_X)^*"'] & \\
   				&\LFib_{\cD/\cC_\cD X} \arrow[u, shift right=1.8, "(u_X)_!"', "\rbot"] \arrow[d, shift left=1.8, "(c_X)_!", "\rbot"'] & \\
   				&\LFib_{\cD/X} \arrow[u, shift left=1.8, "(c_X)^*"] & 
   			\end{tikzcd}
   	\end{center}
  \end{corone}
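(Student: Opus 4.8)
The plan is to obtain the statement by passing to underlying enriched quasi-categories in the diagram of $\sP(\cD)$-enriched Quillen equivalences of \cref{the:strictification dleft fib}, via the rectification theorem of Haugseng \cite[Theorem 1.1]{haugseng2015rectenrichedinftycat}.

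First I would verify the hypotheses of \cite[Theorem 1.1]{haugseng2015rectenrichedinftycat}. By \cref{prop:injectivemod} the model category $\sP(\cD)^{inj}$ is combinatorial and Cartesian, so its underlying quasi-category $\sP(\cD)[\cW^{-1}]$ is a Cartesian closed quasi-category and the notion of a $\sP(\cD)[\cW^{-1}]$-enriched quasi-category is available; moreover each model category appearing in \cref{the:strictification dleft fib} is a combinatorial $\sP(\cD)$-enriched model category (by \cref{the:ncov model} and \cref{prop:projinj model}) and every adjunction in the diagram is a $\sP(\cD)$-enriched Quillen equivalence. Applying the underlying-enriched-quasi-category functor $(-)[\cW^{-1}]$ to the whole diagram then yields, by the rectification theorem, a diagram of $\sP(\cD)[\cW^{-1}]$-enriched quasi-categories in which every horizontal and vertical adjunction is an adjoint equivalence.

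Next I would identify the vertices. For the two functor categories I would use \cref{prop:projinj model} together with the equivalence \eqref{eq:functor cat}, which identifies the common underlying enriched quasi-category of $\uFun(\C_X,\sP(\cD))^{proj}$ and $\uFun(\C_X,\sP(\cD))^{inj}$ with $\Fun_{\QCat_{\sP(\cD)[\cW^{-1}]}}(\C_X,\sP(\cD)[\cW^{-1}])$. For each base $Y\in\{X,\cC_\cD X,\cF_\cD X,N_\cD\C_X\}$ I would note that in $(\sP(\cD\times\DD)_{/Y})^{\cD-cov}$ all objects are cofibrant (cofibrations being monomorphisms) and the fibrant objects are precisely the $\cD$-left fibrations over $Y$ by \cref{the:ncov model}, so that its underlying $\sP(\cD)[\cW^{-1}]$-enriched quasi-category is $\LFib_{\cD/Y}$. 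Tracking the derived functors of $\sint_{\cD/\C_X}$, $\sH_{\cD/\C_X}$, $\sbT_{\cD/\C_X}$, $\sbI_{\cD/\C_X}$, $(r_X)_!$, $(r_X)^*$, $(u_X)_!$, $(u_X)^*$, $(c_X)_!$ and $(c_X)^*$ then reproduces the displayed diagram.

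The main obstacle is one of bookkeeping rather than of mathematical substance: one must check that the identifications of underlying enriched quasi-categories (in particular the instance \eqref{eq:functor cat} of \cite[Theorem 1.1]{haugseng2015rectenrichedinftycat}) are natural enough to be glued into the larger diagram, and that the functors produced by the rectification theorem really are the derived functors named in the statement --- for example that the equivalence induced by $(r_X)_!$ agrees with the derived left Kan extension along $r_X$, and not merely with some abstract equivalence of the two quasi-categories. Verifying this amounts to unwinding the naturality built into \cite[Theorem 1.1]{haugseng2015rectenrichedinftycat} and does not require any new input beyond \cref{the:strictification dleft fib}, \cref{prop:projinj model} and \cref{prop:injectivemod}.
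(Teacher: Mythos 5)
Your proof follows essentially the same route as the paper: the paper simply applies Haugseng's rectification theorem \cite[Theorem 1.1]{haugseng2015rectenrichedinftycat} to the diagram of $\sP(\cD)$-enriched Quillen equivalences in \cref{the:strictification dleft fib}, after recording the identification \eqref{eq:functor cat} and the identification of the bifibrant objects of the $\cD$-covariant model structure with $\cD$-left fibrations. You have merely spelled out the hypothesis-checking and the vertex identifications that the paper leaves implicit.
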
 
  Unlike in the in the context of model categories in the context of quasi-categories in any adjoint equivalence a left adjoint is also a right adjoint \cite[Proposition 2.1.12]{riehlverity2018elements}. Hence, we can in fact compose the various equivalences to get the following final result. 
  
  \begin{corone} \label{cor:equiv lfib qcat}
   Let $\cD$ be a small category with terminal object.
   Let $X$ be a $\cD$-simplicial space with weakly constant objects.
   There is an equivalence of $\sP(\cD)$-enriched quasi-categories
   	\begin{center}
   	\begin{tikzcd}[row sep=0.5in, column sep=0.9in]
   		\Fun_{\QCat_{\sP(\cD)[\cW^{-1}]}}(\C_X,\sP(\cD)[\cW^{-1}]) \arrow[r, shift left = 1.8] & 
   		\LFib_{\cD/X} \arrow[l, shift left=1.8, "\simeq"'] 
   	\end{tikzcd}.
   \end{center}
  \end{corone}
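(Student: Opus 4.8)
The plan is to assemble \cref{cor:equiv lfib qcat} directly from \cref{cor:equiv qcat} together with the structure of adjoint equivalences of quasi-categories. First I would recall that all the adjunctions appearing in \cref{cor:equiv qcat} are equivalences of $\sP(\cD)[\cW^{-1}]$-enriched quasi-categories: the three vertical ones $(r_X)_!\dashv (r_X)^*$, $(u_X)_!\dashv (u_X)^*$, $(c_X)_!\dashv (c_X)^*$ are equivalences because $r_X, c_X, u_X$ are level-wise complete Segal space equivalences (\cref{the:dcov css inv}), and the two horizontal ones $\sint_{\cD/\C_X}\dashv \sH_{\cD/\C_X}$ and $\sbT_{\cD/\C_X}\dashv \sbI_{\cD/\C_X}$ are equivalences by applying \cite[Theorem 1.1]{haugseng2015rectenrichedinftycat} to the Quillen equivalences of \cref{the:strictification dleft fib}, which is exactly what \cref{cor:equiv qcat} records. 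So we have a chain of equivalences of enriched quasi-categories
\[
\LFib_{\cD/X} \xrightarrow{\ (c_X)^*\ } \LFib_{\cD/\cC_\cD X} \xrightarrow{\ (u_X)_!\ } \LFib_{\cD/\cF_\cD X} \xrightarrow{\ (r_X)^*\ } \LFib_{\cD/N_\cD\C_X} \xrightarrow{\ \sbT_{\cD/\C_X}\ } \Fun_{\QCat_{\sP(\cD)[\cW^{-1}]}}(\C_X,\sP(\cD)[\cW^{-1}]).
\]

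The second step is to observe that each of these five arrows, being one half of an adjoint equivalence in the $(\infty,1)$-categorical (or enriched $(\infty,1)$-categorical) setting, is itself an equivalence, and therefore their composite is an equivalence of $\sP(\cD)[\cW^{-1}]$-enriched quasi-categories. This uses the key point flagged in the excerpt just before the statement: in the quasi-categorical world, unlike the model-categorical one, a functor that is part of an adjoint equivalence is simultaneously a left and a right adjoint (\cite[Proposition 2.1.12]{riehlverity2018elements}), so one is free to traverse each adjunction in whichever direction is convenient and the resulting zigzag collapses to a single equivalence. Concretely, I would take the composite above (or equivalently the composite of the opposite-pointing adjoints $\sint_{\cD/\C_X}\circ (r_X)_!\circ (u_X)^*\circ (c_X)_!$) as the forward functor in \cref{cor:equiv lfib qcat}, with quasi-inverse the reverse composite, and note that the homotopy-coherent inverses exist and are themselves enriched because equivalences of enriched quasi-categories are closed under composition and have enriched inverses.

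A small bookkeeping point to address is that \cref{cor:equiv qcat} gives the equivalence landing in $\Fun_{\QCat_{\sP(\cD)[\cW^{-1}]}}(\C_X,\sP(\cD)[\cW^{-1}])$ on both ends (once via $\sint$ with the projective model structure and once via $\sbT$ with the injective one), and these two identifications of the functor quasi-category agree up to equivalence because the identity adjunction between the projective and injective enriched model structures (\cref{prop:projinj model}) is itself a Quillen equivalence, hence induces an equivalence on underlying enriched quasi-categories, and \cref{eq:functor cat} identifies both with the intrinsically-defined enriched functor quasi-category $\Fun_{\QCat_{\sP(\cD)[\cW^{-1}]}}(\C_X,\sP(\cD)[\cW^{-1}])$. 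So there is no ambiguity in the target.

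I do not expect a serious obstacle here: the statement is essentially a formal consequence of \cref{cor:equiv qcat} plus the fact that equivalences of (enriched) quasi-categories compose. The one place that requires a little care—and which is really the content being invoked rather than proved—is the appeal to \cite[Theorem 1.1]{haugseng2015rectenrichedinftycat} to pass from the enriched Quillen equivalences of \cref{the:strictification dleft fib} to equivalences of underlying $\sP(\cD)[\cW^{-1}]$-enriched quasi-categories, together with the identification of $\LFib_{\cD/X}$ as the underlying enriched quasi-category of the $\cD$-covariant model structure (its bifibrant objects being exactly the $\cD$-left fibrations, by \cref{lemma:fib between lfib} and the description of cofibrations in \cref{the:ncov model}); once that is in place, the remainder is the purely $\infty$-categorical fact that adjoint equivalences are invertible and compose, for which I would cite \cite[Proposition 2.1.12]{riehlverity2018elements}.
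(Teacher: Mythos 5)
Your proposal is correct and follows essentially the same approach as the paper: the paper simply notes (immediately before the statement) that in the quasi-categorical setting a left adjoint which is an equivalence is also a right adjoint (\cite[Proposition 2.1.12]{riehlverity2018elements}), and hence the zigzag of adjoint equivalences from \cref{cor:equiv qcat} composes to a single equivalence of $\sP(\cD)[\cW^{-1}]$-enriched quasi-categories. Your additional remark about reconciling the projective and injective endpoints via \cref{prop:projinj model} and \ref{eq:functor cat} is a sensible bookkeeping point that the paper leaves implicit.
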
 
   
  Having done an analysis of the quasi-category we move on to generalize to a global equivalence. This will require some further quasi-categorical techniques and notation. Let $\CSS_{\sP(\cD)}$ be the underlying quasi-category of the $\sP(\cD)$-enriched complete Segal space model structure (\cref{the:spd enriched css}). The object $N_\cD\sP(\cD)$ is by construction an $\sP(\cD)$-enriched Segal space, but might not satisfy the completeness condition. Hence, let 
  \begin{equation}\label{eq:completion}
   \widehat{\pi_*}: \widehat{N_\cD\sP(\cD)_{D[0]/}} \to \widehat{N_\cD\sP(\cD)}  
  \end{equation} 
  be the completion of $\pi_*: N_\cD\sP(\cD)_{D[0]/} \to N_\cD\sP(\cD)$ in the $\sP(\cD)$-enriched complete Segal space model structure (\cref{the:spd enriched css}) and notice we have the following result.
  
  \begin{lemone} \label{lem:universal fib comp lfib}
 	Let $\C$ be a small $\sP(\cD)$-enriched category. The functor
 	$$N_\cD(-)^*\widehat{\pi}_*:\uFun(\C,\sP(\cD)[\cW^{-1}]) \to \LFib_{\cD/X}$$
 	given by pulling back $N_\cD$ along $\widehat{\pi}_*$ is equivalent to the functor $\sint_{\cD/\C}$.
 \end{lemone}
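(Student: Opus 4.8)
The plan is to bootstrap from the strict identification in \cref{prop:universal fib lfib}, using that $\widehat{\pi}_*$ differs from $\pi_*$ only by a level-wise complete Segal space equivalence, which becomes invertible upon passing to the underlying $\sP(\cD)$-enriched quasi-categories. (I read the target $\LFib_{\cD/X}$ in the statement as $\LFib_{\cD/N_\cD\C}$.)

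First I would record that, by \cref{lemma:sint left quillen}, $\sint_{\cD/\C}$ is left Quillen from $\uFun(\C,\sP(\cD))^{proj}$ to $(\sP(\cD\times\DD)_{/N_\cD\C})^{\cD-cov}$, hence induces a functor $\uFun(\C,\sP(\cD))[\cW^{-1}] \to \LFib_{\cD/N_\cD\C}$ on underlying $\sP(\cD)$-enriched quasi-categories; via \ref{eq:functor cat} its source is $\Fun_{\QCat_{\sP(\cD)[\cW^{-1}]}}(\C,\sP(\cD)[\cW^{-1}])$, matching the domain in the statement. On the level of $1$-categories, \cref{prop:universal fib lfib} identifies $\sint_{\cD/\C}(F)$ with the strict pullback $(N_\cD F)^*\pi_*$ of $\pi_*\colon N_\cD\sP(\cD)_{D[0]/}\to N_\cD\sP(\cD)$, and since $\sint_{\cD/\C}F\to N_\cD\C$ is a $\cD$-left morphism (\cref{lemma:sint cdleft}) this object already represents the correct point of $\LFib_{\cD/N_\cD\C}$.

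Next I would analyze $\widehat{\pi}_*$. The map $\pi_*$ is a $\cD$-left morphism whose $\cD$-covariant fibrant replacement over $N_\cD\sP(\cD)$ is a $\cD$-left fibration; completing the base along the level-wise CSS equivalence $N_\cD\sP(\cD)\to\widehat{N_\cD\sP(\cD)}$ preserves $\cD$-left fibrancy up to equivalence by \cref{the:dcov css inv}, so $\widehat{\pi}_*$ is a $\cD$-left fibration over $\widehat{N_\cD\sP(\cD)}$ representing the same object of $\LFib$ as $\pi_*$. By item \ref{leftitem:smooth maps} of \cref{subsec:lfib}, applied level-wise, pulling a $\cD$-left fibration back along a level-wise CSS equivalence is again a level-wise CSS equivalence; applying this along $\widehat{N_\cD F}\colon\widehat{N_\cD\C}\to\widehat{N_\cD\sP(\cD)}$ (which agrees with $N_\cD F$ through the CSS equivalence $\widehat{N_\cD\C}\simeq N_\cD\C$) shows that $(N_\cD F)^*\pi_*$ and $(\widehat{N_\cD F})^*\widehat{\pi}_* = N_\cD(-)^*\widehat{\pi}_*(F)$ agree in $\LFib_{\cD/N_\cD\C}$, naturally in $F$. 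Combined with the previous step, $N_\cD(-)^*\widehat{\pi}_*$ agrees with the functor induced by $\sint_{\cD/\C}$ on underlying enriched quasi-categories, and I would invoke \cite[Theorem 1.1]{haugseng2015rectenrichedinftycat} to ensure both are genuine functors of $\sP(\cD)[\cW^{-1}]$-enriched quasi-categories and that the comparison is coherent.

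The step I expect to be the main obstacle is the bookkeeping in the third paragraph: upgrading the pointwise equivalence $(N_\cD F)^*\pi_*\simeq N_\cD(-)^*\widehat{\pi}_*(F)$ to a natural (coherent) equivalence of functors, and confirming carefully that $\widehat{\pi}_*$ is genuinely a $\cD$-left fibration so that pulling back lands in $\LFib_{\cD/N_\cD\C}$ at all. A cleaner alternative, mirroring the proof of \cref{prop:universal fib lfib}, is to observe that both functors preserve colimits and to compare them on the enriched representables $\uHom_\C(c,-)\times\{A\}$: there $\sint_{\cD/\C}$ yields $N_\cD\C_{c/}\times A$ by \cref{ex:sintcd rep} and \cref{ex:sintcd constant}, while $N_\cD(-)^*\widehat{\pi}_*$ yields the corresponding pullback of $\widehat{\pi}_*$, which is level-wise CSS equivalent to $N_\cD\C_{c/}\times A$ by \cref{prop:universal fib cd} together with invariance; the cost of this route is justifying colimit-preservation of pullback along $\widehat{\pi}_*$ in the quasi-categorical setting.
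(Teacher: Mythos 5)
Your proposal is correct, and the ``cleaner alternative'' you sketch in the final paragraph is in fact exactly what the paper does: the entire proof of \cref{lem:universal fib comp lfib} in the source is a single line noting that the completion map is a Dwyer-Kan equivalence by \cref{the:spd enriched css}, so that the representable comparison used to prove \cref{prop:universal fib lfib} goes through verbatim (both functors preserve colimits and send $\uMap(c,-)\times\{A\}$ to $N_\cD\C_{c/}\times A$ up to $\cD$-covariant equivalence).

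Your primary route --- identifying $\sint_{\cD/\C}$ with strict pullback along $\pi_*$ and then transporting across the completion --- is a valid and more conceptual repackaging of the same idea, but it front-loads exactly the bookkeeping you flag: verifying that $\widehat{\pi}_*$ really lands in $\LFib$ (which in the paper is implicitly absorbed into ``same proof,'' since the representable comparison never touches $\widehat{\pi}_*$ head-on), and upgrading a pointwise comparison to a coherent natural one. Two small cleanups: the completion in \ref{eq:completion} is taken in the $\sP(\cD)$-enriched complete Segal space model structure, and it is a level-wise CSS equivalence only because \cref{the:spd enriched css} tells you it is a Dwyer-Kan equivalence between $\sP(\cD)$-enriched Segal spaces and \cref{lemma:dk equiv segal enriched} then converts; your phrasing elides that chain. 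Also, nothing in the construction completes $N_\cD\C$ itself --- the pullback is along the composite $N_\cD\C \to N_\cD\sP(\cD)\to\widehat{N_\cD\sP(\cD)}$ --- so the parenthetical appeal to ``$\widehat{N_\cD\C}\simeq N_\cD\C$'' is a red herring. If you simply run the representable comparison (your last paragraph) and cite the Dwyer-Kan equivalence of the completion to match the paper's framing, the argument closes without needing to settle fibrancy of $\widehat{\pi}_*$ or the naturality issue directly.
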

  
  \begin{proof}
  	The completion functor is a Dwyer-Kan equivalence (\cref{the:spd enriched css}) and so the argument follows from the same proof as in \cref{prop:universal fib lfib}.
  \end{proof}

  Let $\sOall_{\CSS_{\sP(\cD)}} \to \CSS_{\sP(\cD)}$ be the right fibration of quasi-categories representing the functor $((\CSS_{\sP(\cD)})_{/-})^{grpd}$, where $(-)^{grpd}$ is the underlying $\infty$-groupoid. See \cite[Subsection 3.3]{gepnerkock2017univalence} or \cite[Section 2]{rasekh2021univalence} for a more detailed discussion. Moreover, let $\LFib_\cD \hookrightarrow \sOall_{\CSS_{\sP(\cD)}}$ be the full sub quasi-category consisting of left fibrations, which is also a right fibration over $\CSS_{\sP(\cD)}$. By the Yoneda lemma \cite[Theorem 5.7.1]{riehlverity2018elements}, we have a map of quasi-categories $(\CSS_{\sP(\cD)})_{/\widehat{N_\cD\sP(\cD)}} \to \LFib$ over $\CSS_{\sP(\cD)}$, that takes a map $F:\C \to \widehat{N_\cD\sP(\cD)}$ to the pullback $F^*\widehat{\pi_*}$.
  
  \begin{corone}\label{cor:lfib classifier}
  	The map $(\CSS_{\sP(\cD)})_{/\widehat{N_\cD\sP(\cD)}} \to \LFib_\cD$ is an equivalence of quasi-categories.
  \end{corone}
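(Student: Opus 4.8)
The plan is to recognize both the source and the target of the comparison functor as right fibrations over $\CSS_{\sP(\cD)}$ and to reduce the claim to a fiberwise statement that is then settled by the Grothendieck equivalence \cref{cor:equiv lfib qcat}. First I would check that $(\CSS_{\sP(\cD)})_{/\widehat{N_\cD\sP(\cD)}} \to \CSS_{\sP(\cD)}$ is a right fibration, being a slice projection, which classifies the presheaf $W \mapsto \Map_{\CSS_{\sP(\cD)}}(W,\widehat{N_\cD\sP(\cD)})$; and that $\LFib_\cD \to \CSS_{\sP(\cD)}$ is a right fibration as well, since it is a full sub-quasi-category of the right fibration $\sOall_{\CSS_{\sP(\cD)}} \to \CSS_{\sP(\cD)}$ that is closed under cartesian lifts, because $\cD$-left fibrations are stable under (homotopy) pullback by \cref{lemma:pullback nlfib}; its fiber over $W$ is $(\LFib_{\cD/W})^{grpd}$. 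As the comparison functor lies over $\CSS_{\sP(\cD)}$, by the standard criterion for maps of right fibrations \cite{riehlverity2018elements} it is an equivalence as soon as it induces an equivalence on the fiber over every $W$; on that fiber it is the map $\Map_{\CSS_{\sP(\cD)}}(W,\widehat{N_\cD\sP(\cD)}) \to (\LFib_{\cD/W})^{grpd}$ sending $F$ to $F^*\widehat{\pi_*}$.

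Next I would reduce to the case where $W$ is a nerve. A fibrant $W$ is an $\sP(\cD)$-enriched complete Segal space, hence has weakly constant objects, so \cref{lemma:zigzag strictification} provides a zigzag of level-wise CSS equivalences connecting $W$ to $N_\cD\C_W$ for a small $\sP(\cD)$-enriched category $\C_W$; these are $\sP(\cD)$-enriched Dwyer--Kan equivalences (\cref{lemma:dk equiv segal enriched}), hence weak equivalences in $\CSS_{\sP(\cD)}$ by the last item of \cref{the:spd enriched css} together with \cref{ex:enriched equiv is levelwise css equiv}. It therefore suffices to verify the fiberwise equivalence over objects of the form $N_\cD\C'$.

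For $W = N_\cD\C'$ I would invoke \cref{lem:universal fib comp lfib}, which identifies the pullback functor $N_\cD(-)^*\widehat{\pi_*}$ with $\sint_{\cD/\C'}$, and \cref{cor:equiv lfib qcat} (applicable since $N_\cD\C'$ has weakly constant objects), which shows $\sint_{\cD/\C'}$ is an equivalence $\Fun_{\QCat_{\sP(\cD)[\cW^{-1}]}}(\C',\sP(\cD)[\cW^{-1}]) \xrightarrow{\ \simeq\ } \LFib_{\cD/N_\cD\C'}$. Passing to cores and combining with \cref{eq:functor cat} and the identification of $\Map_{\CSS_{\sP(\cD)}}(\C',\widehat{N_\cD\sP(\cD)})$ with the core of the enriched functor quasi-category $\Fun_{\QCat_{\sP(\cD)[\cW^{-1}]}}(\C',\sP(\cD)[\cW^{-1}])$ then yields the fiberwise equivalence, completing the argument.

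I expect the main obstacle to be the last identification: verifying that $\widehat{N_\cD\sP(\cD)}$, viewed in $\CSS_{\sP(\cD)}$, corepresents ``$\sP(\cD)[\cW^{-1}]$-valued presheaves'' in the relevant sense, i.e. that mapping spaces out of an enriched nerve into $\widehat{N_\cD\sP(\cD)}$ compute cores of $\sP(\cD)[\cW^{-1}]$-enriched functor quasi-categories. This rests on knowing that the underlying quasi-category of $\CSS_{\sP(\cD)}$ is, compatibly with self-enrichment, the quasi-category of $\sP(\cD)[\cW^{-1}]$-enriched quasi-categories, which follows from \cite{haugseng2015rectenrichedinftycat} applied to \cref{the:spd enriched css} but must be made precise. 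A secondary point to state with care is that the fiber of $\LFib_\cD$ over an $\sP(\cD)$-enriched complete Segal space $W$ really coincides with the underlying quasi-category $\LFib_{\cD/W}$ of the $\cD$-covariant model structure; this uses that the $\cD$-covariant model structure over $W$ is a localization of the level-wise CSS model structure (\cref{the:cov loc css}) whose local objects are exactly the $\cD$-left fibrations, together with the invariance of \cref{the:dcov css inv}. Everything else is formal manipulation of right fibrations and the already-established Grothendieck equivalence.
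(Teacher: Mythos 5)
Your proposal is correct and follows essentially the same route as the paper: reduce to a fiberwise check of right fibrations over $\CSS_{\sP(\cD)}$ (the paper cites \cite[Lemma 2.2.3.6]{lurie2009htt}), strictify the fiber to a nerve $N_\cD\C$ via \cref{lemma:zigzag strictification}, and then identify the comparison map on that fiber with the equivalence from \cref{cor:equiv lfib qcat} using \cref{lem:universal fib comp lfib}. The one point you flag as potentially delicate --- identifying $\Map_{\CSS_{\sP(\cD)}}(\C,\widehat{N_\cD\sP(\cD)})$ with the core of the enriched functor quasi-category --- is also where the paper is briefest, simply asserting $\sP(\cD)[\cW^{-1}] \simeq \widehat{N_\cD\sP(\cD)}$ ``essentially by definition,'' so your caution there is warranted but does not represent a divergence from the paper's argument.
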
 

  \begin{proof}
  	By \cite[Lemma 2.2.3.6]{lurie2009htt} it suffices to prove that we have a fiber-wise equivalence. Hence, fix an $\sP(\cD)$-enriched complete Segal space $X$. By \cref{lemma:zigzag strictification}, $X$ is equivalent to the nerve of an $\sP(\cD)$-enriched category $N_\cD\C$ and equivalent objects have equivalent fibers, hence it suffices to show the fibers over $N_\cD\C$ are equivalences. Notice we have the equivalence $\sP(\cD)[\cW^{-1}] \simeq \widehat{N_\cD\sP(\cD)}$ (essentially by definition as both are the underlying quasi-category of $\cD$-spaces). Hence we have 
  	$$\Map(\C,\widehat{N_\cD\sP(\cD)}) \simeq \Map(\C,\sP(\cD)[\cW^{-1}]) \to \LFib_{\cD,N_\cD\C},$$
  	 where the second equivalence is given by the underlying $\infty$-groupoids of \cref{cor:equiv lfib qcat} and, by \cref{lem:universal fib comp lfib}, is given via pulling back $\widehat{\pi}_*$.
  \end{proof} 

\begin{remone} \label{rem:universal lfib}
	Building on \cref{rem:universal opfib}, \cref{cor:lfib classifier} implies that $\widehat{\pi_*}: \widehat{N_\cD\sP(\cD)_{D[0]/}} \to \widehat{N_\cD\sP(\cD)}$ is a {\it universal $\cD$-left fibration} and, in particular, for every $\cD$-left fibration $L \to N_\cD\C$, there is a pullback of $\cD$-simplicial spaces
	\begin{center}
		\pbsq{L}{\widehat{N_\cD\sP(\cD)_{D[0]/}}}{N_\cD\C}{\widehat{N_\cD\sP(\cD)}}{}{}{}{}
	\end{center}
	in a homotopically unique fashion.
\end{remone} 

Finally, we want to consider its implication with regard to {\it univalence}. In light of \cite[Proposition 2.4]{rasekh2021univalence} we have the following result.

\begin{corone} \label{cor:universal lfib univalent}
	The universal $\cD$-left fibration $\widehat{\pi_*}: \widehat{N_\cD\sP(\cD)}_{D[0]/} \to \widehat{N_\cD\sP(\cD)}$ is univalent.
\end{corone}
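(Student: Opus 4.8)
The plan is to reduce univalence to the statement that the classifying map of $\widehat{\pi_*}$ is a monomorphism, and then to read this off directly from \cref{cor:lfib classifier}. By the characterization of univalent maps in \cite[Proposition 2.4]{rasekh2021univalence}, $\widehat{\pi_*}$ is univalent exactly when the map of right fibrations over $\CSS_{\sP(\cD)}$ that classifies it is a monomorphism of quasi-categories (equivalently, a fully faithful functor that is injective on equivalence classes of objects). Concretely, this classifying map is the one corresponding, under the Yoneda lemma \cite[Theorem 5.7.1]{riehlverity2018elements} and the universal property of $\sOall_{\CSS_{\sP(\cD)}}$ as the classifier of the presheaf $((\CSS_{\sP(\cD)})_{/-})^{grpd}$, to the object of $((\CSS_{\sP(\cD)})_{/\widehat{N_\cD\sP(\cD)}})^{grpd}$ given by $\widehat{\pi_*}$ itself.

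First I would identify this classifying map explicitly. By \cref{rem:universal lfib} the natural transformation attached to $\widehat{\pi_*}$ sends a map $\C \to \widehat{N_\cD\sP(\cD)}$ to the $\cD$-left fibration obtained by pulling back $\widehat{\pi_*}$, which is precisely the map appearing in \cref{cor:lfib classifier} (compare \cref{lem:universal fib comp lfib}). Hence the classifying map of $\widehat{\pi_*}$ factors as the composite of right fibrations over $\CSS_{\sP(\cD)}$
\[
(\CSS_{\sP(\cD)})_{/\widehat{N_\cD\sP(\cD)}} \longrightarrow \LFib_\cD \hookrightarrow \sOall_{\CSS_{\sP(\cD)}},
\]
where the second arrow is the inclusion of the full sub-quasi-category on left fibrations and the first arrow is an equivalence by \cref{cor:lfib classifier}.

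The proof then finishes formally: a full sub-quasi-category inclusion is a monomorphism of quasi-categories, and the composite of a monomorphism with an equivalence is again a monomorphism, so the classifying map of $\widehat{\pi_*}$ is a monomorphism; by the criterion recalled above, $\widehat{\pi_*}$ is univalent. The only genuine work — and the step I expect to be the main obstacle — is the bookkeeping in the middle paragraph: one must check that the equivalence furnished by \cref{cor:lfib classifier} is an equivalence of right fibrations over $\CSS_{\sP(\cD)}$ (not merely a fibrewise equivalence) and that its unstraightening agrees with the natural transformation classifying $\widehat{\pi_*}$, so that the factorization above is legitimate. This is essentially contained in the proof of \cref{cor:lfib classifier} together with \cref{lem:universal fib comp lfib}; everything downstream of it is formal.
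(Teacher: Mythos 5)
Your proof is correct and takes essentially the same route as the paper, which gives no real argument beyond citing \cite[Proposition 2.4]{rasekh2021univalence} immediately after establishing \cref{cor:lfib classifier}; you have simply unpacked what that citation is doing, namely factoring the classifying map of $\widehat{\pi_*}$ through the equivalence of \cref{cor:lfib classifier} followed by the full sub-quasi-category inclusion $\LFib_\cD \hookrightarrow \sOall_{\CSS_{\sP(\cD)}}$ and observing that such a composite is a monomorphism. Your flagged ``main obstacle'' (that the equivalence of \cref{cor:lfib classifier} lives over $\CSS_{\sP(\cD)}$ and unstraightens to the natural transformation classifying $\widehat{\pi_*}$) is indeed the substance of \cref{cor:lfib classifier} and \cref{lem:universal fib comp lfib}, so nothing is missing.
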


\begin{remone}
	We would expect \cref{cor:lfib classifier} to not just be an equivalence of quasi-categories, but rather enriched quasi-categories. In particular, we would expect $\widehat{\pi_*}: \widehat{N_\cD\sP(\cD)}_{D[0]/} \to \widehat{N_\cD\sP(\cD)}$ to satisfy a condition stronger than the simple univalence condition. For a discussion how such a condition could look like see \cite[Section 7]{rasekh2021univalence}.
\end{remone}
 
 \section{Localized \texorpdfstring{$\cD$}{D}-Left fibrations} \label{sec:localized}
 Up until now we studied fibrations that correspond to functors valued in general $\cD$-spaces. However, we want fibrations that correspond to functors that take value in localizations of $\cD$-spaces. The goal of this section is to generalize the results of the previous two sections by localizing the various definitions. In particular we present a localized form of most results in \cref{sec:left fib} in \cref{subsec:localized model} and localize the Grothendieck construction (\cref{sec:grothendieck}) in \cref{subsec:localized groth}. The last subsection (\cref{subsec:sloc features}) focuses on results regarding $S$-localized $\cD$-left fibrations that require the localized Grothendieck construction.
 
 \subsection{Localizations Model Structures}\label{subsec:localized model}
 In this subsection we introduce $S$-localized $\cD$-left fibrations and show it has a model structure that behaves very similar to \cref{the:ncov model}. In order to proceed it is helpful to establish relevant notation.
 
 \begin{notone} \label{not:localized}
  Let $S$ be a set cofibrations (or, equivalently, monomorphisms) of $\cD$-spaces.
 	\begin{itemize}
 		\item A $\cD$-space $X$ is called {\it local with respect to $S$} if for every every map 
 		$f: A \to B$ in $S$, 
 		$$\Map_{\sP(\cD)}(B,X) \to \Map_{\sP(\cD)}(A,X)$$
 		is a Kan equivalence.
 		\item A $\cD$-simplicial space $X$ is called {\it local with respect to $S$} if 
 		$\Val(X)$ is local with respect to $S$.
 		This is equivalent to 
 		$$\Map_{\sP(\cD\times\DD)}(\VEmb(B),X) \to \Map_{\sP(\cD\times\DD)}(\VEmb(A),X)$$
 		being a Kan equivalence for every $A \to B$ in $S$ (via the enriched adjunction $(\VEmb,\Val)$ defined in \cref{def:val}).
 		\item A map of $\cD$-simplicial spaces $p:Y \to X$ is called {\it local with respect to $S$} 
 		if the map 
 		$$
 		\Map_{\sP(\cD)}(\VEmb(B),Y) \to \Map_{\sP(\cD)}(\VEmb(A),Y) \times_{\Map_{\sP(\cD)}(\VEmb(A),X)} \Map_{\sP(\cD)}(\VEmb(B),X)
 		$$
 		is a weak equivalence for every map $f: A \to B$ in $S$. Note this is equivalent to the condition that
 		for every map $f: A \to B$ in $S$ and every map $\VEmb(B) \to X$, the induced map 
 		$$\Map_{/X}(\VEmb(B),Y) \to \Map_{/X}(\VEmb(A),Y)$$
 		is a Kan equivalence.
 	\end{itemize}
 \end{notone}

For the later parts we need some conditions on $S$. 

\begin{defone} \label{def:geometric realization}
Let $N(\cD_{/-}): \cD \to \s$ be the functor that takes an object $d$ to the nerve $N(\cD_{/d})$. This induces an adjunction
\end{defone} 
\begin{center}
	\adjun{\sP(\cD)}{\s}{|-|}{Sing}.
\end{center} 

\begin{defone} \label{def:contractible object}
	A morphism of $\cD$-space $X \to Y$ is a {\it geometric equivalence} if $|X| \to |Y|$ is an equivalence in $\s$. Moreover, an object $X$ is {\it geometrically contractible} if the map to the terminal object is a geometric equivalence.
\end{defone}

We now use geometric equivalences to add conditions on a set of cofibrations.

\begin{defone} \label{def:contractible realization}
	Let $S$ be a set of cofibrations of $\cD$-spaces. 
	\begin{itemize}
		\item $S$ has {\it contractible codomains} if for all $f: A \to B$ in $S$, $B$ is geometrically contractible.
		\item $S$ {\it consists of geometric equivalences} if for all $f: A \to B$ in $S$, $f$ is a  geometric equivalence.
		\item $S$ is {\it geometrically contractible} if it consists of geometric equivalences with contractible codomain, which is equivalent to all morphisms in $S$ being geometrically contractible.  
	\end{itemize}
\end{defone} 

The definitions are chosen with the following implications in mind, which follow very straightforwardly.

\begin{lemone} \label{lemma:geom contr}
	An object $B$ is geometrically contractible if and only if for every homotopically constant $\cD$-space $Y$, every map $B \to Y$ is injectively equivalent to a constant map $B \to D[0] \to Y$. 
\end{lemone}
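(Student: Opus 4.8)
The plan is to unwind the definitions through the adjunction $(|-|,Sing)$ of \cref{def:geometric realization}, using the single key input that $N(\cD_{/d})$ is a contractible simplicial set for every object $d$ (it has the terminal object $\id_d$). Three formal consequences follow. First, $|-|\colon \sP(\cD)\to\s$ computes the homotopy colimit of a $\cD$-space, since the defining coend $\int^{d}X(d)\times N(\cD_{/d})$ is exactly the two-sided bar construction for $\mathrm{hocolim}_{\cD^{op}}X$; in particular, when $\cD$ has a terminal object $t$ one has $|D[0]|=|F[t]|=N(\cD_{/t})=N(\cD)\simeq\ast$, so the terminal $\cD$-space $D[0]$ is geometrically contractible and ``$B$ geometrically contractible'' just says $|B|$ is weakly contractible. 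Second, $Sing(K)$ is a homotopically constant $\cD$-space for every Kan complex $K$, because its structure maps are induced by equivalences $N(\cD_{/d'})\to N(\cD_{/d})$. Third, and crucially, for any $\cD$-space $A$ and any homotopically constant $\cD$-space $Y$ there is a natural equivalence
$$\Map_{\sP(\cD)}(A,Y)\ \simeq\ \Map_{\s}(|A|,|Y|),$$
obtained by replacing $Y$ up to level-wise (hence injective) equivalence by the constant $\cD$-space on $|Y|$ and then using that mapping out of a homotopy colimit into a constant diagram is $\Map_{\s}(|A|,-)$. Under this identification, the map induced by the projection $B\to D[0]$ is precomposition along $|B|\to|D[0]|$, and a map $B\to Y$ factors (up to injective equivalence) as $B\to D[0]\to Y$ exactly when the corresponding map $|B|\to|Y|$ factors through $|B|\to|D[0]|$.

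With this dictionary both implications are immediate. If $B$ is geometrically contractible, then $|B|\to|D[0]|$ is an equivalence in $\s$, so for every homotopically constant $Y$ the map $\Map_{\sP(\cD)}(D[0],Y)\to\Map_{\sP(\cD)}(B,Y)$ is precomposition along an equivalence and hence an equivalence; therefore every map $B\to Y$ is injectively equivalent to a constant map $B\to D[0]\to Y$. Conversely, suppose every map from $B$ into a homotopically constant $\cD$-space is injectively equivalent to a constant map, and apply this with $Y=Sing(K)$ ranging over all Kan complexes $K$. The hypothesis then translates, via the equivalence above, into the statement that $\Map_{\s}(|D[0]|,K)\to\Map_{\s}(|B|,K)$ is an equivalence for every Kan complex $K$ (the constant maps being precisely those in the image of the first mapping space). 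By the Whitehead/Yoneda criterion in $\s$ — testing against a fibrant replacement of $|B|$ to produce a homotopy section, and against a fibrant replacement of $|D[0]|$ to see the section is also a retraction — this forces $|B|\to|D[0]|$ to be an equivalence, i.e. $B$ to be geometrically contractible.

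The only step needing care is the computation $\Map_{\sP(\cD)}(A,Y)\simeq\Map_{\s}(|A|,|Y|)$ for homotopically constant $Y$: it rests on replacing $Y$ by a genuinely constant $\cD$-space (valid since all structure maps of $Y$ are weak equivalences, using a terminal object of $\cD$) and on the fact — a direct consequence of the contractibility of the $N(\cD_{/d})$ — that $|-|$ really does compute the homotopy colimit, so that $\Map$ out of it into a constant diagram is a homotopy limit which collapses. Beyond this, the argument is pure manipulation of the adjunction $(|-|,Sing)$ and requires no model-categorical input past the injective model structure of \cref{prop:injectivemod}; I do not expect any serious obstacle, consistent with the remark that these implications ``follow very straightforwardly''.
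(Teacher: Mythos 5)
The paper gives no proof of this lemma (it records both Lemmas~\ref{lemma:geom contr} and~\ref{lemma:geom equiv} with only the comment that they ``follow very straightforwardly''), so there is nothing to compare against; your job is simply to fill the gap, and your argument does so correctly. The route through the Quillen adjunction $(|-|,\mathrm{Sing})$ from $\sP(\cD)^{inj}$ to $\s^{Kan}$, the identification of $|-|$ with the bar-construction homotopy colimit, and the resulting natural equivalence $\Map_{\sP(\cD)}(A,Y)\simeq\Map_\s(|A|,K)$ for $Y$ homotopically constant with value $K$, is exactly the right dictionary, and the converse direction testing against $\mathrm{Sing}$ of a Kan fibrant replacement of $|B|$ is the efficient way to bootstrap contractibility of $|B|$. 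Two small precision remarks are worth keeping in mind. First, the phrase ``replacing $Y$ by the constant $\cD$-space on $|Y|$'' should really be ``replacing $Y$ by $\mathrm{Sing}(K)$ for $K$ a Kan fibrant replacement of $Y(t)$'': the constant diagram itself need not be injectively fibrant, and passing through $\mathrm{Sing}$ is what makes the mapping-space computation honest; the equivalence $|Y|\simeq K$ then comes from $|\{K\}|\cong K\times N(\cD)\simeq K$. Second, the whole argument relies on $N(\cD)$ being contractible, i.e.\ on $\cD$ having a terminal object (so that $|D[0]|=|F[t]|=N(\cD_{/t})=N(\cD)\simeq\ast$); the lemma does not state this hypothesis explicitly, but it is in force throughout the subsection and in every place the lemma is used (e.g.\ Proposition~\ref{prop:fibrancies} assumes a terminal object), so your use of it is appropriate.
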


\begin{lemone}\label{lemma:geom equiv}
	$S$ consists of geometric equivalences if and only if every homotopically constant $\cD$-space is $S$-local.
\end{lemone}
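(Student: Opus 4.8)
The plan is to translate the $S$-locality condition into a statement about geometric realizations via the adjunction $|-| \dashv Sing$ of \cref{def:geometric realization}, and then read off both implications from the standard fact that a map of spaces is a weak equivalence precisely when it induces equivalences on all mapping spaces out of it.

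First I would record the two facts underlying that adjunction that make it useful here. Since each slice $\cD_{/d}$ has a terminal object, the space $|F[d]| = N(\cD_{/d})$ is contractible; hence all representable $\cD$-spaces are geometrically contractible and, more to the point, $|-|$ sends injective equivalences to weak equivalences and admits a natural equivalence $|X| \simeq \mathrm{hocolim}_{\cD^{op}} X$ (both $|-|$ and $\mathrm{hocolim}_{\cD^{op}}$ are colimit-preserving homotopy functors sending $F[d]$ to a model of the contractible space $N(\cD_{/d})$). Combining this with the fact that the constant-diagram functor $\{-\}$ is right adjoint to $\mathrm{colim}_{\cD^{op}}$, one obtains for every $\cD$-space $X$ and every Kan complex $K$ a natural equivalence
$$\Map_{\sP(\cD)}(X, \{K\}) \simeq \Map_\s(|X|, K).$$
Second, I would note that a homotopically constant $\cD$-space $Z$ is injectively equivalent to a constant $\cD$-space $\{K\}$ with $K$ a Kan complex (its common value), so in all mapping-space computations below $Z$ may be replaced by $\{K\}$.

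For the ``only if'' direction, assume $S$ consists of geometric equivalences and let $Z \simeq \{K\}$ be homotopically constant. For $f\colon A \to B$ in $S$, the map $f^\ast\colon \Map_{\sP(\cD)}(B, Z) \to \Map_{\sP(\cD)}(A, Z)$ corresponds under the displayed equivalence to $|f|^\ast\colon \Map_\s(|B|, K) \to \Map_\s(|A|, K)$, which is a Kan equivalence because $|f|$ is a weak equivalence; hence $Z$ is $S$-local. For the ``if'' direction, assume every homotopically constant $\cD$-space is $S$-local and let $f\colon A \to B$ lie in $S$. For each Kan complex $K$ the $\cD$-space $\{K\}$ is homotopically constant, hence $S$-local, so $\Map_{\sP(\cD)}(B,\{K\}) \to \Map_{\sP(\cD)}(A,\{K\})$ is a Kan equivalence; translating back, $|f|^\ast\colon \Map_\s(|B|,K) \to \Map_\s(|A|,K)$ is a Kan equivalence for every Kan complex $K$. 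Since the functors $\Map_\s(-,K)$ jointly reflect weak equivalences (equivalently, by the Yoneda lemma in $\mathrm{Ho}(\s)$), $|f|$ is a weak equivalence, i.e. $f$ is a geometric equivalence.

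I expect the only genuine work to be in justifying the displayed natural equivalence at the derived level — that is, in checking that $\mathrm{colim}_{\cD^{op}} \dashv \{-\}$ (equivalently $|-| \dashv Sing$) is a Quillen adjunction to $\s^{Kan}$, so that the mapping-space identity is homotopically meaningful, together with the verification that homotopically constant $\cD$-spaces really are injectively equivalent to constant ones. Both reduce to the contractibility of $N(\cD_{/d})$ and the homotopy-invariance of realization, so they are routine; everything else is a formal consequence of the adjunction and Whitehead's theorem.
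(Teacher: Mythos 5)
The paper supplies no proof of this lemma; it appears after the sentence ``The definitions are chosen with the following implications in mind, which follow very straightforwardly,'' so there is nothing to compare your argument against. Your route — replace a homotopically constant $Z$ by a constant (injectively fibrant) $\cD$-space, transfer the $S$-locality condition through the adjunction $|-|\dashv \mathrm{Sing}$, and close both directions using that the functors $\Map_\s(-,K)$ for $K$ ranging over Kan complexes jointly detect weak equivalences — is the natural one and, in my view, supplies the content that the paper leaves implicit.

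Two points of hygiene are worth making precise. First, you slide between the adjunctions $(\colim_{\cD^{op}},\{-\})$ and $(|-|,\mathrm{Sing})$. The identity that holds on the nose is $\Map_{\sP(\cD)}(X,\mathrm{Sing}(K))\cong \Map_\s(|X|,K)$; the $\{-\}$ adjunction instead gives $\Map_{\sP(\cD)}(X,\{K\})\cong\Map_\s(\colim X, K)$, and $\colim X$ need not model $|X|$ for the non-cofibrant $X$ that occur as sources and targets of maps in $S$. It is cleaner to carry the whole argument out with $\mathrm{Sing}(K)$ and then observe that the canonical map $\{K\}\to \mathrm{Sing}(K)$ is a levelwise equivalence because each $N(\cD_{/d})$ is contractible, which is what lets you replace the arbitrary homotopically constant $Z$ by $\mathrm{Sing}(K)$. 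Second, the opening reduction — that a homotopically constant $\cD$-space is injectively equivalent to a constant one — is exactly what makes the argument go, and it is the only place where more than formalism is at stake. If one were to read ``homotopically constant'' as merely ``all structure maps are weak equivalences,'' the statement would actually fail for a general $\cD$ (e.g.\ for $\cD$ a nontrivial groupoid, a free $G$-set is not $S$-local against a geometric equivalence of the form $G\hookrightarrow CG$); the reading ``injectively equivalent to a constant diagram'' is the correct one and matches the paper's explicit use of the term for simplicial spaces, and in the paper's applications $\cD=\Theta_k\times\DD^{n-k}$ has a terminal object $t$, so the comparison map $\{Z(t)\}\to Z$ realizes it. With that fixed, your proof is correct; the Quillen-adjunction checks you flag as routine are indeed routine.
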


We also have the following implication we will use later on.

\begin{lemone} \label{lemma:coprod fibrant}
	Let $S$ be a set of cofibrations of $\cD$-spaces with contractible codomain. Let $X,Y$ be two $S$-local $\cD$-spaces. Then $X \coprod Y$ is also $S$-local.
\end{lemone}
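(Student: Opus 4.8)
The plan is to fix a map $f\colon A\to B$ in $S$ and to test $S$-locality of $X\coprod Y$ against it by comparing everything with the canonical ``support'' map $q\colon X\coprod Y\to D[0]\coprod D[0]$ induced by the terminal maps $X\to D[0]$ and $Y\to D[0]$. Since the class of $S$-local $\cD$-spaces is invariant under injective equivalence, I would first replace $X$ and $Y$ by injectively fibrant objects, and then, after factoring $q$ into a trivial cofibration followed by an injective fibration, assume that $q$ itself is an injective fibration; its base $D[0]\coprod D[0]$ is homotopically constant and its fibres are, up to injective equivalence, $X$ over the first point and $Y$ over the second. Applying $\Map_{\sP(\cD)}(-,q)$ to $f$ then produces a commutative square in which the two vertical maps $\Map_{\sP(\cD)}(B,q)$ and $\Map_{\sP(\cD)}(A,q)$ are Kan fibrations of spaces.

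Next I would identify the pieces of this square. The homotopically constant $\cD$-space $D[0]\coprod D[0]$ is $S$-local by \cref{lemma:geom equiv}, so the bottom map
$$\Map_{\sP(\cD)}(B,D[0]\coprod D[0])\longrightarrow \Map_{\sP(\cD)}(A,D[0]\coprod D[0])$$
is a Kan equivalence. Because $B$ is geometrically contractible, \cref{lemma:geom contr} shows that every map $B\to D[0]\coprod D[0]$ is injectively equivalent to one of the two constant maps, so up to weak equivalence the base on the left has exactly two relevant components; over these the fibre of $\Map_{\sP(\cD)}(B,q)$ is $\Map_{\sP(\cD)}(B,X)$, respectively $\Map_{\sP(\cD)}(B,Y)$, and the same holds over $A$ since the bottom map is an equivalence. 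On these fibres the square restricts exactly to the defining $S$-locality squares for $X$ and for $Y$, which are Kan equivalences by hypothesis.

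Finally, a map of Kan fibrations that is a weak equivalence on the base and on every fibre is a weak equivalence on total spaces (via the long exact sequence of homotopy groups, or by gluing homotopy pullbacks), so $\Map_{\sP(\cD)}(B,X\coprod Y)\to\Map_{\sP(\cD)}(A,X\coprod Y)$ is a Kan equivalence; as $f\in S$ was arbitrary, $X\coprod Y$ is $S$-local. The step I expect to require the most care is the fibre identification in the second paragraph: one must verify that mapping $B$ (or $A$) into $X\coprod Y$ over a fixed vertex of the base genuinely recovers mapping into $X$ or into $Y$, with no ``mixed'' contributions. This is precisely where the geometric hypotheses on $S$ are used, through \cref{lemma:geom contr} and \cref{lemma:geom equiv}; if the nerve of $\cD$ is disconnected one should first decompose $\cD$ into its connected components, on each of which the argument goes through verbatim.
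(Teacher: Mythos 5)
There is a genuine gap in your second paragraph. You deduce that $D[0]\coprod D[0]$ is $S$-local from \cref{lemma:geom equiv}, but that lemma requires $S$ to \emph{consist of geometric equivalences}, which by \cref{def:contractible realization} is a different and strictly stronger condition than the present hypothesis that $S$ has \emph{contractible codomains}. The latter constrains only the codomains $B$ and says nothing about the maps $A \to B$ themselves; for instance $\emptyset \to D[0]$ has geometrically contractible codomain but is not a geometric equivalence, and for an $S$ containing such a map the object $D[0]\coprod D[0]$ fails to be $S$-local, since the restriction $\Map(D[0],D[0]\coprod D[0])\to\Map(\emptyset,D[0]\coprod D[0])$ collapses a discrete two-point space to a point. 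Your whole long-exact-sequence argument rests on the bottom map of the square being a Kan equivalence, so this is not a citation slip that can be quietly repaired: to show directly that the bottom map is an equivalence you would in effect have to reproduce the paper's own computation.

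The paper's proof is more direct and bypasses the fibrancy apparatus entirely. Since $|B|$ is contractible, $B$ admits exactly the two constant maps to $D[0]\coprod D[0]$, so every map $B\to X\coprod Y$ factors through one of the two summands, giving an identification $\Map(B,X\coprod Y)\cong\Map(B,X)\coprod\Map(B,Y)$ of simplicial sets; the analogous identification is invoked for $A$, and the conclusion follows because Kan equivalences are closed under coproducts. This is exactly the ``fibre identification'' step you flag as delicate, carried out globally rather than fibrewise; once you have it, no claim about $S$-locality of the base is needed, so the detour through fibrant replacement of $X,Y$, factorization of $q$, and the fibration long exact sequence is both unnecessary and the source of the error.
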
 

\begin{proof}
 The maps to the terminal object $X, Y \to D[0]$ gives us a map of coproducts $X \coprod Y \to D[0] \coprod D[0]$. Now let $A \to B$ be map in $S$. As $S$ has contractible codomains, there are precisely two morphisms $B \to D[0] \coprod D[0]$, which are the two that factor through the coproduct inclusions $i_0,i_1: D[0] \to D[0] \coprod D[0]$. This implies that every morphism $B \to X \coprod Y$ factors through the inclusions $X,Y \to X \coprod Y$, meaning we have $\Map(B,X \coprod Y) \simeq \Map(B,X) \coprod \Map(B,Y)$ and similarly $\Map(A,X \coprod Y) \simeq \Map(A,X) \coprod \Map(A,Y)$. The result now follows from the fact that Kan equivalences are closed under coproducts. 
\end{proof} 

As we want want to show that many equivalences from \cref{sec:left fib} and \cref{sec:grothendieck} still hold in the $S$-localized setting it is helpful to have a general model categorical result that helps us localize Quillen equivalences.

\begin{theone} \label{the:localizing equiv}
	Let $\C^{\cM}$, $\D^{\cN}$ be two simplicial left proper combinatorial model categories with a simplicial Quillen adjunction 
	\begin{center}
		\adjun{\C^{\cM}}{\D^{\cN}}{F}{G}.
	\end{center}
	Let $S$ be a set of cofibrations in $\C$. Let $\C^{\cM_S}$ be the Bousfield localization of the $\cM$-model structure with respect $S$ and let $\D^{\cN_{F(S)}}$ be the Bousfield localization with respect to $F(S)$. Then, the Quillen adjunction $(F,G)$ induces a Quillen adjunction of localized model structures
	\begin{center}
		\adjun{\C^{\cM_S}}{\D^{\cN_{F(S)}}}{F}{G}.
	\end{center}
	which is a Quillen equivalence if the original one was a Quillen equivalence.
\end{theone}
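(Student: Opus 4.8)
The plan is to treat the two assertions separately, using the universal property of left Bousfield localization for the Quillen adjunction and a homotopy-category argument for the Quillen equivalence.

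\emph{The Quillen adjunction.} Since $\C^\cM$ and $\D^\cN$ are simplicial left proper combinatorial, the localizations $\C^{\cM_S}$ and $\D^{\cN_{F(S)}}$ exist and are again simplicial left proper combinatorial, with the same cofibrations, the same cofibrant objects, and the same simplicial enrichment as the unlocalized model structures. In particular $\id\colon\D^\cN\to\D^{\cN_{F(S)}}$ is left Quillen, so the composite $\C^\cM\xrightarrow{F}\D^\cN\xrightarrow{\id}\D^{\cN_{F(S)}}$ is left Quillen; moreover it sends each map of $S$ to a map of $F(S)$, and every map of $F(S)$ is a weak equivalence of $\D^{\cN_{F(S)}}$ by construction of the localization. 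The universal property of left Bousfield localization (\cite[Theorem 3.3.20]{hirschhorn2003modelcategories}) then shows that $F\colon\C^{\cM_S}\to\D^{\cN_{F(S)}}$ is left Quillen, i.e.\ that $(F,G)$ is a (simplicial) Quillen adjunction of the localized model structures, the right adjoint being unchanged.

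\emph{The Quillen equivalence.} Assume $(F,G)\colon\C^\cM\leftrightarrows\D^\cN$ is a Quillen equivalence, and pass to homotopy categories. I would use the standard description of $\Ho(\C^{\cM_S})$ as a reflective localization of $\Ho(\C^\cM)$: the derived functors of the Quillen adjunction $(\id\colon\C^\cM\to\C^{\cM_S},\ \id\colon\C^{\cM_S}\to\C^\cM)$ exhibit $\mathbb{R}\id$ as a fully faithful right adjoint to $\mathbb{L}\id$ whose essential image is exactly the class of $S$-local objects, and similarly for $\D$ and the $F(S)$-local objects. On the other hand $\mathbb{L}F\colon\Ho(\C^\cM)\to\Ho(\D^\cN)$ is an equivalence with quasi-inverse $\mathbb{R}G$ and with invertible derived unit and counit. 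Combining the derived-adjunction identity $\mathbb{R}\Map_\D(\mathbb{L}FA,\mathbb{L}Fc)\simeq\mathbb{R}\Map_\C(A,\mathbb{R}G\mathbb{L}Fc)\simeq\mathbb{R}\Map_\C(A,c)$ with the characterization of local objects by derived mapping spaces, one checks that $\mathbb{L}F$ carries $S$-local objects to $F(S)$-local objects and $\mathbb{R}G$ carries $F(S)$-local objects to $S$-local objects; hence $\mathbb{L}F$ restricts to an equivalence between the two reflective subcategories. Since $\mathbb{L}\id\circ\mathbb{R}\id\cong\id$ on each side and the identity-adjunction derived functors fit into commuting squares with $\mathbb{L}F$ on the two sides, it follows that the derived functor of $F\colon\C^{\cM_S}\to\D^{\cN_{F(S)}}$ is, up to natural isomorphism, a composite of three equivalences, so it is an equivalence and $(F,G)$ is a Quillen equivalence of the localized structures.

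\emph{Main obstacle.} One could instead try the two elementary criteria: that $G$ reflects weak equivalences between fibrant objects of the localized structures, and that the localized derived unit $X\to G\big((FX)^{f_S}\big)$ is an $\cM_S$-weak equivalence for cofibrant $X$. The first reduces painlessly to the unlocalized hypothesis, using that a map between $\cM_S$-fibrant (resp.\ $\cN_{F(S)}$-fibrant) objects is a localized weak equivalence exactly when it is an $\cM$- (resp.\ $\cN$-) weak equivalence. The second is the delicate point: factoring the localized derived unit through the unlocalized one reduces it to showing that $G$ sends the $\cN_{F(S)}$-fibrant replacement $(FX)^{f}\to(FX)^{f_S}$ of an $\cN$-fibrant object to an $\cM_S$-weak equivalence, and here $(FX)^{f}$ is $\cN$-fibrant but not $\cN_{F(S)}$-fibrant, so Ken Brown's lemma does not apply on the nose. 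This is precisely the subtlety the homotopy-category argument circumvents, since there everything is phrased through derived functors and local objects; I would therefore make that argument the backbone of the actual proof.
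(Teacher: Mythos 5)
You take a genuinely different route for the Quillen-equivalence half than the paper does, and the comparison is instructive. For the Quillen-adjunction half both approaches run through the universal property of Bousfield localization, so there is no real difference there. For the equivalence, you pass to homotopy categories and show that the total left derived functor of $F$ restricts to an equivalence between the reflective subcategories of $S$-local and $F(S)$-local objects; the paper instead stays with the elementary two-part criterion, namely that $G$ reflects weak equivalences between fibrant objects of the localized structures (which you correctly flag as painless, since fibrant objects in a left Bousfield localization are fibrant in the original structure and weak equivalences between local fibrant objects are detected in the original structure) together with the derived \emph{counit} being a weak equivalence. Your ``main obstacle'' paragraph correctly diagnoses why the derived \emph{unit} is delicate here — $(FX)^f\to(FX)^{f_S}$ has $\cN$-fibrant but not $\cN_{F(S)}$-fibrant source, so Ken Brown's lemma does not apply to $G$ of that map — but you draw the wrong conclusion: the remedy is not to abandon the elementary criterion for a homotopy-category argument, it is to switch from unit to counit. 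For $Y$ bifibrant in $\D^{\cN_{F(S)}}$, $Y$ is also bifibrant in $\D^\cN$; and since $\cM_S$ and $\cM$ share the same trivial fibrations, a cofibrant replacement $CGY\to GY$ in $\cM$ is automatically one in $\cM_S$, so the localized and unlocalized derived counits $FCGY\to Y$ are literally the same map, which is an $\cN$-equivalence by hypothesis and hence an $\cN_{F(S)}$-equivalence trivially. The asymmetry you noticed between unit and counit is exactly the point: left Bousfield localization changes the fibrant side but not the cofibrant side, so the counit computation sees no localization at all. Your homotopy-category argument is a legitimate alternative that also circumvents the unit subtlety, but it is heavier machinery than what the observation about the counit affords.
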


\begin{proof}
	The proof is analogous to \cite[Theorem 1.2]{rasekh2021cartfibcss} with the sole difference that we are not assuming every object in $\C^{\cM}$ is cofibrant and so we need an alternative argument regarding the derived counit map. Let $Y$ be a bifibrant object in $\D$. Let $p:CGY \to GY$ be a cofibrant replacement of $GY$ in the $\cM$-model structure (meaning $p$ is a trivial fibration in the $\cM$-model structure). By definition of Bousfield localizations, the $\cM_S$-model structure has the same trivial fibrations as $\cM$ and so $p$ is also a trivial fibration in $\cM_S$-model structure. Hence, the map $FCGY \to FGY \to Y$ is the derived counit map of localized model structures and we need to prove this map is an equivalence in the $\cN_{F(S)}$-model structure on $\D$.
	
	By assumption $(F,G)$ is a Quillen equivalence between the $\cM$ and $\cN$ model structure and so the derived counit map $FCGY \to FGY \to Y$ is already an equivalence in the $\cN$-model structure on $\D$, which by Bousfield localization means it is also an equivalence in the $\cN_{F(S)}$-model structure. This, combined with the explanations in the proof of \cite[Theorem 1.2]{rasekh2021cartfibcss}, finishes the proof.
\end{proof}

It is helpful to explicitly state the following variation of \cref{the:localizing equiv}.

\begin{corone}\label{cor:localizing equiv}
		Let $\C^{\cM}$, $\D^{\cN}$ be two simplicial left proper combinatorial model categories with a simplicial Quillen adjunction 
	\begin{center}
		\adjun{\C^{\cM}}{\D^{\cN}}{F}{G}.
	\end{center}
	Let $S_1$ be a set of cofibrations in $\C$ and let $S_2$ be a set of cofibrations in $\D$. Let $\C^{\cM_{S_1}}$ be the Bousfield localization of the $\cM$-model structure with respect $S$ and let $\D^{\cN_{S_2}}$ be the Bousfield localization with respect to $S_2$. Then, the Quillen adjunction $(F,G)$ induces a Quillen adjunction 
	\begin{center}
		\adjun{\C^{\cM_{S_1}}}{\D^{\cN_{S_2}}}{F}{G}
	\end{center}
	if and only if for every fibrant object $Y$ in the $S_2$-localized $\cN$-model structure, $G(Y)$ is $S_1$-local. Moreover, if $(F,G)$ is a Quillen equivalence between the $\cM$ and $\cN$ model structure, then $(F,G)$ is a Quillen equivalence between then $\cM_{S_1}$ and $\cN_{S_2}$ model structure if and only if for every $\cN$-fibrant object $Y$, $Y$ is $\cN_{S_2}$ local if and only if $G(Y)$ is $\cM_{S_1}$-fibrant.
\end{corone}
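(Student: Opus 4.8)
The plan is to deduce both biconditionals from \cref{the:localizing equiv} together with two standard facts about left Bousfield localizations of simplicial left proper combinatorial model categories. First, such a localization is determined by its class of fibrant objects (equivalently by its local objects, or by its local equivalences); in particular, for two sets of cofibrations $T_1,T_2$ in $\D$, the identity adjunction $\D^{\cN_{T_1}} \to \D^{\cN_{T_2}}$ is a Quillen adjunction as soon as every $\cN_{T_2}$-fibrant object is $\cN_{T_1}$-fibrant (the identity right adjoint preserves fibrant objects, and both structures share the cofibrations of $\cN$). Second, for an $\cN$-fibrant $Y$ and $(A \to B)\in S_1$ (which, after cofibrant replacement, we may assume has cofibrant domain and codomain; in the application to $\sP(\cD)^{inj}$ every object is already cofibrant), the simplicial adjunction identifies $\Map(B,G(Y)) \to \Map(A,G(Y))$ with $\Map(F(B),Y) \to \Map(F(A),Y)$, so that, using that $G$ is right Quillen and $F$ is left Quillen for the unlocalized structures, $G(Y)$ is $S_1$-local if and only if $Y$ is $F(S_1)$-local. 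Via this second fact the hypothesis of the first claim becomes ``every $\cN_{S_2}$-fibrant object is $\cN_{F(S_1)}$-fibrant'', and the hypothesis of the second claim becomes ``$\D^{\cN_{S_2}}$ and $\D^{\cN_{F(S_1)}}$ have the same $\cN$-fibrant local objects'', hence (first fact) $\D^{\cN_{S_2}}=\D^{\cN_{F(S_1)}}$.

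For the Quillen adjunction biconditional, the ``only if'' direction is immediate: a Quillen adjunction $(F,G)\colon \C^{\cM_{S_1}} \to \D^{\cN_{S_2}}$ has $G$ carrying fibrant objects to fibrant objects, so an $\cN_{S_2}$-fibrant $Y$ is sent to an $\cM_{S_1}$-fibrant object, which is in particular $S_1$-local. For the ``if'' direction, the reformulation above shows every $\cN_{S_2}$-fibrant object is $\cN_{F(S_1)}$-fibrant, so the identity gives a Quillen adjunction $\D^{\cN_{F(S_1)}} \to \D^{\cN_{S_2}}$; composing it with the Quillen adjunction $(F,G)\colon \C^{\cM_{S_1}} \to \D^{\cN_{F(S_1)}}$ provided by \cref{the:localizing equiv} yields $(F,G)\colon \C^{\cM_{S_1}} \to \D^{\cN_{S_2}}$.

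For the Quillen equivalence biconditional, the ``if'' direction is now formal: the hypothesis gives $\D^{\cN_{S_2}}=\D^{\cN_{F(S_1)}}$, and \cref{the:localizing equiv} asserts that $(F,G)\colon \C^{\cM_{S_1}}\to\D^{\cN_{F(S_1)}}$ is a Quillen equivalence whenever $(F,G)\colon \C^{\cM}\to\D^{\cN}$ is. For the ``only if'' direction I would work in the homotopy categories. The localization adjunctions $\id\colon\C^{\cM}\to\C^{\cM_{S_1}}$ and $\id\colon\D^{\cN}\to\D^{\cN_{S_2}}$, together with the two copies of $(F,G)$, form a commuting square of Quillen adjunctions, so the right derived functors form a commuting square in which the vertical arrows are the fully faithful inclusions $\Ho\C^{\cM_{S_1}} \hookrightarrow \Ho\C^{\cM}$ and $\Ho\D^{\cN_{S_2}} \hookrightarrow \Ho\D^{\cN}$ of the reflective subcategories of local objects, and both horizontal arrows $\mathbb{R}G$ are equivalences. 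A diagram chase then shows that $X$ in $\Ho\D^{\cN}$ lies in $\Ho\D^{\cN_{S_2}}$ if and only if $\mathbb{R}G(X)$ lies in $\Ho\C^{\cM_{S_1}}$; specializing to an $\cN$-fibrant $Y$, for which $\mathbb{R}G(Y)=G(Y)$ is $\cM$-fibrant, this says precisely that $Y$ is $S_2$-local if and only if $G(Y)$ is $S_1$-local, which is the asserted condition.

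The main obstacle is this last (``only if'' of the Quillen equivalence) direction: all the other implications are direct bookkeeping with fibrant and local objects on top of \cref{the:localizing equiv}, whereas here one genuinely uses that a Quillen equivalence exhibits a homotopy category as a reflective subcategory of the target and then compares the two reflective subcategories $\Ho\C^{\cM_{S_1}}\subseteq\Ho\C^{\cM}$ and $\Ho\D^{\cN_{S_2}}\subseteq\Ho\D^{\cN}$ across the equivalence $\mathbb{R}G$. A secondary technical point is the cofibrancy of the (co)domains of the maps in $S_1$ needed for the mapping-space identification; this is harmless since, as noted, every object of $\sP(\cD)^{inj}$ is cofibrant in all intended applications.
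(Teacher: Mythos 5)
Your proof is correct, and its backbone — reformulating the local-object condition as an equality of localized model structures $\D^{\cN_{S_2}} = \D^{\cN_{F(S_1)}}$ and then invoking \cref{the:localizing equiv} — is the same as the paper's. Where you diverge is in how much you actually prove. The paper dispatches the Quillen adjunction biconditional in one line by citing \cite[Corollary A.3.7.2]{lurie2009htt}, which is precisely that biconditional; you instead re-derive the ``if'' direction from \cref{the:localizing equiv} by composing $(F,G)\colon \C^{\cM_{S_1}}\to\D^{\cN_{F(S_1)}}$ with the identity Quillen adjunction $\D^{\cN_{F(S_1)}}\to\D^{\cN_{S_2}}$, which is more self-contained but duplicates a standard result. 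For the Quillen equivalence biconditional, the paper again compresses the argument to the observation that the stated condition says $\D^{\cN_{S_2}}=\D^{\cN_{F(S_1)}}$; it does not spell out the ``only if'' direction (i.e.\ that a Quillen equivalence to $\D^{\cN_{S_2}}$ forces this equality), which you correctly identify as the real content and handle via the square of homotopy categories with vertical fully faithful reflective inclusions and horizontal equivalences $\mathbb{R}G$. That argument works, and nicely isolates where the Quillen-equivalence hypothesis is actually used. A slightly lighter alternative in the same spirit: from the already-established forward direction, $\D^{\cN_{S_2}}$ is a further Bousfield localization of $\D^{\cN_{F(S_1)}}$, so the identity gives a Quillen adjunction $\D^{\cN_{F(S_1)}}\to\D^{\cN_{S_2}}$; since both $(F,G)\colon\C^{\cM_{S_1}}\to\D^{\cN_{F(S_1)}}$ (by \cref{the:localizing equiv}) and the composite $(F,G)\colon\C^{\cM_{S_1}}\to\D^{\cN_{S_2}}$ (by hypothesis) are Quillen equivalences, two-out-of-three forces the identity adjunction to be one, and an identity Quillen equivalence between a model structure and a further left Bousfield localization is an equality. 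Either way your treatment of the cofibrancy issue in the mapping-space identification is correctly flagged and harmless.
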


\begin{proof}
	The Quillen adjunction argument follows immediately from \cite[Corollary A.3.7.2]{lurie2009htt}. The Quillen equivalence follows from  \cref{the:localizing equiv} combined with the fact that the condition precisely states that the $S_2$-localized $\cN$-model structure coincides with the $F(S_1)$-localized $\cN$-model structure.
\end{proof}

Our main object of study is in this section is the notion of a {\it $S$-localized $\cD$-left fibration}.
\begin{defone} \label{def:cdleft s}
	Let $X$ be a $\cD$-simplicial space and $S$ a set of monomorphisms of $\cD$-spaces (not over $X$).
	Then a $\cD$-left fibration $p:L \to X$ is an {\it $S$-localized $\cD$-left fibration} if it is local with respect to $S$, which is equivalent to $\Val(p): \Val(L) \to \Val(X)$ being local with respect to $S$. 
\end{defone}

Let us start the study by localizing the injective model structure defined in \cref{prop:injectivemod}.

\begin{propone} \label{prop:injectivemod s}
	Let $S$ be a set of monomorphisms of $\cD$-spaces.
	There is a unique, combinatorial left proper simplicial model structure on $\sP(\cD)$, denoted by $\sP(\cD)^{inj_S}$ and called the 
	{\it $S$-localized injective model structure}, defined as follows. 
	\begin{itemize}
		\item[C] A map $Y \to Z$ is a cofibration if it is an inclusion.
		\item[F] An object $W$ is fibrant if it is injectively fibrant and local with respect to $S$.
		\item[W] A map $Y \to Z$ is a weak equivalence if for every fibrant object $W$ the map 
		$$\Map_{\sP(\cD)}(Z, W) \to \Map_{\sP(\cD)}(Y,W)$$
		is a Kan equivalence.
	\end{itemize}
\end{propone}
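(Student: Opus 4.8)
The plan is to obtain $\sP(\cD)^{inj_S}$ as the left Bousfield localization of the injective model structure $\sP(\cD)^{inj}$ from \cref{prop:injectivemod} at the set $S$. By \cref{prop:injectivemod} the model category $\sP(\cD)^{inj}$ is simplicial, left proper and combinatorial, and $S$ is a set rather than a proper class, so the standard existence theorem for left Bousfield localizations applies directly --- for instance \cite[Theorem 4.1.1]{hirschhorn2003modelcategories} (together with preservation of left properness), or, in the combinatorial formulation, \cite[Proposition A.3.7.3]{lurie2009htt}. This produces a simplicial, left proper, combinatorial model structure on $\sP(\cD)$ with the same cofibrations (the inclusions), whose fibrant objects are the injectively fibrant $S$-local objects and whose weak equivalences are the $S$-local equivalences. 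The remaining work is then just to check that these three classes are exactly the ones listed as (C), (F) and (W).

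For (C) there is nothing to do, since left Bousfield localization never changes the cofibrations. The only genuine point of care is that the localization machinery is phrased in terms of \emph{derived} mapping spaces, whereas \cref{not:localized} and the statement of the proposition use the strict simplicial mapping spaces $\Map_{\sP(\cD)}(-,-)$. I would resolve this by noting that in $\sP(\cD)^{inj}$ every object is cofibrant: the map out of the initial object is a monomorphism, hence a cofibration. Therefore, for a fibrant object $W$, the simplicial mapping space $\Map_{\sP(\cD)}(A,W)$ already models the homotopy function complex, and so ``$W$ is $S$-local'' in the localization sense is literally the condition of \cref{not:localized}, which gives (F). Likewise the localized weak equivalences, characterized as the maps $f$ for which $\Map_{\sP(\cD)}(-,W)$ sends $f$ to a Kan equivalence for all fibrant $W$, are exactly those in (W).

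For uniqueness I would invoke the general fact that a model structure on a fixed underlying category is determined by its class of cofibrations together with its class of fibrant objects: the fibrations are recovered by the right lifting property against trivial cofibrations, and --- again using that all objects are cofibrant --- the weak equivalences are recovered as the maps $f$ such that $\Map(-,W)$ takes $f$ to an equivalence of simplicial sets for every fibrant $W$. Hence (C) and (F) pin down the model structure uniquely.

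I do not expect a real obstacle here: the statement is essentially an instance of the general left Bousfield localization theorem, and the only thing requiring attention is matching the abstract notion of $S$-locality with the concrete mapping-space condition of \cref{not:localized}, which is handled entirely by the universal cofibrancy of $\sP(\cD)^{inj}$.
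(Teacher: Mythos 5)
Your proposal is correct and takes the same approach as the paper: the paper's proof is the single sentence that this is a direct application of left Bousfield localization \cite[Theorem 4.1.1]{hirschhorn2003modelcategories} to the injective model structure of \cref{prop:injectivemod} at the set $S$. Your additional remarks --- that cofibrancy of every object lets the strict simplicial mapping spaces $\Map_{\sP(\cD)}(-,W)$ compute the derived mapping spaces needed for $S$-locality, and that cofibrations plus fibrant objects pin the model structure down uniquely --- are correct bookkeeping that the paper leaves implicit.
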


\begin{proof}
	This is a direct application of a left Bousfield localization \cite[Theorem 4.1.1]{hirschhorn2003modelcategories} to the injective model structure on $\cD$-spaces (\cref{prop:injectivemod}) with respect to the morphisms $S$.
\end{proof}

Notice, the $S$-localized model structure is not necessarily Cartesian prompting the following definition.

\begin{defone} \label{def:cartesian s}
	Let $S$ be a set of cofibrations of $\cD$-spaces. We say $S$ is Cartesian if the $S$-localized injective model structure on $\sP(\cD)$ is a Cartesian model structure.
\end{defone}

We move on to generalize the level-wise model structure given in \cref{prop:levelwise localized model structure}.

\begin{theone} \label{the:sone local level stwo}
	Let $S_1$ be a set of monomorphisms of $\cD$-spaces and let $S_2$ be a set of monomorphism of simplicial spaces.
	There is a unique simplicial combinatorial left proper model structure on $\sP(\cD\times\DD)$, denoted by $\sP(\cD\times\DD)^{(lev_{S_2})_{S_1}}$ and called the {\it $S_1$-localized level-wise $S_2$-localized model structure}, defined as follows. 
	\begin{enumerate}
		\item A map $Y \to Z$ is a cofibration if it is an inclusion.
		\item An injectively fibrant object $X$ is fibrant if for all objects $d$ in $\cD$, $\Und_dX$ is $S_2$-local and for all $k\geq 0$, $\Val_kX$ is $S_1$-local.
		\item A map $g:Y \to Z$ is a weak equivalence if for all fibrant objects $X$, the induced map 
		$$\Map(Z,X) \to \Map(Y,Z)$$
		is a Kan equivalence. 
		\item A map $g:Y \to Z$ between fibrant objects is a weak equivalence if and only if it is an injective equivalence.
		\item The model structure is Cartesian if and only if both $S_1$ and $S_2$ are Cartesian.
	\end{enumerate}
\end{theone}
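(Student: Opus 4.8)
Construction via Bousfield localization. I would build $\sP(\cD\times\DD)^{(lev_{S_2})_{S_1}}$ as a left Bousfield localization of the level-wise $S_2$-localized model structure $\sP(\cD\times\DD)^{lev_{S_2}}$ from \cref{prop:levelwise localized model structure}, which is simplicial, left proper and combinatorial. The localizing set is
$$T=\{\,(\incl_k)_!(f)\ :\ f\in S_1,\ k\geq 0\,\},$$
where $(\incl_k)_!\dashv\Val_k$ is the adjunction of \cref{def:val} in its spatial form (\cref{rem:ss analogue}). A direct computation shows $(\incl_k)_!(A)$ is, in bidegree $(d,n)$, the copower of the space $A[d]$ by the finite set $\DD([n],[k])$, so each $(\incl_k)_!(f)$ is levelwise a coproduct of copies of the monomorphism $f$ and hence a cofibration, and $(\incl_k)_!\dashv\Val_k$ is a simplicial adjunction, giving a natural identification $\Map_{\sP(\cD\times\DD)}((\incl_k)_!C,W)\simeq\Map_{\sP(\cD)}(C,\Val_kW)$. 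Existence of the localization is then \cite[Theorem 4.1.1]{hirschhorn2003modelcategories}.

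Formal properties. Items (1)--(4) follow from the general theory. Cofibrations are unchanged, i.e. monomorphisms, giving (1). By the standard description of fibrant objects in a Bousfield localization, an object is fibrant here exactly when it is fibrant in $\sP(\cD\times\DD)^{lev_{S_2}}$ — that is, injectively fibrant with every $\Und_dX$ being $S_2$-local, by \cref{prop:levelwise localized model structure} — and in addition $T$-local; by the mapping-space identification above, $T$-locality of such an $X$ means precisely that $\Val_kX$ is $S_1$-local for every $k\geq 0$, which is (2). Property (3) is the defining property of the localization. For (4), a weak equivalence between fibrant objects of the localization is a weak equivalence in $\sP(\cD\times\DD)^{lev_{S_2}}$ between objects that are in particular $lev_{S_2}$-fibrant, hence an injective equivalence by \cref{prop:levelwise localized model structure}.

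The Cartesian claim (5). For the converse direction I would restrict the localized model structure along the strong monoidal left Quillen functors $\VEmb\colon\sP(\cD)^{inj_{S_1}}\to\sP(\cD\times\DD)^{(lev_{S_2})_{S_1}}$ and $\Disc\colon\ss^{Ree_{S_2}}\to\sP(\cD\times\DD)^{(lev_{S_2})_{S_1}}$; both are left Quillen for the localized structures by \cref{cor:localizing equiv} (for $\VEmb$ this is exactly the $S_1$-locality of values in (2); for $\Disc$ it follows from $S_2$-locality of $\Und_d$ of fibrant objects), and since they are strong monoidal and detect the relevant weak equivalences on maps in their images, Cartesianness of the big model structure forces $\sP(\cD)^{inj_{S_1}}$ and $\ss^{Ree_{S_2}}$ to be Cartesian. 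For the forward direction, assume $S_1$ and $S_2$ are Cartesian. By \cref{prop:levelwise localized model structure}(5) the base $\sP(\cD\times\DD)^{lev_{S_2}}$ is already Cartesian, so it remains to see the further localization at $T$ stays Cartesian. Since all objects are cofibrant and the pushout–product of monomorphisms is a monomorphism, the usual two-out-of-three argument with left properness reduces this to the single statement that $f\times\id_A$ is a $T$-local equivalence for every $f\in T$ and every $\cD$-simplicial space $A$; using the explicit formula for $(\incl_k)_!$, the fact that $-\times\cD(-,d)$ is left Quillen on $\sP(\cD)^{inj_{S_1}}$ (here $S_1$ Cartesian enters), and that each $(\incl_m)_!$ is left Quillen from $\sP(\cD)^{inj_{S_1}}$ into the localized structure by \cref{cor:localizing equiv} and (2), one identifies $(\incl_k)_!(g)\times A$ for representable $A$ with a colimit of maps of the form $(\incl_m)_!(g\times\cD(-,d))$, each a $T$-local equivalence, and then propagates to general $A$ by the gluing lemma.

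The step I expect to be the main obstacle is precisely the forward direction of (5): the bookkeeping that expresses $(\incl_k)_!(g)\times A$ as a colimit of $(\incl_m)_!$'s of $S_1$-local equivalences, and the care needed to pass from representable $A$ to all $A$ (for general $\cD$ not every $\cD$-simplicial space is a cellular colimit of representables, so this should be organized around a set of generating cofibrations rather than around representables), after which the "a left Bousfield localization of a Cartesian model structure stays Cartesian once the localizing maps are stable under products with cofibrations" criterion applies. Everything else is a routine application of Bousfield localization theory together with \cref{prop:levelwise localized model structure} and \cref{cor:localizing equiv}.
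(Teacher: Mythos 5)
Your construction is essentially the same as the paper's: a left Bousfield localization at the set $\{(\incl_k)_!(f):f\in S_1,k\geq 0\}\cup\{\Inc_d(j):j\in S_2,d\in\Obj_\cD\}$, which (since $(\incl_k)_!(f)=\VEmb(f)\times D[k]$ and $\Inc_d(j)=\Disc(j)\times F[d,0]$) is exactly the paper's set $\{D[n]\times i\}\cup\{j\times F[d,0]\}$. The paper does the localization in one step from $\sP(\cD\times\DD)^{inj}$ and records nothing further, whereas you factor it through $\sP(\cD\times\DD)^{lev_{S_2}}$ and then spell out (1)--(4); those verifications are correct and are just the standard consequences of left Bousfield localization that the paper leaves implicit.

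Item (5) is where the paper is silent and where your argument has a gap. Your forward direction is the right idea (reduce to showing each localizing map stays a weak equivalence after pushout-product with an arbitrary cofibration), and you rightly flag the bookkeeping as unfinished. The converse direction as stated does not go through: you claim $\VEmb$ and $\Disc$ ``detect the relevant weak equivalences on maps in their images,'' but this requires them to reflect weak equivalences of the localized structures, which is not automatic. Although $\Val_0\circ\VEmb=\id$, the functor $\Val_0$ is \emph{not} left Quillen from $\sP(\cD\times\DD)^{(lev_{S_2})_{S_1}}$ to $\sP(\cD)^{inj_{S_1}}$ -- e.g.\ $\Val_0(\Inc_d(j))=F[d]\times j[0]$, which need not be an $S_1$-equivalence -- so one cannot transport a trivial cofibration back along $\Val_0$. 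Equivalently, an $S_1$-local injectively fibrant $\cD$-space $V$ need not be (weakly equivalent to) $\Val_0W$ for some fibrant $W$ in the big structure, since fibrancy of $W$ additionally forces every $\Und_dW$ to be $S_2$-local, a constraint $V$ does not control; the same issue afflicts $\Disc$. Fixing this requires either producing enough fibrant objects of the big structure (e.g.\ by a judicious cotensoring), or routing through the Quillen equivalence $\fDiag\dashv\Diag_*$ with $\sP(\cD)^{inj_{S_1}}$ of \cref{prop:dcov vs diag s} -- but beware that \cref{the:diagmodel s} and \cref{prop:dcov vs diag s} are proved downstream of the present theorem, so using them here would be circular.
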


\begin{proof}
	This is a direct application of a left Bousfield localization \cite[Theorem 4.1.1]{hirschhorn2003modelcategories} to the injective model structure on $\cD$-simplicial spaces (\cref{prop:injectivemod}) with respect to the morphisms 
	$$\{D[n] \times i: D[n] \times A \to D[n] \times B: i \in S_1, n \geq 0 \} \cup \{j \times F[d,0]: C \times F[d,0] \to D \times F[d,0]: j \in S_2, d \in \Obj_\cD \} .$$
\end{proof}

We now apply this result to the $\cD$-diagonal model structure defined in \cref{the:diagmodel}.

\begin{propone} \label{the:diagmodel s}
	Let $S$ be a set of monomorphisms of $\cD$-spaces. There is a unique simplicial combinatorial left proper model structure on $\sP(\cD\times\DD)$, denoted by $\sP(\cD\times\DD)^{\cD-diag_S}$ and called the {\it $\cD$-diagonal $S$-localized injective model structure}, defined as follows. 
	\begin{enumerate}
		\item A map $Y \to Z$ is a cofibration if it is an inclusion.
		\item A map $g:Y \to Z$ is a weak equivalence if the diagonal map 
		$$\fDiag(g): \fDiag(Y) \to \fDiag(Z)$$ is an $S$-localized injective equivalence.
		\item An object $W$ is fibrant if and only if it is fibrant in the $\cD$-diagonal model structure and local with respect to $S$. 
		\item In particular, the diagonal $S$-localized injective model structure is Cartesian if and only if $S$ is Cartesian (\cref{def:cartesian s}).
	\end{enumerate}
\end{propone}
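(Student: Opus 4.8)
The plan is to obtain this model structure as a left Bousfield localization in the manner of the earlier diagonal and value constructions, and then read off items (1)--(4). Concretely, I would apply \cref{the:sone local level stwo} with $S_1 = S$ and with $S_2 = \{F[0] \to F[n] : n \geq 0\}$, the set of cofibrations of simplicial spaces whose Reedy localization is the diagonal model structure (as already used in the proof of \cref{the:diagmodel}). \cref{the:sone local level stwo} then immediately produces a unique simplicial combinatorial left proper model structure on $\sP(\cD\times\DD)$ whose cofibrations are the inclusions, which takes care of item (1), the adjectives, and uniqueness (the cofibrations together with the weak equivalences pin the structure down). Since localizing the injective model structure at the $S_2$-part of the generating set alone recovers exactly $\sP(\cD\times\DD)^{\cD-diag}$, this model structure is a left Bousfield localization of $\sP(\cD\times\DD)^{\cD-diag}$, a description I would exploit below.

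For item (3): by \cref{the:sone local level stwo} an injectively fibrant $W$ is fibrant here iff $\Und_d W$ is $S_2$-local for every $d$ and $\Val_k W$ is $S$-local for every $k \geq 0$. The first condition says each $\Und_d W$ is homotopically constant, i.e. $W$ is fibrant in the $\cD$-diagonal model structure (\cref{the:diagmodel}). Granting this, the counit map $\VEmb\Val(W) \to W$ being an injective equivalence forces the maps $\Val_0 W \to \Val_k W$ to be injective equivalences of $\cD$-spaces, so ``$\Val_k W$ is $S$-local for all $k$'' is equivalent to ``$\Val(W) = \Val_0 W$ is $S$-local'', i.e. to $W$ being local with respect to $S$ in the sense of \cref{not:localized}. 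This is item (3).

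Item (2) is the substantive point. By \cref{the:diag and val equiv inj} the adjunction $\fDiag = \Diag^* \dashv \Diag_*$ is a Quillen equivalence from $\sP(\cD\times\DD)^{\cD-diag}$ to $\sP(\cD)^{inj}$ with $\fDiag$ left Quillen. Our model structure is the left Bousfield localization of $\sP(\cD\times\DD)^{\cD-diag}$ at the image of $S$ under $\VEmb$ (which we may take to consist of cofibrations). Applying \cref{the:localizing equiv} to $\fDiag \dashv \Diag_*$ and this localizing set, and using that $\fDiag \circ \VEmb = \id$ together with the fact that the contractible factors $\fDiag(D[n])$ may be absorbed in $\sP(\cD)^{inj}$ (which is Cartesian, \cref{prop:injectivemod}), one identifies the induced localization of $\sP(\cD)^{inj}$ with precisely $\sP(\cD)^{inj_S}$. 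Thus $\fDiag \dashv \Diag_*$ descends to a Quillen equivalence $\sP(\cD\times\DD)^{\cD-diag_S} \rightleftarrows \sP(\cD)^{inj_S}$. Since every object of the source is cofibrant (the cofibrations are inclusions) and left Quillen functors preserve cofibrant objects, a left Quillen equivalence reflects weak equivalences between cofibrant objects; hence $g$ is a weak equivalence here if and only if $\fDiag(g)$ is an $S$-localized injective equivalence, which is item (2).

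Finally, item (4) follows from item (5) of \cref{the:sone local level stwo} with $S_1 = S$ and $S_2$ as above: the resulting structure is Cartesian iff both $S_1$ and $S_2$ are Cartesian, and $S_2$ is Cartesian because the diagonal model structure on simplicial spaces is Cartesian --- which in turn follows from $\fDiag$ being a finite-product-preserving left Quillen equivalence onto $\s^{Kan}$ that reflects weak equivalences between cofibrant objects, reducing the pushout-product axiom to the Cartesian-ness of $\s^{Kan}$. So the $\cD$-diagonal $S$-localized model structure is Cartesian exactly when $S$ is Cartesian in the sense of \cref{def:cartesian s}. The main obstacle I expect is item (2): matching the localized weak equivalences with the $\fDiag$-detected ones requires carefully threading the Bousfield localization through the diagonal Quillen equivalence and verifying the compatibilities $\fDiag\circ\VEmb = \id$ and the absorption of the $D[n]$-factors; everything else is bookkeeping with \cref{the:sone local level stwo} and the $(\VEmb,\Val)$ adjunction.
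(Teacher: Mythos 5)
Your proof is correct and uses the same main device as the paper: both you and the paper obtain the model structure by applying \cref{the:sone local level stwo} with $S_1 = S$ and $S_2 = \{F[0]\to F[n] : n\geq 0\}$. The two places you differ from the paper are worth noting. For item (2), the paper's proof simply asserts that \cref{the:sone local level stwo} ``already satisfies most conditions''; the characterization of the weak equivalences via $\fDiag$ is actually deferred to the subsequent \cref{prop:dcov vs diag s}, whose proof is essentially your argument: apply \cref{the:localizing equiv} to the Quillen equivalence $\fDiag\dashv\Diag_*$ of \cref{the:diag and val equiv inj}, observe $\fDiag\circ\VEmb=\id$, absorb the geometrically contractible $D[n]$ factors in the Cartesian $\sP(\cD)^{inj}$, and use that all objects are cofibrant so the left Quillen equivalence reflects weak equivalences. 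So you have correctly front-loaded an argument the paper only makes explicit afterwards. For item (4), you invoke part (5) of \cref{the:sone local level stwo} together with Cartesianness of $\ss^{diag}$ (which you justify by pulling the pushout-product axiom back along $\fDiag:\ss^{diag}\to\s^{Kan}$), whereas the paper argues directly: for cofibrations $i,j$ with $i$ trivial, $\fDiag(i\square j)=\fDiag(i)\square\fDiag(j)$ (since $\fDiag=\Diag^*$ preserves products and colimits), so the axiom for the $\cD$-diagonal $S$-localized structure reduces to the Cartesian axiom for $\sP(\cD)^{inj_S}$, i.e.\ to $S$ being Cartesian --- which does not need $\ss^{diag}$ to be Cartesian or the unstated part (5) of \cref{the:sone local level stwo}. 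Both routes are valid; the paper's is a bit more self-contained, while yours is shorter once \cref{the:sone local level stwo}(5) is granted. One small inaccuracy in your write-up: the localizing set produced by \cref{the:sone local level stwo} for the $S_1$-direction is $\{D[n]\times \VEmb(i)\}$, not just $\{\VEmb(i)\}$; as you correctly note, the $D[n]$-factors can be absorbed (either already in $\sP(\cD\times\DD)^{\cD\text{-}diag}$ or, as you do, on the $\sP(\cD)^{inj}$ side via $\fDiag$), so this is only a matter of phrasing, not a gap.
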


\begin{proof}
	We can directly use \cref{the:sone local level stwo}, where $S_1=S$ and $S_2 = \{F[0] \to F[n]: n \geq 0\}$.
	The resulting model structure already satisfies most condition. We only need to confirm it is Cartesian if $S$ is Cartesian.
	Let $i,j$ be cofibrations of $\cD$-simplicial spaces such that $i$ is trivial. Then $i \square j$ is an equivalence if and only if $\fDiag(i \square j) = \fDiag(i) \square \fDiag(j)$ is an $S$-localized injective equivalence of $\cD$-spaces, which holds precisely if $S$ is Cartesian.
\end{proof}

\begin{propone} \label{prop:dcov vs diag s}
	 The following adjunction 
\begin{center}
	\adjun{\sP(\cD\times\DD)^{diag_S}}{\sP(\cD)^{inj_S}}{\fDiag = (\phiDiag)^*}{(\phiDiag)_*}
\end{center}
is a Quillen equivalence. Here the left hand side has the diagonal $S$-localized injective model structure and the 
right hand side has the localized injective model structure.
\end{propone}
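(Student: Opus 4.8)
The plan is to deduce \cref{prop:dcov vs diag s} directly from the non-localized Quillen equivalence $\fDiag \dashv (\phiDiag)_*$ of \cref{the:diag and val equiv inj} (the left-hand adjunction there, between $\sP(\cD\times\DD)^{\cD-diag}$ and $\sP(\cD)^{inj}$), using the fact that the $\cD$-diagonal $S$-localized model structure is, by construction in \cref{the:diagmodel s}, set up so that $\fDiag$ detects its weak equivalences. This avoids any appeal to \cref{the:localizing equiv}.

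First I would verify that $(\fDiag, (\phiDiag)_*)$ is a Quillen adjunction between the localized structures. Since $\fDiag = (\phiDiag)^*$ is a restriction functor it is computed pointwise, hence preserves monomorphisms, i.e.\ cofibrations on both sides; and by \cref{the:diagmodel s}(2) a map $g$ of $\cD$-simplicial spaces is a weak equivalence in $\sP(\cD\times\DD)^{\cD-diag_S}$ precisely when $\fDiag(g)$ is an $S$-localized injective equivalence of $\cD$-spaces, which by \cref{prop:injectivemod s} is exactly a weak equivalence in $\sP(\cD)^{inj_S}$. So $\fDiag$ preserves cofibrations and weak equivalences, and a left adjoint with these two properties is left Quillen.

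For the Quillen equivalence I would invoke the standard criterion (e.g.\ \cite[Corollary 1.3.16]{hovey1999modelcategories}): since every object of $\sP(\cD\times\DD)^{\cD-diag_S}$ is cofibrant (cofibrations being monomorphisms), it suffices to check (a) $\fDiag$ reflects weak equivalences between cofibrant objects, and (b) for every fibrant $Y$ in $\sP(\cD)^{inj_S}$ the counit $\fDiag (\phiDiag)_* Y \to Y$ is a weak equivalence. Condition (a) is immediate from the description of the weak equivalences of $\sP(\cD\times\DD)^{\cD-diag_S}$ recalled above — in fact $\fDiag$ both preserves and reflects them. For (b), a fibrant $Y$ in the localized structure is in particular injectively fibrant, so since $(\fDiag, (\phiDiag)_*)$ is already a Quillen equivalence between $\sP(\cD\times\DD)^{\cD-diag}$ and $\sP(\cD)^{inj}$ (\cref{the:diag and val equiv inj}), with all objects cofibrant there, the counit $\fDiag (\phiDiag)_* Y \to Y$ is an injective equivalence of $\cD$-spaces, hence an $S$-localized injective equivalence, i.e.\ a weak equivalence in $\sP(\cD)^{inj_S}$. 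This finishes the argument.

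There is essentially no hard step here; the only thing requiring care — the "obstacle" such as it is — is the bookkeeping of the two localized model structures: one must line up that "weak equivalence in $\sP(\cD)^{inj_S}$" is literally "$S$-localized injective equivalence of $\cD$-spaces" and that "weak equivalence in $\sP(\cD\times\DD)^{\cD-diag_S}$" is literally "map $g$ with $\fDiag(g)$ such an equivalence". Once those identifications (from \cref{prop:injectivemod s} and \cref{the:diagmodel s}) are in place, everything reduces formally to the non-localized statement, with no computation of $\fDiag$ on generators or of $(\phiDiag)_*$ needed.
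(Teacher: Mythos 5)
Your proof is correct, and it does take a genuinely different route from the paper's. The paper's proof is a one-line application of \cref{the:localizing equiv}: it observes that the diagonal $S$-localized model structure is the left Bousfield localization of $\sP(\cD\times\DD)^{\cD\text{-}diag}$ at $\{\VEmb(A)\to\VEmb(B):A\to B\in S\}$, that $\sP(\cD)^{inj_S}$ is the localization of $\sP(\cD)^{inj}$ at $S$, and that $\fDiag(\VEmb(A))=A$, so the two localizing sets correspond under the left adjoint; \cref{the:localizing equiv} then transports the non-localized Quillen equivalence of \cref{the:diag and val equiv inj} to the localizations. You instead verify the Quillen-equivalence criterion directly (Hovey 1.3.16), using the explicit description of weak equivalences in $\sP(\cD\times\DD)^{\cD\text{-}diag_S}$ from \cref{the:diagmodel s}(2) to get that $\fDiag$ preserves and reflects weak equivalences (with all objects cofibrant), and the non-localized Quillen equivalence to handle the counit. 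Both steps check out — in particular you are right that "preserves cofibrations and all weak equivalences" suffices for left Quillen, and that the derived counit of the localized adjunction coincides with the (underived) non-localized counit because every object of the source is cofibrant, so its being an injective (hence $S$-localized injective) equivalence is exactly what \cref{the:diag and val equiv inj} supplies.

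One remark worth keeping in mind: the real content is then concentrated in \cref{the:diagmodel s}(2), which you take as given. That statement — that $\fDiag$ detects weak equivalences in the localized model structure — is not an immediate consequence of the Bousfield-localization machinery invoked in the proof of \cref{the:diagmodel s} (the proof there is rather terse, merely saying the structure "already satisfies most conditions"). In fact, the cleanest way to establish \cref{the:diagmodel s}(2) is essentially through \cref{the:localizing equiv} or an equivalent argument: one direction (local equivalences map to $S$-localized injective equivalences) is formal since $\fDiag$ is left Quillen and carries the localizing set into $S$-localized injective equivalences, but the converse is in effect the statement that the localized adjunction is a Quillen equivalence. So your claim to "avoid any appeal to \cref{the:localizing equiv}" is more cosmetic than logical: you have not eliminated the need for that argument, only relocated it into the input you cite. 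This is perfectly acceptable given that the paper states \cref{the:diagmodel s}(2) beforehand, but it is worth being aware that your proof and the paper's are, at bottom, two organizations of the same underlying computation; what your version buys is that it never needs the identity $\fDiag(\VEmb(A))=A$, whereas the paper's version makes explicit which localizing set on the source corresponds to $S$ on the target.
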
 

\begin{proof}
	By direct computation for a given $\cD$-space $A$, we have $\fDiag(\VEmb(A)) = A$. Moreover, we obtain the diagonal $S$-localized injective model structure on $\sP(\cD\times\DD)$ by localizing with respect to maps of the form $\VEmb(A) \to \VEmb(B)$ and we obtain the $S$-localized injective model structure on $\sP(\cD)$ by localizing with respect to $A \to B$ which is equal to $\fDiag(\VEmb(A)) \to \fDiag(\VEmb(B))$. Hence the result follows from \cref{the:localizing equiv}.
\end{proof}

Another application of \cref{the:sone local level stwo} is the $S$-localized level-wise complete Segal space model structure.

\begin{corone}\label{prop:levelcss s}
	There exist a unique simplicial left proper combinatorial model structure on $\P(\cD\times\DD)$, which we call the {\it $S$-localized level-wise (complete) Segal model structure} and denote $\sP(\cD\times\DD)^{(lev_{Seg})_S}$ ($\sP(\cD\times\DD)^{(lev_{CSS})_S})$, which has the following specifications.
	\begin{enumerate}
		\item Cofibrations are inclusions.
		\item An object $X$ is fibrant if it is injectively fibrant, for every $d$ in $\cD$ the simplicial space $\Und_dX$ is a (complete) Segal space and for every $k \geq 0$, the $\cD$-space $\Val_kX$ is $S$-local. 
		\item A map $g:Y \to Z$ is a weak equivalence if for all fibrant objects $X$, the induced map 
		$$\Map(Z,X) \to \Map(Y,Z)$$
		is a Kan equivalence. 
		\item A weak equivalence (fibration) between fibrant objects is an injective equivalence (injective fibration). 
		\item The model structure is Cartesian if $S$ is Cartesian.
	\end{enumerate}
\end{corone}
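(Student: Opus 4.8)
The plan is to obtain this model structure as a direct instance of \cref{the:sone local level stwo}. Recall from \cite[Section 5]{rezk2001css} that the Segal space model structure on simplicial spaces is the left Bousfield localization of the Reedy model structure with respect to the maps $G[n] \to F[n]$ for $n \geq 2$, while the complete Segal space model structure is the localization with respect to these maps together with $F[0] \to E[1]$. First I would set $S_1 = S$ and let $S_2$ be the set $\{G[n] \to F[n] : n \geq 2\}$ in the Segal case, respectively $\{G[n] \to F[n] : n \geq 2\} \cup \{F[0] \to E[1]\}$ in the complete Segal case. Applying \cref{the:sone local level stwo} to the pair $(S_1,S_2)$ then produces a simplicial left proper combinatorial model structure $\sP(\cD\times\DD)^{(lev_{S_2})_{S_1}}$, which I rename $\sP(\cD\times\DD)^{(lev_{Seg})_S}$ (resp. $\sP(\cD\times\DD)^{(lev_{CSS})_S}$).

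Next I would read the asserted specifications off the conclusion of \cref{the:sone local level stwo}: clauses (1), (3), and (4) in the statement to be proved are verbatim the corresponding clauses of that theorem. For the fibrancy characterization (2), part (2) of \cref{the:sone local level stwo} says that an injectively fibrant $X$ is fibrant if and only if $\Und_d X$ is $S_2$-local for every object $d$ in $\cD$ and $\Val_k X$ is $S$-local for every $k \geq 0$; since an injectively fibrant $\cD$-simplicial space has $\Und_d X$ Reedy fibrant for all $d$, and a Reedy fibrant simplicial space is $S_2$-local precisely when it is a (complete) Segal space (again by \cite[Section 5]{rezk2001css}), this is exactly the content of (2).

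Finally, for (5) I would invoke part (5) of \cref{the:sone local level stwo}, which gives that $\sP(\cD\times\DD)^{(lev_{S_2})_{S_1}}$ is Cartesian if and only if both $S_1 = S$ and $S_2$ are Cartesian. The Segal and complete Segal space model structures on simplicial spaces are Cartesian by \cite[Theorems 7.1, 7.2]{rezk2001css}, so $S_2$ is Cartesian in both cases; hence this level-wise model structure is Cartesian whenever $S$ is Cartesian in the sense of \cref{def:cartesian s}. The only point needing any care is the identification in clause (2) of ``$\Und_d X$ is $S_2$-local'' with ``$\Und_d X$ is a (complete) Segal space'' for injectively fibrant $X$, but this is immediate from Rezk's description of these localizations once one notes that injective fibrancy forces each $\Und_d X$ to be Reedy fibrant; there is no genuine obstacle here, as the corollary is designed to be a routine specialization of the general level-wise localization machinery.
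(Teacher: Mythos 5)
Your proposal is correct and follows exactly the paper's approach: a direct application of \cref{the:sone local level stwo} with $S_1 = S$ and $S_2$ taken to be the Segal (resp. Segal plus completeness) maps, with the fibrancy and Cartesian clauses read off from that theorem. The paper's own proof is just a one-line citation of this specialization; your additional commentary on the identification of $S_2$-locality with being a (complete) Segal space and on the Cartesianness of the Segal/CSS localizations is accurate supporting detail rather than a different argument.
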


\begin{proof}
	Direct application of \cref{the:sone local level stwo} with $S_1 = S$ and $S_2$ the Segal and completeness maps as used to define the level-wise complete Segal space model structure (\cref{prop:levelcss}).
\end{proof}

We can now finally move on to our main model structure of interest, which localized the $\cD$-covariant model structure defined in \cref{the:ncov model}.

\begin{theone}  \label{the:cdcov s}
	Let $S$ be a set of monomorphisms of $\cD$-spaces. Let $X$ be a $\cD$-simplicial space. There is a unique simplicial combinatorial left proper model structure on $\sP(\cD\times\DD)_{/X}$, denoted by $(\sP(\cD\times\DD)_{/X})^{\cD-Cov_S}$ and called the 
	{\it $S$-localized $\cD$-covariant model structure}, defined as follows. 
	\begin{itemize}
		\item[C] A map $Y \to Z$ over $X$ is a cofibration if it is an inclusion.
		\item[F] An object $Y \to X$ is fibrant if it is a $\cD$-left fibration and local with respect to $S$.
		\item[W] A map $Y \to Z$ over $X$ is a weak equivalence if for every fibrant object $W \to X$ the map 
		$$\Map_{/X}(Z, W) \to \Map_{/X}(Y,W)$$
		is a Kan equivalence.
	\end{itemize}
	Moreover, if $S$ is Cartesian then the $S$-localized $\cD$-covariant model structure is enriched over the $S$-localized injective model structure on $\cD$-spaces (\cref{prop:injectivemod s}).
\end{theone}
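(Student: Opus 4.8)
The plan is to obtain the $S$-localized $\cD$-covariant model structure as a left Bousfield localization of the (unlocalized) $\cD$-covariant model structure from \cref{the:ncov model}, exactly mirroring how \cref{prop:injectivemod s}, \cref{the:sone local level stwo} and \cref{prop:levelcss s} were obtained from their unlocalized counterparts. Concretely, I would apply \cite[Theorem 4.1.1]{hirschhorn2003modelcategories} to $(\sP(\cD\times\DD)_{/X})^{\cD-cov}$ — which is left proper, combinatorial and simplicial by \cref{the:ncov model} — localizing with respect to the set of maps $\{\VEmb(i) \times_{?} : i \in S\}$ pulled back appropriately into the over-category. Since $S$ consists of maps of $\cD$-\emph{spaces} and not of objects over $X$, the correct set to localize at is $\{\VEmb(A) \times X \to \VEmb(B) \times X\}$ over $X$ for each $i: A \to B$ in $S$ (using that products with $\VEmb$ of a $\cD$-space land over $X$ via the projection). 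The existence of the localization is then automatic, giving a simplicial combinatorial left proper model structure with cofibrations the monomorphisms and weak equivalences detected by mapping into local fibrant objects, which is clause (W).

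The substantive content is identifying the fibrant objects as the $S$-localized $\cD$-left fibrations (clause (F)). The fibrant objects of a Bousfield localization are the fibrant objects of the original model structure — here the $\cD$-left fibrations over $X$ by \cref{the:ncov model} — that are additionally $S$-local in the sense of being right-orthogonal (in the homotopy sense) to the localizing maps. I would check that for a $\cD$-left fibration $p: L \to X$, being homotopically right-orthogonal to $\VEmb(A)\times X \to \VEmb(B)\times X$ over $X$ is equivalent to $p$ being local with respect to $S$ in the sense of \cref{not:localized}, i.e.\ $\Val(p): \Val(L) \to \Val(X)$ being $S$-local; this is a direct unwinding using the enriched adjunction $(\VEmb, \Val)$ from \cref{def:val} together with the standard manipulation $\Map_{/X}(\VEmb(B)\times X, L) \simeq \Map_{\sP(\cD)}(\VEmb(B), L)$ relative to the analogous mapping space for $X$. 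This matches exactly \cref{def:cdleft s}.

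For the final Cartesian-enrichment claim, assuming $S$ is Cartesian I would verify the first criterion of \cref{def:enriched model cat}: for an $S$-localized injective cofibration $i: A \to B$ of $\cD$-spaces and an $S$-localized $\cD$-covariant cofibration $j: C \to D$ over $X$, the map $\VEmb(i) \square j$ is again an $S$-localized $\cD$-covariant cofibration, trivial if either factor is. The cofibration part is immediate since all cofibrations are monomorphisms and these are stable under pushout-product. For the triviality part, if $j$ is trivial this follows from \cref{lemma:exp cdleft}(1), which already gives that $(\text{mono})\square(\text{covariant equivalence})$ is a trivial $\cD$-covariant cofibration and hence trivial in the localization. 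The genuinely new case is when $i$ is an $S$-localized injective trivial cofibration of $\cD$-spaces (but not an injective equivalence); here I would argue via \cref{prop:joyal tierney lifting}, reducing to checking that $\exp{\VEmb(i)}{p}$ is a fibration whenever $p$ is an $S$-localized $\cD$-left fibration, using that the $S$-localized injective model structure on $\sP(\cD)$ is Cartesian by hypothesis and that $\VEmb$ preserves the relevant products. I expect this last verification — reconciling the Cartesian structure on $\sP(\cD)^{inj_S}$ with the over-category localization, since $\VEmb$ is not itself a product-preserving left Quillen functor into the over-category in an obvious way — to be the main obstacle, and I would handle it by passing to the characterization of $S$-local $\cD$-left fibrations via their values $\Val(p)$ and applying the Cartesian property of $\sP(\cD)^{inj_S}$ fiberwise.
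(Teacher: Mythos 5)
Your overall strategy---left Bousfield localization of the $\cD$-covariant model structure of \cref{the:ncov model}, which automatically produces a simplicial combinatorial left proper model structure with the expected cofibrations and weak equivalences---matches the paper. But the localizing set you propose is not the one that produces the fibrant objects of clause (F), and this is a real gap, not a cosmetic difference.

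You localize at $\{\VEmb(A)\times X \to \VEmb(B) \times X\}$ over $X$ via the projection, one map per $A\to B\in S$. The paper localizes at $\mathcal{L} = \{\VEmb(A)\to\VEmb(B)\to X : A\to B \in S\}$, allowing the structure map $\VEmb(B)\to X$ to range over \emph{all} morphisms. These yield different local objects. Unwinding the simplicial locality condition for your set: $\Map_{/X}(\VEmb(B)\times X, L) \cong \Map_{\sP(\cD\times\DD)}(\VEmb(B),\Gamma(L/X))$ where $\Gamma(L/X)=L^X\times_{X^X}1$ is the object of sections; so your condition says only that the single $\cD$-space $\Val(\Gamma(L/X))$ is $S$-local---a global sections condition. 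By contrast, \cref{not:localized} (and \cref{def:cdleft s}) require the relative condition on $\Val(L)\to\Val(X)$, which unwinds to $\Map_{/X}(\VEmb(B),L)\to\Map_{/X}(\VEmb(A),L)$ being an equivalence \emph{for every} structure map $\VEmb(B)\to X$; this is precisely what $\mathcal{L}$ captures and what your set misses. Concretely: take $\cD$ with terminal object $t$, $S$ with contractible codomains and all domains/codomains nonempty (e.g.\ $S_{CSS}$), $X = D[0]\sqcup D[0]$, and $L = \emptyset\sqcup L_1$ with $L_1\to D[0]$ a $\cD$-left fibration whose value $\Val(L_1)$ is not $S$-local. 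Then $\Gamma(L/X)\cong\emptyset$ is vacuously $S$-local, so your localization sees $L$ as fibrant, but the fiber over the second point is not $S$-local, contradicting clause (F). The ``direct unwinding'' you invoke via the $(\VEmb,\Val)$ adjunction does hold---$\Map_{/X}(\VEmb(B),L)\cong\Map_{/\Val(X)}(B,\Val(L))$---but only for the paper's maps, not for yours.

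On the enrichment: you verify the first criterion of \cref{def:enriched model cat}, but the citation of \cref{lemma:exp cdleft}(1) for ``$j$ trivial'' only covers $j$ a \emph{$\cD$-covariant} trivial cofibration, whereas the needed hypothesis is $j$ an \emph{$S$-localized} $\cD$-covariant trivial cofibration, which is strictly weaker; the lemma does not apply. Similarly, the ``$i$ trivial in $\sP(\cD)^{inj_S}$'' case is only sketched, and you flag it yourself as the obstacle. The paper instead verifies the fourth criterion: that $C\times-$ and $-\times Y$ are left Quillen after localization. Since they are already left Quillen before localization (\cref{the:ncov model}), \cref{cor:localizing equiv} reduces the claim to showing each tensor sends the maps of $\mathcal{L}$ to $S$-localized $\cD$-covariant equivalences, which is immediate from $S$ Cartesian. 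This avoids both of the issues above and is the route you should take.
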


\begin{proof}
	Notice the $\cD$-covariant model structure on the category $\sP(\cD\times\DD)_{/X}$ is still proper and cellular \cite[Proposition 12.1.6]{hirschhorn2003modelcategories} and so we can apply left Bousfield localization \cite[Theorem 4.1.1]{hirschhorn2003modelcategories} with respect to the set of morphisms	$\mathcal{L} = \{ \VEmb(A) \to \VEmb(B) \to X : A \to B \in S \}.$ The resulting model structure immediately has the expected cofibrations, fibrant objects and weak equivalences.
	
	Now, let us assume $S$ is Cartesian. We want to prove the $S$-localized $\cD$-covariant model structure is enriched over the $S$-localized injective model structure by verifying the fourth condition in \cref{def:enriched model cat}. Evidently the pushout product of cofibrations is still a cofibration as they are all monomorphisms. Notice for a given $\cD$-space $C$ and a $\cD$-simplicial space $Y \to X$, the enrichment is given by $C \times Y \to Y \to X$. 
	
	We need to prove that for every $\cD$-space, the functor $C \times - : (\sP(\cD\times\DD)_{/X})^{\cD-cov_S} \to (\sP(\cD\times\DD)_{/X})^{\cD-cov_S}$ is left Quillen. By \cref{the:ncov model} and \cref{def:enriched model cat} we already know it is a left Quillen functor of un-localized model structure, hence, by \cref{cor:localizing equiv}, we only need to prove for every $A \to B \to X$ where $A \to B$ in $S$, the map $C \times A \to C \times B \to B \to X$ is an $S$-localized $\cD$-covariant equivalence. However, this follows from the fact that $S$ is Cartesian.
	
	Finally, we need to prove for every $\cD$-simplicial space $Y \to X$, the map $- \times Y: \sP(\cD) \to \sP(\cD\times\DD)_{/X}$ is left Quillen. Again, by \cref{the:ncov model}, it is a left Quillen functor of un-localized model structures, and so, again by \cref{cor:localizing equiv}, it suffices to confirm for every map $A \to B$ in $S$ the map $A \times Y \to B \times Y \to Y \to X$ is $S$-localized $\cD$-covariant equivalence, which again follows from the fact that $S$ is Cartesian.
\end{proof}

We move on to the $S$-localized version of \cref{the:diag local ncov}.

  \begin{theone} \label{the:diag local ncov s}  
  	Let $S$ be a set of monomorphisms of $\cD$-spaces. The following is a Quillen adjunction
	\begin{center}
		\adjun{(\sP(\cD\times\DD)_{/X})^{\cD-cov_S}}{(\sP(\cD\times\DD)_{/X})^{\cD-diag_S}}{id}{id}
	\end{center}
	which is a Quillen equivalence if for all $d$ in $\cD$ the simplicial space $\Und_dX$ is homotopically constant. Here the left side has the diagonal $S$-localized model structure and the right side has the $S$-localized $\cD$-diagonal model structure (\cref{the:diagmodel s}) on the over category. 
\end{theone}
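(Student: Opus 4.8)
The plan is to recognize the $S$-localized $\cD$-covariant model structure as a further Bousfield localization of the $\cD$-covariant model structure, and likewise for the $\cD$-diagonal side, and then feed the unlocalized Quillen adjunction of \cref{the:diag local ncov} into \cref{the:localizing equiv}. Concretely, recall from the proof of \cref{the:cdcov s} that $(\sP(\cD\times\DD)_{/X})^{\cD-cov_S}$ is obtained from the $\cD$-covariant model structure by localizing at the set $\mathcal{L}_X = \{\VEmb(A) \to \VEmb(B) \to X : (A \to B) \in S\}$, and from the proof of \cref{the:diagmodel s} (via \cref{the:sone local level stwo}) that the $S$-localized $\cD$-diagonal structure is the localization of the $\cD$-diagonal structure at the analogous set of maps $\VEmb(A) \to \VEmb(B)$. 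Since the left adjoint in \cref{the:diag local ncov} is the identity functor, it carries $\mathcal{L}_X$ to $\mathcal{L}_X$, so \cref{the:localizing equiv} applies verbatim and gives the Quillen adjunction
$$\adjun{(\sP(\cD\times\DD)_{/X})^{\cD-cov_S}}{(\sP(\cD\times\DD)_{/X})^{\cD-diag_S}}{id}{id}.$$

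For the Quillen-equivalence claim, I would again invoke \cref{the:localizing equiv}: if the unlocalized adjunction of \cref{the:diag local ncov} is a Quillen equivalence — which holds precisely when $\Und_dX$ is homotopically constant for all $d$ in $\cD$ — then the localized adjunction is automatically a Quillen equivalence, because localizing the source at $\mathcal{L}_X$ and the target at $id(\mathcal{L}_X) = \mathcal{L}_X$ is exactly the situation covered by the second assertion of \cref{the:localizing equiv}. One should check the hypotheses of \cref{the:localizing equiv} are met: both the $\cD$-covariant and $\cD$-diagonal model structures over $X$ are simplicial, left proper, and combinatorial (indeed cellular, as noted in the proof of \cref{the:cdcov s} citing \cite[Proposition 12.1.6]{hirschhorn2003modelcategories}), so the Bousfield localizations exist and the theorem is applicable.

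The main obstacle, and the one point deserving genuine care, is verifying that the two localizing sets really do match up under the identity functor as stated — i.e. that the set of maps used to build $\sP(\cD\times\DD)^{\cD-diag_S}$ in \cref{the:diagmodel s} has the same saturation over $X$ as $F(\mathcal{L}_X)$ where $F$ is the identity. This is essentially bookkeeping: \cref{the:diagmodel s} localizes at maps of the form $\VEmb(A) \to \VEmb(B)$ (the $S_1 = S$ part of the $S_1$-localized level-wise $S_2$-localized structure), and these are exactly the maps appearing in $\mathcal{L}_X$ after forgetting the map to $X$; since localization over $X$ only depends on the underlying maps of $\cD$-simplicial spaces, the match is immediate. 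Everything else is a direct citation, so no nontrivial calculation is required; the proof will be two sentences plus the appeal to \cref{the:localizing equiv}, mirroring the structure of \cref{the:diag local ncov} and \cref{prop:dcov vs diag s}.
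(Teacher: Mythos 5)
Your proposal takes essentially the same route as the paper's proof: both start from the unlocalized Quillen adjunction of \cref{the:diag local ncov} and pass it through the localization machinery of \cref{subsec:localized model}. The only meaningful difference is that the paper invokes \cref{cor:localizing equiv} rather than \cref{the:localizing equiv}, which allows it to sidestep the ``bookkeeping'' step you flag as the main obstacle — rather than arguing that the localizing set for the $\cD$-diagonal $S$-localized structure on the over-category coincides (up to saturation) with $\mathcal{L}_X$, the paper just verifies directly that the identity carries each $\VEmb(A) \to \VEmb(B) \to X$ to a weak equivalence in $(\sP(\cD\times\DD)_{/X})^{\cD-diag_S}$, which it reduces (using that diagonal equivalences are independent of the base) to the computation $\fDiag(\VEmb(A)) \to \fDiag(\VEmb(B)) = A \to B$; this avoids committing to the claim that over-category construction and Bousfield localization commute in the precise way your phrase ``the match is immediate'' implicitly assumes.
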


\begin{proof}
	By \cref{the:diag local ncov}, we already have a Quillen adjunction of unlocalized model structures
	\begin{center}
		\adjun{(\sP(\cD\times\DD)_{/X})^{\cD-cov}}{(\sP(\cD\times\DD)_{/X})^{\cD-diag}}{id}{id}.
	\end{center}
	Hence, by \cref{cor:localizing equiv}, it suffices to show that the left adjoint takes localizing maps in the $S$-localized $\cD$-covariant model structure (which by definition are of the form $\VEmb(A) \to \VEmb(B) \to X$) to weak equivalences in the diagonal $S$-localized injective model structure on the over-category. Notice we only need to show that $\VEmb(A) \to \VEmb(B)$ is an equivalence in the $S$-localized injective model structure as diagonal equivalences do not depend on the base. By definition, this means we have to prove $\fDiag(\VEmb(A)) \to \fDiag(\VEmb(B))$ is an $S$-localized injective equivalence. However, this holds by direct computation as this map is equal to $A \to B$.
\end{proof}

The $S$-localized $\cD$-covariant model structure is also invariant under level-wise complete Segal space equivalences generalizing \cref{the:dcov css inv}.

\begin{theone} \label{the:dcov invariant css s}
	Let $g: X \to Y$ be a map of $\cD$-simplicial spaces. Then the adjunction
	\begin{center}
		\adjun{(\sP(\cD)_{/X})^{\cD-Cov_S}}{(\sP(\cD)_{/Y})^{\cD-Cov_S}}{g_!}{g^*}
	\end{center}
	is an $\sP(\cD)$-enriched Quillen adjunction, which is a Quillen equivalence whenever $g$ is a level-wise CSS equivalence.
	Here both sides have the $S$-localized $\cD$-covariant model structure. 
\end{theone}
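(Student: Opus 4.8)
The plan is to bootstrap from the un-localized statement \cref{the:dcov css inv} by means of the localization principle for Quillen equivalences recorded in \cref{cor:localizing equiv}. First, the $\sP(\cD)$-enrichment of $(g_!\dashv g^*)$ is immediate from \cref{lemma:enriched adjunction via tensors}: $g_!$ is post-composition along $g$, hence commutes with the tensor $C\times-$ by a $\cD$-space $C$, so the underlying adjunction upgrades to a $\sP(\cD)$-enriched one. By \cref{the:dcov css inv} this adjunction is a Quillen adjunction for the un-localized $\cD$-covariant model structures on $\sP(\cD\times\DD)_{/X}$ and $\sP(\cD\times\DD)_{/Y}$, and a Quillen equivalence when $g$ is a level-wise CSS equivalence.

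Next, recall from \cref{the:cdcov s} that the $S$-localized $\cD$-covariant model structure over $X$ (resp.\ $Y$) is the Bousfield localization of the $\cD$-covariant one at $\mathcal{L}_X=\{\VEmb(A)\to\VEmb(B)\to X:(A\to B)\in S\}$ (resp.\ $\mathcal{L}_Y$). I would apply \cref{cor:localizing equiv} with $S_1=\mathcal{L}_X$ and $S_2=\mathcal{L}_Y$. The Quillen-adjunction half then needs only that for every $S$-localized $\cD$-left fibration $W\to Y$ the pullback $g^*W\to X$ is $\mathcal{L}_X$-local, and the Quillen-equivalence half needs that a $\cD$-left fibration $W\to Y$ is $S$-local if and only if $g^*W\to X$ is $S$-local. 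One direction is a direct computation: for any $\VEmb(E)\to X$ there is a natural isomorphism $\Map_{/X}(\VEmb(E),g^*W)\cong\Map_{/Y}(\VEmb(E),W)$ with $\VEmb(E)\to Y$ the composite, so by \cref{not:localized} $S$-locality of $W$ over $Y$ forces $S$-locality of $g^*W$ over $X$; this settles the Quillen-adjunction half and the forward implication of the equivalence half.

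The main obstacle is the converse for the Quillen-equivalence claim: deducing that $W\to Y$ is $S$-local from the fact that $g^*W\to X$ is $S$-local, using only that $g$ is a level-wise CSS equivalence. Here I would pass through \cref{def:cdleft s} and rewrite $S$-locality of $W\to Y$ as $S$-locality of the injective fibration of $\cD$-spaces $\Val(W)\to\Val(Y)$ (it is an injective fibration because $W\to Y$ is and $\Val$ is right Quillen for the injective structures, $\VEmb$ preserving monomorphisms and level-wise Kan equivalences), and likewise $S$-locality of $g^*W\to X$ as $S$-locality of $\Val(W)\times_{\Val(Y)}\Val(X)\to\Val(X)$, using that $\Val$ commutes with pullbacks. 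Since $g$, being a level-wise CSS equivalence, is in particular level-wise essentially surjective, and since the fibers of the $\cD$-left fibration $W$ depend only on the equivalence class of the object over which one takes them (by the rigidity of $\cD$-left fibrations from \cref{the:ncov equiv nlfib}, together with the analysis around \cref{cor:weakly constant recognition principle}), every fiber of $\Val(W)\to\Val(Y)$ arises, up to injective equivalence of $\cD$-spaces, as a fiber of $\Val(g^*W)\to\Val(X)$; hence the former fibration is $S$-local as soon as the latter is.

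The two delicate points to nail down in that last step are that $S$-locality of a fibration of $\cD$-spaces genuinely reduces to the $S$-locality of all of its fibers — here one uses that the maps of $S$ are trivial cofibrations in the $S$-localized injective model structure (\cref{prop:injectivemod s}), so that an injective fibration with $S$-local fibers is precisely an $S$-local map, and that $S$-locality of maps is stable under pullback — and that ``level-wise essentially surjective'' suffices to reach every object of $Y$ up to equivalence when one works fibration-by-fibration. Once these are in place, \cref{cor:localizing equiv} yields both the $\sP(\cD)$-enriched Quillen adjunction in general and the Quillen equivalence when $g$ is a level-wise CSS equivalence, completing the proof.
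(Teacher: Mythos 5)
Your plan shares the paper's top-level strategy: establish the $\sP(\cD)$-enrichment via \cref{lemma:enriched adjunction via tensors}, invoke the unlocalized Quillen equivalence from \cref{the:dcov css inv}, and then bootstrap via \cref{cor:localizing equiv} by verifying that a $\cD$-left fibration $L\to Y$ is $S$-local if and only if $g^*L\to X$ is. The forward implication you give (via $\Map_{/X}(\VEmb(E),g^*L)\cong\Map_{/Y}(\VEmb(E),L)$) is exactly right. Where you diverge from the paper is in the converse: the paper deduces it by observing that $g^*L\to L$ is a level-wise CSS equivalence over $Y$ (by the exponentiability result recalled as item (\ref{leftitem:smooth maps}) of \cref{subsec:lfib}), hence a level-wise covariant equivalence over $Y$ by \cref{the:cov loc css}, whereas you attempt a fiber-wise reduction.

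That fiber-wise step is where your argument breaks. You assert that ``an injective fibration with $S$-local fibers is precisely an $S$-local map,'' justifying it by the maps of $S$ being trivial cofibrations in $\sP(\cD)^{inj_S}$. This is false in the generality of \cref{the:dcov invariant css s}: $S$-locality of a fibration $p:E\to B$ of $\cD$-spaces is the requirement that for \emph{every} map $\VEmb(B')\to B$ with $(A'\to B')\in S$ the relative mapping spaces are equivalent, and this is genuinely stronger than requiring only the restrictions along \emph{constant} maps $\VEmb(B')\to D[0]\to B$ to be equivalences (which is what fiber-wise $S$-locality captures). The two coincide only when $S$ has geometrically contractible codomains, cf.\ \cref{def:contractible realization} and \cref{prop:fibrancies} — but \cref{the:dcov invariant css s} imposes no such hypothesis on $S$. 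Worse, the only fiber-wise criterion in the paper, \cref{prop:fibrancies}, is proved \emph{after} \cref{the:dcov invariant css s} and invokes it, so leaning on ``the analysis around \cref{cor:weakly constant recognition principle}'' or \cref{prop:fibrancies}-style reasoning here would be circular. Finally, note that the specific inference you are implicitly using — that a fibration with $S$-local fibers pulls back trivial cofibrations to trivial cofibrations — would amount to right properness of $\sP(\cD)^{inj_S}$, which the paper explicitly flags as typically failing (\cref{prop:right proper}, \cref{rem:right proper}, \cref{rem:localizations are right proper}). To close the gap you need to argue as the paper does, using the level-wise CSS equivalence $g^*L\to L$ over $Y$ directly rather than reducing to fibers.
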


\begin{proof}
	Clearly it is a Quillen adjunction as fibrations are stable under pullback.
	
	Let us now assume that $g$ is a level-wise CSS equivalence. We want to prove that $(g_!,g^*)$ is a Quillen equivalence of $S$-localized $\cD$-covariant model structures. By \cref{the:dcov css inv} it is a Quillen equivalence of $\cD$-covariant model structures and we want to use \cref{cor:localizing equiv} to deduce that it remains a Quillen equivalence after localization, which means we have to confirm the relevant condition, which in our case translates to the following statement: Let $p:L \to Y$ be a $\cD$-left fibration. Then $p$ is $S$-localized if and only and only if $g^*p: g^*L \to X$ is an $S$-localized $\cD$-left fibration. However, this follows immediately from the fact that, by \cref{the:nlfib pb nrfib}, $g^*L \to L$ is a level-wise CSS equivalence, which means it is a level-wise covariant equivalence over $Y$ (\cref{the:cov loc css}). Hence, we can apply \cref{cor:localizing equiv} and the result follows.
\end{proof}	

\begin{remone} \label{rem:better result}
	With appropriate assumptions on $X$, $Y$ and $S$ we can further strengthen the result in \cref{the:dcov invariant css s} as we shall do in \cref{the:cdcov s invariant s equiv}.
\end{remone}

Unfortunately it is not generally true that the $S$-localized $\cD$-covariant model structure is a localization of the $S$-localized level-wise complete Segal space model structure and so \cref{the:cov loc css} does not generalize. However, it does hold in some cases, as indicated by this lemma.

\begin{lemone} \label{lemma:fib s over s loc}
	Let $S$ be a set of monomorphisms of $\cD$-spaces. Let $X$ be a $\cD$-simplicial space such that $\Val(X)$ is $S$-local and $p: L \to X$ be an injective fibration. $p$ is an $S$-localized $\cD$-left fibration if and only if $p$ is a $\cD$-left fibration and $\Val(L)$ is local with respect to $S$. Moreover, if $\Val_k(X)$ is $S$-local, then this is equivalent to $p$ being a $\cD$-left fibration and $\Val_k(L)$ is local with respect to $S$ for all $k \geq 0$. 
\end{lemone}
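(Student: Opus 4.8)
The plan is to reduce the statement to a short diagram chase with mapping spaces, exploiting that $S$-local $\cD$-spaces are precisely the local objects of a Bousfield localization and hence are closed under homotopy pullback.

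First I would record the basic reformulation. Since $p\colon L\to X$ is an injective fibration and every $\cD$-simplicial space is cofibrant in $\sP(\cD\times\DD)^{inj}$ (\cref{prop:injectivemod}), for every monomorphism $f\colon A\to B$ of $\cD$-spaces the map $\Map(\VEmb(A),L)\to\Map(\VEmb(A),X)$ is a Kan fibration, so the strict pullbacks appearing in \cref{not:localized} compute homotopy pullbacks. Using the enriched adjunction $(\VEmb,\Val)$ (\cref{def:val}) to rewrite $\Map_{\sP(\cD\times\DD)}(\VEmb(A),L)\cong\Map_{\sP(\cD)}(A,\Val L)$, and likewise for $X$, this says that $p$ is local with respect to $S$ (equivalently, by \cref{def:cdleft s}, that $\Val(p)$ is) if and only if for every $f\colon A\to B$ in $S$ the square
\begin{center}
\begin{tikzcd}
\Map_{\sP(\cD)}(B,\Val L) \arrow[r] \arrow[d] & \Map_{\sP(\cD)}(A,\Val L) \arrow[d] \\
\Map_{\sP(\cD)}(B,\Val X) \arrow[r] & \Map_{\sP(\cD)}(A,\Val X)
\end{tikzcd}
\end{center}
is a homotopy pullback square of spaces.

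For the first equivalence, assume $\Val(X)$ is $S$-local, so the bottom horizontal map of the square is always a Kan equivalence. Then the square is a homotopy pullback if and only if its top horizontal map is a Kan equivalence, i.e. if and only if $\Val(L)$ is $S$-local. Since being an $S$-localized $\cD$-left fibration means being a $\cD$-left fibration together with this locality condition (\cref{def:cdleft s}), this is exactly the claimed equivalence.

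For the ``moreover'', by the first part it suffices — under the hypotheses that $p$ is a $\cD$-left fibration and $\Val_k(X)$ is $S$-local for all $k\ge 0$ — to show that $\Val_0(L)$ being $S$-local implies $\Val_k(L)$ is $S$-local for every $k$ (the converse being the case $k=0$). Since $p$ is a $\cD$-left fibration, for each object $d$ of $\cD$ the map $\Und_d p$ is a left fibration, so by \ref{eq:leftright} the square with corners $L[d,k],X[d,k],L[d,0],X[d,0]$ is a homotopy pullback of spaces; as this holds for every $d$, the square of $\cD$-spaces with corners $\Val_k(L),\Val_k(X),\Val_0(L),\Val_0(X)$ is a homotopy pullback in $\sP(\cD)$, homotopy pullbacks there being computed level-wise. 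The $S$-local $\cD$-spaces are the local objects of the Bousfield localization $\sP(\cD)^{inj_S}$ (\cref{prop:injectivemod s}) and are therefore closed under homotopy pullback, so $\Val_k(L)$ is $S$-local, finishing the argument. I do not anticipate a real obstacle here; the only two points requiring care are verifying that the mapping-space pullbacks are homotopy pullbacks (which is exactly what the injective-fibration hypothesis provides) and that the left-fibration squares \ref{eq:leftright} assemble into a level-wise, hence honest, homotopy pullback square of $\cD$-spaces.
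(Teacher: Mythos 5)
Your proof is correct and follows essentially the same route as the paper's: both parts rest on the enriched adjunction $(\VEmb,\Val)$ to translate $S$-locality of $p$ into $S$-locality of $\Val(L)$ (using that $\Val(X)$ is $S$-local), and both propagate from $\Val_0(L)$ to $\Val_k(L)$ via the homotopy-pullback square supplied by the $\cD$-left fibration condition together with closure of $S$-local objects under homotopy pullback. The only cosmetic difference is that you phrase the first part through homotopy pullback squares of absolute mapping spaces, whereas the paper works with the relative mapping spaces $\Map_{/X}$ and the fibrant objects of the $S$-localized over-model structure — two presentations of the same computation.
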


\begin{proof}
	Let $p$ be an $S$-localized Reedy $\cD$-fibration. 
	We have a commutative diagram 
	\begin{center}
		\comsq{\Map_{/X}(\VEmb(B),L) }{\Map_{/X}(\VEmb(A),L) }{\Map_{/\Val(X)}(B,\Val(L)) }{\Map_{/\Val(X)}(A,\Val(L))}{}{\cong}{\cong}{}.
	\end{center}
	The vertical maps are bijections using the enriched adjunction $(\VEmb,\Val)$.
	So the top map is an equivalence (which is the definition of an $S$-localized $\cD$-left fibration) if and only if the bottom map is an equivalence (which is equivalent to $\Val(L) \to \Val(X)$ being fibrant in the $S$-localized model structure on $\sP(\cD)_{/\Val(X)}$). As $\Val(X)$ is $S$-local this is equivalent to $\Val(L)$ being $S$-local.
	
	Now assume additionally that $\Val_k(X)$ is $S$-local for all $k \geq 0$ and let $p: L \to X$ be a $\cD$-left fibration. If $\Val_k(L)$ is $S$-local for all $k \geq 0$, then evidently $\Val(L)$ is $S$-local. For the other side, notice we have an injective equivalence of $\cD$-spaces 
	$$\Val_k(L) \simeq \Val(L) \times_{\Val(X)} \Val_k(X).$$
	The right hand side is local with respect to $S$, hence the right hand is local as well.
\end{proof}

With this lemma at hand we have the following result, generalizing \cref{the:cov loc css}.

\begin{theone} \label{the:cdcov loc css s}
		Let $S$ be a set of monomorphisms of $\cD$-spaces. Let $X$ be a $\cD$-simplicial space such that $\Val_k(X)$ is $S$-local. Then the following is a Quillen $\sP(\cD)$-adjunction
		\begin{center}
			\adjun{(\sP(\cD\times\DD)_{/X})^{(lev_{CSS})_S}}{(\sP(\cD\times\DD)_{/X})^{\cD-cov_S}}{id}{id}
		\end{center}
		where the left hand side has the $S$-localized level-wise CSS model structure and the right hand side has the $S$-localized $\cD$-covariant model structure.
\end{theone}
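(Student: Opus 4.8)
The plan is to deduce the statement from the unlocalized Quillen adjunction of \cref{the:cov loc css} by means of \cref{cor:localizing equiv}, routing the hypothesis on $X$ through \cref{lemma:fib s over s loc}.

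First I would recall that by \cref{the:cov loc css} the identity functors form a Quillen adjunction between $(\sP(\cD\times\DD)_{/X})^{lev_{CSS}}$ and $(\sP(\cD\times\DD)_{/X})^{\cD-cov}$. Both the $S$-localized level-wise CSS model structure on $\sP(\cD\times\DD)_{/X}$ (by \cref{prop:levelcss s}) and the $S$-localized $\cD$-covariant model structure (by the construction in the proof of \cref{the:cdcov s}) are, by construction, left Bousfield localizations of these two model structures, so we are exactly in the setting of \cref{cor:localizing equiv} with $F = G = \id$. It therefore remains to verify the single condition of that corollary: every fibrant object of the $S$-localized $\cD$-covariant model structure, that is, every $S$-localized $\cD$-left fibration $p\colon L \to X$, must be local for the set of maps at which $(\sP(\cD\times\DD)_{/X})^{(lev_{CSS})_S}$ is a localization of $(\sP(\cD\times\DD)_{/X})^{lev_{CSS}}$. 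Since $p$ is in particular a $\cD$-left fibration, hence already fibrant in the level-wise CSS model structure over $X$ by \cref{the:cov loc css}, this is equivalent to requiring that $p$ be fibrant in $(\sP(\cD\times\DD)_{/X})^{(lev_{CSS})_S}$.

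By \cref{prop:levelcss s} the latter fibrancy amounts to two requirements on $p\colon L\to X$: (i) $p$ is a level-wise CSS fibration over $X$; and (ii) $\Val_k(L)$ is $S$-local for every $k\geq 0$. Condition (i) holds, as noted, because $p$ is a $\cD$-left fibration (\cref{def:cdleft s}) and the right adjoint in \cref{the:cov loc css} carries $\cD$-left fibrations to level-wise CSS fibrations. Condition (ii) is exactly where the hypothesis that $\Val_k(X)$ is $S$-local is used: \cref{lemma:fib s over s loc} shows that for such $X$ an injective fibration $p\colon L\to X$ is an $S$-localized $\cD$-left fibration precisely when it is a $\cD$-left fibration with $\Val_k(L)$ $S$-local for all $k$, so (ii) is automatic for our $p$. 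Hence \cref{cor:localizing equiv} produces the Quillen adjunction of the localized model structures. The $\sP(\cD)$-enrichment is then immediate: on both over-categories the $\sP(\cD)$-tensor is $C\times(Y\to X)=(C\times Y\to Y\to X)$, and the identity functors preserve it, so the adjunction is $\sP(\cD)$-enriched, just as in the proof of \cref{the:cdcov s}.

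The only substantive step is the verification of condition (ii): without the assumption that $\Val_k(X)$ is $S$-local the two model structures need not be comparable at all (the remark preceding the statement already points out that \cref{the:cov loc css} fails to localize in general), and it is \cref{lemma:fib s over s loc}, applicable only because of the standing hypothesis on $X$, that forces an $S$-localized $\cD$-left fibration over $X$ to be level-wise $S$-local; everything else is routine bookkeeping about Bousfield localizations of slice categories.
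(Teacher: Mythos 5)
Your proof is correct and takes essentially the same approach as the paper: both reduce the claim to showing that the identity right adjoint preserves fibrant objects, with \cref{lemma:fib s over s loc} (together with \cref{prop:levelcss s}) doing the real work. The paper cites \cite[Corollary A.3.7.2]{lurie2009htt} directly whereas you route through \cref{cor:localizing equiv}, but the latter is itself proven via that same reference, so the two arguments coincide in substance.
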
 

\begin{proof}
	Evidently the left adjoint preserves cofibrations as they are just monomorphisms and the right adjoint preserves fibrant objects by \cref{lemma:fib s over s loc}. So, the result follows from \cite[Corollary A.3.7.2]{lurie2009htt}
\end{proof}

If the base is appropriately fibrant the $S$-localized $\cD$-covariant model structure is invariant under presentations.

\begin{notone} \label{not:slsr}
	Let $\cD_1,\cD_2$ be two small categories and $(L,R)$ an adjunction
	\begin{center}
		\adjun{\sP(\cD_1)}{\sP(\cD_2)}{L}{R}.
	\end{center}
    We use the notation 
    \begin{center}
    	\adjun{\sP(\cD_1\times\DD)}{\sP(\cD_2\times\DD)}{sL}{sR}
    \end{center}
	for the adjunction obtained by applying the functor $\Fun(\DD^{op},-)$ to the adjunction $(L,R)$. In particular, for a given $\cD_1$-simplicial space $X$ we have $\Val_k(sLX)=L\Val_k(X)$.
\end{notone}

\begin{theone} \label{the:cdcov s inv qe}
 Let $\cD_1,\cD_2$ be two small categories. Moreover, let $S_1, S_2$ be sets of monomorphisms in $\sP(\cD_1),\sP(\cD_2)$, respectively. Assume there exists a Quillen adjunction 
 \begin{center}
 	\begin{tikzcd}
 		\sP(\cD_1)^{inj_{S_1}} \arrow[r, "L", shift left=1.8, "\bot"'] & \sP(\cD_2)^{inj_{S_2}} \arrow[l, "R", shift left=1.8]
 	\end{tikzcd}.
 \end{center}	
 Then this induces a Quillen adjunction (using \cref{not:slsr})
 \begin{equation} \label{eq:adj three} 
  	\begin{tikzcd}
 	(\sP(\cD_1\times\DD)_{/sRX})^{\cD_1-cov_{S_1}} \arrow[r, "sL", shift left=1.8, "\bot"'] & (\sP(\cD_2\times\DD)_{/X})^{\cD_2-cov_{S_2}} \arrow[l, "sR", shift left=1.8]
 \end{tikzcd},
 \end{equation} 
 for every level-wise fibrant $\cD_2$-simplicial spaces $X$ which is simplicial if $(L,R)$ is simplicial. 
 
 Moreover, if $(L,R)$ is a (simplicial) Quillen equivalence, then the adjunction \ref{eq:adj three} is a (simplicial) Quillen equivalence, both over an $S_2$-local $\cD_2$-simplicial space $X$ and its image $sRX$ as well as over an $S_1$-local $\cD_1$-simplicial space and its image $sLX$.
\end{theone}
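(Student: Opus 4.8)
The plan is to realize every model structure in sight as a left Bousfield localization of an injective model structure on a slice category, and then reduce everything to the transfer principle \cref{cor:localizing equiv}. First I would apply $\Fun(\DD^{op},-)$ to the given Quillen adjunction $(L,R)$: by \cite[Remark A.2.8.6]{lurie2009htt} (exactly as in the proof of \cref{the:diag and val equiv inj}) this produces a Quillen adjunction $(sL,sR)$ between the injective model structures $\sP(\cD_1\times\DD)^{inj}$ and $\sP(\cD_2\times\DD)^{inj}$, which is a Quillen equivalence when $(L,R)$ is, and which is simplicial when $(L,R)$ is. Since $sL$ preserves monomorphisms (because $L$ does) and injective trivial cofibrations, the usual slice construction promotes this to a Quillen adjunction, resp. Quillen equivalence using that $X$ is injectively fibrant, between the injective slice model structures $(\sP(\cD_1\times\DD)_{/sRX})^{inj}$ and $(\sP(\cD_2\times\DD)_{/X})^{inj}$, with left adjoint $Y\mapsto(sLY\to sL\,sRX\to X)$ and right adjoint $Z\mapsto(sRZ\to sRX)$. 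As in the proofs of \cref{the:ncov model} and \cref{the:cdcov s}, the $S_i$-localized $\cD_i$-covariant model structure over the relevant base is the left Bousfield localization of this injective slice model structure at the set $\mathcal{L}_i=\{F[d,0]\to F[d,k]\to(\mathrm{base})\}\cup\{\VEmb(A)\to\VEmb(B)\to(\mathrm{base}):(A\to B)\in S_i\}$, so by \cref{cor:localizing equiv} everything reduces to the single claim that $sR$ sends every $S_2$-localized $\cD_2$-left fibration $W\to X$ (with $X$ injectively fibrant) to an $S_1$-localized $\cD_1$-left fibration $sRW\to sRX$, together with, for the Quillen equivalence, the converse when $X$ is also $S_2$-local.

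To prove the claim, observe that $W$ is injectively fibrant (being the source of an injective fibration into an injectively fibrant object), so $sR(W\to X)\colon sRW\to sRX$ is an injective fibration of injectively fibrant $\cD_1$-simplicial spaces. For the $\cD_1$-left fibration condition I would use the homotopy-pullback criterion \cref{lemma:left fib triv Kan fib}: it suffices to check that $(sRW)[d',k]\to(sRX)[d',k]\times_{(sRX)[d',0]}(sRW)[d',0]$ is a Kan equivalence for every $(d',k)\in\cD_1\times\DD$. Writing $Z_{[k]}\in\sP(\cD_i)$ for the restriction of a $\cD_i$-simplicial space $Z$ to simplicial degree $[k]$, one has $(sRZ)[d',k]=R(Z_{[k]})(d')$ (since $sR=\Fun(\DD^{op},R)$ acts by postcomposition). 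Because $W\to X$ is a $\cD_2$-left fibration, \cref{lemma:left fib triv Kan fib} gives that $W_{[k]}\to X_{[k]}\times_{X_{[0]}}W_{[0]}$ is an injective equivalence of $\cD_2$-spaces; and since $W$ and $X$ are injectively fibrant, the $\cD_2$-spaces $W_{[k]},X_{[k]},W_{[0]},X_{[0]}$ are injectively fibrant and $W_{[0]}\to X_{[0]}$ is an injective fibration, so this is an injective equivalence of fibrant $\cD_2$-spaces along a fibration. Applying $R$ — right Quillen, hence preserving injectively fibrant objects and equivalences between them, and preserving the pullback as a right adjoint — yields an injective equivalence $R(W_{[k]})\to R(X_{[k]})\times_{R(X_{[0]})}R(W_{[0]})$ of $\cD_1$-spaces, which evaluated at $d'$ is the required Kan equivalence. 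Finally $\Val(sRW)=R(\Val W)$ and $\Val(sRX)=R(\Val X)$ since $\Val$ picks out simplicial degree $[0]$, and because $R$ is right Quillen for the $S$-localized injective model structures and $W\to X$ is $S_2$-local, the map $R(\Val W)\to R(\Val X)$ is $S_1$-local; here I would invoke \cref{lemma:fib s over s loc} together with the slice version of \cref{cor:localizing equiv}. Hence $sRW\to sRX$ is an $S_1$-localized $\cD_1$-left fibration.

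For the Quillen equivalence, \cref{cor:localizing equiv} further requires the converse: when $X$ is injectively fibrant and $S_2$-local, a $\cD_2$-left fibration $W\to X$ is $S_2$-local as soon as $sR(W\to X)$ is an $S_1$-localized $\cD_1$-left fibration. This follows by running the transfer argument for the slice Quillen equivalence $(\sP(\cD_1)_{/R\Val X})^{inj_{S_1}}\leftrightarrows(\sP(\cD_2)_{/\Val X})^{inj_{S_2}}$ — which is a Quillen equivalence because $\Val X$ is $S_2$-local and injectively fibrant — so that $S_1$-locality of $R(\Val W)\to R(\Val X)$ forces $S_2$-locality of $\Val W\to\Val X$, and hence of $\Val W$ (a local map into a local object has local source), whence \cref{lemma:fib s over s loc} gives that $W\to X$ is $S_2$-localized. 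The variant over an $S_1$-local $\cD_1$-simplicial space $Y$ and its image $sLY$ follows by the symmetric argument combined with the invariance result \cref{the:dcov invariant css s}. The simplicial refinement is automatic throughout since the functor $\Fun(\DD^{op},-)$, slice passage, and Bousfield localization all preserve simplicial enrichment. The main obstacle is the $\cD$-left fibration preservation step in the middle paragraph: getting \cref{lemma:left fib triv Kan fib} to interact with the way $sR$ mixes the $\cD$-direction and the simplicial direction. This is precisely where injective fibrancy of $X$ — hence of $W$, $W_{[k]}$, $W_{[0]}\to X_{[0]}$, etc. — is indispensable, since it is exactly what allows the homotopy pullback of spaces to be computed as a strict pullback of fibrant objects on which the right Quillen functor $R$ is well behaved.
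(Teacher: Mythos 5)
Your overall strategy matches the paper's skeleton — lift $(L,R)$ along $\Fun(\DD^{op},-)$, pass to slices, recognize the covariant model structures as Bousfield localizations, and transfer via \cref{cor:localizing equiv} — but you test the \emph{right} adjoint (does $sR$ preserve the $S_i$-localized $\cD_i$-left fibrations?) where the paper tests the \emph{left} adjoint (does $sL$ send $F[d,0]\to F[d,n]$ to local equivalences, via $sL(F[d,n])\cong sL(F[d,0])\times D[n]$?). Both readings of \cref{cor:localizing equiv} are legitimate, and your middle paragraph's fiber-wise check against \cref{lemma:left fib triv Kan fib} is correct and has the virtue of being directly re-usable for the Quillen-equivalence converse.

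The first step, however, is wrong as stated, and the error propagates. Applying \cite[Remark A.2.8.6]{lurie2009htt} to $(L,R)$ between the $S_i$-localized injective model structures yields a Quillen adjunction between the \emph{level-wise localized} model structures $\sP(\cD_i\times\DD)^{lev_{S_i}}=\Fun(\DD^{op},\sP(\cD_i)^{inj_{S_i}})^{inj}$, not between the plain injective model structures $\sP(\cD_i\times\DD)^{inj}$ that you name: $L$ need only carry $S_1$-local trivial cofibrations of $\cD_1$-spaces to $S_2$-local ones, not unlocalized to unlocalized. You then realize the $S_i$-localized $\cD_i$-covariant model structure as the localization of the \emph{injective} slice at $\mathcal{L}_i$ (which is true, by \cref{the:cdcov s}) and feed this into \cref{cor:localizing equiv} — but since the adjunction is only Quillen between the $lev_{S_i}$ slices, there is a mismatch. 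The repair is to first check that the $S_i$-localized $\cD_i$-covariant model structure over a level-wise fibrant base is \emph{also} a Bousfield localization of the $lev_{S_i}$ slice model structure, i.e., that $D[n]\times A\to D[n]\times B\to X$ (for $A\to B\in S_i$) is already trivial there; this goes through $\Val(L^{D[n]})\cong\Val_n(L)\simeq\Val_0(L)\times_{\Val_0(X)}\Val_n(X)$ for $\cD_i$-left fibrations over $X$, and hence requires each $\Val_n(X)$ to be $S_i$-local. That is exactly where the level-wise fibrancy hypothesis on $X$ is needed, and your write-up never explicitly invokes it, so the point is silently skipped.

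There is a second gap in the Quillen-equivalence converse. The criterion in \cref{cor:localizing equiv} must be verified for \emph{every} level-wise fibrant $W\to X$, not only for $\cD_2$-left fibrations: you must show that $S_1$-localized $\cD_1$-left fibrancy of $sR(W\to X)$ forces $W\to X$ to be a $\cD_2$-left fibration and $S_2$-local, whereas your last paragraph only establishes the $S_2$-locality and tacitly assumes the $\cD_2$-left fibration condition. The missing step is to run the pull-back computation of your middle paragraph in reverse, using that $R$ reflects weak equivalences between fibrant objects when $(L,R)$ is a Quillen equivalence — the paper's proof handles both conditions at once through the single assertion that $\Val_n(Z)\to\Val_0(Z)\times_{\Val_0(X)}\Val_n(X)$ is an equivalence of $S_2$-local $\cD_2$-spaces iff its $R$-image is an equivalence of $S_1$-local $\cD_1$-spaces.
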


\begin{proof}
	We can apply the functor $\Fun(\DD^{op},-)$ to the Quillen adjunction $(L,R)$ and give it the injective model structure to induce a Quillen adjunction \cite[Remark A.2.8.6]{lurie2009htt}
	\begin{equation} \label{eq:adj}
		\begin{tikzcd}
			\sP(\cD_1\times\DD)^{lev_{S_1}} \arrow[r, "sL", shift left=1.8, "\bot"'] & \sP(\cD_2\times\DD)^{lev_{S_2}} \arrow[l, "sR", shift left=1.8]
		\end{tikzcd},
	\end{equation}	
	where both sides have the localized level-wise model structure (\cref{the:sone local level stwo}). Moreover, if $(L,R)$ is an equivalence or simplicial, then the same holds for $(sL,sR)$, as everything is determined level-wise. 
	
	Let $X$ be level-wise $S_2$-local $\cD_2$-simplicial space.	This adjunction gives us an induced Quillen adjunction on over-categories
	\begin{center}
		\begin{tikzcd}
			(\sP(\cD_1\times\DD)_{/sRX})^{lev_{S_1}} \arrow[r, "sL", shift left=1.8, "\bot"'] & (\sP(\cD_2\times\DD)_{/X})^{lev_{S_2}} \arrow[l, "sR", shift left=1.8]
		\end{tikzcd},
	\end{center}
	which is simplicial if $(sL,sR)$ is one. Now we localize the left hand side with respect to the maps $F[d,0] \to F[d,n] \to sRX$ and the right hand side with respect to the map $F[d,0] \to F[d,n] \to X$ and, by \cref{the:cdcov loc css s}, the resulting localization model structures are the $\cD_1$-covariant ($\cD_2$-covariant) model structures, giving us the adjunction
	\begin{equation} \label{eq:adj two}
		\begin{tikzcd}
			(\sP(\cD_1\times\DD)_{/sRX})^{\cD_1-cov_{S_1}} \arrow[r, "sL", shift left=1.8, "\bot"'] & (\sP(\cD_2\times\DD)_{/X})^{\cD_2-cov_{S_2}} \arrow[l, "sR", shift left=1.8]
		\end{tikzcd}.
	\end{equation}
	We want to prove this is a Quillen adjunction. By \cref{cor:localizing equiv}, it suffices to observe that $sL(F[d,0]) \to sL(F[d,n]) \to X$ is an $S_2$-localized $\cD_2$-covariant equivalence. We will in fact prove it is a $\cD_2$-covariant equivalence. By \cref{rem:D s F} we have the bijection $F[d,n] \cong F[d,0] \times D[n]$ and $sL$ is defined level-wise, hence $sL(F[d,n]) \cong sL(F[d,0] \times D[n]) = sL(F[d,0]) \times D[n]$ and the map $sL(F[d,0]) \times D[0] \to sL(F[d,0]) \times D[n] \to X$ is a $\cD_2$-covariant equivalence, as the $\cD_2$-covariant model structure is enriched over $\sP(\cD)^{inj}$ (\cref{the:ncov model}). Moreover, if $(L,R)$ is simplicial, then this Quillen adjunction is evidently simplicial as well.
	
	Now, let us assume $(L,R)$ is in fact a Quillen equivalence. We already established that the adjunction given in \ref{eq:adj} is a Quillen equivalence. Let $X$ be an arbitrary level-wise $S$-local $\cD_1$-space. Then, there exists a level-wise fibrant $\cD_2$-space $Y$, such that $sRY$ is injectively equivalent to $X$. Indeed, $Y$ is given as the fibrant replacement of $sLX$ in the level-wise $S_2$-localized model structure on $\cD_2$-spaces. 
	
	Using the adjunction above (\ref{eq:adj two}) we can hence deduce that 
	\begin{center}
		\begin{tikzcd}
			(\sP(\cD_1\times\DD)_{/X})^{\cD_1-cov_{S_1}} \arrow[r, "sL", shift left=1.8, "\bot"'] & (\sP(\cD_2\times\DD)_{/sLY})^{\cD_2-cov_{S_2}} \arrow[l, "sR", shift left=1.8]
		\end{tikzcd}
	\end{center}
	is a Quillen adjunction, which is simplicial if $(L,R)$ is. It remains to prove that the Quillen adjunction is a Quillen equivalence. Via the Quillen equivalence \ref{eq:adj} we already know the adjunction of over-categories is a Quillen equivalence between the level-wise $S_1$-localized model structure on $\cD_1$-simplicial spaces and the level-wise $S_2$-localized model structure on $\cD_2$-simplicial spaces. We only need to prove this remains the case after localizing both sides with respect to the maps $F[d,0] \to F[d,n]$ on both sides. We want to use \cref{cor:localizing equiv} and for that we need to verify that a map $Z \to sLY$ is an $S_2$-localized $\cD_2$-left fibration if and only if $sR(Z) \to X$ is an $S_1$-localized $\cD_1$-left fibration.
	
	This follows immediately from the fact that $R$ reflects equivalences between fibrant objects and so 
	$$\Val_n(Z) \to \Val_0(Z) \times_{\Val(X)_0} \Val_n(X)$$
	is an equivalence of $S_2$-local $\cD_2$-spaces if and only if 
	$$R(\Val_nZ)=\Val_n(sR(Z)) \to \Val_0(sRZ) \times_{\Val_0(sRX)} \Val_n(sRX) = R\Val_0(Z) \times_{R\Val(X)_0} R\Val_n(X)$$
	is an equivalence of $S_1$-local $\cD_1$-spaces. 
\end{proof}

For the remainder of this section we want to focus our studies of $S$-localized $\cD$-left fibrations over $\cD$-simplicial spaces with weakly constant objects, which will at the end of this subsection include a discussion over $\sP(\cD)$-enriched (complete) Segal spaces.

\begin{propone} \label{prop:fibrancies}
	Let $\cD$ be a small category with terminal object $t$.	Let $S$ be a set of cofibrations of $\cD$-spaces with contractible codomain (\cref{def:contractible realization}). Let $X$ be a $\cD$-simplicial space with weakly constant objects (\cref{def:weakly constant objects}). Moreover, let $p:L \to X$ be an injective fibration of $\cD$-simplicial spaces. Then the following are equivalent.
	\begin{enumerate}
		\item $p$ is an $S$-localized $\cD$-left fibration.
		\item For every map $\sigma: F[d,k] \times \Delta[l] \to X$, the pullback map $\sigma^*p: \sigma^*L \to F[d,k] \times \Delta[l]$ is an $S$-localized $\cD$-left fibration.
		\item For every map $\sigma: F[d,k] \to X$, the pullback map $\sigma^*p: \sigma^*L \to F[d,k]$ is an $S$-localized $\cD$-left fibration.
		\item $p$ is a $\cD$-left fibration and for every point $\{x\}: D[0] \to X$ the fiber $\Fib_xL$ is fibrant in the diagonal $S$-localized injective model structure.
		\item $p$ is a $\cD$-left fibration and for every point $\{x\}: D[0] \to X$ the fiber $\Val(\Fib_xL)$ is fibrant in the $S$-localized injective model structure.
		\item $p$ is a $\cD$-left fibration and for every map $\VEmb(B) \to D[0] \to X$, the induced map 
		$$\Map_{/X}(\VEmb(B),L) \to \Map_{/X}(\VEmb(A),L)$$
		is a Kan equivalence.
	\end{enumerate}
\end{propone}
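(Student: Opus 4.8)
The plan is to establish a cycle of implications $(1)\Rightarrow(2)\Rightarrow(3)\Rightarrow(4)\Leftrightarrow(5)\Leftrightarrow(6)\Rightarrow(1)$, leaning on the already-established unlocalized analogue \cref{lemma:cdleft local} (which gives the equivalence of the ``$\cD$-left fibration'' parts of conditions $(1)$--$(3)$) and reducing each ``$S$-local'' clause to a statement about values of fibers. First I would observe that $(1)\Rightarrow(2)$ is immediate: pulling back an $S$-localized $\cD$-left fibration along any map again yields an $S$-localized $\cD$-left fibration, since $\cD$-left fibrations are stable under pullback (\cref{lemma:pullback nlfib}) and being local with respect to $S$ is detected by mapping spaces $\Map_{/X}(\VEmb(B),-)$ which only improve under base change. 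The implication $(2)\Rightarrow(3)$ is trivial (take $l=0$, $k$ arbitrary, or just restrict the class of test maps). For $(3)\Rightarrow(1)$ one uses \cref{lemma:cdleft local} to conclude $p$ is a $\cD$-left fibration from the representable pullbacks being so, and then checks $S$-locality by noting that every map $\VEmb(B)\to X$ factors through some $F[d,0]\to X$ after applying $\Val$, so the locality condition can be checked on the pullbacks $\sigma^*L$ for $\sigma: F[d,0]\to X$.

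Next I would handle the core equivalences $(4)\Leftrightarrow(5)\Leftrightarrow(6)$. The equivalence $(4)\Leftrightarrow(5)$ is exactly \cref{prop:dcov vs diag s}: an object is fibrant in the diagonal $S$-localized injective model structure on $\cD$-simplicial spaces if and only if its value (equivalently, its diagonal, since $\fDiag$ and $\Val$ agree up to the relevant equivalence on homotopically constant-in-the-right-way objects) is $S$-local as a $\cD$-space; here the fiber $\Fib_xL$ is obtained by pulling back along $\{x\}: D[0]\to X$, and $\fDiag(\Fib_xL)$ versus $\Val(\Fib_xL)$ are compared using that $D[0]$ is discrete. The equivalence $(5)\Leftrightarrow(6)$ unwinds the definition: $\Map_{/X}(\VEmb(B),L)\cong \Map_{/X}(\VEmb(B), \Fib_xL)$ when $\VEmb(B)\to X$ factors through $\{x\}: D[0]\to X$ — because $\VEmb(B)$ has discrete value $B$ and $D[0]$ picks out a single object — and then by the enriched adjunction $(\VEmb,\Val)$ this is $\Map_{/\Val X}(B,\Val(\Fib_xL))=\Map(B,\Val(\Fib_xL))$, so condition $(6)$ literally says $\Val(\Fib_xL)$ is $S$-local. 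This is the step analogous to \cref{lemma:fib s over s loc}, but fiberwise rather than over all of $X$.

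To close the loop I would prove $(3)\Rightarrow(4)$ and $(6)\Rightarrow(1)$ (or route through $(4)$). For $(3)\Rightarrow(4)$: condition $(3)$ applied to $\sigma=\{x\}: D[0]\to X$ gives that $\Fib_xL\to D[0]$ is an $S$-localized $\cD$-left fibration, whence $\Val(\Fib_xL)$ is $S$-local by \cref{lemma:fib s over s loc} (with base $D[0]$, whose value $D[0]$ is trivially $S$-local). For $(6)\Rightarrow(1)$, assuming $p$ is a $\cD$-left fibration with every $\Val(\Fib_xL)$ being $S$-local, I must verify that $\Val(p):\Val(L)\to\Val(X)$ is local with respect to $S$; this is where the hypothesis that $X$ has weakly constant objects is essential. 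Using \cref{cor:zigzag better} we may replace $X$ by the level-wise Segal category $\tilde X$ with discrete value, so that $\Val(X)$ is a coproduct of copies of $D[0]$ indexed by objects $x$, and correspondingly $\Val(L)\simeq \coprod_x \Val(\Fib_xL)$; then $S$-locality of $\Val(L)$ follows from $S$-locality of each summand together with \cref{lemma:coprod fibrant}, using that $S$ has contractible codomain. The main obstacle is precisely this last reduction: identifying $\Val(L)$ with the coproduct of the fibers requires care — one must use that the fibrant replacement $\hat X$ and the Segal category $\tilde X$ are level-wise complete Segal space equivalent and transport $p$ along \cref{the:dcov invariant css s}/\cref{the:dcov css inv} without disturbing the $S$-locality data, and then invoke \cref{lemma:surjective path component} to match up objects of $X$ with the summands — so I expect most of the work to be in making the passage to $\tilde X$ and the coproduct decomposition of values precise.
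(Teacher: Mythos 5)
Your proposal is essentially correct and tracks the paper's proof closely: both establish $(1)\Rightarrow(2)$, $(2)\Leftrightarrow(3)$ (you note $(2)\Rightarrow(3)$ is a restriction; the paper also records $(3)\Rightarrow(2)$ directly via the injective equivalence $F[d,k]\times\Delta[l]\to F[d,k]$), $(3)\Rightarrow(4)\Leftrightarrow(5)\Leftrightarrow(6)$ by pulling back along $\sigma=\{x\}:D[0]\to X$ (using $D[0]=F[t,0]$) and unwinding the $(\VEmb,\Val)$ adjunction, and then close by reducing to the level-wise Segal category $\tilde X$ from \cref{cor:zigzag better} and transporting along \cref{the:dcov invariant css s}/\cref{the:dcov css inv}, matching up objects via \cref{lemma:surjective path component}. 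The one genuine deviation is your final step: you deduce $S$-locality of $\Val(\tilde L)$ from a coproduct decomposition $\Val(\tilde L)\cong\coprod_x\Val(\Fib_x\tilde L)$ plus \cref{lemma:coprod fibrant} and then appeal to \cref{lemma:fib s over s loc}, whereas the paper bypasses the coproduct entirely by observing that, since $S$ has contractible codomain, every test map $\VEmb(B)\to\tilde X$ factors through a point $D[0]\to\tilde X$, so the locality of $\tilde p$ is literally the fiberwise statement. Both routes work; the paper's is slightly leaner because it does not need to first verify $\Val(\tilde X)$ is $S$-local. For $(4)\Leftrightarrow(5)$ you cite \cref{prop:dcov vs diag s} where the paper invokes \cref{the:diag local ncov} together with \cref{the:diagmodel s}, but both amount to the same observation that fibers of $\cD$-left fibrations are homotopically constant, so $\fDiag$ and $\Val$ agree on them up to equivalence.

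One small caution: the alternate sketch you offer for $(3)\Rightarrow(1)$ (``every map $\VEmb(B)\to X$ factors through some $F[d,0]\to X$ after applying $\Val$'') is not true for a general $X$; this factoring only becomes available after you have replaced $X$ by $\tilde X$ with discrete value and used contractibility of $B$, exactly as you do in the route you actually adopt. Since you do not ultimately use this alternate route, it does not affect the correctness of your cycle $(1)\Rightarrow(2)\Rightarrow(3)\Rightarrow(4)\Leftrightarrow(5)\Leftrightarrow(6)\Rightarrow(1)$, but if you were to write it out you would need to insert the reduction to $\tilde X$ there as well.
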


\begin{proof}
	$(1 \Rightarrow 2)$ This follows immediately from the fact that $S$-localized $\cD$-left fibrations are stable under pullback.
	
	$(2 \Leftrightarrow 3)$ As $X$ is already an injective fibration we only need to confirm it is local with respect to certain morphisms. However, being local is invariant under injective equivalences and so a map is local over $F[d,k]\times \Delta[l]$ if and only if it is local over $F[d,k]$ as the projection $F[d,k] \times \Delta[l] \to F[d,k]$ is an injective equivalence.
	
	$(4 \Leftrightarrow 5)$ If $p$ is a $\cD$-left fibration, then the fiber $\Fib_xL$ is fibrant in the $\cD$-diagonal model structure (\cref{the:diag local ncov}) and so the result follows from the characterization of the fibrant objects in the diagonal $S$-localized model structure (\cref{the:diagmodel s}).
	
	$(3 \Rightarrow 4)$ Let $L \to X$ be an injective fibration such that $\sigma^*L \to F[d,k]$ is an $S$-localized $\cD$-left fibration. Then in particular $\sigma^*L \to F[d,k]$ is a $\cD$-left fibration (\cref{def:cdleft s}) which, by \cref{lemma:cdleft local}, means $L \to X$ is a $\cD$-left fibration. Moreover, the fiber is an $S$-localized $\cD$-left fibration as it is just a special case as $D[0] = F[t,0]$. 
	
	$(5 \Leftrightarrow 6)$ Let $L \to X$ be a $\cD$-left fibration. Fix a map $\VEmb(A) \to \VEmb(B) \to D[0] \to  X$. We have the following diagram 
	\begin{center}
		\begin{tikzcd}
			\Map_{/X}(\VEmb(B),L) \arrow[r] \arrow[d, "\cong"] & \Map_{/X}(\VEmb(A),L) \arrow[d, "\cong"] \\
			\Map_{/\Val(X)}(B,\Val(L)) \arrow[r] \arrow[d, "\cong"] &\Map_{/\Val(X)}(A,\Val(L)) \arrow[d, "\cong"] \\
			\Map(B,\Fib_x\Val(L)) \arrow[r] & \Map(A,\Fib_x\Val(L))
		\end{tikzcd}.
	\end{center}
	Here the top vertical morphisms are isomorphisms of spaces by the enriched adjunction $(\VEmb,\Val)$ (as further explained in \cref{not:localized}) and the bottom vertical morphisms are isomorphisms of spaces as the map $\VEmb(B) \to X$ factors through $x:D[0] \to X$. 
	
	This implies that the top map is a Kan equivalence, meaning $L \to X$ is $S$-local, if and only if the bottom map is a Kan equivalence for all $x: D[0] \to X$, meaning $\Fib_xL$ is $S$-local for all $x: D[0] \to X$.  
	
	$(5 \Leftrightarrow 1)$ Let $p: L \to X$ be a $\cD$-left fibration, such that $\Fib_xL$ is $S$-local for all $x:D[0] \to X$.
	Applying the Quillen equivalence (\cref{the:dcov css inv}) to the zigzag (\cref{cor:zigzag better}) implies that there is a diagram of the form 
	\begin{equation}\label{eq:pb square}
		\begin{tikzcd}
			L \arrow[r, "\simeq"] \arrow[d, twoheadrightarrow, "p"] \arrow[dr, phantom, "\ulcorner", very near start] & \hat{L} \arrow[d, twoheadrightarrow, "\hat{p}"] & \tilde{L} \arrow[l, "\simeq"'] \arrow[d,twoheadrightarrow, "\tilde{p}"] \\
			X \arrow[r, "\simeq"] & \hat{X} &  \tilde{X} \arrow[l, "\simeq"']
		\end{tikzcd},
	\end{equation}
	where the horizontal morphisms are level-wise complete Segal space equivalences, the vertical maps are $\cD$-left fibrations and the squares are homotopy pullback squares. Moreover, by the Quillen equivalence (\cref{the:dcov invariant css s}) $p: L \to X$ is an $S$-local $\cD$-left fibration if and only if $\hat{p}:\hat{L} \to \hat{X}$ is an $S$-local $\cD$-left fibration if and only if $\tilde{p}: \tilde{L} \to \tilde{X}$ is an $S$-local $\cD$-left fibration. 
	
	It follows from \cref{cor:zigzag better} that $\Val\tilde{X}$ is constant. Moreover, by assumption $S$ has contractible codomains and so every morphism $\VEmb(B) \to \tilde{X}$ is equal to a constant map $\VEmb(B) \to D[0] \xrightarrow{ \ \{x\} \ } \tilde{X}$ (\cref{lemma:geom contr}), which gives us an isomorphism 
	$$\Map_{/\tilde{X}}(\VEmb(B),\tilde{L}) \cong \Map(\VEmb(B),\Fib_x\tilde{L})$$ 
	and similarly $\Map_{/\tilde{X}}(\VEmb(A),\tilde{L}) \cong \Map(\VEmb(A),\Fib_x\tilde{L})$. This proves that $\tilde{p}: \tilde{L} \to \tilde{X}$ is $S$-local if and only if $\Fib_x\tilde{L}$ is $S$-local for all maps $\{x\}:D[0] \to \tilde{X}$.
	
	Fix a map $\{x\}: D[0] \to \tilde{X}$.  By construction $\tilde{L}$ and $\tilde{X} \times_{\hat{X}} \hat{L}$ are equivalent $\cD$-left fibrations over $\tilde{X}$ and so in particular, by \cref{the:ncov equiv nlfib}, gives us an injective equivalence of fibers 
	\begin{equation} \label{eq:equiv one}
	 \Fib_{x}\tilde{L}\simeq \Fib_{x}\hat{L}. 
	\end{equation}
	Next, the map $X[t,0] \to \hat{X}[t,0]$ is surjective on path components (as explained in the proof of \cref{lemma:surjective path component}) and so there exists a point $\{\hat{x}\}: D[0] \to X$ such that images of $x$ and $\hat{x}$ are equivalent in $\hat{X}[t,0]$. By \cref{the:diag local ncov}, this implies we have an equivalence of fibers of $\hat{L}$
	\begin{equation} \label{eq:equiv two}
		\Fib_{x}\hat{L} \simeq \Fib_{\hat{x}}\hat{L}.
	\end{equation}
	 Finally, applying the same argument used to deduce the equivalence of fibers given in \ref{eq:equiv one} on the left hand homotopy pullback square of $\cD$-left fibrations gives us the equivalence of fibers 
	 \begin{equation} \label{eq:equiv three}
	   \Fib_{\hat{x}}L \simeq \Fib_{\hat{x}}\hat{L}. 
 	\end{equation} 
	 Combining the equivalences \ref{eq:equiv one}, \ref{eq:equiv two} and \ref{eq:equiv three} implies that $\Fib_x\tilde{L}$ is $S$-local if and only if $\Fib_{\hat{x}}L$ is $S$-local, and hence we are done.
\end{proof}

\cref{prop:fibrancies} has the following valuable corollary.

\begin{corone} \label{cor:fibrancies}
	Let $\cD$ be a small category with terminal object $t$.	Let $S$ be a set of cofibrations of $\cD$-spaces with contractible codomain. Let $X$ be a $\cD$-simplicial space with weakly constant objects. Then the $S$-localized $\cD$-covariant model structure on $(\sP(\cD\times\DD)_{/X})^{\cD-cov_S}$ coincides with the localization of the $\cD$-covariant model structure with respect to the maps $\VEmb(A) \to \VEmb(B) \to D[0] \to X$.
\end{corone}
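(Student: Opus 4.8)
The plan is to realize both model structures as left Bousfield localizations of the $\cD$-covariant model structure on $\sP(\cD\times\DD)_{/X}$ (\cref{the:ncov model}) and to show they have the same fibrant objects, whence they coincide. Recall from the proof of \cref{the:cdcov s} that the $S$-localized $\cD$-covariant model structure is the left Bousfield localization of the $\cD$-covariant model structure at the set $\mathcal{L}=\{\,\VEmb(A)\to\VEmb(B)\to X : (A\to B)\in S\,\}$, where the structure map $\VEmb(B)\to X$ ranges over all maps, while the model structure in the statement is the left Bousfield localization at $\mathcal{L}'=\{\,\VEmb(A)\to\VEmb(B)\to D[0]\to X : (A\to B)\in S,\ x\colon D[0]\to X\,\}$. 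Both $\mathcal{L}$ and $\mathcal{L}'$ are sets (the latter being indexed by the set $S$ together with the set of maps $D[0]\to X$), and since the $\cD$-covariant model structure is left proper and cellular (as recalled in the proof of \cref{the:cdcov s}), both localizations exist by \cite[Theorem 4.1.1]{hirschhorn2003modelcategories}.

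Since each composite $\VEmb(A)\to\VEmb(B)\to D[0]\to X$ is in particular a map of the form $\VEmb(A)\to\VEmb(B)\to X$, we have $\mathcal{L}'\subseteq\mathcal{L}$, so every $\mathcal{L}$-local object is $\mathcal{L}'$-local; equivalently, the $S$-localized $\cD$-covariant model structure is a further left Bousfield localization of the localization at $\mathcal{L}'$. It therefore remains to prove the reverse inclusion of classes of local objects: every $\mathcal{L}'$-local fibrant object is $\mathcal{L}$-local.

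Here I would invoke \cref{prop:fibrancies}. A fibrant object of the localization at $\mathcal{L}'$ is precisely a $\cD$-left fibration $L\to X$ for which the map $\Map_{/X}(\VEmb(B),L)\to\Map_{/X}(\VEmb(A),L)$ is a Kan equivalence for every $(A\to B)$ in $S$ and every map $\VEmb(B)\to D[0]\to X$ --- that is, condition $(6)$ of \cref{prop:fibrancies}. A fibrant object of the $S$-localized $\cD$-covariant model structure is, by \cref{the:cdcov s}, exactly an $S$-localized $\cD$-left fibration over $X$, i.e.\ condition $(1)$. As $\cD$ has a terminal object, $X$ has weakly constant objects, and $S$ has contractible codomains, \cref{prop:fibrancies} applies and yields $(1)\Leftrightarrow(6)$, so the two localizations have the same class of fibrant objects. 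Two left Bousfield localizations of a fixed model category with the same fibrant objects have the same local (weak) equivalences and the same cofibrations, hence are the same model structure, which is the assertion.

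The substantive content is carried entirely by \cref{prop:fibrancies}; the only point requiring care in the write-up is checking that \cref{prop:fibrancies} is being applied with exactly its stated hypotheses and that $\mathcal{L}'$ is a legitimate (small) localizing set, after which the argument is a purely formal comparison of Bousfield localizations.
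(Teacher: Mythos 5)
Your proposal is correct and follows essentially the same route as the paper: both hinge entirely on the equivalence $(1)\Leftrightarrow(6)$ of \cref{prop:fibrancies}, which shows the two localizations have the same fibrant objects. The only cosmetic difference is that the paper packages the final bookkeeping step through \cref{cor:localizing equiv} (the identity is a Quillen equivalence after localization), whereas you argue directly that two left Bousfield localizations of a fixed model category with the same fibrant objects coincide; the two formulations are interchangeable.
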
 

\begin{proof}
	Let $S_1$ be the set of cofibrations $\VEmb(A) \to \VEmb(B) \to D[0] \to X$ and $S_2$ the set of cofibrations $\VEmb(A) \to \VEmb(B) \to X$. Evidently the identity functor gives us a Quillen equivalence from the $\cD$-covariant model structure over $X$ to itself. By \cref{cor:localizing equiv}, in order to prove that it remains a Quillen equivalence after localizing we need to prove that a $\cD$-left fibration is $S_1$-local if and only if it is $S_2$-local. However, this is precisely the statement of $(1) \Leftrightarrow (6)$ in \cref{prop:fibrancies}. The fact that the identity functor is a Quillen equivalence and both model structures have the same cofibrations implies that they also have the same weak equivalences and fibrations, proving the model structures coincide.
\end{proof}

We move on the analogous result for \cref{the:pullback rfib}.

\begin{theone} \label{the:pullback rfib s}
	Let $\cD$ be a small category with terminal object $t$.	Let $S$ be a Cartesian set of cofibrations of $\cD$-spaces with contractible codomain. Let $X$ be a $\cD$-simplicial space with weakly constant objects, such that $\Val_k(X)$ is local with respect to $S$.
	Let $p:R \to X$ be a $\cD$-right or $\cD$-left fibration. Then the adjunction
	\begin{center}
		\adjun{(\sP(\cD\times\DD)_{/X})^{lev_{CSS_S}}}{(\sP(\cD\times\DD)_{/R})^{lev_{CSS_S}}}{p^*}{p_*}
	\end{center}
	is a Quillen adjunction. Here both sides have the $S$-localized level-wise CSS model structure (\cref{prop:levelcss s}) on the over-category.
\end{theone}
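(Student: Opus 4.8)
The plan is to realise both $S$-localized level-wise complete Segal space model structures as left Bousfield localizations of the corresponding un-localized ones and then apply the localization criterion \cref{cor:localizing equiv}. By \cref{the:pullback rfib}, $(p^*,p_*)$ is already a Quillen adjunction between the level-wise complete Segal space model structures over $X$ and over $R$, and the $S$-localized level-wise complete Segal space model structure on either side is obtained from it by a further left Bousfield localization, namely with respect to the images in the base of the maps $\VEmb(A)\to\VEmb(B)$ with $A\to B$ in $S$ (cf.\ the localizing set used in \cref{prop:levelcss s}). By \cref{cor:localizing equiv} it therefore suffices to show that $p_*$ sends a fibrant object $Z\to R$ of the $S$-localized level-wise complete Segal space model structure over $R$ — equivalently, a level-wise complete Segal space fibration $Z\to R$ which is local with respect to $S$ — to a map $p_*Z\to X$ that is local with respect to $S$; note that $p_*Z\to X$ is automatically a level-wise complete Segal space fibration, again by \cref{the:pullback rfib}.

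The first genuine step is to cut down the test maps over $X$. Since $X$ has weakly constant objects, $\Val_k(X)$ is $S$-local, and $S$ has contractible codomains, I would argue — along the lines of \cref{prop:fibrancies} (equivalence $(1)\Leftrightarrow(6)$) and \cref{cor:fibrancies}, now with the $\cD$-covariant model structure replaced by the level-wise complete Segal space one, using a zigzag $X\to\widehat X\leftarrow\widetilde X$ as in \cref{cor:zigzag better} together with the invariance of the (localized) level-wise complete Segal space over-category under level-wise complete Segal space equivalences of the base — that, for a level-wise complete Segal space fibration over $X$, being local with respect to $S$ is equivalent to being local with respect only to the maps $\VEmb(A)\to\VEmb(B)\to D[0]\to X$. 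Thus it is enough to fix $A\to B$ in $S$ and a map $\{x\}\colon D[0]\to X$ and show that
$$\Map_{/X}(\VEmb(B),p_*Z)\longrightarrow\Map_{/X}(\VEmb(A),p_*Z)$$
is a Kan equivalence.

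The remaining computation is short. By the adjunction $p^*\dashv p_*$ this map is $\Map_{/R}(\VEmb(B)\times_X R,Z)\to\Map_{/R}(\VEmb(A)\times_X R,Z)$. As $D[0]$ is the terminal $\cD$-simplicial space and $\VEmb(B)\to X$ factors through $\{x\}$, we have $\VEmb(B)\times_X R\cong\VEmb(B)\times\Fib_x R$, with structure map to $R$ factoring through the fiber $\Fib_x R=D[0]\times_X R$; hence $\Map_{/R}(\VEmb(B)\times_X R,Z)\cong\Map_{/\Fib_x R}(\VEmb(B)\times\Fib_x R,Z_x)$ where $Z_x:=Z\times_R\Fib_x R$ is fibrant in the $S$-localized level-wise complete Segal space model structure over $\Fib_x R$ (being a pullback of the fibrant object $Z\to R$). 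The induced map $\VEmb(A)\times\Fib_x R\to\VEmb(B)\times\Fib_x R$ over $\Fib_x R$ is the tensor of $\VEmb(A)\to\VEmb(B)$ with $\id_{\Fib_x R}$; since $\VEmb(A)\to\VEmb(B)$ is one of the localizing maps of the $S$-localized level-wise complete Segal space model structure, hence a trivial cofibration there, and since that model structure is Cartesian (because $S$ is, by \cref{prop:levelcss s}) so that its over-category over $\Fib_x R$ is enriched over it, the tensored map is a trivial cofibration over $\Fib_x R$. As $Z_x$ is fibrant, $\Map_{/\Fib_x R}(-,Z_x)$ carries it to a trivial Kan fibration, so the displayed map is a Kan equivalence, and we are done.

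I expect the main obstacle to be the reduction in the second paragraph. \cref{cor:localizing equiv} only reduces the problem to locality of $p_*Z$, but locality is a priori tested against every map $\VEmb(B)\to X$, and for general such maps $\VEmb(B)\times_X R$ is not an external product, so the Cartesian hypothesis on $S$ cannot be brought to bear. Restricting to maps factoring through a point is precisely where the weakly-constant-objects hypothesis on $X$ and the contractible-codomain hypothesis on $S$ are used, and transporting the argument of \cref{prop:fibrancies} from the $\cD$-covariant to the level-wise complete Segal space setting — in particular verifying that the relevant over-category model structures stay invariant under level-wise complete Segal space equivalences of the base — is the part that requires the most care.
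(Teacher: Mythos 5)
Your proof follows the same overall strategy as the paper's, phrased via the dual form of the localization criterion: you verify that the right adjoint $p_*$ preserves local objects, whereas the paper verifies that the left adjoint $p^*$ sends the localizing maps $\VEmb(A)\to\VEmb(B)\to X$ to $S$-localized level-wise CSS equivalences; these are equivalent formulations of \cref{cor:localizing equiv}. The subsequent reduction to maps factoring through $D[0]$, the external-product computation $\VEmb(B)\times_X R\cong\VEmb(B)\times\Fib_x R$, and the appeal to Cartesianness are all exactly as in the paper. Your final-paragraph concern about the reduction step is well-placed: the paper's own proof simply cites \cref{cor:fibrancies}, but that corollary is about the $\cD$-covariant model structure (whose fibrant objects are $\cD$-left fibrations), whereas here one is working with the level-wise CSS over-category model structure (whose fibrant objects are only level-wise CSS fibrations); moreover the invariance argument underlying \cref{prop:fibrancies}$(5)\Leftrightarrow(1)$ uses \cref{the:dcov css inv}, and the analogue for the level-wise CSS over-category fails in general since that model structure is not right proper (\cref{prop:right proper}, \cref{rem:localizations are right proper}). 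So the transplantation you flag is indeed where the extra hypotheses ($\Val_k(X)$ being $S$-local, weakly constant objects, contractible codomains) must do work and where the paper's exposition is terse. One small point you could sharpen: the localizing maps for $\sP(\cD\times\DD)^{(lev_{CSS})_S}$ coming from \cref{the:sone local level stwo}/\cref{prop:levelcss s} are $D[n]\times A\to D[n]\times B$ for all $n\geq 0$, not only $n=0$; the paper's own reduction to $\VEmb(A)\to\VEmb(B)$ already restricts to $n=0$, so your account inherits this, but it is worth noting that one should also address the $n\geq 1$ tensors (for instance by observing that after factoring through $D[0]$ the same Cartesianness argument applies verbatim, since $D[n]\times(A\to B)$ is again a localizing trivial cofibration).
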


\begin{proof}
	By \cref{the:pullback rfib} we already know it is a Quillen adjunction of un-localized model structures. Hence, by \cref{cor:localizing equiv}, it suffices to prove that for every $f:\VEmb(A) \to \VEmb(B)$ in $S$ over $X$ the image $p^*(\VEmb(A)) \to p^*(\VEmb(B))$ is an $S$-localized level-wise complete Segal space equivalence.
	
	As $X$ has weakly constant objects and $S$ has contractible codomains, by \cref{cor:fibrancies}, it suffices to consider maps of the form $\VEmb(A) \to \VEmb(B) \to D[0] \to X$ and so we have 
	$$p^*(\VEmb(A) \to \VEmb(B)) \cong \VEmb(A) \times (D[0] \times_X R) \to \VEmb(B) \times (D[0] \times_X R)$$
	Hence, we need to prove this map is an $S$-localized level-wise complete Segal space equivalence over $X$. However, this follows directly from the fact that $S$ is Cartesian.
\end{proof}	

We now have a result analogous to \cref{the:nlfib pb nrfib}.

 \begin{theone} \label{the:rfib pb cov s}
		Let $\cD$ be a small category with terminal object $t$.	Let $S$ be a Cartesian set of cofibrations of $\cD$-spaces with contractible codomain. Let $X$ be a $\cD$-simplicial space with weakly constant objects. Then for any $\cD$-right fibration $R \to X$, the adjunction 
	\begin{center}
		\adjun{(\sP(\cD\times\DDelta)_{/X})^{\cD-cov_S}}{(\sP(\cD\times\DDelta)_{/X})^{\cD-cov_S}}{p_!p^*}{p_*p^*}
	\end{center}
	is a $\sP(\cD)$-enriched Quillen adjunction. Here both sides have the $S$-localized $\cD$-covariant model structure.
\end{theone}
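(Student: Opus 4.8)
The plan is to reduce to the unlocalized statement, Theorem~\ref{the:nlfib pb nrfib}, and then localize by means of Corollary~\ref{cor:localizing equiv}, exactly as in the other localization arguments of this subsection (cf. the proofs of \cref{the:diag local ncov s}, \cref{the:dcov invariant css s}, \cref{the:pullback rfib s}). By \cref{the:nlfib pb nrfib} the adjunction $(p_!p^*,p_*p^*)$ is a Quillen adjunction of the (unlocalized) $\cD$-covariant model structures on $\sP(\cD\times\DD)_{/X}$; it is $\sP(\cD)$-enriched because pullback along $p$ and postcomposition with $p$ each preserve the $\sP(\cD)$-tensor (products commute with pullbacks and with base change), so $p_!p^*$ preserves the tensor and \cref{lemma:enriched adjunction via tensors} applies. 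Thus, by \cref{cor:localizing equiv}, it suffices to verify that $p_!p^*$ sends the maps generating the $S$-localized $\cD$-covariant model structure over $X$ to weak equivalences in that same model structure.

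The next step is to replace the generating set by a more convenient one. Since $X$ has weakly constant objects and $S$ has contractible codomains, \cref{cor:fibrancies} tells us that the $S$-localized $\cD$-covariant model structure over $X$ coincides with the localization of the $\cD$-covariant model structure at the maps $\VEmb(A)\to\VEmb(B)\to D[0]\xrightarrow{\{x\}}X$ with $(A\to B)$ in $S$. So I would only need to check that $p_!p^*$ carries each such map to an $S$-localized $\cD$-covariant equivalence over $X$.

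Now comes the key computation. Because $D[0]=\Disc(F[0])$ is the terminal $\cD$-simplicial space, a fiber product over $D[0]$ is a genuine product; hence for a map factoring through $\{x\}\colon D[0]\to X$ we get
$p^*(\VEmb(B)\to X)=\VEmb(B)\times_X R\cong \VEmb(B)\times(D[0]\times_X R)=\VEmb(B)\times\Fib_xR$ over $R$,
and applying $p_!$ yields $p_!p^*(\VEmb(B)\to X)\cong(\VEmb(B)\times\Fib_xR\to X)$ with structure map factoring through $\{x\}$. Consequently $p_!p^*$ takes the generator $\VEmb(A)\to\VEmb(B)\to D[0]\to X$ to $\VEmb(A)\times\Fib_xR\to\VEmb(B)\times\Fib_xR$ over $X$, which is precisely the pushout product of $(A\to B)\in S$ with the cofibration $\emptyset\to(\Fib_xR\to X)$ for the $\sP(\cD)$-tensor on $\sP(\cD\times\DD)_{/X}$ (whose value on a $\cD$-space $C$ and an object $Z\to X$ is $\VEmb(C)\times Z\to X$). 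Since $S$ is Cartesian, \cref{the:cdcov s} gives that the $S$-localized $\cD$-covariant model structure over $X$ is enriched over $\sP(\cD)^{inj_S}$; as $(A\to B)$ lies in $S$ it is a trivial cofibration of $\sP(\cD)^{inj_S}$, so by \cref{def:enriched model cat} its pushout product with the cofibration $\emptyset\to(\Fib_xR\to X)$ is a trivial cofibration in $(\sP(\cD\times\DD)_{/X})^{\cD-cov_S}$, hence an $S$-localized $\cD$-covariant equivalence. This verifies the hypothesis of \cref{cor:localizing equiv} and finishes the proof.

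The argument is largely formal bookkeeping, and I do not expect a serious obstacle. The two places that require care are the reduction to generators supported on maps through $D[0]$ — this is exactly where the hypotheses "$X$ has weakly constant objects" and "$S$ has contractible codomains" are used, via \cref{cor:fibrancies} — and the identification of $p_!p^*$ applied to such a generator as a tensor–pushout product, which is what lets the Cartesian hypothesis on $S$ together with the enrichment of the $S$-localized $\cD$-covariant model structure do the work. If anything, the mildly delicate point is checking that the $\sP(\cD)$-tensor really is preserved closely enough by $p_!p^*$ to invoke \cref{lemma:enriched adjunction via tensors} and to recognize the pushout product; both are direct consequences of the compatibility of products with pullbacks, so no new input is needed.
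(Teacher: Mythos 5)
Your argument is correct and follows the paper's high-level strategy exactly (reduce to the unlocalized case via \cref{the:nlfib pb nrfib}, then apply \cref{cor:localizing equiv} and \cref{cor:fibrancies} to reduce to checking $p_!p^*$ on the generators $\VEmb(A)\to\VEmb(B)\to D[0]\to X$, then compute $p_!p^*$ of such a generator as $\VEmb(A)\times\Fib_xR\to\VEmb(B)\times\Fib_xR$). Where you differ from the paper is in the final step. The paper observes that $\Fib_xR$ is \emph{diagonally constant} because $R\to X$ is a $\cD$-right fibration (this is exactly where the right-fibration hypothesis is reused a second time), and then argues that it therefore suffices to prove a diagonal $S$-localized injective equivalence, which follows since $S$ is Cartesian. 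You instead identify the map as the pushout product of the trivial cofibration $A\to B$ in $\sP(\cD)^{inj_S}$ with the cofibration $\emptyset\to(\Fib_xR\to X)$, and invoke the $\sP(\cD)^{inj_S}$-enrichment of the $S$-localized $\cD$-covariant model structure from \cref{the:cdcov s}, which was established using that $S$ is Cartesian. Both routes channel the Cartesian hypothesis into the conclusion; yours is a bit more uniform in that the right-fibration hypothesis enters only once (through \cref{the:nlfib pb nrfib}), whereas the paper's fiber-level argument is more explicit about where the diagonal constancy of the fiber comes in. You also handle the $\sP(\cD)$-enrichment of the Quillen adjunction itself (via preservation of the tensor and \cref{lemma:enriched adjunction via tensors}) more explicitly than the paper's proof does, which is a good addition since the theorem statement does claim an enriched adjunction. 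One small inaccuracy worth flagging: the structure map of $\VEmb(B)\times\Fib_xR$ to $X$ produced by $p_!$ is the composite through $R$, not literally through $\{x\}\colon D[0]\to X$; it does coincide with the composite through $D[0]$ because $\Fib_xR\to X$ factors through $D[0]$, but this identification should be spelled out rather than asserted. It doesn't affect the argument, since the pushout-product description and the enrichment step are what actually close the proof.
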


\begin{proof}
	Following the arguments of the previous proof, we can similarly use \cref{the:nlfib pb nrfib}, \cref{cor:localizing equiv} and \cref{cor:fibrancies} to reduce it to showing that the map 
	$$p_!p^*(\VEmb(A) \to \VEmb(B)) \cong \VEmb(A) \times (D[0] \times_X R) \to \VEmb(B) \times (D[0] \times_X R)$$
	is an $S$-localized $\cD$-covariant equivalence over $X$. As $R$ is a $\cD$-right fibration, the fiber $D[0] \times_X R$ is diagonally constant (\cref{the:diag local ncov}), hence it suffices to prove that this map is a diagonal $S$-localized injective equivalence. However, this follows directly from the fact that this model structure is Cartesian.
\end{proof}

The previous results are important as they hold over a fixed base. If we are willing to vary it we have the following result in the Cartesian setting.

\begin{propone}
	Let $S$ be a Cartesian set of cofibrations of $\cD$-spaces.
	Let $g: C \to D$ be a cofibration of $\cD$-simplicial spaces and $p: L \to X$ an $S$-localized $\cD$-left fibration. 
	Then $\exp{g}{p}$ is also an $S$-localized $\cD$-left fibration. 
\end{propone}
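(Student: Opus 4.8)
The plan is to verify the two defining conditions of an $S$-localized $\cD$-left fibration (\cref{def:cdleft s}) separately. That $\exp{g}{p}$ is a $\cD$-left fibration is immediate from \cref{lemma:exp cdleft}(2), since $g$ is a monomorphism and $p$, being an $S$-localized $\cD$-left fibration, is in particular a $\cD$-left fibration; moreover $\exp{g}{p}$ is an injective fibration because $\sP(\cD\times\DD)^{inj}$ is Cartesian (\cref{prop:injectivemod}). It therefore remains to show that $\exp{g}{p}\colon L^D\to L^C\times_{X^C}X^D$ is local with respect to $S$.

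I would first unwind this using \cref{not:localized}: it asks that for each $f\colon A\to B$ in $S$ the comparison map
$$\Map(\VEmb(B),L^D)\longrightarrow \Map(\VEmb(A),L^D)\underset{\Map(\VEmb(A),\,L^C\times_{X^C}X^D)}{\times}\Map(\VEmb(B),\,L^C\times_{X^C}X^D)$$
be a weak equivalence. Using the exponential adjunctions together with the adjunction $(-\,\square\,g,\ \exp{g}{-})$ of arrow categories recorded after \cref{def:pullback exponential}, this map is naturally identified with the analogous comparison map of the cofibration $(\VEmb(A)\to\VEmb(B))\square g$ against $p$, namely $\Map(\VEmb(B)\times D,L)\to \Map(U,L)\times_{\Map(U,X)}\Map(\VEmb(B)\times D,X)$ with $U$ the source of $(\VEmb(A)\to\VEmb(B))\square g$. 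As $p$ is an injective fibration and $(\VEmb(A)\to\VEmb(B))\square g$ is a monomorphism, this comparison map is a Kan fibration, so it is a weak equivalence if and only if $p$ has the right lifting property against $\big[(\VEmb(A)\to\VEmb(B))\square g\big]\square(\partial\Delta[l]\to\Delta[l])$ for every $l\geq 0$. By commutativity and associativity of the pushout product the latter map equals $(\VEmb(A)\to\VEmb(B))\square g_l$, where $g_l := g\,\square\,(\partial\Delta[l]\to\Delta[l])$ is again a monomorphism of $\cD$-simplicial spaces, so the entire problem reduces to showing that $p$ has the right lifting property against $(\VEmb(A)\to\VEmb(B))\square g_l$.

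For this I would exhibit $(\VEmb(A)\to\VEmb(B))\square g_l$ as a trivial cofibration in an appropriate $S$-localized $\cD$-covariant model structure and use that $p$ is fibrant there. The ingredients are: every map $f\colon A\to B$ in $S$ is a trivial cofibration in $\sP(\cD)^{inj_S}$ (it is a monomorphism and a localizing map); the $S$-localized $\cD$-covariant model structure is enriched over $\sP(\cD)^{inj_S}$ precisely because $S$ is Cartesian (\cref{the:cdcov s}); and $(\VEmb(A)\to\VEmb(B))\square g_l$ is exactly the enriched pushout product of $f$ with $g_l$. Hence \cref{def:enriched model cat} makes $(\VEmb(A)\to\VEmb(B))\square g_l$ a trivial cofibration over the target of $g_l$ with its canonical projection structure map. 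One then transports this along the relevant change of base: given a lifting problem against $p$ over an arbitrary structure map $\phi$ to $X$, one pulls $p$ back along $\phi$ — the result is still an $S$-localized $\cD$-left fibration by pullback-stability (\cref{lemma:pullback nlfib}, together with the fact that $S$-locality is a pullback-stable condition in \cref{not:localized}) — and uses the invariance and base-change statements of \cref{the:dcov invariant css s} to reduce the lifting problem to one over the target of $g_l$, where the trivial-cofibration property just established supplies the lift.

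The part I expect to be the main obstacle is precisely this base-change bookkeeping: the enriched pushout product $(\VEmb(A)\to\VEmb(B))\square g_l$ is most naturally recognized as a trivial cofibration over the target of $g_l$ equipped with its projection structure map, whereas an arbitrary lifting problem against $p$ presents it over $X$ via a map $\phi\colon \VEmb(B)\times(\text{target of }g_l)\to X$ that need not factor through that projection; reconciling these two bases — cleanest via pulling $p$ back along $\phi$ and checking that the resulting $S$-localized $\cD$-left fibration over $\VEmb(B)\times(\text{target of }g_l)$ still sees $(\VEmb(A)\to\VEmb(B))\square g_l$ as a trivial cofibration — is where the real care is needed. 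Everything else is routine manipulation via \cref{prop:joyal tierney lifting}.
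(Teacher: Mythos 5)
Your first step (that $\exp{g}{p}$ is a $\cD$-left fibration and an injective fibration) is correct and matches the paper. Your reduction — that $S$-locality of $\exp{g}{p}$ amounts to $p$ having the right lifting property against $(\VEmb(A)\to\VEmb(B))\square g_l$ where $g_l = g \square (\partial\Delta[l]\to\Delta[l])$ — is also a sound reformulation. The gap is in the last step, and you correctly identify where it is: the enrichment of the $S$-localized $\cD$-covariant model structure over $\sP(\cD)^{inj_S}$ only recognizes $(\VEmb(A)\to\VEmb(B))\square g_l$ as a trivial cofibration when the target $\VEmb(B)\times D_l$ maps to $X$ \emph{through the projection to $D_l$}, whereas a lifting square against $p$ equips it with an arbitrary structure map $\phi$. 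Your proposed repair does not close this: pulling $p$ back along $\phi$ gives an $S$-localized $\cD$-left fibration over $\VEmb(B)\times D_l$, not over $D_l$, and there is no map $D_l \to \VEmb(B)\times D_l$ to base-change along; the projection $\VEmb(B)\times D_l\to D_l$ is also not a level-wise CSS equivalence in general, so \cref{the:dcov invariant css s} does not apply.

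The paper sidesteps the over-category bookkeeping entirely by using the symmetry of the pushout product one step earlier. Instead of first forming $\exp{g}{p}$ and then asking when $\exp{f}{-}$ applied to it is well-behaved, one rewrites
$\exp{f}{\exp{g}{p}} \cong \exp{f\,\square\,g}{p} \cong \exp{g}{\exp{f}{p}}$.
Now $\exp{f}{p}$ is directly a trivial fibration in the $S$-localized $\cD$-covariant model structure by the $\sP(\cD)^{inj_S}$-enrichment of \cref{the:cdcov s} (this is where Cartesianity of $S$ enters), and since that model structure is a Bousfield localization of the injective one, ``trivial fibration'' there means ``trivial injective fibration'' — an absolute, base-independent condition. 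Then $\exp{g}{\exp{f}{p}}$ is a trivial injective fibration simply because the injective model structure on $\sP(\cD\times\DD)$ is Cartesian (\cref{prop:injectivemod}), with no reference to $X$ or to any structure map. So the lesson is: apply $\exp{f}{-}$ \emph{first} (turning the relative enrichment statement into an absolute one), then $\exp{g}{-}$; in the order you chose, the relative nature of the enrichment really does create the base-mismatch you flagged, and it is the commutation of the two exponentials, not a base-change argument, that resolves it.
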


\begin{proof}
	By \cref{lemma:exp cdleft} $\exp{g}{p}$ is a $\cD$-left fibration and so it suffices to prove that it is local with respect to $S$.
	It suffices to observe that $\exp{f}{\exp{g}{p}}$ 
	is a trivial injective fibration for every $f$ in $S$. By direct computation we have 
	$$ \exp{f}{\exp{g}{p}} \cong \exp{f \square g}{p} \cong \exp{g}{\exp{f}{p}}.$$
	The result now follows from the fact that $\exp{f}{p}$ is a trivial injective fibration (as the $S$-local model structure is Cartesian) 
	and the injective model structure is Cartesian (\cref{prop:injectivemod}).
\end{proof}

\begin{corone} 
 Let $S$ be a Cartesian set of cofibrations of $\cD$-spaces. Let $L \to X$ be an $S$-localized $\cD$-left fibration. Then for any $\cD$-simplicial space $Y$, $L^Y \to X^Y$ is also an $S$-localized $\cD$-left fibration.
\end{corone}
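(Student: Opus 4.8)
The plan is to recognize $L^Y \to X^Y$ as a pullback exponential and then quote the preceding proposition. Write $p \colon L \to X$ for the given $S$-localized $\cD$-left fibration and let $\emptyset \to Y$ be the unique map from the initial $\cD$-simplicial space. Unwinding \cref{def:pullback exponential} for the pair $(\emptyset \to Y, p)$, the pullback exponential $\exp{(\emptyset \to Y)}{p}$ is the map $L^Y \to L^\emptyset \underset{X^\emptyset}{\times} X^Y$. Since $\sP(\cD\times\DD)$ is Cartesian closed, $L^\emptyset$ and $X^\emptyset$ are both terminal, so this codomain is canonically $X^Y$, and under this identification $\exp{(\emptyset \to Y)}{p}$ is exactly the map $L^Y \to X^Y$ induced by $p$.

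Next I would observe that $\emptyset \to Y$ is a monomorphism, hence a cofibration, of $\cD$-simplicial spaces. The preceding proposition asserts that for a Cartesian set $S$ of cofibrations of $\cD$-spaces, a cofibration $g$ of $\cD$-simplicial spaces, and an $S$-localized $\cD$-left fibration $p$, the map $\exp{g}{p}$ is again an $S$-localized $\cD$-left fibration. Applying this with $g = (\emptyset \to Y)$ and invoking the identification above yields the corollary.

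I do not expect a genuine obstacle: the statement is a direct specialization of the previous proposition, and the only point worth spelling out is the identification of $L^Y \to X^Y$ with $\exp{(\emptyset\to Y)}{p}$, which is immediate from Cartesian closedness. If one prefers to avoid that identification, the argument of the previous proposition can be repeated verbatim: $L^Y \to X^Y$ is a $\cD$-left fibration by \cref{lemma:exp cdleft} applied to $\emptyset \to Y$, and it is local with respect to $S$ because $\exp{f}{(L^Y \to X^Y)} \cong \exp{f \square (\emptyset\to Y)}{p} \cong \exp{(\emptyset\to Y)}{\exp{f}{p}}$ is a trivial injective fibration for every $f \in S$, using that $\exp{f}{p}$ is a trivial injective fibration (the $S$-localized injective model structure is Cartesian) together with the fact that the injective model structure is Cartesian (\cref{prop:injectivemod}).
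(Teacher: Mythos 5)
Your proposal is correct and is exactly the intended argument: the corollary is the special case of the preceding proposition obtained by taking $g$ to be the cofibration $\emptyset\to Y$, using Cartesian closedness to identify $\exp{(\emptyset\to Y)}{p}$ with $L^Y\to X^Y$. The paper omits the proof precisely because this specialization is immediate, so you have filled it in as the authors would have.
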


We will end this section with a study of how $S$-localization effects $\sP(\cD)$-enriched (complete) Segal spaces. First of all we have the following result.
 
\begin{lemone} \label{lemma:fiber vs global}
 Let $\cD$ be small category with terminal object $t$. Let $S$ be a geometrically contractible set of cofibrations of $\cD$-spaces. Let $X$ be an $\sP(\cD)$-enriched Segal space (\cref{def:enriched segal}), such that $\Val(X)$ is homotopically constant. Then the following are equivalent:
 \begin{itemize}
 	\item $\Val_k(X)$ is $S$-local for all $k \geq 0$ (meaning it is fibrant in the $S$-localized level-wise Segal space model structure \cref{prop:levelcss s}) 
 	\item $\Val_1(X)$ is $S$-local
 	\item for all objects $x,y$, $\map_X(x,y)$ (\cref{def:map}) is $S$-localized. 	
 \end{itemize}
\end{lemone}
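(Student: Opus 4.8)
The plan is to establish the cycle of implications $(1)\Rightarrow(2)\Rightarrow(3)\Rightarrow(1)$, writing $(1)$ for ``$\Val_k(X)$ is $S$-local for all $k\geq 0$'', $(2)$ for ``$\Val_1(X)$ is $S$-local'', and $(3)$ for ``$\map_X(x,y)$ is $S$-local for all objects $x,y$''. The implication $(1)\Rightarrow(2)$ is trivial. For $(2)\Rightarrow(3)$ I would observe that, by \cref{def:map}, $\map_X(x,y)$ is the pullback of $\cD$-spaces $\Val_1(X)\to\Val(X)\times\Val(X)\leftarrow D[0]$, and that since $X$ is a level-wise Segal space the map $\Val_1(X)\to\Val(X)\times\Val(X)$ is a level-wise Kan fibration (for each $d$ it is the matching map of $\partial F[d,1]\hookrightarrow F[d,1]$ into the Reedy fibrant $\Und_dX$), so this is a homotopy pullback in $\sP(\cD)^{inj}$. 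Now $D[0]$ is terminal, hence $S$-local; $\Val(X)$ is homotopically constant, hence $S$-local by \cref{lemma:geom equiv} (a geometrically contractible $S$ consists of geometric equivalences), so $\Val(X)\times\Val(X)$ is $S$-local; and $\Val_1(X)$ is $S$-local by $(2)$. Since $\Map_{\sP(\cD)}(Z,-)$ preserves homotopy pullbacks for every $Z$, a homotopy pullback of $S$-local $\cD$-spaces along $S$-local $\cD$-spaces is $S$-local, so $\map_X(x,y)$ is $S$-local.

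For $(3)\Rightarrow(1)$ the key device is the level-wise twisted arrow construction $\Tw(X)\to X^{op}\times X$ of \cref{prop:twisted}: a $\cD$-left fibration whose fiber over a pair of objects $(x,y)$ is $\map_X(x,y)$. From $\Tw(X)[d,k]_l=X[d,2k+1]_l$ (\cref{def:twisted}) one reads off $\Val(\Tw(X))=\Val_1(X)$ and $\Val(\Fib_{(x,y)}\Tw(X))=\map_X(x,y)$; moreover $X^{op}\times X$ has weakly constant objects (as $X$ does), its points $D[0]=F[t,0]\to X^{op}\times X$ are exactly pairs of objects of $X$, and $\Val(X^{op}\times X)=\Val(X)\times\Val(X)$ is $S$-local by the argument above. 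Assuming $(3)$, the equivalence of conditions $(1)$ and $(5)$ in \cref{prop:fibrancies} (applied with base $X^{op}\times X$) shows $\Tw(X)\to X^{op}\times X$ is an $S$-localized $\cD$-left fibration, and then \cref{lemma:fib s over s loc}, applied over the $S$-local base $X^{op}\times X$, forces $\Val(\Tw(X))=\Val_1(X)$ to be $S$-local. Finally the Segal condition promotes this to $(1)$: for each $d$ the Segal maps exhibit $\Val_k(X)[d]$ as an iterated homotopy pullback of copies of $\Val_1(X)[d]$ over $\Val(X)[d]$ (the structure maps being Kan fibrations since $\Und_dX$ is Reedy fibrant), so $\Val_k(X)$ is an iterated homotopy pullback of the $S$-local $\cD$-spaces $\Val_1(X)$ and $\Val(X)$, hence $S$-local.

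The main obstacle is exactly the step $(3)\Rightarrow(2)$: reconstructing the total $\cD$-space $\Val_1(X)$ from its homotopy fibers $\map_X(x,y)$ is delicate because $\Val(X)$ is only homotopically, not strictly, constant, so one cannot simply write $\Val_1(X)$ as a coproduct of the $\map_X(x,y)$ indexed by $\pi_0$ of the object space. Routing the argument through $\Tw(X)$ converts this into the fiberwise recognition of $S$-localized $\cD$-left fibrations already available in \cref{prop:fibrancies} and \cref{lemma:fib s over s loc}; what then remains is purely bookkeeping --- checking that $\Tw(X)$ realizes the stated values and fibers level-wise, and that $X^{op}\times X$ meets the hypotheses (weakly constant objects, $S$-local values) of those two results. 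I would also note at the outset that all three conditions implicitly carry injective fibrancy, which is automatic here since $\Val_1(X)$, $\Val_k(X)$ and $\map_X(x,y)$ are pullbacks of injectively fibrant $\cD$-spaces along injective fibrations.
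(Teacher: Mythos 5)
Your proof is correct, but it takes a genuinely different and heavier route than the paper's. Where you route $(3)\Rightarrow(1)$ through the twisted arrow $\cD$-left fibration $\Tw(X)\to X^{op}\times X$, invoking the equivalence of conditions (1) and (5) in \cref{prop:fibrancies} and then \cref{lemma:fib s over s loc} over the $S$-local base $X^{op}\times X$, the paper argues directly with the map $(s,t):\Val_1(X)\to\Val_0(X)\times\Val_0(X)$: since $S$ is geometrically contractible, any map $B\to\Val(X)\times\Val(X)$ with $B$ a codomain in $S$ factors up to equivalence through $D[0]$ (\cref{lemma:geom contr}), so $S$-locality of $\Val_1(X)$ reduces to $S$-locality of its fibers over points, which are precisely the $\map_X(x,y)$. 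Conceptually that factoring step is exactly what drives the $(5)\Leftrightarrow(1)$ equivalence in \cref{prop:fibrancies}, so your argument re-derives the same idea through the general fibration machinery while the paper applies the idea to the single fibration $(s,t)$ directly. What your detour buys is a clean conceptual link between mapping $\cD$-spaces and the twisted arrow construction; what the paper's approach buys is brevity and independence from \cref{prop:twisted} and from the level-wise twisted arrow bookkeeping (e.g., confirming $\Val(\Tw(X))=\Val_1(X)$, that $X^{op}\times X$ has weakly constant objects, and that points of $X^{op}\times X$ are pairs of points of $X$). Your $(2)\Rightarrow(3)$ step --- the mapping $\cD$-space as a homotopy pullback of $S$-local objects, hence $S$-local since $\Map(Z,-)$ sends injective homotopy pullbacks to homotopy pullbacks of spaces --- is sound and matches the paper's reading of the same pullback square, and your reduction of $\Val_k$ to $\Val_1,\Val_0$ via the Segal maps is the same as the paper's opening move. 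No circularity issues: \cref{prop:twisted}, \cref{prop:fibrancies} and \cref{lemma:fib s over s loc} all precede \cref{lemma:fiber vs global}.
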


\begin{proof}
	First of all $X$ is a level-wise Segal space and so we have injective equivalences $\Val_k(X) \simeq \Val_1(X) \times_{\Val_0(X)} ... \times_{\Val_0(X)} \Val_1(X)$, which implies that $\Val_k(X)$ is $S$-local for all $k \geq 0$ if and only if $\Val_1(X), \Val_0(X)$ are $S$-local.
	
	Now, by assumption $\Val_0(X)$ is constant and so, by \cref{lemma:geom equiv}, always $S$-local, which reduces everything to $\Val_1(X)$ being $S$-local. 
 
	We now have the injective fibration $(s,t):\Val_1(X) \to \Val_0(X) \times \Val_0(X)$. Given that $\Val_0(X)$ is $S$-local, $\Val_1(X)$ is $S$-local if and only if for every $A \to B \to \Val(X)$ in $S$, the map 
	\begin{equation}\label{eq:mapping spaces}
	  \Map_{/\Val(X)\times\Val(X)}(B,\Val_1(X)) \to \Map_{/\Val(X)\times\Val(X)}(A,\Val_1(X))
	\end{equation} 
	is an equivalence. By \cref{lemma:geom equiv}, the map $B \to \Val(X)$ factors as $B \to D[0] \to \Val(X)$, hence, the equivalence in \ref{eq:mapping spaces} is equivalent to $\Val_1(X) \to \Val(X) \times \Val(X)$ being fiber-wise $S$-local. However, the fiber is by definition the mapping space (\cref{def:map}) proving that it is equivalent to the mapping space being $S$-local.
\end{proof} 
 
\begin{defone} \label{def:sloc spd enriched css} 
	Let $S$ be a geometrically contractible set of cofibrations of $\cD$-spaces.
	An $S$-localized $\sP(\cD)$-enriched Segal space $X$ is an $\sP(\cD)$-enriched Segal space with $\Val(X)$ homotopically constant that satisfies one of the following equivalent conditions.
	\begin{enumerate}
		\item For all objects $x,y$, $\map_X(x,y)$ is $S$-local.
		\item The $\cD$-space $\Val_1(X)$ is $S$-local.
		\item The $\cD$-spaces $\Val_k(X)$ are $S$-local for $k \geq 0$. 
	\end{enumerate}
\end{defone}

We can specialize this characterization to the complete setting.

\begin{defone}
	A $\cD$-simplicial space $X$ is an $S$-local $\sP(\cD)$-enriched complete Segal space if it is an $\sP(\cD)$-enriched complete Segal space and $\Val_1(X)$ is $S$-local.
\end{defone}

We can also use the condition to finally adjust the model structure defined in \cref{the:spd enriched css}.

\begin{theone} \label{the:spd enriched css s}
 Let $\cD$ be a small category with terminal object. Let $S$ be a geometrically contractible set of cofibrations of $\cD$-spaces. There exists a simplicial left proper combinatorial model structure on $\cD$-simplicial spaces, called the $S$-localized $\sP(\cD)$-enriched complete Segal space model structure and denoted $\sP(\cD)^{\cD-CSS_S}$ with the following specification:
 \begin{itemize}
 	\item Cofibrations are monomorphisms. 
 	\item Fibrant objects are $S$-local $\sP(\cD)$-enriched complete Segal spaces. 
 \end{itemize} 
\end{theone}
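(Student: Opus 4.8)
The plan is to obtain this model structure as a left Bousfield localization, mirroring both the construction of the unlocalized $\sP(\cD)$-enriched complete Segal space model structure in \cref{the:spd enriched css} and the level-wise $S$-localization of \cref{prop:levelcss s}. Concretely, I would start from the injective model structure on $\sP(\cD\times\DD)$ (\cref{prop:injectivemod}), which is left proper, combinatorial and simplicial, and apply \cite[Theorem 4.1.1]{hirschhorn2003modelcategories} to localize with respect to the union of the three families already used in \cref{the:spd enriched css} — the level-wise Segal maps $\Disc(G[n]) \times F[d] \to F[d,n]$, the completeness maps $D[0] \to \Disc(E[1])$, and the weak-constancy maps $F[t,0] \to F[d,0]$ — together with the tensored family that defines the $S$-localized level-wise model structure of \cref{the:sone local level stwo}, namely
$$T = \{\, D[n] \times \VEmb(A) \to D[n] \times \VEmb(B) \ :\ (A \to B) \in S,\ n \geq 0 \,\}.$$
All of these are monomorphisms, so the localization exists; by construction it is simplicial, left proper, combinatorial, and has the monomorphisms as its cofibrations. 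Equivalently one may localize the $\cD$-CSS model structure of \cref{the:spd enriched css} with respect to $T$ alone, using that a Bousfield localization of a Bousfield localization is again one.

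It then remains to identify the fibrant objects, which by construction are the injectively fibrant $X$ that are local with respect to every map above. Locality with respect to the first three families means precisely that $X$ is an $\sP(\cD)$-enriched complete Segal space (\cref{the:spd enriched css}); in particular $\Val(X) = \Val_0(X)$ is homotopically constant. Locality with respect to $T$ is analyzed exactly as in \cref{the:sone local level stwo}: using the identification $\Val(X^{D[n]}) \cong \Val_n(X)$ together with the enriched adjunction $(\VEmb,\Val)$ of \cref{def:val}, being local with respect to $D[n]\times\VEmb(A) \to D[n]\times\VEmb(B)$ for all $n$ is equivalent to $\Val_n(X)$ being $S$-local for all $n \geq 0$. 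Now $\Val_0(X)$, being homotopically constant, is automatically $S$-local since $S$ consists of geometric equivalences (\cref{lemma:geom equiv}); and for the remaining levels \cref{lemma:fiber vs global} — which applies because $S$ is geometrically contractible and $X$ is an $\sP(\cD)$-enriched Segal space with homotopically constant value — shows that ``$\Val_n(X)$ is $S$-local for all $n$'' is equivalent to the single condition ``$\Val_1(X)$ is $S$-local''. Hence the fibrant objects are exactly the $S$-local $\sP(\cD)$-enriched complete Segal spaces, cf.\ \cref{def:sloc spd enriched css}.

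The main obstacle is precisely this identification of the fibrant objects, and in particular the reduction, on a $\cD$-CSS-fibrant object, of locality with respect to the whole tensored family $T$ to the clean single condition on $\Val_1(X)$; this is where the hypothesis that $S$ be geometrically contractible is essential (via \cref{lemma:geom equiv} and \cref{lemma:fiber vs global}), and it is the reason the localizing maps must be tensored up by all $D[n]$ rather than taken in the bare form $\VEmb(A)\to\VEmb(B)$, which would only constrain $\Val_0$. The remaining points — that the maps of $T$ are monomorphisms (so that the localization exists and cofibrations stay monomorphisms), the computation $\Val(X^{D[n]}) \cong \Val_n(X)$, and the verification that localizing the injective structure versus localizing the $\cD$-CSS structure of \cref{the:spd enriched css} yields the same model category — are routine consequences of \cite[Theorem 4.1.1]{hirschhorn2003modelcategories} and the explicit description of representables and the $(\VEmb,\Val)$ adjunction.
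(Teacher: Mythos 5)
Your proof is correct, and it fleshes out in detail what the paper compresses into a single sentence. The overall strategy is the same — Bousfield-localize the $\sP(\cD)$-enriched complete Segal space model structure of \cref{the:spd enriched css} (equivalently, localize the injective model structure by all four families at once) — but the paper writes its localizing set as ``$F[d]\times S$'' with no further explanation, whereas you tensor $S$ by the family $D[n]$ for all $n\geq 0$. Your choice aligns with the convention used for $\cD$-space localizations in \cref{the:sone local level stwo} and \cref{prop:levelcss s}, and it is what makes your identification of the fibrant objects go through: locality with respect to $D[n]\times\VEmb(A)\to D[n]\times\VEmb(B)$ translates, via $\Val(X^{D[n]})\cong\Val_n(X)$ and the $(\VEmb,\Val)$ adjunction, to $S$-locality of $\Val_n(X)$ for every $n$, which \cref{lemma:fiber vs global} (using geometric contractibility of $S$, and the fact that any $\sP(\cD)$-enriched complete Segal space satisfies its hypotheses) collapses to $S$-locality of $\Val_1(X)$ alone, i.e.\ \cref{def:sloc spd enriched css}. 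Read literally — with $F[d]$ denoting the constant $\cD$-simplicial space $F[d,0]$ — the paper's ``$F[d]\times S$'' would only constrain $\Val_0(X)$, which is already $S$-local for any $\sP(\cD)$-enriched complete Segal space by \cref{lemma:geom equiv}; so the paper's formula should be understood as shorthand for a set equivalent to your $T$, and your write-up supplies the needed identification of fibrant objects that the paper elides.
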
 

\begin{proof}
	We obtain this model structure by localizing the $\sP(\cD)$-enriched complete Segal space model structure (\cref{the:spd enriched css}) with respect to to $F[d] \times S$.
\end{proof}

 Let us use $S$-localized $\sP(\cD)$-enriched Segal spaces to give an analysis of fibrant replacements. If $X$ is level-wise Segal then the $\cD$-covariant fibrant replacement of $x:D[0] \to X$ in the $\cD$-covariant model structure is given by $X_{x/}= X^{D[1]} \times_X D[0]$ (\cref{the:levelwise Yoneda}). In general this $\cD$-left fibration will not be $S$-localized. However, in certain situations they do coincide. 
 
\begin{propone} \label{prop:yoneda s}
	Let $\cD$ be a small category with terminal object $t$.	Let $S$ be a geometrically contractible set of cofibrations of $\cD$-spaces. Let $X$ be an $S$-localized $\sP(\cD)$-enriched Segal space. Then the $\cD$-covariant fibrant replacement of an object $x$ in $X$ is also the $S$-localized $\cD$-covariant fibrant replacement.
\end{propone}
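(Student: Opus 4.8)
The plan is to reduce the statement, via the Bousfield-localization structure and the fiberwise criterion \cref{prop:fibrancies}, to the observation that the fibers of $X_{x/}\to X$ are precisely the mapping $\cD$-spaces of $X$, which are $S$-local by hypothesis.

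First I would recall that an $S$-localized $\sP(\cD)$-enriched Segal space is in particular a level-wise Segal space with weakly constant objects and with $\Val(X)$ homotopically constant, so by the discussion preceding the statement the $\cD$-covariant fibrant replacement of $\{x\}\colon D[0]\to X$ is the factorization $D[0]\hookrightarrow X_{x/}=X^{D[1]}\times_X D[0]\to X$, in which the first map is a $\cD$-covariant equivalence over $X$ (\cref{the:levelwise Yoneda}) and the projection $X_{x/}\to X$ is a $\cD$-left fibration (\cref{the:undercat left}). Because the $S$-localized $\cD$-covariant model structure is a left Bousfield localization of the $\cD$-covariant one (\cref{the:cdcov s}), the monomorphism $D[0]\to X_{x/}$ is still a trivial cofibration after localizing. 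Hence it only remains to prove that $X_{x/}\to X$ is fibrant in the $S$-localized $\cD$-covariant model structure, i.e.\ an $S$-localized $\cD$-left fibration.

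For this I would apply \cref{prop:fibrancies}: since $S$ is geometrically contractible it has contractible codomain (\cref{def:contractible realization}) and $X$ has weakly constant objects, so the equivalence $(1)\Leftrightarrow(5)$ there reduces $S$-localization of the $\cD$-left fibration $X_{x/}\to X$ to the condition that $\Val(\Fib_y X_{x/})$ be $S$-local for every point $\{y\}\colon D[0]\to X$. The one genuine computation is the identification of this fiber: using that $\Val$ preserves limits and that $F[d,0]\times D[1]\cong F[d,1]$ (\cref{rem:D s F}), one gets $\Val(X^{D[1]})\cong\Val_1(X)$, and hence $\Val(\Fib_y X_{x/})\cong\Val_1(X)\times_{\Val(X)\times\Val(X)}D[0]\cong\map_X(x,y)$, matching \cref{def:map}. (Injective fibrancy of this $\cD$-space is automatic since $X$ is injectively fibrant.)

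Finally, since $X$ is an $S$-localized $\sP(\cD)$-enriched Segal space, condition $(1)$ of \cref{def:sloc spd enriched css} says exactly that $\map_X(x,y)$ is $S$-local for all objects $x,y$; so every relevant fiber is $S$-local and $X_{x/}\to X$ is an $S$-localized $\cD$-left fibration, which finishes the argument. I do not expect a serious obstacle here — the only care needed is the bookkeeping in the fiber identification $\Val(\Fib_y X_{x/})\cong\map_X(x,y)$; everything else is a direct appeal to results already established in the excerpt.
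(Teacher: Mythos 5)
Your proof is correct and follows the same route as the paper's: reduce to showing $X_{x/}\to X$ is an $S$-localized $\cD$-left fibration, then use \cref{prop:fibrancies} to check this fiberwise, identifying the fibers with the mapping $\cD$-spaces $\map_X(x,y)$, which are $S$-local by \cref{def:sloc spd enriched css}. You supply more detail than the paper, which jumps straight from \cref{def:under segal} to the fiber identification; your explicit computation $\Val(X^{D[1]})\cong\Val_1(X)$ via $F[d,0]\times D[1]\cong F[d,1]$, and your observation that Bousfield localization preserves the trivial cofibration $D[0]\hookrightarrow X_{x/}$, are exactly the steps the paper leaves implicit.
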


\begin{proof}
	By \cref{def:under segal} the fiber of $X_{x/} \to X$ over a point $y$ is given by $\map_X(x,y)$ which by assumption is $S$-local hence $X_{x/}\to X$ is an $S$-localized $\cD$-left fibration by \cref{prop:fibrancies}. 
\end{proof}

Using the same argument, we get the analogous result for the twisted arrow construction, generalizing \cref{prop:twisted}.

\begin{corone} \label{cor:twisted loc}
		Let $\cD$ be a small category with terminal object $t$.	Let $S$ be a geometrically contractible set of cofibrations of $\cD$-spaces. Let $W$ be an $S$-localized $\sP(\cD)$-enriched Segal space. Then the twisted arrow construction $\Tw(W) \to W^{op} \times W$ is an $S$-localized $\cD$-left fibration.  
\end{corone}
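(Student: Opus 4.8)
The plan is to identify the fibers of the twisted arrow construction with mapping $\cD$-spaces and then invoke the fiber-wise criterion for $S$-localized $\cD$-left fibrations, exactly as in the proof of \cref{prop:yoneda s}.

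First I would assemble the hypotheses. Since $W$ is an $S$-localized $\sP(\cD)$-enriched Segal space it is in particular an $\sP(\cD)$-enriched Segal space (\cref{def:sloc spd enriched css}), hence a level-wise Segal space with weakly constant objects (\cref{def:enriched segal}). By \cref{ex:twisted}, $W^{op}\times W$ is then also a level-wise Segal space with weakly constant objects, where $W^{op}$ is the opposite $\cD$-simplicial space of \ref{eq:op}. By \cref{prop:twisted} the level-wise twisted arrow construction $\Tw(W) \to W^{op}\times W$ is a $\cD$-left fibration --- in particular an injective fibration --- whose fiber over a pair of objects $x, y$ is the mapping $\cD$-space $\map_W(x,y)$ of \cref{def:map}; equivalently, for a point $\{z\}\colon D[0]\to W^{op}\times W$ the $\cD$-space $\Val(\Fib_z\Tw(W))$ is injectively equivalent to $\map_W(x,y)$, where $z$ determines the pair $(x,y)$ (a map $D[0]\cong F[t,0]\to W^{op}\times W$ is a pair consisting of an object of $W^{op}$ and an object of $W$, and $(-)^{op}$ does not change the $0$-simplices).

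Now $S$ is geometrically contractible, so it has contractible codomains (\cref{def:contractible realization}), and $W^{op}\times W$ has weakly constant objects, so \cref{prop:fibrancies} applies to $p\colon \Tw(W)\to W^{op}\times W$. It therefore suffices to verify condition $(5)$ of \cref{prop:fibrancies}: that $p$ is a $\cD$-left fibration (done above) and that $\Val(\Fib_z\Tw(W))$ is fibrant in the $S$-localized injective model structure for every $\{z\}\colon D[0]\to W^{op}\times W$. Since $\Val(\Fib_z\Tw(W))\simeq \map_W(x,y)$ and $\map_W(x,y)$ is $S$-local by the defining property of an $S$-localized $\sP(\cD)$-enriched Segal space (\cref{def:sloc spd enriched css}~$(1)$), this condition holds, and \cref{prop:fibrancies} gives that $\Tw(W)\to W^{op}\times W$ is an $S$-localized $\cD$-left fibration.

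The argument is short because all the work is already packaged in \cref{prop:twisted} and \cref{prop:fibrancies}; the only point that needs care --- the analogue of the subtlety noted in \cref{prop:yoneda s} --- is the identification of the fibers of $\Tw(W)$ with the mapping $\cD$-spaces $\map_W(x,y)$, which is precisely the content of \cref{prop:twisted}. Once that identification is in hand there is no real obstacle.
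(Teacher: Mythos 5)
Your proof is correct and follows essentially the same route the paper intends, which is indicated only by the remark immediately preceding the corollary (``Using the same argument,\ldots''); you have supplied the details that the paper leaves implicit, namely that \cref{prop:twisted} identifies the fibers of $\Tw(W)\to W^{op}\times W$ with the mapping $\cD$-spaces $\map_W(x,y)$, and that \cref{prop:fibrancies}(5) then applies since $S$ has contractible codomains and $W^{op}\times W$ has weakly constant objects. Nothing is missing.
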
 

Finally we can also use this result to adjust the source and target fibrations.

\begin{corone} \label{cor:target loc} 
 Let $\cD$ be a small category with terminal object $t$. Let $S$ be a geometrically contractible set of cofibrations of $\cD$-spaces. Let $W$ be an $S$-localized $\sP(\cD)$-enriched Segal space. Then the source and target fibrations $\Tw(W) \to W$, $\Tw(W)^{op} \to W$ are $\cD\times\DD$-left (right) fibrations with fibers $S$-localized Segal spaces, which are complete if and only if $W$ is complete. 
\end{corone}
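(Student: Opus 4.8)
The plan is to reduce the statement to the unlocalized source and target constructions of \cref{ex:source fibration} and \cref{ex:target fibration} and then to upgrade the fibers to the $S$-local setting using the $S$-local Yoneda statement \cref{prop:yoneda s} together with the fiberwise locality criterion \cref{lemma:fib s over s loc}.

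First I would note that an $S$-localized $\sP(\cD)$-enriched Segal space $W$ is in particular an $\sP(\cD)$-enriched Segal space, hence a level-wise Segal space with weakly constant objects and with $\Val(W)$ homotopically constant, and that by condition $(3)$ of \cref{def:sloc spd enriched css} each $\cD$-space $\Val_k(W)$ is $S$-local. Hence \cref{ex:target fibration} and \cref{ex:source fibration} apply to $W$: the target fibration $\Tw(W)\to W$ is a $\cD\times\DD$-left fibration with fiber over an object $x$ the level-wise under-Segal space $W_{x/}$, and the source fibration $\Tw(W)^{op}\to W$ is a $\cD\times\DD$-right fibration with fiber over $x$ the level-wise over-Segal space $W_{/x}$; both fibers are level-wise Segal spaces by the lemma following \cref{def:under segal} and its contravariant analogue. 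This settles the claimed (co)fibration structure. For the $S$-localization of the fibers, recall that by \cref{the:undercat left} and \cref{the:levelwise Yoneda} the projection $W_{x/}\to W$ is the $\cD$-covariant fibrant replacement of $\{x\}\colon D[0]\to W$, and that by \cref{prop:yoneda s} (which goes through \cref{prop:fibrancies}) it is in fact the $S$-localized $\cD$-covariant fibrant replacement, i.e. an $S$-localized $\cD$-left fibration. Since every $\Val_k(W)$ is $S$-local, \cref{lemma:fib s over s loc} applied to $W_{x/}\to W$ shows each $\Val_k(W_{x/})$ is $S$-local, so $W_{x/}$ is an $S$-localized level-wise Segal space; the dual argument, using right fibrations, the contravariant version of \cref{prop:yoneda s}, and \cref{rem:nrfib}, treats $W_{/x}$. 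Alternatively one can feed \cref{cor:twisted loc}, which already asserts that $\Tw(W)\to W^{op}\times W$ is an $S$-localized $\cD$-left fibration, into the factorization $\Tw(W)\to W^{op}\times W\to W$ used in \cref{ex:target fibration}, together with \cref{lemma:cdleft comp}. The point requiring care here is to match the $\cD\times\DD$-simplicial fibers with the $\cD$-simplicial (co)slices $W_{x/}$, $W_{/x}$, using the direct computation $\Und_d(W_{x/})=(\Und_dW)_{x/}$ from the proof of \cref{the:undercat left}, and to check that ``$S$-local'' translates correctly between the two pictures.

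Finally, for the completeness clause I would reduce, via $\Und_d(W_{x/})=(\Und_dW)_{x/}$ and its dual, to the purely level-wise assertion that the (co)slice of a Segal space is a complete Segal space if and only if the Segal space is. The ``if'' direction is immediate from \cite[Theorem 5.11]{rasekh2017left} (left and right fibrations are CSS fibrations): the total space of a left fibration over a complete Segal space is again a complete Segal space, and likewise on the right. The ``only if'' direction — that $W_{x/}$ being complete for every object $x$ forces $W$ complete — does not follow formally: taking $x=y$ together with the object $\mathrm{id}_y$ of $W_{y/}$ one identifies the space of homotopy equivalences out of $y$ with a fiber of the completeness map for $W_{y/}$, so completeness of all the $W_{x/}$ amounts to $(\Und_tW)_0\xrightarrow{\ \simeq\ }(\Und_tW)_{\hoequiv}$ (and similarly at every level $d$). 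Carrying this through cleanly, invoking the recognition of homotopy equivalences in Segal spaces from \cite[Sections 5.5, 6]{rezk2001css}, is the step I expect to be the main obstacle; the remainder is the bookkeeping between the various simplicial directions indicated above.
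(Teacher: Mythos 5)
The paper states \cref{cor:target loc} without any proof, so you are effectively being judged on your own correctness rather than against the author's argument. Your handling of the fibration structure and the $S$-locality of the fibers is sound and is surely the intended route: \cref{ex:target fibration}/\cref{ex:source fibration} give the $\cD\times\DD$-left/right fibration structure with fibers $W_{x/}$, $W_{/x}$, and \cref{prop:yoneda s} plus \cref{lemma:fib s over s loc} (using condition $(3)$ of \cref{def:sloc spd enriched css}) upgrade the fibers to $S$-local level-wise Segal spaces. Your alternative route via \cref{cor:twisted loc} and \cref{lemma:cdleft comp} is weaker than you suggest: \cref{lemma:cdleft comp} composes $\cD$-left fibrations in the \emph{same} simplicial direction, while here $\Tw(W)\to W^{op}\times W$ is a $\cD$-left fibration in the original $\DD$-direction and $W^{op}\times W\to W$ is a $\cD\times\DD$-left fibration in the newly adjoined one, so the $S$-locality of the $\cD$-fibration does not transfer through that lemma alone; the Yoneda route is the one that actually works.

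The genuine gap is exactly the one you flag: the ``only if'' half of the completeness clause is left as a heuristic. The statement is in fact true, and your idea (identify the space of homotopy equivalences out of $y$ with a fiber of the completeness map for $W_{y/}$) is right, but it needs to be turned into an argument. At the level of the underlying Segal space $V:=\Und_tW$, the Segal condition gives $(V_{y/})_1\simeq V_1(\text{source}=y)\times_{V_0}V_1$, hence $(V_{y/})_{\hoequiv}\simeq V_1(\text{source}=y)\times_{V_0}V_{\hoequiv}$, and the completeness map $(V_{y/})_0\to(V_{y/})_{\hoequiv}$ is the tautological section of the Kan fibration obtained by pulling back $s\colon V_{\hoequiv}\to V_0$ along the target map $V_1(\text{source}=y)\to V_0$. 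A section of a Kan fibration is a weak equivalence iff the fibration is trivial, so $V_{y/}$ complete forces $s^{-1}(z)\simeq\ast$ for every $z$ in the image of that target map; taking $y=z$ and using $\mathrm{id}_z$ as a witness yields $s^{-1}(z)\simeq\ast$ for every $z\in V_0$, i.e.\ $V$ complete. Until something of this flavor (plus the level-wise bookkeeping you mention) is written out, the ``only if'' direction is a hole in your proof, as you yourself suspected.
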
 

\subsection{$S$-Localized Grothendieck Construction for \texorpdfstring{$\cD$}{D}-Simplicial Spaces}\label{subsec:localized groth}
We move on to study the Grothendieck construction for $S$-localized $\cD$-left fibrations. Concretely, we want to prove the $S$-localized version of \cref{the:simp grothendieck}. As a first step we need to adjust \cref{prop:projinj model} and construct appropriate model structures on functor categories. 

\begin{propone} \label{prop:projinj model s}
	Let $\C$ be a small $\sP(\cD)$-enriched category and $S$ a set of cofibrations of $\sP(\cD)$-spaces. Then the category $\Fun(\C,\sP(\cD))$ has a combinatorial simplicial left proper model structure, the {\it $S$-localized projective model structure}, given as follows.
	\begin{itemize}
		\item A natural transformation $\alpha: F \to G$ is a fibration if for all $c$ the map of $\cD$-spaces $F(c) \to G(c)$ is a fibration in the $S$-localized injective model structure. 
		\item A natural transformation $\alpha: F \to G$ is a weak equivalence if for all $c$, the maps of $\cD$-spaces $F(c) \to G(c)$ is a weak equivalence in the $S$-localized injective model structure.
		\item A natural transformation $\alpha: F \to G$ between fibrant objects is a fibration (weak equivalence) if and only if it is a projective fibration (equivalence). 
	\end{itemize}
	Moreover, if $S$ is Cartesian then the model structure are enriched over the $S$-localized injective model structure on $\sP(\cD)^{inj}$. 
\end{propone}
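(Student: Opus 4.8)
The plan is to obtain the $S$-localized projective model structure as a left Bousfield localization of the unlocalized projective model structure $\uFun(\C,\sP(\cD))^{proj}$ of \cref{prop:projinj model}, in exact analogy with how the $S$-localized injective model structure (\cref{prop:injectivemod s}) and the $S$-localized $\cD$-covariant model structure (\cref{the:cdcov s}) are obtained from their unlocalized counterparts. First I would note that $\uFun(\C,\sP(\cD))^{proj}$ is combinatorial and left proper (\cref{prop:projinj model}), so by \cite[Theorem 4.1.1]{hirschhorn2003modelcategories} the left Bousfield localization with respect to the set
$$T = \{\, \uHom(c,-)\times\{A\} \xrightarrow{\ \id\times\{f\}\ } \uHom(c,-)\times\{B\} \ :\ (f\colon A\to B)\in S,\ c\in\Obj_\C \,\}$$
exists and is again combinatorial, left proper and simplicial. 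Since the cofibrations of $\sP(\cD)^{inj_S}$ are again the monomorphisms (\cref{prop:injectivemod s}), this localization has the same generating cofibrations $\uHom(c,-)\times\{f\}$ (with $f$ a monomorphism) as $\uFun(\C,\sP(\cD))^{proj}$, so its cofibrations are unchanged by the localization.

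Next I would identify the fibrant objects. A functor $F$ is fibrant in the localization iff it is projectively fibrant and $T$-local. For $(f\colon A\to B)\in S$ and an object $c$, the enriched Yoneda lemma (\cref{lemma:enriched yoneda}, used here as in the proof of \cref{the:simp grothendieck}) together with the tensor–cotensor adjunction gives a natural isomorphism $\uNat(\uHom(c,-)\times\{A\},F)\cong F(c)^A$ of $\cD$-spaces; passing to underlying simplicial sets yields $\Map_{\uFun(\C,\sP(\cD))}(\uHom(c,-)\times\{A\},F)\simeq \Map_{\sP(\cD)}(A,F(c))$. Hence $F$ is $T$-local iff $F(c)$ is $S$-local for every $c$, so the fibrant objects are exactly the projectively fibrant functors taking $S$-local values. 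Combined with the fact that the cofibrations are unchanged, this pins the localization down: it is the unique model structure on $\Fun(\C,\sP(\cD))$ with these cofibrations and these fibrant objects, i.e.\ the projective model structure with values in $\sP(\cD)^{inj_S}$, and from this one reads off the stated level-wise descriptions of the fibrations and of the weak equivalences (using that a map between $S$-local $\cD$-spaces is an $S$-localized injective fibration, resp.\ equivalence, iff it is an injective fibration, resp.\ equivalence).

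For the last clause I would assume $S$ Cartesian, so that $\sP(\cD)^{inj_S}$ is a Cartesian model structure (\cref{def:cartesian s}), and verify the fourth condition of \cref{def:enriched model cat} for the self-tensor, following the template of the proof of \cref{the:cdcov s}. Pushout-products of monomorphisms are monomorphisms, so only the left-Quillen clauses remain; since the localization at $T$ is a Bousfield localization of the $\sP(\cD)^{inj}$-enriched model structure $\uFun(\C,\sP(\cD))^{proj}$, \cref{cor:localizing equiv} reduces these to showing that, for each $\cD$-space $A$, each functor $F$ and each $(f\colon A'\to B')\in S$, the maps $A\times(\uHom(c,-)\times\{A'\}\to\uHom(c,-)\times\{B'\})$ and $f\times F$ are weak equivalences in the $S$-localized projective model structure; by the level-wise description of the weak equivalences these reduce in turn to the statement that $Z\times A'\to Z\times B'$ is an $S$-local equivalence of $\cD$-spaces for every $\cD$-space $Z$, which holds because $f$ is an $S$-localized trivial cofibration and $\sP(\cD)^{inj_S}$ is Cartesian.

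I expect the main obstacle to be the second step: establishing that the weak equivalences (and fibrations) of the Bousfield localization are precisely the level-wise $S$-localized injective equivalences (fibrations), rather than merely the $T$-local equivalences. The point is that for a non-Cartesian $S$ one cannot simply invoke \cite[Theorem 4.4]{moser2019enrichedproj} with $\mathcal{V}=\sP(\cD)^{inj}$ and $\mathcal{A}=\sP(\cD)^{inj_S}$, since $\sP(\cD)^{inj_S}$ need not be a $\sP(\cD)^{inj}$-model category; so the identification has to be argued through the characterization of a left Bousfield localization by its cofibrations together with its class of fibrant objects, as in the second paragraph above.
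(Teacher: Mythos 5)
Your proposal uses a genuinely different route from the paper: you try to obtain the model structure as a left Bousfield localization of the unlocalized enriched projective model structure of \cref{prop:projinj model} at the set $T$, whereas the paper invokes Moser's Theorem 6.5 \cite{moser2019enrichedproj} directly (when $S$ is Cartesian) and then modifies the proof of that theorem — swapping the Cartesian path object for the simplicial one $X\times\partial\Delta[1]\to X\times\Delta[1]\to X$ to satisfy the hypotheses of \cite[Theorem 2.2.1]{hkrs2017induced} — when $S$ is not Cartesian. Your first two steps are correct: the localization exists and is combinatorial, simplicial, left proper, its cofibrations agree with the unlocalized ones, and the enriched Yoneda computation correctly identifies the $T$-local projectively fibrant objects as the functors landing level-wise in $S$-local injectively fibrant $\cD$-spaces.

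The gap is precisely the one you flag, and your proposed resolution does not actually close it. You argue that because the localization is pinned down by its cofibrations and fibrant objects, it must coincide with ``the projective model structure with values in $\sP(\cD)^{inj_S}$'' and hence inherits the level-wise descriptions of fibrations and weak equivalences. But this uniqueness argument only yields the level-wise descriptions if the model structure with level-wise weak equivalences and level-wise $S$-localized fibrations is already known to exist on $\Fun(\C,\sP(\cD))$ — and establishing that existence, for the \emph{enriched} functor category, is exactly the nontrivial content of the proposition and exactly what Moser's theorem (and the paper's modification of it) supplies. Absent that input, the Bousfield localization only gives you $T$-local equivalences and the corresponding fibrations, with no a priori level-wise description. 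The issue is not merely notational: the maps in $T$ evaluate at $c'$ to $\uHom(c,c')\times A\to\uHom(c,c')\times B$, and when $S$ is not Cartesian these need not be $S$-localized injective equivalences of $\cD$-spaces (indeed, the non-Cartesian localizations of \cref{subsec:infn cat}, cf.\ \cref{ex:noncartesian mapping spaces not invariant}, exhibit exactly this kind of failure of $Z\times(-)$ to preserve local equivalences). So the identification of the $T$-local equivalences with the level-wise ones is something that must be proved, not read off, and the paper's detour through the explicit construction of the enriched projective model structure on the localized base — rather than through localization of the unlocalized projective model structure — is what makes that verification go through.
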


\begin{proof}
	Let us first assume that $S$ is Cartesian. Then for every functor $F:\C \to \sP(\cD)$, we can use the fact that the $S$-localized injective model structure on $\sP(\cD)$ has functorial fibrant replacements, to construct a functorial fibrant replacement $F \to \hat{F}$. Moreover, the terminal $\cD$-space $D[0]$, the unit of the Cartesian product, is certainly $S$-localized. Hence, by \cite[Theorem 6.5]{moser2019enrichedproj}, the projective model structure on $\Fun(\C,\sP(\cD))$ exists and is Cartesian.
	
	Now, let us assume $S$ is not Cartesian. Then \cite[Theorem 6.5]{moser2019enrichedproj} does not apply directly. However, the Cartesian condition is only used in the proof of \cite[Theorem 6.5]{moser2019enrichedproj} to satisfy the path object condition that is the required in \cite[Theorem 2.2.1]{hkrs2017induced}. Even if the $S$-localized injective model structure on $\cD$-spaces is not Cartesian, it is still simplicial (\cref{prop:injectivemod s}) and we can use the simplicial tensor to construct a path factorization 
	$$X \times \partial \Delta[1] \to X \times \Delta[1] \to X$$
	for every $\cD$-simplicial space $X$ in the $S$-localized injective model structure. With this one change the remaining steps of the proof of \cite[Theorem 6.5]{moser2019enrichedproj} remain the same and so the result follows.
\end{proof}

We constructed the $S$-localized projective model structure as a projective model structure on a $S$-localized injective model structure. However, we can also obtain it as an $S$-localized model structure on a projective model structure.

\begin{lemone}\label{lemma:projective s localization}
	Let $\C$ be a small $\sP(\cD)$-enriched category and $S$ a set of cofibrations of $\sP(\cD)$-spaces. A $\sP(\cD)$-enriched functor $G: \C \to \sP(\cD)$ is fibrant in the $S$-localized projective model structure if and only if for every object $c$, and map $f: A \to B$ in $S$, the induced map 
	$$\Nat(\uMap(c,-) \times \{B\},G) \to \Nat(\uMap(c,-) \times \{A\},G)$$
	is an equivalence of spaces.  
\end{lemone}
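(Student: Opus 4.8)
The plan is to reduce the statement to the description of fibrant objects in the $S$-localized injective model structure on $\cD$-spaces, combined with the enriched Yoneda lemma. By construction (\cref{prop:projinj model s}), the fibrations of the $S$-localized projective model structure on $\Fun(\C,\sP(\cD))$ are exactly the pointwise fibrations for the $S$-localized injective model structure $\sP(\cD)^{inj_S}$, so its fibrant objects are exactly the functors $G$ with $G(c)=\ev_c(G)$ fibrant in $\sP(\cD)^{inj_S}$ for every object $c$; by \cref{prop:injectivemod s} this means $G(c)$ is injectively fibrant and local with respect to $S$. Equivalently — and this is the reformulation I would record, matching the reading already implicit in the discussion preceding the lemma — $G$ is fibrant in the $S$-localized projective model structure if and only if it is fibrant in the (unlocalized) projective model structure and each $G(c)$ is $S$-local.

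Next I would set up the Yoneda identification. Since $\Fun(\C,\sP(\cD))$ is tensored over $\sP(\cD)$ by $(F\times X)(c')=F(c')\times X$ (\cref{ex:psh}) and the enriched Yoneda lemma (\cref{lemma:enriched yoneda}) gives $\uNat(\uHom(c,-),G)\cong G(c)$, one obtains for every $\cD$-space $B$ a natural isomorphism of simplicial mapping spaces
$$\Nat(\uHom(c,-)\times\{B\},G)\;\cong\;\Map_{\sP(\cD)}(B,G(c)),$$
natural in $B$, and hence in a map $f\colon A\to B$ of $S$; under it the map displayed in the statement becomes the restriction $\Map_{\sP(\cD)}(B,G(c))\to\Map_{\sP(\cD)}(A,G(c))$. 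With this in hand the proof concludes formally: by the definition of $S$-locality (\cref{not:localized}), the $\cD$-space $G(c)$ is $S$-local if and only if all these restriction maps are Kan equivalences, i.e.\ if and only if the condition of the statement holds at $c$; quantifying over all objects $c$ and combining with the pointwise characterization of fibrancy from the first step yields the claimed equivalence.

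The only delicate point is the isomorphism $\Nat(\uHom(c,-)\times\{B\},G)\cong\Map_{\sP(\cD)}(B,G(c))$: one must check it is an isomorphism of simplicial sets compatible with the simplicial enrichment of the projective model structure (\cref{prop:projinj model}) and with the self-enrichment of $\sP(\cD)$, and natural in $f\in S$. This is a routine unwinding of the definitions of the tensor on $\Fun(\C,\sP(\cD))$ and of $\uNat$ (as in \cref{ex:psh} and \cref{rem:ss analogue}), but it is the one place where care is needed; every other step is a direct appeal to \cref{prop:projinj model s}, \cref{prop:injectivemod s}, \cref{lemma:enriched yoneda} and \cref{not:localized}.
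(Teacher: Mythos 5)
Your proof is correct and follows essentially the same route as the paper's: reduce fibrancy to pointwise $S$-locality of the $G(c)$, identify $G(c)\cong\uNat(\uMap(c,-),G)$ via the enriched Yoneda lemma, and pass through the tensor-hom adjunction to turn the locality condition into the displayed map of natural-transformation spaces. The paper's proof is slightly terser but the decomposition and the key lemmas invoked are the same.
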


\begin{proof}
	By definition $G$ is fibrant if and only if $G(c)$ is an $S$-local $\cD$-space. By the enriched Yoneda lemma (\cref{lemma:enriched yoneda}) we have an isomorphism of $\cD$-spaces $\uNat(\uMap(c,-),G) \cong G(c)$. Moreover, by the tensor-hom adjunction we have a natural isomorphisms 
	$$\Map_{\sP(\cD)}(B,G(c))\cong \Map_{\sP(\cD)}(B,\uNat(\uMap(c,-),G)) \cong \Nat(\{B\} \times \uMap(c,-),G)$$
	Hence, $G(c)$ being $S$-local for all $c$ is equivalent to $G$ being local with respect to $\{A\} \times \uHom(c,-) \to \{B\} \times \uHom(c,-)$ for all $c$, giving us the desired result.
\end{proof}

We can use this alternative characterization to construct an $S$-localized injective model structure as well.

\begin{lemone} \label{lemma:sloc proj vs inj}
	Let $\C$ be small $\sP(\cD)$-enriched category and $S$ a set of cofibrations of $\sP(\cD)$-spaces. Then the $\sP(\cD)$-Quillen equivalence 
	\begin{center}
		\adjun{\Fun(\C,\sP(\cD))^{proj}}{\Fun(\C,\sP(\cD))^{inj}}{\id}{\id}
	\end{center}
	between the projective and injective model structures remain a Quillen equivalence if we localize both sides with respect to $\uHom(c,-) \times S$.
	Moreover, for a given morphism $\alpha: F \to G$ the following are equivalent.
	\begin{enumerate}
		\item $\alpha$ is an equivalence in the localization of the projective model structure with respect to $\uHom(c,-) \times S$.
		\item $\alpha$ is an equivalence in the localization of the projective model structure with respect to $\uHom(c,-) \times S$.
		\item For every object $c$ in $\C$, the morphism of $\cD$-spaces $F(c) \to G(c)$ is an $S$-localized injective equivalence. 
	\end{enumerate}
\end{lemone}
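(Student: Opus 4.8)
The plan is to deduce the statement from the unlocalized $\sP(\cD)$-Quillen equivalence of \cref{prop:projinj model} together with the localization principle \cref{the:localizing equiv}, and then to compare weak equivalences across the resulting Quillen equivalence. Throughout write $S' := \{\uHom(c,-)\times f : c\in\Obj_\C,\ f\in S\}$ for the localizing set. Each map in $S'$ is a projective cofibration, being the image of a cofibration of $\cD$-spaces under the left Quillen functor $\uHom(c,-)\times\{-\}\colon \sP(\cD)^{inj}\to\Fun(\C,\sP(\cD))^{proj}$ (left adjoint to evaluation $\ev_c$); it is also a level-wise monomorphism, hence an injective cofibration. Consequently the left Bousfield localizations of $\Fun(\C,\sP(\cD))^{proj}$ and $\Fun(\C,\sP(\cD))^{inj}$ at $S'$ exist, both being left proper combinatorial simplicial model categories by \cref{prop:projinj model} and \cite{hirschhorn2003modelcategories}.

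First I would check that the localization of $\Fun(\C,\sP(\cD))^{proj}$ at $S'$ is precisely the $S$-localized projective model structure of \cref{prop:projinj model s}. Passing from $\sP(\cD)^{inj}$ to $\sP(\cD)^{inj_S}$ does not change the trivial fibrations, hence does not change the level-wise trivial fibrations, hence does not change the projective cofibrations; so both model structures have the same cofibrations as $\Fun(\C,\sP(\cD))^{proj}$. Moreover, by \cref{lemma:projective s localization}, both have the same fibrant objects, namely the projectively fibrant functors $G$ with each $G(c)$ an $S$-local $\cD$-space. Since a left Bousfield localization of a fixed model category is determined by its class of fibrant objects, the two model structures coincide. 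In particular, by the description of weak equivalences in \cref{prop:projinj model s}, a natural transformation $\alpha\colon F\to G$ is a weak equivalence in this localization if and only if $\alpha_c\colon F(c)\to G(c)$ is an $S$-local injective equivalence for every $c$; this is exactly $(1)\Leftrightarrow(3)$.

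Next, applying \cref{the:localizing equiv} to the simplicial Quillen equivalence $\id\colon \Fun(\C,\sP(\cD))^{proj}\rightleftarrows \Fun(\C,\sP(\cD))^{inj}\colon\id$ with the localizing set $S'$ — whose image under the identity left adjoint is again $S'$ — shows that the induced adjunction between the localizations at $S'$ is again a Quillen equivalence, which is the first assertion of the lemma. For the remaining equivalence $(2)\Leftrightarrow(3)$ I would transfer the characterization of weak equivalences across this Quillen equivalence by a $2$-out-of-$3$ argument. Given $\alpha\colon F\to G$, choose projective cofibrant replacements $q_F\colon F^{c}\xrightarrow{\ \sim\ }F$, $q_G\colon G^{c}\xrightarrow{\ \sim\ }G$ and a lift $\alpha^{c}\colon F^{c}\to G^{c}$ of $\alpha$; the maps $q_F,q_G$ are level-wise trivial fibrations in the projective model structure, hence level-wise injective equivalences, hence weak equivalences in both the localized projective and the localized injective model structures. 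Therefore $\alpha$ is a weak equivalence in the localized projective model structure if and only if $\alpha^{c}$ is, and likewise on the injective side. But $F^{c},G^{c}$ are cofibrant in the localized projective model structure (its cofibrations being the projective ones), and the identity functor, being the left adjoint of the Quillen equivalence above, both preserves (left Quillen functors preserve weak equivalences between cofibrant objects) and reflects weak equivalences between cofibrant objects; so $\alpha^{c}$ is a weak equivalence in the localized projective model structure if and only if it is one in the localized injective model structure. Combining these two biconditionals with $(1)\Leftrightarrow(3)$ yields all three equivalences.

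The only point requiring real care is the identification in the second paragraph: one must verify that $S$-localizing $\sP(\cD)$ leaves the class of projective cofibrations of $\Fun(\C,\sP(\cD))$ unchanged, so that \cref{prop:projinj model s} genuinely produces a left Bousfield localization of $\Fun(\C,\sP(\cD))^{proj}$ and the uniqueness of such localizations may be invoked. Everything else is a formal manipulation of \cref{the:localizing equiv} and the standard comparison of weak equivalences along an identity Quillen equivalence.
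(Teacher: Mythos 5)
Your proof is correct and follows essentially the same route as the paper's: the Quillen equivalence comes from applying \cref{the:localizing equiv} to \cref{prop:projinj model}, the equivalence $(1)\Leftrightarrow(3)$ rests on \cref{lemma:projective s localization} (you just spell out the intermediate identification of the two localized projective model structures, which the paper leaves implicit), and $(1)\Leftrightarrow(2)$ is handled by the same projective cofibrant replacement plus Ken Brown's lemma and reflection of equivalences for the localized Quillen equivalence.
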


\begin{proof}
	The existence of the Quillen equivalence is a direct application of \cref{the:localizing equiv} to the Quillen equivalence given in \cref{prop:projinj model}. We move on to confirm the equivalence of the three conditions.
	
	$(1 \Leftrightarrow 3)$ Follows directly from \cref{lemma:projective s localization}.
	
	$(1 \Leftrightarrow 2)$ If $F$ and $G$ are projectively cofibrant, then this follows from the fact that left Quillen functors of Quillen equivalences reflect equivalences. For a general morphism $\alpha:F \to G$ we have the following diagram 
	\begin{center}
		\begin{tikzcd}
			\hat{F} \arrow[r, "\hat{\alpha}"] \arrow[d, twoheadrightarrow, "\simeq"]& \hat{G} \arrow[d, twoheadrightarrow, "\simeq"] \\
			F \arrow[r, "\alpha"] & G
		\end{tikzcd},
	\end{center}
	which satisfies the following conditions:
	\begin{itemize}
		\item $\hat{F},\hat{G}$ are projectively cofibrant.
		\item The vertical maps are trivial fibrations in the projective model structure, meaning for every object $c$, $\hat{F}(c) \to F(c)$ and $\hat{G}(c) \to G(c)$ are trivial fibrations in the injective model structure on $\cD$-spaces. 
	\end{itemize}  
	Given these conditions, $\alpha_c: F(c) \to G(c)$ is an $S$-localized injective equivalence if and only if $\hat{\alpha}_c$ is an $S$-localized injective equivalence for every object $c$ in $\C$, which combined with our previous analysis of the cofibrant case finishes the proof.
\end{proof}

\begin{propone} \label{prop:inj mod s}   
	Let $\C$ be a small $\sP(\cD)$-enriched category and $S$ a set of cofibrations of $\sP(\cD)$-spaces. Then the category $\Fun(\C,\sP(\cD))$ has a combinatorial simplicial proper $\sP(\cD)^{inj}$-enriched model structure, the {\it $S$-localized injective model structure}, given as follows.
	\begin{itemize}
		\item A natural transformation $\alpha: F \to G$ is a cofibration if for all $c$ the map of $\cD$-spaces $F(c) \to G(c)$ is a cofibration in the $S$-localized injective model structure. 
		\item A natural transformation $\alpha: F \to G$ is a weak equivalence if for all $c$, the maps of $\cD$-spaces $F(c) \to G(c)$ is a weak equivalence in the $S$-localized injective model structure.
		\item An object $F$ is fibrant if it is injectively fibrant and $F(c)$ is $S$-local for every object $c$ in $\C$.
		\item A natural transformation $\alpha: F \to G$ between fibrant objects is a fibration (weak equivalence) if and only if it is a projective fibration (equivalence). 
	\end{itemize}
	Moreover, if $S$ is Cartesian then the model structure are enriched over the $S$-localized injective model structure on $\sP(\cD)^{inj}$. 
\end{propone}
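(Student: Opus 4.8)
The plan is to realise the $S$-localized injective model structure as the left Bousfield localization of the injective model structure $\Fun(\C,\sP(\cD))^{inj}$ of \cref{prop:projinj model} with respect to the set $\uHom(c,-)\times S$ (with $c$ ranging over the objects of $\C$), i.e. exactly the set already used in \cref{lemma:sloc proj vs inj}. Since $\Fun(\C,\sP(\cD))^{inj}$ is simplicial, left proper and combinatorial, this localization exists — as in the proof of \cref{lemma:sloc proj vs inj}, via \cref{the:localizing equiv} — and is again simplicial, left proper and combinatorial; moreover its cofibrations and its simplicial structure are unchanged by the localization, so the cofibrations are the injective cofibrations of \cref{prop:projinj model}, i.e. the level-wise monomorphisms of $\cD$-spaces, which are exactly the level-wise cofibrations of $\sP(\cD)^{inj_S}$. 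This settles the cofibration clause together with the adjectives ``combinatorial simplicial left proper''.

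The weak equivalences and fibrant objects can then be read off from the lemmas already established. By the equivalence of the three conditions in \cref{lemma:sloc proj vs inj}, a natural transformation $\alpha\colon F\to G$ is a weak equivalence of this localization if and only if $\alpha_c\colon F(c)\to G(c)$ is a weak equivalence in $\sP(\cD)^{inj_S}$ for every object $c$, which is the weak equivalence clause. For the fibrant objects, $F$ is fibrant if and only if it is injectively fibrant and local with respect to $\uHom(c,-)\times S$; unwinding this exactly as in the proof of \cref{lemma:projective s localization} — using the pointwise tensor (\cref{def:ufuncd}), the tensor--hom adjunction, and the enriched Yoneda isomorphism $\uNat(\uHom(c,-),F)\cong F(c)$ (\cref{lemma:enriched yoneda}) — shows that this holds if and only if $\Map_{\sP(\cD)}(B,F(c))\to\Map_{\sP(\cD)}(A,F(c))$ is a Kan equivalence for every $(f\colon A\to B)$ in $S$ and every object $c$, i.e. if and only if each $F(c)$ is $S$-local. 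Finally, that a map between fibrant objects is a weak equivalence (resp. fibration) precisely when it is an injective equivalence (resp. injective fibration) is the standard behaviour of left Bousfield localizations \cite[Section~3.3]{hirschhorn2003modelcategories} combined with the description of fibrations between fibrant objects of $\Fun(\C,\sP(\cD))^{inj}$.

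For the enrichment clause assume $S$ is Cartesian (\cref{def:cartesian s}). We already know from \cref{prop:projinj model} that $\Fun(\C,\sP(\cD))^{inj}$ is enriched over $\sP(\cD)^{inj}$, so by the last condition in \cref{def:enriched model cat} it suffices, after passing to the localized structures, to check that for every $\cD$-space $A$ the functor $A\times-$ is left Quillen on the $S$-localized functor category, and that for every $B$ in $\Fun(\C,\sP(\cD))$ the functor $-\times B\colon\sP(\cD)^{inj_S}\to\Fun(\C,\sP(\cD))$ is left Quillen for the $S$-localized injective structure. Both functors are already left Quillen before localization, so by \cref{cor:localizing equiv} — applied as in the proof of \cref{the:cdcov s} — it is enough to verify that they send the relevant localizing maps to $S$-localized injective equivalences; using the pointwise formula for the tensor and the explicit form $\uHom(c,-)\times f$ of the localizing maps, this reduces to checking that $\id_A\times f$ (respectively $\id_{B(c)}\times f$) is a trivial cofibration in $\sP(\cD)^{inj_S}$ for every $f$ in $S$, which is precisely the statement that $\sP(\cD)^{inj_S}$ is Cartesian.

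I expect the only genuinely delicate step to be the enrichment clause, where one has to keep the two a priori different enrichments of $\Fun(\C,\sP(\cD))$ apart and match the localizing sets on source and target correctly; everything else is a transcription of \cref{lemma:projective s localization} and \cref{lemma:sloc proj vs inj} together with the standard theory of left Bousfield localizations of combinatorial left proper simplicial model categories. (Left properness of the localization is automatic for a left Bousfield localization.)
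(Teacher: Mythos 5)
Your proof is correct and uses the same key ingredient as the paper: the Bousfield localization of $\Fun(\C,\sP(\cD))^{inj}$ at the set $\uHom(c,-)\times S$, together with \cref{lemma:sloc proj vs inj} to identify the localized weak equivalences as the level-wise $S$-localized ones. The organization differs slightly from the paper's. The paper's proof constructs two model structures explicitly: first an injective model structure on $\Fun(\C,\sP(\cD)^{inj_S})$ via \cite[Theorem 4.4]{moser2019enrichedproj}, which yields the correct cofibrations and weak equivalences but leaves fibrant objects unidentified, and second the Bousfield localization, which yields the correct fibrant objects; it then invokes \cref{lemma:sloc proj vs inj} to show the two coincide by matching cofibrations and weak equivalences. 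You instead work entirely within the single Bousfield localization, obtaining the cofibration clause for free (Bousfield localization preserves cofibrations), the weak equivalence clause from \cref{lemma:sloc proj vs inj}, and the fibrancy clause by the Yoneda/tensor unwinding of \cref{lemma:projective s localization}. This is a leaner route and avoids the redundant appeal to Moser's existence theorem. You are also more explicit than the paper about the Cartesian enrichment clause, and your reduction via \cref{cor:localizing equiv} to checking that the tensors carry the localizing maps to $S$-local equivalences is the right argument there. One small caveat you share with the paper: the proposition claims "proper," but a left Bousfield localization is only guaranteed to remain left proper, and neither you nor the paper address right properness; this looks like an over-claim in the statement rather than a defect specific to your proof.
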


\begin{proof}
	Given that the $S$-localized injective model structure on $\sP(\cD)$ satisfies the condition of \cite[Theorem 4.4]{moser2019enrichedproj} we can deduce the existence of an injective model structure on $\Fun(\C,\sP(\cD)^{inj_S})$, where a morphism $\alpha: F \to G$ is a weak equivalence or a cofibration if and only if $\alpha_c: F(c) \to G(c)$ is a weak equivalence or cofibration in the $S$-localized injective model structure on $\cD$-spaces. The resulting model structure already has the expected cofibrations and weak equivalences, however, we cannot yet characterize the fibrant objects. This requires an additional step.
	
	Notice, we can construct a second model structure on $\Fun(\C,\sP(\cD)^{inj})$, by localizing it with respect to the set of cofibrations 
	$$\{\uMap(c,-) \times \{A \} \to \uMap(c,-) \times \{B\}: c \in \Obj_\C, A \to F \in S\}.$$
	The resulting localization model structure has the expected fibrant objects and morphisms between fibrant objects behave as we want. 
	
	In order to finish the proof, we need to show that these two model structure coincide. By construction they have the same cofibrations (level-wise monomorphisms). Hence, it order to show they coincide it suffices to prove they have the same weak equivalences. Let $\alpha:F \to G$ be an arbitrary morphism in $\Fun(\C,\sP(\cD))$. Then, by \cref{lemma:sloc proj vs inj}, it is an $S$-localized injective equivalence if and only if it is a level-wise $S$-localized equivalence and hence we are done.
\end{proof}

Before we move on to the main proof, we need the following useful characterization of the $S$-localized $\cD$-covariant model structure over $N_\cD\C$.

\begin{lemone} \label{lemma:grothendieck technical}
	Let $S$ be a set of inclusions of $\sP(\cD)$-spaces. Let $\C$ be a small $\sP(\cD)$-enriched category. A $\cD$-left fibration $L \to N_\cD\C$ is $S$-local if and only if for all $A \to B$ in $S$ and objects $c$ in $\C$ the map 
	$$\Map_{/N_\cD\C}(N_\cD\C_{c/} \times \VEmb(B),L) \to \Map_{/N_\cD\C}(N_\cD\C_{c/} \times \VEmb(A),L)$$
	is a Kan equivalence.
\end{lemone}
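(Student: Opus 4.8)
The plan is to reduce both conditions to the single statement that every fiber of $L$ over an object of $\C$ is $S$-local, using the Grothendieck construction together with the Yoneda lemma. First I would record the computation
$$\Map_{/N_\cD\C}(N_\cD\C_{c/}\times\VEmb(B),L)\;\simeq\;\Map_{\sP(\cD)}(B,(\sH_{\cD/\C}L)(c))\;\simeq\;\Map_{\sP(\cD)}(B,\Fib_cL).$$
The first equivalence comes from $N_\cD\C_{c/}\times\VEmb(B)=\sint_{\cD/\C}(\uHom(c,-)\times\{B\})$ (\cref{ex:sintcd rep}, \cref{lemma:sint colimits}) combined with the adjunction \cref{lemma:sintsH adj} and the enriched Yoneda lemma \cref{lemma:enriched yoneda}. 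The second comes from \cref{the:levelwise Yoneda}: the map $D[0]\to N_\cD\C_{c/}$ is a $\cD$-covariant equivalence over $N_\cD\C$, hence so is $\VEmb(B)=D[0]\times\VEmb(B)\to N_\cD\C_{c/}\times\VEmb(B)$ (tensor with $B$, using that the $\cD$-covariant model structure is enriched over $\sP(\cD)^{inj}$, \cref{the:ncov model}), and $L$ is $\cD$-covariantly fibrant. Thus the right-hand condition of the lemma is equivalent to: $\Fib_cL$ is $S$-local for every object $c$ of $\C$; equivalently, $\sH_{\cD/\C}L$ is fibrant in the $S$-localized projective model structure (\cref{prop:projinj model s}, \cref{lemma:projective s localization}).

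For the ``only if'' direction I would argue as follows. If $L$ is $S$-local it is fibrant in the $S$-localized $\cD$-covariant model structure, which by \cref{the:cdcov s} is the localization of the $\cD$-covariant model structure at the maps $\VEmb(A)\to\VEmb(B)\to N_\cD\C$ with $A\to B$ in $S$. Taking the structure map $\VEmb(B)\to N_\cD\C$ to be the constant map at the object $c$, the corresponding localizing map induces a Kan equivalence on $\Map_{/N_\cD\C}(-,L)$, and the equivalences of the previous paragraph transport this to the required equivalence for $N_\cD\C_{c/}\times(A\to B)$. This direction is essentially formal.

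The ``if'' direction is where the work lies. Using that $\Val(N_\cD\C)=\Obj_\C$ is a constant discrete $\cD$-space (\cref{prop:nervesnset ff}) and that $\cD$ has a terminal object, any map $\VEmb(B)\to N_\cD\C$ decomposes via $\VEmb\dashv\Val$ as a coproduct $\coprod_c(\VEmb(B_c)\to N_\cD\C)$ of constant maps indexed by $\Obj_\C$, and $\Val(L)=\coprod_c\Fib_cL$ over $\Obj_\C$ since $L$ is a $\cD$-left fibration. Hence for $A\to B$ in $S$ the map $\Map_{/N_\cD\C}(\VEmb(B),L)\to\Map_{/N_\cD\C}(\VEmb(A),L)$ is the product over $c$ of the maps $\Map_{\sP(\cD)}(B_c,\Fib_cL)\to\Map_{\sP(\cD)}(A_c,\Fib_cL)$, and it must be shown each of these is a Kan equivalence knowing that every $\Fib_cL$ is $S$-local; the delicate point is that the pieces $A_c\to B_c$ of a map in $S$ need not themselves lie in $S$. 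I would clear this by identifying the $S$-localized injective model structure on the over-category $\sP(\cD)_{/\Obj_\C}$ with the product of copies of $\sP(\cD)^{inj_S}$ indexed by the discrete set $\Obj_\C$ --- so that a coproduct of $S$-local $\cD$-spaces is again local with respect to $S$ over $\Obj_\C$ --- using the terminal object of $\cD$ to rule out degenerate behaviour of empty mapping spaces. That $S$-localization is compatible with splitting a $\cD$-space over the discrete base $\Obj_\C$ is the one genuinely nontrivial input; granting it, the ``if'' direction follows, and together with the ``only if'' direction this proves the lemma.
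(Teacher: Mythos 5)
Your reduction of the right-hand condition to ``$\Fib_cL$ is $S$-local for every object $c$'' is correct, and the direct route you mention --- tensoring the Yoneda equivalence $D[0]\to N_\cD\C_{c/}$ from \cref{the:levelwise Yoneda} with $\VEmb(-)$, then mapping into the $\cD$-covariantly fibrant $L$ --- is exactly the mechanism the paper uses. The paper's whole proof is a single commutative square comparing $\Map_{/N_\cD\C}(N_\cD\C_{c/}\times\VEmb(-),L)$ with $\Map_{/N_\cD\C}(\VEmb(-),L)$ along this equivalence, so the preliminary detour through $\sH_{\cD/\C}$ and \cref{lemma:sintsH adj} is not needed (though it is harmless).

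The real difference is in the ``if'' direction. The paper asserts that, since $\Val(N_\cD\C)$ is the constant discrete $\cD$-space $\Obj_\C$, every map $\VEmb(B)\to N_\cD\C$ is constant at a single object, and then applies the Yoneda square to that constant map. You correctly observe that a disconnected $B$ can map onto $\Obj_\C$ nonconstantly, so in general one should decompose $B=\coprod_cB_c$ over $\Obj_\C$ as you do; the paper's constancy assertion is valid when the codomains of $S$ are geometrically contractible (which covers the $S$'s used later in the paper, see \cref{lemma:contractible thetan}), but the lemma as stated imposes no such hypothesis. However, the step you flag as ``the one genuinely nontrivial input'' is where your proof still has a hole, and the product-of-model-structures identification you gesture at does not close it: the pieces $A_c\to B_c$ of a map $A\to B$ in $S$ need not themselves be $S$-local equivalences, and the target statement ``$\prod_c\Map(B_c,\Fib_cL)\to\prod_c\Map(A_c,\Fib_cL)$ is an equivalence'' does not reduce to a factor-wise statement once any $\Fib_cL$ or any $A_c$ can be empty (the terminal object of $\cD$ does not exclude this). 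As written, the ``if'' direction therefore still needs an argument --- either a direct one for passing fiber-wise $S$-locality to $S$-locality over the discrete base $\Obj_\C$, or a connectedness hypothesis on the codomains of $S$ of the kind the paper relies on implicitly.
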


\begin{proof}
	Notice, $\Val(N_\cD\C)$ is equal to the constant diagram with value the set $\Obj_\C$. Hence, any map $\VEmb(B) \to \C$ is necessarily constant. Fix a map $\VEmb(B) \to N_\cD\C$ and assume it has constant value the object $c$. Then we have following diagram 
	\begin{center}
		\begin{tikzcd}
			\Map_{/N_\cD\C}(N_\cD\C_{c/} \times \VEmb(B),L) \arrow[r] \arrow[d, "\simeq"]  &  \Map_{/N_\cD\C}(N_\cD\C_{c/} \times \VEmb(A),L) \arrow[d, "\simeq"]  \\
			\Map_{/N_\cD\C}(\VEmb(B),L) \arrow[r]  &  \Map_{/N_\cD\C}(\VEmb(A),L)
		\end{tikzcd}
	\end{center}
	The vertical maps are Kan equivalences by \cref{the:levelwise Yoneda} and so the top map is a Kan equivalence if and only if the bottom map is a Kan equivalence giving us the desired result.
\end{proof}

We can finally prove the $S$-localized version of the Grothendieck construction.

\begin{theone} \label{the:grothendieck simp s}
	Let $\C$ be a small $\sP(\cD)$-enriched category. Then the $\sP(\cD)$-enriched adjunctions defined in \cref{the:simp grothendieck}
	\begin{center}
		\begin{tikzcd}[row sep=0.5in, column sep=0.9in]
			\uFun(\C,\sP(\cD)^{inj_S})^{proj} \arrow[r, shift left = 1.8, "\sint_{\cD/\C}"] & 
			(\sP(\cD\times\DD)_{/N_\cD\C})^{\cD-cov_S} \arrow[l, shift left=1.8, "\sH_{\cD/\C}", "\bot"'] \arrow[r, shift left=1.8, "\sbT_{\cD/\C}"] &
			\uFun(\C,\sP(\cD)^{inj_S})^{inj} \arrow[l, shift left=1.8, "\sbI_{\cD/\C}", "\bot"']
		\end{tikzcd}
	\end{center}
	are $\sP(\cD)$-enriched Quillen equivalences. Here the middle has the $S$-localized $\cD$-covariant model structure over $N_\cD\C$
	and the two sides have the projective model structure on the $S$-localized injective model structure.
\end{theone}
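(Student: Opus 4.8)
The strategy is to localize, one at a time, the two $\sP(\cD)$-enriched Quillen equivalences of \cref{the:simp grothendieck} by invoking \cref{cor:localizing equiv}. Set
\[
S_1 = \{\,\uMap(c,-)\otimes A \to \uMap(c,-)\otimes B \ :\ c\in\Obj_\C,\ (A\to B)\in S\,\},
\]
a set of cofibrations in $\uFun(\C,\sP(\cD))$ (the generating cofibrations supported at representables, cf.\ \cref{prop:projinj model}), and
\[
S_2 = \{\,\VEmb(A)\to\VEmb(B)\to N_\cD\C \ :\ (A\to B)\in S\,\},
\]
a set of cofibrations in $\sP(\cD\times\DD)_{/N_\cD\C}$. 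By \cref{the:cdcov s}, the $S_2$-localization of $(\sP(\cD\times\DD)_{/N_\cD\C})^{\cD-cov}$ is exactly the $S$-localized $\cD$-covariant model structure; by \cref{lemma:projective s localization} and \cref{lemma:sloc proj vs inj} (together with \cref{prop:projinj model s} and \cref{prop:inj mod s}), the $S_1$-localizations of $\uFun(\C,\sP(\cD))^{proj}$ and $\uFun(\C,\sP(\cD))^{inj}$ are exactly $\uFun(\C,\sP(\cD)^{inj_S})^{proj}$ and $\uFun(\C,\sP(\cD)^{inj_S})^{inj}$. So it suffices, for each adjunction, to verify the hypothesis of \cref{cor:localizing equiv}; the $\sP(\cD)$-enrichment is inherited from \cref{the:simp grothendieck} and is untouched by the localizations.

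For $(\sint_{\cD/\C},\sH_{\cD/\C})$ (with $\sint_{\cD/\C}$ the left adjoint, so $S_1$ sits on the source and $S_2$ on the target), let $Y\to N_\cD\C$ be a $\cD$-left fibration. By \ref{eq:sHDc value} we have $\sH_{\cD/\C}(Y)(c)=\uMap_{/N_\cD\C}(N_\cD\C_{c/},Y)$, so the tensor--hom adjunction and the enriched Yoneda lemma (\cref{lemma:enriched yoneda}) give a natural isomorphism $\Map_{\sP(\cD)}(B,\sH_{\cD/\C}(Y)(c))\cong\Map_{/N_\cD\C}(N_\cD\C_{c/}\times\VEmb(B),Y)$. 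Hence $\sH_{\cD/\C}(Y)$ is $S_1$-local (equivalently: fibrant in $\uFun(\C,\sP(\cD)^{inj_S})^{proj}$) precisely when $\Map_{/N_\cD\C}(N_\cD\C_{c/}\times\VEmb(B),Y)\to\Map_{/N_\cD\C}(N_\cD\C_{c/}\times\VEmb(A),Y)$ is a Kan equivalence for all $c$ and all $(A\to B)\in S$, which by \cref{lemma:grothendieck technical} is exactly the condition that $Y$ be an $S$-localized $\cD$-left fibration, i.e.\ fibrant in $(\sP(\cD\times\DD)_{/N_\cD\C})^{\cD-cov_S}$. Thus the hypothesis of \cref{cor:localizing equiv} holds (in the strong "if and only if" form needed for the Quillen equivalence), and the first localized Quillen equivalence follows.

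For $(\sbT_{\cD/\C},\sbI_{\cD/\C})$ (with $\sbT_{\cD/\C}$ the left adjoint, so $S_2$ sits on the source and $S_1$ on the target), let $Y\colon\C\to\sP(\cD)$ be injectively fibrant, so $\sbI_{\cD/\C}(Y)\to N_\cD\C$ is a $\cD$-left fibration by \cref{the:simp grothendieck}. Using \cref{ex:sintcd rep}, \cref{ex:sintcd constant} and \cref{lemma:sint colimits} we have $N_\cD\C_{c/}\times\VEmb(A)=\sint_{\cD/\C}(\uMap(c,-)\otimes A)$, so the natural equivalence $\sbT_{\cD/\C}\circ\sint_{\cD/\C}\simeq\id$ from the proof of \cref{the:simp grothendieck} gives $\sbT_{\cD/\C}(N_\cD\C_{c/}\times\VEmb(A))\simeq\uMap(c,-)\otimes A$. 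Combining this with the adjunction $(\sbT_{\cD/\C},\sbI_{\cD/\C})$ and \cref{lemma:enriched yoneda} yields $\Map_{/N_\cD\C}(N_\cD\C_{c/}\times\VEmb(A),\sbI_{\cD/\C}(Y))\simeq\Map_{\sP(\cD)}(A,Y(c))$. Therefore $\sbI_{\cD/\C}(Y)$ is local with respect to the maps $N_\cD\C_{c/}\times\VEmb(A)\to N_\cD\C_{c/}\times\VEmb(B)$ — which by \cref{lemma:grothendieck technical} is the same as being an $S$-localized $\cD$-left fibration — if and only if $\Map_{\sP(\cD)}(B,Y(c))\to\Map_{\sP(\cD)}(A,Y(c))$ is a Kan equivalence for all $c$ and all $(A\to B)\in S$, i.e.\ if and only if every $Y(c)$ is $S$-local, i.e.\ $Y$ is fibrant in $\uFun(\C,\sP(\cD)^{inj_S})^{inj}$. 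Again \cref{cor:localizing equiv} applies, giving the second localized Quillen equivalence.

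The content of the argument is entirely in the two bookkeeping facts above; the main thing to be careful about is matching the several descriptions of the $S$-localized model structures on the functor categories — the intrinsic ones of \cref{prop:projinj model s} and \cref{prop:inj mod s} versus the Bousfield localizations at $S_1$ — which is precisely what \cref{lemma:projective s localization} and \cref{lemma:sloc proj vs inj} are for, and keeping straight that $S$ is a set of cofibrations of $\cD$-\emph{spaces}, so that it enters on the fibration side through $\VEmb$ and on the functor side through tensoring representables with it. No separate treatment of the (non-cofibrant) derived counit is needed, since that is already absorbed into the statements of \cref{the:localizing equiv} and \cref{cor:localizing equiv}.
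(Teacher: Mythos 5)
Your proof is correct and follows essentially the same route as the paper: localize both Quillen equivalences of \cref{the:simp grothendieck} via \cref{cor:localizing equiv}, identifying the $S$-localized model structures on the functor categories with Bousfield localizations at the set of maps $\uMap(c,-)\otimes A \to \uMap(c,-)\otimes B$, and using \cref{lemma:grothendieck technical} to match fibrancy conditions on the two sides. The only small divergence is in the second adjunction: the paper simply computes $\sbT_{\cD/\C}(\VEmb(A)\to\VEmb(B))=\uMap(c,-)\times\{A\}\to\uMap(c,-)\times\{B\}$ directly from \cref{ex:sbT constant} (an honest isomorphism, since every map $\VEmb(B)\to N_\cD\C$ is constant) and then appeals to \cref{prop:inj mod s}; you instead route through $\sbI_{\cD/\C}$, \cref{lemma:grothendieck technical}, and the natural weak equivalence $\sbT_{\cD/\C}\circ\sint_{\cD/\C}\simeq\id$, which requires keeping track of a zigzag of weak equivalences where the paper has an equality, but the conclusion is the same.
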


\begin{proof}
	We already know from \cref{the:simp grothendieck} that both adjunctions are Quillen equivalences between non-localized model structures and we want to use \cref{cor:localizing equiv} to prove they remain Quillen equivalences after localization, by confirming the condition stated there. 
	
	For the first adjunction we need to prove that a $\cD$-left fibration $L \to N_\cD\C$ is $S$ localized if and only it is local with respect to the map $\sint_{\cD/\C} (\uMap(c,-) \times \{A\} \to \uMap(c,-) \times \{B\})$. By \cref{ex:sintcd rep} and \cref{ex:sintcd constant} this map is equal to $N_\cD\C_{c/} \times A \to N_\cD\C_{c/} \times B$. Now the result follows from \cref{lemma:grothendieck technical}.
	
	For the second adjunction, we need to prove that an injectively fibrant $\sP(\cD)$-enriched functor $F: \C \to \sP(\cD)$ is fibrant in the $S$-localized injective model structure on $\uFun(\C,\sP(\cD))$ if and only if $F$ is local with respect  $\sbT_{\cD/\C}(\VEmb(A) \to \VEmb(B) \to N_\cD\C$). Every map $\VEmb(B) \to N_\cD\C$ is necessarily constant, so let us assume the value is the object $c$. By \cref{ex:sbT constant}, $\sbT_{\cD/\C}(\VEmb(A) \to \VEmb(B)) = \uMap(c, -) \times \{A\} \to \uMap(c,-) \times \{B\}$. The result now follows from the characterization of fibrant objects given in \cref{prop:inj mod s}.
\end{proof}

The Quillen equivalence gives us a possibility to understand $S$-localized $\cD$-covariant equivalences over $N_\cD\C$.

\begin{corone} \label{cor:reconition principle s}
	Let $\C$ be a small $\sP(\cD)$-enriched category.
	A map $Y \to Z$ over $N_\cD\C$ is an $S$-localized $\cD$-covariant equivalence if and only if for every object $c$ in $\C$ the map 
	$$Y \times_{N_\cD\C} N_\cD\C_{/c} \to Z \times_{N_\cD\C} N_\cD\C_{/c}$$
	is a diagonal $S$-localized equivalence.
\end{corone}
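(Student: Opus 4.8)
The plan is to read this off directly from the $\sP(\cD)$-enriched Quillen equivalence $(\sbT_{\cD/\C},\sbI_{\cD/\C})$ between $(\sP(\cD\times\DD)_{/N_\cD\C})^{\cD-cov_S}$ and $\uFun(\C,\sP(\cD)^{inj_S})^{inj}$ established in \cref{the:grothendieck simp s}, combined with the explicit formula for $\sbT_{\cD/\C}$ from \cref{def:sbTcd}. This is the $S$-localized analogue of \cref{the:recognition principle} in the special case of bases of the form $N_\cD\C$, and as there the computation is essentially formal once the Grothendieck construction is available.

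First I would observe that in the $S$-localized $\cD$-covariant model structure every object is cofibrant, since the cofibrations are exactly the monomorphisms (\cref{the:cdcov s}). Consequently the left Quillen functor $\sbT_{\cD/\C}$, being part of a Quillen equivalence, both preserves weak equivalences (Ken Brown's lemma, as every object is cofibrant) and reflects them (the left adjoint of a Quillen equivalence reflects weak equivalences between cofibrant objects). Hence a map $Y \to Z$ over $N_\cD\C$ is an $S$-localized $\cD$-covariant equivalence if and only if $\sbT_{\cD/\C}Y \to \sbT_{\cD/\C}Z$ is a weak equivalence in $\uFun(\C,\sP(\cD)^{inj_S})^{inj}$.

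It remains to unwind this last condition. By \cref{prop:inj mod s}, a natural transformation $\alpha\colon F \to G$ is a weak equivalence in $\uFun(\C,\sP(\cD)^{inj_S})^{inj}$ precisely when $\alpha_c\colon F(c) \to G(c)$ is a weak equivalence in the $S$-localized injective model structure on $\cD$-spaces for every object $c$ of $\C$. Substituting the value $\sbT_{\cD/\C}(p\colon Y \to N_\cD\C)(c) = \fDiag(Y \times_{N_\cD\C} N_\cD\C_{/c})$ from \cref{def:sbTcd}, this becomes the assertion that $\fDiag(Y \times_{N_\cD\C} N_\cD\C_{/c}) \to \fDiag(Z \times_{N_\cD\C} N_\cD\C_{/c})$ is an $S$-localized injective equivalence of $\cD$-spaces for every $c$; and by the very definition of the $\cD$-diagonal $S$-localized injective model structure (\cref{the:diagmodel s}) this is exactly the statement that $Y \times_{N_\cD\C} N_\cD\C_{/c} \to Z \times_{N_\cD\C} N_\cD\C_{/c}$ is a diagonal $S$-localized equivalence, which is the desired conclusion.

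I do not anticipate a genuine obstacle; the only point that warrants care is the reflection of weak equivalences by $\sbT_{\cD/\C}$, which uses the standard fact about left adjoints of Quillen equivalences together with the fortunate circumstance that all objects of the $\cD$-covariant model structure are cofibrant. If one prefers to avoid invoking reflection directly, one can instead use, exactly as in the proofs of \cref{the:simp grothendieck} and \cref{the:grothendieck simp s}, that $\sbT_{\cD/\C}$ fits into a chain of Quillen equivalences whose composite is naturally equivalent to the identity, so that $\sbT_{\cD/\C}$ detects weak equivalences for that reason.
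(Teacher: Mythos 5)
Your proof is correct and takes essentially the same approach as the paper: cofibrancy of all objects, the Quillen equivalence from \cref{the:grothendieck simp s}, the level-wise characterization in \cref{prop:inj mod s}, and the explicit formula for $\sbT_{\cD/\C}$ from \cref{def:sbTcd}. The one small difference is cosmetic — the paper's one-line proof says only that $\sbT_{\cD/\C}$ ``reflects weak equivalences,'' while you correctly note that one also needs preservation (via Ken Brown) for the forward implication; that extra care is a slight improvement in precision, not a different argument.
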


\begin{proof}
	By \cref{the:cdcov s} all objects in the $S$-localized $\cD$-covariant model structure are cofibrant, hence, by \cref{the:grothendieck simp s}, $\sbT_{\cD/\C}$ reflects weak equivalences. By \cref{prop:inj mod s}, the equivalences in the $S$-localized injective model structure are given point-wise and by \cref{def:sbTcd}, $\sbT_{\cD/\C}(Y)(c) = \fDiag(Y \times_{N_\cD\C} N_\cD\C_{/c})$ and so the result follows.
\end{proof}

We will later (\cref{the:recognition principle s}) generalize this to $S$-localized $\cD$-covariant equivalences over more general $\cD$-simplicial spaces. Continuing our previous steps, we now want a version of these Quillen equivalences where the two model structures on $\Fun(\C,\sP(\cD))$ coincide. 

\begin{theone} \label{the:grothendieck double proj s}
	Let $\C$ be a small $\sP(\cD)$-enriched category. Assume that $\sbI_{\cD/\C}$ takes projective fibrations to injective fibration. Then, the two $\sP(\cD)$-enriched adjunctions  
	\begin{center}
	\begin{tikzcd}[row sep=0.5in, column sep=0.9in]
		\uFun(\C,\sP(\cD)^{inj_S})^{proj} \arrow[r, shift left = 1.8, "\sint_{\cD/\C}"] & 
		(\sP(\cD\times\DD)_{/N\C})^{\cD-cov_S} \arrow[l, shift left=1.8, "\sH_{\cD/\C}", "\bot"'] \arrow[r, shift left=1.8, "\sbT_{\cD/\C}"] &
		\uFun(\C,\sP(\cD)^{inj_S})^{proj} \arrow[l, shift left=1.8, "\sbI_{\cD/\C}", "\bot"']
	\end{tikzcd}
\end{center}
	are Quillen equivalences. Here the two $\Fun(\C,\sP(\cD))$ have the projective model structure and $\sP(\cD\times\DD)_{/N\C}$ has the $\cD$-covariant model structure. Moreover, if $S$ is Cartesian, then these are $\P(\cD)^{inj_S}$-enriched Quillen equivalences. 
\end{theone}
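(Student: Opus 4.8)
The plan is to deduce this statement from its unlocalized counterpart \cref{the:grothendieck double proj} by localizing along the lines of how \cref{the:grothendieck simp s} is deduced from \cref{the:simp grothendieck}, i.e. via \cref{cor:localizing equiv}. Since the left-hand functor category already carries the $S$-localized projective model structure both in \cref{the:grothendieck simp s} and in the statement to be proven, the adjunction $(\sint_{\cD/\C},\sH_{\cD/\C})$ is already a $\sP(\cD)$-enriched Quillen equivalence by \cref{the:grothendieck simp s}. Hence only the second adjunction $(\sbT_{\cD/\C},\sbI_{\cD/\C})$ — now landing in the $S$-localized \emph{projective} rather than the $S$-localized injective model structure — requires a new argument.

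By \cref{the:grothendieck double proj} (whose standing hypothesis that $\sbI_{\cD/\C}$ sends projective fibrations to injective fibrations is assumed here), there is a $\sP(\cD)$-enriched Quillen equivalence $\sbT_{\cD/\C}\colon(\sP(\cD\times\DD)_{/N_\cD\C})^{\cD-cov}\rightleftarrows\uFun(\C,\sP(\cD))^{proj}$ with right adjoint $\sbI_{\cD/\C}$. I would localize its source with respect to $S_1=\{\VEmb(A)\to\VEmb(B)\to N_\cD\C : (A\to B)\in S\}$, which by \cref{the:cdcov s} yields $(\sP(\cD\times\DD)_{/N_\cD\C})^{\cD-cov_S}$, and its target with respect to $S_2=\{\uHom(c,-)\times\{A\}\to\uHom(c,-)\times\{B\} : (A\to B)\in S,\ c\in\Obj_\C\}$, which by \cref{lemma:projective s localization} (together with the construction in \cref{prop:projinj model s}) yields the $S$-localized projective model structure. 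By \cref{cor:localizing equiv} it then suffices to verify that for every projectively fibrant functor $F\colon\C\to\sP(\cD)$, one has that $F$ is $S_2$-local if and only if $\sbI_{\cD/\C}(F)$ is fibrant in the $S$-localized $\cD$-covariant model structure.

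For this verification: since $\sbI_{\cD/\C}$ preserves the terminal object, \cref{the:grothendieck double proj} (which shows $\sbI_{\cD/\C}$ sends projective fibrations to $\cD$-left fibrations) gives that $\sbI_{\cD/\C}(F)\to N_\cD\C$ is a $\cD$-left fibration whenever $F$ is projectively fibrant, so its $S$-localized $\cD$-covariant fibrancy amounts to $S$-locality. Now $\Val(N_\cD\C)$ is discrete (\cref{def:nervesnset}), so any map $\VEmb(B)\to N_\cD\C$ with $(A\to B)\in S$ is constant at some object $c$ of $\C$; using the adjunction \cref{lemma:sbTsbI adj}, the identity $\sbT_{\cD/\C}(\VEmb(B)\to D[0]\to N_\cD\C)=\{B\}\times\uHom(c,-)$ of \cref{ex:sbT constant}, the tensor--hom adjunction, and the enriched Yoneda lemma \cref{lemma:enriched yoneda}, I obtain natural identifications
\[
\Map_{/N_\cD\C}(\VEmb(B),\sbI_{\cD/\C}F)\ \cong\ \Map_{\Fun(\C,\sP(\cD))}(\{B\}\times\uHom(c,-),F)\ \cong\ \Map_{\sP(\cD)}(B,F(c)),
\]
and likewise with $A$ in place of $B$. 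The comparison map is then $\Map_{\sP(\cD)}(B,F(c))\to\Map_{\sP(\cD)}(A,F(c))$, which is a Kan equivalence exactly when $F(c)$ is $S$-local; so $\sbI_{\cD/\C}(F)$ is $S$-local iff $F(c)$ is $S$-local for every $c$, which by \cref{lemma:projective s localization} is precisely $S_2$-locality of $F$. This closes the verification, so \cref{cor:localizing equiv} promotes the unlocalized Quillen equivalence to one of localized model structures; it stays $\sP(\cD)$-enriched because $\sbT_{\cD/\C}$ still commutes with tensors (\cref{ex:sbT constant}, as in \cref{the:grothendieck simp s}). For the final clause, when $S$ is Cartesian both the $S$-localized projective (\cref{prop:projinj model s}) and the $S$-localized $\cD$-covariant (\cref{the:cdcov s}) model structures are enriched over $\sP(\cD)^{inj_S}$, and since the two adjunctions are $\sP(\cD)$-enriched they are in particular $\sP(\cD)^{inj_S}$-enriched Quillen equivalences.

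The only genuine computation is the middle identification of $\Map_{/N_\cD\C}(\VEmb(B),\sbI_{\cD/\C}F)$ with $\Map_{\sP(\cD)}(B,F(c))$, which is essentially the one already carried out just before \cref{prop:projinj model} and is bookkeeping with the enriched Yoneda lemma rather than a real obstacle. The one subtle organizational point — and what I expect to spend the most care on — is checking that the $S$-localized projective model structure of \cref{prop:projinj model s} genuinely coincides with the $S_2$-localization of the ordinary projective model structure, so that \cref{cor:localizing equiv} applies; this is exactly what \cref{lemma:projective s localization} and \cref{lemma:sloc proj vs inj} are set up to provide.
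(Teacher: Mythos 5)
Your proof is correct, and the substance — that everything reduces to verifying $\sbI_{\cD/\C}$ carries projectively fibrant $S$-local functors to $S$-local $\cD$-left fibrations, which you establish via the adjunction with $\sbT_{\cD/\C}$, \cref{ex:sbT constant}, discreteness of $\Val(N_\cD\C)$, and the enriched Yoneda lemma — is the same computation the paper leans on. The main organizational difference is in how you draw the conclusion: you apply \cref{cor:localizing equiv} directly to the single adjunction $(\sbT_{\cD/\C},\sbI_{\cD/\C})$ between the $\cD$-covariant and projective model structures, verifying its fibrancy-matching criterion by hand. The paper instead first establishes that $\sbI_{\cD/\C}$ is right Quillen into the $S$-localized $\cD$-covariant model structure (citing \cref{the:grothendieck simp s} for preservation of $S$-local objects rather than redoing the computation), and then recovers the Quillen equivalence by $2$-out-of-$3$ applied to the triangle $(\sP(\cD\times\DD)_{/N_\cD\C})^{\cD\text{-}cov_S}\rightleftarrows \uFun(\C,\sP(\cD)^{inj_S})^{proj}\rightleftarrows \uFun(\C,\sP(\cD)^{inj_S})^{inj}$, with the right leg handled by \cref{lemma:sloc proj vs inj} and the composite by \cref{the:grothendieck simp s}. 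Your route is more self-contained and makes the localization criterion explicit; the paper's is shorter because it reuses the verification already done in \cref{the:grothendieck simp s} and the comparison between $S$-localized projective and injective model structures. Both are valid, and your closing remark about \cref{lemma:projective s localization} being exactly what justifies treating the $S$-localized projective model structure as a Bousfield localization of the ordinary projective one is indeed the subtle organizational point that makes \cref{cor:localizing equiv} applicable.
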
 

\begin{proof}
	Following \cref{the:grothendieck double proj} the assumption implies that $\sbI_{\cD/\C}$ is a right Quillen functor from the projective model structure to the $\cD$-covariant model structure. Moreover, by \cref{the:grothendieck simp s}, $\sbI_{\cD/\C}$ preserves $S$-local objects and so it is also a right Quillen functor from the projective model structure on the $S$-localized injective model structure to the $S$-localized $\cD$-covariant model structure. 
	
	In order to deduce it is a Quillen equivalence, we can again use $2$-out-of-$3$ to the diagram above using the results from \cref{the:grothendieck simp s}.
\end{proof}

We can now generalize \cref{the:strictification dleft fib}.

\begin{theone} \label{the:strictification dleft fib s}
	Let $\cD$ be a small category with terminal object.
	Let $X$ be a $\cD$-simplicial space with weakly constant objects. Then we have the following diagram of $\sP(\cD)$-enriched Quillen equivalences.
	\begin{center}
		\begin{tikzcd}[row sep=0.3in, column sep=0.9in]
			\qcatFun(\C_X,\sP(\cD)^{inj_S})^{proj} \arrow[r, shift left = 1.8, "\sint_{\cD/\C_X}"] & 
			(\sP(\cD\times\DD)_{/N_\cD\C_X})^{\cD-cov_S} \arrow[l, shift left=1.8, "\sH_{\cD/\C_X}", "\bot"'] \arrow[r, shift left=1.8, "\sbT_{\cD/\C_X}"] \arrow[d, shift left =1.8, "(r_X)_!"] & \qcatFun(\C_X,\sP(\cD)^{inj_S})^{inj} \arrow[l, shift left=1.8, "\sbI_{\cD/\C_X}", "\bot"'] \\
			&(\sP(\cD\times\DD)_{/\cF_\cD X})^{\cD-cov_S} \arrow[u, shift left=1.8, "(r_X)^*", "\rbot"'] \arrow[d, shift right=1.8, "(u_X)^*"'] & \\
			&(\sP(\cD\times\DD)_{/\cC_\cD X})^{\cD-cov_S} \arrow[u, shift right=1.8, "(u_X)_!"', "\rbot"] \arrow[d, shift left=1.8, "(c_X)_!", "\rbot"'] & \\
			&(\sP(\cD\times\DD)_{/X})^{\cD-cov_S}\arrow[u, shift left=1.8, "(c_X)^*"] & 
		\end{tikzcd}
	\end{center}
\end{theone}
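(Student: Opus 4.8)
\textit{Proof idea.} The plan is to follow the proof of \cref{the:strictification dleft fib} line by line, feeding in the $S$-localized analogue of each ingredient. The two horizontal Quillen equivalences will come from the $S$-localized Grothendieck construction \cref{the:grothendieck simp s}, and the three vertical Quillen equivalences from the $S$-localized invariance result \cref{the:dcov invariant css s}.

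First I would invoke the hypothesis that $X$ has weakly constant objects: by \cref{lemma:zigzag strictification} this produces the natural zigzag
$$ X \overset{c_X}{\longleftarrow} \cC_\cD X \overset{u_X}{\longrightarrow} \cF_\cD X \overset{r_X}{\longleftarrow} N_\cD\C_X,$$
in which $\C_X$ is an honest $\sP(\cD)$-enriched category and all three maps are level-wise complete Segal space equivalences. Second, I would apply \cref{the:dcov invariant css s} to each of $c_X$, $u_X$, $r_X$: since these are level-wise CSS equivalences, the induced adjunctions $((c_X)_!,(c_X)^*)$, $((u_X)_!,(u_X)^*)$, $((r_X)_!,(r_X)^*)$ between the respective $S$-localized $\cD$-covariant model structures on the over-categories are $\sP(\cD)$-enriched Quillen equivalences, giving the three vertical arrows in the diagram. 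Third, I would apply \cref{the:grothendieck simp s} to the $\sP(\cD)$-enriched category $\C_X$ to obtain the two horizontal $\sP(\cD)$-enriched Quillen equivalences between $\qcatFun(\C_X,\sP(\cD)^{inj_S})^{proj}$, $(\sP(\cD\times\DD)_{/N_\cD\C_X})^{\cD-cov_S}$ and $\qcatFun(\C_X,\sP(\cD)^{inj_S})^{inj}$. Assembling these five Quillen equivalences yields the asserted diagram.

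The only point needing attention — and the reason the hypothesis on $X$ is present — is that without weakly constant objects \cref{lemma:zigzag strictification} neither supplies a zigzag of level-wise CSS equivalences nor a strictly $\sP(\cD)$-enriched replacement $\C_X$, so the vertical maps would cease to be equivalences. Beyond that the argument is purely formal: a left Bousfield localization alters neither the underlying categories nor the class of cofibrations, so the functors $\sint_{\cD/\C_X}$, $\sH_{\cD/\C_X}$, $\sbT_{\cD/\C_X}$, $\sbI_{\cD/\C_X}$ and the (co)base-change functors along $c_X,u_X,r_X$ appearing here are literally the ones from \cref{the:strictification dleft fib}; only the model structures they relate have changed. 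I do not expect any genuine obstacle — the work has all been front-loaded into \cref{the:grothendieck simp s}, \cref{the:dcov invariant css s}, and \cref{lemma:zigzag strictification}.
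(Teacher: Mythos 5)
Your proposal matches the paper's proof exactly: the horizontal Quillen equivalences are obtained from \cref{the:grothendieck simp s}, and the vertical ones from \cref{the:dcov invariant css s} together with the fact that $r_X, c_X, u_X$ are level-wise complete Segal space equivalences supplied by \cref{lemma:zigzag strictification}. No discrepancies.
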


\begin{proof}
	The horizontal Quillen equivalences follow from \cref{the:grothendieck simp s} and the vertical Quillen equivalences from \cref{the:dcov invariant css s} combined with the fact that $r_X,c_X,u_X$ are level-wise complete Segal space equivalences (\cref{lemma:zigzag strictification}).	
\end{proof}

We move on to find the related result regarding quasi-categories. 

\begin{notone} \label{not:underlying qcat s}
 Generalizing \cref{not:underlying qcat}, we denote the underlying quasi-category of the $S$-localized injective model structure on $\sP(\cD)$ by $\sP(\cD)[\cW^{-1}_S]$ and similarly use $\LFib^S_{\cD/X}$ denote the quasi-category of $S$-localized $\cD$-left fibrations over $X$.
\end{notone}

\begin{remone} \label{rem:proj not fun}
 Notice, if $S$ is not Cartesian, then $\sP(\cD)[\cW^{-1}_S]$ is not a Cartesian closed quasi-category and so in particular also not enriched over itself. This implies that the underlying quasi-category of the projective model structure $\Fun(\C,\sP(\cD))[\cW^{-1}_S]$ is not equivalent to a functor quasi-category of some enriched quasi-categories. On the other hand, if $S$ is Cartesian, by \cref{the:cdcov s}, $\sP(\cD)[\cW^{-1}_S]$ is Cartesian closed and we have an equivalence with a functor quasi-category 
 $$
 	\Fun(\C,\sP(\cD))[\cW^{-1}_S] \simeq \Fun_{\QCat_{\sP(\cD)[\cW^{-1}_S]}}(\C,\sP(\cD)[\cW^{-1}_S]),
 $$
 analogous to \ref{eq:functor cat}.
\end{remone} 

 \begin{corone} \label{cor:equiv qcat s}
	Let $\cD$ be a small category with terminal object.	Let $X$ be a $\cD$-simplicial space with weakly constant objects.
	Then we have an equivalence of quasi-categories, 
	\begin{center}
		\begin{tikzcd}[row sep=0.3in, column sep=0.9in]
			\Fun(\C_X,\sP(\cD))[\cW^{-1}_S] \arrow[r, shift left = 1.8, "\sint_{\cD/\C_X}"] & 
			\LFib^S_{\cD/N_\cD\C_X} \arrow[l, shift left=1.8, "\sH_{\cD/\C_X}", "\bot"'] \arrow[r, shift left=1.8, "\sbT_{\cD/\C_X}"] \arrow[d, shift left =1.8, "(r_X)_!"] & \Fun(\C_X,\sP(\cD))[\cW^{-1}_S] \arrow[l, shift left=1.8, "\sbI_{\cD/\C_X}", "\bot"'] \\
			&\LFib^S_{\cD/\cF_\cD X} \arrow[u, shift left=1.8, "(r_X)^*", "\rbot"'] \arrow[d, shift right=1.8, "(u_X)^*"'] & \\
			&\LFib^S_{\cD/\cC_\cD X} \arrow[u, shift right=1.8, "(u_X)_!"', "\rbot"] \arrow[d, shift left=1.8, "(c_X)_!", "\rbot"'] & \\
			&\LFib^S_{\cD/X} \arrow[u, shift left=1.8, "(c_X)^*"] & 
		\end{tikzcd},
	\end{center}
 which is an equivalence of $\sP(\cD)^S$-enriched quasi-categories and equivalent to the functor quasi-category $\Fun_{\QCat_{\sP(\cD)[\cW^{-1}_S]}}(\C,\sP(\cD)[\cW^{-1}_S])$ if $S$ is Cartesian.
\end{corone}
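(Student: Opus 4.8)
The statement is the $S$-localized counterpart of \cref{cor:equiv qcat}, and the plan is to obtain it by passing to underlying (enriched) quasi-categories in the diagram of $\sP(\cD)$-enriched Quillen equivalences of \cref{the:strictification dleft fib s}, exactly as \cref{cor:equiv qcat} was deduced from \cref{the:strictification dleft fib}. First I would record the identifications of the relevant underlying quasi-categories. Following \cref{not:underlying qcat s}, for a $\cD$-simplicial space $Y$ the quasi-category $\LFib^S_{\cD/Y}$ is the underlying quasi-category of $(\sP(\cD\times\DD)_{/Y})^{\cD-cov_S}$; since by \cref{the:cdcov s} every object of this model structure is cofibrant and its fibrant objects are precisely the $S$-localized $\cD$-left fibrations, its bifibrant objects are exactly the $S$-localized $\cD$-left fibrations, so this notation is consistent with the model-categorical picture. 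Likewise, by \cref{lemma:sloc proj vs inj} the identity adjunction between the $S$-localized projective and the $S$-localized injective model structures on $\Fun(\C_X,\sP(\cD))$ is a Quillen equivalence, hence both have the same underlying quasi-category, which we write $\Fun(\C_X,\sP(\cD))[\cW^{-1}_S]$.

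Next I would apply Haugseng's rectification theorem \cite[Theorem 1.1]{haugseng2015rectenrichedinftycat} to every arrow in the diagram of \cref{the:strictification dleft fib s}. The horizontal arrows are $\sP(\cD)$-enriched Quillen equivalences by \cref{the:grothendieck simp s}, and the vertical arrows are $\sP(\cD)$-enriched Quillen equivalences obtained from \cref{the:dcov invariant css s} applied to the level-wise complete Segal space equivalences $c_X$, $u_X$, $r_X$ of \cref{lemma:zigzag strictification}. Consequently each arrow induces an equivalence of the corresponding underlying quasi-categories, and these assemble into the stated diagram of equivalences of quasi-categories.

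It remains to address the enrichment. When $S$ is Cartesian, \cref{def:cartesian s} says the $S$-localized injective model structure $\sP(\cD)^{inj_S}$ is Cartesian, and by \cref{the:cdcov s}, \cref{prop:projinj model s} and \cref{prop:inj mod s} all model structures appearing in \cref{the:strictification dleft fib s} are enriched over it; then \cite[Theorem 1.1]{haugseng2015rectenrichedinftycat} upgrades the equivalences above to equivalences of $\sP(\cD)[\cW^{-1}_S]$-enriched quasi-categories, and the $S$-localized analogue of the equivalence \ref{eq:functor cat} recorded in \cref{rem:proj not fun} identifies $\Fun(\C_X,\sP(\cD))[\cW^{-1}_S]$ with $\Fun_{\QCat_{\sP(\cD)[\cW^{-1}_S]}}(\C_X,\sP(\cD)[\cW^{-1}_S])$. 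If $S$ is not Cartesian, one only retains the simplicial enrichment, and by \cref{rem:proj not fun} the underlying quasi-category of the projective model structure need not be a functor quasi-category, which is precisely why the functor-quasi-category identification is stated under the Cartesian hypothesis.

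The main obstacle here is bookkeeping rather than conceptual: one must keep careful track of which model structures in play are genuinely $\sP(\cD)^{inj_S}$-enriched (rather than merely simplicial), so that Haugseng's theorem — which presupposes enrichment over a Cartesian closed monoidal model category — is invoked only in the Cartesian case, while still producing an equivalence of quasi-categories in general. A secondary point to be careful about is that \cref{the:strictification dleft fib s} applies to the $\sP(\cD)$-enriched category $\C_X$ associated to a $\cD$-simplicial space $X$ with \emph{weakly constant objects}, so this hypothesis is indispensable, and one should not claim more for a general base $N_\cD\C$ here than what \cref{the:grothendieck simp s} already provides.
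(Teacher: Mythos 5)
Your proposal is correct and follows the same route the paper implicitly takes: apply Haugseng's rectification theorem \cite[Theorem 1.1]{haugseng2015rectenrichedinftycat} to the diagram of $\sP(\cD)$-enriched Quillen equivalences from \cref{the:strictification dleft fib s}, exactly as \cref{cor:equiv qcat} was deduced from \cref{the:strictification dleft fib}, then use \cref{lemma:sloc proj vs inj} to identify the underlying quasi-categories of the two functor-category model structures and \cref{rem:proj not fun} to obtain the functor quasi-category identification only under the Cartesian hypothesis.

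One small clarification worth making explicit: in the non-Cartesian case you say Haugseng's theorem is not invoked yet one ``still produc[es] an equivalence of quasi-categories.'' The reason this works is that every model structure in \cref{the:strictification dleft fib s} is still a \emph{simplicial} combinatorial left proper model category and every arrow is still a simplicial Quillen equivalence, so the induced maps on underlying quasi-categories are equivalences by standard theory (e.g.\ the simplicial special case of Haugseng's result, or Dwyer--Kan localization); only the \emph{upgrade} to $\sP(\cD)[\cW^{-1}_S]$-enrichment and the identification of $\Fun(\C_X,\sP(\cD))[\cW^{-1}_S]$ with the enriched functor quasi-category genuinely require $S$ to be Cartesian. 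Stating this explicitly would close the one gap in the logical flow.
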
 

Again we can apply \cite[Proposition 2.1.12]{riehlverity2018elements} to get the following corollary.

 \begin{corone} 
	Let $\cD$ be a small category with terminal object.	Let $X$ be a $\cD$-simplicial space with weakly constant objects.
	There is an equivalence of quasi-categories
	\begin{center}
		\begin{tikzcd}[row sep=0.5in, column sep=0.9in]
			\Fun(\C_X,\sP(\cD))[\cW_S^{-1}] \arrow[r, shift left = 1.8] & 
			\LFib^S_{\cD/X} \arrow[l, shift left=1.8, "\bot"'] 
		\end{tikzcd}.
	\end{center}
 Moreover, if $S$ is Cartesian then there is an equivalence $\sP(\cD)[\cW^{-1}_S]$-enriched quasi-categories 
 \begin{center}
 	\begin{tikzcd}[row sep=0.5in, column sep=0.9in]
 		\Fun_{\QCat_{\sP(\cD)[\cW^{-1}_S]}}(\C_X,\sP(\cD)[\cW^{-1}_S]) \arrow[r, shift left = 1.8] & 
 		\LFib^S_{\cD/X} \arrow[l, shift left=1.8, "\bot"'] 
 	\end{tikzcd}.
 \end{center}
\end{corone}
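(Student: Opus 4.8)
The plan is to follow the same strategy used to prove \cref{cor:equiv lfib qcat}, now in the $S$-localized setting. The starting point is \cref{cor:equiv qcat s}, which (via \cref{the:strictification dleft fib s} together with \cite[Theorem 1.1]{haugseng2015rectenrichedinftycat}) already provides a zigzag of equivalences of quasi-categories connecting $\Fun(\C_X,\sP(\cD))[\cW^{-1}_S]$, appearing at both ends of the zigzag, with $\LFib^S_{\cD/X}$, passing through $\LFib^S_{\cD/N_\cD\C_X}$, $\LFib^S_{\cD/\cF_\cD X}$ and $\LFib^S_{\cD/\cC_\cD X}$. Each arrow in that zigzag is the underlying functor of one half of an adjoint equivalence of quasi-categories.

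First I would invoke \cite[Proposition 2.1.12]{riehlverity2018elements}: in any adjoint equivalence of quasi-categories the left adjoint is simultaneously a right adjoint, and conversely. Consequently every arrow appearing in the diagram of \cref{cor:equiv qcat s} is itself an equivalence, and the individual equivalences may be composed in whichever direction is needed. Composing the chain starting with $\sint_{\cD/\C_X}\colon \Fun(\C_X,\sP(\cD))[\cW^{-1}_S] \to \LFib^S_{\cD/N_\cD\C_X}$ and continuing with $(r_X)^*$, $(u_X)_!$ and $(c_X)^*$ — inverting, where necessary, those arrows in the diagram that point in the opposite direction — yields a single equivalence of quasi-categories $\Fun(\C_X,\sP(\cD))[\cW^{-1}_S] \simeq \LFib^S_{\cD/X}$, which proves the first assertion.

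For the second assertion, assume $S$ is Cartesian. Then by \cref{rem:proj not fun} the quasi-category $\sP(\cD)[\cW^{-1}_S]$ is Cartesian closed and there is an equivalence of $\sP(\cD)[\cW^{-1}_S]$-enriched quasi-categories $\Fun(\C_X,\sP(\cD))[\cW^{-1}_S] \simeq \Fun_{\QCat_{\sP(\cD)[\cW^{-1}_S]}}(\C_X,\sP(\cD)[\cW^{-1}_S])$. Moreover \cref{cor:equiv qcat s} records that, under the Cartesian hypothesis, the whole zigzag is one of $\sP(\cD)[\cW^{-1}_S]$-enriched quasi-categories, so the composite equivalence constructed above is $\sP(\cD)[\cW^{-1}_S]$-enriched. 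Combining these two statements gives the claimed equivalence of $\sP(\cD)[\cW^{-1}_S]$-enriched quasi-categories.

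I do not expect a genuine obstacle here: all of the content lives in the preceding results — principally \cref{cor:equiv qcat s}, \cref{rem:proj not fun}, and the formal fact \cite[Proposition 2.1.12]{riehlverity2018elements} about adjoint equivalences. The only point requiring mild care is bookkeeping, namely keeping track of which arrows in the diagram of \cref{cor:equiv qcat s} must be inverted before being composed and checking that the enrichment is preserved at each composition step in the Cartesian case; but this is carried out exactly as in the unlocalized argument of \cref{cor:equiv lfib qcat}.
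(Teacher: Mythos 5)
Your proposal is correct and matches the paper's argument: the paper likewise obtains this corollary by applying \cite[Proposition 2.1.12]{riehlverity2018elements} to the zigzag from \cref{cor:equiv qcat s} and composing, with the enriched statement following from \cref{rem:proj not fun} when $S$ is Cartesian. This is the same one-line reduction used for \cref{cor:equiv lfib qcat}.
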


We end this section with an analysis of the {\it universal $S$-localized $\cD$-left fibration}, generalizing \cref{rem:universal lfib}. Following \ref{eq:completion}, let $\widehat{\pi_*}: \widehat{N_\cD\sP(\cD)^S_{D[0]/}} \to \widehat{N_\cD\sP(\cD)^S}$ be the completion of the map $\pi_*:N_\cD\sP(\cD)^S_{D[0]/} \to N_\cD\sP(\cD)^S$. Let $\CSS_{\sP(\cD)^S}$ be the underlying quasi-category of the $S$-localized $\sP(\cD)$-enriched complete Segal space model structure (\cref{the:spd enriched css s}). Let $\LFib_\cD^S \to \CSS_{\sP(\cD)^S}$ be the sub-right fibration of $S$-localized $\cD$-left fibrations in  $\sOall_{\CSS_{\sP(\cD)^S}}$ given analogous to \cref{cor:lfib classifier}. Again, we have a map of quasi-categories $(\CSS_{\sP(\cD)^S})_{/\widehat{N_\cD\sP(\cD)^S}} \to \LFib^S_\cD$ over $\CSS_{\sP(\cD)^S}$, that takes a map $F:\C \to \widehat{N_\cD\sP(\cD)^S}$ to the pullback $F^*\widehat{\pi_*}$. We now have the following analogous result.

 \begin{corone}\label{cor:lfib classifier s}
 	We have the following diagram of right fibrations over the inclusion $\CSS_{\sP(\cD)^S} \hookrightarrow \CSS_{\sP(\cD)}$
 	\begin{center}
 		\begin{tikzcd}
 			(\CSS_{\sP(\cD)^S})_{/\widehat{N_\cD\sP(\cD)^S}} \arrow[r, "\simeq"] \arrow[d, hookrightarrow] &  \LFib^S_\cD \arrow[d, hookrightarrow] \\
 			(\CSS_{\sP(\cD)})_{/\widehat{N_\cD\sP(\cD)}} \arrow[r, "\simeq"] & \LFib_\cD
 		\end{tikzcd}
 	\end{center}
 	where the horizontal maps are equivalences and the vertical maps are inclusions.  
\end{corone}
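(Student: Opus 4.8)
The plan is to mirror the proof of \cref{cor:lfib classifier}, adding the bookkeeping needed to track the $S$-localizations and the two vertical inclusions. First I would record what those inclusions are: the map $\CSS_{\sP(\cD)^S} \hookrightarrow \CSS_{\sP(\cD)}$ is the inclusion of underlying quasi-categories coming from the Bousfield localization of \cref{the:spd enriched css s}, and $\LFib^S_\cD \hookrightarrow \LFib_\cD$ is the full sub-quasi-category on $S$-localized $\cD$-left fibrations, which is a full sub-right-fibration over that inclusion (the relevant recognition of $S$-localized $\cD$-left fibrations among $\cD$-left fibrations being \cref{prop:fibrancies}). Commutativity of the square then reduces to the claim that the $S$-localized universal fibration $\widehat{\pi_*}\colon \widehat{N_\cD\sP(\cD)^S_{D[0]/}} \to \widehat{N_\cD\sP(\cD)^S}$ agrees with the restriction of the non-localized universal $\cD$-left fibration $\widehat{N_\cD\sP(\cD)_{D[0]/}} \to \widehat{N_\cD\sP(\cD)}$ along $\widehat{N_\cD\sP(\cD)^S} \hookrightarrow \widehat{N_\cD\sP(\cD)}$: that restriction is a $\cD$-left fibration whose fiber over a point (a small $S$-local $\cD$-space) is that $\cD$-space, hence is $S$-localized by \cref{prop:fibrancies}, and one then identifies it with $\widehat{\pi_*}$ via the Yoneda lemma \cite[Theorem 5.7.1]{riehlverity2018elements} once the top map is known to be an equivalence.

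To prove the top map $(\CSS_{\sP(\cD)^S})_{/\widehat{N_\cD\sP(\cD)^S}} \to \LFib^S_\cD$ is an equivalence, I would invoke \cite[Lemma 2.2.3.6]{lurie2009htt} to reduce to a fiber-wise statement over $\CSS_{\sP(\cD)^S}$. Fix an $S$-localized $\sP(\cD)$-enriched complete Segal space $X$; it is in particular a $\cD$-simplicial space with weakly constant objects, so \cref{lemma:zigzag strictification} gives a zigzag of level-wise complete Segal space equivalences connecting it to $N_\cD\C_X$. Since such equivalences preserve the mapping $\cD$-spaces, $N_\cD\C_X$ satisfies the same localness condition (\cref{def:sloc spd enriched css}), and in any case $X$ and $N_\cD\C_X$ represent the same object of $\CSS_{\sP(\cD)^S}$, so it suffices to compute the fiber over $N_\cD\C_X$. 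There the fiber of the slice is $\Map(\C_X,\widehat{N_\cD\sP(\cD)^S}) \simeq \Map(\C_X,\sP(\cD)[\cW^{-1}_S])$, using that $\widehat{N_\cD\sP(\cD)^S}$ and $\sP(\cD)[\cW^{-1}_S]$ are both the underlying quasi-category of $S$-local $\cD$-spaces; the fiber of $\LFib^S_\cD$ is, by construction, the underlying $\infty$-groupoid of $\LFib^S_{\cD/N_\cD\C_X}$; and the map between them is given by pulling back $\widehat{\pi_*}$ by the $S$-localized analogue of \cref{lem:universal fib comp lfib}, proven exactly as \cref{prop:universal fib lfib} using that the completion $\widehat{\pi_*}$ is a Dwyer–Kan equivalence (\cref{the:spd enriched css s}). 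That this map is an equivalence is then precisely \cref{cor:equiv qcat s} after passing to underlying $\infty$-groupoids.

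Finally I would assemble the square: the bottom row is \cref{cor:lfib classifier}, the top row was just treated, the vertical arrows are the inclusions from the first paragraph, and the square commutes by the identification of $\widehat{\pi_*}$ with the restriction of the non-localized universal fibration. I expect the main obstacle to be organizational rather than conceptual: one must verify carefully that $\LFib^S_\cD$ is genuinely a right fibration sitting as a \emph{full} sub-right-fibration of $\LFib_\cD$ over $\CSS_{\sP(\cD)^S}\hookrightarrow\CSS_{\sP(\cD)}$, and that the completions (\cref{the:spd enriched css s}), the $S$-localized $\cD$-covariant model structure (\cref{the:cdcov s}), and their underlying quasi-categories interact as expected — all the ingredients are already in place, but threading them through coherently is where the effort lies.
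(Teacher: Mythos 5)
Your proposal is correct and is essentially what the paper intends, since the paper states \cref{cor:lfib classifier s} as an ``analogous result'' after \cref{cor:lfib classifier} without writing out a proof. Your filling-in of the argument — reducing the top horizontal equivalence to fibers via \cite[Lemma 2.2.3.6]{lurie2009htt}, invoking \cref{cor:equiv qcat s} in place of \cref{cor:equiv lfib qcat}, proving the $S$-localized analogue of \cref{lem:universal fib comp lfib} via the Dwyer--Kan property of the completion, and then observing that commutativity of the square amounts to identifying $\widehat{\pi_*}$ over $\widehat{N_\cD\sP(\cD)^S}$ with the restriction of the non-localized universal $\cD$-left fibration (which is $S$-local fiber-wise by \cref{prop:fibrancies}) — is exactly the route the paper's setup signals, and \cref{rem:universal lfib s} confirms that the pullback-compatibility you checked is the intended content of the square.
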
 

\begin{remone}\label{rem:universal lfib s}
	\cref{cor:lfib classifier s} implies that $\widehat{\pi_*}: \widehat{N_\cD\sP(\cD)^S_{D[0]/}} \to \widehat{N_\cD\sP(\cD)^S}$ is a {\it universal $S$-localized $\cD$-left fibration}. More interestingly, building on \cref{rem:universal lfib}, the inclusions of right fibrations in \cref{cor:lfib classifier s} imply that for every $\cD$-left fibration $p:L \to N_\cD\C$, there is a (homotopically unique) pullback of $\cD$-simplicial spaces
	\begin{center}
		\begin{tikzcd}[row sep=0.5in, column sep=0.5in]
			L \arrow[r, dashed] \arrow[d,"p"] \arrow[dr, phantom, "\ulcorner", very near start] \arrow[rr, bend right=20]& \widehat{N_\cD\sP(\cD)}^S_{D[0]/} \arrow[r] \arrow[d] \arrow[dr, phantom, "\ulcorner", very near start] & \widehat{N_\cD\sP(\cD)}_{D[0]/}  \arrow[d] \\
			N_\cD\C \arrow[r, dashed] \arrow[rr, bend right=20]&  \widehat{N_\cD\sP(\cD)}^S \arrow[r, hookrightarrow] & \widehat{N_\cD\sP(\cD)}
		\end{tikzcd}.
	\end{center}
	that factors through $N_\cD\sP(\cD)^S$ if and only if $p$ is $S$-localized.
\end{remone} 

Again, \cite[Proposition 2.4]{rasekh2021univalence} implies the following result regarding univalence.

\begin{corone} \label{cor:universal lfib univalent s}
	The universal $\cD$-left fibration $\widehat{\pi_*}: \widehat{N_\cD\sP(\cD)}_{D[0]/} \to \widehat{N_\cD\sP(\cD)}$ is univalent.
\end{corone}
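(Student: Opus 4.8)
The plan is to obtain univalence as a formal consequence of the classification result \cref{cor:lfib classifier s} together with the characterization of univalent morphisms from \cite[Proposition 2.4]{rasekh2021univalence}. Recall that, given the universal right fibration $\sOall_{\CSS_{\sP(\cD)^S}} \to \CSS_{\sP(\cD)^S}$ representing the functor $c \mapsto ((\CSS_{\sP(\cD)^S})_{/c})^{grpd}$, a morphism $p$ of $\CSS_{\sP(\cD)^S}$ is univalent precisely when the map of quasi-categories $(\CSS_{\sP(\cD)^S})_{/B} \to \sOall_{\CSS_{\sP(\cD)^S}}$ classifying the pullbacks of $p$ (obtained via the Yoneda lemma \cite[Theorem 5.7.1]{riehlverity2018elements}) is a monomorphism, equivalently a fully faithful functor which is moreover injective on equivalence classes of objects. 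So it suffices to verify that this classifying map, for $p = \widehat{\pi_*}$ and $B = \widehat{N_\cD\sP(\cD)^S}$, is a monomorphism.

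First I would observe that the classifying map of $\widehat{\pi_*}$ is exactly the composite
$$(\CSS_{\sP(\cD)^S})_{/\widehat{N_\cD\sP(\cD)^S}} \longrightarrow \LFib^S_\cD \hookrightarrow \sOall_{\CSS_{\sP(\cD)^S}},$$
since, by construction, the first map sends $F\colon \C \to \widehat{N_\cD\sP(\cD)^S}$ to the pullback $F^*\widehat{\pi_*}$, and the second is the inclusion of the full sub-right-fibration spanned by the $S$-localized $\cD$-left fibrations. The second map is a monomorphism because a full subcategory inclusion is fully faithful and injective on equivalence classes of objects; the first map is an equivalence of quasi-categories by \cref{cor:lfib classifier s}; hence their composite is a monomorphism.

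Then I would conclude by invoking \cite[Proposition 2.4]{rasekh2021univalence}: since the classifying map of $\widehat{\pi_*}$ is a monomorphism, $\widehat{\pi_*}$ is a univalent morphism in $\CSS_{\sP(\cD)^S}$, which is the assertion of the corollary (the unlocalized statement being \cref{cor:universal lfib univalent}).

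The only obstacle here is a bookkeeping one rather than a genuine difficulty: one must make precise that the equivalence provided by \cref{cor:lfib classifier s}, together with the inclusion $\LFib^S_\cD \hookrightarrow \sOall_{\CSS_{\sP(\cD)^S}}$, assembles into the classifying map of $\widehat{\pi_*}$ in the exact sense demanded by the univalence criterion of \cite[Proposition 2.4]{rasekh2021univalence}, and that $\LFib^S_\cD$ is genuinely a full subcategory so that its inclusion into $\sOall_{\CSS_{\sP(\cD)^S}}$ is a monomorphism. Both points are immediate from the constructions set up just before \cref{cor:lfib classifier s}, so the argument proceeds without complication, following the same pattern as the proof of \cref{cor:universal lfib univalent}.
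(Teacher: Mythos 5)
Your proposal is correct and is exactly the argument the paper has in mind: the corollary is stated as a direct consequence of \cref{cor:lfib classifier s} via \cite[Proposition 2.4]{rasekh2021univalence}, and your unwinding of "classifying map equals the equivalence followed by the full-subcategory inclusion, hence a monomorphism" is just making that implication explicit. The paper additionally remarks one could instead combine \cref{cor:universal lfib univalent} with \cite[Proposition 2.5]{rasekh2021univalence}, but that is an alternative route, not a gap in yours.
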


Notice that we could have deduced \cref{cor:universal lfib univalent s} also by using \cref{cor:universal lfib univalent} with \cite[Proposition 2.5]{rasekh2021univalence}.

\subsection{Further Properties of the $S$-Localized \texorpdfstring{$\cD$}{D}-Left Fibrations}\label{subsec:sloc features}
We move on to understand the $S$-localized $\cD$-covariant model structure, in particular its equivalences.

 \begin{theone} \label{the:cdcov s equiv lfib}
	Let $L$ and $M$ be two $S$-localized $\cD$-left fibrations over $X$.
	Let $g:L \to M$ be a map over $X$. Then the following are equivalent.
	\begin{enumerate}
		\item $g: L \to M$ is an $S$-localized $\cD$-covariant equivalence.
		\item $g: L \to M$ is an injective equivalence of $\cD$-simplicial spaces.
		\item $\Val(g): \Val(L) \to \Val(M)$ is an injective equivalence of $\cD$-spaces.
		\item For every $\{ x \}: F[d,0] \to X$, the map  $\Fib_x \Val(L) \to \Fib_x \Val(M)$ is an injective equivalence of $\cD$-simplicial spaces.
		\item For every $\{x \}: F[d,0] \to X$, the map $\Fib_x(L) \to \Fib_x (M)$ is a $\cD$-diagonal equivalence of $\cD$-simplicial spaces.
	\end{enumerate}
\end{theone}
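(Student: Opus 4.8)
The plan is to reduce the statement to the non-localized recognition theorem \cref{the:ncov equiv nlfib} together with the standard behaviour of weak equivalences between fibrant objects under a left Bousfield localization.

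First I would dispose of the implications among conditions (2)--(5), none of which mentions $S$. These are, up to the identification noted below, exactly conditions (2), (3), (5) and (6) of \cref{the:ncov equiv nlfib} applied to the pair of $\cD$-left fibrations $L,M$ over $X$ (both are $\cD$-left fibrations since they are $S$-localized $\cD$-left fibrations): condition (2) is an injective equivalence; condition (3) says $\Val(g)\colon \Val(L)\to\Val(M)$ is an injective equivalence of $\cD$-spaces, which by \cref{the:valmodel} is precisely the statement that $g$ is a $\cD$-value equivalence; for condition (4), since $\Val=(\incl_0)^*$ is a right adjoint (\cref{def:val}) it preserves the pullbacks defining fibres, so $\Fib_x\Val(L)\cong\Val(\Fib_xL)$ for every $\{x\}\colon F[d,0]\to X$, and (4) becomes the assertion that each $\Fib_xg$ is a $\cD$-value equivalence, i.e.\ that $g$ is a fiberwise $\cD$-value equivalence; and condition (5) is the statement that $g$ is a fiberwise $\cD$-diagonal equivalence. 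Hence $(2)\Leftrightarrow(3)\Leftrightarrow(4)\Leftrightarrow(5)$ is immediate from \cref{the:ncov equiv nlfib}, which moreover identifies all of them with ``$g$ is a $\cD$-covariant equivalence''.

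It then remains to establish $(1)\Leftrightarrow(2)$. By \cref{the:cdcov s} the $S$-localized $\cD$-covariant model structure is the left Bousfield localization of the $\cD$-covariant model structure at the maps $\VEmb(A)\to\VEmb(B)\to X$ with $A\to B$ in $S$, and by \cref{def:cdleft s} the $S$-localized $\cD$-left fibrations $L\to X$ and $M\to X$ are exactly the fibrant objects of this localization. For $(2)\Rightarrow(1)$: the $\cD$-covariant model structure is itself a localization of the injective model structure on $\sP(\cD\times\DD)_{/X}$ (\cref{the:ncov model}), so an injective equivalence over $X$ is a $\cD$-covariant equivalence and hence a fortiori an $S$-localized $\cD$-covariant equivalence. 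For $(1)\Rightarrow(2)$: a weak equivalence between fibrant objects of a left Bousfield localization is a weak equivalence in the model structure being localized --- here the $\cD$-covariant model structure --- by a standard property of left Bousfield localizations \cite{hirschhorn2003modelcategories}; thus $g$ is a $\cD$-covariant equivalence between $\cD$-left fibrations, and \cref{the:ncov equiv nlfib} then upgrades it to an injective equivalence. Combining the two blocks yields the claimed five-fold equivalence.

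I do not expect a serious obstacle: the analytic content is already packaged in \cref{the:ncov equiv nlfib} and in the construction of the $S$-localized $\cD$-covariant model structure, so what remains is bookkeeping. The only points needing a little care are the identification $\Fib_x\Val(-)\cong\Val(\Fib_x(-))$ --- so that condition (4) lines up with the fiberwise $\cD$-value condition of \cref{the:ncov equiv nlfib} --- and the bare observation that ``$S$-localized $\cD$-left fibration'' and ``fibrant object of the $S$-localized $\cD$-covariant model structure'' denote the same notion, which is immediate from \cref{def:cdleft s} and \cref{the:cdcov s}.
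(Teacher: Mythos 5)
Your proof is correct and takes essentially the same route as the paper's: the paper also splits into $(1)\Leftrightarrow(2)$, using the Bousfield-localization fact that a weak equivalence between fibrant objects of the $S$-localized $\cD$-covariant model structure is already a $\cD$-covariant (hence, between $\cD$-left fibrations, injective) equivalence, and then deduces the equivalence of $(2)$--$(5)$ from \cref{the:ncov equiv nlfib}. The only thing you add that the paper leaves implicit is the identification $\Fib_x\Val(-)\cong\Val(\Fib_x(-))$ needed to line up condition $(4)$ with the fiberwise $\cD$-value condition of \cref{the:ncov equiv nlfib}.
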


\begin{proof}
	The first two conditions are equivalent as $L \to X$, $M \to X$ are fibrant objects in the $S$-localized $\cD$-covariant model structure and \cref{the:cdcov s}. The other condition are all equivalent to the second condition by \cref{the:ncov equiv nlfib} and the fact that $L \to X, M \to X$ are $\cD$-left fibrations. 
\end{proof}

\begin{propone} \label{prop:cdcov s equiv lfib}
    Let $\cD$ have a terminal object $t$, let $S$ be a set of cofibrations and assume $X$ has weakly constant objects. 
	Let $p: L \to X$, $q: M \to X$ be $\cD$-left fibrations (not necessarily localized) and	let $f:L \to M$ be a map over $X$. 
	Then the following are equivalent:
	\begin{enumerate}
		\item $f$ is an $S$-localized $\cD$-covariant equivalence.
		\item For every object $\{x\}: F[t,0] \to X$, the induced map on fibers 
		$$\Fib_x L \to \Fib_x M$$
		is a diagonal $S$-localized injective equivalence of $\cD$-simplicial spaces.
		\item For every object $\{x\}: F[t,0] \to X$, the induced map on fibers 
		$$\Val(\Fib_x L) \to \Val(\Fib_x M)$$
		is an $S$-localized injective equivalence of $\cD$-spaces.
	\end{enumerate}
\end{propone}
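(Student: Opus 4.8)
The strategy is to reduce this to the $S$-localized Grothendieck construction over a nerve (\cref{the:grothendieck simp s}), exactly in the spirit of how \cref{cor:reconition principle s} was proved, but carried out through the strictification zigzag of \cref{cor:zigzag better} and \cref{the:strictification dleft fib s}. First I would establish the equivalence $(2)\Leftrightarrow(3)$, which is purely local: for a fixed object $x$, the $\cD$-left fibrations restrict to $\cD$-simplicial spaces $\Fib_x L$, $\Fib_x M$ that are fibrant in the $\cD$-diagonal model structure (\cref{the:diag local ncov}), and on such objects the diagonal $S$-localized injective equivalences are detected by $\fDiag$; since $\fDiag \VEmb = \id$ and the counit $\VEmb\Val(\Fib_xL)\to\Fib_xL$ is a $\cD$-diagonal equivalence (by homotopical constancy of the fibers of a $\cD$-left fibration, \cref{the:diag local ncov}), applying $\fDiag$ to $\Fib_xL\to\Fib_xM$ gives the same information as applying it to $\VEmb\Val(\Fib_xL)\to\VEmb\Val(\Fib_xM)$, i.e. as $\Val(\Fib_xL)\to\Val(\Fib_xM)$. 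More cleanly: $(2)\Leftrightarrow(3)$ is the content of \cref{prop:dcov vs diag s} applied fiberwise. So I would dispatch this equivalence in a couple of lines.

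The main work is $(1)\Leftrightarrow(2)$. The plan is: by \cref{cor:zigzag better}, since $X$ has weakly constant objects there is a natural zigzag $X \xrightarrow{\ \simeq\ } \hat X \xleftarrow{\ \simeq\ } \tilde X$ of level-wise complete Segal space equivalences with $\tilde X$ a level-wise Segal category with discrete value; and by \cref{lemma:zigzag strictification} we may further replace $\tilde X$ by $N_\cD\C_X$ for an $\sP(\cD)$-enriched category $\C_X$. By \cref{the:dcov invariant css s}, each of the induced adjunctions $g_! \dashv g^*$ along these equivalences is a Quillen equivalence of $S$-localized $\cD$-covariant model structures, so $f: L\to M$ over $X$ is an $S$-localized $\cD$-covariant equivalence if and only if the corresponding pullback–pushforward over $N_\cD\C_X$ is. Now over a nerve I can invoke the recognition principle \cref{cor:reconition principle s}: the map over $N_\cD\C_X$ is an $S$-localized $\cD$-covariant equivalence iff for every object $c$ the induced map on $-\times_{N_\cD\C_X} N_\cD(\C_X)_{/c}$ is a diagonal $S$-localized equivalence. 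Then I identify these ``slice pullbacks'' with the fibers: by \cref{cor:value preservation} (together with \cref{lemma:mapping vs overcat} / \cref{rem:ps}) the $\cD$-contravariant fibrant replacement $R_x$ of $\{x\}$ over $X$ corresponds, under strictification, to $N_\cD(\C_X)_{/c}$, and $R_x\times_X(-)$ computes (up to diagonal equivalence) the fiber $\Fib_x(-)$ — indeed this is exactly the reasoning of \cref{cor:enriched seg recognition principle} and \cref{cor:weakly constant recognition principle}, now in the $S$-localized setting. Since objects of $X$ correspond bijectively (up to equivalence) to objects of $\C_X$ by \cref{lemma:surjective path component}, and equivalent objects of $\hat X$ have $\cD$-diagonal-equivalent fibers by \cref{the:diag local ncov}, the condition ``for all $\{x\}: F[t,0]\to X$'' transfers correctly across the zigzag.

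**The main obstacle.** The delicate point is the transfer of fibers along the zigzag $X \to \hat X \leftarrow \tilde X \leftarrow N_\cD\C_X$: one must check that the fiber of $L\to X$ over $x: F[t,0]\to X$ agrees, up to $\cD$-diagonal equivalence, with the fiber over the corresponding object of $N_\cD\C_X$, and that ranging over objects of $X$ is the same as ranging over objects of $\C_X$. This is precisely the argument already assembled inside the proof of \cref{prop:fibrancies} (the chain of equivalences \ref{eq:equiv one}, \ref{eq:equiv two}, \ref{eq:equiv three}), so I would cite that verbatim rather than redo it. Concretely the proof would read: $(2)\Leftrightarrow(3)$ by \cref{prop:dcov vs diag s} applied to each $\Fib_x$; for $(1)\Leftrightarrow(2)$, use \cref{cor:zigzag better}, \cref{lemma:zigzag strictification} and \cref{the:dcov invariant css s} to reduce to the case $X = N_\cD\C$, apply \cref{cor:reconition principle s}, identify the slice pullbacks with fibers via \cref{ex:sintcd rep}, \cref{the:levelwise Yoneda} and \cref{cor:value preservation}, and finally match up objects using \cref{lemma:surjective path component} together with the fiber-transfer argument from the proof of \cref{prop:fibrancies}.
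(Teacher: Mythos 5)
Your proposal is correct and follows the paper's strategy in broad strokes, but the packaging differs enough to be worth flagging. The paper's proof of $(1)\Leftrightarrow(2)$ is more direct: it invokes \cref{cor:left fib strict} (which already bundles the zigzag reduction to $N_\cD\C_X$) to replace $f:L\to M$ by $\sint_{\cD/\C_X}G\to\sint_{\cD/\C_X}H$, then cites \cref{cor:value preservation} to identify $G(c)\simeq\Fib_cL$ (and likewise for $H$), so that the Quillen equivalence of \cref{the:strictification dleft fib s} immediately translates the $S$-localized $\cD$-covariant condition into the level-wise condition on the functors, which \emph{is} condition $(2)$. You instead go through the recognition principle \cref{cor:reconition principle s}, which characterizes equivalences by testing against $-\times_{N_\cD\C}N_\cD\C_{/c}$, and then you have to separately identify those slice pullbacks with the fibers (via the Yoneda lemma \cref{the:levelwise Yoneda} together with the exponentiability result \cref{the:nlfib pb nrfib}, not \cref{lemma:mapping vs overcat}/\cref{rem:ps}, which concern the base rather than fibers of a left fibration) and then transfer fibers across the zigzag via the argument from \cref{prop:fibrancies}. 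All of this works, but it re-derives, step by step, what \cref{cor:left fib strict} and \cref{cor:value preservation} were packaged to provide. One small inaccuracy: your clean restatement of $(2)\Leftrightarrow(3)$ as "\cref{prop:dcov vs diag s} applied fiberwise'' is not quite it — what you actually need (and correctly write out in the earlier part) is that $\Fib_xL$ is $\cD$-diagonally fibrant, so $\VEmb\Val(\Fib_xL)\to\Fib_xL$ is an injective equivalence and hence $\fDiag(\Fib_xL)\simeq\Val(\Fib_xL)$; \cref{prop:dcov vs diag s} alone does not encode this.
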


\begin{proof} 
	{\it (1) $\Leftrightarrow$ (2)} 
	The map $L \to M$ is level-wise CSS equivalent (via a zigzag of equivalences by \cref{cor:left fib strict}) to a map of the form $\sint_{\cD/\C_X} G \to \sint_{\cD/\C_X} H$, where $G:\C \to \sP(\cD)$ is an $\sP(\cD)$-enriched functor with $G(c) \simeq \Fib_cL$ (\cref{cor:value preservation}) and the same for $H$.  Hence, by the Quillen equivalence in \cref{the:strictification dleft fib}, $L \to M$ is an equivalence in the $S$-localized $\cD$-covariant model structure if and only if $G \to H$ is a level-wise equivalence in the $S$-localized injective model structure, which is precisely the statement that $\Fib_cL \to \Fib_cM$ is a diagonal $S$-localized injective equivalence.

	{\it (2) $\Leftrightarrow$ (3)}
	This follows from the fact that $L \to X$ is a $\cD$-left fibration and so 
	$\VEmb\Val\Fib_x(L) \to \Fib_x(L)$ is an injective equivalence (\cref{the:diag local ncov}). 
\end{proof}
	
\begin{theone} \label{the:recognition principle s}
	Let $\cD$ have a terminal object $t$, let $S$ be a set of cofibrations and assume $X$ has a weakly constant objects. 
	A map $g:Y \to Z$ of $\cD$-simplicial spaces over $X$ is an equivalence in the $S$-localized $\cD$-covariant model structure if and only if for each map $\{x\}: F[t,0] \to X$, the induced map 
	$$ Y \underset{X}{\times} R_x \to Z \underset{X}{\times} R_x$$
	is an equivalence in the diagonal $S$-localized injective model structure.
	Here $R_x$ is a choice of $\cD$-right fibrant replacement of the map $\{x\}$.
\end{theone}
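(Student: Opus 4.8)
The plan is to reduce the statement to the fibre-wise criterion \cref{prop:cdcov s equiv lfib} and otherwise follow the shape of the proof of \cref{the:recognition principle}, keeping careful track of which flavour of equivalence each comparison map carries. First I would choose \emph{non-localized} $\cD$-left fibrant replacements $Y \to \tilde{Y}$ and $Z \to \tilde{Z}$ over $X$ in the $\cD$-covariant model structure (\cref{the:ncov model}), so that $\tilde{Y}\to X$ and $\tilde{Z}\to X$ are $\cD$-left fibrations and $Y\to\tilde{Y}$, $Z\to\tilde{Z}$ are $\cD$-covariant equivalences. Since the $S$-localized $\cD$-covariant model structure is a Bousfield localization of the $\cD$-covariant one (\cref{the:cdcov s}), these replacement maps are in particular $S$-localized $\cD$-covariant equivalences, so by two-out-of-three $g$ is an $S$-localized $\cD$-covariant equivalence if and only if $\tilde{g}\colon \tilde{Y}\to\tilde{Z}$ is. Because $\tilde{Y}$ and $\tilde{Z}$ are $\cD$-left fibrations, $\cD$ has a terminal object and $X$ has weakly constant objects, \cref{prop:cdcov s equiv lfib} then says that $\tilde{g}$ is an $S$-localized $\cD$-covariant equivalence if and only if for every $\{x\}\colon F[t,0]\to X$ the map of fibres $\Fib_x\tilde{Y}\to\Fib_x\tilde{Z}$ is a diagonal $S$-localized injective equivalence.

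It remains to compare $\Fib_x\tilde{Y}=\tilde{Y}\underset{X}{\times}F[t,0]$ with $Y\underset{X}{\times}R_x$, which I would do via the commutative diagram
\begin{center}
\begin{tikzcd}[row sep=0.4in, column sep=0.5in]
 Y \underset{X}{\times} R_x \arrow[r] \arrow[d] & Z \underset{X}{\times} R_x \arrow[d] \\
 \tilde{Y} \underset{X}{\times} R_x \arrow[r] & \tilde{Z} \underset{X}{\times} R_x \\
 \tilde{Y} \underset{X}{\times} F[t,0] \arrow[r] \arrow[u] & \tilde{Z} \underset{X}{\times} F[t,0] \arrow[u]
\end{tikzcd}
\end{center}
where the upper verticals are induced by $Y\to\tilde{Y}$, $Z\to\tilde{Z}$ and the lower verticals by the structure map $F[t,0]\to R_x$. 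The upper verticals are $\cD$-covariant equivalences over $X$: the functor $-\underset{X}{\times}R_x$ is, up to the projection to $X$, the left adjoint $p_!p^*$ of \cref{the:nlfib pb nrfib} attached to the $\cD$-right fibration $p\colon R_x\to X$, hence left Quillen for the $\cD$-covariant model structure, and every object is cofibrant. The lower verticals are $\cD$-contravariant equivalences over $X$: now $\tilde{Y}\underset{X}{\times}-$ and $\tilde{Z}\underset{X}{\times}-$ are the left adjoints $q_!q^*$ attached to the $\cD$-left fibrations $\tilde{Y}\to X$, $\tilde{Z}\to X$ (the $\cD$-contravariant dual of \cref{the:nlfib pb nrfib}, cf.\ \cref{rem:nrfib}), and $F[t,0]\to R_x$ is a $\cD$-contravariant equivalence by the defining factorization of $R_x$. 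Since the $\cD$-diagonal model structure over $X$ is a localization of both the $\cD$-covariant and the $\cD$-contravariant model structures (\cref{the:diag local ncov} and its dual), and the diagonal $S$-localized injective model structure is a further localization of the $\cD$-diagonal one (\cref{the:diagmodel s}, via \cref{cor:localizing equiv}), all four vertical maps are diagonal $S$-localized injective equivalences.

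A double application of two-out-of-three then shows that the top map $Y\underset{X}{\times}R_x\to Z\underset{X}{\times}R_x$ is a diagonal $S$-localized injective equivalence if and only if the bottom map $\Fib_x\tilde{Y}\to\Fib_x\tilde{Z}$ is; combining this with the previous paragraph finishes the proof for any fixed choice of $R_x$, and since the two intermediate conditions never mention $R_x$ the criterion is automatically independent of the choice (as one would expect, paralleling \cref{cor:reconition principle s}). I do not expect a serious obstacle: the only non-formal ingredient, \cref{prop:cdcov s equiv lfib}, is already available, and the rest is an exercise in composing Quillen adjunctions. The one point that genuinely requires care is invoking the chain of localizations in the correct direction --- $\cD$-covariant and $\cD$-contravariant equivalences feed \emph{into} $\cD$-diagonal equivalences, which in turn feed into diagonal $S$-localized equivalences --- so that the upper and lower verticals really do land among the diagonal $S$-localized injective equivalences without any Cartesianity or contractible-codomain hypothesis on $S$.
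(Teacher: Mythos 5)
Your proof is correct and takes essentially the same route as the paper's: both pass to a $\cD$-left fibrant replacement in the unlocalized $\cD$-covariant model structure, set up the same zigzag comparison between the fibre over $F[t,0]$ and the pullback along $R_x$, identify the vertical maps as $\cD$-contravariant respectively $\cD$-covariant equivalences via \cref{the:nlfib pb nrfib} and its dual, pass through the chain of localizations to diagonal $S$-localized injective equivalences, and invoke \cref{prop:cdcov s equiv lfib} to close the loop. The only difference is cosmetic --- you invoke \cref{prop:cdcov s equiv lfib} before the zigzag, whereas the paper does so afterwards.
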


\begin{proof}
	Let $\hat{g}: \hat{Y} \to \hat{Z}$ be a fibrant replacement of $g$ in the $\cD$-covariant model structure (note: not localized). 
	Moreover, let $\{x \}: F[t,0] \to X$ be a vertex in $X$.
	This gives us the following zigzag of maps:
	\begin{center}
		\begin{tikzcd}[row sep=0.25in, column sep=0.25in]
			\hat{Y} \underset{X}{\times} F\leb t\comma0\reb \arrow[d, "\simeq"'] \arrow[r] & \hat{Z} \underset{X}{\times} F\leb t\comma0\reb \arrow[d, "\simeq"] \\
			\hat{Y} \underset{X}{\times} R_x \arrow[r] &\hat{Z} \underset{X}{\times} R_x \\
			Y \underset{X}{\times} R_x \arrow[r] \arrow[u, "\simeq"] & Z \underset{X}{\times} R_x \arrow[u, "\simeq"']
		\end{tikzcd}
		.
	\end{center}
	According to \cref{the:nlfib pb nrfib} the top vertical maps are $\cD$-contravariant equivalences and the bottom vertical maps are $\cD$-covariant equivalences. By \cref{the:diag local ncov} both of these are $\cD$-diagonal equivalences, which are always	diagonal localized injective equivalences (\cref{the:diagmodel s}). Thus the top map is a diagonal localized injective equivalence if and only if the bottom map is one, but, by \cref{prop:cdcov s equiv lfib}, this is equivalent to $Y \to Z$ being a localized $\cD$-covariant equivalence over $X$.
\end{proof}

We can use this result to prove the following invariance property, giving a result similar to \cref{the:cdcov s inv qe} with different assumptions.

\begin{theone} \label{the:cdcov s inv qe two}
	Let $\cD_1,\cD_2$ be two small categories with terminal objects $t_1, t_2$. Moreover, let $S_1$ ($S_2$) be a set of monomorphisms in $\sP(\cD_1)$ ($\sP(\cD_2)$) with contractible codomain. Assume there exists a Quillen adjunction 
	\begin{center}
		\begin{tikzcd}
			\sP(\cD_1)^{inj_{S_1}} \arrow[r, "L", shift left=1.8, "\bot"'] & \sP(\cD_2)^{inj_{S_2}} \arrow[l, "R", shift left=1.8]
		\end{tikzcd}.
	\end{center}	
	Let $X$ be a $\cD_1$-simplicial space with weakly constant objects.	Then this induces a Quillen adjunction (using \cref{not:slsr}).
	\begin{center} 
		\begin{tikzcd}
			(\sP(\cD_1\times\DD)_{/X})^{\cD_1-cov_{S_1}} \arrow[r, "sL", shift left=1.8, "\bot"'] & (\sP(\cD_2\times\DD)_{/sLX})^{\cD_2-cov_{S_2}} \arrow[l, "sR", shift left=1.8]
		\end{tikzcd},
	\end{center} 
	for all $\sP(\cD_1)$-simplicial spaces $X$ which is simplicial or a Quillen equivalence if $(L,R)$ is one.
\end{theone}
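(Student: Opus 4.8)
The plan is to follow the strategy of the proof of \cref{the:cdcov s inv qe}, replacing its hypothesis that the base is level-wise fibrant by the present hypotheses that $X$ has weakly constant objects and that $S_1$ and $S_2$ have contractible codomains; these are exactly what will let us identify the relevant localized level-wise model structures with the $\cD$-covariant ones over bases that need not be level-wise fibrant. First I would apply $\Fun(\DD^{op},-)$ to the Quillen adjunction $(L,R)$ and give both sides the injective model structure, obtaining by \cite[Remark A.2.8.6]{lurie2009htt} a Quillen adjunction $sL\dashv sR$ between the level-wise $S_1$- and $S_2$-localized model structures $\sP(\cD_1\times\DD)^{lev_{S_1}}$ and $\sP(\cD_2\times\DD)^{lev_{S_2}}$ of \cref{the:sone local level stwo} (with no localization in the $\DD$-direction), which is simplicial, respectively a Quillen equivalence, whenever $(L,R)$ is, since everything in these model structures is detected in the $\DD$-direction where $sL$ and $sR$ act as $L$ and $R$. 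Passing to the forward slices then yields a Quillen adjunction $sL_X\dashv sR_X$ with $sL_X(A\to X)=(sLA\to sLX)$ between $(\sP(\cD_1\times\DD)_{/X})^{lev_{S_1}}$ and $(\sP(\cD_2\times\DD)_{/sLX})^{lev_{S_2}}$, again simplicial, respectively a Quillen equivalence, if $(L,R)$ is.

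Next I would localize both sides. Since $\cD_1$ has a terminal object, $D[n]\cong F[t_1,n]$, and using \cref{lemma:exp cdleft} the first-vertex map $\VEmb(A)\to D[n]\times\VEmb(A)$ is a trivial $\cD_1$-covariant cofibration; hence the level-wise $S_1$-localizing maps $D[n]\times\VEmb(A)\to D[n]\times\VEmb(B)\to X$ become, up to $\cD_1$-covariant equivalence, the localizing maps $\VEmb(A)\to\VEmb(B)\to X$ of \cref{the:cdcov s}. Together with \cref{the:ncov model}, \cref{the:cdcov s} and \cref{cor:fibrancies} (the last of which uses that $X$ has weakly constant objects and $S_1$ has contractible codomains, via the fibrant-object characterization of \cref{prop:fibrancies}), this identifies the localization of $(\sP(\cD_1\times\DD)_{/X})^{lev_{S_1}}$ at the maps $F[d,0]\to F[d,n]\to X$ together with $\VEmb(A)\to\VEmb(B)\to X$ with the $S_1$-localized $\cD_1$-covariant model structure over $X$, and likewise on the right for $sLX$ and $S_2$. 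By \cref{cor:localizing equiv} the localized adjunction remains a Quillen adjunction once we check that $sL$ sends the left-hand localizing maps to weak equivalences on the right. For the maps $F[d,0]\to F[d,n]\to X$ this follows from $sL(F[d,n])\cong sL(F[d,0])\times D[n]$ (as $sL$ preserves coproducts and $D[n]$ is level-wise a finite coproduct of terminal objects) and the enrichment of the $\cD_2$-covariant model structure over $\sP(\cD_2)^{inj}$. For the maps $\VEmb(A)\to\VEmb(B)\to D[0]\to X$ — which by \cref{cor:fibrancies} suffice — one uses $sL\circ\VEmb\cong\VEmb\circ L$ and $sL(D[0])\cong\VEmb(L(\ast_{\cD_1}))$, so the image is a map $\VEmb(LA)\to\VEmb(LB)$ over a base of the form $\VEmb(C)$; since $\Und_{d'}\VEmb(C)$ is homotopically constant for every $d'$, the $\cD_2$-covariant and $\cD_2$-diagonal $S_2$-localized model structures over $\VEmb(C)$ have the same weak equivalences by \cref{the:diag local ncov s}, and there $\VEmb(LA)\to\VEmb(LB)$ is one because $\fDiag\circ\VEmb=\id$ and $LA\to LB$ is an $S_2$-local injective equivalence ($L$ being left Quillen); finally it is an $S_2$-localized $\cD_2$-covariant equivalence over $sLX$ as well by \cref{the:dcov invariant css s}.

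For the Quillen equivalence and simplicial statements I would argue as in \cref{the:cdcov s inv qe}: the underlying level-wise adjunction is already a Quillen equivalence (resp.\ simplicial), so by \cref{cor:localizing equiv} it suffices to verify that $M\to sLX$ is an $S_2$-localized $\cD_2$-left fibration if and only if $sR_X M\to X$ is an $S_1$-localized $\cD_1$-left fibration. That $sR_X M\to X$ is a $\cD_1$-left fibration follows since $sR$ is right Quillen for the injective model structures and hence preserves, level-wise and fiber-wise, the homotopy pullback squares characterizing left fibrations; the $S_1$-locality is then obtained from \cref{prop:fibrancies} together with \cref{prop:cdcov s equiv lfib}, reducing to the fibers over points of $X$ and using that $R$ reflects weak equivalences between $S_2$-local $\cD_2$-spaces.

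The main obstacle is the identification in the second paragraph of the localized level-wise $S$-model structure over $X$ (resp.\ $sLX$) with the $S$-localized $\cD$-covariant model structure: in contrast to \cref{the:cdcov s inv qe}, where level-wise fibrancy of the base makes $\Val_k$ of the base $S$-local so that \cref{the:cdcov loc css s} applies directly, here only weakly constant objects are available, so the comparison has to be routed through \cref{cor:fibrancies} and \cref{prop:fibrancies} and, in particular, through the fibers over points — an $S$-localized $\cD$-left fibration over such a base need not have $S$-local higher values, so one cannot argue level-wise.
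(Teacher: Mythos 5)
Your Quillen adjunction argument is essentially sound and follows the paper's route: localize the level-wise adjunction, identify the resulting localization with the $S$-localized covariant model structure via \cref{cor:fibrancies} and \cref{prop:fibrancies} (which is where weakly constant objects and contractible codomains enter), and use \cref{cor:localizing equiv} with the observations that $sL$ commutes with $\VEmb$ and products with $D[n]$.

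However, the Quillen equivalence part has a genuine gap. You write that ``the underlying level-wise adjunction is already a Quillen equivalence'' so that \cref{cor:localizing equiv} reduces everything to matching up fibrant objects. But the slice adjunction you set up is the \emph{forward} slice $sL_X \dashv sR_X$ with $sL_X(A\to X)=(sLA\to sLX)$ and $sR_X(M\to sLX)=(sRM\times_{sRsLX}X\to X)$, taken over a base $X$ which is \emph{not} level-wise fibrant (it only has weakly constant objects). The fact that $(sL,sR)$ is a Quillen equivalence unsliced does not make this forward slice a Quillen equivalence: a fibration $M\to sLX$ does not yield a fibrant $M$ (since $sLX$ is not fibrant), so the derived-unit/counit criterion for $(sL,sR)$ does not apply to $M$, and moreover $sR_X$ involves a pullback along the \emph{non-derived} unit $X\to sRsLX$, which is not an equivalence for non-fibrant $X$. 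This is exactly why \cref{the:cdcov s inv qe} restricted to level-wise fibrant bases and slices ``backward'' over a fibrant object $Y$ with $sRY\simeq X$; for a general $X$ with weakly constant objects that device is unavailable, and you cannot apply \cref{cor:localizing equiv} without first establishing the unlocalized sliced Quillen equivalence.

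The paper closes this gap differently. It first uses the zigzag of Quillen equivalences in \cref{the:strictification dleft fib s} (valid because $X$ has weakly constant objects) to reduce to the case $X=N_{\cD_1}\C_1$, a nerve of an $\sP(\cD_1)$-enriched category, where one can compute $sLX\cong N_{\cD_2}\C_2$ with $\uMap_{\C_2}(c,d)\simeq L\uMap_{\C_1}(c,d)$. Over such a base the Quillen equivalence is verified \emph{directly}, not via \cref{cor:localizing equiv}: one checks (i) the derived unit on an $S_1$-localized $\cD_1$-left fibration $Y$ restricts over each object $c$ to the derived unit of $(L,R)$ applied to the fiber $\Fib_cY$, which is an equivalence since $(L,R)$ is a Quillen equivalence, and (ii) $sR$ reflects equivalences between $S_2$-localized $\cD_2$-left fibrations, again fiber-wise using \cref{the:cdcov s equiv lfib} and \cref{prop:cdcov s equiv lfib} and that $R$ reflects equivalences between fibrant objects. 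This fiber-wise verification over a nerve is the crucial missing step; your final paragraph correctly identifies that the argument must pass through fibers over points, but the mechanism you propose (\cref{cor:localizing equiv} applied to a slice Quillen equivalence) is not available here.
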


\begin{proof}
	First let us assume $(L,R)$ is a Quillen adjunction. Using again the fact that $F[d,n] \cong F[d,0] \times D[n]$ (\cref{rem:D s F}) and the same argument as in the proof of \cref{the:cdcov s inv qe}  (and in particular right below \ref{eq:adj two}) it follows that 
	\begin{center} 
		\begin{tikzcd}
			(\sP(\cD_1\times\DD)_{/X})^{\cD_1-cov} \arrow[r, "sL", shift left=1.8, "\bot"'] & (\sP(\cD_2\times\DD)_{/sLX})^{\cD_2-cov} \arrow[l, "sR", shift left=1.8]
		\end{tikzcd}
	\end{center} 
	 is a Quillen adjunction from the $\cD_1$-covariant model structure to the $\cD_2$-covariant model structure. We want to prove it remains a Quillen adjunction of localized model structures. By \cref{cor:fibrancies} in order to obtain localized covariant model structures it suffices to localize with respect to maps of the form 
	 $$\{ \VEmb(A) \to \VEmb(B) \to D[0] \to X: A \to B \in S_1\},$$
	 $$\{ \VEmb(A) \to \VEmb(B) \to D[0] \to sLX: A \to B \in S_2\}.$$
	 By \cref{cor:localizing equiv} it suffices to prove that $sL(\VEmb(A) \to \VEmb(B) \to D[0] \to X) = \VEmb(L(A)) \to \VEmb(L(B)) \to D[0] \to sLX$ is an $S_2$-localized $\cD_2$-covariant equivalence. However, by assumption $(L,R)$ is a Quillen adjunction between $S_1$-localized and the $S_2$-localized injective model structures and so the result again follows from \cref{prop:fibrancies}. This proves the adjunction is a Quillen adjunction, which is simplicial if $(L,R)$ is. 
	 
	 Now, we assume that $(L,R)$ is a Quillen equivalence. Using the zigzag of Quillen equivalences given in \cref{the:strictification dleft fib s} we can assume that $X = N_{\cD_1}\C_1$, where $\C_1$ is an $\sP(\cD_1)$-enriched category. In that case $sL(X)= N_{\cD_2}\C_2$, where $\C_2$ has the same objects as $\C_1 $ and for two objects $c,d$ in $\C_1$, $\uMap_{\C_2}(c,d) \simeq L\uMap_{\C_1}(c,d)$ (using the fact that the left Quillen equivalence $L$ commutes with products up to equivalence).
	 
	 We want to prove the Quillen adjunction 
	 \begin{center} 
	 	\begin{tikzcd}
	 		(\sP(\cD_1\times\DD)_{/N_{\cD_1}\C_1})^{\cD_1-cov} \arrow[r, "sL", shift left=1.8, "\bot"'] & (\sP(\cD_2\times\DD)_{/N_{\cD_2}\C_2})^{\cD_2-cov} \arrow[l, "sR", shift left=1.8]
	 	\end{tikzcd},
	 \end{center} 
	 is a Quillen equivalence. We will prove that the right adjoint reflects equivalences between fibrant objects and the derived unit map is an equivalence of $S_1$-localized $\cD_1$-left fibrations.
	 
	 Let $Y \to N_{\cD_1}\C_1$ be an $S_1$-localized $\cD_1$-left fibration. Let  $\widehat{sL(Y)} \to N_{\cD_2}\C_2$ be the fibrant replacement of $sL(Y) \to N_{\cD_2}\C_2$ in the $S_2$-localized $\cD_2$-covariant model structure and notice by \cref{prop:cdcov s equiv lfib} for every object $c$ in $\C_2$, the fiber $\Fib_csL(Y) \to \Fib_c\widehat{sL(Y)}$ is a fibrant replacement in the $S_2$-localized injective model structure on $\cD_2$-spaces. Applying $sR$ we get the derived unit map $sL(Y) \to sR\widehat{sL(Y)}$, which over an object $c$ restricts to the map  $\Fib_cY \to \Fib_csR\widehat{sL(Y)}$. As $sL,sR$ are defined level-wise and the fiber of a $\cD_1$-left fibration is constant (\cref{the:diag local ncov}) we have the injective equivalence $\Fib_csR\widehat{sL(Y)} \simeq R\widehat{L\Fib_cY}$, meaning the derived unit map $sL(Y) \to sR\widehat{sL(Y)}$ restricts over the object $c$ to the derived unit map of the $cD$-space $\Fib_cY$. By the assumption that $(L,R)$ is a Quillen equivalence, this map is an $S_1$-localized equivalence of $\cD_1$-spaces, which, by \cref{the:cdcov s equiv lfib}, implies that the derived unit map of $L$ is an $S_1$-localized $\cD_1$-covariant equivalence.
	 
	 Now, let $Y \to Z$ be a map of $S_2$-localized $\cD_2$-left fibrations over $N_{\cD_2}\C_2$, such that $sR(Y) \to sR(Z)$ is an $S_1$-localized $\cD_2$-equivalence over $N_{\cD_1}\C_1$. By \cref{the:cdcov s equiv lfib}, the assumption implies that $\Fib_csR(Y) \to \Fib_csR(Z)$ is an injective equivalence of $\cD_1$-spaces (as both $\Fib_csR(Y)$, $\Fib_csR(Z)$ are fibrant in the $S_1$-localized injective model structure). However, this map is injectively equivalent to $R(\Fib_c(Y)) \to R(\Fib_c(Z))$ and $R$ reflects equivalences, which means that $\Fib_c(Y) \to \Fib_(Z)$ is an $S_2$-localized injective equivalence, which implies that $Y \to Z$ is an $S_2$-localized $\cD_2$-covariant equivalence (\cref{the:cdcov s equiv lfib}).
 \end{proof}

We want to generalize \cref{the:dcov invariant css s} to $S$-localized equivalences, however, this requires a technical detour. First we need the following definition.

\begin{defone} \label{def:sloc dk}
	Let $\cD$ have a terminal object $t$. A functor $F: \C \to \D$ of $\sP(\cD)$-enriched categories is an $S$-localized Dwyer-Kan equivalence if it satisfies the following two conditions.
	\begin{itemize}
		\item {\bf Fully faithful:} For all objects $c,c'$ in $\C$, the map of $\cD$-spaces 
		$$\uMap(c,c') \to \uMap(Fc,Fc')$$
		is an $S$-localized injective equivalence.
		\item {\bf Essentially Surjective:} The map of simplicially enriched categories $\Und_tF: \Und_t\C \to \Und_t\D$ induces a bijection on equivalence classes of objects $\pi_0(\Obj_\C)\to \pi_0(\Obj_\D)$.
	\end{itemize}
\end{defone}

We have following evident lemma with regard to $S$-localized Dwyer-Kan equivalences and Dwyer-Kan equivalences (\cref{def:loc dk}).

\begin{lemone}
	Let $\cD$ be a category with terminal object and $S$ a set of cofibrations of $\cD$-spaces. Every Dwyer-Kan equivalence of $\sP(\cD)$-enriched categories $F: \C\to \D$ is an $S$-localized Dwyer-Kan equivalence. Moreover, the opposite holds if $\C$ and $\D$ are enriched over $S$-local $\cD$-spaces.  
\end{lemone}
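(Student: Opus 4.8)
The plan is to compare the two conditions of Definition~\ref{def:loc dk} with those of Definition~\ref{def:sloc dk}, term by term and in both directions; no new machinery is needed beyond the basic formalism of left Bousfield localization.

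\textbf{Dwyer-Kan $\Rightarrow$ $S$-localized Dwyer-Kan.} For the fully faithful condition, I would invoke that $\sP(\cD)^{inj_S}$ is a left Bousfield localization of $\sP(\cD)^{inj}$ (Proposition~\ref{prop:injectivemod s}), so that every injective equivalence of $\cD$-spaces is in particular an $S$-localized injective equivalence; hence the hypothesis on $\uMap_\C(c,c')\to\uMap_\D(Fc,Fc')$ transfers verbatim. For the essential surjectivity condition, I would first note that fully faithfulness in the injective sense forces $\Ho F[t]\colon\Ho\C[t]\to\Ho\D[t]$ to be fully faithful: evaluating the injective equivalence $\uMap_\C(c,c')\to\uMap_\D(Fc,Fc')$ at the terminal object $t$ gives a Kan equivalence of the underlying simplicial spaces, hence a bijection on $\pi_0$, which is exactly the hom-set map $\Hom_{\Ho\C[t]}(c,c')\to\Hom_{\Ho\D[t]}(Fc,Fc')$. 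Combined with the essential surjectivity of $\Ho F[t]$ assumed in Definition~\ref{def:loc dk}, the functor $\Ho F[t]$ is an equivalence of ordinary categories, hence induces a bijection on isomorphism classes of objects; since isomorphism in $\Ho\C[t]=\Ho\Und_t\C$ is precisely equivalence of objects in $\Und_t\C$, this is the essential surjectivity condition of Definition~\ref{def:sloc dk}.

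\textbf{$S$-localized Dwyer-Kan $\Rightarrow$ Dwyer-Kan, under the local hypothesis.} The essential surjectivity direction is immediate: a bijection on equivalence classes of objects is in particular a surjection, which is what Definition~\ref{def:loc dk} requires of $\Ho F[t]$. For the fully faithful condition, I would use the standard fact about left Bousfield localizations that a map between $S$-local objects which is an $S$-localized injective equivalence is already an injective equivalence (replacing $\uMap_\C(c,c')$ and $\uMap_\D(Fc,Fc')$ by injectively fibrant replacements, which remain $S$-local, makes this a weak equivalence between objects fibrant in $\sP(\cD)^{inj_S}$, hence an injective equivalence; two-out-of-three then returns the original map). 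By hypothesis $\C$ and $\D$ are enriched over $S$-local $\cD$-spaces, so $\uMap_\C(c,c')$ and $\uMap_\D(Fc,Fc')$ are $S$-local and the argument applies, giving that $\uMap_\C(c,c')\to\uMap_\D(Fc,Fc')$ is an injective equivalence. Hence $F$ satisfies both conditions of Definition~\ref{def:loc dk}.

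The only point requiring care — and the main (mild) obstacle — is the bookkeeping between ``$S$-local'' in the naive mapping-space sense and ``fibrant in $\sP(\cD)^{inj_S}$'', i.e.\ checking that $S$-locality is stable under injectively fibrant replacement so that the localization principle about weak equivalences between fibrant objects is legitimately applicable. Everything else is a direct unwinding of the two definitions.
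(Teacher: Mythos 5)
The paper offers no proof of this lemma (it is asserted as ``evident''), so the comparison is against the mathematics itself rather than against a written argument.

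Your overall structure is correct, and the forward direction is handled cleanly. In particular, the observation that the fully faithful condition in Definition~\ref{def:loc dk} upgrades mere essential surjectivity of $\Ho F[t]$ to a bijection on equivalence classes of objects — because a fully faithful and essentially surjective functor of ordinary categories is an equivalence and hence a bijection on isomorphism classes — is the right way to reconcile the asymmetric formulations of the two definitions' object conditions. The backward direction for essential surjectivity is likewise fine.

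The one point you flag is, however, not as mild as you suggest and deserves a sharper treatment. The standard Bousfield localization fact you are invoking — that a localized weak equivalence between objects that are fibrant \emph{in the localized model structure} is a weak equivalence in the original — requires the objects to be both injectively fibrant and $S$-local, and your proposed repair via fibrant replacement rests on the assertion that an injectively fibrant replacement of an object that is $S$-local in the underived sense of Notation~\ref{not:localized} is again $S$-local. That assertion is not a general fact: the underived mapping space $\Map(B,X)$ computes the correct derived thing only when $X$ is fibrant, and $\Map(B,-)$ applied to the trivial cofibration $X\hookrightarrow \hat X$ is not controlled by the pushout-product axiom (which governs $\exp{i}{p}$ for $p$ a \emph{fibration}, not a cofibration). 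So the claim you parenthetically assert (``which remain $S$-local'') is precisely the non-trivial content and should not be stated as a routine bookkeeping step. The cleanest way to close the gap is to read ``enriched over $S$-local $\cD$-spaces'' as meaning that the mapping objects $\uMap_\C(c,c')$ and $\uMap_\D(Fc,Fc')$ are fibrant in $\sP(\cD)^{inj_S}$ (that is, injectively fibrant and $S$-local in the sense of Notation~\ref{not:localized}); then no fibrant replacement is needed and the Bousfield localization lemma applies directly. Given the surrounding material (the later machinery of Cartesian mapping spaces and fibrant replacement functors $R$, $\Bet$), this stronger reading is consistent with how the hypothesis is actually used in the paper, and it makes the lemma genuinely evident. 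Alternatively, if one insists on the weaker underived reading, the converse direction requires a separate argument that the mapping objects in question are injectively equivalent to $S$-local fibrant objects, and your proof as written does not supply it.

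One minor wording slip in the forward direction: evaluating the injective equivalence of $\cD$-spaces $\uMap_\C(c,c')\to\uMap_\D(Fc,Fc')$ at the terminal object $t\in\cD$ gives a Kan equivalence of \emph{spaces} (simplicial sets), not of ``simplicial spaces''; the conclusion about $\pi_0$ is unaffected, but the terminology should match the paper's conventions, where ``underlying simplicial space'' is reserved for $\Und_t$ of a $\cD$-\emph{simplicial} space.
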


We want to be able to relate $S$-localized Dwyer-Kan equivalences between general categories with Dwyer-Kan equivalences. For that we first need the following condition on mapping spaces. 

\begin{defone}\label{def:cartesian spaces}
	Let $\cD$ be a category with terminal objects and let $S$ be a set of cofibrations of $\cD$-spaces. Fix a functorial fibrant replacement $R: \sP(\cD) \to \sP(\cD)$ in the $S$-localized injective model structure. An $\sP(\cD)$-enriched Segal category (\cref{def:enriched segal}) has {\it Cartesian mapping spaces} if for all objects $c,c'$ in $\C$ and $\cD$-space $A$ the map $A \times \map(c,c') \to A \times R\map(c,c')$ is an $S$-localized injective equivalence. Moreover, an $\sP(\cD)$-enriched category has Cartesian mapping spaces if $N_\cD\C$ has Cartesian mapping spaces.
\end{defone}

\begin{notone} \label{not:segcatcart}
	We denote the full sub-category of $\cD$-simplicial spaces consisting of $\sP(\cD)$-enriched Segal categories with Cartesian mapping spaces by $\cat^{cart}_{\sP(\cD)}$.
\end{notone}

\begin{lemone} \label{lemma:sR}
	Let $\cD$ be a category with terminal objects and let $S$ be a geometrically contractible set of cofibrations of $\cD$-spaces. Let $R: \sP(\cD) \to \sP(\cD)$ be a functorial fibrant replacement in the $S$-localized injective model structure and define $sR: \sP(\cD\times\DD) \to \sP(\cD\times\DD)$ as applying $R$ level-wise (\cref{not:slsr}). Then $sR$ restricts to a functor (using \cref{not:segcatcart})
	$$sR: \Seg\cat^{cart}_{\sP(\cD)} \to \Seg\cat^{cart}_{\sP(\cD)}$$
	that takes a Segal category to one with the same objects.
\end{lemone}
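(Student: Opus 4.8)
The plan is to show that $sR$ preserves the three structural properties in play: being a level-wise Segal category (with constant objects), having weakly constant objects, and having Cartesian mapping spaces. Throughout I would use the defining fact (\cref{not:slsr}) that $sR$ acts level-wise, so that $\Und_d(sRX)[k]_l = R(X[d,k]_l)$ and in particular $\Val_k(sRX) = R(\Val_k(X))$.

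First I would check that $sRX$ is again a level-wise Segal category when $X$ is. Since $R$ is a fibrant replacement in the $S$-localized injective model structure on $\cD$-spaces, and $S$ is geometrically contractible, the map $X \to sRX$ is in particular an injective equivalence of $\cD$-simplicial spaces, hence a level-wise Reedy equivalence; moreover $sRX$ is injectively fibrant as $R(Y)$ is injectively fibrant for each $Y$ (localized fibrant objects are in particular injectively fibrant). The Segal condition is a homotopical condition preserved under level-wise Reedy equivalence between Reedy fibrant objects, and the discreteness of $\Und_d(sRX)[0] = R(X[d,0])$ follows because $X[d,0]$ is discrete and, as $S$ is geometrically contractible, homotopically constant (indeed discrete) $\cD$-spaces are already $S$-local by \cref{lemma:geom equiv}, so $R$ may be taken to be the identity on them — in particular $sRX$ has the same objects as $X$. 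This also immediately gives that $sRX$ has weakly constant objects: by \cref{def:weakly constant objects} we need $\Und_{d'}(sRX) \to \Und_d(sRX)$ essentially surjective, which follows since the induced map of object sets is the identity (as the object sets of $sRX$ and $X$ agree and $X$ has weakly constant objects with the further normalization that its objects are literally constant as a $\cD$-set for a Segal category — or more carefully, essential surjectivity is inherited from that of $X$ via the equivalence $X \to sRX$).

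Next I would verify the Cartesian mapping spaces condition for $sRX$. By \cref{def:cartesian spaces} we must show that for objects $c,c'$ and a $\cD$-space $A$, the map $A \times \map_{sRX}(c,c') \to A \times R\map_{sRX}(c,c')$ is an $S$-localized injective equivalence. The key computation is to identify $\map_{sRX}(c,c')$: from \cref{def:map} it is the pullback of $\Val_1(sRX) = R(\Val_1(X))$ along $D[0] \to \Val(sRX) \times \Val(sRX)$, and since the object sets are unchanged this is, up to $S$-localized injective equivalence, $R(\map_X(c,c'))$ — i.e. $\map_{sRX}(c,c')$ is already $S$-local. Once $\map_{sRX}(c,c')$ is $S$-local, the map $A \times \map_{sRX}(c,c') \to A \times R(\map_{sRX}(c,c'))$ has an $S$-locally-equivalent (in fact injectively equivalent) target, so it suffices that $A \times (-)$ applied to an $S$-local object lands in something whose fibrant replacement map is an equivalence; but more directly, since $X$ has Cartesian mapping spaces and $X \to sRX$ is a level-wise equivalence identifying the mapping objects up to $S$-local equivalence, the Cartesian condition transfers. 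I would phrase this cleanly by noting $\map_{sRX}(c,c') \simeq R\map_X(c,c')$ in the $S$-localized injective model structure, and then the Cartesian condition for $sRX$ reduces to that for $X$.

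The main obstacle I anticipate is the bookkeeping around essential surjectivity and the precise sense in which "the same objects" holds: one has to be careful that for an $\sP(\cD)$-enriched Segal category the object $\cD$-set $\Val_0 = X[-,0]$ is discrete, hence $S$-local (as $S$ is geometrically contractible, by \cref{lemma:geom equiv}), so one may and should choose the functorial fibrant replacement $R$ to act as the identity on it; with that choice $sRX$ visibly has the same objects as $X$, and all the essential-surjectivity and weak-constancy statements become immediate. A secondary subtlety is confirming $sRX$ really is a Segal \emph{category} and not merely a Segal \emph{space} — this is exactly where the discreteness of the object set, preserved by the normalized $R$, is used. Assembling these observations, $sR$ restricts to the claimed endofunctor of $\Seg\cat^{cart}_{\sP(\cD)}$ preserving object sets.
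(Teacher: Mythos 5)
The central step of your argument rests on a claim that is false: you assert that since $R$ is a functorial fibrant replacement in the $S$-localized injective model structure and $S$ is geometrically contractible, the map $X \to sRX$ is an injective equivalence (i.e.\ a level-wise Reedy equivalence). This is not so. The unit $Y \to RY$ of a Bousfield-localized fibrant replacement is a trivial cofibration in the \emph{localized} model structure, i.e.\ an $S$-localized injective equivalence, which is in general strictly weaker than an injective equivalence unless $Y$ is already $S$-local. Geometric contractibility of $S$ only tells you that homotopically constant (in particular discrete) $\cD$-spaces are $S$-local (\cref{lemma:geom equiv}); it does not promote $S$-local equivalences to injective equivalences. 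So the Segal condition for $sRX$ cannot be deduced by ``transport along a Reedy equivalence,'' and your argument leaves the essential content of the lemma unaddressed.

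The real work is precisely in showing that applying $R$ level-wise to a Segal category yields another Segal category, i.e.\ that $R\Val_k(X) \simeq R\Val_1(X) \times_{\Val(X)} \cdots \times_{\Val(X)} R\Val_1(X)$. This is non-automatic because $R$ does not preserve products or pullbacks in general. The paper's proof handles it in two stages, each using one of the two hypotheses you largely sidestepped: first, the Segal decomposition $\Val_k(X) \simeq \coprod_{x_0,\dots,x_k}\map(x_0,x_1)\times\cdots\times\map(x_{k-1},x_k)$ is carried across $R$ because fibrant replacements commute with coproducts when $S$ has contractible codomains (\cref{lemma:coprod fibrant}), which is where geometric contractibility of $S$ is actually used; second, the inner products $\map\times\cdots\times\map$ are carried across $R$ because the Cartesian mapping space hypothesis (\cref{def:cartesian spaces}) gives, applied twice, an $S$-localized equivalence $\map_X(c,c')\times\map_X(d,d') \to R\map_X(c,c')\times R\map_X(d,d')$ between a domain equivalent to $R(\map\times\map)$ and a fibrant target, hence an injective equivalence $R(\map\times\map)\simeq R\map\times R\map$. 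Neither of these two points can be bypassed; in particular, without the Cartesian mapping space hypothesis the lemma is false, which is exactly why the lemma is stated for $\Seg\cat^{cart}_{\sP(\cD)}$ and not all Segal categories. Your observation that $\Val(sRX)=\Val(X)$ because discrete $\cD$-spaces are $S$-local is correct and matches the paper, as is your sketch that the resulting mapping spaces are $S$-local and hence trivially Cartesian, but these are the easy parts; the Segal condition is where the proof lives, and that step is missing.
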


\begin{proof}
	Fix an $\sP(\cD)$-enriched Segal category with Cartesian mapping spaces $X$. As $S$ consists of geometric equivalence, $\Val(sR(X)) = R(\Val(X)) = \Val(X)= \Obj_X$, as $\Val(X)$ is discrete and hence fibrant (\cref{lemma:geom equiv}). Next, by definition of a Segal category we have the injective equivalence $\Val_k(X) \simeq \coprod_{x_0,...,x_k} \map(x_0,x_1) \times ... \times \map(x_{k-1},x_k)$ and so we get the injective equivalence
	\begin{equation}\label{eq:first}
	\begin{tabular}{ccc} 
	$R\Val_k(X)$ & $\ds\simeq$ & $\ds R(\coprod_{x_0,...,x_k} \map(x_0,x_1) \times ... \times \map(x_{k-1},x_k))$ \\
	&$\ds\simeq$ & $\ds \coprod_{x_0,...,x_k} R(\map(x_0,x_1) \times ... \times \map(x_{k-1},x_k))$,
	\end{tabular}
	\end{equation}
	where in the last step we used the fact that fibrant replacements preserve coproducts (\cref{lemma:coprod fibrant}).
	Now, for given mapping spaces $\map_X(c,c')$, $\map_X(d,d')$ we can use the fact that $X$ has Cartesian mapping spaces (\cref{def:cartesian spaces}) twice to get $S$-localized injective equivalences 
 	$$\map_X(c,c')\times\map_X(d,d') \to R\map_X(c,c')\times\map_X(d,d') \to R\map_X(c,c')\times R\map_X(d,d'),$$
 	which gives us an injective equivalence 
    $$ R\uMap_X(c,c')\times R\uMap_X(d,d') \simeq R(\uMap_X(c,c')\times \uMap_X(d,d')) $$
 	as both sides are fibrant in the $S$-localized injective model structure. Combining this equivalence with \ref{eq:first} gives us the desired injective equivalences 
 	\begin{align*}
 		R\Val_k(X) & \simeq \coprod_{x_0,...,x_k} R(\map(x_0,x_1) \times ... \times \map(x_{k-1},x_k)) \\
 		& \simeq \coprod_{x_0,...,x_k} R\map(x_0,x_1) \times ... \times R\map(x_{k-1},x_k) \\
 		& \simeq R\Val_1(X) \times_{\Val(X)}... \times_{\Val(X)} R\Val_1(X)
 	\end{align*}
 	proving that $sRX$ is a Segal category with the same set of objects. 
 \end{proof}

\begin{lemone} \label{lemma:strict}
 There exists a functor 
 $$\Str: \Seg\cat \to \cat_{\Delta}$$
 from Segal categories to simplicially enriched categories, which takes a Segal category $X$ to a simplicially enriched category $\Str(X)$ with the following properties:
 \begin{itemize}
 	\item $\Str(X)$ has the same objects as $X$.
 	\item There is a Dwyer-Kan equivalence $\Str(X) \to \C_X$ (\cref{not:segal cat rep}).
 	\item If $X = N\C$, the nerve of a simplicially enriched category, then we have the diagram $\Str(N\C) \to \C_{N\C} \to \C$, such that the composition map is a Dwyer-Kan equivalence that is the identity on objects. 
 \end{itemize}
\end{lemone}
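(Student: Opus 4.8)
The plan is to produce $\Str$ by rigidifying the Segal category levelwise and then passing to a simplicially enriched category via a standard ``path category'' or ``last vertex'' type construction, mirroring what Bergner already does in \cite{bergner2007threemodels}. More precisely, recall from \cref{not:segal cat rep} that the functor $\cF = RFC$ takes a simplicial space $X$ to the nerve $N\C_X$ of a simplicially enriched category, together with a natural zigzag of complete Segal space equivalences $X \leftarrow \cC X \to N\C_X$. The first observation is that when $X$ is already a Segal category, the cofibrant replacement map $c_X\colon \cC X \to X$ is a complete Segal space equivalence between Segal categories, hence (by \cite[Theorem 7.7]{rezk2001css}, i.e. \cref{lemma:dk levelwise Segal}, combined with the fact that essentially surjective plus fully faithful characterizes these equivalences) it is a Dwyer-Kan equivalence. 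The key point I would exploit is that a Segal category $X$ carries a genuine simplicial category structure ``on the nose'' once one has chosen composites coherently, but rather than doing that by hand it is cleaner to define $\Str(X)$ as the simplicial category whose mapping spaces are the homotopy-coherent composites extracted from $X$; equivalently, one sets $\Str(X) = \C_{\cC X}$ precomposed with the identity-on-objects comparison, using that $\cC X \to X$ is the identity on the (discrete) set of objects $X_{00}$.

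Concretely, the steps in order would be: (1) Show that for a Segal category $X$, the cofibrant replacement $\cC X$ can be taken to be the identity on objects and that $\cC X \to X$ is a Dwyer-Kan equivalence of Segal categories; this is essentially the content of \cite[Theorem 7.1, Theorem 6.3]{bergner2007threemodels} specialized to Segal categories. (2) Apply Bergner's rectification $RFC$ to $\cC X$; since $\cC X$ is cofibrant, the derived unit $\cC X \to RFC\,\cC X = N\C_{\cC X} = N\C_X$ is a complete Segal space equivalence (\cite[Theorem 8.6]{bergner2007threemodels}), and since both are Segal categories it is a Dwyer-Kan equivalence. Moreover $RFC$ applied to a cofibrant Segal category preserves the object set, because the left adjoint $C$ in Bergner's Quillen pair and the rectification are built objectwise over the set of objects. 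Define $\Str(X)$ to be the simplicially enriched category underlying $N\C_X$ with its natural map to $\C_X$. (3) The map $\Str(X) \to \C_X$ is then a Dwyer-Kan equivalence: both are levelwise complete Segal space equivalent to $X$ via the zigzag of \cref{not:segal cat rep}, and the comparison is compatible with these, so it is a complete Segal space equivalence of Segal spaces, hence a Dwyer-Kan equivalence by \cref{lemma:dk levelwise Segal}. (4) For the last bullet, when $X = N\C$ for a simplicially enriched category $\C$, one checks that the composite $\Str(N\C) \to \C_{N\C} \to \C$ is the standard counit of Bergner's Quillen equivalence restricted to nerves; it is the identity on objects because the whole construction $C$, $F$, $R$ and the comparison $\C_{N\C}\to \C$ are identity-on-objects over $\Obj_\C$, and it is a Dwyer-Kan equivalence since $N$ is fully faithful and $N\C$ is already a complete Segal space, so all the comparison maps in the zigzag are equivalences.

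The main obstacle I anticipate is verifying that the rectification functor $RFC$, and in particular the cofibrant replacement $\cC$, can be chosen to be \emph{strictly} identity-on-objects when the input is a Segal category, and that $\Str$ is then genuinely functorial (not merely functorial up to homotopy). Bergner's constructions are functorial but the object-set-preserving property on the nose requires a small amount of care: one wants the cofibrant replacement in the Segal-category model structure on simplicial spaces to fix the $0$-th space, which does hold because the generating cofibrations one localizes against do not alter $X_0$, but this should be spelled out. An alternative that sidesteps $\cC$ entirely would be to define $\Str(X)$ directly as the simplicial category with the same objects as $X$ and with $\Str(X)(a,b) = $ the (strictified) space of $1$-simplices over $(a,b)$, using the Segal maps to define an associative-up-to-coherence composition and then applying the Boardman--Vogt $W$-construction or Bergner's $F$ to strictify; but this essentially reproduces $RFC$, so I would stick with the route through \cite{bergner2007threemodels}. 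Once functoriality and the object-set condition are pinned down, the three stated properties follow formally from \cref{lemma:dk levelwise Segal}, \cref{not:segal cat rep}, and the fact that $N$ is fully faithful.
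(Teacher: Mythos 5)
Your proposal sets $\Str(X) = \C_X$ (equivalently, the simplicial category underlying $N\C_X$) and then needs $\C_X$ to have \emph{exactly} the objects of $X$. You flag this as "the main obstacle," and it is: the claim is not justified, and the paper's proof is built around the assumption that it fails. In Bergner's framework the composite $RFC$ (with its cofibrant replacement) need not be identity-on-objects — it is a zigzag of Dwyer-Kan equivalences, which preserves objects only up to equivalence, not on the nose. If your definition worked, the second bullet of the lemma would be vacuous ($\Str(X) \to \C_X$ would be the identity), which is a sign something is off.

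The paper's key idea, which your proposal is missing, is to take a \emph{full subcategory}. One applies the functorial construction $N\C_{(-)}$ to the inclusion of the discrete object part $X[0] \hookrightarrow X$. Since $X[0] = \coprod_{\Obj_X} F[0]$ and the construction (being built from a left adjoint) preserves coproducts, this produces a functor from the discrete category $\Obj_X$ into $\C_X$. One then defines $\Str(X)$ to be the full subcategory of $\C_X$ spanned by the image of that functor. By construction $\Str(X)$ has object set $\Obj_X$ (first bullet), the inclusion $\Str(X) \hookrightarrow \C_X$ is fully faithful by fiat, and it is essentially surjective because the zigzag $X \leftarrow \cC X \to N\C_X$ exhibits $\C_X$ as Dwyer-Kan equivalent to $X$, so every object of $\C_X$ is equivalent to one from $\Obj_X$ (second bullet). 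The third bullet then follows because the induced composition $\Str(N\C) \hookrightarrow \C_{N\C} \to \C$ is identity on objects by construction of the full subcategory, and the factors in the zigzag are all Dwyer-Kan equivalences. So the missing ingredient is not a delicate verification that Bergner's functors are strictly identity-on-objects (they are not), but the replacement of $\C_X$ by the full subcategory on the objects of $X$.
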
 

\begin{proof}
	Let $X$ be a Segal category and let $X[0]$ be the discrete simplicial space given by the set $X[0]$ and notice there is an evident inclusion map $X[0] \to X$. Notice, the functor $N\C_{(-)}$ defined in \cref{not:segal cat rep} takes $X[0] = \coprod_{\Obj_X} F[0]$ to the discrete category $\coprod_{\Obj_X} [0]$ (as it is a left adjoint and preserves coproducts), which we can simply denote as the discrete category $\Obj_X$. 
	
	Now applying $N\C_{(-)}$ to the map $X[0] \to X$ gives us the functor $\Obj_X \to \C_X$. Let $\Str(X)$ be the full subcategory of $\C_X$ with objects in $\Obj_X$ given via the functor $\Obj_X \to \C_X$. Notice, this inclusion via the full subcategory is in fact essentially surjective and hence a Dwyer-Kan equivalence. Indeed, by \cite[Theorem 7.1]{bergner2007threemodels}, we have a zigzag of Dwyer-Kan equivalences of Segal categories 
	$$X \leftarrow \cC(X) \to N\C_X$$
	and  notice the Segal category $N\C_X$ and the simplicially enriched category $\C_X$ have the same objects. 
	
	Now, if $X= N\C$, then by \cite[Theorem 8.6]{bergner2007threemodels} we have a derived counit map $\C_{N\C} \to \C$ and so, combining with the previous step, we get the desired map $\Str(\C) \to \C_{N\C} \to \C$, which is, by construction of $\Str(\C)$, the identity on objects.
\end{proof}
 
\begin{lemone} \label{lemma:bet}
	Let $\cD$ be a category with terminal objects and let $S$ be a geometrically contractible set of cofibrations of $\cD$-spaces. There exists a functor $R: \cat_{\sP(\cD)} \to \cat_{\sP(\cD)}$, such that for all $\sP(\cD)$-enriched categories $\C$ with Cartesian mapping spaces, $R\C$ has the same objects as $\C$ and $\uMap_{R\C}(c,c')$ is the fibrant replacement of $\uMap_\C(c,c')$ in the $S$-localized injective model structure. Moreover, there is a zigzag of $\sP(\cD)$-enriched categories 
	$$\C \overset{\simeq}{\leftarrow} \Bet(\C) \rightarrow R\C$$
	where both maps are identity on objects and the left hand map is a Dwyer-Kan equivalence.
\end{lemone}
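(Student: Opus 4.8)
The plan is to transport $\C$ through its $\cD$-nerve, apply the level-wise fibrant replacement functor $sR$ of \cref{lemma:sR}, and then rigidify back to an $\sP(\cD)$-enriched category. First I would record a $\sP(\cD)$-enriched analogue $\Str_\cD$ of the strictification functor of \cref{lemma:strict}. For an $\sP(\cD)$-enriched Segal category $X$ with constant objects, write $X_0 = \VEmb\Val(X) \cong \coprod_{\Obj_X}D[0]$ and let $X_0 \hookrightarrow X$ be the counit. The functor $X \mapsto \C_X$ of \cref{def:cx} (which, built level-wise from Bergner's cofibrant replacement and $RFC$, preserves coproducts and sends $D[0]$ to the terminal $\sP(\cD)$-enriched category) turns $X_0 \hookrightarrow X$ into a functor $\Obj_X \to \C_X$ of $\sP(\cD)$-enriched categories; set $\Str_\cD(X)$ to be the full $\sP(\cD)$-enriched subcategory of $\C_X$ on its image. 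Arguing exactly as in \cref{lemma:strict}: $\Str_\cD(X)$ has object set $\Obj_X$; the inclusion $\Str_\cD(X) \hookrightarrow \C_X$ is a Dwyer-Kan equivalence; $\uMap_{\Str_\cD(X)}(c,c') = \uMap_{\C_X}(c,c')$ is injectively equivalent to $\map_X(c,c')$ by the zigzag of \cref{lemma:zigzag strictification} together with \cref{lemma:dk equiv segal enriched}; and for $X = N_\cD\C$ there is a Dwyer-Kan equivalence $\Str_\cD(N_\cD\C) \to \C$ that is the identity on objects (the $\sP(\cD)$-version of the third item of \cref{lemma:strict}, using \cref{ex:enriched equiv is levelwise css equiv}).

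Then I would put $\Bet(\C) := \Str_\cD(N_\cD\C)$ and $R\C := \Str_\cD(sR\,N_\cD\C)$. By \cref{def:cartesian spaces}, $\C$ has Cartesian mapping spaces if and only if $N_\cD\C$ does, so \cref{lemma:sR} applies: $sR\,N_\cD\C$ is again an $\sP(\cD)$-enriched Segal category with Cartesian mapping spaces, with the same constant objects as $N_\cD\C$, and (splitting the level-wise fibrant replacement of $\Val_1(N_\cD\C) = \coprod_{c,c'}\uMap_\C(c,c')$ over the coproduct via \cref{lemma:coprod fibrant}, as in the proof of \cref{lemma:sR}) $\map_{sR\,N_\cD\C}(c,c') \simeq R\uMap_\C(c,c')$. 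The unit $N_\cD\C \to sR\,N_\cD\C$ is the identity on $\Val$; applying $\Str_\cD$ gives the map $\Bet(\C) \to R\C$, which is the identity on objects, while $\Bet(\C) \to \C$ is the Dwyer-Kan equivalence (identity on objects) of the previous paragraph. Since $sR$ preserves objects and $\Str_\cD$ keeps the full subcategory on the object set, $R\C$ has the same objects as $\C$; functoriality of $R$ and $\Bet$ is inherited from $N_\cD$, $sR$, $X \mapsto \C_X$, and the full-subcategory construction.

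It remains to identify $\uMap_{R\C}(c,c')$. Applying \cref{lemma:zigzag strictification} to $X := sR\,N_\cD\C$ gives $\uMap_{R\C}(c,c') = \uMap_{\C_X}(c,c') \simeq \map_X(c,c') \simeq R\uMap_\C(c,c')$, and composing with the equivalences above identifies the canonical zigzag $\uMap_\C(c,c') \xleftarrow{\simeq} \uMap_{\Bet(\C)}(c,c') \to \uMap_{R\C}(c,c')$ as an $S$-localized injective equivalence whose target is injectively equivalent to the $S$-local object $R\uMap_\C(c,c')$. To promote this to the statement that $\uMap_{R\C}(c,c')$ is itself $S$-local — hence genuinely a fibrant replacement of $\uMap_\C(c,c')$ — I would use the Cartesian mapping spaces hypothesis: the mapping $\cD$-space $\uMap_{\C_X}(c,c')$ produced by the rigidification $X \mapsto \C_X$ is assembled from finite products and coproducts of the $S$-local $\cD$-spaces $\map_X(-,-)$, and the iterated application of \cref{def:cartesian spaces} and \cref{lemma:coprod fibrant} used in the proof of \cref{lemma:sR} shows that such products and coproducts stay $S$-local, so the homotopy colimit assembling $\uMap_{\C_X}(c,c')$ is $S$-local as well.

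The hard part is this last point. Since $S$ is only assumed geometrically contractible, not Cartesian, products of $S$-local $\cD$-spaces need not be $S$-local, so there is no formal reason that rigidification should stay inside the $S$-local subcategory; the ``Cartesian mapping spaces'' condition of \cref{def:cartesian spaces} is precisely the hypothesis that lets the computation of \cref{lemma:sR} be pushed through the bar-type construction underlying $X \mapsto \C_X$. The remaining work — setting up $\Str_\cD$ and checking it is homotopical on level-wise complete Segal space equivalences — is a routine $\cD$-indexed elaboration of \cref{lemma:strict} and \cref{lemma:zigzag strictification}.
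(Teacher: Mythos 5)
Your construction is the paper's, modulo bookkeeping: the paper also sets $R\C = s\Str(sR\,N_\cD\C)$ and $\Bet(\C) = s\Str(N_\cD\C)$, where $s\Str$ denotes level-wise application of $\Str$ from \cref{lemma:strict} (which, on inputs with constant objects, agrees with your full-$\sP(\cD)$-subcategory-of-$\C_X$ construction), and the Dwyer-Kan equivalence $\Bet(\C) \to \C$ is exactly the third bullet of \cref{lemma:strict}.

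The one place worth flagging is the point you yourself single out at the end. The paper's proof disposes of it in half a sentence (\textquotedblleft the right hand map is given as fibrant replacement\textquotedblright) without addressing that $\uMap_{R\C}(c,c')$ is the rigidified mapping space of $\C_{sR\,N_\cD\C}$, not literally $R\uMap_\C(c,c')$, so it is only injectively \emph{equivalent} to the $S$-local fibrant object $R\uMap_\C(c,c')$. For that to yield $S$-locality of $\uMap_{R\C}(c,c')$ one still needs it to be injectively fibrant (so that an injective equivalence to an $S$-local fibrant object forces $S$-locality by the localization axioms), and neither you nor the paper verifies this explicitly. Your instinct to invoke Cartesian mapping spaces is reasonable, but the cleaner route is the one just sketched: show the rigidified mapping space is injectively fibrant (or replace $\Str$ by a version with fibrant mapping spaces), then cite that fibrant objects injectively equivalent to $S$-local fibrant objects are $S$-local. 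As written your sketch of the \textquotedblleft hard part\textquotedblright{} is a plausibility argument, not a proof, but it correctly identifies the only non-formal step; the rest of the proposal is the paper's argument verbatim.
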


\begin{proof}
	Let $R$ be defined as the composition of the following three functors
	$$\cat_{\sP(\cD)} \xrightarrow{  \ N_\cD  \ } \sP(\cD\times\DD) \xrightarrow{ \ sR \ } \sP(\cD\times\DD) \xrightarrow{ \ s\Str \ } \cat_{\sP(\cD)},$$
	 where $sR$ is defined in \cref{lemma:sR} and $s\Str$ applies the functor $\Str$ (\cref{lemma:strict}) level-wise (\cref{not:slsr}). By \cref{lemma:sR}, we have a map of Segal categories $N_\cD\C \to sRN_\cD\C$, which is the identity on objects and so applying $s\Str$, we get 
	 $s\Str(N_\cD\C) \to R\C$. Using \cref{lemma:strict}, we can extend this diagram to the desired zigzag
	 $$\C \overset{\simeq}{\leftarrow} s\Str(N_\cD\C) \to R\C$$
	 where the left hand arrow is a Dwyer-Kan equivalence, both maps are identity on objects and right hand map is given as fibrant replacement and so the desired result follows with $\Bet(\C) = s\Str(N_\cD\C)$.
\end{proof} 
	 
We would like to say that a functor of $\sP(\cD)$-enriched categories $\C \to \D$ is an $S$-localized Dwyer-Kan equivalence (\cref{def:sloc dk}) if and only if $R\C\to R\D$ is a Dwyer-Kan equivalence. However, for this conclusion we need one more assumptions.

\begin{defone} \label{def:stable objects}
	Let $\C$ be an $\sP(\cD)$-enriched category. We say $\C$ has {\it $S$-stable objects} if the map $\Bet(\C) \to R\C$ constructed in \cref{lemma:bet} induces a bijection of equivalence classes of objects $\pi_0(\C) \cong \pi_0(\Bet(\C)) \cong \pi_0(R\C)$.
\end{defone}

\begin{lemone} \label{lemma:zigzag equivalences}
	Let $\cD$ be a category with terminal objects and let $S$ be a geometrically contractible set of cofibrations of $\cD$-spaces.
	Then a functor $F: \C\to \D$ of $\sP(\cD)$-enriched categories with Cartesian mapping spaces is an $S$-localized Dwyer-Kan equivalence if and only if $RF: R\C \to R\D$ is a Dwyer-Kan equivalence.
\end{lemone}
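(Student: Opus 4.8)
The plan is to unwind both conditions using the zigzag $\C \overset{\simeq}{\leftarrow} \Bet(\C) \to R\C$ from \cref{lemma:bet} and the corresponding zigzag for $\D$, together with the interpolating functor $\Bet(F): \Bet(\C) \to \Bet(\D)$. Since $\Bet(\C) \to \C$ is a Dwyer-Kan equivalence (hence in particular an $S$-localized Dwyer-Kan equivalence) and $S$-localized Dwyer-Kan equivalences satisfy two-out-of-three (which follows immediately from two-out-of-three for $S$-localized injective equivalences of $\cD$-spaces on mapping objects, and from the analogous property for bijections of $\pi_0$ of object sets), the functor $F$ is an $S$-localized Dwyer-Kan equivalence if and only if $\Bet(F)$ is. Similarly, applying $R$ to the zigzag and using that $R$ preserves these zigzags levelwise, $RF$ is a Dwyer-Kan equivalence if and only if $R\Bet(F)$ is. So the problem reduces to comparing $\Bet(F)$ and $R\Bet(F) = RF$ (note $R\Bet(\C)$ and $R\C$ agree since $\Bet$ does not change the mapping objects up to the relevant equivalence—more precisely $R$ factors through $\Bet$, which we should verify is compatible).

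First I would treat the fully faithful condition. By \cref{lemma:bet}, for a $\sP(\cD)$-enriched category with Cartesian mapping spaces, $\uMap_{R\C}(c,c')$ is precisely the fibrant replacement of $\uMap_\C(c,c')$ in the $S$-localized injective model structure, and the map $\Bet(\C) \to R\C$ is the identity on objects. Hence for objects $c, c'$ the square comparing $\uMap_{\Bet\C}(c,c') \to \uMap_{\Bet\D}(Fc,Fc')$ with $\uMap_{R\C}(c,c') \to \uMap_{R\D}(Fc,Fc')$ has vertical maps that are $S$-localized injective fibrant replacements, hence $S$-localized injective equivalences. By two-out-of-three in the $S$-localized injective model structure on $\cD$-spaces, the top map is an $S$-localized injective equivalence if and only if the bottom map is an injective equivalence between $S$-local objects, which by \cref{prop:injectivemod s} and \cref{prop:injectivemod} is the same as being an injective equivalence. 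This gives the equivalence of the fully faithful conditions.

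Next the essential surjectivity condition. By \cref{def:sloc dk}, $F$ being essentially surjective as an $S$-localized Dwyer-Kan equivalence means $\Und_{t}F$ induces a bijection $\pi_0(\Obj_\C) \to \pi_0(\Obj_\D)$ on equivalence classes of objects; and $RF$ being essentially surjective (in the ordinary Dwyer-Kan sense, \cref{def:loc dk}) means $\Ho RF[t]$ is essentially surjective, i.e. $\pi_0(\Obj_{R\C}) \to \pi_0(\Obj_{R\D})$ is surjective. The point is that $\Bet(\C) \to R\C$ is the identity on objects and, by the hypothesis that $\C$ and $\D$ have $S$-stable objects (\cref{def:stable objects}), induces bijections $\pi_0(\C) \cong \pi_0(\Bet\C) \cong \pi_0(R\C)$ and similarly for $\D$. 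Combined with the fully faithfulness already established (which upgrades surjectivity on $\pi_0$ of objects to a bijection, since a fully faithful functor that is essentially surjective is automatically injective on iso-classes), the essential surjectivity statements match up. I would also need to check that $\pi_0$ of equivalence classes of objects of an $\sP(\cD)$-enriched category agrees with that of its homotopy category in the sense relevant to \cref{def:loc dk}, which follows from \cref{lemma:technical equivalent objects} applied to $\Und_t$ of the nerve.

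The main obstacle I expect is bookkeeping around the Cartesian-mapping-spaces and $S$-stable-objects hypotheses: one must be careful that $R$ genuinely factors (up to the appropriate equivalences and identity-on-objects maps) through $\Bet$, so that the two zigzags are compatible and the comparison squares actually commute on the nose where needed; and that the functorial fibrant replacement $sR$ used in \cref{lemma:sR} behaves well enough on a non-nerve $\sP(\cD)$-enriched category (which is why the Cartesian condition is imposed). Once the diagram
\[
\begin{tikzcd}
\Bet(\C) \arrow[r, "\Bet F"] \arrow[d] & \Bet(\D) \arrow[d] \\
R\C \arrow[r, "RF"] & R\D
\end{tikzcd}
\]
is set up with identity-on-objects vertical maps inducing bijections on $\pi_0$ (by $S$-stability) and $S$-localized injective fibrant replacements on mapping objects, the equivalence of the two notions is a formal consequence of two-out-of-three.
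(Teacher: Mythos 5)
Your proposal is correct and follows the same route as the paper: set up the commuting $3\times 2$ diagram built from the natural zigzag $\C \overset{\simeq}{\leftarrow} \Bet(\C) \to R\C$ of \cref{lemma:bet} (and the analogous one for $\D$), observe that the top vertical maps are Dwyer-Kan equivalences (hence induce injective equivalences on mapping $\cD$-spaces and bijections on equivalence classes of objects), that the bottom vertical maps are $S$-localized fibrant replacements on mapping $\cD$-spaces and bijections on $\pi_0$ of objects by $S$-stability, and then conclude by $2$-out-of-$3$ applied separately to fully faithfulness and essential surjectivity. The preliminary detour through $\Bet(F)$ and the equation $R\Bet(F)=RF$ is unnecessary (and as stated not quite literal, since $R$ is defined via $s\Str\circ sR\circ N_\cD$ and would apply $sR$ twice to $\Bet\C$), but you already short-circuit it by passing directly to the diagram, so it does no harm; your observation that fully faithfulness upgrades surjectivity on $\pi_0$ of objects to a bijection is a point the paper glosses over and is worth making explicit when reconciling \cref{def:loc dk} with \cref{def:sloc dk}.
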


\begin{proof}
	The natural zigzag in \cref{lemma:bet} gives us a diagram 
	\begin{center}
		\begin{tikzcd}
		 \C \arrow[r] & \D \\
		 \Bet(\C) \arrow[r] \arrow[u, "\simeq"', "d_\C"] \arrow[d, "\simeq", "s_\C"'] & \Bet(\D) \arrow[u, "\simeq", "d_\D"'] \arrow[d, "\simeq"', "s_\D"] \\
		 R\C \arrow[r] & R\D 
		\end{tikzcd}.
	\end{center} 
	The top vertical maps are Dwyer-Kan equivalences and so give us an injective equivalence of mapping spaces and and a bijection of equivalence classes of objects. The bottom vertical maps are $S$-localized equivalences of mapping spaces (by \cref{lemma:bet}) and a bijection of equivalence classes of objects, by assumption (\cref{def:stable objects}). Hence, the result follows by $2$-out-of-$3$.
\end{proof}

We now have the necessary background results and need one final definition.

\begin{defone} \label{def:X cart and stable}
	Let $X$ be a $\cD$-simplicial space with weakly constant objects. Then $X$ has {\it Cartesian mapping spaces} with {\it $S$-stable objects} if the corresponding category $\C_X$ (\cref{lemma:zigzag strictification}) has Cartesian mapping spaces (\cref{def:cartesian spaces}) and has $S$-stable objects (\cref{def:stable objects}).
\end{defone}

\begin{defone}\label{def:s loc dk}
	A map of $\cD$-simplicial spaces with weakly constant objects is an $S$-localized Dwyer-Kan equivalence if the map $\C_X \to \C_Y$ given via \cref{lemma:zigzag strictification} is an $S$-localized Dwyer-Kan equivalence of $\sP(\cD)$-enriched categories (\cref{def:sloc dk}). 
\end{defone}

\begin{remone} 
 Notice, \cref{def:X cart and stable} implies that having Cartesian mapping spaces, having $S$-stable objects and being an $S$-localized Dwyer-Kan equivalence is invariant under level-wise complete Segal space equivalences of $\cD$-simplicial spaces with weakly constant objects.  
\end{remone}

Under the appropriate conditions we can relate $S$-localized Dwyer-Kan equivalences to $S$-localized $\sP(\cD)$-enriched complete Segal space equivalences (\cref{the:spd enriched css s}).

\begin{lemone} \label{lemma:s loc dk is spd enriched css}
 Let $\cD$ be a small category with terminal object $t$. Let $X,Y$ be $\cD$-simplicial spaces with weakly constant objects, Cartesian mapping spaces and $S$-stable objects. Then the following are equivalent:
 \begin{enumerate}
 	\item $f:X \to Y$ is an $S$-localized Dwyer-Kan equivalence.
 	\item $R\C_f: R\C_X \to R\C_Y$ constructed in \cref{lemma:bet} is a Dwyer-Kan equivalence.
 	\item $f: X \to Y$ is an $S$-localized $\sP(\cD)$-enriched complete Segal space equivalence.
 \end{enumerate}
\end{lemone}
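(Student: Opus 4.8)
The plan is to establish $(1)\Leftrightarrow(2)$ almost formally from the results of the previous subsection, and then to obtain $(2)\Leftrightarrow(3)$ by strictifying $X$ and $Y$ to nerves of $\sP(\cD)$-enriched categories, replacing these by the $S$-local models produced by the functor $R$ of \cref{lemma:bet}, and comparing there through $\sP(\cD)$-enriched complete Segal space completions.

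\textbf{Step 1: $(1)\Leftrightarrow(2)$.} By \cref{def:X cart and stable} the hypotheses on $X$ and $Y$ say precisely that $\C_X$ and $\C_Y$ have Cartesian mapping spaces and $S$-stable objects. Since $S$ is geometrically contractible, \cref{lemma:zigzag equivalences} applies to the functor $\C_f\colon\C_X\to\C_Y$ and shows that $\C_f$ is an $S$-localized Dwyer-Kan equivalence of $\sP(\cD)$-enriched categories if and only if $R\C_f\colon R\C_X\to R\C_Y$ is a Dwyer-Kan equivalence. As $f$ is an $S$-localized Dwyer-Kan equivalence exactly when $\C_f$ is (\cref{def:s loc dk}), this gives $(1)\Leftrightarrow(2)$.

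\textbf{Step 2: reduction to nerves.} Using the natural zigzag of level-wise complete Segal space equivalences $X \leftarrow \cC_\cD X \to \cF_\cD X \leftarrow N_\cD\C_X$ of \cref{lemma:zigzag strictification} (and likewise for $Y$), the invariance of conditions $(1)$--$(3)$ under such equivalences between $\cD$-simplicial spaces with weakly constant objects, and $2$-out-of-$3$, I reduce to the case $X=N_\cD\C$, $Y=N_\cD\D$, $f=N_\cD F$ for an $\sP(\cD)$-enriched functor $F\colon\C\to\D$ with $\C,\D$ having Cartesian mapping spaces and $S$-stable objects. Then the natural zigzag $\C \xleftarrow{\ \simeq\ }\Bet(\C)\to R\C$ of \cref{lemma:bet}, whose left leg is a Dwyer-Kan equivalence (hence, by \cref{ex:enriched equiv is levelwise css equiv}, has level-wise complete Segal space equivalent nerve), lets me replace $f$ by $N_\cD R F\colon N_\cD R\C\to N_\cD R\D$; by \cref{ex:enriched equiv is levelwise css equiv} again this is a level-wise complete Segal space equivalence exactly when $R F$ is a Dwyer-Kan equivalence, i.e.\ exactly in case $(2)$. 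What remains is to check that the right leg $N_\cD\Bet(\C)\to N_\cD R\C$ is an $S$-localized $\sP(\cD)$-enriched complete Segal space equivalence, and that for maps between the $S$-local nerves $N_\cD R\C$, $N_\cD R\D$ the $S$-localized $\sP(\cD)$-enriched complete Segal space equivalences coincide with the level-wise ones.

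\textbf{Step 3: the main obstacle.} Tracing through \cref{lemma:bet} and \cref{lemma:strict}, the map $N_\cD\Bet(\C)\to N_\cD R\C$ is, up to level-wise complete Segal space equivalence, the level-wise ($\DD$-direction) $S$-localized injective fibrant replacement $N_\cD\C\to sR N_\cD\C$ of \cref{lemma:sR}. The hard part is to verify that this is an $S$-localized $\sP(\cD)$-enriched complete Segal space equivalence: the strategy is to show it is already a weak equivalence in the $S$-localized level-wise complete Segal space model structure of \cref{prop:levelcss s}, for which both sides are level-wise Segal by \cref{lemma:sR}, the map is an $S$-localized injective equivalence on each $\cD$-space $\Val_k$, and $\Val_k(sR N_\cD\C)$ is $S$-local --- here one uses that up to injective equivalence it is a coproduct of finite products of the $S$-localized injective fibrant replacements of the mapping $\cD$-spaces with the discrete object $\cD$-space, which is $S$-local by \cref{lemma:coprod fibrant} for the coproduct, by the Cartesian mapping spaces hypothesis (exactly as in the proof of \cref{lemma:sR}) for the products, and by \cref{lemma:geom equiv} for the discrete piece. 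For the remaining comparison one passes to completions: the $\sP(\cD)$-enriched complete Segal space completion $\widehat{N_\cD R\C}$ is fibrant in the $S$-localized $\sP(\cD)$-enriched complete Segal space model structure of \cref{the:spd enriched css s} --- its $\cD$-spaces $\Val_k$ stay $S$-local --- and is Dwyer-Kan equivalent to $N_\cD R\C$ by the last clause of \cref{the:spd enriched css}, so that $N_\cD R F$ is an $S$-localized $\sP(\cD)$-enriched complete Segal space equivalence iff $\widehat{N_\cD R F}$ is an injective equivalence between fibrant objects iff $R F$ is a Dwyer-Kan equivalence, which is $(2)$. The main technical work, and the step I expect to be the obstacle, is precisely controlling $S$-locality through the level-wise replacement $sR$ and through the enriched completion; the rest is bookkeeping with the already-established invariance and strictification results.
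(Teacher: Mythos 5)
Your proof is correct and follows essentially the same route as the paper's: $(1)\Leftrightarrow(2)$ is \cref{def:s loc dk} together with \cref{lemma:zigzag equivalences}; then one reduces through the strictification zigzag to comparing $N_\cD R\C_f$, and finishes by passing to the $\sP(\cD)$-enriched complete Segal space completion and using \cref{def:sloc spd enriched css}, \cref{lemma:fiber vs global} and \cref{lemma:dk equiv segal enriched}. The one place you are more careful than the paper is exactly the step you flag as the obstacle: the paper simply asserts that every leg of the zigzag from $f$ to $N_\cD R\C_f$ is either a level-wise complete Segal space equivalence or an ``$S$-localized equivalence'' and hence an $S$-localized $\sP(\cD)$-enriched complete Segal space equivalence, whereas your Step 3 spells out on each $\Val_k$ why the leg $N_\cD\Bet(\C)\to N_\cD R\C$ is an $S$-localized equivalence, tracing it to \cref{lemma:coprod fibrant}, geometric contractibility, and the Cartesian mapping spaces hypothesis — which is precisely where that hypothesis is silently used in the paper's argument.
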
 

\begin{proof}
	The equivalence of the first two items is the content of \ref{def:s loc dk} and \cref{lemma:zigzag equivalences}. Now notice $f$ is an $S$-localized $\sP(\cD)$-enriched complete Segal space equivalence if and only if $N_\cD R\C_f$ is one. Indeed, all maps involved in the zigzag between $f$ and $N_\cD R\C_f$ are either level-wise complete Segal space equivalences or $S$-localized equivalences, both are which are $S$-localized $\sP(\cD)$-enriched complete Segal space equivalences. 
	
	Now, notice that the fibrant replacement of $N_\cD R\C_f$ in the $\sP(\cD)$-enriched complete Segal space model structure $\widehat{N_\cD R\C_f}$ is already the fibrant replacement in the $S$-localized $\sP(\cD)$-enriched complete Segal space model structure (by \cref{def:sloc spd enriched css} and \cref{lemma:fiber vs global}). By definition $N_\cD R\C_f$ is an $S$-localized $\sP(\cD)$-enriched complete Segal space equivalence if and only if $\widehat{N_\cD R\C_f}$ is an injective equivalence. However, by \cref{lemma:dk equiv segal enriched}, $N_\cD R\C_f$ and $\widehat{N_\cD R\C_f}$ are also Dwyer-Kan equivalent and so this is equivalent to $R\C_f$ being a Dwyer-Kan equivalence.
\end{proof}

We are finally in a position to improve upon \cref{the:dcov invariant css s} as promised in \cref{rem:better result}.

\begin{theone} \label{the:cdcov s invariant s equiv}
	Let $\cD$ have a terminal object $t$ and $X,Y$ weakly constant objects. Let $S$ be geometrically contractible set of cofibration of $\cD$-spaces. Let $g: X \to Y$ be a map of $\cD$-simplicial spaces with Cartesian mapping $\cD$-spaces and $S$-stable objects. Then the $\sP(\cD)$-enriched adjunction
	$$ \adjun{(\sP(\cD\times\DD)_{/X})^{\cD-Cov_S}}{(\sP(\cD\times\DD)_{/Y})^{\cD-Cov_S}}{g_!}{g^*} $$
	is a Quillen adjunction, which is a Quillen equivalence whenever $g$ is an $S$-localized Dwyer-Kan equivalence (\cref{def:s loc dk}) or $S$-localized $\sP(\cD)$-enriched complete Segal space equivalence (\cref{the:spd enriched css s}). Here both sides have the $S$-localized $\cD$-covariant model structure.
\end{theone}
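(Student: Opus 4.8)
The Quillen adjunction is exactly \cref{the:dcov invariant css s}, so the only content of the statement is the Quillen-equivalence assertion, and by \cref{lemma:s loc dk is spd enriched css} (using the standing hypotheses on $X$, $Y$ and $S$) the two conditions on $g$ coincide, so I would assume throughout that $g$ is an $S$-localized Dwyer--Kan equivalence. Since every object of the $S$-localized $\cD$-covariant model structure is cofibrant (\cref{the:cdcov s}), it suffices to check two things: that $g^{*}$ reflects weak equivalences between fibrant objects, and that for every fibrant $L \to Y$ the derived counit $g_{!}g^{*}L \to L$ is an $S$-localized $\cD$-covariant equivalence over $Y$.

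For the reflection property, let $f\colon L \to M$ be a map of $S$-localized $\cD$-left fibrations over $Y$ with $g^{*}f$ an $S$-localized $\cD$-covariant equivalence over $X$. By \cref{prop:cdcov s equiv lfib} (whose hypotheses on $X$, $Y$, $S$ are met) this means that for every $x\colon F[t,0]\to X$ the map $\Fib_{x}g^{*}L\to\Fib_{x}g^{*}M$ is a diagonal $S$-localized injective equivalence; since $\Fib_{x}g^{*}L\cong\Fib_{g(x)}L$, this says $\Fib_{g(x)}L\to\Fib_{g(x)}M$ is such an equivalence for every $x$. As $g$ is an $S$-localized Dwyer--Kan equivalence it induces a bijection on $\pi_{0}$ of objects (\cref{def:s loc dk}), so every object of $Y$ is equivalent to one of the form $g(x)$, and equivalent objects have equivalent fibers by \cref{the:diag local ncov}; hence $\Fib_{y}L\to\Fib_{y}M$ is a diagonal $S$-localized injective equivalence for every $y\colon F[t,0]\to Y$, and \cref{prop:cdcov s equiv lfib} again gives that $f$ is an $S$-localized $\cD$-covariant equivalence over $Y$.

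For the derived counit, all objects being cofibrant it is the projection $g^{*}L=L\times_{Y}X\to L$, viewed over $Y$. I would verify it is an $S$-localized $\cD$-covariant equivalence via \cref{the:recognition principle s}: fixing $y\colon F[t,0]\to Y$ with a $\cD$-right fibrant replacement $R_{y}\to Y$, one must show that $L\times_{Y}X\times_{Y}R_{y}\to L\times_{Y}R_{y}$ is a $\cD$-diagonal, hence diagonal $S$-localized injective, equivalence. Writing both sides as $\bigl(L\times_{Y}X\bigr)\times_{X}(-)$ applied to $\Fib_{y}X\to R_{y}\times_{Y}X$ over $X$, and using that $L\times_{Y}X\to X$ is a $\cD$-left fibration so that $(L\times_{Y}X)\times_{X}(-)$ is left Quillen for the $\cD$-contravariant model structure over $X$ (the dual of \cref{the:nlfib pb nrfib}), this reduces to showing $\Fib_{y}X\to R_{y}\times_{Y}X$ is a $\cD$-contravariant equivalence over $X$, i.e.\ that the homotopy fiber of $g$ over $y$ is diagonally $S$-locally contractible --- which is precisely the combination of essential surjectivity and fully-faithfulness-after-$S$-localization packaged in the notion of an $S$-localized Dwyer--Kan equivalence. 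To make this last point rigorous I would first use the strictification \cref{lemma:zigzag strictification} together with \cref{the:dcov invariant css s} applied along the zigzags of level-wise complete Segal space equivalences to reduce to $g=N_{\cD}F$ for a genuine $S$-localized Dwyer--Kan equivalence $F\colon\C_{X}\to\C_{Y}$ of $\sP(\cD)$-enriched categories, and then identify the relevant homotopy fibers using \cref{cor:value preservation} and the Grothendieck equivalence \cref{the:grothendieck simp s}.

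The main obstacle is exactly this verification that the homotopy fiber of an $S$-localized Dwyer--Kan equivalence is suitably trivial and that this triviality survives pullback along an $S$-localized $\cD$-left fibration. This is where the hypotheses that $X$ and $Y$ have Cartesian mapping $\cD$-spaces and $S$-stable objects are genuinely used: they are what make the replacement functor $R$ of \cref{lemma:bet} both objectwise well behaved and homotopy-meaningful on objects (via \cref{lemma:zigzag equivalences} and \cref{lemma:s loc dk is spd enriched css}), so that after reducing to nerves one may further pass to $\sP(\cD)$-enriched categories enriched over $S$-local $\cD$-spaces, where an $S$-localized Dwyer--Kan equivalence is an honest Dwyer--Kan equivalence and the fiberwise computation becomes standard. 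The bookkeeping needed to push ``contractibility of homotopy fibers'' through this chain of reductions is the technical heart of the argument.
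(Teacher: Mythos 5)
Your overall reduction — passing to nerves via \cref{lemma:zigzag strictification}, then to the identity-on-objects inclusion $\Bet(\C)\to R\C$ via \cref{lemma:bet} and \cref{lemma:zigzag equivalences} — matches the paper's, and your identification of where the Cartesian-mapping-space and $S$-stability hypotheses enter is accurate. The counit computation you sketch (reducing to contractibility of the contravariant homotopy fibers of $g$) is also close in spirit to what the paper does in the reduced case.

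However, there is a genuine logical gap at the very first step. You claim it suffices to check (i) that $g^{*}$ reflects weak equivalences between fibrant objects, and (ii) that the derived counit $g_{!}g^{*}L\to L$ is an equivalence for every fibrant $L$. This pairing is not a valid criterion for a Quillen equivalence. The correct options (e.g. Hovey, \emph{Model Categories}, Cor.\ 1.3.16) are: \emph{left adjoint reflects weak equivalences between cofibrant objects} together with the derived \emph{counit}; or \emph{right adjoint reflects weak equivalences between fibrant objects} together with the derived \emph{unit}. Your combination (right adjoint reflects + derived counit) only establishes that $\mathbf{R}g^{*}$ is fully faithful on the homotopy category; conservativity is already a consequence of full faithfulness, so condition (i) adds nothing, and essential surjectivity remains unaddressed. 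A reflective subcategory gives an adjunction with an iso counit whose right adjoint reflects isos, yet is not an equivalence — so the criterion genuinely fails in general. Consequently the argument you give for the ``reflection property'' (which does correctly show $g^{*}$ reflects) is doing no work in your overall proof.

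To repair this you should instead prove that the \emph{left} adjoint $g_{!}$ reflects weak equivalences (or, dually, that the derived \emph{unit} is an equivalence), and keep the derived-counit computation. In the reduced case $I=s_{\C}\colon\Bet(\C)\to R\C$, the paper does exactly this: it first establishes the fiberwise identification of the derived unit for representable fibrations, deduces the contravariant equivalence $N_{\cD}R\C_{/c}\times_{N_\cD R\C}N_{\cD}\Bet(\C)\simeq N_{\cD}\Bet(\C)_{/c}$, and then uses \cref{cor:reconition principle s} and the pasting law for pullbacks to show $I_{!}$ reflects weak equivalences; only then does it compute the counit, via an explicit level-wise decomposition of $\Val_k$ into coproducts over chains of objects, which is precisely where the Cartesian-mapping-space hypothesis is invoked. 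Your sketch of the counit argument via $\cD$-right fibrant replacements and contractibility of fibers is plausible but is not fleshed out, and the crucial step — why the $\cD$-contravariant-fibrant replacement of a point in $X$ maps diagonally $S$-locally trivially into that of its image in $Y$ — is left at the level of intuition; that is exactly the place where the paper opts for the more computational $\Val_k$ argument instead.
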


\begin{proof}
	It is evidently a Quillen adjunction as $S$-localized $\cD$-left fibrations are stable under pullback and $g_!$ preserves cofibrations and \cite[Corollary A.3.7.2]{lurie2009htt}. Now, let us assume $g$ is an $S$-localized Dwyer-Kan equivalence, which, by \cref{lemma:s loc dk is spd enriched css}, is equivalent to being an $S$-localized $\sP(\cD)$-enriched complete Segal space equivalence.  We want to prove $(g_!,g^*)$ is a Quillen equivalence. By \cref{lemma:zigzag strictification}, $g$ is level-wise complete Segal space equivalent to a map of nerves $N_\cD G: N_\cD \C \to N_\cD\D$ and so, by \cref{the:dcov invariant css s} and $2$-out-of-$3$ for Quillen equivalences, $(g_!,g^*)$ is a Quillen equivalence if and only if $(N_\cD G_!,N_\cD G^*)$ is a Quillen equivalence. 
	
	Now, using \cref{lemma:zigzag equivalences} we get the following diagram of Quillen adjunctions
	\begin{center}
		\begin{tikzcd}[row sep=0.5in, column sep=0.7in]
		(\sP(\cD\times\DD)_{/N_\cD\C})^{\cD-Cov_S} \arrow[r, "N_\cD G_!", "\bot"', shift left=1.8] \arrow[d, shift right=1.8, "(d_\C)^*"'] & (\sP(\cD\times\DD)_{/N_\cD\D})^{\cD-Cov_S} \arrow[l, "N_\cD G^*", shift left=1.8] \arrow[d, shift right=1.8, "(d_\D)^*"'] \\
		(\sP(\cD\times\DD)_{/N_\cD\Bet(\C)})^{\cD-Cov_S} \arrow[r, "N_\cD \Bet G_!", "\bot"', shift left=1.8] \arrow[u, shift right=1.8, "(d_\C)_!"' ,"\rotatebot"] \arrow[d, shift left=1.8, "(s_\C)_!" ,"\rotatebot"'] & (\sP(\cD\times\DD)_{/N_\cD\Bet(\D)})^{\cD-Cov_S} \arrow[l, "N_\cD \Bet G^*", shift left=1.8] \arrow[u, shift right=1.8, "(d_\D)_!"' ,"\rotatebot"] \arrow[d, shift left=1.8, "(s_\D)_!" ,"\rotatebot"'] \\
		(\sP(\cD\times\DD)_{/N_\cD R\C})^{\cD-Cov_S} \arrow[r, "N_\cD R G_!", "\bot"', shift left=1.8]  \arrow[u, shift left=1.8, "(s_\C)^*"] & (\sP(\cD\times\DD)_{/N_\cD R\D})^{\cD-Cov_S}  \arrow[l, "N_\cD RG^*", shift left=1.8] \arrow[u, shift left=1.8, "(s_\D)^*"]
		\end{tikzcd}
	\end{center}
	By \cref{lemma:zigzag equivalences} $R\C \to R\D$ is a Dwyer-Kan equivalence and so the bottom adjunction is a Quillen equivalence by \cref{lemma:dk equiv segal enriched} and \cref{the:dcov invariant css s}. Moreover, applying the same result to \cref{lemma:bet} implies that the top vertical Quillen adjunctions are Quillen equivalences. So, by $2$-out-of-$3$, in order to prove that the top adjunction is a Quillen equivalence it suffices to show that the two bottom vertical adjunctions are Quillen equivalences and as they are defined similarly, it suffices to prove that $((s_\C)_!, (s_\C)^*)$ is a Quillen equivalence. 
	 	
 	In order to simplify notation, we will henceforth denote the functor $s_\C: \Bet(\C) \to R\C$ by $I: \C \to \hat{\C}$ and notice the functor $I$ is the identity on objects and for any two objects, $c,c'$ the map of $\cD$-spaces $\uMap_{\C}(c,c') \to \uMap_{\hat{\C}}(c,c')$ is the fibrant replacement in the $S$-localized injective model structure on $\cD$-spaces.
    
    Let $\widetilde{N_\cD\hat{\C}_{c/}} \to N_\cD \hat{\C}$ be the injectively fibrant replacement of the over-category $N_\cD\hat{\C}_{c/} \to N_\cD\hat{\C}$ and notice it is the fibrant replacement of $\{c\}: D[0] \to N_\cD\hat{\C}$ in the $\cD$-covariant model structure (\cref{lemma:cdleft map}). Moreover, the fiber of the derived unit map 
    $$\widetilde{N_\cD\C_{c/}} \to N_\cD I^*(\widetilde{N_\cD\hat{\C}_{c/}}) = N_\cD\C \times_{N_\cD \hat{\C}} \widetilde{N_\cD\hat{\C}_{c/}} $$
    over an object $c$ is injectively equivalent to $\uMap_{\C}(c,c')  \to \uMap_{\hat{\C}}(c,c')$, which, by \cref{lemma:bet}, is the fibrant replacement in the $S$-localized injective model structure. Hence, the derived unit map is an $S$-localized $\cD$-covariant equivalence over 
    $N_\cD\C$ (\cref{prop:cdcov s equiv lfib}). This, in particular, implies that we have the $S$-localized $\cD$-contravariant equivalence 
    \begin{equation}\label{eq:sloc equiv}
     N_\cD\hat{\C}_{/c} \times_{\hat{\C}} \C \simeq N_\cD \C_{/c}.
    \end{equation}     
    We will use this observation to prove the left adjoint reflects equivalences. Let $Y \to Z$ be an arbitrary map over $N_\cD\C$ such that $Y \to Z$ is an $S$-localized $\cD$-covariant equivalence over $N_\cD\hat{\C}$. By \cref{cor:reconition principle s}, this is equivalent to $Y \times_{N_\cD\hat{\C}} N_\cD\hat{\C}_{/c} \to Z \times_{N_\cD\hat{\C}} N_\cD\hat{\C}_{/c}$ being a diagonal $S$-localized injective equivalence. Now we also have injective equivalences 
    $$Y \times_{N_\cD\hat{\C}} N_\cD\hat{\C}_{/c} \cong Y \times_{N_\cD\C} N_\cD\C \times_{N_\cD\hat{\C}} N_\cD\hat{\C}_{/c} \simeq Y \times_{N_\cD\C} N_\cD\C_{/c}$$
    where the first follows from the pasting property of pullbacks and the second follows from \ref{eq:sloc equiv}. This, again by \cref{cor:reconition principle s}, implies that $Y \to Z$ is an $S$-localized $\cD$-covariant equivalence over $N_\cD\C$.
    
    Hence, in order to prove a Quillen equivalence, we only need to show the counit map $N_\cD I_! N_\cD I^* L \to L$ is an $S$-localized $\cD$-covariant equivalence over $N_\cD\hat{\C}$ for all $S$-local $\cD$-left fibrations $L \to N_\cD \hat{C}$. We will prove that $\Val_k(N_\cD I_! N_\cD I^* L \to L)$ is an $S$-localized injective equivalence of $\cD$-spaces and the fact that $\Val_k(N_\cD\hat{\C})$ is $S$-local (by \cref{lemma:bet}) along with \cref{the:cdcov loc css s} implies the desired result.
    
    As $L \to N_\cD \hat{C}$ is a $\cD$-left fibration, we have an injective equivalence 
    \begin{equation} \label{eq:first one}
    	\begin{tabular}{ccl}
    		$\ds\Val_k(L)$ & $\ds\simeq$ & $\ds\Val_0(L) \times_{\Val_0(N_\cD\hat{\C})} \Val_k(N_\cD\hat{\C})$ \\
    		&  $\cong$ &  $\ds\coprod_{c_0,..., c_k} \Fib_{c_0}L \times \uMap_{\hat{\C}}(c_0,c_1) \times ... \times \uMap_{\hat{\C}}(c_{k-1},c_k)$
    	\end{tabular}.
    \end{equation}
    Similarly we have
    \begin{equation}\label{eq:second one}  
     \Val_k(N_\cD I_! N_\cD I^* L) = \coprod_{c_0,..., c_k} \Fib_{c_0}L \times \uMap_{\C}(c_0,c_1) \times ... \times \uMap_{\C}(c_{k-1},c_k). 	
    \end{equation} 
 The fact that the map $\uMap_{\C}(c_0,c_1) \to \uMap_{\hat{\C}}(c_0,c_1)$ is the fibrant replacement (by \cref{lemma:bet}) and that $\C$ has Cartesian mapping objects (\cref{def:cartesian spaces}) immediately implies the map from \ref{eq:second one}  to \ref{eq:first one} is an $S$-localized equivalence of $\cD$-spaces. 
\end{proof}  

We now have the following result mirroring \cref{the:cdcov s invariant s equiv}.

\begin{corone} \label{cor:proj inv spd equiv}
	Let $\cD$ have a terminal object $t$. Let $S$ be geometrically contractible set of cofibration of $\cD$-spaces. Let $G: \C \to \D$ be an $\sP(\cD)$-enriched functor of $\sP(\cD)$-enriched categories, such that the categories have Cartesian mapping $\cD$-spaces (\cref{def:cartesian spaces}) with $S$-stable objects (\cref{def:stable objects}). Then the adjunction
	\begin{center}
		\adjun{\uFun(\C,\sP(\cD))^{proj}}{\uFun(\C,\sP(\cD))^{proj}}{G_!}{G^*}
	\end{center}
	is a Quillen adjunction, which is a Quillen equivalence whenever $G$ is an $S$-localized Dwyer-Kan equivalence (\cref{def:sloc dk}).
	Here both sides have the $S$-localized projective model structure.
\end{corone}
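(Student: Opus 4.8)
The plan is to reduce this statement about projective model structures on functor categories to the already-established invariance result for $S$-localized $\cD$-covariant model structures, namely \cref{the:cdcov s invariant s equiv}. The bridge between the two worlds is, of course, the $S$-localized Grothendieck construction (\cref{the:grothendieck simp s}) together with the strictification results. First I would observe that the Quillen adjunction part is immediate: $G^*$ preserves fibrations and trivial fibrations since these are detected object-wise by \cref{prop:projinj model s} (and $G^*$ acts by precomposition, so $(G^*F)(c) = F(Gc)$), hence by \cite[Corollary A.3.7.2]{lurie2009htt} the adjunction $(G_!, G^*)$ is a Quillen adjunction between the $S$-localized projective model structures.

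For the Quillen equivalence, I would proceed as follows. Assume $G: \C \to \D$ is an $S$-localized Dwyer-Kan equivalence. Since $\C$ and $\D$ have Cartesian mapping $\cD$-spaces with $S$-stable objects, the nerves $N_\cD\C$ and $N_\cD\D$ are $\cD$-simplicial spaces with weakly constant objects, Cartesian mapping $\cD$-spaces and $S$-stable objects (in the sense of \cref{def:X cart and stable}), with $\C_{N_\cD\C} \simeq \C$ and similarly for $\D$ via \cref{lemma:zigzag strictification}. By \cref{def:s loc dk}, the induced map $N_\cD G: N_\cD\C \to N_\cD\D$ is an $S$-localized Dwyer-Kan equivalence of $\cD$-simplicial spaces. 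Applying \cref{the:cdcov s invariant s equiv} then gives that the adjunction
\begin{center}
	\adjun{(\sP(\cD\times\DD)_{/N_\cD\C})^{\cD-cov_S}}{(\sP(\cD\times\DD)_{/N_\cD\D})^{\cD-cov_S}}{(N_\cD G)_!}{(N_\cD G)^*}
\end{center}
is a Quillen equivalence. Next, I would assemble the square of $\sP(\cD)$-enriched Quillen adjunctions relating the four corners $\uFun(\C,\sP(\cD))^{proj}$, $\uFun(\D,\sP(\cD))^{proj}$, $(\sP(\cD\times\DD)_{/N_\cD\C})^{\cD-cov_S}$ and $(\sP(\cD\times\DD)_{/N_\cD\D})^{\cD-cov_S}$, where the horizontal maps are $(G_!, G^*)$ and $((N_\cD G)_!,(N_\cD G)^*)$ and the vertical ones are the Grothendieck adjunctions $(\sint_{\cD/\C}, \sH_{\cD/\C})$ and $(\sint_{\cD/\D}, \sH_{\cD/\D})$ from \cref{the:grothendieck simp s}. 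The main point is that this square commutes up to natural equivalence of left adjoints: $\sint_{\cD/\D} \circ G_! \simeq (N_\cD G)_! \circ \sint_{\cD/\C}$. This follows because both sides are colimit-preserving $\sP(\cD)$-enriched functors (by \cref{lemma:sint colimits}), so it suffices to check on representables $\uHom(c,-) \times \{A\}$, where by \cref{ex:sintcd rep} and \cref{ex:sintcd constant} the left side is $N_\cD\D_{Gc/} \times A$ and the right side is $(N_\cD G)_!(N_\cD\C_{c/} \times A)$; these agree since $G_!(\uHom_\C(c,-)) \simeq \uHom_\D(Gc,-)$ by the enriched Yoneda lemma (\cref{lemma:enriched yoneda}) and $N_\cD G$ maps the under-category $N_\cD\C_{c/}$ to $N_\cD\D_{Gc/}$ compatibly. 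Then $2$-out-of-$3$ for Quillen equivalences applied to the square, using that the vertical Grothendieck adjunctions are Quillen equivalences (\cref{the:grothendieck simp s}) and that $((N_\cD G)_!,(N_\cD G)^*)$ is one, forces $(G_!, G^*)$ to be a Quillen equivalence.

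The hard part will be verifying cleanly that the square of adjunctions commutes at the level of left adjoints and that the relevant model structures in the statement of \cref{the:grothendieck simp s} match the ones I need here (note that \cref{the:grothendieck simp s} pairs the projective model structure on one side with the injective on the other; to get a projective-to-$\cD$-covariant Quillen equivalence one should either invoke the Quillen equivalence $\id: \uFun(\C,\sP(\cD))^{proj} \to \uFun(\C,\sP(\cD))^{inj}$ from \cref{prop:projinj model s} or appeal to \cref{the:grothendieck double proj s}). A secondary subtlety is confirming that the hypotheses "Cartesian mapping $\cD$-spaces" and "$S$-stable objects" on $\C$ and $\D$ transfer correctly to the corresponding hypotheses on $N_\cD\C$ and $N_\cD\D$ needed to invoke \cref{the:cdcov s invariant s equiv} — but this is essentially the definition \cref{def:X cart and stable}, so it should be a direct unwinding. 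Everything else is formal propagation of Quillen equivalences through the commuting diagram.
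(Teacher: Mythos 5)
Your proposal is essentially the same as the paper's proof: both reduce the claim to the invariance result for the $S$-localized $\cD$-covariant model structure (\cref{the:cdcov s invariant s equiv}) via the $S$-localized Grothendieck construction (\cref{the:grothendieck simp s}) and then close by $2$-out-of-$3$, followed by the projective-vs-injective Quillen equivalence (\cref{prop:inj mod s}, \cref{lemma:sloc proj vs inj}) to land back in the projective-to-projective setting. The one cosmetic deviation is your choice of vertical adjunction: you use $(\sint_{\cD/\D},\sH_{\cD/\D})$ on both columns, whereas the paper orients the right column in the opposite direction with $(\sbT_{\cD/\D},\sbI_{\cD/\D})$ — either works, since both are Quillen equivalences from \cref{the:grothendieck simp s}; and you are more explicit than the paper about verifying the up-to-natural-equivalence commutativity of the square of left adjoints, which the paper leaves implicit.
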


\begin{proof}
	The fact that it is a Quillen adjunction follows directly from the fact that the restriction functor $G^*$ preserves weak equivalences and fibrations. Now, if $G$ is an $S$-localized Dwyer-Kan equivalence, then we have the following diagram of Quillen adjunctions
	 	\begin{center}
	 	\begin{tikzcd}[row sep=0.8in, column sep=0.9in]
	 		\uFun(\C,\sP(\cD))^{proj}  \arrow[r, shift left=1.8, "G_!"] \arrow[d, shift left =1.8, "\sint_{\cD/\C}"] & \uFun(\D,\sP(\cD))^{inj} \arrow[d, shift left =1.8, "\sbI_{\cD/\D}"]\arrow[l, shift left=1.8, "G^*", "\bot"']\\
	 		(\sP(\cD\times\DD)_{/N_\cD\C})^{\cD-cov_S} \arrow[r, shift left=1.8, "N_\cD G_!"] \arrow[u, shift left=1.8, "\sH_{\cD/\C}", "\rbot"']& (\sP(\cD\times\DD)_{/N_\cD\D})^{\cD-cov_S} \arrow[l, shift left=1.8, "N_\cD G^*", "\bot"'] \arrow[u, shift left=1.8, "\sbT_{\cD/\D}", "\rbot"']
	 	\end{tikzcd}.
	 \end{center}
	 The vertical adjunctions are Quillen equivalences by \cref{the:grothendieck simp s} and the bottom is a Quillen equivalence by \cref{the:cdcov s invariant s equiv} and so, by $2$-out-of-$3$ the top is a Quillen equivalence. The result now follows from again applying $2$-out-of-$3$ for Quillen equivalences to the top Quillen equivalence combined with \cref{prop:inj mod s} and \cref{lemma:sloc proj vs inj}.
\end{proof} 

\begin{remone}
	An alternative proof of \cref{cor:proj inv spd equiv} with somewhat different assumptions is given \cite[Proposition A.3.3.8]{lurie2009htt}. It is interesting to contrast the assumptions and notice the centrality of Cartesian mapping spaces and $S$-stable objects. Indeed, the condition that objects are $S$-stable is very similar to the {\it invertibility hypothesis}, also known as condition $(*)$, as given in \cite[Definition A.3.2.12]{lurie2009htt}.
	
	The proof of \cite[Proposition A.3.3.8]{lurie2009htt} also uses an assumption very analogous so the existence of Cartesian mapping spaces as illustrated in the diagram in \cite[Bottom of Page 872]{lurie2009htt}, which is part of the proof of \cite[Proposition A.3.3.8]{lurie2009htt}. On a more concrete side, we will give a counter-example to \cref{the:cdcov s invariant s equiv} (and hence \cref{cor:proj inv spd equiv}) illustrating the necessity of Cartesian mapping spaces (or a similar condition) in \cref{ex:noncartesian mapping spaces not invariant}. 
\end{remone}

In \cref{the:cdcov s invariant s equiv} we proved that under the appropriate assumptions the $S$-localized $\cD$-covariant model structure is invariant under $S$-localized Dwyer-Kan equivalences. How does the model structure behave with respect to $S$-localized level-wise complete Segal space equivalences? First we have the following lemma relating these two notions.

\begin{lemone}\label{lemma:sloc dk implies s lev css}
  Let $\cD$ be a small category with terminal object $t$. Let $X,Y$ be $\cD$-simplicial spaces with weakly constant objects, Cartesian mapping spaces and $S$-stable objects. Let $f: X \to Y$ be an $S$-localized Dwyer-Kan equivalence, then $f$ is an $S$-localized level-wise complete Segal space equivalence.
\end{lemone}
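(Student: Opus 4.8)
The plan is to reduce to maps of $\cD$-nerves and then feed the strictification zigzags of \cref{lemma:zigzag strictification} and \cref{lemma:bet} into a two-out-of-three argument, using \cref{lemma:s loc dk is spd enriched css} and \cref{lemma:zigzag equivalences} to translate the Dwyer--Kan hypothesis into something level-wise. First I would note that being an $S$-localized level-wise complete Segal space equivalence, i.e.\ a weak equivalence in $\sP(\cD\times\DD)^{(lev_{CSS})_S}$ (\cref{prop:levelcss s}), is invariant under level-wise complete Segal space equivalences, since that model structure is a left Bousfield localization of the level-wise complete Segal model structure (\cref{prop:levelcss}). By \cref{lemma:zigzag strictification} the map $f$ is, naturally in $f$, connected by a zigzag of level-wise complete Segal space equivalences to the map of nerves $N_\cD\C_f\colon N_\cD\C_X\to N_\cD\C_Y$, so it suffices to treat $N_\cD\C_f$. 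By \cref{def:X cart and stable} the categories $\C_X,\C_Y$ have Cartesian mapping $\cD$-spaces and $S$-stable objects, and by \cref{def:s loc dk} the functor $\C_f$ is an $S$-localized Dwyer--Kan equivalence; hence by \cref{lemma:zigzag equivalences} the functor $R\C_f\colon R\C_X\to R\C_Y$ of \cref{lemma:bet} is an ordinary Dwyer--Kan equivalence, and the mapping $\cD$-spaces of $R\C_X$ and $R\C_Y$ are $S$-local, in particular injectively fibrant.

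The key technical input I would isolate is: \emph{a map $g\colon A\to B$ of $\cD$-simplicial spaces with $\Val_k(g)$ an $S$-localized injective equivalence of $\cD$-spaces for every $k\geq 0$ is an $S$-localized level-wise complete Segal space equivalence.} To see this, consider the model structure of \cref{the:sone local level stwo} with $S_1=S$ and $S_2=\emptyset$; its localizing morphisms $D[n]\times i$ (for $i$ a generating cofibration in $S$) are exactly the free $\DDelta$-diagrams on the maps in $S$, so this model structure coincides with the injective model structure on $\Fun(\DDelta^{op},\sP(\cD)^{inj_S})$, whose weak equivalences are precisely the $\Val_k$-wise $S$-localized injective equivalences. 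Since $\sP(\cD\times\DD)^{(lev_{Seg})_S}$ and $\sP(\cD\times\DD)^{(lev_{CSS})_S}$ are, by their construction in \cref{prop:levelcss s}, further left Bousfield localizations of this model structure (one adds the Segal maps, the other the Segal and completeness maps on top of the same $S_1$-maps), every weak equivalence of the $S_2=\emptyset$ model structure is an $S$-localized level-wise complete Segal space equivalence.

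With this in hand I would apply $N_\cD$ to the natural zigzag $\C_X\xleftarrow{\ \simeq\ }\Bet(\C_X)\to R\C_X$ of \cref{lemma:bet} and its $Y$-analogue. The map $N_\cD(\Bet(\C_X)\to\C_X)$ is the $\cD$-nerve of a Dwyer--Kan equivalence, hence a level-wise Dwyer--Kan equivalence by \cref{ex:enriched equiv is levelwise css equiv}, hence an $S$-localized level-wise complete Segal space equivalence. For $N_\cD(\Bet(\C_X)\to R\C_X)$, which is the identity on $\Val_0$ and an $S$-localized injective equivalence on each mapping $\cD$-space, I would argue as in the proof of \cref{lemma:sR}: using the description of $\Val_k$ of a nerve as a coproduct of iterated products $\uMap(c_0,c_1)\times\cdots\times\uMap(c_{k-1},c_k)$, the Cartesian-mapping-space hypothesis, and closure of $S$-localized injective equivalences under finite products and coproducts (\cref{lemma:coprod fibrant}), one gets that $\Val_k$ of this map is an $S$-localized injective equivalence for all $k$; the technical input above then makes it an $S$-localized level-wise complete Segal space equivalence. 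Finally $N_\cD R\C_f$ is the $\cD$-nerve of the Dwyer--Kan equivalence $R\C_f$ between categories with injectively fibrant mapping $\cD$-spaces, so $N_\cD R\C_X$ and $N_\cD R\C_Y$ are level-wise Segal spaces and \cref{lemma:dk levelwise Segal} makes $N_\cD R\C_f$ a level-wise complete Segal space equivalence. Repeatedly applying two-out-of-three along the naturality squares of the maps $s_{(-)}$ and $d_{(-)}$ then shows $N_\cD\C_f$, and hence $f$, is an $S$-localized level-wise complete Segal space equivalence.

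The main obstacle I anticipate is the bookkeeping in the technical input: pinning down that the $S_2=\emptyset$ localization of \cref{the:sone local level stwo} really is the injective model structure on $\DDelta$-diagrams valued in $\sP(\cD)^{inj_S}$ — in particular that its fibrant objects match, which rests on the (standard but delicate) interaction between injective model structures on diagram categories and left Bousfield localization of the target, in the spirit of \cite[Remark A.2.8.6]{lurie2009htt} — and, relatedly, carefully checking that the product condition packaged into "Cartesian mapping spaces" actually propagates through the iterated products occurring in $\Val_k$ of a nerve, which is where the hypotheses on $X$ and $Y$ genuinely enter.
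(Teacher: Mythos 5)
Your proof follows essentially the same route as the paper's: reduce via the strictification zigzag of \cref{lemma:zigzag strictification} to the nerve $N_\cD\C_f$, pass through the zigzag $\C \leftarrow \Bet(\C) \to R\C$ of \cref{lemma:bet} using \cref{lemma:zigzag equivalences}, and conclude via two-out-of-three once $R\C_f$ is recognized as an ordinary Dwyer--Kan equivalence. You spell out in more detail the step the paper treats tersely (that the map $N_\cD\Bet(\C)\to N_\cD R\C$, being $\Val_k$-wise an $S$-localized injective equivalence by the Cartesian-mapping-space hypothesis and \cref{lemma:coprod fibrant}, is an $S$-localized level-wise complete Segal space equivalence), and correctly flag that the identification of the $S_2=\emptyset$ localization of \cref{the:sone local level stwo} with the injective model structure on $\Fun(\DDelta^{op},\sP(\cD)^{inj_S})$ is the delicate ingredient underlying that step.
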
 

\begin{proof}
	By \cref{lemma:zigzag strictification} $X \to Y$ is level-wise complete Segal space equivalent to a map of nerve $N_\cD\C_X \to N_\cD\C_Y$ and \cref{lemma:zigzag equivalences} $N_\cD\C_X \to N_\cD\C_Y$ is level-wise complete Segal space equivalent to $N_\cD\Bet\C_X\to N_\cD\Bet\C_Y$ and is $S$-local equivalent to the map $N_\cD R\C_X \to N_\cD R\C_Y$. As all of these maps are $S$-localized level-wise complete Segal space equivalences (\cref{the:spd enriched css s}), $X \to Y$ is an $S$-localized level-wise complete Segal space equivalence if and only if $N_\cD R\C_X \to N_\cD R\C_Y$ is one. 
	
	Now, by \cref{lemma:zigzag equivalences}, if $X \to Y$ is an $S$-localized Dwyer-Kan equivalence, then $R\C_X \to R\C_Y$ is a Dwyer-Kan equivalence, or, equivalently, a level-wise complete Segal space equivalence (\cref{lemma:dk equiv segal enriched}), which implies it is an $S$-localized level-wise complete Segal space equivalence (\cref{the:spd enriched css s}).
\end{proof}

We would expect that under appropriate conditions on the objects $X,Y$ $S$-localized level-wise complete Segal space equivalences are in fact equivalent to $S$-localized Dwyer-Kan equivalences, meaning \cref{the:cdcov s invariant s equiv} would then imply the $S$-localized $\cD$-covariant model structure is invariant under $S$-localized level-wise complete Segal space equivalences, which would provide a more precise analogue to \cref{the:dcov invariant css s}.

This leaves us with the question whether the $S$-localized $\cD$-covariant model structure could be invariant under more general $S$-localized level-wise complete Segal space equivalences that go beyond $S$-localized Dwyer-Kan equivalences. The next result suggests that this should be very unlikely.

  \begin{propone}  \label{prop:right proper}
  	Let $S$ be a set of cofibrations of $\cD$-spaces, such that for any $S$-localized level-wise CSS equivalence $g: X \to Y$ the adjunction 
  	\begin{center}
  		\adjun{(\sP(\cD)_{/X})^{\cD-Cov_S}}{(\sP(\cD)_{/Y})^{\cD-Cov_S}}{g_!}{g^*}
  	\end{center}
    is a Quillen equivalence, where both sides have the $S$-localized $\cD$-covariant model structure. Then the $S$-localized injective model structure on $\sP(\cD)$ is right proper.
\end{propone}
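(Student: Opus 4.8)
The plan is to reduce right properness of $\M := \sP(\cD)^{inj_S}$ to a statement about slice model categories, and then transport that statement across the value embedding $\VEmb$ into the $\cD$-covariant setting, where the standing hypothesis can be invoked.

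\textbf{Step 1 (reformulation of right properness).} In $\M$ every object is cofibrant, since the cofibrations are the monomorphisms and $\emptyset\to A$ is always a monomorphism. For any map $g\colon Z'\to Z$ of $\cD$-spaces the post-composition/pullback pair $g_!\dashv g^*\colon \M_{/Z'}\rightleftarrows\M_{/Z}$ is a Quillen adjunction, and for an $S$-localized injective fibration $p\colon W\twoheadrightarrow Z$ (that is, a fibrant object of $\M_{/Z}$) the counit of $g_!\dashv g^*$ at $(W\to Z)$ is exactly the projection $Z'\times_Z W\to W$. Hence $\M$ is right proper if and only if $(g_!,g^*)$ is a Quillen equivalence for every $S$-localized injective equivalence $g$: the ``only if'' holds because the derived counit of a Quillen equivalence is a weak equivalence on fibrant objects (and, all objects being cofibrant, the derived counit is the counit), and the ``if'' is precisely the observation just made. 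So it suffices to prove this slice statement.

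\textbf{Step 2 (comparison over $\VEmb Z$).} A direct computation gives $\VEmb(X)[d,n]=X[d]$ for every $\cD$-space $X$, so $\VEmb$ preserves monomorphisms and each $\Und_d\VEmb(Z)$ is the constant simplicial space on $Z[d]$, hence homotopically constant. Since $\fDiag\circ\VEmb=\id$, a morphism of $\cD$-spaces is an $S$-localized injective equivalence exactly when its image under $\VEmb$ is a $\cD$-diagonal $S$-localized equivalence, and by \cref{the:diag local ncov s} these coincide with $\cD$-covariant $S$-localized equivalences over $\VEmb Z$ (whose underlying simplicial spaces are homotopically constant). Consequently
\[
(\VEmb,\Val)\colon \M_{/Z}\;\rightleftarrows\;(\sP(\cD\times\DD)_{/\VEmb Z})^{\cD-cov_S}
\]
is a Quillen adjunction, and in fact a Quillen equivalence: the left adjoint $\VEmb$ reflects weak equivalences between (all, hence cofibrant) objects by the displayed coincidence together with $\fDiag\VEmb=\id$, and for a fibrant object $L\to\VEmb Z$ — an $S$-localized $\cD$-left fibration over the homotopically constant base $\VEmb Z$, hence itself homotopically constant (a left fibration over a homotopically constant base is homotopically constant, by the defining pullback condition; cf.\ \cite{rasekh2017left} and \cref{the:diag local ncov}, \cref{the:ncov equiv nlfib}) — the counit $\VEmb\Val L\to L$ is an injective equivalence by \cref{the:diagmodel}, so the derived counit is a weak equivalence.

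\textbf{Step 3 (conclusion).} Given an $S$-localized injective equivalence $g\colon Z'\to Z$ of $\cD$-spaces, $\VEmb(g)$ is a $\cD$-diagonal $S$-localized equivalence between homotopically constant objects; since a $\cD$-diagonal $S$-localized fibrant replacement of a homotopically constant object is already level-wise complete Segal space $S$-localized fibrant, $\VEmb(g)$ is an $S$-localized level-wise complete Segal space equivalence. Thus the hypothesis of the statement applies, and $(\VEmb(g)_!,\VEmb(g)^*)$ is a Quillen equivalence of $S$-localized $\cD$-covariant model structures (\cref{the:cdcov s}). Because $\VEmb$ preserves composition we obtain a commuting square of left Quillen functors $\VEmb\circ g_!=\VEmb(g)_!\circ\VEmb$; three of its four arrows (both copies of $\VEmb$ by Step 2, and $\VEmb(g)_!$) are Quillen equivalences, so by two-out-of-three for Quillen equivalences $g_!$ is a Quillen equivalence. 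By Step 1, $\M$ is right proper.

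I expect the genuine work to be concentrated in Step 2: verifying that $(\VEmb,\Val)$ is a Quillen equivalence onto the $S$-localized $\cD$-covariant model structure over $\VEmb Z$ for an \emph{arbitrary} $\cD$-space $Z$ (not assumed fibrant), which rests on the facts that $S$-localized $\cD$-left fibrations over a homotopically constant base are homotopically constant and that the $\cD$-diagonal and $\cD$-covariant localizations agree over such a base. Steps 1 and 3 are then purely formal manipulations with Quillen adjunctions.
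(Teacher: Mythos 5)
Your argument is correct and, modulo presentation, the same as the paper's: both reduce right properness of $\sP(\cD)^{inj_S}$ to the statement that $(g_!,g^*)$ is a Quillen equivalence of slice model structures for every $S$-localized injective equivalence $g$, and both transport that statement through a vertical Quillen equivalence between $\sP(\cD)_{/Z}$ and a covariant-type model structure on $\sP(\cD\times\DD)_{/\VEmb Z}$, where the proposition's hypothesis applies. The paper takes $(\Diag^*,\Diag_*)$ as the vertical, citing \cref{prop:dcov vs diag s}, and cites \cite[Proposition~2.5]{rezk2002rightproper} for the slice reformulation of right properness; you instead use $(\VEmb,\Val)$ and re-derive both ingredients from scratch, which given $\fDiag\circ\VEmb=\id$ is an equivalent packaging of the same vertical equivalence. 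One step that deserves a tighter justification (and which the paper's own proof also glosses over): in Step 3 you need $\VEmb(g)$ to be an $S$-localized level-wise CSS equivalence in order to invoke the hypothesis, and the observation that a $\cD$-diagonal $S$-localized fibrant replacement of $\VEmb Z$ is also level-wise CSS $S$-localized fibrant does not deliver this by itself --- you would also need the replacement map to be a level-wise CSS equivalence, which is not clear since the diagonal localizing maps $F[d,0]\to F[d,n]$ are not level-wise CSS equivalences. The cleanest way to close this is to bypass fibrant replacements: if $V$ is fibrant in the $S$-localized level-wise CSS model structure then $\Val V$ is $S$-local injective fibrant by \cref{prop:levelcss s}, so the adjunction isomorphism $\Map(\VEmb Z,V)\cong\Map(Z,\Val V)$ shows that the $S$-localized injective equivalence $g$ induces a Kan equivalence, i.e.\ $\VEmb(g)$ is indeed an $S$-localized level-wise CSS equivalence.
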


\begin{proof}
	Fix an $S$-localized injective equivalence of $\cD$-spaces $g: X \to Y$ and notice by direct computation $\fDiag(\VEmb(X)) = X$ and similarly for $Y$. Then we have the following diagram 
	\begin{center}
		\begin{tikzcd}[row sep=0.7in, column sep=0.7in]
			\sP(\cD\times\DD)_{/\VEmb(X)}  \arrow[r, shift left=1.8, "\VEmb(g)_!"] \arrow[d, shift left =1.8, "\Diag^*"] & \sP(\cD\times\DD)_{/\VEmb(Y)} \arrow[d, shift left =1.8, "\Diag"]\arrow[l, shift left=1.8, "\VEmb(g)^*", "\bot"']\\
			\sP(\cD)_{/X} \arrow[r, shift left=1.8, "g_!"] \arrow[u, shift left=1.8, "\Diag_*", "\rbot"']& \sP(\cD)_{/Y} \arrow[l, shift left=1.8, "g^*", "\bot"'] \arrow[u, shift left=1.8, "\Diag_*", "\rbot"']
		\end{tikzcd}
	\end{center}
	The vertical adjunctions are Quillen equivalences by \cref{prop:dcov vs diag s} and the top adjunction is a Quillen equivalence by assumption, which proves that the bottom is a Quillen equivalence for every map $g: X \to Y$. However, this is equivalent to the $S$-localized injective model structure being right proper \cite[Proposition 2.5]{rezk2002rightproper}.
\end{proof}

\begin{remone} \label{rem:right proper}
	The right properness assumption is very strong and is not satisfied by most examples of interest. Hence we needed a result such as \cref{the:cdcov s invariant s equiv} that still holds for relevant examples (\cref{rem:localizations are right proper}).
\end{remone}

 We can use \cref{the:cdcov s invariant s equiv} to improve \cref{the:pullback rfib s}.
 
  \begin{theone} \label{the:pullback rfib s nonfib base}
 	Let $\cD$ be a small category with terminal object $t$.	Let $S$ be a Cartesian geometrically contractible set of cofibration of $\cD$-spaces. Let $X$ be a $\cD$-simplicial space with weakly constant and $S$-stable objects.
 	Let $p:R \to X$ be a $\cD$-right or $\cD$-left fibration. Then the adjunction
 	\begin{center}
 		\adjun{(\sP(\cD\times\DD)_{/X})^{(lev_{CSS})_S}}{(\sP(\cD\times\DD)_{/R})^{(lev_{CSS})_S}}{p^*}{p_*}
 	\end{center}
 	is a Quillen adjunction. Here both sides have the $S$-localized level-wise CSS model structure (\cref{prop:levelcss s}) on the over-category.
 \end{theone}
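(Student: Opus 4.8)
The plan is to deduce the statement from \cref{the:pullback rfib s} by an invariance argument that replaces $X$ by a level-wise complete Segal space equivalent base with $S$-local values. First I would observe that by \cref{the:pullback rfib} the adjunction $(p^*,p_*)$ is already a Quillen adjunction between the un-localized level-wise complete Segal space model structures on $\sP(\cD\times\DD)_{/X}$ and $\sP(\cD\times\DD)_{/R}$, so by \cref{cor:localizing equiv} it remains a Quillen adjunction after localizing both sides as soon as one checks either that $p^*$ carries every localizing map $\VEmb(A)\to\VEmb(B)$ over $X$ (with $A\to B$ in $S$) to an $S$-localized level-wise complete Segal space equivalence over $R$, or, equivalently, that $p_*$ preserves $S$-locality of level-wise complete Segal space fibrations.

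Next I would run the reduction on the base. Since $X$ has weakly constant objects, \cref{lemma:zigzag strictification} presents $X$ as level-wise complete Segal space equivalent to the nerve $N_\cD\C_X$, and since $\C_X$ has Cartesian mapping spaces and $S$-stable objects, \cref{lemma:bet}, \cref{lemma:zigzag equivalences} and \cref{lemma:s loc dk is spd enriched css} present $N_\cD\C_X$ as $S$-localized level-wise complete Segal space equivalent to $N_\cD R\C_X$. The point of passing to $N_\cD R\C_X$ is that its values are $S$-local: each $\Val_k(N_\cD R\C_X)$ is a coproduct of finite products of the $\cD$-spaces $\uMap_{R\C_X}(c,c')$, which are $S$-local by construction (\cref{lemma:bet}); finite products of $S$-local $\cD$-spaces are $S$-local, and, since $S$ has contractible codomains, so are coproducts of $S$-local $\cD$-spaces (\cref{lemma:coprod fibrant}). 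Transporting $p$ along this chain of equivalences, using the invariance of the $\cD$-covariant, resp.\ $\cD$-contravariant, model structure (\cref{the:dcov css inv}, \cref{the:cdcov s invariant s equiv}, where the $S$-stable objects hypothesis enters), produces a $\cD$-left, resp.\ $\cD$-right, fibration $p'\colon R'\to N_\cD R\C_X$, to which \cref{the:pullback rfib s} applies directly; the $\cD$-right case reduces to the $\cD$-left case by passing to opposites (\cref{lemma:opposite fib}).

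The hard part will be bridging $(p^*,p_*)$ over $X$ and $((p')^*,(p')_*)$ over $N_\cD R\C_X$: one must produce compatible base-change Quillen equivalences of the level-wise complete Segal space slices along the chain $X\simeq N_\cD\C_X\simeq N_\cD R\C_X$ — available un-localized because the level-wise complete Segal space model structure is right proper, being built from the right proper complete Segal space model structure on simplicial spaces — and then check via \cref{cor:localizing equiv} that $S$-locality of a level-wise complete Segal space fibration is preserved and reflected along these base changes; here $\Val_k$ preserving pullbacks is what makes reflection work once the terminal base has $S$-local values. A more self-contained alternative mirrors the proof of \cref{the:pullback rfib s} verbatim: for $X$ with weakly constant objects and $S$ with contractible codomains the reduction of the localizing maps over $X$ to those factoring through a point $D[0]\to X$ is insensitive to which $\Val$-direction localization is imposed on the ambient model structure, so the argument behind \cref{cor:fibrancies} (via \cref{prop:fibrancies}) applies equally to the level-wise complete Segal space slice; on such a map $p^*$ yields $\VEmb(A)\times F\to\VEmb(B)\times F$ over $R$ with $F=D[0]\times_X R$, which is an $S$-localized injective equivalence because $S$ is Cartesian, hence an $S$-localized level-wise complete Segal space equivalence over $R$, and \cref{cor:localizing equiv} concludes.
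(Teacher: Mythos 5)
Both branches of your proposal have genuine gaps, and the paper's actual proof is considerably more direct. The paper takes $g\colon X\to\hat{X}$ to be a fibrant replacement in the $S$-localized $\sP(\cD)$-enriched complete Segal space model structure of \cref{the:spd enriched css s} (so $\hat{X}$ automatically has weakly constant objects and $S$-local $\Val_k$'s), uses \cref{the:cdcov s invariant s equiv} (whose Cartesian mapping space hypothesis is automatic because $S$ is Cartesian, and whose $S$-stable objects hypothesis is in the statement) to produce an $S$-localized $\cD$-right (or $\cD$-left) fibration $\hat{R}\to\hat{X}$ with $R\simeq\hat{R}\times_{\hat{X}}X$, and then observes directly that for a trivial cofibration $A\to B$ over $X$ the pullback $R\times_X A\to R\times_X B$ is injectively equivalent to $\hat{R}\times_{\hat{X}}A\to\hat{R}\times_{\hat{X}}B$, which is a trivial cofibration by \cref{the:pullback rfib s} applied to $\hat{R}\to\hat{X}$. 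No base-change Quillen equivalences of level-wise CSS slices appear; the only invariance used is the $\cD$-covariant/contravariant one, and only to manufacture the lift $\hat{R}$.

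Your first branch appeals to right properness of the level-wise complete Segal space model structure. This is not established in the paper and is not safe to assume: \cref{rem:localizations are right proper} explicitly flags that the relevant localizations are mostly not right proper (the Segal space model structure already fails it), and the entire point of the $S$-stable objects/Cartesian-mapping-space hypotheses in \cref{the:cdcov s invariant s equiv} is to get an invariance result that does \emph{not} require right properness (cf.\ \cref{prop:right proper}). Even granting a base-change Quillen equivalence un-localized, the step you call "check via \cref{cor:localizing equiv} that $S$-locality \ldots is preserved and reflected" is precisely the non-obvious content that the paper replaces by the explicit lift $\hat{R}$. Your second branch claims that the reduction to maps through $D[0]$ from \cref{cor:fibrancies}/\cref{prop:fibrancies} "applies equally to the level-wise complete Segal space slice"; it does not, because the crucial equivalence $(5)\Leftrightarrow(1)$ in \cref{prop:fibrancies} is proved via the $\cD$-covariant invariance \cref{the:dcov invariant css s}, and the analogous invariance for the level-wise CSS slice would again be exactly a right properness assertion. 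Your instinct to reduce to a base with $S$-local values is the right one, but the mechanism you propose for relating the two bases does not go through; the fix is to lift $R$ itself along the fibrant replacement via \cref{the:cdcov s invariant s equiv}, as the paper does.
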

 
 \begin{proof}
 	Evidently $p^*$ preserves cofibrations and so we only need to prove it preserves trivial cofibrations. For that we need an additional construction. Let $g:X \to \hat{X}$ be a fibrant replacement of $X$ in the $S$-localized $\sP(\cD)$-enriched complete Segal space model structure. Then, by \cref{the:cdcov s invariant s equiv}, there exists an $S$-localized $\cD$-right fibration $\hat{R} \to \hat{X}$ such that there is an injective equivalence $R \simeq \hat{R} \times_{\hat{X}} X$. 
 	
 	Now, let $A \to B$ be a trivial cofibration over $X$. We want to prove that $p^*(A \to B)$ is a trivial cofibration over $R$. By direct computation
 	$$p^*(A \to B) = p^*A \to p^*B = R \times_X A \to R\times_XB \simeq \hat{R} \times_{\hat{X}} A \to \hat{R} \times_{\hat{X}} B.$$
 	The result now follows from the fact that this map is a trivial cofibration over $\hat{X}$, by \cref{the:pullback rfib s}, and the fact that $g_!$ is the left Quillen functor of the Quillen equivalence in \cref{the:cdcov s invariant s equiv} and hence reflects weak equivalences.
\end{proof} 

Finally, we can use similar ideas to get an adjunction of $\sP(\cD)$-enriched complete Segal spaces.

  \begin{corone} \label{cor:pullback rfib s nonfib base}
	Let $\cD$ be a small category with terminal object $t$.	Let $S$ be a Cartesian geometrically contractible set of cofibration of $\cD$-spaces. Let $X$ be a $\cD$-simplicial space with weakly constant and $S$-stable objects.
	Let $p:R \to X$ be a $\cD$-right or $\cD$-left fibration. Then the adjunction
	\begin{center}
		\adjun{(\sP(\cD\times\DD)_{/X})^{\cD-CSS_S}}{(\sP(\cD\times\DD)_{/R})^{\cD-CSS_S}}{p^*}{p_*}
	\end{center}
	is a Quillen adjunction. Here both sides have the $S$-localized $\sP(\cD)$-enriched CSS model structure (\cref{the:spd enriched css s}) on the over-category.
\end{corone}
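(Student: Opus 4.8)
The plan is to obtain this as a localization of the preceding theorem. By \cref{the:pullback rfib s nonfib base} the adjunction $(p^*,p_*)$ is already a Quillen adjunction between the $S$-localized level-wise CSS model structures (\cref{prop:levelcss s}) on $\sP(\cD\times\DD)_{/X}$ and $\sP(\cD\times\DD)_{/R}$. First I would record, exactly as one derives \cref{the:spd enriched css} from \cref{prop:levelcss}, that over any base the $S$-localized $\sP(\cD)$-enriched CSS model structure is the further left Bousfield localization of the $S$-localized level-wise CSS model structure at the maps $F[d,0] \to F[t,0] \to X$ (one for each object $d$ of $\cD$ and each map $F[t,0]\to X$): this uses that $F[t,0] = D[0]$ is terminal in $\sP(\cD\times\DD)$ and that the level-wise CSS model structure is Cartesian, so that the level-wise completeness maps are generated by the single map $F[t,0] \to \Disc(E[1])$ together with these value-constancy maps. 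Consequently, by \cref{cor:localizing equiv} (equivalently \cref{the:localizing equiv}), it suffices to prove that $p^*$ carries each such extra localizing map over $X$ to a weak equivalence in the $S$-localized $\sP(\cD)$-enriched CSS model structure over $R$.

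Next I would compute these images. Since $F[t,0]$ is terminal, a map $F[t,0]\to X$ is a global vertex $x$ of $X$, and pulling back $p\colon R\to X$ along the constant map $F[d,0] \to F[t,0]\xrightarrow{\,x\,} X$ identifies $p^*\bigl(F[d,0]\to F[t,0]\xrightarrow{x}X\bigr)$ with the map $\Fib_x R \times F[d,0] \to \Fib_x R$ of objects over $R$, where $\Fib_x R$ carries the structure map induced by the inclusion $\Fib_x R \hookrightarrow R$. Because $p$ is a $\cD$-right (resp. $\cD$-left) fibration, $\Fib_x R$ is diagonally constant by \cref{the:diag local ncov}, so it remains to see that $\Fib_x R \times F[d,0] \to \Fib_x R$ is an $S$-localized $\sP(\cD)$-enriched CSS equivalence over $R$. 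I would deduce this from the fact that the representable $\cD$-space $F[d]$ is geometrically contractible — its realization $|F[d]| = N(\cD_{/d})$ is contractible since $\cD_{/d}$ has a terminal object (\cref{def:geometric realization}, \cref{def:contractible object}, \cref{lemma:geom contr}) — so that $F[d,0] = \VEmb(F[d]) \to F[t,0]$ is inverted in the $\sP(\cD)$-enriched CSS model structure, together with the Cartesian hypothesis on $S$, which guarantees that smashing this equivalence with the diagonally constant $\cD$-simplicial space $\Fib_x R$ over $R$ again yields a weak equivalence. An essentially equivalent route is to mirror the proof of \cref{the:pullback rfib s nonfib base} verbatim, replacing ``level-wise CSS'' by ``$\sP(\cD)$-enriched CSS'': reduce along an $S$-localized $\sP(\cD)$-enriched complete Segal space fibrant replacement $g\colon X \to \hat X$, use \cref{the:cdcov s invariant s equiv} to produce a $\cD$-right (resp. $\cD$-left) fibration $\hat R \to \hat X$ with $R \simeq \hat R \times_{\hat X} X$ and the fact that $g_!$ reflects weak equivalences, and settle the fibrant-base case by the localization computation above, now over $\hat X$, where $\Val_k(\hat X)$ is $S$-local so that \cref{the:pullback rfib s} applies.

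The main obstacle is precisely the verification that $p^*$ sends the new value-constancy localizing maps to weak equivalences over $R$. Unlike the $S$-indexed maps handled in \cref{the:pullback rfib s}, these are not of the form $\VEmb(A)\to\VEmb(B)$ with $A\to B\in S$, and the source $F[d,0]=\VEmb(F[d])$ does not have weakly constant objects, so one cannot invoke the strictification of \cref{lemma:zigzag strictification} directly. The resolution hinges on two structural properties of $\cD$-right and $\cD$-left fibrations — stability under pullback (\cref{lemma:pullback nlfib}) and the diagonal constancy of their fibers (\cref{the:diag local ncov}) — which together reduce the question to the fiber $\Fib_x R$, where the geometric contractibility of $F[d]$ and the Cartesian property of $S$ close the argument; everything else is formal bookkeeping of Bousfield localizations.
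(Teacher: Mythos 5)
Your proposal follows the same strategy as the paper's proof: start from \cref{the:pullback rfib s nonfib base}, which already gives the Quillen adjunction for the $S$-localized level-wise CSS model structures, then apply \cref{cor:localizing equiv} to reduce to showing that $p^*$ sends the remaining value-constancy localizing maps to weak equivalences over $R$, factor the map $F[d,0] \to X$ through $D[0]$ via weakly constant objects, and identify the image under $p^*$ with the comparison between $\Fib_x R$ and $\Fib_x R \times F[d,0]$. That is precisely the decomposition the paper uses. One small slip: the value-constancy localizing maps in \cref{the:spd enriched css} run $F[t,0] \to F[d,0]$, a map out of the terminal object (and hence automatically a monomorphism), not $F[d,0] \to F[t,0]$ as you write; since the two maps form a section-retraction pair this does not change the conclusion by two-out-of-three, but the direction you give is not in general a cofibration, so it is not the one the localization is taken at.

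Where you diverge from the paper is in justifying the final equivalence. The paper records $\Fib_x R \to \Fib_x R \times F[d,0]$ as an equivalence essentially by definition of the $\sP(\cD)$-enriched CSS model structure, since $F[t,0] \to F[d,0]$ is among its generating localizing maps. You instead appeal to the geometric contractibility of $F[d]$ and the Cartesian hypothesis on $S$. Neither of those is quite the operative reason here: $F[t,0] \to F[d,0]$ is inverted because it is a localizing map, not because $|F[d]|$ is contractible; and the Cartesian hypothesis on $S$ (\cref{def:cartesian s}) asserts Cartesianness of the $S$-localized injective model structure on $\cD$-\emph{spaces}, not of the $\sP(\cD)$-enriched CSS model structure on $\cD$-\emph{simplicial spaces}, which is what you would need to conclude formally that tensoring the trivial cofibration $F[t,0] \to F[d,0]$ with the (diagonally constant but arbitrary) object $\Fib_x R$ again yields a trivial cofibration. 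So your justification of the last step rests on grounds that do not straightforwardly apply, even if the structure of the reduction is right. Your alternative route (repeating the proof of \cref{the:pullback rfib s nonfib base} with an $\sP(\cD)$-enriched CSS fibrant replacement and \cref{the:cdcov s invariant s equiv}) is a reasonable fallback, but the paper opts for the shorter localization argument.
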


\begin{proof}
	By \cref{the:spd enriched css} and \cref{the:spd enriched css s}, the $S$-localized $\sP(\cD)$-enriched complete Segal space model structure can be obtained by localizing the $S$-localized level-wise Segal space model structure with respect to the maps $D[0] \to \Disc(E[1]) \to X$ and $F[t,0] \to F[d,0] \to X$. The first map is already covered by \cref{the:pullback rfib s nonfib base} and so, by \cref{cor:localizing equiv}, it suffices to prove $p^*F[t,0] \to p^*F[d,0]$ is an $S$-localized $\sP(\cD)$-enriched complete Segal space equivalence. 
	
	As $X$ has weakly constant objects we can assume that $F[d,0] \to X$ factors through $F[d,0] \to D[0] \xrightarrow{ \  \{x\} \ } X$. Hence, we have $p^*(F[t,0] \to F[d,0]) = \Fib_xR \to \Fib_xR \times F[d,0]$, which by definition is an equivalence in the $\sP(\cD)$-enriched complete Segal space model structure (\cref{the:spd enriched css}) and so also in the $S$-localized $\sP(\cD)$-enriched complete Segal space model structure (\cref{the:spd enriched css s}). 
\end{proof}

\section{(Segal) Cartesian Fibrations of \texorpdfstring{$(\infty,n)$}{(oo,n)}-Categories} \label{sec:infn fib}
 In this section we apply the previous work to the particular case of interest: {\it (Segal) Cartesian fibrations of $(\infty,n)$-Categories}, for models of $(\infty,n)$-categories based on simplicial presheaves. We first review the relevant theories of $(\infty,n)$-categories (\cref{subsec:infn cat}) before moving on to the main results regarding $(\infty,n)$-categorical fibrations (\cref{subsec:infn fib}), ending with some low-dimensional examples (\cref{subsec:examples}).
 
\subsection{Theories of \texorpdfstring{$(\infty,n)$}{(oo,n)}-Categories} \label{subsec:infn cat} 
 Let us review the relevant models of $(\infty,n)$-categories as certain simplicial presheaves. 
 For that we first recall the $\Theta$ construction. For a category $\C$ let $\Theta\C$ be the category with objects of the form $[k](c_1,...,c_k)$, where $k \geq 0$ and $c_1, ..., c_k$ are objects in $\C$ . Moreover, the morphisms are given as 
 $$\Hom_{\Theta\C}([k](c_1,...,c_k),[l](d_1,...,d_l))= \coprod_{\delta: [k] \to [l]} \prod_{i=1}^k \prod_{j = \delta(i-1)+1}^{\delta(i)}\Hom_\C(c_i,d_j).$$
 The $\Theta$ construction is originally due to Joyal \cite{joyal1997disks}. For a nice review of the $\Theta$-construction see \cite[Subsection 3.2]{rezk2010thetanspaces}. 
 
 \begin{defone} 
   Define $\Theta_0$ as the terminal category and $\Theta_{k+1} = \Theta\Theta_k$. Notice $\Theta_1 = \DD$.
 \end{defone} 
 
 We want to study models of $(\infty,n)$-categories via $\Theta_n$-spaces with a model structure on the category $\sP(\Theta_n)$. Let $(-)[1]: \sP(\Theta_{n}) \to \sP(\Theta_{n-1})$ be the functor that takes a $\Theta_n$-space $X$ to the $\Theta_{n-1}$-space $X[1]$ with value $(X[1])(\theta) = X[1](\theta)$.
 Moreover, for a $\Theta_n$-space $X$, let $\Und(X)$, the {\it underlying simplicial space}, be the simplicial space defined as 
 \begin{equation} \label{eq:underlying css}
  \Und(X)[n] = X[n]([0],...,[0]).
 \end{equation} 
 For more details see \cite[Subsection 4.1]{rezk2010thetanspaces}, where it is denoted $T^*$. Finally, there is a functor $\Sigma:\sP(\Theta_{n-1}) \to \sP(\Theta_n)$, given by $\Sigma(A) = V[1](A)$, where $V$ is the {\it intertwining functor} \cite[Subsection 4.4]{rezk2010thetanspaces}.
  
 \begin{defone}
 Let $X$ be a $\Theta_n$-space. Let $(\infty,0)$-$\Theta$-spaces be Kan complexes and define an $(\infty,n)$-$\Theta$-space $X$ inductively as:
  \begin{enumerate}
  	\item {\bf Fibrancy:} $X$ is injectively fibrant.
  	\item {\bf Segal:} $X$ satisfies the Segal condition, meaning for every object $[n](c_1,..., c_n)$ in $\Theta_n$, the map 
  	$$X[n](c_1,...,c_n) \to X[1](c_1) \times_{X[0]} ... \times_{X[0]} X[1](c_n)$$
  	is an equivalence of spaces.
  	\item {\bf Completeness:} The underlying simplicial space $\Und X$ (\ref{eq:underlying css}), which is a Segal space by the previous condition, is complete.
  	\item {\bf Enrichment:} The $\Theta_{n-1}$-space $X[1]$ is an $(\infty,n-1)$-$\Theta$-space. 
  \end{enumerate}
 \end{defone}
 
 We now have the following theorem, with slightly different notation, due to Rezk \cite{rezk2010thetanspaces}.
 
 \begin{theone} \label{the:thetan model cat}
  Let $n \geq 0$. There exists a left proper combinatorial Cartesian model structure on $\sP(\Theta_n)$ with the following specifications:
  \begin{enumerate}
  	\item Cofibrations are monomorphisms.
  	\item Fibrant objects are $(\infty,n)$-$\Theta$-spaces. 
  \end{enumerate} 	
  Moreover, the model structure is obtained by localizing the injective model structure on $\Theta_n$-spaces with respect to the following inductively defined set of cofibrations:
  \begin{itemize}
  	\item If $n=0$, then the localizing set is empty.
  	\item The Segal morphisms $F[1](c_1) \coprod_{F[0]} ... F[1](c_n) \to F[n](c_1,...,c_n)$
  	\item The completeness map $F[0] \to \Disc(E[1])$
  	\item Maps of the form $\Sigma(f): \Sigma(A) \to \Sigma(B)$, where $f:A \to B$ is a map of $\Theta_{n-1}$-spaces that defined $(\infty,n-1)$-$\Theta$-spaces. 
  \end{itemize}
 \end{theone}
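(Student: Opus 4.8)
The plan is to argue by induction on $n$, in the style of Rezk. For $n=0$ the asserted model structure is the Kan model structure on $\s=\sP(\Theta_0)$: it is left proper, combinatorial and Cartesian, its cofibrations are the monomorphisms, its fibrant objects are the Kan complexes (that is, the $(\infty,0)$-$\Theta$-spaces), and the prescribed localizing set is empty, so there is nothing to check. So suppose the statement holds for $n-1$; in particular there is a set $S_{n-1}$ of cofibrations of $\Theta_{n-1}$-spaces (the Segal, completeness and $\Sigma$-maps listed in the theorem) such that an injectively fibrant $\Theta_{n-1}$-space is $S_{n-1}$-local if and only if it is an $(\infty,n-1)$-$\Theta$-space, and such that the Bousfield localization of the injective model structure on $\sP(\Theta_{n-1})$ at $S_{n-1}$ is Cartesian.

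First I would recall that $\sP(\Theta_n)$ carries an injective model structure which is left proper, combinatorial, simplicial and Cartesian, being the injective model structure on a presheaf category over the (Cartesian) Kan model structure (\cite{heller1982injectivemodel}, as in \cref{prop:injectivemod}). Let $S_n$ be the set named in the statement: the Segal morphisms, the completeness map $F[0]\to\Disc(E[1])$, and the maps $\Sigma(f)$ for $f\in S_{n-1}$. A left Bousfield localization of the injective model structure at $S_n$ exists by \cite[Theorem 4.1.1]{hirschhorn2003modelcategories} and is automatically left proper, combinatorial and simplicial, with cofibrations still the monomorphisms. This already yields every property asserted in the theorem except the Cartesian one and the identification of the fibrant objects.

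Next I would identify the fibrant objects by analysing $S_n$-locality of an injectively fibrant $X$ one family of maps at a time. Locality with respect to the Segal morphisms is, by definition, the Segal condition. Locality with respect to $F[0]\to\Disc(E[1])$ is, via the adjunction identifying $\Map_{\sP(\Theta_n)}(\Disc(E[1]),X)$ with $\Map(E[1],\Und X)$, precisely the completeness of the underlying Segal space $\Und X$ --- which is a Segal space once the Segal condition holds --- exactly as for complete Segal spaces (\cite[Section 6]{rezk2001css}). For the remaining maps I would use the adjoint properties of Rezk's intertwining functor $V$ (with $\Sigma=V[1](-)$), established in \cite[Section 4]{rezk2010thetanspaces}, together with the fact that evaluation at $[1]$ carries injectively fibrant $\Theta_n$-spaces to injectively fibrant $\Theta_{n-1}$-spaces: these reduce $\Sigma(S_{n-1})$-locality of $X$ to $S_{n-1}$-locality of $X[1]$, which by the inductive hypothesis says exactly that $X[1]$ is an $(\infty,n-1)$-$\Theta$-space, i.e.\ the Enrichment condition. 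Combining the three families, an injectively fibrant $X$ is fibrant in the localized model structure if and only if it is an $(\infty,n)$-$\Theta$-space.

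It remains to establish the Cartesian property of \cref{def:cartesian model cat}. The cofibration half is immediate, monomorphisms being closed under pushout-product. For the trivial half I would invoke the Joyal--Tierney calculus (\cref{prop:joyal tierney lifting}): since the injective structure is already Cartesian, trivial injective cofibrations are stable under pushout-product with cofibrations, so it suffices to show that $S_n\square\mathcal{I}$ consists of $S_n$-local equivalences for a generating set $\mathcal{I}$ of injective cofibrations --- equivalently, that the class of $S_n$-local objects is stable under exponentiation $(-)^K$ by cofibrant $K$. This comes down to checking $f\square j$ for $f\in S_n$ and $j\in\mathcal{I}$: for the Segal morphisms and the completeness map this is an explicit finite verification (the analogue of the Cartesian check for complete Segal spaces), and for $\Sigma(g)\square j$ I would exploit that $V$, hence $\Sigma$, carries the Cartesian structure of $\sP(\Theta_{n-1})$ compatibly into that of $\sP(\Theta_n)$, so that $\Sigma(g)\square j$ is, up to $S_n$-local equivalence, $\Sigma$ applied to a pushout-product built from $g$, which is an $S_{n-1}$-local equivalence by the inductive Cartesian hypothesis. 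Making this last reduction precise --- pinning down how $\Sigma$, the pushout-product, and the two localizations interact so that the inductive Cartesian property can be fed in --- is the step I expect to be the genuine work; the remainder is routine manipulation of Bousfield localizations.
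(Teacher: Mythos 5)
This theorem is stated in the paper as a recall of a result due to Rezk, preceded by the sentence ``We now have the following theorem, with slightly different notation, due to Rezk \cite{rezk2010thetanspaces}.''\ The paper supplies no proof; it simply cites Rezk. So there is no ``paper's own proof'' to compare against, and your proposal should be read as an attempt to reconstruct the argument in \cite{rezk2010thetanspaces}.

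Your outline matches Rezk's in broad strokes --- induction on $n$, left Bousfield localization of the injective model structure at the Segal, completeness, and $\Sigma$-maps, and identification of the fibrant objects via the adjoint properties of $V$ and of $\Disc$. That part is fine. The gap is exactly where you flag it, but it is more serious than you suggest. First, the claim that ``for the Segal morphisms and the completeness map this is an explicit finite verification'' substantially understates the work even in the $n=1$ case: the Cartesian property of the complete Segal space model structure is the content of a long argument in \cite[Sections 9--12]{rezk2001css} and is not routine. Second, and more decisively, the assertion that $\Sigma(g)\square j$ is ``up to $S_n$-local equivalence, $\Sigma$ applied to a pushout-product built from $g$'' is not true in the form stated: $\Sigma=V[1](-)$ is not compatible with products in the naive way this requires, and making the interaction of $\Sigma$ with pushout-products and the localization precise is exactly where the original version of \cite{rezk2010thetanspaces} had an error in its proof of the Cartesian property, later corrected in a published erratum. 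So the Cartesian half of the argument, which is the genuine content of the theorem, is not established by the proposal; you would need to reproduce (or cite) the corrected argument of Rezk rather than gesture at a compatibility of $\Sigma$ with the Cartesian structure.
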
 

 We now want to use this to define a model of $(\infty,n)$-categories on diagrams of the form $\Theta_k \times \DD^{n-k}$. In order to do that we need the following key result by Bergner and Rezk \cite[Corollary 7.2]{bergnerrezk2020comparisonii} with somewhat different terminology.
 
 \begin{theone} \label{the:cso in inftyn}
 	Let $\cD$ be a small category with terminal object and $S$ a set of cofibrations of $\cD$-spaces such that the $S$-localized injective model structure on $\cD$-spaces (\cref{prop:injectivemod s}) is a model for $(\infty,n-1)$-categories. Then the $S$-localized $\sP(\cD)$-enriched complete Segal space model structure (\cref{the:spd enriched css s}) is a model for $(\infty,n)$-categories. The model structure is obtained by localizing the injective model structure on $\cD$-simplicial spaces with respect to the following morphisms 
 	\begin{itemize}
 		\item $\VEmb(S) \times D[n]$, where $n\geq 0$.
 		\item $F[d,0] \times \Disc(G[n]) \to F[d,0] \times D[n]$, where $n \geq 2$ and $d\in \Obj_\cD$.
 		\item $D[0] \to \Disc(E[1])$
 		\item $D[0] \to F[d]$, where $d \in \Obj_\cD$
 	\end{itemize}
 \end{theone}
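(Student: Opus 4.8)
The plan is to prove this by induction on $n$, reducing the statement to an interplay between the inductive hypothesis (that the $S$-localized injective model structure on $\cD$-spaces models $(\infty,n-1)$-categories) and the general theory of $\sP(\cD)$-enriched complete Segal spaces developed in \cref{subsec:enriched css} and \cref{subsec:localized model}. The first task is to recognize that the $S$-localized $\sP(\cD)$-enriched complete Segal space model structure (\cref{the:spd enriched css s}) exists precisely when $S$ is geometrically contractible, so I would begin by checking that a set of cofibrations $S$ for which the $S$-localized injective model structure on $\cD$-spaces is a model of $(\infty,n-1)$-categories is indeed geometrically contractible in the sense of \cref{def:contractible realization}. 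For $n-1 \geq 1$ this follows because every $(\infty,n-1)$-category is, as an $\infty$-groupoid, contractible only if it is a point, but the relevant fact is that homotopically constant $\cD$-spaces are $S$-local (this is exactly the inductive input together with \cref{lemma:geom equiv}), which gives that $S$ consists of geometric equivalences; combined with contractibility of codomains coming from the explicit description of the localizing maps, $S$ is geometrically contractible. So \cref{the:spd enriched css s} applies and produces the model structure whose fibrant objects are $S$-localized $\sP(\cD)$-enriched complete Segal spaces.

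Next I would unwind what an $S$-localized $\sP(\cD)$-enriched complete Segal space is, using \cref{def:sloc spd enriched css} and \cref{lemma:fiber vs global}: it is a level-wise Segal space $X$ with $\Val(X)$ homotopically constant, $\Und_t X$ complete, and all mapping $\cD$-spaces $\map_X(x,y)$ being $S$-local. Under the inductive hypothesis, an $S$-local $\cD$-space is exactly an $(\infty,n-1)$-category, so such an $X$ is precisely a complete Segal space object internal to $(\infty,n-1)$-categories — which is the standard inductive definition of an $(\infty,n)$-category. The bridge to the $\Theta_k \times \DD^{n-k}$ picture then comes from \cref{the:thetan model cat} and the iterated application of \cref{the:cso in inftyn}: starting from the Kan model structure (the case $n=0$, where $S$ is empty and $\cD$ is trivial), each application of \cref{the:cso in inftyn} adds one complete-Segal-space direction, and after $n$ steps one reaches a model of $(\infty,n)$-categories on $\sP(\Theta_k \times \DD^{n-k})$. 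The second half of the statement — the explicit list of localizing maps — I would verify by identifying each family on the list with the corresponding family already used in the proofs of \cref{the:spd enriched css} and \cref{the:spd enriched css s}: the maps $\VEmb(S) \times D[n]$ are the ones coming from $S$-localization after smashing with representable simplices, the maps $F[d,0] \times \Disc(G[n]) \to F[d,0] \times D[n]$ are the level-wise Segal maps, $D[0] \to \Disc(E[1])$ is the level-wise completeness map, and $D[0] \to F[d]$ is the weakly-constant-objects map. One must only check that these generate the same localization as the set written in \cref{the:spd enriched css s} combined with $\cref{the:sone local level stwo}$, which is a routine comparison of generating sets.

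The main obstacle I expect is making the identification ``$S$-local $\cD$-space $=$ $(\infty,n-1)$-category'' precise enough that the resulting $\sP(\cD)$-enriched complete Segal space model structure genuinely coincides with one of the established models of $(\infty,n)$-categories, rather than merely being Quillen equivalent to one through a zigzag. The cleanest route is not to prove a direct equivalence but to invoke the inductive structure of \cref{the:thetan model cat}: since that theorem builds $(\infty,n)$-$\Theta$-spaces by localizing $\Theta_n = \Theta\Theta_{n-1}$-spaces using the intertwining/$\Sigma$ maps, and since $\Theta_n$-spaces are the same as $\DD$-diagrams of $\Theta_{n-1}$-spaces, one observes that Rezk's $\Sigma(f)$ maps translate, under the isomorphism $\sP(\Theta_n) \cong \Fun(\DD^{\mathrm{op}}, \sP(\Theta_{n-1}))$ with the $\cD = \Theta_{n-1}$ indexing, exactly into the $\VEmb(S) \times D[n]$ family of \cref{the:cso in inftyn}. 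Thus the content is that \cref{the:cso in inftyn}'s model structure, applied with $\cD = \Theta_{n-1}$ and $S$ the Rezk localizing set of one lower level, reproduces Rezk's $\Theta_n$-space model structure — a statement that is essentially \cite[Corollary 7.2]{bergnerrezk2020comparisonii} itself, so the bulk of the proof is citing that result and checking the translation of generating sets is faithful. The remaining care is purely bookkeeping: confirming that the hypotheses ``$\cD$ has a terminal object'' and ``$S$ geometrically contractible'' propagate correctly through the induction, which they do since $\Theta_{n-1}$ always has a terminal object $[0]$ and, as noted above, the relevant $S$ is always geometrically contractible once $n \geq 1$.
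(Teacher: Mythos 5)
The paper does not prove this theorem at all — it is explicitly presented as a restatement of \cite[Corollary 7.2]{bergnerrezk2020comparisonii} ``with somewhat different terminology,'' so the only real content the paper asks for is the translation between Bergner--Rezk's generating sets and the four families listed in the statement. You eventually arrive at exactly this conclusion (``the bulk of the proof is citing that result and checking the translation of generating sets is faithful''), which is the right answer, but the scaffolding you build around it has two genuine problems.

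First, there is a circularity in the third paragraph of your first part: you write that ``the bridge to the $\Theta_k\times\DD^{n-k}$ picture then comes from \cref{the:thetan model cat} and the iterated application of \cref{the:cso in inftyn},'' with ``after $n$ steps one reaches a model of $(\infty,n)$-categories.'' But \cref{the:cso in inftyn} is precisely the statement under proof, so invoking its iterated application as a bridge is using the result to establish itself. The induction you seem to want is legitimate, but you never isolate the base case and the inductive step cleanly enough to make the circularity disappear; in the paper the entire inductive structure lives inside the Bergner--Rezk reference and doesn't need to be rebuilt.

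Second, the opening verification that $S$ is geometrically contractible (needed to invoke \cref{the:spd enriched css s}) contains a non-sequitur. The sentence ``every $(\infty,n-1)$-category is, as an $\infty$-groupoid, contractible only if it is a point'' does not bear on whether $S$ is geometrically contractible and should be deleted. The statement that needs checking is that the specific generating cofibrations of the $(\infty,n-1)$-category model structure (Segal maps, completeness map, discreteness maps, iterated $\Sigma$-maps) each have geometrically contractible domain and codomain — this is a direct verification on representables and suspensions, which the paper does separately in \cref{lemma:contractible thetan}. Deducing ``$S$ consists of geometric equivalences'' from ``homotopically constant objects are $S$-local'' via \cref{lemma:geom equiv} is valid, but you then need, and omit, a check that the codomains in $S$ are geometrically contractible; that is not automatic from $S$ consisting of geometric equivalences. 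So the paragraph reads as plausible but leaves the actual content unchecked. Strip these two detours out and what remains is exactly the paper's treatment: cite \cite[Corollary 7.2]{bergnerrezk2020comparisonii} and match generating sets.
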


 Notice, by \cite[Corollary 7.3]{bergnerrezk2020comparisonii}, we can relate these model structures via simplicial Quillen equivalences.
 
 \begin{corone} \label{cor:inftyn}
 	Let $n\geq 0$. There is a chain of simplicial Quillen equivalences of simplicial model categories
 	\begin{center}
 		\begin{tikzcd}
 		 \sP(\DD^n) \arrow[r, shift left=1.8, "\bot"', "L_1"] & \sP(\DD^{n-1} \times \DD)  \arrow[r, shift left=1.8, "\bot"', "L_2"] \arrow[l, shift left=1.8, "R_1"] & \sP(\DD^{n-2} \times \Theta_2)  \arrow[r, shift left=1.8, "\bot"', "L_{n-1}"] \arrow[l, shift left=1.8, "R_2"]& ...  \arrow[r, shift left=1.8, "\bot"', "L_n"] \arrow[l, shift left=1.8, "R_{n-1}"]& \sP(\Theta_n) \arrow[l, shift left=1.8, "R_n"] 	
 		\end{tikzcd},
 	\end{center} 
 where the categories have the model structures given by \cref{the:cso in inftyn} or \cref{the:thetan model cat}. Moreover, only the right hand model structure is Cartesian. 
 \end{corone}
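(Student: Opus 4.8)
The plan is to build the chain one link at a time by induction on $n$, reducing everything to a single comparison between an enriched complete Segal space model structure and a $\Theta$-space model structure, and reading off each model structure from its explicit presentation as a left Bousfield localization of an injective model structure. Since $\Theta_1=\DD$ we have $\sP(\DD^n)=\sP(\DD^{n-1}\times\DD)$ on the nose, with the two model structures of \cref{cor:inftyn} both supplied by \cref{the:cso in inftyn} applied to $\cD=\DD^{n-1}$, so the first link $(L_1,R_1)$ is the identity up to relabeling. For $2\le j\le n$, write $\DD^{n-j+1}\times\Theta_{j-1}=\DD^{n-j}\times(\Theta_{j-1}\times\DD)$ and $\DD^{n-j}\times\Theta_j=\DD^{n-j}\times\Theta\Theta_{j-1}$; the link $(L_j,R_j)$ is to be obtained from a single adjunction $\sP(\Theta_{j-1}\times\DD)\rightleftarrows\sP(\Theta\Theta_{j-1})$ by applying $\Fun((\DD^{n-j})^{op},-)$, giving both sides the injective model structure and invoking \cite[Remark A.2.8.6]{lurie2009htt}. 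Here one checks that the localizing sets of \cref{the:cso in inftyn} and \cref{the:thetan model cat} for $\DD^{n-j}\times\cC$-diagrams are obtained from those for $\cC$-diagrams by smashing with representables, so that passage to the larger diagram category is compatible with localization and carries a Quillen equivalence to a Quillen equivalence.

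This reduces the corollary to the single comparison $\sP(\Theta_{j-1}\times\DD)\simeq\sP(\Theta_j)$ for each $j$. On the left, by the inductive hypothesis (and \cref{the:thetan model cat}) the set $S$ of $\Theta_{j-1}$-localizing cofibrations makes $\sP(\Theta_{j-1})^{inj_S}$ a model for $(\infty,j-1)$-categories, so by \cref{the:cso in inftyn} the $S$-localized $\sP(\Theta_{j-1})$-enriched complete Segal space model structure on $\sP(\Theta_{j-1}\times\DD)$ models $(\infty,j)$-categories with the localizing set listed there; on the right, \cref{the:thetan model cat} presents the $\Theta_j$-space model structure with its own localizing set. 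The comparison adjunction is restriction and left Kan extension along the functor arising from Berger's identification $\Theta\Theta_{j-1}\cong\DD\wr\Theta_{j-1}$ of the $\Theta$-construction with the wreath product, and the assertion that this localized adjunction is a Quillen equivalence is exactly the content of the Bergner--Rezk comparison \cite{bergnerrezk2013comparisoni,bergnerrezk2020comparisonii}. I expect the main obstacle to be precisely this last point: verifying that the Segal, completeness and suspension conditions cut out by the localizing set of \cref{the:cso in inftyn} on $\sP(\Theta_{j-1}\times\DD)$ correspond, under restriction along the wreath functor, to the wreath-Segal and completeness conditions cut out by the localizing set of \cref{the:thetan model cat} on $\sP(\Theta_j)$, so that the two localizations have the same bifibrant objects; everything else is formal bookkeeping, carried out in \cite[Corollary 7.3]{bergnerrezk2020comparisonii}.

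For the final assertion, the model structure on $\sP(\Theta_n)$ is Cartesian by \cref{the:thetan model cat}. Each intermediate category $\sP(\DD^{n-j}\times\Theta_j)$ with $j<n$ carries an iterated enriched complete Segal space localization, of the form $\sP(\cD\times\DD)^{\cD\text{-}CSS_S}$ with $\cD=\DD^{n-j-1}\times\Theta_j$, and \cref{the:spd enriched css s} does not assert that such a localization is Cartesian; indeed it is not, since the discreteness linking maps of the form $D[0]\to F[d,0]$ in its localizing set do not have Cartesian pushout-products — equivalently, $A\times-$ fails to be left Quillen because it does not send these generating trivial cofibrations to weak equivalences in the surviving $\DD$-directions. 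Hence none of the intermediate model structures is Cartesian, and only $\sP(\Theta_n)$ is, completing the proof.
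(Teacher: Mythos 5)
Your proof takes essentially the same approach as the paper: the paper's own justification is a single-sentence citation to \cite[Corollary 7.3]{bergnerrezk2020comparisonii}, and your argument likewise reduces the chain of Quillen equivalences to that Bergner--Rezk comparison. Your additional unpacking (the trivial first link, applying $\Fun((\DD^{n-j})^{op},-)$ to the wreath-product comparison, and non-Cartesianness of the intermediate models stemming from the discreteness localizing maps) is consistent with the paper's surrounding machinery; the Cartesianness claim in particular is recorded more formally in \cref{rem:cartesian loc} via \cite[Proposition 5.9]{bergnerrezk2020comparisonii}.
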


In order to use these results we need to establish some notation with regard to localizing morphisms.

\begin{notone}\label{not:infn notation}
	Fix the following notation for sets of cofibrations of $\cD$-spaces.
	\begin{enumerate}
		\item Let $S_{disc}$ be the set of cofibrations in $\sP(\Theta_k \times \DD^{n-k})$ necessary to get the discreteness condition (if $n=k$ then $S_{disc}$ is empty).
		\item Let $S_{Seg}$ be the set of cofibrations in $\sP(\Theta_k \times \DD^{n-k})$ necessary to get the Segal condition. Moreover, let $S_{Seg,disc} = S_{Seg} \cup S_{disc}$. 
		\item Let $S_{CSS}$ be the set of cofibrations in $\sP(\Theta_k \times \DD^{n-k})$ necessary to get the Segal and completeness condition. Moreover, let $S_{CSS,disc} = S_{CSS} \cup S_{disc}$. 
	\end{enumerate}
\end{notone}

We use the following terminology for the injectively fibrant $S$-local objects and the underlying quasi-category of $S$-local objects depending on the choice of $S$.

\begin{equation} \label{eq:inftyn categories}
	\begin{tabular}{c|c|c|c}
		Localizing Set & Local Object & Notation & Quasi-Cat \\ \hline 
		$S_{Seg}$ & $n$-Segal Space & $n$-Seg & $\Seg^n$\\ \hline 
		$S_{Seg,disc}$ &Segal $(\infty,n)$-Category & $Seg\cat_{(\infty,n)}$& $\segcat_{(\infty,n)}$ \\ \hline 
		$S_{CSS}$ & $n$-complete Segal space & $n$-CSS& $\CSS^n$\\ \hline 
		$S_{CSS,disc}$ & $(\infty,n)$-Category & $\cat_{(\infty,n)}$ & $\cat_{(\infty,n)}$
	\end{tabular}
\end{equation} 

In particular, a {\it theory of $(\infty,n)$-categories} is any choice of model structure in \cref{cor:inftyn} and an $(\infty,n)$-category is a fibrant object in one of these model structures. Let us review the relevant properties of the localizing sets.

\begin{remone}\label{rem:cartesian loc}
	For a given subset $S\subseteq S_{CSS,disc}$, the $S$-localized model structure is Cartesian if and only if $S \subseteq S_{CSS}$ \cite[Proposition 5.9]{bergnerrezk2020comparisonii}. This implies that the $n$-Segal space and the $n$-complete Segal space model structures are always Cartesian and the (Segal) $(\infty,n)$-category model structure is Cartesian if and only if $k=n$. Thus, by \cite[Theorem 1.1]{haugseng2015rectenrichedinftycat}, the underlying quasi-categories are enriched over themselves in the sense of \cite{gepnerhaugseng2015enrichedinftycat}. In particular, if $k=n$, then $\cat_{(\infty,n)}$ is enriched over itself meaning it is an $(\infty,n+1)$-category in the sense of \cite[Definition 6.1.5]{gepnerhaugseng2015enrichedinftycat}.
\end{remone}

\begin{remone} \label{rem:localizations are right proper}
	Notice, these localizations are mostly not right proper. Indeed, even the Segal space model structure on simplicial spaces is not right proper \cite[Example 5.19]{rasekh2017left}. This fact, combined with \cref{prop:right proper} confirms what we mentioned in \cref{rem:right proper}, namely that we need some assumptions to get an invariance result such as \cref{the:cdcov s invariant s equiv}, which we apply to the context of $(\infty,n)$-categories in \cref{the:invariance property}.
\end{remone}

Having done the review, we can move on to prove several results with regard to these localizations that are relevant in the next subsection. First we tackle geometric contractibility.

 \begin{lemone} \label{lemma:contractible thetan}
	The set of generating trivial cofibrations given in \cref{the:thetan model cat} or \cref{the:cso in inftyn} are geometrically contractible.
\end{lemone}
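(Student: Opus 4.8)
The plan is to work through the generating trivial cofibrations listed in \cref{the:thetan model cat} (equivalently \cref{the:cso in inftyn}) one family at a time and verify that each codomain is geometrically contractible, i.e. that the map to the terminal object becomes an equivalence after applying the geometric realization functor $|-|$ from \cref{def:geometric realization}. Recall that the relevant families are: the Segal morphisms $F[1](c_1)\coprod_{F[0]}\cdots\coprod_{F[0]}F[1](c_n)\to F[n](c_1,\dots,c_n)$; the completeness map $F[0]\to\Disc(E[1])$; and the suspension maps $\Sigma(f)\colon\Sigma(A)\to\Sigma(B)$ for $f$ a trivial cofibration at level $n-1$ (and, in the $\cD$-version, the extra maps $D[0]\to F[d]$ from the discreteness condition). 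Since geometric contractibility of the codomain implies geometric contractibility of the whole morphism (the domain and codomain both map to a point, so if the codomain realizes to a contractible space the map does too, by two-out-of-three), it suffices to treat the codomains. So the strategy reduces to: (i) identify what $|-|$ does to the representables $F[\theta,n]$ (and to $\Disc(E[1])$, $F[d]$), and (ii) observe each relevant codomain realizes to something contractible.

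First I would record the key computation that $|-|$ sends representable presheaves to nerves of slice categories, by definition (\cref{def:geometric realization}): $|F[d,\theta]| \simeq N(\cD_{/d}) \times N(\Theta_{\bullet})$-type expressions, and in particular each $|F[\theta,n]|$ is the nerve of a category with a terminal object, hence contractible. For the Segal codomains $F[n](c_1,\dots,c_n)$ this is immediate: they are representable, so their realization is the nerve of a slice category, which has a terminal object and is therefore contractible. For the completeness codomain $\Disc(E[1])$, recall $E[1]$ is the nerve of the free-living isomorphism (the contractible groupoid on two objects); its realization is contractible since $E[1]$ is levelwise a disjoint union of contractible pieces assembled into a contractible simplicial space, and $\Disc$ composed with $|-|$ just picks out this realization. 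For the $\cD$-level maps $D[0]\to F[d]$, we have $|D[0]|\simeq |F[t,0]|$ which is a point, and $|F[d]| = N(\cD_{/d})$ which is contractible because $\cD_{/d}$ has the terminal object $\id_d$.

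The one family requiring a genuine inductive argument is the suspension maps $\Sigma(f)\colon \Sigma(A)\to\Sigma(B)$. Here I would use the inductive structure of the statement: by induction on $n$, the codomain $B$ is geometrically contractible (it is a codomain of a trivial cofibration of $\Theta_{n-1}$-spaces defining $(\infty,n-1)$-$\Theta$-spaces, hence on the induction hypothesis geometrically contractible), and then I need that $\Sigma = V[1](-)$ applied to a geometrically contractible object is again geometrically contractible. This follows because $|\Sigma(B)| = |V[1](B)|$ can be computed from the description of the intertwining functor $V$: $V[1](B)$ has two "endpoint" objects and the morphism object between them is $B$, so its realization is (up to equivalence) the homotopy pushout/mapping-cylinder $* \cup_{|B|} |B|\times\Delta[1] \cup_{|B|} *$, which collapses to a point whenever $|B|$ is contractible. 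I would phrase this cleanly via the standard fact that a simplicial space built as a "Segal-type" object on a contractible collection of morphism spaces with a terminal/initial-type object realizes to a point.

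The main obstacle is making the last paragraph — the behavior of realization on the suspension/intertwining construction — precise without reproducing Rezk's machinery from \cite[Subsection 4.4]{rezk2010thetanspaces}. The cleanest route is probably to avoid an explicit formula for $|V[1](B)|$ and instead argue that $\Sigma(B)$ admits a map from $F[1](B)$-type objects or directly that $\Sigma(*) $ is geometrically contractible (its realization is the nerve of $\Theta\Theta_{n-1}$-slices, contractible) and that $\Sigma$ preserves geometric equivalences between suitably fibrant objects — the latter because $|-|$ is a left Quillen functor from the injective model structure and $\Sigma$ is built from colimits and representables. Then $\Sigma(B)\to\Sigma(*)$ is a geometric equivalence when $B\to *$ is, and $\Sigma(*)$ is geometrically contractible, so $\Sigma(B)$ is too. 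I expect roughly half a page once the auxiliary observation about $|-|$ on representables is stated.
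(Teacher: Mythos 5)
Your proposal has the right overall shape but contains two substantive problems.

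First, the reduction ``it suffices to treat the codomains'' is not valid as you state it. A set $S$ is geometrically contractible when each $f\colon A\to B$ is a geometric equivalence \emph{and} $B$ is geometrically contractible. Knowing only that $|B|\simeq *$ tells you nothing about whether $|A|\to|B|$ is an equivalence — the two-out-of-three you invoke for $A\to B\to *$ would require that $A\to *$ is already known to be a geometric equivalence, which is precisely the thing you'd be trying to avoid checking. What actually happens, and what the paper does, is that one checks \emph{both} the domain and the codomain are geometrically contractible; then the map is automatically an equivalence of contractible spaces. In particular for the Segal maps you do need to argue that the domain $F[1](c_1)\coprod_{F[0]}\cdots\coprod_{F[0]}F[1](c_n)$ realizes to something contractible (which it does, being a chain of contractible pieces glued along points, but it isn't a representable and deserves a sentence).

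Second, you treat \cref{the:thetan model cat} and \cref{the:cso in inftyn} as though they list the same morphisms (``equivalently''), but they do not: the first is the $\Theta_n$-space model structure built via the suspension operator $\Sigma$, while the second is the general $\cD\times\DD$ construction built via products. Their localizing sets look quite different — \cref{the:cso in inftyn} involves $\VEmb(S)\times D[n]$, $F[d,0]\times\Disc(G[n])\to F[d,0]\times D[n]$, $D[0]\to\Disc(E[1])$, and $D[0]\to F[d]$, of which you address only the last. The paper handles these by a separate observation: products of geometrically contractible objects are geometrically contractible, so one reduces to the individual factors $D[n]$, $\Disc(G[n])$, $\Disc(E[1])$, $F[d,0]$, each handled by inspection (plus the induction hypothesis that $S$ is geometrically contractible for the $\VEmb(S)$ family). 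Your proposal needs this second part.

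On the suspension family, your instinct is right but your execution is roundabout. The paper dispatches it in one line: $|\Sigma f|\simeq\Sigma|f|$, i.e. $\Theta$-suspension commutes with geometric realization up to equivalence, and the topological (unreduced) suspension of a contractible space is contractible. Your second fallback route (``$\Sigma(*)$ is geometrically contractible and $\Sigma$ preserves geometric equivalences'') is essentially the same thing, but the claimed reason — ``$|-|$ is left Quillen and $\Sigma$ is built from colimits'' — does not by itself give preservation of geometric equivalences; you still need the commutation formula. Once you state $|\Sigma(-)|\simeq\Sigma|-|$, everything else you write becomes unnecessary, including the attempted mapping-cylinder description of $|V[1](B)|$.
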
 

\begin{proof}
	We will make several inductive arguments. First of all we prove the generating cofibration in \cref{the:thetan model cat} are geometrically contractible. Notice if $f$ is a geometrically contractible map of $\Theta_{n-1}$-spaces, then $\Sigma f$ is also contractible. Indeed, $|\Sigma f| \simeq \Sigma |f|$ and the suspension of a contractible space is contractible.
	
	Now, for a given object $[n](c_1,...,c_n)$ in $\Theta_n$, $|F[n](c_1,...,c_n)|$, $|F[1](c_1)|$ and $|F[0]|$ are contractible spaces and so 
	$$ |F[1](c_1) \coprod_{F[0]} ... F[1](c_n)| \to |F[n](c_1,...,c_n)|$$
	is an equivalence of contractible spaces. Indeed, by definition of $|-|$ (\cref{def:geometric realization}) the image of every representable $\cD$-space is contractible. Moreover, $|\Disc(E[1])|$ is contractible, hence $|D[0]| \to |E[1]|$ is also a map of contractible spaces. 
	
	We will now move on to the trivial cofibrations given in \cref{the:cso in inftyn}, assuming $S$ is geometrically contractible. Notice, the product of geometrically contractible $\cD$-spaces are geometrically contractible, which means it suffices to check the individual objects $D[n]$, $\Disc(G[n])$, $\Disc(E[1])$ and $F[d,0]$ are geometrically contractible, which follows by direct observation.
\end{proof}

Next, we move on to $S$-stability. For that we need the following lemma.

\begin{lemone} \label{lemma:technical lemma segal spaces}
	Let $X,Y,Z$ be Segal spaces and $a: X \times Y \to Z$ a map of Segal spaces, where $Z$ has a distinguished object $z$. Then for a given object $x$ in $X$, there exists an object $y$ with $a(x,y)$ equivalent to $z$ in $Z$ if and only if there exists an object $y$ in the homotopy category $\Ho(Y)$, such that $a(x,y)$ is isomorphic to $z$.
\end{lemone}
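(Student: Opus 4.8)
The plan is to unwind the statement into a comparison between the mapping space of the Segal space $X \times Y$ and the homotopy category $\Ho(Y)$, using the basic dictionary between Segal spaces and their homotopy categories. First I would recall (from \cite[Subsection 5.5]{rezk2001css}, as used in \cref{lemma:technical equivalent objects}) that for Segal spaces $W$ the set of isomorphism classes of objects of the homotopy category $\Ho W$ is $W_{00}/_\sim$, where $\sim$ is the equivalence relation of \cref{def:eq objects simp space}, and that for a map of Segal spaces $f: W \to W'$ and objects $w,w'$ the induced map $\map_W(w,w') \to \map_{W'}(fw,fw')$ computes $\Hom_{\Ho W}(w,w') \to \Hom_{\Ho W'}(fw,fw')$ on path components. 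The key observation is that the homotopy category of a product of Segal spaces is the product of the homotopy categories: $\Ho(X \times Y) \cong \Ho(X) \times \Ho(Y)$, since $(X\times Y)_{00} = X_{00} \times Y_{00}$ and the equivalence relation is computed coordinatewise (a point $(x,y) \sim (x',y')$ in $X \times Y$ iff $x \sim x'$ in $X$ and $y \sim y'$ in $Y$, as this can be checked after applying the completeness-type characterization of \cref{lemma:technical equivalent objects} to each factor).

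Given this, I would argue as follows. For the nontrivial direction, suppose there is an object $y$ in $\Ho(Y)$ (equivalently, a point $y \in Y_{00}$) such that $a(x,y)$ is isomorphic to $z$ in the homotopy category $\Ho(Z)$. Since $a$ is a map of Segal spaces, $\Ho(a): \Ho(X) \times \Ho(Y) = \Ho(X\times Y) \to \Ho(Z)$ sends the object $(x,y)$ to $a(x,y)$, and by hypothesis this object is isomorphic to $z$ in $\Ho(Z)$. By the equivalence $(1)\Leftrightarrow(3)$ of \cref{lemma:technical equivalent objects} applied to $Z$ (after passing to a Segal fibrant replacement if necessary; note $Z$ is already a Segal space), being isomorphic in $\Ho(Z)$ is the same as being equivalent in the Segal space $Z$, so $a(x,y)$ is equivalent to $z$ in $Z$. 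The converse direction is immediate: if $a(x,y)$ is equivalent to $z$ in the Segal space $Z$, then it is in particular isomorphic to $z$ in $\Ho(Z)$ by the same \cref{lemma:technical equivalent objects}. So the two conditions literally coincide once one identifies "equivalent in the Segal space" with "isomorphic in the homotopy category," and the only content is the product decomposition of homotopy categories plus the fact that a distinguished object's equivalence class is detected in $\Ho$.

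I expect the main obstacle to be the bookkeeping around what "distinguished object $z$" and "equivalent to $z$ in $Z$" precisely mean when $Z$ need not be complete, and making the identification $\Ho(X\times Y) \cong \Ho(X)\times \Ho(Y)$ fully rigorous — in particular verifying that the equivalence relation $\sim$ of \cref{def:eq objects simp space} on $(X\times Y)_{00}$ factors as a product. This is where I would be most careful: I would either give a direct argument that a morphism in $\Ho(X\times Y)$ is invertible iff each of its two components is invertible (using the Segal structure to write composition coordinatewise), or invoke that $\Ho$ is a right adjoint / product-preserving functor on Segal spaces. Everything else is a formal consequence of the already-established \cref{lemma:technical equivalent objects} and the functoriality of $\Ho(-)$ on maps of Segal spaces. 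I would keep the write-up short, citing \cref{lemma:technical equivalent objects} for the three-way equivalence and \cite[Subsection 5.5]{rezk2001css} for the homotopy category of a Segal space, and spending the bulk of the argument on the product decomposition.
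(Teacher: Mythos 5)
Your core argument is correct and is in fact exactly the paper's proof: the only fact needed is that a morphism in a Segal space is an equivalence if and only if it is an isomorphism in the homotopy category (\cite[Subsection 5.5]{rezk2001css}), which is what you invoke via the equivalence $(1)\Leftrightarrow(3)$ of the earlier lemma. Since objects of $\Ho(Y)$ are literally points of $Y_{00}$ and the relation being compared is stated entirely in $Z$ and $\Ho(Z)$, both existential quantifiers range over the same set and the two conditions on $a(x,y)$ coincide by that single fact. Where you go astray is in identifying the product decomposition $\Ho(X\times Y)\cong \Ho(X)\times\Ho(Y)$ as the ``main obstacle'' and the place requiring the most care: it is not needed at all. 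Nothing in the statement requires passing through $\Ho(X\times Y)$ — you never need to compare morphisms in $X\times Y$, only objects — so the bulk of the work you anticipated spending on the product decomposition is superfluous. The proof really is the one sentence you would otherwise have written; no bookkeeping about coordinatewise equivalence relations on $(X\times Y)_{00}$ is required.
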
 

\begin{proof} 
 Follows directly from the fact that a map is an equivalence in a Segal space if and only if it is an isomorphism in the homotopy category \cite[Subsection 5.5]{rezk2001css}.
\end{proof} 

\begin{lemone}\label{lemma:thetan stable}
	Let $S \subseteq S_{CSS}$. Then every $\sP(\cD)$-enriched category $\C$ has $S$-stable objects (\cref{def:stable objects}).
\end{lemone}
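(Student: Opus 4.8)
The statement asserts that for any localizing set $S \subseteq S_{CSS}$, every $\sP(\cD)$-enriched category $\C$ has $S$-stable objects (\cref{def:stable objects}), i.e.\ the map $\Bet(\C) \to R\C$ from \cref{lemma:bet} induces a bijection $\pi_0(\C) \cong \pi_0(R\C)$ on equivalence classes of objects. By construction $\Bet(\C)$ and $R\C$ have the same underlying set of objects (both maps $\C \xleftarrow{\simeq} \Bet(\C) \to R\C$ are identity on objects), so the issue is purely whether the equivalence relation on objects changes when we replace each mapping $\cD$-space $\uMap_\C(c,c')$ by its fibrant replacement $\uMap_{R\C}(c,c')$ in the $S$-localized injective model structure. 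So the real content is: passing to $S$-local mapping spaces does not create new equivalences between objects (no equivalences are ever destroyed, since a fibrant replacement $\uMap_\C(c,c') \to \uMap_{R\C}(c,c')$ is in particular an injective equivalence, hence any pre-existing equivalence $x \simeq y$ in $\Und_t\C$ persists in $\Und_t R\C$).

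\textbf{First step.} I would reduce to a concrete statement about the simplicially enriched categories $\Und_t\C$ and $\Und_t R\C$ obtained by applying $\Und_t$ (evaluating at the terminal object $t$, cf.\ \cref{lemma:undcat} and \cref{lemma:undcat} analogue for spaces), since $\pi_0$ of a $\sP(\cD)$-enriched category is computed on the underlying simplicially enriched category at $t$ (\cref{def:loc dk}, \cref{def:sloc dk}). Concretely, two objects $x,y$ become equivalent in $R\C$ precisely when there exist maps $f \in \uMap_{R\C}(x,y)$ and $g \in \uMap_{R\C}(y,x)$ with $g\circ f$ equivalent to $\id_x$ and $f \circ g$ equivalent to $\id_y$ in the respective Segal spaces $\Und_t\uMap_{R\C}(x,x)$ etc. Using \cref{lemma:technical lemma segal spaces} applied to the composition maps $\uMap_{R\C}(x,y) \times \uMap_{R\C}(y,x) \to \uMap_{R\C}(x,x)$, the existence of such an $S$-local equivalence is detected on homotopy categories, so it suffices to understand $\pi_0$ of the $S$-local mapping spaces.

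\textbf{Key step: $\pi_0$ is unchanged.} The crucial observation is that if $S \subseteq S_{CSS}$, then the $S$-localized injective model structure on $\cD$-spaces is a model for $(\infty,n-1)$-categories (via \cref{the:cso in inftyn}), and critically $S$ consists of \emph{Cartesian} cofibrations — this is exactly the dichotomy of \cref{rem:cartesian loc}, where $S \subseteq S_{CSS}$ is precisely the condition for the localization to be Cartesian. A Cartesian localization of $\cD$-spaces (equivalently, a localization of an $(\infty,n)$-category theory obtained \emph{without} imposing discreteness) has the property that $S$-localization does not change $\pi_0$ of the underlying space: more precisely, for any $\cD$-space $A$, the map $A \to RA$ to its $S$-local replacement induces a surjection on $\pi_0$ of the value, and because the localizing maps in $S_{CSS}$ (Segal, completeness, $\Sigma$-maps) all have geometrically contractible — indeed, by \cref{lemma:contractible thetan}, contractible — codomains with the same $\pi_0$ as their domains, the localization preserves path components of the underlying space. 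I would spell this out by induction on the structure of $S_{CSS}$, showing that attaching cells along Segal maps, completeness maps, or suspension maps $\Sigma(f)$ never merges distinct path components of the degree-zero space (a suspension always has $\pi_0$ a single point more than, but the relevant $\pi_0$ of $\Und_tX$ for composition purposes is governed by the homotopy category which sees $X_0$, and the Segal/completeness/suspension localizations act as equivalences on the "space of objects"). Hence $\pi_0(\uMap_\C(x,y)) \to \pi_0(\uMap_{R\C}(x,y))$ is a bijection for all $x,y$, and in particular a map $f$ witnessing an equivalence $x \simeq y$ in $R\C$ lifts to an equivalence already present in $\Bet(\C)$ (using again \cref{lemma:technical lemma segal spaces} to pass between the Segal space and its homotopy category, and \cref{lemma:technical equivalent objects} / \cref{lemma:surjective path component} to control equivalence classes of objects across $\sP(\cD)$-enriched complete Segal space equivalences). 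This gives injectivity of $\pi_0(\Bet(\C)) \to \pi_0(R\C)$; surjectivity is automatic since the map is identity on objects.

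\textbf{Main obstacle.} The delicate point — and where I would spend the most care — is the inductive claim that localizing along $S_{CSS}$ (as opposed to $S_{CSS,disc}$) does not identify distinct objects, i.e.\ that the "discreteness" maps $D[0] \to F[d,0]$ are the \emph{only} ones in $S_{CSS,disc}$ capable of merging path components of the object space. One must verify that the Segal maps, the completeness map $F[0] \to \Disc(E[1])$, and the suspension maps $\Sigma(f)$ each induce bijections on $\pi_0$ of the underlying space of objects of the $\cD$-space, which is where the Cartesian hypothesis (equivalently $S \subseteq S_{CSS}$, equivalently geometric contractibility of the codomains combined with the $\Sigma$-maps being equivalences on degree-zero) is indispensable. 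Once that inductive fact is in hand, the rest assembles routinely from the lemmas already established: \cref{lemma:bet} gives the zigzag, \cref{lemma:technical lemma segal spaces} reduces equivalences to the homotopy category, and \cref{lemma:surjective path component} handles the bookkeeping of equivalence classes.
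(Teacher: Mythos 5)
Your high-level reduction (identity on objects, reduce to showing the equivalence relation on objects is unchanged, use \cref{lemma:technical lemma segal spaces} and homotopy categories) is in the right spirit, but the central claim you lean on is false and would sink the argument. You assert that $\pi_0(\uMap_\C(x,y)) \to \pi_0(\uMap_{R\C}(x,y))$ is a bijection because the localizing maps in $S_{CSS}$ are geometrically contractible. This is not true and cannot be made true: the completeness map $F[0] \to \Disc(E[1])$ has the explicit purpose of \emph{merging} path components corresponding to equivalent cells, so after fibrant replacement in the $S$-localized model structure the space $\uMap_{R\C}(x,y)(t)$ typically has strictly fewer path components than $\uMap_\C(x,y)(t)$ (whenever $\uMap_\C(x,y)$ has distinct but equivalent elements at level $0$). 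Geometric contractibility of the codomain is an orthogonal condition — it is used in the paper for other purposes (e.g.\ \cref{lemma:geom contr}, \cref{lemma:coprod fibrant}) but has no bearing on whether attaching cells along $F[0]\to\Disc(E[1])$ preserves $\pi_0$. Indeed $E[1]$ is geometrically contractible yet $F[0]_0 \to E[1]_0$ is not a bijection.

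The quantity that is actually invariant — and what the paper's proof uses — is not $\pi_0$ of the mapping $\cD$-space but the \emph{homotopy category of the underlying Segal space} $\Ho(\Und\uMap_\C(c,c'))$. The paper's argument runs as follows. Because the fibrant replacement is assembled by the small object argument, it suffices to treat each class of localizing maps separately. Injective trivial cofibrations change nothing. Segal maps $G[n]\to F[n]$ are the identity in degree $0$, so they do not alter the space of objects at all. For the completeness maps, one forms the $2$-category $\Ho\Und\C$ whose hom-categories are the homotopy categories of the mapping Segal spaces, observes via \cref{lemma:technical lemma segal spaces} that equivalence of objects in $\C$ is detected in $\Ho\Und\C$, and then appeals to Rezk's theorem that the completion of a Segal space is a Dwyer-Kan equivalence (\cite[Theorem 7.7]{rezk2001css}) — so it induces an equivalence of homotopy categories, hence leaves $\Ho\Und\C$ unchanged. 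You do cite \cref{lemma:technical lemma segal spaces} and gesture at homotopy categories, but you treat that as a reduction to your $\pi_0$ claim rather than as the argument itself; making the homotopy-$2$-category the primary invariant, rather than a stepping stone to a $\pi_0$ statement, is the fix.
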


\begin{proof}
	Following \cref{def:stable objects} we need to prove that for a given $\sP(\cD)$-enriched category $\C$, the functor $\Bet\C \to R\C$ constructed in \cref{lemma:bet} gives us a bijection on equivalence classes of objects. Fix two objects $c,c'$ in $\Bet\C$. By \cref{lemma:bet}, the map $\uMap_{\Bet\C}(c,c') \to \uMap_{R\C}(c,c')$ is the fibrant replacement in the $S$-localized injective model structure, which, up to injective equivalence is obtained by applying the small object argument to the $\cD$-space $\uMap_{\Bet\C}(c,c')$ with respect to the following classes of cofibrations of $\cD$-spaces: injective equivalences, Segal morphisms and completeness morphism. 
	
	Hence, it suffices to show that all three classes of maps individually are $S$-stable. Injective equivalences are evident. Segal morphisms do not change the space $\Map_{\Bet\C}(c,c')$ at all, as the map $G[n]_0 \to F[n]_0$ is the identity for $n \geq 2$. This leaves us with completeness maps. Hence, assume $\uMap_{\Bet\C}(c,c')$ is already local with respect to the Segal maps and denote the completion by $\widehat{\uMap_{\Bet\C}(c,c')}$. 
	
	We want to prove that a morphism is a weak equivalence in the mapping $\cD$-simplicial space $\uMap_{\Bet\C}(c,c')$ if and only if it is an equivalence in the mapping $\cD$-simplicial space $\widehat{\uMap_{\Bet\C}(c,c')}$. By definition of completions, the space $\widehat{\uMap_{\Bet\C}(c,c')}[t]$ are the space of weak equivalences weak equivalences in the Segal space $\Und\uMap_{\Bet\C}(c,c')$, hence it suffices to prove an object is an equivalence in the Segal space $\Und\uMap_{\Bet\C}(c,c')$ if and only if it is an equivalence in the completed complete Segal space $\Und\widehat{\uMap_{\Bet\C}(c,c')}$. 
	
	Let $\Und\C$ be the Segal space enriched category with the same objects as $\C$ and $\uMap_{\Und\C}(c,c') = \Und\uMap_{\C}(c,c')$ and notice the equivalences in $\C$ and $\Und\C$ coincide. Now, let $\Ho\Und\C$ be the $2$-category with the same objects as $\C$ and $\uHom_{\Ho\Und\C}(c,c') = \Ho\Und\uMap_{\C}(c,c')$, where the right hand $\Ho$ is the homotopy category of the Segal space $\Und\uMap_{\C}(c,c')$ \cite[Subsection 5.5]{rezk2001css}. Then, by \cref{lemma:technical lemma segal spaces}, an object in the Segal space is an equivalence if and only if it is one in the homotopy category. However, the completion functor from a Segal space to the complete Segal space is a Dwyer-Kan equivalence \cite[Theorem 7.7]{rezk2001css} and so induces equivalent homotopy categories. Hence, we are done.
\end{proof} 

Finally, we need the following result with regard to Cartesian localizations.

\begin{lemone} \label{lemma:cart css comp}
	Let $\cD = \Theta_k \times \DD^{n-k}$ and let $X$ be a $\cD$-space that is local with respect to $S_{Seg,disc}$. Let $A$ be a $\cD$-space and $X \to \hat{X}$ be the fibrant replacement in the model structure for $(\infty,n)$-categories. Then $A \times X \to A \times \hat{X}$ is an equivalence in the model structure for $(\infty,n)$-categories.
\end{lemone}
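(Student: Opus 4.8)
The plan is to reduce the statement to the $n$-complete Segal space model structure (the $S_{CSS}$-localized injective model structure on $\sP(\Theta_k\times\DD^{n-k})$), which, unlike the model structure for $(\infty,n)$-categories, is Cartesian (\cref{rem:cartesian loc}). Since $S_{CSS}\subseteq S_{CSS,disc}$ and left Bousfield localizations compose, the model structure for $(\infty,n)$-categories is the left Bousfield localization of the $n$-complete Segal space model structure with respect to $S_{disc}$; in particular every $n$-complete Segal space equivalence is a weak equivalence in the model structure for $(\infty,n)$-categories. Hence it suffices to exhibit a fibrant replacement $X\to\hat X$ in the model structure for $(\infty,n)$-categories which is simultaneously an $n$-complete Segal space equivalence: for such a replacement, $A\times(-)$ is a left Quillen endofunctor of the Cartesian $n$-complete Segal space model structure, every object of $\sP(\Theta_k\times\DD^{n-k})$ is cofibrant (the cofibrations are the monomorphisms), so by Ken Brown's lemma $A\times(-)$ preserves all $n$-complete Segal space equivalences, and then $A\times X\to A\times\hat X$ is a weak equivalence in the model structure for $(\infty,n)$-categories by the previous sentence.

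To produce such a replacement I would first let $\tilde X$ be the fibrant replacement of $X$ in the $n$-complete Segal space model structure. By hypothesis $X$ is local with respect to $S_{Seg,disc}$, hence in particular injectively fibrant and $S_{Seg}$-local; therefore $X\to\tilde X$ performs only the completeness localization, i.e. it is a level-wise complete Segal space equivalence into an $n$-complete Segal space. The key step is to verify that $\tilde X$ is still local with respect to $S_{disc}$. Granting this, $\tilde X$ is local with respect to $S_{CSS,disc}$, hence fibrant in the model structure for $(\infty,n)$-categories, so $X\to\tilde X$ is already a fibrant replacement there and we may take $\hat X=\tilde X$. By construction $X\to\hat X$ is then an $n$-complete Segal space equivalence, and the first paragraph completes the proof.

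The main obstacle is precisely this key step, and it is where the explicit form of the generating sets in \cref{the:cso in inftyn} enters: the discreteness-forcing morphisms generating $S_{disc}$ are the ``weakly constant objects'' maps (of the form $\VEmb(D[0]\to F[d])$ and their iterates down the tower $\Theta_k\times\DD^{n-k}\to\Theta_k\times\DD^{n-k-1}\to\cdots$), and, level by level, being local with respect to them is the condition that the relevant underlying simplicial spaces have weakly constant objects — a condition phrased entirely in terms of complete Segal space fibrant replacements (cf. the notion of essential surjectivity preceding \cref{def:weakly constant objects}, and \cref{lemma:dk equiv segal enriched}). Such conditions are invariant under complete Segal space equivalences, and $X\to\tilde X$ is one, so $\tilde X$ inherits them from $X$. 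I would also remark that the hypothesis ``$X$ is local with respect to $S_{Seg,disc}$'' is indispensable: without it, the $(\infty,n)$-category fibrant replacement of $X$ may genuinely discretize $X$ in some simplicial direction, an operation that does not commute with the product with $A$, and the conclusion fails.
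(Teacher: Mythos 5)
Your proof is structured the same way as the paper's, and the reduction to the Cartesian $n$-complete Segal space model structure is exactly what the paper does. The one nontrivial input, which you correctly flag and which the paper asserts in a single clause, is that the $S_{CSS}$-fibrant replacement of an $S_{Seg,disc}$-local object stays $S_{disc}$-local. Your justification via ``weakly constant objects'' is in the right spirit but is not quite the right condition: being $S_{disc}$-local forces the relevant values to be \emph{homotopically constant} (equivalently, the counit $\VEmb\Val \to \id$ at those levels is a weak equivalence, cf.\ \cref{the:diag local ncov} and the definition of an $\sP(\cD)$-enriched complete Segal space), which is strictly stronger than the essential-surjectivity condition of \cref{def:weakly constant objects}, and essential surjectivity of underlying level-wise maps is not the condition one needs to propagate. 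What actually delivers the key step is the last item of \cref{the:spd enriched css} (applied recursively down the tower, following \cite[Lemma 8.19]{bergnerrezk2020comparisonii}) together with \cref{lemma:dk equiv segal enriched}: for a level-wise Segal object with homotopically constant value, the fibrant replacement in the $\sP(\cD)$-enriched CSS model structure is a Dwyer--Kan equivalence, hence a level-wise CSS equivalence, so the $S_{CSS,disc}$-fibrant replacement already serves as an $S_{CSS}$-fibrant replacement. Swapping that citation in for your heuristic closes the argument; everything else, including your closing remark about the indispensability of the hypothesis, is correct and matches the paper's proof.
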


\begin{proof}
	By assumption $X$ is already local with respect to $S_{disc}$ and so the fibrant replacement $\hat{X}$ with respect to $S_{CSS}$ is already local with respect to $S_{CSS,disc}$. However, the $n$-complete Segal space model structure is Cartesian (\cref{rem:cartesian loc}) and so the map $A \times X \to A \times \hat{X}$ is an equivalence in the $n$-complete Segal space model structure and so, in particular, a equivalence in the model structure for $(\infty,n)$-categories.
\end{proof} 

\subsection{Fibrations of \texorpdfstring{$(\infty,n)$}{(oo,n)}-Categories} \label{subsec:infn fib}
In this section we will finally study fibrations of $(\infty,n)$-categories. Fix natural numbers $0 \leq k \leq n < +\infty$ and let $\cD$ be the category $\cD=\Theta_k \times \DD^{n-k}$. Notice this category has a terminal object $([0], [0], ..., [0])$, where there are $n-k+1$ copies of $[0]$, and we will hence implicitly use the fact that $\cD$ has a terminal object whenever necessary. 	

Throughout this section $S$ is a set of cofibrations of $\cD$-spaces, such that $S \subseteq S_{CSS,disc}$ (\cref{not:infn notation}). Finally, by \cref{lemma:contractible thetan}, all cofibrations in $S_{CSS,disc}$ are geometrically contractible. We will now consider fibrations in the context of $(\infty,n)$-categories. Recall an $S$-localized $\cD$-left fibration, as defined in (\cref{def:cdleft s}), comes with a model structure (\cref{the:cdcov s}), which in our context takes the following form.

\begin{theone}\label{the:cocart model}
 Let $X$ be a $\cD$-simplicial space. There is a left proper combinatorial simplicial model structure on $\cD$-simplicial spaces over $X$, the $S$-localized $\cD$-covariant model structure, that satisfies the following conditions:
 \begin{itemize}
 	\item The cofibrations are monomorphisms.
 	\item The fibrant objects are $S$-localized $\cD$-left fibrations.
 	\item A map $A \to B$ is an equivalence if and only if for all fibrant objects $L \to X$ the induced map 
 	$$\Map_{/X}(B,L) \to \Map_{/X}(A,L)$$ 
 	is a Kan equivalence of spaces. 
    \item A weak equivalence (fibration) between fibrant objects is a level-wise equivalence (injective fibration). 
    \item It is enriched over $S$-localized injective model structure if  $S\subseteq S_{CSS}$ and in particular if $n=k$.
 \end{itemize}
\end{theone}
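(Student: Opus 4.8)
The plan is essentially to recognize \cref{the:cocart model} as a direct special case of \cref{the:cdcov s}, applied to the diagram category $\cD = \Theta_k \times \DD^{n-k}$ (which has a terminal object, as noted at the start of the subsection) and to the set $S \subseteq S_{CSS,disc}$ of cofibrations of $\cD$-spaces. The first step is to invoke \cref{the:cdcov s} itself: for any $\cD$-simplicial space $X$, it produces a unique simplicial combinatorial left proper model structure on $\sP(\cD\times\DD)_{/X}$, the $S$-localized $\cD$-covariant model structure, whose cofibrations are monomorphisms, whose fibrant objects are $S$-localized $\cD$-left fibrations, and whose weak equivalences are characterized by the mapping-space condition $\Map_{/X}(B,L)\to\Map_{/X}(A,L)$ being a Kan equivalence for every fibrant $L \to X$. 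This already yields the first three bullet points verbatim.

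Next I would address the fourth bullet, that weak equivalences (respectively fibrations) between fibrant objects are level-wise equivalences (respectively injective fibrations). This is a standard feature of Bousfield localizations, but here it follows more directly by combining \cref{the:ncov model}(4) — which states the analogous fact for the unlocalized $\cD$-covariant model structure — with the fact that the $S$-localized $\cD$-covariant model structure is a left Bousfield localization of the $\cD$-covariant model structure, so a fibration (respectively weak equivalence) between fibrant objects in the localized structure is in particular a fibration (respectively weak equivalence) between fibrant objects in the unlocalized structure; since $\cD$-left fibrations that are $S$-local are, in particular, $\cD$-left fibrations, \cref{the:ncov model}(4) then gives the conclusion. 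Alternatively one cites the general fact about Bousfield localizations of left proper simplicial combinatorial model categories directly.

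Finally, for the enrichment claim I would invoke the last sentence of \cref{the:cdcov s}: if $S$ is Cartesian then the $S$-localized $\cD$-covariant model structure is enriched over the $S$-localized injective model structure on $\cD$-spaces (\cref{prop:injectivemod s}). So it remains only to observe that $S$ is Cartesian when $S \subseteq S_{CSS}$; this is exactly \cref{rem:cartesian loc}, which states that for $S \subseteq S_{CSS,disc}$ the $S$-localized model structure is Cartesian if and only if $S \subseteq S_{CSS}$. In particular, when $n=k$ the set $S_{disc}$ is empty (\cref{not:infn notation}), so $S_{CSS,disc} = S_{CSS}$ and every $S \subseteq S_{CSS,disc}$ automatically satisfies $S\subseteq S_{CSS}$, giving the enrichment unconditionally in that case.

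I do not expect any genuine obstacle here: the statement is a packaging of \cref{the:cdcov s}, \cref{the:ncov model}, \cref{prop:injectivemod s} and \cref{rem:cartesian loc} into the notation of the $(\infty,n)$-categorical setting. The only point requiring a word of care is matching the Cartesianness hypothesis on $S$ with the containment $S \subseteq S_{CSS}$, and checking that this is automatic when $n=k$; both are immediate from \cref{rem:cartesian loc} and the definition of $S_{disc}$. Thus the proof is short: apply \cref{the:cdcov s} with $\cD = \Theta_k\times\DD^{n-k}$, read off the first four bullets (using \cref{the:ncov model}(4) for the fourth), and deduce the enrichment from \cref{rem:cartesian loc}.
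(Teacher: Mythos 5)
Your proposal is correct and takes the same approach as the paper, whose proof is a two-sentence citation of \cref{the:cdcov s} (existence and the first four properties) and \cref{rem:cartesian loc} (Cartesianness when $S\subseteq S_{CSS}$). You simply make explicit the details that the paper leaves implicit, such as deriving the fourth bullet from \cref{the:ncov model} together with standard facts about Bousfield localization and noting that $S_{disc}=\emptyset$ when $n=k$.
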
 

\begin{proof}
 The existence and properties of the $S$-localized $\cD$-covariant model structure follows from \cref{the:cdcov s}. Notice the enrichment follows from the fact that $S$ is Cartesian if and only if $S \subseteq S_{CSS}$, by \cref{rem:cartesian loc}.
\end{proof} 

For historical reasons several notions of $\cD$-left fibrations deserve their own name.

\begin{notone}\label{not:left fib}
 We will use the following terminology with regard to the model structure and its fibrant objects, depending on the choice of localizations.
 \begin{center}
	\begin{tabular}{c|c|c|c|c} 
		$S$ & {\bf Fibration} & {\bf Model Structure} & {\bf Notation} & {\bf QCat}\\ \hline 
		$\emptyset$ & $(\infty,n)$-left & $(\infty,n)$-covariant& ${cov_{(\infty,n)}}$ & $\LFib_{(\infty,n)/X}$\\
		$S_{Seg}$ & $(\infty,n)$-Segal left & $(\infty,n)$-Segal covariant& ${Segcov_{(\infty,n)}}$ & $\SegLFib_{(\infty,n)/X}$\\
		$S_{Seg, disc}$ & Segal $(\infty,n)$-coCartesian & Segal $(\infty,n)$-coCartesian& ${SegcoCart_{(\infty,n)}}$ & $\SegcoCart_{(\infty,n)/X}$\\
		$S_{CSS}$ & $(\infty,n)$-complete Segal left & $(\infty,n)$-complete Segal covariant& ${CScov_{(\infty,n)}}$ & $\CSLFib_{(\infty,n)/X}$\\
		$S_{CSS, disc}$ & $(\infty,n)$-coCartesian & $(\infty,n)$-coCartesian & ${coCart_{(\infty,n)}}$ & $\coCart_{(\infty,n)/X}$
	\end{tabular}
\end{center}
Notice, using \cref{the:cocart model}, we have the following enrichments: The $(\infty,n)$-covariant model structure and the $(\infty,n)$-(complete) Segal covariant model structures are always enriched and the $(\infty,n)$-(Segal) coCartesian model structures are enriched if $k=n$. 
\end{notone}

Notice, the $(\infty,n)$-coCartesian model structure is in fact independent of our choice of $(\infty,n)$-categories. 

\begin{theone}
 The chain of simplicial Quillen equivalences in \cref{cor:inftyn} induces a chain of Quillen equivalences of $(\infty,n)$-coCartesian model structures
 \begin{center}
 	\begin{tikzcd}
 		\sP(\DD^n)_{/X_0} \arrow[r, shift left=1.8, "\bot"', "L_1"] & \sP(\DD^{n-1} \times \DD)_{/X_1}  \arrow[r, shift left=1.8, "\bot"', "L_2"] \arrow[l, shift left=1.8, "R_1"] & \sP(\DD^{n-2} \times \Theta_2)_{/X_2}  \arrow[r, shift left=1.8, "\bot"', "L_{n-1}"] \arrow[l, shift left=1.8, "R_2"]& ...  \arrow[r, shift left=1.8, "\bot"', "L_n"] \arrow[l, shift left=1.8, "R_{n-1}"]& \sP(\Theta_n)_{/X_n} \arrow[l, shift left=1.8, "R_n"] 	
 	\end{tikzcd},
 \end{center} 
 where for each $ 0 \leq i \leq n-1$ we have $LX_i \simeq X_{i+1}$ (or equivalently $X_i \simeq R_iX_{i+1}$). Notice only the right hand model structure is enriched over the model structure for $(\infty,n)$-categories.
\end{theone}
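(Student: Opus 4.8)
The plan is to deduce the whole statement by iterating the invariance theorem \cref{the:cdcov s inv qe}. The first step is to observe that each simplicial Quillen equivalence $(L_i,R_i)$ in \cref{cor:inftyn} — relating the model structures for $(\infty,n)$-categories on the two consecutive diagram categories $\sP(\DD^{n-i}\times\Theta_i)$ and $\sP(\DD^{n-i-1}\times\Theta_{i+1})$ — is itself obtained by applying $\Fun(\DD^{op},-)$, in the sense of \cref{not:slsr}, to a simplicial Quillen equivalence $(\ell_i,r_i)$ of the corresponding $S$-localized injective model structures on $\cD_i$-spaces, where the relevant $S$-localizations are exactly the model structures for $(\infty,n-1)$-categories supplied by \cref{cor:inftyn} one dimension lower. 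This is where \cref{the:cso in inftyn} enters: it identifies each $(\infty,n)$-category model structure as the $S$-localized $\sP(\cD_i)$-enriched complete Segal space model structure built on top of the $S$-localized injective model structure on $\cD_i$-spaces, and the Bergner--Rezk comparison functors act only in the $\Theta$-directions being compared, so they commute with $\Fun(\DD^{op},-)$; hence $(L_i,R_i)=(s\ell_i,sr_i)$.

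Next I would feed each $(\ell_i,r_i)$ into \cref{the:cdcov s inv qe}. Its hypotheses are met because the chosen bases $X_i$ are $(\infty,n)$-categories, hence in particular level-wise fibrant and $S$-local; the conclusion of \cref{the:cdcov s inv qe} then gives, for each $i$, a simplicial Quillen equivalence $(s\ell_i,sr_i)=(L_i,R_i)$ between $(\sP(\DD^{n-i}\times\Theta_i)_{/X_i})$ and $(\sP(\DD^{n-i-1}\times\Theta_{i+1})_{/X_{i+1}})$ with their $(\infty,n)$-coCartesian model structures. Because \cref{the:cdcov s inv qe} provides the equivalence both over an $S$-local base on the $\cD_{i+1}$-side paired with its image under $sr_i$, and over an $S$-local base on the $\cD_i$-side paired with its image under $s\ell_i$, we may freely take $X_{i+1}\simeq s\ell_i(X_i)$ or equivalently $X_i\simeq sr_i(X_{i+1})$, which is exactly the compatibility $L_iX_i\simeq X_{i+1}$ (equivalently $X_i\simeq R_iX_{i+1}$) asserted in the statement. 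Concatenating these equivalences for $i=1,\dots,n$ and lining up the bases along this compatibility yields the desired chain. (One could equally invoke \cref{the:cdcov s inv qe two}, whose hypotheses also hold here, since the $X_i$ have weakly constant objects and every set in $S_{CSS,disc}$ has contractible codomains by \cref{lemma:contractible thetan}.)

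For the final assertion about enrichment, I would appeal to the last item of \cref{the:cocart model} together with \cref{rem:cartesian loc}: the $S$-localized $\cD$-covariant model structure over $X_i$ is enriched over the $S$-localized injective model structure on $\cD_i$-spaces precisely when $S\subseteq S_{CSS}$, i.e.\ when no discreteness maps are inverted; among the diagrams $\DD^{n-i}\times\Theta_i$ this forces $i=n$, and in that case the $S_{CSS}$-localized injective model structure on $\Theta_n$-spaces is literally the $\Theta_n$-space model structure for $(\infty,n)$-categories of \cref{the:thetan model cat}. Hence only the right-hand model structure in the chain is enriched over a model structure for $(\infty,n)$-categories.

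I expect the main obstacle to be organizational rather than conceptual: carefully matching the diagram categories $\DD^{n-i}\times\Theta_i$ with the splitting $\cD\times\DD$ demanded by \cref{the:cdcov s inv qe}, verifying that the comparison functors of \cref{cor:inftyn} genuinely arise via $\Fun(\DD^{op},-)$ from the lower-dimensional comparison functors (so that the ``$s$'' of \cref{not:slsr} applies), and checking at each stage that the bases are level-wise fibrant, $S$-local, and compatible along $L_iX_i\simeq X_{i+1}$. Once this bookkeeping is settled, the theorem is a formal consequence of \cref{the:cdcov s inv qe} and \cref{the:cocart model}.
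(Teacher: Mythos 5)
Your proposal is correct and arrives at the right conclusion, but it foregrounds a slightly different key tool than the paper. The paper's proof is a one‑liner: \emph{Direct application of \cref{the:cdcov s inv qe two} using \cref{lemma:contractible thetan}.} You instead present \cref{the:cdcov s inv qe} as the primary invariance result (with \cref{the:cdcov s inv qe two} mentioned parenthetically as an alternative). These two theorems make different demands on the data: \cref{the:cdcov s inv qe} needs the base to be level-wise fibrant / $S$-local, whereas \cref{the:cdcov s inv qe two} trades that for two milder conditions — the base has weakly constant objects, and the localizing set has contractible codomains, which is exactly what \cref{lemma:contractible thetan} supplies for $S_{CSS,disc}$. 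The paper's choice is therefore not merely a stylistic preference: it lets the statement apply to any base with weakly constant objects (e.g.\ Segal $(\infty,n+1)$-categories), not just to $S$-local ones, so if you want to recover the intended generality you should promote your parenthetical remark to the main line of argument. A small related slip: you call the $X_i$ ``$(\infty,n)$-categories''; for $(\infty,n)$-coCartesian fibrations they should be $(\infty,n+1)$-categories (or more generally $\cD\times\DD$-simplicial spaces with weakly constant objects).

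On the other hand, your preliminary discussion — verifying that the Bergner–Rezk comparison functors $(L_i,R_i)$ of \cref{cor:inftyn} genuinely arise by applying $\Fun(\DD^{op},-)$ (in the sense of \cref{not:slsr}) to simplicial Quillen equivalences of the one-dimension-lower $S$-localized injective model structures on the $\cD_i$-spaces — is a legitimate hypothesis of both \cref{the:cdcov s inv qe} and \cref{the:cdcov s inv qe two} that the paper leaves entirely implicit. Your argument that the comparison only touches the $\Theta$-direction (and hence commutes with $\Fun(\DD^{op},-)$, which adds the extra simplicial direction used for the covariant model structure) is the right observation and fills a genuine gap in the paper's ``Direct application.'' Your treatment of the compatibility $L_iX_i\simeq X_{i+1}$ via the two sides of \cref{the:cdcov s inv qe}, and your handling of the enrichment claim via \cref{the:cocart model} and \cref{rem:cartesian loc}, both match the paper.
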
 

\begin{proof}
	Direct application of \cref{the:cdcov s inv qe two} using \cref{lemma:contractible thetan}.
\end{proof} 

\begin{remone}
	There is a contravariant approach to $\cD$-left fibrations via $\cD$-right fibrations. Accordingly, there is also a contravariant version to all the results we state in this section. Instead of repeating of them, we will give the following translation table and only give more details when necessary.
\begin{center}
	\begin{tabular}{|c|c|c|}
		\hline 
		& {\bf Covariant} & {\bf Contravariant} \\ \hline 
		{\bf Fibration:} & $(\infty,n)$-left  & $(\infty,n)$-right\\ 
		{\bf Model:} & $(\infty,n)$-covariant & $(\infty,n)$-contravariant \\ 
		{\bf Notation:} & $(\sP(\cD)_{/X})^{cov_{(\infty,n)}}$ &  $(\sP(\cD)_{/X})^{contra_{(\infty,n)}}$ \\ \hline 
		{\bf Fibration:}& $(\infty,n)$-(complete) Segal left & Segal $(\infty,n)$-(complete) Segal right \\
		{\bf Model:} & $(\infty,n)$-(complete) Segal covariant &  $(\infty,n)$-(complete) Segal contravariant \\ 
		{\bf Notation:}& $(\sP(\cD)_{/X})^{Seg(CS)cov_{(\infty,n)}}$ & $(\sP(\cD)_{/X})^{Seg(CS)contra_{(\infty,n)}}$ \\ \hline 
		{\bf Fibration:}& (Segal) $(\infty,n)$-coCartesian & (Segal) $(\infty,n)$-Cartesian \\
		{\bf Model:} & (Segal) $(\infty,n)$-coCartesian & (Segal) $(\infty,n)$-Cartesian \\ 
		{\bf Notation:}& $(\sP(\cD)_{/X})^{(Seg)coCart_{(\infty,n)}}$  & $(\sP(\cD)_{/X})^{(Seg)Cart_{(\infty,n)}}$ \\ \hline 
	\end{tabular}
\end{center}
\end{remone}

We proceed towards studying the properties of the fibrant objects and weak equivalences in the model structures defined in \cref{the:cocart model}. First of all we can understand the fibrations by analyzing fibers, which follows from \cref{prop:fibrancies}.

\begin{propone} \label{prop:fibrancies nfib}
 Let $X$ be a $\cD$-simplicial space with weakly constant objects and let $S \subseteq S_{CSS,disc}$. Then an $(\infty,n)$-left fibrations $L \to X$ is an $S$-localized $(\infty,n)$-left fibration if and only it is fiber-wise $S$-local.  
\end{propone}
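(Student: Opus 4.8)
The plan is to obtain \cref{prop:fibrancies nfib} as a direct specialisation of the fiber-wise recognition criterion \cref{prop:fibrancies}. First I would verify the hypotheses of that proposition in the present situation: the category $\cD = \Theta_k \times \DD^{n-k}$ has a terminal object $([0],\dots,[0])$; the standing set $S \subseteq S_{CSS,disc}$ consists, by \cref{lemma:contractible thetan}, of geometrically contractible cofibrations of $\cD$-spaces, and geometric contractibility entails contractible codomains (\cref{def:contractible realization}), which is exactly the hypothesis \cref{prop:fibrancies} imposes on $S$; the $\cD$-simplicial space $X$ has weakly constant objects by assumption; and an $(\infty,n)$-left fibration $L \to X$ is, by \cref{def:nleft} and \cref{not:left fib}, in particular an injective fibration of $\cD$-simplicial spaces. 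Thus all hypotheses of \cref{prop:fibrancies} are in force.

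With these checks in place, I would simply read off the desired equivalence from \cref{prop:fibrancies}. By \cref{def:cdleft s}, the assertion that an $(\infty,n)$-left fibration $p \colon L \to X$ is an $S$-localized $(\infty,n)$-left fibration is condition $(1)$ of \cref{prop:fibrancies}. On the other side, "fiber-wise $S$-local" is to be interpreted as: $p$ is a $\cD$-left fibration and for every point $\{x\} \colon D[0] \to X$ the fiber $\Fib_x L$ is $S$-local, which is condition $(5)$ (equivalently $(4)$ or $(6)$) of \cref{prop:fibrancies}. Since that proposition asserts $(1) \Leftrightarrow (5)$, we are done.

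The one point that needs a moment's care is pinning down the meaning of "fiber-wise $S$-local", since the fiber $\Fib_x L$ of an $(\infty,n)$-left fibration may a priori be regarded in several Quillen-equivalent model structures (diagonal, value, injective). Using \cref{the:diag local ncov}, the counit $\VEmb\Val\Fib_x(L) \to \Fib_x(L)$ is an injective equivalence, so being $S$-local as a $\cD$-simplicial space (fibrant in the diagonal $S$-localized injective model structure) agrees with $\Val(\Fib_x L)$ being $S$-local as a $\cD$-space; this identifies the intended condition uniformly with $(4)$--$(6)$ of \cref{prop:fibrancies}. Beyond this bookkeeping there is no genuine obstacle: the statement is a clean instance of the general theory developed in \cref{subsec:localized model}.
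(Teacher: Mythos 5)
Your proposal is correct and follows exactly the route the paper intends (the paper in fact gives no explicit proof, treating the statement as an immediate consequence of \cref{prop:fibrancies}). You correctly supply the implicit verifications: that $\Theta_k \times \DD^{n-k}$ has a terminal object, that \cref{lemma:contractible thetan} guarantees $S \subseteq S_{CSS,disc}$ is geometrically contractible and hence has contractible codomains as required by \cref{prop:fibrancies}, and that conditions $(1)$ and $(4)$--$(6)$ of that proposition are precisely the two sides of the claimed equivalence.
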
 

This in particular has the following implications.

\begin{corone}  \label{cor:fibrancies nfib}
 Let $\C$ be an $(\infty,n+1)$-category and $\sL \to \C$ be an $(\infty,n)$-left fibration. Then $\sL$ is a (Segal) $(\infty,n)$-coCartesian fibration if and only if for every object $c$ in $\C$, $\Fib_c\sL$ is a (Segal) $(\infty,n)$-category. 
\end{corone}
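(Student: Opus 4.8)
\textbf{Proof proposal for \cref{cor:fibrancies nfib}.} The plan is to deduce this as a direct special case of \cref{prop:fibrancies nfib} once we identify the correct set of cofibrations $S$ and check that the hypotheses there are met. First I would recall from \cref{the:cso in inftyn} and \cref{not:infn notation} that, writing $\cD = \Theta_k \times \DD^{n-k}$, the model structure for $(\infty,n)$-categories on $\cD$-simplicial spaces is precisely the $S_{CSS,disc}$-localized $\sP(\cD)$-enriched complete Segal space model structure, so that a $\cD$-simplicial space is an $(\infty,n)$-category exactly when it is an $S_{CSS,disc}$-local $\sP(\cD)$-enriched complete Segal space (and a Segal $(\infty,n)$-category for $S_{Seg,disc}$). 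Since $\C$ is an $(\infty,n+1)$-category, it is in particular a $\cD$-simplicial space with weakly constant objects by \cref{the:cso in inftyn} (the localizing morphism $D[0] \to F[d,0]$ forces this). Thus the hypotheses of \cref{prop:fibrancies nfib} apply with $S = S_{CSS,disc}$ (respectively $S = S_{Seg,disc}$).

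Next I would invoke \cref{prop:fibrancies nfib} directly: an $(\infty,n)$-left fibration $\sL \to \C$ is an $S$-localized $(\infty,n)$-left fibration if and only if it is fiber-wise $S$-local, i.e. for every object $c$ in $\C$ the fiber $\Fib_c\sL$ is an $S$-local $\cD$-simplicial space. The remaining content is bookkeeping: by \cref{prop:fibrancies} (condition $(4)\Leftrightarrow(5)$, applied to the diagonal $S$-localized model structure) an $(\infty,n)$-left fibration has $\Fib_c\sL$ fibrant in the diagonal $S$-localized injective model structure exactly when $\Val(\Fib_c\sL)$ is $S$-local; since $\sL \to \C$ is an $(\infty,n)$-left fibration, $\Fib_c\sL$ is already homotopically constant (the counit $\VEmb\Val\Fib_c\sL \to \Fib_c\sL$ is an injective equivalence, by \cref{the:diag local ncov}), hence $\Fib_c\sL$ is $S$-local precisely when it is an $S$-local $\sP(\cD)$-enriched complete Segal space, which by definition means it is an $(\infty,n)$-category (or a Segal $(\infty,n)$-category when $S = S_{Seg,disc}$). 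Here I would use \cref{lemma:fiber vs global} to see that for a level-wise Segal space with homotopically constant value, being $S$-local is equivalent to the mapping objects being $S$-local, matching the characterization of an $S$-local $\sP(\cD)$-enriched complete Segal space.

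The one genuine point requiring care, and the place I expect to spend the most effort, is checking that $\Fib_c\sL$ really is a (complete) Segal space in the appropriate sense so that ``$S$-local'' translates to ``$(\infty,n)$-category''. This requires knowing that for an $(\infty,n)$-left fibration $\sL \to \C$ over an $(\infty,n+1)$-category $\C$, the fiber $\Fib_c\sL$ is automatically a level-wise complete Segal space. This should follow from \cref{the:cov loc css} (left fibrations are CSS fibrations level-wise), combined with the fact that $\C$ itself is injectively fibrant and level-wise a complete Segal space, so that the fiber, being a pullback of an injective fibration over a point, inherits level-wise complete Segal space structure; one also needs the completeness of the underlying simplicial space and homotopical constancy of the value, which come from $\sL \to \C$ being an $(\infty,n)$-left fibration (hence an injective fibration that is level-wise a left fibration) together with $\C$ having these properties. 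I would therefore organize the proof as: (i) identify $S$; (ii) verify $\C$ has weakly constant objects; (iii) establish that $\Fib_c\sL$ is a level-wise complete Segal space with homotopically constant value; (iv) apply \cref{prop:fibrancies nfib} together with \cref{lemma:fiber vs global} to conclude that $\sL$ is $S$-localized iff each $\Fib_c\sL$ is an $(\infty,n)$-category (resp. Segal $(\infty,n)$-category). Steps (i), (ii), (iv) are immediate; step (iii) is the crux but is a routine assembly of \cref{the:cov loc css}, \cref{the:diag local ncov}, and the fibrancy of $\C$.
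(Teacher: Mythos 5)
Your approach is correct and is essentially the paper's: the corollary is stated with no separate proof, as a direct specialization of \cref{prop:fibrancies nfib} to $S = S_{CSS,disc}$ (resp.\ $S_{Seg,disc}$). Steps (i), (ii), (iv) are right, and your observation that an $(\infty,n+1)$-category has weakly constant objects (forced by the discreteness localizing morphism $F[t,0]\to F[d,0]$ in \cref{the:spd enriched css}) is exactly what is needed to apply \cref{prop:fibrancies nfib}.

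Step (iii), however, is unnecessary and rests on a confusion of directions. The ``fiber-wise $S$-local'' condition in \cref{prop:fibrancies nfib}, unwound via condition (5) of \cref{prop:fibrancies}, says precisely that $\Val(\Fib_c\sL)$ is fibrant in the $S_{CSS,disc}$-localized (resp.\ $S_{Seg,disc}$-localized) injective model structure on $\sP(\cD)$, and by the terminology table \ref{eq:inftyn categories} that \emph{is} the definition of a (Segal) $(\infty,n)$-category. The implicit passage from the $\cD$-simplicial space $\Fib_c\sL$ to the $\cD$-space $\Val(\Fib_c\sL)$ is supplied by \cref{the:diag local ncov}: the fiber of a $\cD$-left fibration over a point is fibrant in the $\cD$-diagonal model structure, hence homotopically constant in the extra $\DD$-direction. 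Your worry about verifying that $\Fib_c\sL$ is a ``level-wise complete Segal space'' conflates this extra $\DD$-direction (where the fibration lives and where the fiber over a point trivializes) with the $\cD = \Theta_k \times \DD^{n-k}$-direction (where the $(\infty,n)$-categorical structure of the fiber lives, all recorded in $\Val$). Neither \cref{the:cov loc css} nor \cref{lemma:fiber vs global} is needed here; the latter concerns $S$-locality of mapping objects inside a Segal space, which is a different question from $S$-locality of fibers of a left fibration. There is nothing further to check beyond the observation that $\C$ has weakly constant objects.
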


Next, the fibrations over $(\infty,n+1)$-categories are not necessarily $(\infty,n+1)$-categories, but are quite close, as follows directly from \cref{the:cdcov loc css s}.

\begin{propone} \label{prop:doubecat}
	Let $S \subseteq S_{CSS,disc}$.
	Let $X$ be $\sP(\cD)$-enriched complete Segal space such that $\Val_1(X)$ is $S$-local and let $p: L \to X$ be an $S$-localized $\cD$-covariant fibration. Then $L$ is a complete Segal object in $S$-local $\cD$-spaces.  
\end{propone}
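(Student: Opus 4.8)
The plan is to deduce this directly from the general theory developed in \cref{subsec:localized model}, in particular from \cref{the:cdcov loc css s}, after unwinding what the hypotheses say in the present situation. Recall that $\Val_k(X)$ being $S$-local follows from $\Val_1(X)$ being $S$-local together with $X$ being a $\sP(\cD)$-enriched complete Segal space: indeed, $X$ is level-wise Segal, so $\Val_k(X) \simeq \Val_1(X) \times_{\Val_0(X)} \cdots \times_{\Val_0(X)} \Val_1(X)$, and $\Val_0(X)$ is homotopically constant, hence $S$-local since all cofibrations in $S \subseteq S_{CSS,disc}$ are geometrically contractible (\cref{lemma:contractible thetan}) and therefore every homotopically constant $\cD$-space is $S$-local (\cref{lemma:geom equiv}); the class of $S$-local objects is closed under the relevant homotopy pullbacks. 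So the hypotheses of \cref{the:cdcov loc css s} are met and the identity functor exhibits $(\sP(\cD\times\DD)_{/X})^{\cD-cov_S}$ as a localization of $(\sP(\cD\times\DD)_{/X})^{(lev_{CSS})_S}$.

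First I would make precise what ``complete Segal object in $S$-local $\cD$-spaces'' means: it is a $\cD$-simplicial space $L$ which, regarded via $\Val_\bullet$ as a simplicial object in $\cD$-spaces, is Reedy fibrant, satisfies the Segal condition, the completeness condition, and whose values $\Val_k(L)$ are all $S$-local. By \cref{lemma:fib s over s loc}, since $\Val_k(X)$ is $S$-local for all $k$, the $S$-localized $\cD$-left fibration $p:L\to X$ has the property that $p$ is a $\cD$-left fibration and each $\Val_k(L)$ is $S$-local. Thus the $S$-locality of the values is immediate, and what remains is to verify the Segal, completeness, and Reedy fibrancy conditions on $\Val_\bullet(L)$, which do not involve $S$ at all.

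The key step is therefore the un-localized statement: if $X$ is a $\sP(\cD)$-enriched complete Segal space and $p:L\to X$ is a $\cD$-left fibration (equivalently, a fibrant object in the $\cD$-covariant model structure over $X$), then $\Val_\bullet(L)$ is a complete Segal object in $\cD$-spaces. For the Segal condition, I would use the defining homotopy pullback squares of a $\cD$-left fibration (\cref{lemma:left fib triv Kan fib} and the subsequent lemma), which give $L[d,k] \simeq L[d,0]\times_{X[d,0]} X[d,k]$; combining this with the Segal condition on $X$ and the fact that $\Und_d X$ is a Segal space yields the Segal condition for $\Val_\bullet(L)$ after applying $\Val$. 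For completeness: the completeness square for $\Val_\bullet(L)$ is a homotopy pullback because the $\cD$-left fibration identifies the relevant squares of $L$ with pullbacks of the corresponding (homotopy pullback) squares of $X$ along a map to $\Val_0(X)$, and homotopy pullbacks are stable under pullback along fibrations in the (right proper) injective model structure on $\cD$-spaces. Reedy fibrancy of $\Val_\bullet(L)$ follows from $L$ being injectively fibrant together with the $\cD$-left fibration condition, exactly as Reedy fibrancy of a left fibration over a Reedy fibrant base is inherited in the simplicial-space case.

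The main obstacle I expect is bookkeeping: carefully tracking which squares of $L$ are homotopy pullbacks and checking that $\Val$ (the right adjoint $(\incl_0)^*$) preserves the homotopy-pullback squares in question, which requires knowing the relevant maps are injective fibrations so that strict pullbacks compute homotopy pullbacks. This is where \cref{prop:injectivemod} (properness of the injective model structure on $\cD$-spaces) and the closure of $\cD$-left fibrations under pullback (\cref{lemma:pullback nlfib}) do the work. Once the un-localized ``complete Segal object'' claim is in hand, the localized version is just the conjunction of that claim with \cref{lemma:fib s over s loc}, so the proof assembles quickly.
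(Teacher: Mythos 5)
Your overall strategy differs from the paper's. The paper proves this statement by a one-line citation of \cref{the:cdcov loc css s}: since $\Val_k(X)$ is $S$-local (which you correctly verify via \cref{lemma:fiber vs global} and \cref{lemma:geom equiv}), the identity adjunction exhibits the $S$-localized $\cD$-covariant model structure over $X$ as a Bousfield localization of the $S$-localized level-wise CSS model structure over $X$, so a fibrant object of the former is a fibrant object of the latter, and the conclusion is read off from the fibrant-object characterizations. Your proposal instead tries to rebuild this by hand, which leads to a gap.

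The problematic step is your reduction to ``the un-localized statement,'' together with your claim that ``the Segal, completeness, and Reedy fibrancy conditions on $\Val_\bullet(L)$ \ldots do not involve $S$ at all.'' For the completeness half you argue that the completeness square of $\Val_\bullet(L)$ is a pullback along $\Val_0(L)\to \Val_0(X)$ of the ``(homotopy pullback) square'' of $\Val_\bullet(X)$. But for the square of $X$ to be a homotopy pullback of $\cD$-spaces (which is computed level-wise in $\sP(\cD)^{inj}$), one needs $\Und_d X$ to be a complete Segal space for \emph{every} $d$. The definition of an $\sP(\cD)$-enriched complete Segal space only asserts completeness at the terminal object $t$, and the model structure of \cref{the:spd enriched css} is localized at $D[0]\to\Disc(E[1])$, i.e.\ only at $t$. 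Whether this propagates to all $d\in\cD$ is precisely the content one buys from the $S$-locality hypothesis on $\Val_1(X)$ (which encodes completeness of the mapping $\cD$-spaces), so the $S$-conditions cannot be cleanly stripped away before checking completeness. Once they are dropped, the un-localized statement you propose to prove is not true in general: a $\cD$-left fibration over a merely $\sP(\cD)$-enriched complete Segal space need not have $\Und_d L$ complete away from $d=t$, and the remark directly following \cref{prop:doubecat} (``all conditions \ldots\ except that $\Val(L)$ is homotopically constant'') signals that the paper's intended conclusion is calibrated to exactly what the level-$t$ completeness plus $S$-locality give. There is a secondary, more routine issue: the identification of the completeness square of $L$ as a pullback of that of $X$ is not a direct consequence of the single $\langle 0\rangle^*$ square defining a $\cD$-left fibration, since the completeness square also uses the $\langle 2,3\rangle^*$-type face maps whose compatibility with $\langle 0\rangle^*$ requires a further argument. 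The citation of \cref{the:cdcov loc css s} avoids both of these pitfalls by working at the level of model-structure localizations rather than individual squares.
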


\begin{remone}
	\cref{prop:doubecat} implies that the domain $L$ satisfies all conditions of an $S$-localized $\sP(\cD)$-enriched complete Segal space except for the condition that $\Val(L)$ is homotopically constant.
\end{remone}

This has the following direct application.

\begin{corone} \label{cor:double seg}
	Let $\C$ be a Segal $(\infty,n+1)$-category and $p: \sL \to \C$ a Segal $(\infty,n)$-coCartesian fibration. Then $\sL$ is an $(n+1)$-Segal space.
\end{corone}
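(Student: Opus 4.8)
The plan is to follow the proof of \cref{prop:doubecat}, with complete Segal spaces replaced by their Segal analogues throughout. Set $\cD = \Theta_k \times \DD^{n-k}$ (which has a terminal object) and let $S = S_{Seg,disc}$ be the localizing set of $\cD$-spaces whose $S$-local objects are the Segal $(\infty,n)$-categories (\cref{not:infn notation}), so that by \cref{the:cocart model} and \cref{not:left fib} the hypothesis is exactly that $p\colon \sL \to \C$ is a fibrant object of the $S$-localized $\cD$-covariant model structure $(\sP(\cD\times\DD)_{/\C})^{\cD-cov_S}$. The essential input is the Segal analogue of \cref{the:cdcov loc css s}: since $\C$ is a Segal $(\infty,n+1)$-category, each $\cD$-space $\Val_m(\C)$ is $S$-local (this is precisely the content of the lifted Segal/discreteness maps $\VEmb(S)\times D[m]$ appearing in the localization of \cref{the:cso in inftyn}), and hence the identity adjunction
\begin{center}
	\adjun{(\sP(\cD\times\DD)_{/\C})^{(lev_{Seg})_S}}{(\sP(\cD\times\DD)_{/\C})^{\cD-cov_S}}{id}{id}
\end{center}
is a Quillen adjunction. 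This is proved just as \cref{the:cdcov loc css s}: the $\cD$-covariant model structure over $\C$ is a localization of the level-wise Segal model structure over $\C$ (the Segal analogue of \cref{the:cov loc css}, which holds by the same argument together with \cref{lemma:levelwise}), localizing both by $S$ preserves this, and one invokes \cite[Corollary A.3.7.2]{lurie2009htt}.

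Given this, I would argue as follows. Because the right adjoint $id\colon \cD\text{-}cov_S \to (lev_{Seg})_S$ preserves fibrant objects, the fibrant object $p\colon \sL \to \C$ of $(\sP(\cD\times\DD)_{/\C})^{\cD-cov_S}$ is also a fibration in the $S$-localized level-wise Segal model structure over $\C$. On the other hand $\C$ is itself fibrant in $\sP(\cD\times\DD)^{(lev_{Seg})_S}$: being a Segal $(\infty,n+1)$-category means $\C$ is injectively fibrant, each $\Und_d\C$ is a Segal space, and each $\Val_m(\C)$ is $S$-local, which by \cref{prop:levelcss s} is exactly the description of the fibrant objects. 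Composing the fibrations $\sL \to \C \to \ast$ shows that $\sL$ is fibrant in $\sP(\cD\times\DD)^{(lev_{Seg})_S}$, i.e. $\sL$ is injectively fibrant, every $\Und_d\sL$ is a Segal space, and every $\Val_m(\sL)$ is $S$-local.

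It remains to recognize such an object as an $(n+1)$-Segal space. Since $S = S_{Seg,disc} \supseteq S_{Seg}$, the Segal-only localizing set of $\cD$-spaces whose local objects are the $n$-Segal spaces, each $\Val_m(\sL)$ is in particular $S_{Seg}$-local, hence an $n$-Segal space; and a $\cD$-simplicial space that is injectively fibrant, level-wise Segal, and has each $\Val_m$ an $n$-Segal space is precisely a fibrant object of the level-wise $S_{Seg}$-localized Segal model structure on $\sP(\cD\times\DD)$, which by the shape of the localizing sets in \cref{the:cso in inftyn} is the $(n+1)$-Segal space model structure. Therefore $\sL$ is an $(n+1)$-Segal space. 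I expect the only delicate point to be purely bookkeeping: checking that the Segal condition defining an $(\infty,n+1)$-category genuinely decomposes into the level-wise Segal condition in the outermost simplicial direction together with the $n$-Segal condition on each $\Val_m$, so that the conclusion of the Segal analogue of \cref{prop:doubecat} — a Segal object in $S$-local $\cD$-spaces — does, after adding injective fibrancy, amount to an $(n+1)$-Segal space; this is routine given \cref{prop:levelcss s} and \cref{the:cso in inftyn}.
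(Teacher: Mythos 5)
Your proof is correct, and it follows essentially the route the paper has in mind, but it does fill in a genuine gap that the paper leaves unremarked. The paper derives \cref{cor:double seg} as a "direct application" of \cref{prop:doubecat}, whose proof in turn is said to "follow directly" from \cref{the:cdcov loc css s}. However, \cref{prop:doubecat} as stated requires the base $X$ to be an $\sP(\cD)$-enriched \emph{complete} Segal space, whereas the base $\C$ in \cref{cor:double seg} is only a Segal $(\infty,n+1)$-category, which need not be complete; and \cref{the:cdcov loc css s} as stated concerns the $S$-localized level-wise \emph{CSS} model structure. Your proposal correctly identifies that what is actually needed is the Segal analogue of \cref{the:cdcov loc css s}, i.e.\ a Quillen adjunction between the $S$-localized level-wise Segal model structure and the $S$-localized $\cD$-covariant model structure over $\C$. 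You also correctly justify this analogue: since the covariant model structure is a localization of the level-wise CSS model structure (\cref{the:cov loc css}) and the latter is a further localization of the level-wise Segal model structure, the covariant model structure is a fortiori a localization of the level-wise Segal model structure over $\C$, and the $S$-localization argument via \cref{lemma:fib s over s loc} and \cite[Corollary A.3.7.2]{lurie2009htt} then goes through as in the CSS case. Your checks that every $\Val_m(\C)$ is $S_{Seg,disc}$-local (needed to apply \cref{lemma:fib s over s loc}), that fibrations compose to deduce fibrancy of $\sL$, and that $S_{Seg,disc}$-locality implies $S_{Seg}$-locality so that $\sL$ is recognized as a fibrant object of the $(n+1)$-Segal space model structure, are all correct and match the intended argument. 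In short, you have produced the proof the paper implicitly relies on, spelled out at a level of detail the paper leaves to the reader.
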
 

\begin{corone} \label{cor:double css}
	Let $\C$ be an $(\infty,n+1)$-category and $p: \sL \to \C$ an $(\infty,n)$-coCartesian fibration. Then $\sL$ is an $(n+1)$-complete Segal space.
\end{corone}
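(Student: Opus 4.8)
The statement to prove is \cref{cor:double css}: if $\C$ is an $(\infty,n+1)$-category and $p\colon \sL \to \C$ is an $(\infty,n)$-coCartesian fibration, then $\sL$ is an $(n+1)$-complete Segal space. The plan is to derive this as a special case of \cref{prop:doubecat} together with a small amount of extra bookkeeping. First I would recall that an $(\infty,n+1)$-category, in the present setup with $\cD = \Theta_k \times \DD^{n-k}$, is precisely a fibrant object in the $S_{CSS,disc}$-localized $\sP(\cD)$-enriched complete Segal space model structure, by \cref{the:cso in inftyn}; in particular $\C$ is an $S_{CSS,disc}$-localized $\sP(\cD)$-enriched complete Segal space, so $\Val_1(\C)$ is $S_{CSS,disc}$-local (indeed all $\Val_k(\C)$ are, by \cref{def:sloc spd enriched css}). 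An $(\infty,n)$-coCartesian fibration is by \cref{not:left fib} an $S_{CSS,disc}$-localized $\cD$-covariant fibration, i.e. an $S$-localized $\cD$-covariant fibration with $S = S_{CSS,disc} \subseteq S_{CSS,disc}$.

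Next I would apply \cref{prop:doubecat} directly with $S = S_{CSS,disc}$ and $X = \C$: the hypotheses are met since $\C$ is an $\sP(\cD)$-enriched complete Segal space with $\Val_1(\C)$ $S$-local, and $p\colon \sL \to \C$ is an $S$-localized $\cD$-covariant fibration. The conclusion is that $\sL$ is a complete Segal object in $S_{CSS,disc}$-local $\cD$-spaces. Unwinding what this says: $\Und_t(\sL)$ is a complete Segal space (by the "complete Segal object" part, applied to the terminal object $t$ of $\cD$, together with the level-wise Segal/completeness conditions packaged in \cref{the:cdcov loc css s}), and for every $k$ the $\cD$-space $\Val_k(\sL)$ is fibrant in the $S_{CSS,disc}$-localized injective model structure on $\cD$-spaces — which, by \cref{the:cso in inftyn}, means $\Val_k(\sL)$ is an $(\infty,n)$-category.

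Finally I would translate this into the language of $(n+1)$-complete Segal spaces. Recall (from \cref{rem:ss analogue} and the discussion of $\cD$-simplicial spaces) that a $\cD$-simplicial space with $\cD = \Theta_k \times \DD^{n-k}$ can equivalently be viewed as a simplicial object in $\sP(\Theta_k \times \DD^{n-k})$, and that an $(n+1)$-complete Segal space — i.e. a fibrant object in the model structure obtained from \cref{the:cso in inftyn} with one extra simplicial direction and the full discreteness set — is exactly a level-wise complete Segal object whose values are $(\infty,n)$-categories and whose $\Und_t$-direction is complete. Since we have established all three of: (i) $\sL$ is injectively fibrant and level-wise Segal, (ii) each $\Val_k(\sL)$ is an $(\infty,n)$-category, (iii) $\Und_t(\sL)$ is a complete Segal space, the object $\sL$ satisfies precisely the conditions defining an $(n+1)$-complete Segal space, and we are done.

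The main obstacle, and the only place requiring genuine care rather than citation, is step three: matching the abstract phrase "complete Segal object in $S_{CSS,disc}$-local $\cD$-spaces" coming out of \cref{prop:doubecat} with the concrete definition of an $(n+1)$-complete Segal space used in this section. This amounts to checking that the discreteness conditions in $S_{CSS,disc}$ together with the level-wise completeness and Segal conditions assemble correctly across the two simplicial (and $\Theta$) directions — essentially a reindexing argument of the type used throughout \cref{subsec:infn cat}, using \cref{cor:inftyn} to know the relevant model structures agree. I expect no surprises here, but it is where the proof's content lies; everything else is an application of \cref{prop:doubecat} and the identification of $(\infty,n+1)$-categories with fibrant objects via \cref{the:cso in inftyn}.
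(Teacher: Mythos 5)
Your proof is correct and follows essentially the same route the paper intends: the corollary is a direct application of \cref{prop:doubecat} with $S = S_{CSS,disc}$, followed by unwinding the terminology. Two small imprecisions are worth flagging. First, the conclusion of \cref{prop:doubecat} (via \cref{the:cdcov loc css s} and \cref{prop:levelcss s}) says that $\Und_d(\sL)$ is a complete Segal space for \emph{every} object $d$ of $\cD$, not just the terminal object $t$; your ``unwinding'' only extracts the $\Und_t$ statement, and while that is the part responsible for the top-level completeness condition, the $(n+1)$-level Segal/completeness conditions do require all of the $\Und_d$. Second, your characterization of ``$(n+1)$-complete Segal space'' as a complete Segal object whose values $\Val_k(\sL)$ are $(\infty,n)$-categories is stronger than the minimal reading: an $(n+1)$-complete Segal space in the sense of the table \eqref{eq:inftyn categories} (with $S_{CSS}$ and no discreteness) only requires the values to be $n$-complete Segal spaces. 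The phrase ``with one extra simplicial direction and the full discreteness set'' is also misleading, since the full discreteness set (including the top level) would give $(\infty,n+1)$-categories, and $\sL$ is generally not one of those. None of this breaks the argument -- \cref{prop:doubecat} delivers the strictly stronger conclusion (values are $(\infty,n)$-categories, $\Und_d$ complete Segal for all $d$), which implies the $(n+1)$-complete Segal space condition under any of the plausible readings -- but the bookkeeping in your third paragraph should be tightened so the implication is clearly stated in the correct direction.
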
 

We move on to the Yoneda lemma.

\begin{theone} \label{the:yoneda}
	Let $W$ be an $\sP(\cD)$-enriched Segal space. Then $W_{x/} = D[0] \times_W W^{D[1]}$ (\cref{def:under segal}) is the $(\infty,n)$-covariant fibrant replacement of the object $x$. Moreover, if $W$ is a (Segal) $(\infty,n+1)$-category, then $W_{x/} \to X$ is in fact (Segal) $(\infty,n)$-coCartesian.
\end{theone}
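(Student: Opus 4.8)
The plan is to reduce everything to results already established for level-wise Segal spaces and then use \cref{prop:fibrancies} to upgrade to the $S$-localized statement. First I would observe that a $\sP(\cD)$-enriched Segal space $W$ is in particular a level-wise Segal space (\cref{def:enriched segal}), so \cref{the:undercat left} applies and tells us that $W_{x/} = D[0]\times_W W^{D[1]} \to W$ is a $\cD$-left fibration, while \cref{the:levelwise Yoneda} tells us that $\{\id_x\}: D[0] \to W_{x/}$ is a $\cD$-covariant equivalence over $W$. Since the $\cD$-covariant model structure on $\sP(\cD\times\DD)_{/W}$ has the $\cD$-left fibrations as fibrant objects (\cref{the:ncov model}) and all objects are cofibrant, this exhibits $W_{x/} \to W$ as a fibrant replacement of $\{x\}: D[0]\to W$ in the $(\infty,n)$-covariant model structure (which is exactly the $\cD$-covariant model structure with $S = \emptyset$, using \cref{not:left fib}). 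That settles the first assertion for all $\sP(\cD)$-enriched Segal spaces; note we only need $W$ to be level-wise Segal here, so this part is essentially immediate.

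Next I would handle the $S$-localized refinement under the hypothesis that $W$ is a (Segal) $(\infty,n+1)$-category, i.e. $W$ is fibrant in the $S$-localized $\sP(\cD)$-enriched complete Segal space model structure for $S\subseteq S_{CSS,disc}$. Here I want to show that $W_{x/}\to W$ is not merely a $\cD$-left fibration but $S$-localized, equivalently (Segal) $(\infty,n)$-coCartesian via \cref{cor:fibrancies nfib}. Because $W$ is a (Segal) $(\infty,n+1)$-category it has weakly constant objects (indeed homotopically constant value), and $S = S_{CSS}$ or $S_{CSS,disc}$ is geometrically contractible by \cref{lemma:contractible thetan}, so the hypotheses of \cref{prop:fibrancies} are in force. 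By the equivalence $(1)\Leftrightarrow(5)$ there, it suffices to check that for every point $\{y\}: D[0]\to W$ the fiber $\Val(\Fib_y W_{x/})$ is fibrant in the $S$-localized injective model structure on $\cD$-spaces. By \cref{def:under segal} and direct computation the fiber $\Fib_y(W_{x/})$ is precisely the mapping $\cD$-space $\map_W(x,y)$ (\cref{def:map}), exactly as in the level-wise twisted-arrow computation of \cref{prop:twisted}. So the claim reduces to: the mapping $\cD$-spaces of a (Segal) $(\infty,n+1)$-category are $S$-local.

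The main obstacle is this last point — showing $\map_W(x,y)$ is $S$-local when $W$ is a (Segal) $(\infty,n+1)$-category — but in fact it is precisely the content of \cref{lemma:fiber vs global} together with \cref{def:sloc spd enriched css}: since $W$ is fibrant in the $S$-localized $\sP(\cD)$-enriched complete Segal space model structure, it is an $S$-localized $\sP(\cD)$-enriched Segal space, which by the equivalent conditions of \cref{def:sloc spd enriched css} means exactly that all its mapping $\cD$-spaces $\map_W(x,y)$ are $S$-local. (One should be a little careful that the $S$ used to get $(\infty,n+1)$-categories and the $S$ used to fiberwise-localize $(\infty,n)$-left fibrations are compatible; by \cref{the:cso in inftyn} the relevant localizing set on $\cD$-spaces giving $(\infty,n)$-categories is indeed $\subseteq S_{CSS,disc}$, matching \cref{not:infn notation}.) This is essentially the same argument already used in \cref{prop:yoneda s} and \cref{cor:twisted loc}, so I would phrase the proof as: apply \cref{prop:fibrancies}, identify the fiber as $\map_W(x,y)$ as in \cref{prop:twisted}, and invoke \cref{def:sloc spd enriched css}/\cref{lemma:fiber vs global} to conclude $S$-locality, hence via \cref{cor:fibrancies nfib} that $W_{x/}\to W$ is (Segal) $(\infty,n)$-coCartesian.
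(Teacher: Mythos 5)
Your proof is correct and follows essentially the same route as the paper: the paper's own proof simply cites \cref{the:undercat left}, \cref{the:levelwise Yoneda}, and \cref{prop:yoneda s}, and your argument reproduces those citations, merely unfolding the (one-line) proof of \cref{prop:yoneda s} — identify the fiber of $W_{x/}\to W$ over $y$ with $\map_W(x,y)$, apply \cref{prop:fibrancies}, and observe that the mapping $\cD$-spaces of a (Segal) $(\infty,n+1)$-category are $S$-local by \cref{def:sloc spd enriched css}/\cref{lemma:fiber vs global}.
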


\begin{proof}
	Direct result of \cref{the:undercat left}, \cref{the:levelwise Yoneda} and \cref{prop:yoneda s}. 
\end{proof}

This in particular, gives us the more usual form of the Yoneda lemma for $(\infty,n+1)$-categories. 

\begin{corone}\label{cor:yoneda} 
	Let $\C$ be a Segal $(\infty,n+1)$-category and $\sL \to \C$ a Segal $(\infty,n)$-coCartesian fibration and $c$ an object in $\C$. Then we have an equivalence of Segal $(\infty,n)$-categories 
	$$\Fib_c\sL \simeq \uMap_{/\C}(\C_{c/},\sL)$$
\end{corone}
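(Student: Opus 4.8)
The plan is to deduce \cref{cor:yoneda} directly from the Yoneda lemma for $\cD$-left fibrations together with the enriched Grothendieck construction established in the previous sections. First I would recall the setup: $\C$ is a Segal $(\infty,n+1)$-category, meaning it is an $\sP(\cD)$-enriched Segal category in the sense of \cref{subsec:enriched css} with $\cD = \Theta_k \times \DD^{n-k}$, localized with respect to $S = S_{Seg,disc}$, and $\sL \to \C$ is a Segal $(\infty,n)$-coCartesian fibration, i.e.\ an $S$-localized $\cD$-left fibration (\cref{not:left fib}). Since $\C$ has weakly constant objects, I can invoke the strictification of $\cD$-left fibrations: by \cref{cor:left fib strict} (and its $S$-localized refinement in \cref{subsec:localized groth}), $\sL$ corresponds, up to a zigzag of level-wise complete Segal space equivalences, to an $\sP(\cD)$-enriched functor $G: \C_\C \to \sP(\cD)$, with $G(c) \simeq \Fib_c\sL$ by \cref{cor:value preservation}.

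The main step is to identify $\uMap_{/\C}(\C_{c/},\sL)$ with $\Fib_c\sL$ as $\cD$-spaces. By \cref{the:yoneda}, $\C_{c/} \to \C$ is the $(\infty,n)$-Segal covariant fibrant replacement of the object $x = c$, and in fact it is Segal $(\infty,n)$-coCartesian since $\C$ is a Segal $(\infty,n+1)$-category. Hence both $\C_{c/}$ and $\sL$ are fibrant in the $S$-localized $\cD$-covariant model structure over $\C$, so $\uMap_{/\C}(\C_{c/},\sL)$ computes the derived mapping object. Now I would run the computation used in the proof of \cref{cor:value preservation}: writing $L_c = \C_{c/}$ for the fibrant replacement of $\{c\}: F[t,0] \to \C$, we have the chain of equivalences and isomorphisms of $\cD$-spaces
\[
\Fib_c\sL \cong \uMap_{/\C}(F[t,0],\sL) \simeq \uMap_{/\C}(\C_{c/},\sL) \simeq \uMap_{/N_\cD\C_\C}(\uMap_{\C_\C}(c,-),G) \cong G(c),
\]
where the first isomorphism is the definition of the fiber, the second equivalence is because $F[t,0] \to \C_{c/}$ is an $S$-localized $\cD$-covariant equivalence over $\C$ (\cref{the:yoneda} combined with \cref{the:cdcov s}), the third is the $\sP(\cD)$-enriched Quillen equivalence of the Grothendieck construction in its $S$-localized form (\cref{the:grothendieck simp s}, via \cref{the:strictification dleft fib s}), transporting along the level-wise CSS equivalences of \cref{lemma:zigzag strictification} while using \cref{ex:sintcd rep} to identify $\sint_{\cD/\C_\C}\uMap(c,-)$ with $N_\cD(\C_\C)_{c/}$, and the last is the enriched Yoneda lemma (\cref{lemma:enriched yoneda}). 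Reading the two ends gives the desired equivalence $\Fib_c\sL \simeq \uMap_{/\C}(\C_{c/},\sL)$.

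Finally I would upgrade this to an equivalence of Segal $(\infty,n)$-categories rather than merely of $\cD$-spaces. This uses that $\sL$ is a Segal $(\infty,n)$-coCartesian fibration, so by \cref{cor:fibrancies nfib} the fiber $\Fib_c\sL$ is a Segal $(\infty,n)$-category, and that the equivalence above is natural and compatible with the $\sP(\cD)$-enrichments of both sides (the enrichment of $\uMap_{/\C}(-,-)$ coming from \cref{the:cocart model} when $k=n$, or more precisely from the $\sP(\cD)$-enrichment of the $S$-localized $\cD$-covariant model structure in \cref{the:cdcov s}), so all maps in the chain are maps of $S$-local $\cD$-spaces, hence of Segal $(\infty,n)$-categories. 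The hard part — though it is already done in the machinery assembled above — is the bookkeeping of the zigzag of level-wise complete Segal space equivalences relating $\C$ to $N_\cD\C_\C$ and checking that the mapping objects are preserved; this is exactly what \cref{the:strictification dleft fib s} and \cref{cor:value preservation} package, so the proof reduces to citing them and spelling out the displayed chain.
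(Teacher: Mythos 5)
Your argument reaches the stated equivalence, but it is considerably more involved than what the corollary requires, and it differs from the route the paper takes. The paper treats \cref{cor:yoneda} as an immediate consequence of \cref{the:yoneda}: once one knows that $D[0]\to\C_{c/}$ is a trivial cofibration in the Segal $(\infty,n)$-coCartesian model structure (an $S$-localized $\cD$-covariant equivalence between cofibrant objects, by \cref{the:yoneda} and \cref{prop:yoneda s}) and that $\sL\to\C$ is fibrant there, the $\sP(\cD)^{inj}$-enrichment of the $\cD$-covariant model structure (\cref{the:ncov model}) gives directly that $\uMap_{/\C}(\C_{c/},\sL)\to\uMap_{/\C}(D[0],\sL)=\Fib_c\sL$ is an injective equivalence of $\cD$-spaces, and $\Fib_c\sL$ is a Segal $(\infty,n)$-category by \cref{cor:fibrancies nfib}. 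The extra legs of your chain --- passing through the strictification $\C_\C$, invoking \cref{the:strictification dleft fib s} and \cref{the:grothendieck simp s}, identifying $\sint_{\cD/\C_\C}\uMap(c,-)$ with the undercategory nerve, and landing on $G(c)$ --- are all valid but unnecessary: you already had the desired statement after the second link $\uMap_{/\C}(F[t,0],\sL)\simeq\uMap_{/\C}(\C_{c/},\sL)$; everything past that is re-proving \cref{cor:value preservation}, which is a strictly stronger statement than what is being asked. One small caution in your final paragraph: you appeal to ``the $\sP(\cD)$-enrichment of the $S$-localized $\cD$-covariant model structure in \cref{the:cdcov s}'', but that enrichment is only available when $S$ is Cartesian (i.e.\ $k=n$ in the Segal case), whereas the corollary has no such restriction. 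The fix is exactly the observation above: you only ever need the enrichment of the \emph{unlocalized} $\cD$-covariant model structure from \cref{the:ncov model}, which holds unconditionally, together with the fact that $\Fib_c\sL$ is $S$-local to conclude that the injectively equivalent $\uMap_{/\C}(\C_{c/},\sL)$ is too.
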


Given the notational differences, it is instructive to state the contravariant case separately, with the proof being analogous

\begin{theone}\label{cor:yoneda contra}
	Let $W$ be an $\sP(\cD)$-enriched Segal space. Then $W_{/x} = W^{D[1]} \times_W D[0]$ (\cref{def:over segal}) is the $(\infty,n)$-contravariant fibrant replacement of $x$. In particular, if $\C$ is a (Segal) $(\infty,n+1)$-category and $c$ an object, then $\C_{c/}$ is in fact (Segal) $(\infty,n)$-Cartesian. Hence, for every Segal $(\infty,n)$-Cartesian fibration $\R \to \C$ we have an equivalence of Segal $(\infty,n)$-categories
	$$\Fib_c\R \simeq \uMap_{/\C}(\C_{/c},\R)$$
\end{theone}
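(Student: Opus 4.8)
The plan is to mirror, essentially verbatim, the proof of \cref{the:yoneda}, applying the dual (contravariant) versions of the three ingredients cited there. The structure of the argument has three layers, and each layer has a contravariant analogue established earlier in the excerpt. First I would observe that $W_{/x} = W^{D[1]} \times_W D[0]$, where the pullback is along $<1>^*: W^{D[1]} \to W$ (this is precisely \cref{def:over segal}). By the remark following \cref{def:over segal}, the projection $W_{/x} \to W$ is a $\cD$-right fibration whenever $W$ is a level-wise Segal space, which is the dual of \cref{the:undercat left}. Similarly, the dual of \cref{the:levelwise Yoneda} (also noted in the text right after \cref{def:over segal}) gives that $\{\id_x\}: D[0] \to W_{/x}$ is a $\cD$-contravariant equivalence over $W$, so that $W_{/x} \to W$ is the $(\infty,n)$-contravariant fibrant replacement of the object $x$, i.e.\ of the map $\{x\}: D[0] \to W$.

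Second, I would upgrade this to the localized statement using the dual of \cref{prop:yoneda s}: if $W$ is an $S$-localized $\sP(\cD)$-enriched Segal space (in particular a (Segal) $(\infty,n+1)$-category, where $S = S_{CSS}$ or $S = S_{CSS,disc}$), then the fiber of $W_{/x} \to W$ over an object $y$ is the mapping $\cD$-space $\map_W(y,x)$ (this is the contravariant reading of \cref{def:over segal}/\cref{def:map}), which is $S$-local by hypothesis; hence by \cref{prop:fibrancies} (equivalently the dual of \cref{prop:fibrancies nfib}) the $\cD$-right fibration $W_{/x} \to W$ is in fact $S$-localized, i.e.\ (Segal) $(\infty,n)$-Cartesian. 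This handles the assertion about $\C_{c/}$ when $\C$ is a (Segal) $(\infty,n+1)$-category and $c$ an object.

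Third, for the final mapping-space equivalence, I would argue exactly as in \cref{cor:yoneda}: given a Segal $(\infty,n)$-Cartesian fibration $\R \to \C$, the $\cD$-contravariant model structure is enriched over the appropriate $S$-localized injective model structure (the dual of \cref{the:cocart model}, valid since $k=n$ makes $S \subseteq S_{CSS}$ Cartesian), so $\uMap_{/\C}(-,\R)$ is a right-derived mapping object functor; applying it to the $\cD$-contravariant equivalence $\{c\}: D[0] \to \C_{/c}$ from the previous paragraph yields
$$\uMap_{/\C}(\C_{/c},\R) \simeq \uMap_{/\C}(D[0],\R) \cong \Fib_c\R,$$
an equivalence of Segal $(\infty,n)$-categories (the right-hand side is a (Segal) $(\infty,n)$-category by the contravariant form of \cref{cor:fibrancies nfib}).

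I do not expect a genuine obstacle here, since every needed input is already the dual of something proved in the covariant setting. The only point requiring a little care is bookkeeping the variance: in the contravariant construction $W_{/x}$ the fiber over $y$ is $\map_W(y,x)$ rather than $\map_W(x,y)$, and one must make sure that "$S$-local mapping $\cD$-spaces" is symmetric in the two arguments — which it is, since the $S$-locality of an $\sP(\cD)$-enriched Segal space is a condition on $\Val_1$ (\cref{def:sloc spd enriched css}), equivalently on all mapping spaces $\map_W(-,-)$ simultaneously, by \cref{lemma:fiber vs global}. Beyond that, the proof is a direct transcription of the proof of \cref{the:yoneda} and \cref{cor:yoneda} with "over" replacing "under", $<1>^*$ replacing $<0>^*$, and "contravariant/right/Cartesian" replacing "covariant/left/coCartesian".
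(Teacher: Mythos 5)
Your proposal is correct and matches the paper's intent exactly: the paper itself gives no separate proof for this statement, simply noting that it is "the contravariant case" with "the proof being analogous," and the remark immediately following \cref{def:over segal} already records the two contravariant inputs (that $W_{/x}\to W$ is a $\cD$-right fibration and $D[0]\to W_{/x}$ is a $\cD$-contravariant equivalence). Your fleshed-out version — dualizing \cref{the:undercat left}, \cref{the:levelwise Yoneda}, \cref{prop:yoneda s}, and then deriving the mapping-space equivalence via the enrichment exactly as in \cref{cor:yoneda} — is precisely what the paper intends, and your variance-bookkeeping note (that $S$-locality of mapping spaces is symmetric via \cref{lemma:fiber vs global}) correctly addresses the one point where a careless dualization could go astray.
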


We move on to weak equivalences between $(\infty,n)$-left fibrations.

\begin{theone} \label{the:fib equiv}
	Let $X$ be a $\cD$-simplicial space with weakly constant objects and let $S \subseteq S_{CSS,disc}$. 
	Let $f:L \to L'$ be a map of $(\infty,n)$-left fibration over $X$. Then $f$ is an $S$-localized $\cD$-covariant equivalence if and only if for every object $x$ in $X$, the map on fibers $\Fib_xL \to \Fib_xL'$ is an $S$-localized equivalence of $\cD$-spaces. In particular, $f$ is
	\begin{itemize}
		\item a Segal $(\infty,n)$-coCartesian equivalence if and only if it is a fiber-wise Segal $(\infty,n)$-category equivalence. 
		\item an $(\infty,n)$-coCartesian equivalence if and only if it is a fiber-wise $(\infty,n)$-category equivalence.	
	\end{itemize}
\end{theone}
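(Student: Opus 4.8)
The plan is to reduce the statement directly to \cref{prop:cdcov s equiv lfib}, which is the analogous result at the level of the abstract framework, and then translate the conclusion into the concrete language of $(\infty,n)$-categories using the localizing sets fixed at the beginning of \cref{subsec:infn fib}. First I would observe that $\cD = \Theta_k \times \DD^{n-k}$ has a terminal object and that $S \subseteq S_{CSS,disc}$ consists of geometrically contractible cofibrations by \cref{lemma:contractible thetan}, so that all hypotheses of \cref{prop:cdcov s equiv lfib} are in place once $X$ is assumed to have weakly constant objects. Applying that proposition with $p = L \to X$ and $q = L' \to X$ both $\cD$-left fibrations (not necessarily $S$-localized) gives the equivalence of conditions $(1)$ and $(2)$ there: $f$ is an $S$-localized $\cD$-covariant equivalence if and only if for every $\{x\}: F[t,0] \to X$ the induced map on fibers $\Fib_xL \to \Fib_xL'$ is a diagonal $S$-localized injective equivalence of $\cD$-simplicial spaces. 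Since $L \to X$ and $L' \to X$ are $\cD$-left fibrations, condition $(3)$ of \cref{prop:cdcov s equiv lfib} shows this is in turn equivalent to $\Val(\Fib_xL) \to \Val(\Fib_xL')$ being an $S$-localized injective equivalence of $\cD$-spaces; and because the fiber of a $\cD$-left fibration is homotopically constant (\cref{the:diag local ncov}), the counit $\VEmb\Val(\Fib_xL) \to \Fib_xL$ is an injective equivalence, so the distinction between the diagonal $S$-localized statement and the plain $S$-localized statement on $\cD$-spaces is immaterial. This establishes the first sentence of the theorem, with ``$S$-localized equivalence of $\cD$-spaces'' understood via either the diagonal $S$-localized injective model structure or the $S$-localized injective model structure on $\Val$.

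Next I would specialize. With $\cD = \Theta_k \times \DD^{n-k}$, the $S$-localized injective model structure on $\cD$-spaces is precisely a model for $(\infty,n)$-categories (for $S = S_{CSS,disc}$) or for Segal $(\infty,n)$-categories (for $S = S_{Seg,disc}$), by \cref{the:thetan model cat} and \cref{the:cso in inftyn}; correspondingly the $S$-localized $\cD$-covariant model structure is, by \cref{not:left fib}, the $(\infty,n)$-coCartesian model structure respectively the Segal $(\infty,n)$-coCartesian model structure. So taking $S = S_{CSS,disc}$ in the first sentence yields: $f$ is an $(\infty,n)$-coCartesian equivalence if and only if for every object $x$ the map $\Fib_xL \to \Fib_xL'$ is an $(\infty,n)$-category equivalence; and taking $S = S_{Seg,disc}$ yields the Segal version. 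Here one uses \cref{prop:fibrancies nfib} implicitly only to know that $L$, $L'$ need not be assumed $S$-localized for the fiber criterion to apply — \cref{prop:cdcov s equiv lfib} is stated exactly for $\cD$-left fibrations that are not necessarily localized, which is what we want, since the hypothesis of the theorem is only that they are $(\infty,n)$-left fibrations.

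I do not anticipate a serious obstacle, since the content is genuinely already packaged in \cref{prop:cdcov s equiv lfib}; the work is purely bookkeeping. The one point requiring a small amount of care is matching ``weakly constant objects'' of $X$ — a hypothesis of \cref{prop:cdcov s equiv lfib} — with the running hypotheses of \cref{subsec:infn fib}, and confirming that the two named special cases $S_{Seg,disc}$ and $S_{CSS,disc}$ both satisfy $S \subseteq S_{CSS,disc}$, hence consist of geometrically contractible cofibrations; this is immediate from \cref{not:infn notation} and \cref{lemma:contractible thetan}. The other mild subtlety is the precise sense in which ``$S$-localized equivalence of $\cD$-spaces'' is meant in the general statement versus ``$(\infty,n)$-category equivalence'' in the specialized bullets, but as noted above this is resolved by homotopical constancy of fibers of $\cD$-left fibrations together with the Quillen equivalence $\fDiag \dashv (\phiDiag)_*$ of \cref{prop:dcov vs diag s}, so the diagonal $S$-localized and $S$-localized pictures agree on the relevant objects. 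Thus the proof will be essentially one invocation of \cref{prop:cdcov s equiv lfib} followed by the dictionary of \cref{not:left fib}.
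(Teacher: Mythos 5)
Your proposal matches the paper's proof exactly: the paper's argument is precisely "Direct application of \cref{prop:cdcov s equiv lfib} combined with the characterization of localizations in \cref{not:left fib}," which is the reduction and dictionary lookup you describe. The additional care you take in identifying the diagonal $S$-localized statement with the plain one via homotopical constancy of fibers is correct and is already packaged into the equivalence of conditions $(2)$ and $(3)$ in \cref{prop:cdcov s equiv lfib}.
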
 

\begin{proof}
	Direct application of \cref{prop:cdcov s equiv lfib} combined with the characterization of localizations in \cref{not:left fib}.
\end{proof}

This in particular has the following corollary, which can be seen as a generalization of the result \cite[Theorem 7.7]{rezk2001css} or \cite[Proposition 8.17]{bergnerrezk2020comparisonii} from Segal spaces to Segal fibrations. 

\begin{remone} \label{rem:dk infn}
Recall that a map of Segal $(\infty,n)$-categories is a {\it Dwyer-Kan equivalence} if it is essentially surjective and for any two objects the induced map on mapping spaces is a Dwyer-Kan equivalence of Segal $(\infty,n-1)$-categories. This corresponds to the $S_{CSS,disc}$-localized Dwyer-Kan equivalences (\cref{def:sloc dk}).
\end{remone}

\begin{corone} \label{cor:fib dk}
	Let $\C$ be a Segal $(\infty,n+1)$-category. Then a map of Segal $(\infty,n)$-coCartesian fibrations $f:\sL \to \sL'$ is an $(\infty,n)$-coCartesian equivalence if and only if it is a fiber-wise Dwyer-Kan equivalence of Segal $(\infty,n)$-categories. 
\end{corone}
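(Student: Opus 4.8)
The plan is to deduce \cref{cor:fib dk} directly from \cref{the:fib equiv}, specialized to the setting $S = S_{CSS,disc}$ and with $\cD = \Theta_k \times \DD^{n-k}$ so that $\cD$-simplicial spaces model $(\infty,n+1)$-categories and $S$-localized $\cD$-spaces model Segal $(\infty,n)$-categories. Since $\C$ is a Segal $(\infty,n+1)$-category, in particular it is a level-wise Segal space, hence an $\sP(\cD)$-enriched Segal space, and so it has weakly constant objects (\cref{def:enriched segal}); thus the hypotheses of \cref{the:fib equiv} are met. A Segal $(\infty,n)$-coCartesian fibration is by definition (\cref{not:left fib}) an $S_{Seg,disc}$-localized $\cD$-left fibration, hence a fortiori an $(\infty,n)$-left fibration, so the map $f\colon\sL\to\sL'$ is a map of $(\infty,n)$-left fibrations over $\C$ as required by \cref{the:fib equiv}.

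First I would invoke \cref{the:fib equiv} with $S = S_{CSS,disc}$: it says that $f$ is an $S_{CSS,disc}$-localized $\cD$-covariant equivalence if and only if for every object $x$ in $\C$ the map on fibers $\Fib_x\sL \to \Fib_x\sL'$ is an $S_{CSS,disc}$-localized equivalence of $\cD$-spaces. By the dictionary in \eqref{eq:inftyn categories} and \cref{not:left fib}, an $S_{CSS,disc}$-localized $\cD$-covariant equivalence is precisely an $(\infty,n)$-coCartesian equivalence, and an $S_{CSS,disc}$-localized equivalence of $\cD$-spaces (i.e.\ an equivalence in the model structure for $(\infty,n)$-categories on $\cD$-spaces) between objects that are already Segal $(\infty,n)$-categories is precisely a Dwyer-Kan equivalence of Segal $(\infty,n)$-categories. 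For this last identification I would appeal to \cref{rem:dk infn}, which records that Dwyer-Kan equivalences of Segal $(\infty,n)$-categories are exactly the $S_{CSS,disc}$-localized Dwyer-Kan equivalences, together with \cref{cor:fibrancies nfib}: since $\sL$ and $\sL'$ are Segal $(\infty,n)$-coCartesian over the $(\infty,n+1)$-category $\C$, each fiber $\Fib_x\sL$, $\Fib_x\sL'$ is a Segal $(\infty,n)$-category, so an $S_{CSS,disc}$-localized equivalence between them is the same as a weak equivalence of fibrant objects, which is what "Dwyer-Kan equivalence of Segal $(\infty,n)$-categories" means in this model.

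Putting these translations together yields exactly the statement: $f$ is an $(\infty,n)$-coCartesian equivalence iff it is a fiber-wise Dwyer-Kan equivalence of Segal $(\infty,n)$-categories. The main obstacle — really the only nontrivial point — is the careful bookkeeping in the second paragraph: one must be sure that the word "$S$-localized equivalence of $\cD$-spaces" appearing in \cref{the:fib equiv} coincides on the nose with "Dwyer-Kan equivalence of Segal $(\infty,n)$-categories" when restricted to fibers. This requires knowing that the fibers are genuinely fibrant Segal $(\infty,n)$-categories (so that the abstract model-categorical weak equivalence unwinds to the concrete Dwyer-Kan notion), which is \cref{cor:fibrancies nfib}, and that Segal $(\infty,n)$-category equivalences between fibrant objects are Dwyer-Kan equivalences, which is the content of the generalization of \cite[Theorem 7.7]{rezk2001css} / \cite[Proposition 8.17]{bergnerrezk2020comparisonii} alluded to in \cref{rem:dk infn}. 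No further computation is needed; the corollary is a direct specialization.
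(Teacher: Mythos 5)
Your proposal takes essentially the same route as the paper: invoke \cref{the:fib equiv} to reduce the question to fiber-wise equivalence and then identify the resulting fiber-wise $(\infty,n)$-categorical equivalences of Segal $(\infty,n)$-categories with Dwyer-Kan equivalences via \cite[Proposition 8.17]{bergnerrezk2020comparisonii}, which is exactly what the paper's two-line proof does.

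There is one wobble in the middle of your second paragraph that is worth flagging. You write that "an $S_{CSS,disc}$-localized equivalence between them is the same as a weak equivalence of fibrant objects, which is what `Dwyer-Kan equivalence of Segal $(\infty,n)$-categories' means in this model." This is not quite right: by \cref{cor:fibrancies nfib} the fibers $\Fib_x\sL$, $\Fib_x\sL'$ are Segal $(\infty,n)$-categories, which are fibrant in the $S_{Seg,disc}$-localized (Segal $(\infty,n)$-category) model structure but \emph{not} in the $S_{CSS,disc}$-localized ($(\infty,n)$-category) model structure, since they need not be complete. So an $S_{CSS,disc}$-localized equivalence between them is a weak equivalence between non-fibrant objects of the $(\infty,n)$-category model structure, and identifying this with the concrete Dwyer-Kan notion is the nontrivial content of \cite[Proposition 8.17]{bergnerrezk2020comparisonii} (generalizing Rezk's \cite[Theorem 7.7]{rezk2001css}); \cref{rem:dk infn} alone does not supply this. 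Your final sentence, which names that proposition as the actual key ingredient, does repair the gap — but the middle phrasing as written would mislead a reader into thinking the identification is a model-categorical triviality when it is really the substantive input.
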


\begin{proof}
	By \cref{the:fib equiv} $f$ is an $(\infty,n)$-coCartesian equivalence if and only if it is a fiber-wise an $(\infty,n)$-categorical equivalence, which by \cite[Proposition 8.17]{bergnerrezk2020comparisonii} is equivalent to being a Dwyer-Kan equivalence of Segal $(\infty,n)$-categories.
\end{proof} 

We can generalize this result to maps that are not fibrations.

\begin{notone} \label{not:diag grpd}
	In \cref{def:diag} we define $\fDiag: \sP(\cD\times\DD) \to \sP(\cD)$ as the left adjoint to the inclusion functor $\Diag_*: \sP(\cD) \to \sP(\cD\times\DD)$. In the standard categorical literature this left adjoint is often known as {\it groupoidification} and denoted $(-)^{grpd}$ or $(-)^\simeq$. We will hence use the notation $(-)^{grpd}$ in this section to match our notation in this regard. 
\end{notone}

Now \cref{the:recognition principle s} immediately gives us the following result.

\begin{theone} \label{the:recognition principle inftyn}
	Let $X$ be a $\cD$-simplicial space with weakly constant objects and let $S \subseteq S_{CSS,disc}$.
	Let $f:Y \to Z$ be a map of $\cD$-simplicial spaces over $X$. For every object $x$ in $X$ let $R_x \to X$ be fibrant replacement in the $(\infty,n)$-contravariant model structure. Then $f$ is an $S$-localized $(\infty,n)$-covariant equivalence if and only if $(Y \times_X R_x)^{grpd} \to (Z \times_X R_x)^{grpd}$ is an $S$-localized equivalence of $\cD$-spaces for all objects $x$ in $X$. 
	
	In particular,
	\begin{itemize}
		\item $f$ is a Segal $(\infty,n)$-coCartesian equivalence if and only if $(Y \times_X R_x)^{grpd} \to (Z \times_X R_x)^{grpd}$ is a Segal $(\infty,n)$-category equivalence. 
		\item $f$ is an $(\infty,n)$-coCartesian equivalence if and only if $(Y \times_X R_x)^{grpd} \to (Z \times_X R_x)^{grpd}$ is an $(\infty,n)$-category equivalence.	
	\end{itemize}
\end{theone}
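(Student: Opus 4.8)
The plan is to derive \cref{the:recognition principle inftyn} as a direct specialization of \cref{the:recognition principle s}, exactly in the way the earlier ``theory of $(\infty,n)$-categories'' results in this section (such as \cref{prop:fibrancies nfib}, \cref{the:fib equiv}) are obtained from their general $S$-localized counterparts. First I would observe that $\cD = \Theta_k \times \DD^{n-k}$ has a terminal object, and that the relevant localizing set $S \subseteq S_{CSS,disc}$ consists of geometrically contractible cofibrations by \cref{lemma:contractible thetan}; in particular it is a set of cofibrations of $\cD$-spaces in the sense required by \cref{the:recognition principle s}. Since $X$ is assumed to have weakly constant objects, all the hypotheses of \cref{the:recognition principle s} are met.

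The main body of the argument is then a translation of vocabulary. By \cref{the:recognition principle s}, a map $g: Y \to Z$ over $X$ is an equivalence in the $S$-localized $\cD$-covariant model structure if and only if for every $\{x\}: F[t,0] \to X$ the induced map
$$Y \underset{X}{\times} R_x \to Z \underset{X}{\times} R_x$$
is an equivalence in the diagonal $S$-localized injective model structure on $\sP(\cD\times\DD)$, where $R_x$ is a choice of $\cD$-right fibrant replacement of $\{x\}$. Two identifications remain. First, by \cref{the:diagmodel s} combined with \cref{prop:dcov vs diag s}, a map $W \to W'$ of $\cD$-simplicial spaces is a diagonal $S$-localized injective equivalence if and only if $\fDiag(W) \to \fDiag(W')$ is an $S$-localized injective equivalence of $\cD$-spaces; since $\fDiag(-) = (-)^{grpd}$ by \cref{not:diag grpd}, this is precisely the condition that $(Y \times_X R_x)^{grpd} \to (Z \times_X R_x)^{grpd}$ is an $S$-localized equivalence of $\cD$-spaces. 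Second, I would note that a $\cD$-right fibrant replacement of $\{x\}$ is exactly a fibrant replacement of $\{x\}: F[t,0] \to X$ in the $(\infty,n)$-contravariant model structure, by the contravariant analogue of \cref{not:left fib} (the $(\infty,n)$-contravariant model structure is the unlocalized case, $S = \emptyset$, of the $\cD$-contravariant model structures, whose fibrant objects are $\cD$-right fibrations, cf.\ \cref{rem:nrfib}). This yields the displayed equivalence exactly as stated.

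For the two bulleted specializations I would simply unwind \cref{not:left fib}: taking $S = S_{Seg,disc}$ the $S$-localized injective model structure on $\cD$-spaces is the Segal $(\infty,n)$-category model structure (by \cref{the:cso in inftyn} / \eqref{eq:inftyn categories}), so an $S$-localized equivalence of $\cD$-spaces is a Segal $(\infty,n)$-category equivalence and an $S$-localized $(\infty,n)$-covariant equivalence is a Segal $(\infty,n)$-coCartesian equivalence; and taking $S = S_{CSS,disc}$ gives the $(\infty,n)$-category model structure and hence the $(\infty,n)$-coCartesian statement. I do not expect any genuine obstacle here: the proof is essentially bookkeeping, assembling \cref{the:recognition principle s}, \cref{prop:dcov vs diag s}, \cref{not:diag grpd} and \cref{not:left fib}. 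The only point requiring a moment of care is making sure that ``$\cD$-right fibrant replacement'' and ``fibrant replacement in the $(\infty,n)$-contravariant model structure'' really coincide — but this is immediate since the $(\infty,n)$-contravariant model structure is by definition the $\cD$-contravariant model structure with no extra localization, whose fibrant objects over $X$ are precisely the $\cD$-right fibrations over $X$.

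\begin{proof}
	Set $\cD = \Theta_k\times\DD^{n-k}$, which has a terminal object $t = ([0],\dots,[0])$. By \cref{lemma:contractible thetan} every cofibration in $S_{CSS,disc}$, and hence every cofibration in $S$, is geometrically contractible; in particular $S$ is a set of cofibrations of $\cD$-spaces. Since $X$ has weakly constant objects, the hypotheses of \cref{the:recognition principle s} are satisfied, and so $f: Y\to Z$ over $X$ is an equivalence in the $S$-localized $\cD$-covariant model structure if and only if for every $\{x\}:F[t,0]\to X$ the map
	$$Y\underset{X}{\times}R_x \to Z\underset{X}{\times}R_x$$
	is an equivalence in the diagonal $S$-localized injective model structure, where $R_x$ is a choice of $\cD$-right fibrant replacement of $\{x\}$.

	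Now, by \cref{the:diagmodel s}, a map $W\to W'$ of $\cD$-simplicial spaces is a diagonal $S$-localized injective equivalence if and only if $\fDiag(W)\to\fDiag(W')$ is an equivalence in the $S$-localized injective model structure on $\cD$-spaces (equivalently, by \cref{prop:dcov vs diag s}, this is detected by the left Quillen equivalence $\fDiag$). Since $\fDiag(-) = (-)^{grpd}$ by \cref{not:diag grpd}, this condition is precisely that $(Y\times_X R_x)^{grpd}\to(Z\times_X R_x)^{grpd}$ is an $S$-localized equivalence of $\cD$-spaces. Finally, the $(\infty,n)$-contravariant model structure is, by the contravariant analogue of \cref{not:left fib} (the case $S=\emptyset$, cf.\ \cref{rem:nrfib}), exactly the $\cD$-contravariant model structure, whose fibrant objects over $X$ are the $\cD$-right fibrations over $X$; hence a fibrant replacement of $\{x\}$ in the $(\infty,n)$-contravariant model structure is a $\cD$-right fibrant replacement of $\{x\}$. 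Combining these identifications with the equivalence ``$S$-localized $\cD$-covariant equivalence $=$ $S$-localized $(\infty,n)$-covariant equivalence'' from \cref{not:left fib} gives the first claim.

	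For the two special cases we unwind \cref{not:left fib} and \eqref{eq:inftyn categories}. Taking $S = S_{Seg,disc}$, the $S$-localized injective model structure on $\cD$-spaces is the Segal $(\infty,n)$-category model structure, so an $S$-localized equivalence of $\cD$-spaces is a Segal $(\infty,n)$-category equivalence, and an $S$-localized $(\infty,n)$-covariant equivalence is precisely a Segal $(\infty,n)$-coCartesian equivalence; this yields the first bullet. Taking $S = S_{CSS,disc}$, the $S$-localized injective model structure on $\cD$-spaces is the $(\infty,n)$-category model structure, so an $S$-localized equivalence of $\cD$-spaces is an $(\infty,n)$-category equivalence and an $S$-localized $(\infty,n)$-covariant equivalence is an $(\infty,n)$-coCartesian equivalence; this yields the second bullet.
\end{proof}
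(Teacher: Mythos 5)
Your proposal is correct and follows exactly the route taken in the paper: the paper simply states that \cref{the:recognition principle s} ``immediately gives'' the theorem, and your proof supplies the bookkeeping — specializing to $\cD = \Theta_k\times\DD^{n-k}$ (which has a terminal object), identifying $(-)^{grpd}$ with $\fDiag$ via \cref{not:diag grpd}, using \cref{the:diagmodel s} to translate diagonal $S$-localized equivalences into $S$-localized equivalences after applying $\fDiag$, and unwinding \cref{not:left fib} and \eqref{eq:inftyn categories} for the two bullets. The one minor stylistic wrinkle is your invocation of \cref{lemma:contractible thetan}: geometric contractibility of $S$ is not a hypothesis of \cref{the:recognition principle s} and is not needed here, so that sentence is harmless but superfluous.
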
 

Combining this result with \cref{cor:yoneda contra} gives us the following concrete application.

\begin{corone} \label{cor:recognition principle inftyn}
 	Let $W$ be be an $\sP(\cD)$-enriched Segal space and let $S \subseteq S_{CSS,disc}$. Then a morphism $f: Y \to Z$ over $W$ is an $S$-localized $(\infty,n)$-covariant equivalence if and only if $(Y \times_W W_{/x})^{grpd} \to (Z \times_W W_{/x})^{grpd}$ is an $S$-localized equivalence of $\cD$-spaces for all objects $x$. 
 
 In particular, if $\C$ is a Segal $(\infty,n)$-category and $f: \D \to \E$ over $\C$, then 
 \begin{itemize}
 	\item $f$ is a Segal $(\infty,n)$-coCartesian equivalence if and only if $(\D \times_\C \C_{/c})^{grpd} \to (\E \times_\C \C_{/c})^{grpd}$ is a Segal $(\infty,n)$-category equivalence for all objects $c$ in $\C$.
 	\item $f$ is an $(\infty,n)$-coCartesian equivalence if and only if $(\D \times_\C \C_{/c})^{grpd} \to (\E \times_\C \C_{/c})^{grpd}$ is an $(\infty,n)$-category equivalence for all objects $c$ in $\C$.	
 \end{itemize}
\end{corone}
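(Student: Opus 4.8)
The plan is to deduce \cref{cor:recognition principle inftyn} directly from \cref{the:recognition principle inftyn} by specializing to the case where the base $X$ is an $\sP(\cD)$-enriched Segal space $W$ and identifying the contravariant fibrant replacement $R_x$ explicitly. First I would recall that $W$, being an $\sP(\cD)$-enriched Segal space, in particular has weakly constant objects (\cref{def:enriched segal}), so \cref{the:recognition principle inftyn} applies with $X = W$. That theorem tells us that $f \colon Y \to Z$ over $W$ is an $S$-localized $(\infty,n)$-covariant equivalence if and only if $(Y \times_W R_x)^{grpd} \to (Z \times_W R_x)^{grpd}$ is an $S$-localized equivalence of $\cD$-spaces for every object $x$ in $W$, where $R_x \to W$ is a choice of fibrant replacement of $\{x\} \colon F[t,0] \to W$ in the $(\infty,n)$-contravariant model structure.

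The key step is then to replace the abstract $R_x$ by the concrete model $W_{/x}$. By \cref{cor:yoneda contra}, for an $\sP(\cD)$-enriched Segal space $W$ the level-wise over-Segal space $W_{/x} = W^{D[1]} \times_W D[0]$ is precisely the $(\infty,n)$-contravariant fibrant replacement of the object $x$. Since fibrant replacements in the $(\infty,n)$-contravariant model structure agree up to $(\infty,n)$-contravariant equivalence, and pulling back along such an equivalence together with applying $(-)^{grpd}$ preserves the relevant equivalence class of diagrams (using invariance results like \cref{the:nlfib pb nrfib} and the fact that $\cD$-contravariant equivalences between $\cD$-right fibrations are detected fiber-wise, hence stay equivalences after the relevant pullbacks and $\fDiag$), I can substitute $W_{/x}$ for $R_x$ without changing the condition. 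This yields the first displayed statement of \cref{cor:recognition principle inftyn}.

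Finally, for the ``in particular'' clause I would specialize $S$ to the two relevant choices from \cref{not:left fib}: taking $S = S_{Seg,disc}$ gives that $S$-localized $(\infty,n)$-covariant equivalences are exactly Segal $(\infty,n)$-coCartesian equivalences and $S$-local equivalences of $\cD$-spaces are exactly Segal $(\infty,n)$-category equivalences, while taking $S = S_{CSS,disc}$ gives the $(\infty,n)$-coCartesian and $(\infty,n)$-category versions. Then I specialize $W = \C$ to a Segal $(\infty,n)$-category (a fortiori an $\sP(\cD)$-enriched Segal space) with $x = c$ an object, rename $Y,Z$ to $\D,\E$, and rewrite $(-)^{grpd}$ via \cref{not:diag grpd}, obtaining the two bulleted equivalences verbatim. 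I do not expect a genuine obstacle here; the only point requiring a little care is the substitution of $W_{/x}$ for the arbitrary $R_x$, i.e.\ checking that the recognition criterion is insensitive to the choice of contravariant fibrant replacement, which follows from the $2$-out-of-$3$ property applied to the zigzag of $\cD$-contravariant equivalences between any two such replacements combined with the fact (\cref{the:ncov equiv nlfib}, \cref{rem:nrfib}) that $\cD$-contravariant equivalences between $\cD$-right fibrations are $\cD$-diagonal equivalences and hence remain $S$-localized equivalences after pulling back along $f$ and applying $\fDiag$.
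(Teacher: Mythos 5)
Your proof is correct and follows the paper's intended route: apply \cref{the:recognition principle inftyn} to the base $W$ (which has weakly constant objects by \cref{def:enriched segal}) and then identify the contravariant fibrant replacement $R_x$ with $W_{/x}$ via \cref{cor:yoneda contra}. One minor over-complication: since \cref{the:recognition principle inftyn} already allows any choice of fibrant replacement of $\{x\}$ in the $(\infty,n)$-contravariant model structure, and \cref{cor:yoneda contra} exhibits $W_{/x}$ as exactly such a choice, you can simply take $R_x = W_{/x}$ directly; the $2$-out-of-$3$ argument and the discussion about insensitivity to the choice of replacement are not needed.
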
 
We move on to study the {\it $(\infty,n)$-twisted arrow} construction. 

\begin{propone}\label{prop:twisted infn}
	Let $W$ be an $\sP(\cD)$-enriched Segal space, then $\Tw(W) \to W^{op} \times W$ (as defined in \cref{prop:twisted}) is an $(\infty,n)$-left fibration. Moreover, let $S \subseteq S_{CSS,disc}$. If for all objects $x,y$ in $W$ the $\cD$-space $\map_W(x,y)$ is $S$-localized, then $\Tw(W) \to W^{op} \times W$ is an $S$-localized $\cD$-left fibration.  
\end{propone}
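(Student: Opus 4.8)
The plan is to reduce \cref{prop:twisted infn} to results already established about level-wise Segal spaces and about $S$-localized $\cD$-left fibrations. The first assertion, that $\Tw(W) \to W^{op} \times W$ is an $(\infty,n)$-left fibration, is essentially immediate: by \cref{prop:twisted} (and its construction via $\Tw(W)[n]_l = W[2n+1]_l$) the map $\Tw(W) \to W^{op} \times W$ is a $\cD$-left fibration whenever $W$ is a level-wise Segal space, so it suffices to observe that an $\sP(\cD)$-enriched Segal space is in particular a level-wise Segal space (\cref{def:enriched segal}). Since an $(\infty,n)$-left fibration is by definition (\cref{not:left fib}, the case $S=\emptyset$) precisely a $\cD$-left fibration, the first claim follows.

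For the second assertion, the strategy is to apply \cref{prop:fibrancies}, which gives a fiber-wise criterion for when a $\cD$-left fibration over a base with weakly constant objects is $S$-localized. First I would note that $W^{op} \times W$ has weakly constant objects: $W$ does by \cref{def:enriched segal}, $W^{op}$ does by the defining property of $(-)^{op}$ in \ref{eq:op} (it only reshuffles the simplicial direction), and a product of $\cD$-simplicial spaces with weakly constant objects again has weakly constant objects since $\Und_d$ and essential surjectivity are both compatible with products. Also, since $S \subseteq S_{CSS,disc}$, every map in $S$ is geometrically contractible by \cref{lemma:contractible thetan}, hence in particular $S$ has contractible codomains (\cref{def:contractible realization}). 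Thus \cref{prop:fibrancies} applies, and the equivalence $(1) \Leftrightarrow (4)$ there reduces the claim to showing that for every point $(x,y) : D[0] \to W^{op} \times W$ the fiber $\Fib_{(x,y)}\Tw(W)$ is fibrant in the diagonal $S$-localized injective model structure (\cref{the:diagmodel s}); by $(4) \Leftrightarrow (5)$ this is equivalent to $\Val(\Fib_{(x,y)}\Tw(W))$ being $S$-local. But \cref{prop:twisted} identifies the fiber of $\Tw(W)$ over $(x,y)$ with the mapping $\cD$-space $\map_W(x,y)$ (which, being a $\cD$-space, is already diagonally constant, so $\Val$ recovers it up to injective equivalence), and by hypothesis $\map_W(x,y)$ is $S$-local. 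Hence the fibrancy condition is met and $\Tw(W) \to W^{op}\times W$ is an $S$-localized $\cD$-left fibration.

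The only genuinely delicate point — and the one I would spend the most care on — is the bookkeeping identifying $\Fib_{(x,y)}\Tw(W)$ with $\map_W(x,y)$ compatibly with the hypotheses of \cref{prop:fibrancies}: one must check that the point $(x,y)$ of $W^{op}\times W$, which by weak constancy may be taken to factor through $D[0]$, really does pick out the pair $(x,y)$ of objects in the sense of \cref{def:map}, and that the level-wise identification $\Und_d\Tw(W) = \Tw(\Und_dW)$ used in the proof of \cref{prop:twisted} is natural enough that the fiber computation can be performed simultaneously over all $d$. Since \cref{prop:twisted} already records that the fiber over $(x,y)$ is $\map_W(x,y)$, this is a matter of citing that statement rather than reproving it, so no serious obstacle remains. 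I would also remark, for the statement about completeness/discreteness-flavoured choices of $S$, that nothing extra is needed: the criterion is purely fiber-wise and the fibers are $\cD$-spaces, so no condition on $\Und_t W$ beyond the enriched Segal condition enters.

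\begin{proof}
	By \cref{def:enriched segal}, an $\sP(\cD)$-enriched Segal space is a level-wise Segal space, so \cref{prop:twisted} shows $\Tw(W) \to W^{op} \times W$ is a $\cD$-left fibration with fiber over a pair of objects $(x,y)$ the mapping $\cD$-space $\map_W(x,y)$; since an $(\infty,n)$-left fibration is exactly a $\cD$-left fibration (\cref{not:left fib}), this proves the first claim.

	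For the second claim, note that $W^{op} \times W$ has weakly constant objects: $W$ does by \cref{def:enriched segal}, $W^{op}$ does since $(-)^{op}$ in \ref{eq:op} only acts on the simplicial variable, and the property is stable under products as $\Und_d$ preserves products and essential surjectivity is preserved under products. Since $S \subseteq S_{CSS,disc}$, every map in $S$ is geometrically contractible by \cref{lemma:contractible thetan}, so in particular $S$ has contractible codomains (\cref{def:contractible realization}). Hence \cref{prop:fibrancies} applies to the $\cD$-left fibration $\Tw(W) \to W^{op}\times W$. By the equivalence $(1)\Leftrightarrow(5)$ of \cref{prop:fibrancies}, $\Tw(W)\to W^{op}\times W$ is an $S$-localized $\cD$-left fibration if and only if for every $\{(x,y)\} : D[0] \to W^{op}\times W$ the $\cD$-space $\Val(\Fib_{(x,y)}\Tw(W))$ is $S$-local. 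Using the weak constancy of $W^{op}\times W$ we may assume $(x,y)$ is a genuine pair of objects of $W$, and then \cref{prop:twisted} identifies $\Fib_{(x,y)}\Tw(W)$ with $\map_W(x,y)$, a $\cD$-space, so $\Val(\Fib_{(x,y)}\Tw(W)) \simeq \map_W(x,y)$, which is $S$-local by hypothesis. Therefore $\Tw(W) \to W^{op}\times W$ is an $S$-localized $\cD$-left fibration.
\end{proof}
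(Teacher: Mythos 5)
Your proof is correct and takes essentially the same route as the paper: the paper simply cites \cref{prop:twisted} for the first sentence and \cref{cor:twisted loc} for the second, and \cref{cor:twisted loc} in turn applies \cref{prop:fibrancies} to the fibers exactly as you do. You unfold the argument one level further (directly verifying weak constancy of $W^{op}\times W$ and contractibility of codomains of $S$), which is a minor but genuine clarification since \cref{cor:twisted loc} as stated asks for $W$ to be an $S$-localized $\sP(\cD)$-enriched Segal space — a nominally stronger hypothesis (it requires $\Val(W)$ homotopically constant per \cref{def:sloc spd enriched css}) than the one in \cref{prop:twisted infn}; your direct appeal to \cref{prop:fibrancies} sidesteps that mismatch cleanly.
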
 

\begin{proof}
	The first sentence follows from \cref{prop:twisted}, the second from \cref{cor:twisted loc}.
\end{proof}

In the particular case of (Segal) $(\infty,n+1)$-category gives us the following result.

\begin{corone}  \label{cor:twisted cocart}
	If $\C$ is a Segal $(\infty,n+1)$-category, then $\Tw(\C) \to \C^{op} \times \C$ is a Segal $(\infty,n)$-coCartesian fibration, which is an $(\infty,n)$-coCartesian fibration if $\C$ is an $(\infty,n+1)$-category.
\end{corone}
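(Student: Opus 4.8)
The plan is to obtain the statement as an essentially immediate consequence of \cref{prop:twisted infn}, the only real work being to match the notions of ``(Segal) $(\infty,n+1)$-category'' with the appropriate localizing class and to transfer $S$-locality from $\C$ to its mapping $\cD$-spaces.

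First I would observe that in both cases $\C$ is, in particular, a level-wise Segal space with weakly constant objects, i.e. an $\sP(\cD)$-enriched Segal space in the sense of \cref{def:enriched segal}. Hence \cref{prop:twisted infn} applies and yields that $\Tw(\C) \to \C^{op} \times \C$ is an $(\infty,n)$-left fibration whose fiber over an object $(x,y)$ of $\C^{op}\times\C$ is the mapping $\cD$-space $\map_\C(x,y)$ (\cref{def:map}). Note that $\C^{op}$ is again an $\sP(\cD)$-enriched Segal space (its level-wise opposite, cf. \ref{eq:op} and \cref{lemma:op}) and that $\C^{op}\times\C$ has weakly constant objects, so the construction of the twisted arrow $\cD$-simplicial space and the identification of its fibers are legitimate; all of this is already built into \cref{prop:twisted infn}.

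Second, I would identify the fibers with (Segal) $(\infty,n)$-categories. By \cref{the:cso in inftyn} (resp. by the definition of a Segal $(\infty,n+1)$-category, cf. \cref{rem:dk infn} and the table \ref{eq:inftyn categories}), the hypothesis that $\C$ is an $(\infty,n+1)$-category (resp. a Segal $(\infty,n+1)$-category) says exactly that $\C$ is an $S$-localized $\sP(\cD)$-enriched complete Segal space (resp. Segal space) for $S = S_{CSS,disc}$ (resp. $S = S_{Seg,disc}$), where the $S$-localized injective model structure on $\cD$-spaces is the chosen model for $(\infty,n)$-categories (resp. Segal $(\infty,n)$-categories). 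By \cref{def:sloc spd enriched css} together with \cref{lemma:fiber vs global}, this is equivalent to each $\map_\C(x,y)$ being $S$-local, i.e. to each $\map_\C(x,y)$ being an injectively fibrant $S$-local $\cD$-space, which is precisely an $(\infty,n)$-category (resp. a Segal $(\infty,n)$-category).

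Finally, since $S_{CSS,disc}$ and $S_{Seg,disc}$ are contained in $S_{CSS,disc}$, which is geometrically contractible by \cref{lemma:contractible thetan}, the ``moreover'' part of \cref{prop:twisted infn} applies with this choice of $S$ and shows that $\Tw(\C) \to \C^{op}\times\C$ is an $S$-localized $\cD$-left fibration; unwinding \cref{not:left fib} this is exactly the assertion that it is an $(\infty,n)$-coCartesian fibration (resp. a Segal $(\infty,n)$-coCartesian fibration). Alternatively one may finish by invoking \cref{prop:fibrancies nfib}, using that $\C^{op}\times\C$ has weakly constant objects and that $\Tw(\C) \to \C^{op}\times\C$ is an $(\infty,n)$-left fibration which is fiber-wise $S$-local. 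I do not expect any step to be a genuine obstacle: the corollary is a repackaging of \cref{prop:twisted infn}, and the only care needed is the bookkeeping of the second and third paragraphs, namely translating between the notions of $(\infty,n+1)$-category and their localizing sets and then passing $S$-locality from $\Val_1(\C)$ (equivalently from $\C$) to the individual mapping $\cD$-spaces.
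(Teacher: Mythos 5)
Your proposal is correct and follows exactly the route the paper intends: the corollary has no separate proof in the paper and is stated as a "particular case" of \cref{prop:twisted infn}, which is precisely your reading. You spell out the bookkeeping more explicitly than the paper does — in particular the translation, via \cref{the:cso in inftyn}, \cref{def:sloc spd enriched css}, and \cref{lemma:fiber vs global}, of the hypothesis "$\C$ is a (Segal) $(\infty,n+1)$-category" into the statement that each $\map_\C(x,y)$ is $S$-local for $S = S_{CSS,disc}$ (resp.\ $S_{Seg,disc}$) — but this is exactly what the paper takes for granted.

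Two small remarks that do not affect correctness. First, you note geometric contractibility of $S$ via \cref{lemma:contractible thetan}; this is unnecessary for invoking \cref{prop:twisted infn}, whose "moreover" clause only asks that $S \subseteq S_{CSS,disc}$ and that the mapping $\cD$-spaces be $S$-local, although it does no harm to record it. Second, your alternative finish via \cref{prop:fibrancies nfib} is arguably cleaner than relying on the "moreover" clause of \cref{prop:twisted infn}, since the latter's proof in the paper invokes \cref{cor:twisted loc}, whose stated hypotheses (in particular that $\Val(W)$ be homotopically constant) are formally stronger than those of \cref{prop:twisted infn} itself; passing directly through \cref{prop:fibrancies nfib} sidesteps that minor mismatch. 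Either way the argument is sound.
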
 

Finally, we can use the $(\infty,n)$-twisted arrow construction to study the source and target fibrations. 

\begin{propone} \label{prop:target infn}
	Let $W$ be an $\sP(\cD)$-enriched Segal space, then the target fibration $\Tw(W) \to W$ (\cref{ex:target fibration}) is an $(\infty,n+1)$-left fibration and the source fibration $\Tw(W)^{op} \to W$ (\cref{ex:target fibration}) is an $(\infty,n+1)$-right fibration. Moreover, if $S \subseteq S_{CSS,disc}$ and for all objects $x,y$ the $\cD$-space $\map_W(x,y)$ is $S$-localized, then the target fibration $\Tw(W) \to W$ (source fibration $\Tw(W)^{op} \to W$) is an $S$-localized $(\infty,n+1)$-left fibrations ($S$-localized $(\infty,n+1)$-right fibration). 
\end{propone}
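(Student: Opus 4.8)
The plan is to reduce everything to the fiber-wise criterion for $S$-localized fibrations. For the unlocalized statement this is immediate: by \cref{ex:target fibration} the composite $\Tw(W)\to W^{op}\times W\to W$ is a $\cD\times\DD$-left fibration, and by \cref{ex:source fibration} the composite $\Tw(W)^{op}\to W$ is a $\cD\times\DD$-right fibration. Since $\cD\times\DD=\Theta_k\times\DD^{(n+1)-k}$ is one of the admissible presentations of $(\infty,n+1)$-categories, a $\cD\times\DD$-left (right) fibration is by definition (\cref{not:left fib} with $S=\emptyset$) an $(\infty,n+1)$-left (right) fibration, which gives the first sentence.

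For the localized statement I would first observe that when $W$ is viewed as a constant simplicial object in $\cD$-simplicial spaces — that is, as the $\cD\times\DD$-simplicial space constant in the new $\DD$-direction — it has weakly constant objects, since $W$ already has weakly constant objects as an $\sP(\cD)$-enriched Segal space and the new simplicial operators are identities. Next, by \cref{ex:target fibration} the fiber of $\Tw(W)\to W$ over an object $x$ is the level-wise under-Segal space $W_{x/}$ (again constant in the new direction). By \cref{the:undercat left} the map $W_{x/}\to W$ is a $\cD$-left fibration, and its fiber over an object $y$ is the mapping $\cD$-space $\map_W(x,y)$ (\cref{def:under segal}, \cref{def:map}), which is $S$-local by hypothesis. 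Because $S\subseteq S_{CSS,disc}$ is geometrically contractible (\cref{lemma:contractible thetan}), \cref{prop:fibrancies} applies and shows that $W_{x/}\to W$ is an $S$-localized $\cD$-left fibration; in particular $\Val(W_{x/})$ — which is the value of the fiber of $\Tw(W)\to W$ over $x$ — is $S$-local. Thus $\Tw(W)\to W$ is an $(\infty,n+1)$-left fibration which is fiber-wise $S$-local, and \cref{prop:fibrancies nfib} (applied with diagram category $\cD\times\DD$) then gives that it is an $S$-localized $(\infty,n+1)$-left fibration. The assertion for the source fibration follows by the same argument, using \cref{ex:source fibration} and \cref{lemma:opposite fib} and the fact that the fiber of $\Tw(W)^{op}\to W$ over $x$ is the over-Segal space $W_{/x}$, whose fibers are the mapping $\cD$-spaces $\map_W(y,x)$, again $S$-local by hypothesis. (Alternatively, the localized half can be quoted directly from \cref{cor:target loc} once $S$ is known to be geometrically contractible.)

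I expect the main obstacle to be essentially bookkeeping: keeping straight the two roles of the simplicial direction — the Reedy/space direction built into the notation $\sP(-)$ versus the new simplicial direction that enlarges $\cD$ to $\cD\times\DD$ — and verifying that "$S$-local" for the constant-in-the-new-direction extension of a $\cD$-space agrees with "$S$-local" for the honest $\cD$-space, so that the fiber computation of \cref{ex:target fibration} can be fed into \cref{prop:fibrancies nfib}. A secondary subtlety is that \cref{prop:fibrancies} and \cref{prop:yoneda s} are phrased for $S$-localized $\sP(\cD)$-enriched Segal spaces, whereas here we assume only that each mapping $\cD$-space $\map_W(x,y)$ is $S$-local; inspection of their proofs shows that only the $S$-locality of the fibers $\map_W(x,y)$ is used, so this weaker hypothesis suffices and no completion of $W$ is needed. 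Everything else is a direct chain of citations.
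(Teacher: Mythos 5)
Your proposal is correct and follows the same route as the paper: the unlocalized statement is cited to Examples \ref{ex:target fibration} and \ref{ex:source fibration}, and the localized statement is cited to Corollary \ref{cor:target loc} (which you note at the end as the "alternative"). The only difference is that you unpack the proof of Corollary \ref{cor:target loc} rather than quoting it, which is fine but adds no content beyond that corollary; one small inaccuracy in your closing remark is that Proposition \ref{prop:fibrancies} is stated for $\cD$-simplicial spaces with weakly constant objects rather than for $S$-localized $\sP(\cD)$-enriched Segal spaces, so the "secondary subtlety" you raise about weakening its hypothesis does not actually arise there (it does arise for Proposition \ref{prop:yoneda s}, but that result is not needed here).
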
 

\begin{proof}
  The first sentence follows from \cref{ex:target fibration} and \cref{ex:source fibration}. The second from \cref{cor:target loc}.
\end{proof}

This, in particular, gives us the following concrete result.

\begin{corone} \label{cor:target infn}
	If $\C$ is a Segal $(\infty,n)$-category, then the target fibration $\Tw(\C) \to \C$ is a Segal $(\infty,n+1)$-coCartesian fibration which is an $(\infty,n+1)$-coCartesian fibration if $\C$ is an $(\infty,n)$-category. Similarly, the source fibration $\Tw(\C)^{op} \to \C$ is a Segal $(\infty,n+1)$-Cartesian fibration, which is a $(\infty,n+1)$-Cartesian fibration if $\C$ is an $(\infty,n)$-category.
\end{corone}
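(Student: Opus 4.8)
The plan is to obtain this as an essentially immediate consequence of \cref{prop:target infn}, with the fiberwise criterion \cref{prop:fibrancies nfib} and the dictionary \cref{not:left fib} doing the remaining bookkeeping. First I would observe that a (Segal) $(\infty,n)$-category $\C$ is in particular an $\sP(\cD)$-enriched Segal space, so the first sentence of \cref{prop:target infn} already gives that the target fibration $\Tw(\C)\to\C$ is an $(\infty,n+1)$-left fibration and the source fibration $\Tw(\C)^{op}\to\C$ is an $(\infty,n+1)$-right fibration. It then remains only to identify which localization class each of these fibrations lands in — equivalently, via \cref{prop:fibrancies nfib}, to identify the fibers.

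Second, I would record the fibers. By \cref{ex:target fibration} the fiber of $\Tw(\C)\to\C$ over an object $x$ is the level-wise under-Segal space $\C_{x/}$, and dually, by \cref{ex:source fibration}, the fiber of $\Tw(\C)^{op}\to\C$ over $x$ is the level-wise over-Segal space $\C_{/x}$. The content of \cref{the:yoneda} (resp.\ \cref{cor:yoneda contra}) is precisely that these are the $(\infty,n+1)$-covariant (resp.\ contravariant) fibrant replacements of $x$; combined with \cref{prop:doubecat} and the fact that under- and over-Segal spaces inherit the Segal, discreteness, and (when present) completeness conditions, this shows $\C_{x/}$ and $\C_{/x}$ are (Segal) $(\infty,n+1)$-categories as soon as $\C$ is one. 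Alternatively one can stay closer to \cref{prop:target infn}: by \cref{lemma:fiber vs global}, $\C$ being a (Segal) $(\infty,n)$-category makes every mapping $\cD$-space $\map_\C(x,y)$ local with respect to the appropriate geometrically contractible set $S\subseteq S_{CSS,disc}$ (namely $S_{Seg,disc}$, resp.\ $S_{CSS,disc}$, which is geometrically contractible by \cref{lemma:contractible thetan}), which is exactly the hypothesis of the second half of \cref{prop:target infn}; this yields that $\Tw(\C)\to\C$ is an $S$-localized $(\infty,n+1)$-left fibration and $\Tw(\C)^{op}\to\C$ an $S$-localized $(\infty,n+1)$-right fibration.

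Third, I would close by translating through \cref{not:left fib}: an $S_{Seg,disc}$-localized $(\infty,n+1)$-left fibration is by definition a Segal $(\infty,n+1)$-coCartesian fibration and an $S_{CSS,disc}$-localized one an $(\infty,n+1)$-coCartesian fibration, while an $S_{Seg,disc}$- (resp.\ $S_{CSS,disc}$-) localized $(\infty,n+1)$-right fibration is a Segal $(\infty,n+1)$-Cartesian (resp.\ $(\infty,n+1)$-Cartesian) fibration. Matching the completeness hypothesis on $\C$ to the Segal-versus-non-Segal dichotomy then produces all four assertions.

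I do not expect a genuine obstacle here, since every ingredient has already been established; the only thing requiring real care is the bookkeeping of categorical level. Here $\C$ and $\Tw(\C)$ are regarded as constant objects in $\cD$-simplicial spaces, so the ambient fibration theory is that of $\cD\times\DD$-left fibrations rather than $\cD$-left fibrations, and a localizing set of cofibrations of $\cD$-spaces must be read off correctly against this shifted indexing so that the Segal and completeness conditions on the fibers $\C_{x/}$, $\C_{/x}$ line up with the claimed (Segal) $(\infty,n+1)$-(co)Cartesian conclusions.
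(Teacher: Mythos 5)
Your second route is exactly the paper's (implicit) argument: a (Segal) $(\infty,n)$-category $\C$ is in particular an $\sP(\cD)$-enriched Segal space whose mapping $\cD$-spaces $\map_\C(x,y)$ are local with respect to $S_{Seg,disc}$ (resp.\ $S_{CSS,disc}$) by \cref{lemma:fiber vs global}; these sets are geometrically contractible by \cref{lemma:contractible thetan}, so the second sentence of \cref{prop:target infn} applies, and \cref{not:left fib} then translates ``$S_{Seg,disc}$-localized $(\infty,n+1)$-left fibration'' into ``Segal $(\infty,n+1)$-coCartesian fibration'' (and likewise for the complete and for the dual right-fibration variants). Your first route, though, leans on a slightly misdirected citation: \cref{prop:doubecat} concerns the \emph{total space} of an $S$-localized $\cD$-covariant fibration, not its fibers, so it cannot by itself deliver that $\C_{x/}$ and $\C_{/x}$ are (Segal) $(\infty,n+1)$-categories; the fiberwise statement you want is really the ``only if'' direction of \cref{cor:fibrancies nfib} together with some argument that the under/over-Segal spaces inherit the Segal, discreteness and (when present) completeness conditions, which the paper does not isolate as a standalone lemma. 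Since your second route is self-contained and coincides with the proposition-to-corollary specialization the paper intends, the proof goes through; you are also right that the level shift (working in $\cD\times\DD$-left fibrations with a localizing set read off at the $\cD$-level) is the one place where real care is needed, and the paper leaves this bookkeeping implicit just as you do.
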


We now move on to invariance properties.

\begin{theone} \label{the:invariance property}
 Let $g: X \to Y$ be a map of $\cD$-simplicial spaces. Then the adjunction 
 \begin{center}
 	\adjun{(\sP(\cD\times\DD)_{/X})^{(cov_{(\infty,n)})_S}}{(\sP(\cD\times\DD)_{/Y})^{(cov_{(\infty,n)})_S}}{g_!}{g^*}
 \end{center}
 is a Quillen adjunction, where both sides have $S$-localized $(\infty,n)$-covariant model structures. Moreover, the adjunction is a Quillen equivalence under the following conditions:
 \begin{itemize}
 	\item If $g$ is a level-wise complete Segal space equivalence.
 	\item If $g$ is a Dwyer-Kan equivalence and $X,Y$ have weakly constant objects.
 	\item If $g$ is an $S$-localized Dwyer-Kan equivalence and $X$ and $Y$ have weakly constant objects and Cartesian mapping spaces (\cref{def:cartesian spaces}).
 	\item If $g$ is an $S$-localized $\sP(\cD)$-enriched complete Segal space equivalence and $X,Y$ have weakly constant objects and Cartesian mapping spaces.
 \end{itemize}
\end{theone}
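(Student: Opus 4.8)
\emph{Proof proposal.} Throughout, $\cD = \Theta_k \times \DD^{n-k}$ has the terminal object $([0],\dots,[0])$, and since $S \subseteq S_{CSS,disc}$, every map in $S$ is geometrically contractible by \cref{lemma:contractible thetan}; in particular $S$ is a geometrically contractible set of cofibrations in the sense of \cref{def:contractible realization}. The plan is to deduce the Quillen adjunction statement and the first two bullet points from the unlocalized/level-wise invariance results, and the last two bullet points from the refined invariance theorem \cref{the:cdcov s invariant s equiv}, after verifying its hypotheses. First, the $S$-localized $(\infty,n)$-covariant model structure is by definition the $S$-localized $\cD$-covariant model structure of \cref{the:cdcov s}, so \cref{the:dcov invariant css s} applies verbatim: for any map $g\colon X \to Y$ the adjunction $(g_!, g^*)$ is an $\sP(\cD)$-enriched Quillen adjunction, and it is a Quillen equivalence whenever $g$ is a level-wise complete Segal space equivalence. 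This gives the general Quillen adjunction claim and the first bullet point with no further work.

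For the second bullet, suppose $X$ and $Y$ have weakly constant objects and $g$ is a Dwyer-Kan equivalence, i.e.\ the induced functor $\C_g\colon \C_X \to \C_Y$ of \cref{lemma:zigzag strictification} is a Dwyer-Kan equivalence of $\sP(\cD)$-enriched categories. I would use the zigzag of \cref{lemma:zigzag strictification}, which is natural in $X$, to obtain a ladder
\[
X \longleftarrow \cC_\cD X \longrightarrow \cF_\cD X \longleftarrow N_\cD\C_X,\qquad
Y \longleftarrow \cC_\cD Y \longrightarrow \cF_\cD Y \longleftarrow N_\cD\C_Y,
\]
compatible with $g$ and $N_\cD\C_g$, in which every horizontal map is a level-wise complete Segal space equivalence. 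By the first bullet, each horizontal map induces a Quillen equivalence of $S$-localized $(\infty,n)$-covariant model structures on the relevant over-categories. Moreover, by \cref{ex:enriched equiv is levelwise css equiv} together with \cref{lemma:dk levelwise Segal}, the Dwyer-Kan equivalence $\C_g$ makes $N_\cD\C_g$ a level-wise complete Segal space equivalence, so it too induces a Quillen equivalence. Two-out-of-three for Quillen equivalences then forces $(g_!,g^*)$ to be a Quillen equivalence over $X$.

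For the third and fourth bullets, suppose in addition that $X$ and $Y$ have Cartesian mapping $\cD$-spaces (\cref{def:cartesian spaces}), and that $g$ is an $S$-localized Dwyer-Kan equivalence, respectively an $S$-localized $\sP(\cD)$-enriched complete Segal space equivalence. Here the goal is to invoke \cref{the:cdcov s invariant s equiv}, whose hypotheses are: $\cD$ has a terminal object (yes), $X$ and $Y$ have weakly constant objects (assumed), $S$ is geometrically contractible (established above), and $g$ is a map of $\cD$-simplicial spaces with Cartesian mapping $\cD$-spaces and $S$-stable objects. The one input not immediately available is $S$-stability, and this is exactly where the structure of $\cD$ and $S$ enters: since $S \subseteq S_{CSS,disc}$, \cref{lemma:thetan stable} shows that every $\sP(\cD)$-enriched category — in particular $\C_X$ and $\C_Y$ — has $S$-stable objects, hence so do $X$ and $Y$ in the sense of \cref{def:X cart and stable}. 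With all hypotheses verified, \cref{the:cdcov s invariant s equiv} yields that $(g_!,g^*)$ is a Quillen equivalence under either equivalence condition (these two conditions coincide for such $X,Y$ by \cref{lemma:s loc dk is spd enriched css}), completing the proof. The main obstacle is the bookkeeping of the third step: confirming that $S$-stability genuinely holds for all of $S_{CSS,disc}$ — i.e.\ that the argument of \cref{lemma:thetan stable} covers the discreteness morphisms as well as the Segal and completeness morphisms — and, for the second step, that the strictification zigzag is natural enough to set up the two-out-of-three argument.
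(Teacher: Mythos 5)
Your proposal follows essentially the same route as the paper's proof: the Quillen adjunction and the first bullet come straight from \cref{the:dcov invariant css s}; the second bullet is reduced to the first by converting a Dwyer-Kan equivalence into a level-wise complete Segal space equivalence; and the last two bullets are handled by feeding the hypotheses into \cref{the:cdcov s invariant s equiv}, with $S$-stability supplied by \cref{lemma:thetan stable}. The one place you genuinely diverge is in the second bullet: the paper invokes \cref{lemma:dk equiv segal enriched} directly, which strictly speaking is stated only for maps of $\sP(\cD)$-enriched Segal spaces, whereas the theorem's hypothesis on $X,Y$ is merely weakly constant objects. Your detour through the strictification zigzag of \cref{lemma:zigzag strictification} together with \cref{ex:enriched equiv is levelwise css equiv} and two-out-of-three for Quillen equivalences fills this gap explicitly, and is in that sense more careful than the paper's terse phrasing, at the cost of a somewhat longer argument.

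Your closing worry is well-placed and worth emphasizing: \cref{lemma:thetan stable} is stated and proved only under the hypothesis $S \subseteq S_{CSS}$ (the proof checks Segal and completeness morphisms and says nothing about the discreteness morphisms in $S_{disc}$), yet the subsection's standing assumption is $S \subseteq S_{CSS,disc}$ and both \cref{the:invariance property} and \cref{cor:invariance property} are applied with $S = S_{Seg,disc}$ or $S = S_{CSS,disc}$. The paper's own proof, by writing ``Cartesian mapping spaces are already $S$-stable, by \cref{lemma:thetan stable}'', invokes the lemma outside its stated hypotheses in exactly the way you flag. So this is not a defect of your write-up specifically; it is a real gap shared with the source, and an explicit check that the discreteness cofibrations do not disturb the bijection on equivalence classes of objects in \cref{def:stable objects} (perhaps along the lines of \cref{lemma:cart css comp}, which handles the analogous question for the Cartesian mapping space condition) would be needed to close it.
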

 
\begin{proof}
	The fact that $(g_!,g^*)$ is an adjunction is proven in  \cref{the:dcov invariant css s}. Now, if $g$ is a level-wise complete Segal space equivalence, then $(g_!,g^*)$ is a Quillen equivalence by \cref{the:dcov invariant css s}. If $g$ is a Dwyer-Kan equivalence between $\cD$-simplicial spaces with weakly constant objects, then it is a level-wise complete Segal space equivalence by \cref{lemma:dk equiv segal enriched} and so the result follows. 
	
	Finally, if $g$ is an $S$-localized Dwyer-Kan equivalence or $S$-localized $\sP(\cD)$-enriched complete Segal space equivalence between $\cD$-simplicial spaces with weakly constant objects and Cartesian mapping spaces, then the result follows from \cref{the:cdcov s invariant s equiv}. Notice, here we used the fact that Cartesian mapping spaces are already $S$-stable, by \cref{lemma:thetan stable}. This covers the last two items and hence we are done.
\end{proof}

\begin{remone}
	Most assumptions on the localizing set $S$ that we used in \cref{sec:localized} are already satisfied by the localizing maps we are using in this section (\cref{not:infn notation}). \cref{the:invariance property} is in fact an exception where the Cartesian mapping space assumption is necessary and does not always hold. For a concrete counter example using $3$-fold complete Segal spaces see \cref{ex:noncartesian mapping spaces not invariant}.
\end{remone}

In the case of Segal $(\infty,n)$-categories we can simplify the statement.

\begin{corone} \label{cor:invariance property}
	Let $g: \C \to \D$ be Dwyer-Kan equivalence of Segal $(\infty,n+1)$-categories. Then the adjunction 
	 \begin{center}
		\adjun{(\sP(\cD\times\DD)_{/\C})^{(Seg)coCart_{(\infty,n)}}}{(\sP(\cD\times\DD)_{/\D})^{(Seg)coCart_{(\infty,n)}}}{g_!}{g^*}
	\end{center}
	is a Quillen equivalence if $g$ is a Dwyer-Kan equivalence of $(\infty,n+1)$-categories (\cref{rem:dk infn}). 
	Here both sides have the (Segal) $(\infty,n)$-coCartesian model structure.
\end{corone}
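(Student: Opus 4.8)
The plan is to read off \cref{cor:invariance property} as the special case of the invariance theorem \cref{the:invariance property} in which the base $\cD$-simplicial spaces are (Segal) $(\infty,n+1)$-categories and the localizing set $S$ is $S_{Seg,disc}$ (for the Segal $(\infty,n)$-coCartesian model structure) or $S_{CSS,disc}$ (for the $(\infty,n)$-coCartesian model structure). The Quillen adjunction part is immediate: $(\infty,n)$-left fibrations are stable under pullback and $g_!$ preserves monomorphisms, so this is already covered by \cref{the:dcov invariant css s}. Hence the real work is to verify the hypotheses of \cref{the:invariance property} that promote this to a Quillen equivalence.

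First I would observe that $\C$ and $\D$, being (Segal) $(\infty,n+1)$-categories, are by \cref{the:cso in inftyn} (localized) $\sP(\cD)$-enriched (complete) Segal spaces, hence $\cD$-simplicial spaces with weakly constant objects. For the sharper bullets of \cref{the:invariance property} I would check that $\C$ and $\D$ have Cartesian mapping spaces in the sense of \cref{def:cartesian spaces}: their mapping $\cD$-spaces $\map(c,c')$ are (Segal) $(\infty,n)$-categories, in particular local with respect to $S_{Seg,disc}$, so \cref{lemma:cart css comp} furnishes exactly the required equivalences $A\times\map(c,c') \to A\times R\map(c,c')$ in the model structure for $(\infty,n)$-categories, where $R$ is a fibrant replacement. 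By \cref{lemma:thetan stable} these categories moreover have $S$-stable objects, so no further assumption is needed.

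Next I would match up the two notions of equivalence. By \cref{rem:dk infn}, together with \cref{cor:fib dk} and \cite[Proposition 8.17]{bergnerrezk2020comparisonii}, a Dwyer--Kan equivalence of (Segal) $(\infty,n+1)$-categories is essentially surjective and restricts on each mapping $\cD$-space to a Dwyer--Kan equivalence of (Segal) $(\infty,n)$-categories; unwinding this recursively identifies $g$ with an $S$-localized Dwyer--Kan equivalence in the sense of \cref{def:s loc dk} (equivalently, by \cref{lemma:s loc dk is spd enriched css}, an $S$-localized $\sP(\cD)$-enriched complete Segal space equivalence). Feeding this into the third or fourth bullet of \cref{the:invariance property} gives that $(g_!,g^*)$ is a Quillen equivalence of $S$-localized $(\infty,n)$-covariant model structures, which for $S=S_{Seg,disc}$ is the Segal $(\infty,n)$-coCartesian model structure and for $S=S_{CSS,disc}$ the $(\infty,n)$-coCartesian model structure.

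The main obstacle I expect is the bookkeeping in the last two steps. Verifying the Cartesian mapping space condition is where the Cartesian closedness of the $n$-complete Segal space localization (via \cref{lemma:cart css comp}) is doing real work, and it genuinely fails without such an assumption, as the counterexample \cref{ex:noncartesian mapping spaces not invariant} shows, so one must be careful that $S\subseteq S_{CSS,disc}$ is precisely what makes it go through. The second delicate point is checking that a Dwyer--Kan equivalence of (Segal) $(\infty,n+1)$-categories really translates into an $S$-localized Dwyer--Kan equivalence for the $S$ attached to the chosen fibration model structure in the non-complete (Segal) variant; here, if a direct matching is awkward, I would instead complete $\C$ and $\D$, apply the complete case of \cref{the:invariance property}, and conclude by $2$-out-of-$3$ for Quillen equivalences rather than invoking the theorem verbatim.
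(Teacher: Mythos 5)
Your proof matches the paper's own: it invokes the third bullet of \cref{the:invariance property} after checking that $\C$ and $\D$, being (Segal) $(\infty,n+1)$-categories, have weakly constant objects and, via \cref{lemma:cart css comp}, Cartesian mapping spaces. The paper's proof is a two-line version of the same reduction (the $S$-stability check you add from \cref{lemma:thetan stable} is already folded into the proof of \cref{the:invariance property} itself, so it is harmless but redundant to restate). Your remark at the end that matching a Dwyer--Kan equivalence of Segal $(\infty,n+1)$-categories against the $S_{Seg,disc}$-localized Dwyer--Kan condition requires some care is an apt observation; \cref{rem:dk infn} identifies such Dwyer--Kan equivalences with $S_{CSS,disc}$-localized ones, which plugs directly into the third bullet for the $(\infty,n)$-coCartesian model structure, and the Segal variant is where your proposed completion-and-$2$-out-of-$3$ fallback would be invoked. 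The paper's proof does not spell this out either, so you are, if anything, being slightly more careful than the source.
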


\begin{proof}
	By definition $\C,\D$ have weakly constant objects and by \cref{lemma:cart css comp} $\C$ and $\D$ have Cartesian mapping spaces. So, $g$ satisfies the third condition of \cref{the:invariance property}.
\end{proof}

Having discussed invariance we move on to various exponentiability properties. First we can use \cref{the:rfib pb cov s} to get the following result with regard to exponentiability of $S$-localized $(\infty,n)$-left fibrations.

\begin{theone} \label{the:rfib pb cov infn}
	Let $S \subseteq S_{CSS}$ and let $X$ be a $\cD$-simplicial space with weakly constant objects. Then for any $(\infty,n)$-right fibration $R \to X$, the adjunction 
	\begin{center}
		\adjun{(\sP(\cD\times\DDelta)_{/X})^{(cov_{(\infty,n)})_S}}{(\sP(\cD\times\DDelta)_{/X})^{(cov_{(\infty,n)})_S}}{p_!p^*}{p_*p^*}
	\end{center}
	is an $\sP(\cD)$-enriched Quillen adjunction. Here both sides have the $S$-localized $(\infty,n)$-covariant model structure.
\end{theone}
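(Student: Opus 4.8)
The plan is to deduce \cref{the:rfib pb cov infn} directly from \cref{the:rfib pb cov s} by verifying that the relevant hypotheses on the localizing set $S$ are met in the $(\infty,n)$-categorical context. Recall that \cref{the:rfib pb cov s} (that is, \cref{the:rfib pb cov infn}'s abstract counterpart) requires $\cD$ to be a small category with terminal object, $S$ to be a \emph{Cartesian} set of cofibrations of $\cD$-spaces with \emph{contractible codomain}, and $X$ to be a $\cD$-simplicial space with weakly constant objects; under these hypotheses it produces the desired $\sP(\cD)$-enriched Quillen adjunction $(p_!p^*, p_*p^*)$ on the $S$-localized $\cD$-covariant model structure over $X$. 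So the strategy is: first observe that $\cD = \Theta_k \times \DD^{n-k}$ has a terminal object (as already noted at the start of \cref{subsec:infn fib}); second, invoke \cref{lemma:contractible thetan} to conclude that every cofibration in $S_{CSS}$ (indeed in $S_{CSS,disc}$) is geometrically contractible, and in particular each such $S$ is geometrically contractible in the sense of \cref{def:contractible realization}, which means it consists of geometric equivalences with contractible codomain — so $S$ has contractible codomains; third, invoke \cref{rem:cartesian loc}, which states that for $S \subseteq S_{CSS,disc}$ the $S$-localized injective model structure on $\cD$-spaces is Cartesian precisely when $S \subseteq S_{CSS}$, giving us that $S$ is Cartesian (\cref{def:cartesian s}) under the hypothesis $S \subseteq S_{CSS}$.

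With these three observations in hand, all hypotheses of \cref{the:rfib pb cov s} are satisfied: $\cD$ has a terminal object, $S$ is a Cartesian set of cofibrations with contractible codomain, and $X$ is assumed to have weakly constant objects. Thus \cref{the:rfib pb cov s} applies verbatim and yields that the adjunction
\[
\adjun{(\sP(\cD\times\DDelta)_{/X})^{(cov_{(\infty,n)})_S}}{(\sP(\cD\times\DDelta)_{/X})^{(cov_{(\infty,n)})_S}}{p_!p^*}{p_*p^*}
\]
is an $\sP(\cD)$-enriched Quillen adjunction, where both sides carry the $S$-localized $\cD$-covariant model structure. Here we also use \cref{not:left fib}, which identifies the $S$-localized $\cD$-covariant model structure over $X$ (for $S \subseteq S_{CSS}$) with the $S$-localized $(\infty,n)$-covariant model structure, so that the statement of \cref{the:rfib pb cov infn} is just a translation of \cref{the:rfib pb cov s} into the terminology of this section.

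I do not anticipate a genuine obstacle here, as the argument is essentially a bookkeeping exercise matching hypotheses; the one point requiring a small amount of care is making sure that the Cartesian condition really does hold, i.e.\ that we genuinely need $S \subseteq S_{CSS}$ rather than merely $S \subseteq S_{CSS,disc}$ — this is exactly the content of \cref{rem:cartesian loc} and is why the discreteness maps must be excluded. The contravariant analogue (for $(\infty,n)$-right fibrations $R \to X$ the same statement with the $(\infty,n)$-contravariant model structure) follows by the dual version of \cref{the:rfib pb cov s} and the evident opposite-category symmetry recorded in \cref{lemma:opposite fib}.
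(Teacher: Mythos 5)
Your proposal is correct and takes essentially the same approach as the paper: the paper itself simply cites \cref{the:rfib pb cov s} without a displayed proof, and the work you do — verifying via \cref{lemma:contractible thetan} that $S$ is geometrically contractible (hence has contractible codomains) and via \cref{rem:cartesian loc} that $S \subseteq S_{CSS}$ is Cartesian — is precisely the bookkeeping the paper has already put in place at the start of \cref{subsec:infn fib}. The only superfluous remark is the final sentence about the contravariant analogue, which is not part of this particular statement, but that does no harm.
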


This gives us the following valuable case for $(\infty,n)$-categories.

\begin{corone}
	Let $\C$ be a Segal $(\infty,n+1)$-category. Then for any $(\infty,n)$-right fibration $p:\R \to \C$, the adjunction 
	\begin{center}
		\adjun{(\sP(\cD\times\DDelta)_{/\C})^{CScov_{(\infty,n)}}}{(\sP(\cD\times\DDelta)_{/\C})^{CScov_{(\infty,n)}}}{p_!p^*}{p_*p^*}
	\end{center}
	is a Quillen adjunction. Here both sides have the $(\infty,n)$-complete Segal covariant model structure.
\end{corone}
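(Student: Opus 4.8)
The plan is to obtain this corollary as a direct instance of \cref{the:rfib pb cov infn}, the general exponentiability statement for $S$-localized $(\infty,n)$-left fibrations over a base with weakly constant objects. Here the ambient diagram category is $\cD = \Theta_k \times \DD^{n-k}$, so that $\cD\times\DD = \Theta_k\times\DD^{n-k+1}$ is exactly the diagram category used in \cref{subsec:infn cat} to present $(\infty,n+1)$-categories as $\cD$-simplicial spaces, and $\cD$ carries a terminal object.

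First I would verify the hypotheses of \cref{the:rfib pb cov infn} with $S = S_{CSS}$. The condition $S\subseteq S_{CSS}$ holds trivially, and by \cref{not:left fib} the $(\infty,n)$-complete Segal covariant model structure on $\sP(\cD\times\DD)_{/\C}$ is by definition the $S_{CSS}$-localized $(\infty,n)$-covariant model structure $(\sP(\cD\times\DD)_{/\C})^{(cov_{(\infty,n)})_{S_{CSS}}}$. The only substantive point is that the base has weakly constant objects: since $\C$ is a Segal $(\infty,n+1)$-category it is in particular injectively fibrant, satisfies the Segal condition in the top $\DD$-direction level-wise, and satisfies the discreteness/essential-constancy conditions, hence is an $\sP(\cD)$-enriched Segal space in the sense of \cref{def:enriched segal} and therefore has weakly constant objects (\cref{def:weakly constant objects}).

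Then I would simply apply \cref{the:rfib pb cov infn} to the given $(\infty,n)$-right fibration $p:\R\to\C$, taking $X = \C$ and $S = S_{CSS}$: the adjunction $(p_!p^*,\, p_*p^*)$ --- which is an adjunction because $p_!\dashv p^*\dashv p_*$ --- is then an $\sP(\cD)$-enriched Quillen adjunction between the $S_{CSS}$-localized $(\infty,n)$-covariant model structures, that is, between the $(\infty,n)$-complete Segal covariant model structures, which is the asserted statement (in fact slightly stronger, since the Quillen adjunction is $\sP(\cD)$-enriched). I do not anticipate a genuine obstacle: all the content is carried by \cref{the:rfib pb cov infn}, and the one step that requires a sentence of justification is the identification of a Segal $(\infty,n+1)$-category with a $\cD$-simplicial space having weakly constant objects, which is immediate from the definitions collected in \cref{subsec:enriched css} and \cref{subsec:infn cat}.
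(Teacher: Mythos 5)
Your proposal is correct and is exactly the argument the paper intends: the corollary is the specialization of \cref{the:rfib pb cov infn} to $S = S_{CSS}$ and $X = \C$, with the only hypothesis to check being that a Segal $(\infty,n+1)$-category, as an $\sP(\cD)$-enriched Segal space over $\cD = \Theta_k\times\DD^{n-k}$, has weakly constant objects, which indeed follows from the constancy condition built into the fibrancy conditions. Your observation that one in fact obtains the slightly stronger conclusion of an $\sP(\cD)$-enriched Quillen adjunction is accurate.
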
 

It is worth explicitly pointing out the Cartesian case that should be the most useful one.

\begin{corone}
	Let $n=k$ and let $\C$ be a Segal $(\infty,n+1)$-category. Then for any $(\infty,n)$-right fibration $p:\R \to \C$, the adjunction 
	\begin{center}
		\adjun{(\sP(\cD\times\DDelta)_{/\C})^{(Seg)coCart_{(\infty,n)}}}{(\sP(\cD\times\DDelta)_{/\C})^{(Seg)coCart_{(\infty,n)}}}{p_!p^*}{p_*p^*}
	\end{center}
	is a Quillen adjunction. Here both sides have the (Segal) $(\infty,n)$-coCartesian model structure.
\end{corone}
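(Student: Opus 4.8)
The plan is to deduce this corollary directly from \cref{the:rfib pb cov infn}, which already establishes that $(p_!p^*, p_*p^*)$ is an $\sP(\cD)$-enriched Quillen adjunction on the $S$-localized $(\infty,n)$-covariant model structure over any $\cD$-simplicial space with weakly constant objects, provided the localizing set satisfies $S \subseteq S_{CSS}$. Hence the whole argument reduces to two bookkeeping points: identifying the (Segal) $(\infty,n)$-coCartesian model structure over $\C$ with such an $S$-localized $(\infty,n)$-covariant model structure for an admissible $S$, and checking that $\C$ has weakly constant objects.

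For the first point, I would note that the hypothesis $n = k$ forces $\cD = \Theta_n \times \DD^{0} = \Theta_n$, so the discreteness localizing set $S_{disc}$ of \cref{not:infn notation} is empty. Therefore $S_{Seg,disc} = S_{Seg}$ and $S_{CSS,disc} = S_{CSS}$, and by the dictionary in \cref{not:left fib} the Segal $(\infty,n)$-coCartesian (resp.\ $(\infty,n)$-coCartesian) model structure over $\C$ is precisely the $S_{Seg}$-localized (resp.\ $S_{CSS}$-localized) $(\infty,n)$-covariant model structure. Since both $S_{Seg}$ and $S_{CSS}$ are contained in $S_{CSS}$, the admissibility hypothesis of \cref{the:rfib pb cov infn} is satisfied in either case. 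For the second point, a Segal $(\infty,n+1)$-category is in particular an $\sP(\cD)$-enriched Segal space (\cref{def:enriched segal}) and hence has weakly constant objects by definition, exactly as already used in the proof of \cref{cor:invariance property}.

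With these identifications in hand, \cref{the:rfib pb cov infn} applied with $X = \C$ and $S \in \{S_{Seg}, S_{CSS}\}$ immediately yields that $(p_!p^*, p_*p^*)$ is a Quillen adjunction for the (Segal) $(\infty,n)$-coCartesian model structures, and the enrichment is inherited because $S \subseteq S_{CSS}$ makes these structures enriched over the $S$-localized injective model structure on $\cD$-spaces (\cref{the:cocart model}, \cref{rem:cartesian loc}). I do not anticipate a genuine obstacle here; the only thing worth spelling out is why the discreteness maps drop out exactly when $n = k$, so that the coCartesian model structures really do sit among the $S$-localized covariant ones with $S \subseteq S_{CSS}$, together with the observation that the $(\infty,n)$-right fibration hypothesis on $p$ is precisely what \cref{the:rfib pb cov infn} requires.
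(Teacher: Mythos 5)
Your proof is correct and follows the same route the paper intends: the corollary is an immediate specialization of \cref{the:rfib pb cov infn}, and the paper itself offers no separate argument, treating it as "worth explicitly pointing out." You correctly identify the key bookkeeping point — that $n=k$ makes $S_{disc}$ empty (\cref{not:infn notation}), so $S_{Seg,disc}=S_{Seg}$ and $S_{CSS,disc}=S_{CSS}$ both lie inside $S_{CSS}$, which is what lets the (Segal) $(\infty,n)$-coCartesian model structure over a Segal $(\infty,n+1)$-category $\C$ (automatically with weakly constant objects by \cref{def:enriched segal}) fall within the scope of \cref{the:rfib pb cov infn}.
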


We now move on to the exponentiability of fibrations of $(\infty,n)$-categories. Combining \cref{the:pullback rfib s nonfib base} and \cref{cor:pullback rfib s nonfib base} gives us the following result.

\begin{theone} \label{the:pullback rfib infn}
	Let $S \subseteq S_{CSS}$ and let $X$ be a $\cD$-simplicial space with weakly constant and $S$-stable objects.
	Let $p:R \to X$ be an $(\infty,n)$-right or $(\infty,n)$-left fibration. Then the adjunctions
	\begin{center}
		\adjun{(\sP(\cD\times\DD)_{/X})^{(lev_{CSS})_S}}{(\sP(\cD\times\DD)_{/R})^{(lev_{CSS})_S}}{p^*}{p_*},
	\end{center}
	\begin{center}
		\adjun{(\sP(\cD\times\DD)_{/X})^{\cD-CSS_S}}{(\sP(\cD\times\DD)_{/R})^{\cD-CSS_S}}{p^*}{p_*}
	\end{center}
	are Quillen adjunctions. Here, in the first adjunction both sides have the $S$-localized level-wise CSS model structure (\cref{prop:levelcss s}) on the over-category and in the second adjunction both sides have the $S$-localized $\sP(\cD)$-enriched complete Segal space model structure (\cref{the:spd enriched css s}).
\end{theone}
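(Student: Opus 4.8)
The plan is to deduce this theorem directly from the two general results \cref{the:pullback rfib s nonfib base} and \cref{cor:pullback rfib s nonfib base}, so the only real work is to verify that their hypotheses hold in the present setting. Recall that throughout this subsection $\cD=\Theta_k\times\DD^{n-k}$, which has a terminal object $([0],\dots,[0])$, and that $S$ is a set of cofibrations of $\cD$-spaces with $S\subseteq S_{CSS}$; these are exactly the two standing assumptions that feed into the argument.

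First I would record that $S$ is a Cartesian, geometrically contractible set of cofibrations of $\cD$-spaces. Cartesianness is \cref{rem:cartesian loc}: since $S\subseteq S_{CSS}$, the $S$-localized injective model structure on $\cD$-spaces is Cartesian, i.e.\ $S$ is Cartesian in the sense of \cref{def:cartesian s}. Geometric contractibility is \cref{lemma:contractible thetan}, since $S$ is contained in the set of generating trivial cofibrations of \cref{the:cso in inftyn} (equivalently \cref{the:thetan model cat}). Next, $X$ has weakly constant objects by hypothesis, and it has $S$-stable objects — this is assumed, and it is in fact automatic: by \cref{lemma:zigzag strictification} such an $X$ is level-wise complete Segal space equivalent to the nerve $N_\cD\C_X$ of an $\sP(\cD)$-enriched category, and by \cref{lemma:thetan stable} every $\sP(\cD)$-enriched category has $S$-stable objects when $S\subseteq S_{CSS}$, which is precisely what it means (\cref{def:X cart and stable}) for $X$ to have $S$-stable objects. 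Finally, an $(\infty,n)$-left fibration $p\colon R\to X$ is, by \cref{not:left fib}, exactly an $\emptyset$-localized $\cD$-left fibration, hence in particular a $\cD$-left fibration; dually an $(\infty,n)$-right fibration is a $\cD$-right fibration (\cref{rem:nrfib}). So in either case $p$ meets the fibration hypothesis of the cited results.

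With the hypotheses in place, \cref{the:pullback rfib s nonfib base} gives that $(p^*,p_*)$ is a Quillen adjunction between the $S$-localized level-wise CSS model structures on $\sP(\cD\times\DD)_{/X}$ and $\sP(\cD\times\DD)_{/R}$, and \cref{cor:pullback rfib s nonfib base} gives the corresponding statement for the $S$-localized $\sP(\cD)$-enriched complete Segal space model structures, which together is the claim. The only point that requires genuine care — and the reason the statement is phrased with $S\subseteq S_{CSS}$ rather than $S\subseteq S_{CSS,disc}$ — is that Cartesianness of $S$ fails as soon as the discreteness maps are adjoined (\cref{rem:cartesian loc}), so the Cartesian pushout-product and mapping-space manipulations that underlie \cref{the:pullback rfib s nonfib base} and \cref{cor:pullback rfib s nonfib base} would no longer be available; restricting to $S_{CSS}$ is exactly what keeps that machinery applicable. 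No model-categorical input beyond the cited results is needed.
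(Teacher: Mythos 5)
Your proof is correct and takes exactly the same approach as the paper, which introduces this theorem with the single sentence ``Combining \cref{the:pullback rfib s nonfib base} and \cref{cor:pullback rfib s nonfib base} gives us the following result.'' Your careful verification of the hypotheses (Cartesianness of $S$ via \cref{rem:cartesian loc}, geometric contractibility via \cref{lemma:contractible thetan}, $S$-stability via \cref{lemma:thetan stable}, and the identification of $(\infty,n)$-left/right fibrations with $\cD$-left/right fibrations) is more explicit than the paper itself but fills in precisely the steps the paper leaves implicit.
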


Restricting to Segal $(\infty,n+1)$-categories and applying \cref{lemma:thetan stable} to this theorem gives us the following important corollary.

\begin{corone} 
	Let $\C$ be a Segal $(\infty,n+1)$-category. Let $p:\R \to \C$ be an $(\infty,n)$-right or $(\infty,n)$-left fibration. Then the adjunctions
	\begin{center}
		\adjun{(\sP(\cD\times\DD)_{/\C})^{(n+1)-Seg}}{(\sP(\cD\times\DD)_{/\R})^{(n+1)-Seg}}{p^*}{p_*}
	\end{center}
	\begin{center}
		\adjun{(\sP(\cD\times\DD)_{/\C})^{(n+1)-CSS}}{(\sP(\cD\times\DD)_{/\R})^{(n+1)-CSS}}{p^*}{p_*}
	\end{center}
	are Quillen adjunctions. Here the top has the $(n+1)$-Segal space model structure and the bottom the $(n+1)$-complete Segal space model structure on the over-category.
\end{corone}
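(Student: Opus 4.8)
The plan is to deduce this as a direct specialization of \cref{the:pullback rfib infn}, with $X=\C$, once the model structures are matched up and the hypotheses on the base are checked. The first step is the bookkeeping: organize the diagram category as $\Theta_k\times\DD^{n+1-k}=(\Theta_k\times\DD^{n-k})\times\DD=\cD\times\DD$, and identify the $(n+1)$-Segal space model structure on $\sP(\cD\times\DD)$ with the $S$-localized level-wise complete Segal space model structure, and the $(n+1)$-complete Segal space model structure with the $S$-localized $\sP(\cD)$-enriched complete Segal space model structure, for a suitable Cartesian, geometrically contractible localizing set $S\subseteq S_{CSS}$ of cofibrations of $\cD$-spaces (the natural candidates being $S=S_{Seg}$ for the first and $S=S_{CSS}$ for the second in the notation of \cref{not:infn notation}). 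This identification is exactly the content of \cref{cor:inftyn} together with \cref{the:cso in inftyn}; one also records here that such $S$ are Cartesian by \cref{rem:cartesian loc} and geometrically contractible by \cref{lemma:contractible thetan}, which are the standing hypotheses used in \cref{the:pullback rfib s nonfib base} and \cref{cor:pullback rfib s nonfib base} that feed into \cref{the:pullback rfib infn}.

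The second step is to verify that the base $\C$ satisfies the two conditions required of $X$ in \cref{the:pullback rfib infn}, namely weakly constant and $S$-stable objects (and, should it be needed through the proof of \cref{the:pullback rfib s nonfib base}, Cartesian mapping $\cD$-spaces). Since $\C$ is a Segal $(\infty,n+1)$-category it is in particular an $\sP(\cD)$-enriched Segal space, hence has weakly constant objects by \cref{def:enriched segal}, and it has Cartesian mapping $\cD$-spaces by \cref{lemma:cart css comp}. For $S$-stability one cannot cite \cref{lemma:thetan stable} for $\C$ directly, as $\C$ need not literally be the nerve of a $\sP(\cD)$-enriched category; instead, I would use \cref{lemma:zigzag strictification} to replace $\C$ by the level-wise complete Segal space equivalent nerve $N_\cD\C_\C$, apply \cref{lemma:thetan stable} to the $\sP(\cD)$-enriched category $\C_\C$ (using $S\subseteq S_{CSS}$) to see $N_\cD\C_\C$ has $S$-stable objects, and transport this back to $\C$ via the invariance of $S$-stability under level-wise complete Segal space equivalences noted after \cref{def:X cart and stable}.

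With these in place, the third step is immediate: $p\colon\R\to\C$ is an $(\infty,n)$-right or $(\infty,n)$-left fibration, hence a $\cD$-right or $\cD$-left fibration by \cref{not:left fib}, so \cref{the:pullback rfib infn} applies verbatim with $X=\C$, $R=\R$, and $S$ as chosen in the first step, giving the two asserted Quillen adjunctions. I expect the main obstacle to be purely in the first step: carefully matching the inductively defined $(n+1)$-(complete) Segal space model structures on $\sP(\Theta_k\times\DD^{n+1-k})$ with the level-wise and $\sP(\cD)$-enriched complete Segal space localizations coming from the splitting $\cD\times\DD$, and confirming that the localizing set arising this way really does lie inside $S_{CSS}$, so that all of the machinery of \cref{sec:localized} — in particular \cref{the:pullback rfib infn} — is available. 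The remaining verifications are routine consequences of results already established.
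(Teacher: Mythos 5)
Your proposal matches the paper's own one-line argument preceding the corollary: restrict \cref{the:pullback rfib infn} to $X=\C$ and invoke \cref{lemma:thetan stable} for $S$-stability, with weakly constant objects coming for free because a Segal $(\infty,n+1)$-category is in particular an $\sP(\cD)$-enriched Segal space. You are also right that $S$-stability of $\C$ is checked through the associated $\sP(\cD)$-enriched category $\C_\C$ — that is not a detour but exactly how \cref{def:X cart and stable} defines the condition for a $\cD$-simplicial space, so \cref{lemma:thetan stable} applies directly. Two small remarks: \cref{the:pullback rfib infn} uses a \emph{single} localizing set $S\subseteq S_{CSS}$ that feeds both adjunctions simultaneously (the level-wise CSS and $\sP(\cD)$-enriched CSS model structures), so the hedge about taking $S=S_{Seg}$ for the first adjunction and $S=S_{CSS}$ for the second is not how the theorem is structured; and your flagging of Cartesian mapping $\cD$-spaces (via \cref{lemma:cart css comp}) is prudent, since the underlying proof of \cref{the:pullback rfib s nonfib base} passes through \cref{the:cdcov s invariant s equiv}, which needs that hypothesis even though it is not written into the statement of \cref{the:pullback rfib infn}.
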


Again, it is valuable to make the most useful case, $n=k$, more explicit.

 \begin{corone} \label{cor:pullback rfib s nonfib base nfib}
 	Let $n=k$ and $\C$ be an $(\infty,n+1)$-category and $p:\R \to \C$ an $(\infty,n)$-right or $(\infty,n)$-left fibration. Then the adjunctions
	\begin{center}
		\adjun{(\sP(\cD\times\DD)_{/\C})^{\Seg\cat_{(\infty,n+1)}}}{(\sP(\cD\times\DD)_{/\R})^{\Seg\cat_{(\infty,n+1)}}}{p^*}{p_*}
		\adjun{(\sP(\cD\times\DD)_{/\C})^{\cat_{(\infty,n+1)}}}{(\sP(\cD\times\DD)_{/\R})^{\cat_{(\infty,n+1)}}}{p^*}{p_*}
	\end{center}
	are Quillen adjunctions. Here the top has the Segal $(\infty,n+1)$-category model structure and the bottom the $(\infty,n+1)$-category model structure.
\end{corone}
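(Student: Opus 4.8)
The plan is to deduce both Quillen adjunctions from the pullback results of \cref{subsec:sloc features}, exploiting the fact that the hypothesis $n=k$ forces $\cD = \Theta_k\times\DD^{n-k}=\Theta_n$. Over $\Theta_n$ the canonical model for $(\infty,n)$-categories is the $\Theta_n$-space model of Rezk, which carries no discreteness part, so $S_{CSS,disc}=S_{CSS}$ and $S_{Seg,disc}=S_{Seg}$ (\cref{not:infn notation}); by \cref{rem:cartesian loc} and \cref{lemma:contractible thetan} both of these are \emph{Cartesian} geometrically contractible sets of cofibrations of $\cD$-spaces. Cartesianness of the localizing set is exactly the standing hypothesis of \cref{the:pullback rfib infn}, \cref{the:pullback rfib s nonfib base} and \cref{cor:pullback rfib s nonfib base}, and is precisely what is unavailable when $k<n$ (cf.\ \cref{rem:localizations are right proper}).

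The first step is to identify the two model structures in the statement with localizations already analysed. By \cref{the:cso in inftyn} applied with the small category $\Theta_n$ and the Cartesian set $S=S_{CSS}$ (so that $\sP(\Theta_n)^{inj_S}$ is the $(\infty,n)$-category model structure), the $(\infty,n+1)$-category model structure on $\sP(\cD\times\DD)=\sP(\Theta_n\times\DD)$ coincides with the $S$-localized $\sP(\Theta_n)$-enriched complete Segal space model structure of \cref{the:spd enriched css s}: both are localizations of the injective model structure with the same fibrant objects, namely the $(\infty,n+1)$-categories. For the Segal statement one proceeds the same way, except that, as noted in \cref{rem:no segal cats}, there is no $\sP(\Theta_n)$-enriched Segal category model structure, so instead I would realise the Segal $(\infty,n+1)$-category model structure as the localization of the $S_{Seg}$-localized level-wise Segal model structure (\cref{prop:levelcss s}) at the remaining discreteness cofibrations, which have contractible codomain over a weakly constant base, so that \cref{cor:fibrancies} applies.

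Next I would verify the hypotheses of \cref{cor:pullback rfib s nonfib base} (for the $(\infty,n+1)$-category adjunction) and of the level-wise Segal analogue of \cref{the:pullback rfib s nonfib base} (for the Segal $(\infty,n+1)$-category adjunction), with $\cD=\Theta_n$, base $X=\C$ and the given map $p$. Being an $(\infty,n+1)$-category, $\C$ is an $\sP(\Theta_n)$-enriched complete Segal space, hence a $\cD$-simplicial space with weakly constant objects (\cref{def:enriched segal}). It also has $S$-stable objects: by \cref{lemma:zigzag strictification} $\C$ is level-wise complete Segal space equivalent to $N_\cD\C_\C$, having $S$-stable objects is invariant under such equivalences among $\cD$-simplicial spaces with weakly constant objects, and the $\sP(\Theta_n)$-enriched category $\C_\C$ has $S$-stable objects by \cref{lemma:thetan stable} since $S\subseteq S_{CSS}$. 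Finally $p:\R\to\C$ is, by hypothesis, an $(\infty,n)$-right or $(\infty,n)$-left fibration, which by \cref{not:left fib} is precisely a $\cD$-right or $\cD$-left fibration. Thus \cref{cor:pullback rfib s nonfib base} gives that $(p^*,p_*)$ is a Quillen adjunction for the $(\infty,n+1)$-category model structures over $\C$ and $\R$; and the level-wise Segal version of \cref{the:pullback rfib s nonfib base}, followed by the further localization described above and \cref{cor:localizing equiv}, yields the Segal $(\infty,n+1)$-category case.

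The step I expect to be the main obstacle is the identification in the second paragraph for the Segal case. For complete $(\infty,n+1)$-categories it is immediate from \cref{the:cso in inftyn}, but for Segal $(\infty,n+1)$-categories one cannot appeal to an enriched Segal category model structure and must instead present the model structure as an explicit localization of the level-wise Segal model structure and check, via \cref{cor:fibrancies}, that the additional discreteness cofibrations are of the form controlled by that corollary (contractible codomains, weakly constant base), so that \cref{cor:localizing equiv} together with the level-wise Segal analogue of \cref{the:pullback rfib s nonfib base} closes the argument. All remaining points are direct instantiations of results established earlier in the section.
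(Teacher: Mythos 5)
Your treatment of the $\cat_{(\infty,n+1)}$ adjunction is correct and is the argument the paper relies on (implicitly, since the paper states the corollary without proof as a specialisation of the preceding one): identify the $(\infty,n+1)$-category model structure with the $S_{CSS}$-localized $\sP(\Theta_n)$-enriched complete Segal space model structure via \cref{the:cso in inftyn}, observe that $S_{CSS}$ is Cartesian (\cref{rem:cartesian loc}) and geometrically contractible (\cref{lemma:contractible thetan}), that $\C$ has weakly constant objects (being an $\sP(\Theta_n)$-enriched complete Segal space) and $S$-stable objects (\cref{lemma:thetan stable}), and apply \cref{cor:pullback rfib s nonfib base}.

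For the Segal $(\infty,n+1)$-category adjunction your plan has a genuine gap. You propose to deduce it from a ``level-wise Segal analogue of \cref{the:pullback rfib s nonfib base},'' but no such result exists in the paper: the whole chain \cref{the:pullback rfib} $\to$ \cref{the:pullback rfib s} $\to$ \cref{the:pullback rfib s nonfib base} is proved only for the level-wise \emph{complete} Segal model structures $(lev_{CSS})_S$, and one cannot deduce a Quillen adjunction for the less-localized $(lev_{Seg})_S$ from the one for $(lev_{CSS})_S$ (Bousfield localization only transfers Quillen adjunctions in the direction of further localization via \cref{cor:localizing equiv}). You would have to prove the level-wise Segal analogue from scratch, which is a nontrivial missing step. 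Two smaller issues: \cref{cor:fibrancies} concerns only the $\cD$-covariant model structure and does not apply to the (complete) Segal model structures; the relevant device is instead the explicit computation used in the proof of \cref{cor:pullback rfib s nonfib base}, namely that $p^*(F[t,0]\to F[d,0])\cong\Fib_xR\to\Fib_xR\times F[d,0]$ is an equivalence, which is what one would feed into \cref{cor:localizing equiv}. Also, the localizing set you use for the values, $S_{Seg}$, is likely wrong: following \cref{the:cso in inftyn}, the mapping objects of a Segal $(\infty,n+1)$-category are $(\infty,n)$-categories (i.e.\ $S_{CSS}$-local), so the value localization should use $S_{CSS}$; only the top-level completeness $D[0]\to\Disc(E[1])$ is dropped when passing from $\cat_{(\infty,n+1)}$ to $\Seg\cat_{(\infty,n+1)}$.
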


\begin{remone}
	This result can be seen as a generalization of the fact that in the $(\infty,1)$-categorical setting Cartesian fibrations are {\it exponentiable}, which has been observed in \cite{lurie2009htt}, but also \cite{rasekh2021cartfibcss}, and is in fact an important aspect of the work in \cite{ayalafrancis2020fibrations}.
\end{remone}

We move on to the Grothendieck construction for $S$-localized $(\infty,n)$-left fibrations. For that we need the following lemma.

\begin{lemone} \label{lemma:grothendieck}
	The functor $\sbI_{\cD/\C}: \uFun(\C,\sP(\cD)) \to \sP(\cD\times\DD)_{/N_\cD\C}$ takes projective fibrations to injective fibrations. 
\end{lemone}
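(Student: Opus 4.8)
The plan is to show that $\sbI_{\cD/\C}$ is a right Quillen functor with respect to the projective model structure on $\uFun(\C,\sP(\cD))$ and the injective model structure on $\sP(\cD\times\DD)_{/N_\cD\C}$. Since $\sbI_{\cD/\C}$ is by construction (\cref{lemma:sbTsbI adj}) the right adjoint of the colimit-preserving functor $\sbT_{\cD/\C}$, it suffices to prove that $\sbT_{\cD/\C}: \sP(\cD\times\DD)_{/N_\cD\C} \to \uFun(\C,\sP(\cD))$ takes (trivial) injective cofibrations of $\cD$-simplicial spaces over $N_\cD\C$ to (trivial) projective cofibrations of $\sP(\cD)$-enriched functors.

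First I would recall that a map $f: A \to B$ over $N_\cD\C$ is an injective cofibration if and only if it is a monomorphism, which by \cref{the:ncov model} is precisely the class of $\cD$-covariant cofibrations; likewise an injective trivial cofibration is a monomorphism that is an injective equivalence, which is in particular a $\cD$-covariant trivial cofibration. Thus every (trivial) injective cofibration over $N_\cD\C$ is a (trivial) $\cD$-covariant cofibration over $N_\cD\C$. Then I would invoke \cref{lemma:sbt left quillen}, which states that $\sbT_{\cD/\C}$ takes (trivial) $\cD$-covariant cofibrations to (trivial) injective cofibrations of $\sP(\cD)$-enriched functors. Finally, by \cref{prop:projinj model}, every injective cofibration (resp. trivial injective cofibration) of $\sP(\cD)$-enriched functors is a projective cofibration (resp. trivial projective cofibration), since both model structures on $\uFun(\C,\sP(\cD))$ have the same weak equivalences and the injective cofibrations form a subclass of — wait, this is backwards: injective cofibrations are more general than projective ones.

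Let me correct the final step. The cleaner route is to argue directly: by the adjunction, $\sbI_{\cD/\C}$ takes projective fibrations to injective fibrations if and only if $\sbT_{\cD/\C}$ takes \emph{injective} trivial cofibrations to projective trivial cofibrations and injective cofibrations to projective cofibrations. But \cref{lemma:sbt left quillen} only tells us $\sbT_{\cD/\C}$ lands in injective (trivial) cofibrations of functors, which is too weak. So instead I would use the following observation: by \cref{prop:projinj model} the generating projective (trivial) cofibrations are of the form $\id \times \{\alpha\}: \uHom(c,-) \times \{F\} \to \uHom(c,-) \times \{G\}$ with $\alpha$ a (trivial) cofibration in $\sP(\cD)$; a map of functors is a projective fibration precisely when it has the right lifting property against these. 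Dualizing through the adjunction, $\sbI_{\cD/\C}(p)$ is an injective fibration (equivalently, has the right lifting property against all injective trivial cofibrations and is moreover required only to be a fibration, so it suffices to check against a generating set of trivial cofibrations \emph{and} cofibrations of the injective model structure on the over-category) if and only if $p$ has the right lifting property against $\sbT_{\cD/\C}$ applied to those generating maps. By \cref{ex:sbT of rep} and \cref{ex:sbT constant}, $\sbT_{\cD/\C}$ sends the generating maps $F[d,0] \to F[d,n]$ to projective equivalences and sends the maps $(A \to B)\square(\partial\Delta[l]\to\Delta[l])$ type morphisms to level-wise (trivial) injective cofibrations of functors, which are detected against projective fibrations — the key point being that $\sbT_{\cD/\C}$ is left Quillen for the $\cD$-covariant structure on the source and the \emph{injective} structure on the target (\cref{lemma:sbt left quillen}), and a projective fibration is in particular an injective fibration in the sense that it has the right lifting property against all injective trivial cofibrations since the injective trivial cofibrations are contained in the class of maps having the left lifting property against projective fibrations...

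Actually the main obstacle, and where I expect to spend real effort, is precisely reconciling the direction of the Quillen adjunction: \cref{lemma:sbt left quillen} gives $\sbT_{\cD/\C}$ as left Quillen into the \emph{injective} functor model structure, so formally $\sbI_{\cD/\C}$ is right Quillen \emph{out of} the injective model structure, i.e. it takes \emph{injective} fibrations to $\cD$-left fibrations. To upgrade this to "takes \emph{projective} fibrations to injective fibrations" one needs that $\sbI_{\cD/\C}$ additionally sends \emph{projective} (not just injective) fibrations to injective fibrations, which is a strictly stronger statement. The honest approach is: a projective fibration $p: F \to G$ between arbitrary functors is a level-wise injective fibration of $\cD$-spaces; one checks that $\sbI_{\cD/\C}(p)$ is a level-wise injective fibration of $\cD$-simplicial spaces by unwinding $\sbI_{\cD/\C}$ as the right adjoint, using the explicit fiber computation $\Map_{/N_\cD\C}(F[d,n]\times\Delta[l],\sbI_{\cD/\C}G) \simeq G(c_0)[d]$ displayed in the excerpt after \cref{lemma:sbTsbI adj}, so that the lifting properties for $\sbI_{\cD/\C}(p)$ against generating injective trivial cofibrations $(A\to B)\square(\partial\Delta[l]\to\Delta[l])$ translate under adjunction — via \cref{prop:joyal tierney lifting} and \cref{lemma:sbt left quillen} — into lifting properties for $p$ against maps $\sbT_{\cD/\C}$ sends them to, which are level-wise trivial cofibrations of $\cD$-spaces and hence liftable against the level-wise fibration $p$. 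That verification is the crux and the rest is formal; I would organize the proof around that single lifting-property translation and cite \cref{prop:joyal tierney lifting}, \cref{lemma:sbt left quillen}, and \cref{prop:projinj model} for the supporting facts.
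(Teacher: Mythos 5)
Your proposal identifies the genuine formal difficulty correctly — \cref{lemma:sbt left quillen} makes $\sbT_{\cD/\C}$ left Quillen into the \emph{injective} functor model structure, so the adjunction only hands you ``injective fibrations go to $\cD$-left fibrations,'' which is strictly weaker than the statement to be proved. That observation is right and is exactly why the lemma needs its own argument. But the resolution you sketch has a real gap.

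You propose to verify that $\sbI_{\cD/\C}(p)$ lifts against ``generating injective trivial cofibrations $(A\to B)\square(\partial\Delta[l]\to\Delta[l])$,'' then transpose through the adjunction. The problem is that there is no reason, in general, for the generating trivial cofibrations of an \emph{injective} model structure on a presheaf category to admit such an explicit pushout-product description; injective generating (trivial) cofibrations are produced by Smith's theorem and are not concretely describable. The pushout-product family you name is the generating family for the \emph{Reedy} model structure on $\Fun(\DD^{op},\sP(\cD)^{inj})$, and lifting against it gives Reedy fibrancy, not injective fibrancy. Your argument therefore never actually establishes that $\sbI_{\cD/\C}(p)$ is an injective fibration. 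The missing insight — and the step the paper's own proof opens with — is that $\cD=\Theta_k\times\DD^{n-k}$ is an \emph{elegant Reedy category} in the sense of Bergner--Rezk, so the injective and Reedy model structures on $\sP(\cD\times\DD)$ coincide. Only after this identification does the reduction to RLP against $(\VEmb(A)\to\VEmb(B))\square(\partial D[n]\to D[n])$ become legitimate. (Note also that in your notation the boundary inclusion should live in the nerve direction, $\partial D[n]\to D[n]$, not in the simplicial-space direction $\partial\Delta[l]\to\Delta[l]$: the latter is already absorbed into ``trivial cofibration of $\cD$-spaces.'') Once that reduction is in place, the paper transposes through $(\sbT_{\cD/\C},\sbI_{\cD/\C})$, uses that $\sbT_{\cD/\C}$ preserves the tensor to factor out the $A\to B$ part, and explicitly computes $\sbT_{\cD/\C}(\partial D[n]\to D[n])$ for $n=0,1$ and $n\geq 2$ to see it is a projective cofibration. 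You gesture at this computation via \cref{ex:sbT of rep} and \cref{ex:sbT constant} but never carry it out, and it is where the substance of the proof lives; without the elegance observation, even carrying it out would not yield injective fibrancy.
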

 
 \begin{proof}
 	By \cite[Corollary 4.5]{bergnerrezk2013elegantreedy} $\cD= \Theta_k \times \DD^{n-k}$ is an elegant Reedy category and so, by \cite[Proposition 3.15]{bergnerrezk2013elegantreedy}, the injective model structure on $\sP(\cD\times\DD)$ coincides with the Reedy model structure on $\sP(\cD\times\DD)$. Hence, it suffices to prove that for every projective natural transformation $\alpha: G \to H$, $\sbI_{\cD/\C}\alpha: \sbI_{\cD/\C}G \to \sbI_{\cD/\C}H$ is a Reedy fibration. By definition of the Reedy model structure we need to prove $\sbI_{\cD/\C}\alpha: \sbI_{\cD/\C}G \to \sbI_{\cD/\C}H$ has the lifting condition with respect to the maps 
 	$$\VEmb(A) \to \VEmb(B) \square \partial D[n] \to D[n],$$
 	where $A \to B$ is a trivial cofibration of $\cD$-spaces. 
 	By adjunction $(\sbT_{\cD/\C},\sbI_{\cD/\C})$ this is equivalent to 
 	\begin{equation} \label{eq:some map}
 	\sbT_{\cD/\C}(\VEmb(A) \to \VEmb(B) \square \partial D[n] \to D[n])
 	\end{equation}
 	being a trivial cofibration in the projective model structure on $\uFun(\C,\sP(\cD))$.
 	
 	By \cref{lemma:sbTsbI adj} $\sbT_{\cD/\C}$ commutes with tensor over $\sP(\cD)$-spaces and so the map \ref{eq:some map} is an equivalence if and only if 
 	$$\{A\} \to \{B\} \square \sbT_{\cD/\C}( \partial D[n] \to D[n])$$
 	is an equivalence, which means it suffices to prove that $\sbT_{\cD/\C}(\partial D[n] \to D[n])$ is a projective cofibration. For $n \geq 2$, $\sbT_{\cD/\C}(\partial D[n] \to D[n])$ is the identity. For the other two cases we have 
 	\begin{itemize}
 		\item $\sbT_{\cD/\C}(\emptyset \to D[0])= \{\emptyset\} \to \uMap(c_0,-)$, where $\{c_0\}: D[0] \to N_\cD\C$.
 		\item $\sbT_{\cD/\C}(\partial D[1] \to D[1])= \uMap(c_0,-) \coprod \uMap(c_1,-) \to D[1] \times_{\fDiag(N_\cD\C)} \fDiag(N_\cD\C_{/-})$, where $\{c_0 \to c_1\}: D[1] \to N_\cD\C$. 
 	\end{itemize}
  Both of which are evidently projective cofibrations. 
 \end{proof}

Applying  \cref{lemma:grothendieck} to \cref{the:grothendieck double proj s} gives us the Grothendieck construction for $S$-localized $(\infty,n)$-left fibrations.
 
\begin{theone} \label{the:grothendieck infn}
	Let $X$ be a $\cD$-simplicial space with weakly constant objects, which, in particular, includes (Segal) $(\infty,n+1)$-categories. Then we have the following diagram of $\sP(\cD)$-enriched Quillen equivalences.
	\begin{center}
		\begin{tikzcd}[row sep=0.3in, column sep=0.9in]
			\Fun(\C_X,\sP(\cD)^{inj_S})^{proj} \arrow[r, shift left = 1.8, "\sint_{\cD/\C_X}"] & 
			(\sP(\cD\times\DD)_{/N_\cD\C_X})^{(cov_{(\infty,n)})_S} \arrow[l, shift left=1.8, "\sH_{\cD/\C_X}", "\bot"'] \arrow[r, shift left=1.8, "\sbT_{\cD/\C_X}"] \arrow[d, shift left =1.8, "(r_X)_!"] & \Fun(\C_X,\sP(\cD)^{inj_S})^{proj} \arrow[l, shift left=1.8, "\sbI_{\cD/\C_X}", "\bot"'] \\
			&(\sP(\cD\times\DD)_{/\cF_\cD X})^{(cov_{(\infty,n)})_S} \arrow[u, shift left=1.8, "(r_X)^*", "\rbot"'] \arrow[d, shift right=1.8, "(u_X)^*"'] & \\
			&(\sP(\cD\times\DD)_{/\cC_\cD X})^{(cov_{(\infty,n)})_S} \arrow[u, shift right=1.8, "(u_X)_!"', "\rbot"] \arrow[d, shift left=1.8, "(c_X)_!", "\rbot"'] & \\
			&(\sP(\cD\times\DD)_{/X})^{(cov_{(\infty,n)})_S} \arrow[u, shift left=1.8, "(c_X)^*"] & 
		\end{tikzcd}, 
	\end{center}
	where the middle row has the $S$-localized $(\infty,n)$-covariant model structure the other ones have the projective model structure on the $S$-localized injective model structure. Moreover, the Quillen equivalence is an $\sP(\cD)$-enriched Quillen equivalence of $\sP(\cD)$-enriched model structures if $S \subseteq S_{CSS}$, which in particular always holds if $k=n$.
\end{theone}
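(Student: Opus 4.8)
The plan is to deduce this theorem by gluing together the abstract $S$-localized Grothendieck construction of \cref{the:grothendieck double proj s} with the strictification zigzag of \cref{the:strictification dleft fib s}, once the extra hypothesis of \cref{the:grothendieck double proj s} has been checked for the diagram category $\cD = \Theta_k \times \DD^{n-k}$. First I would record the identification: by \cref{not:left fib}, the $S$-localized $(\infty,n)$-covariant model structure on $\sP(\cD\times\DD)_{/X}$ is, by definition, the $S$-localized $\cD$-covariant model structure of \cref{the:cdcov s} for the chosen localizing set $S \subseteq S_{CSS,disc}$. Consequently \cref{the:strictification dleft fib s} already supplies the entire commuting diagram of $\sP(\cD)$-enriched Quillen equivalences asserted here, with the single discrepancy that the right-hand functor category there carries the injective rather than the projective model structure.

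Next I would upgrade that right-hand side to the projective model structure. This is exactly what \cref{the:grothendieck double proj s} accomplishes, provided its sole additional hypothesis holds, namely that $\sbI_{\cD/\C}$ sends projective fibrations to injective fibrations. That statement is precisely \cref{lemma:grothendieck}, which is proved for $\cD = \Theta_k \times \DD^{n-k}$ (using that this is an elegant Reedy category, so injective and Reedy model structures agree, and that $\sbT_{\cD/\C}$ sends the generating Reedy cofibrations to projective cofibrations). Feeding \cref{lemma:grothendieck} into \cref{the:grothendieck double proj s} then yields the two horizontal Quillen equivalences with both copies of $\Fun(\C_X,\sP(\cD)^{inj_S})$ in the projective model structure.

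Then I would assemble the vertical Quillen equivalences: by \cref{lemma:zigzag strictification} the maps $c_X$, $u_X$, $r_X$ are level-wise complete Segal space equivalences, so \cref{the:dcov invariant css s}, applied with the same localizing set $S$, shows that $(c_X)_!$, $(u_X)_!$, $(r_X)_!$ are $S$-localized $\cD$-covariant Quillen equivalences over the bases $X$, $\cC_\cD X$, $\cF_\cD X$. Stacking these below the horizontal equivalences gives the full diagram. Finally, for the enrichment statement I would invoke \cref{rem:cartesian loc}: the $S$-localized injective model structure on $\cD$-spaces is Cartesian exactly when $S \subseteq S_{CSS}$, and $S_{CSS,disc} = S_{CSS}$ when $k=n$; in that Cartesian case \cref{the:cdcov s}, \cref{prop:projinj model s} and \cref{the:grothendieck double proj s} make all the model structures in the diagram $\sP(\cD)^{inj_S}$-enriched and all the Quillen equivalences $\sP(\cD)^{inj_S}$-enriched.

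The only genuinely non-formal ingredient is \cref{lemma:grothendieck}; everything else is bookkeeping that re-packages results already established in \cref{sec:localized}. So I do not anticipate a real obstacle — the main point requiring care is to check, for each of the four bases $X$, $\cC_\cD X$, $\cF_\cD X$, $N_\cD\C_X$, that the hypotheses of \cref{the:grothendieck double proj s} and \cref{the:dcov invariant css s} are simultaneously satisfied and that the model-structure labels of \cref{not:left fib} are consistently matched to the localizing set $S$ used throughout.
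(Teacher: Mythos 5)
Your proposal is correct and follows essentially the same route as the paper: the paper's own proof is the single line ``Applying \cref{lemma:grothendieck} to \cref{the:grothendieck double proj s} gives us the Grothendieck construction for $S$-localized $(\infty,n)$-left fibrations,'' which is exactly your second step, and the vertical Quillen equivalences are obtained exactly as in the proof of \cref{the:strictification dleft fib s} via \cref{lemma:zigzag strictification} and \cref{the:dcov invariant css s}, which you also invoke. You have in fact spelled out the bookkeeping (identification of model structures via \cref{not:left fib}, the Cartesian/enrichment clause via \cref{rem:cartesian loc}) more explicitly than the paper does, but the decomposition and the identification of \cref{lemma:grothendieck} as the only non-formal ingredient are the same.
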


In order to avoid having to draw the diagram in \cref{the:grothendieck infn} several times we state the special cases in a more succinct form.

\begin{corone}
	The diagram of Quillen equivalences in \cref{the:grothendieck infn} hold in particular with the following substitutions.
	\begin{center}
		\begin{tabular}{c|c|c}
			$S$ & $inj_S$ & $(cov_{(\infty,n)})_S$ \\ \hline 
			$\emptyset$ & $inj$ & $cov_{(\infty,n)}$ \\ 
			$S_{Seg,disc}$ & $Seg_{(\infty,n)}$ & $SegcoCart_{(\infty,n)}$ \\ 
			$S_{CSS,disc}$ & $CSS_{(\infty,n)}$ & $coCart_{(\infty,n)}$
		\end{tabular},
	\end{center}
	where the equivalence of the two last rows is one of enriched model structures if and only if $k=n$.
\end{corone}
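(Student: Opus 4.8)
The statement is a direct specialization of \cref{the:grothendieck infn}, so the plan is to feed the three chosen localizing sets into that theorem and unwind the notation. First I would recall that \cref{the:grothendieck infn} applies to any $\cD$-simplicial space $X$ with weakly constant objects and \emph{any} set $S$ of cofibrations of $\cD$-spaces with $S \subseteq S_{CSS,disc}$ (this is the standing assumption of \cref{subsec:infn fib}). So the only thing to check for each row of the table is that the indicated $S$ lies in $S_{CSS,disc}$: for the first row $S = \emptyset$, which is trivially contained; for the second and third rows $S = S_{Seg,disc}$ and $S = S_{CSS,disc}$, both of which are by definition (\cref{not:infn notation}) subsets of $S_{CSS,disc}$ (with $S_{CSS,disc}$ equal to itself in the last case).

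Next I would match up the names of the resulting model structures. The functor category side of \cref{the:grothendieck infn} carries the projective model structure on $\sP(\cD)^{inj_S}$, and by the dictionary \ref{eq:inftyn categories} we have $\sP(\cD)^{inj_\emptyset} = \sP(\cD)^{inj}$, $\sP(\cD)^{inj_{S_{Seg,disc}}} = \sP(\cD)^{Seg_{(\infty,n)}}$, and $\sP(\cD)^{inj_{S_{CSS,disc}}} = \sP(\cD)^{CSS_{(\infty,n)}}$; these are precisely the entries in the middle column. On the fibration side, the middle row of the diagram carries the $S$-localized $(\infty,n)$-covariant model structure, which by \cref{not:left fib} is denoted $cov_{(\infty,n)}$ when $S=\emptyset$, $SegcoCart_{(\infty,n)}$ when $S = S_{Seg,disc}$, and $coCart_{(\infty,n)}$ when $S = S_{CSS,disc}$; these are the entries in the right column. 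So each row of the table is literally the conclusion of \cref{the:grothendieck infn} rewritten in the historical terminology, and in particular all the listed adjunctions are $\sP(\cD)$-enriched Quillen equivalences.

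For the final clause about the last two rows being equivalences of enriched model structures precisely when $k=n$, I would invoke the enrichment criterion of \cref{the:grothendieck infn}, which gives an $\sP(\cD)$-enriched Quillen equivalence of $\sP(\cD)^{inj_S}$-enriched model structures exactly when $S \subseteq S_{CSS}$. Since $S_{Seg,disc} = S_{Seg}\cup S_{disc}$ and $S_{CSS,disc} = S_{CSS}\cup S_{disc}$, the containment $S \subseteq S_{CSS}$ holds for these two sets if and only if $S_{disc}$ is empty, which by \cref{not:infn notation} happens precisely when $k=n$ (for the first row, $\emptyset \subseteq S_{CSS}$ always, matching \cref{the:grothendieck infn}); one may also phrase this via \cref{rem:cartesian loc}. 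I do not expect any genuine obstacle here: the entire argument is bookkeeping, and the only point requiring minor care is keeping track of which localizations fall inside $S_{CSS}$ versus merely inside $S_{CSS,disc}$, which is exactly the $k=n$ dichotomy above.
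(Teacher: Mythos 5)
Your proposal is correct and takes essentially the same route the paper implicitly assumes: the corollary is stated without proof, and the intended justification is exactly the bookkeeping you give — verify $S\subseteq S_{CSS,disc}$, translate notation via \ref{eq:inftyn categories} and \cref{not:left fib}, and settle the enrichment clause by observing that $S_{Seg,disc}\subseteq S_{CSS}$ and $S_{CSS,disc}\subseteq S_{CSS}$ hold precisely when $S_{disc}=\emptyset$, i.e.\ $k=n$, with \cref{rem:cartesian loc} supplying the converse direction (for $k<n$ the base model structure fails to be Cartesian, so one cannot even speak of enrichment over it).
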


We can again use this to get an equivalence of underlying quasi-categories. Motivated by \cref{not:underlying qcat s}, we will denote the underlying quasi-category of the $S$-localized $(\infty,n)$-covariant model structure over $X$ by $\LFib^S_{(\infty,n)/X}$. Now \cref{cor:equiv qcat s} gives us the following result.

\begin{corone} \label{cor:qcat equiv ncat}
	Let $X$ be a $\cD$-simplicial space with weakly constant objects, which in particular includes (Segal) $(\infty,n+1)$-categories. Then we have the following equivalence of quasi-categories
	\begin{center}
		\begin{tikzcd}[row sep=0.3in, column sep=0.9in]
			\Fun(\C_X,\sP(\cD))[\cW^{-1}_S] \arrow[r, shift left = 1.8, "\sint_{\cD/\C_X}"] & 
			\LFib^S_{(\infty,n)/N_\cD\C_X} \arrow[l, shift left=1.8, "\sH_{\cD/\C_X}", "\bot"'] \arrow[r, shift left=1.8, "\sbT_{\cD/\C_X}"] \arrow[d, shift left =1.8, "(r_X)_!"] & \Fun(\C_X,\sP(\cD))[\cW^{-1}_S] \arrow[l, shift left=1.8, "\sbI_{\cD/\C_X}", "\bot"'] \\
			&\LFib^S_{(\infty,n)/\cF_\cD X} \arrow[u, shift left=1.8, "(r_X)^*", "\rbot"'] \arrow[d, shift right=1.8, "(u_X)^*"'] & \\
			&\LFib^S_{(\infty,n)/\cC_\cD X} \arrow[u, shift right=1.8, "(u_X)_!"', "\rbot"] \arrow[d, shift left=1.8, "(c_X)_!", "\rbot"'] & \\
			&\LFib^S_{(\infty,n)/X} \arrow[u, shift left=1.8, "(c_X)^*"] & 
		\end{tikzcd}.
	\end{center}
	which is an equivalence of $\sP(\cD)[\cW^{-1}_S]$-enriched quasi-categories  and equivalent to the functor quasi-category $\Fun_{\QCat_{\sP(\cD)[\cW^{-1}_S]}}(\C,\sP(\cD)[\cW^{-1}_S])$ if $S \subseteq S_{CSS}$, which holds in particular if $n=k$.
\end{corone}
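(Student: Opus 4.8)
The plan is to obtain the statement as a direct specialization of \cref{cor:equiv qcat s} to the case $\cD = \Theta_k \times \DD^{n-k}$ with $S \subseteq S_{CSS,disc}$. The only genuinely new input is the identification, recorded in \cref{the:cocart model} and \cref{the:grothendieck infn}, that for this $\cD$ the $S$-localized $\cD$-covariant model structure of \cref{the:cdcov s} is precisely the $S$-localized $(\infty,n)$-covariant model structure of \cref{not:left fib}, whose bifibrant objects are the $S$-localized $(\infty,n)$-left fibrations; hence its underlying $\sP(\cD)$-enriched quasi-category is what \cref{not:underlying qcat s} calls $\LFib^S_{(\infty,n)/X}$, matching $\LFib^S_{\cD/X}$ from \cref{cor:equiv qcat s} once one unwinds the notation. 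With this in hand, the displayed diagram of the corollary is literally the diagram of \cref{cor:equiv qcat s} rewritten in the present notation.

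First I would verify the hypotheses of \cref{cor:equiv qcat s}: $\cD = \Theta_k \times \DD^{n-k}$ is a small category with terminal object $([0],\dots,[0])$, and $X$ is assumed to have weakly constant objects --- which, by the definition of the relevant model structures, covers all (Segal) $(\infty,n+1)$-categories. Then \cref{cor:equiv qcat s} applies verbatim: it produces the displayed diagram of equivalences of quasi-categories by applying \cite[Theorem 1.1]{haugseng2015rectenrichedinftycat} to the $\sP(\cD)$-enriched Quillen equivalences of \cref{the:grothendieck infn} (the $(\infty,n)$-categorical incarnation of \cref{the:strictification dleft fib s}), which gives the first assertion.

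For the enrichment assertion I would invoke \cref{rem:cartesian loc}: the $S$-localized injective model structure on $\sP(\cD)$ is Cartesian exactly when $S \subseteq S_{CSS}$, and in particular when $n=k$ (since then $S_{CSS,disc} = S_{CSS}$). When $S$ is Cartesian, the $S$-localized $(\infty,n)$-covariant model structure and the $S$-localized projective model structure are $\sP(\cD)^{inj_S}$-enriched by \cref{the:cocart model} and \cref{prop:projinj model s}, and the Quillen equivalences of \cref{the:grothendieck infn} are enriched; the enriched form of \cite[Theorem 1.1]{haugseng2015rectenrichedinftycat} then upgrades the diagram to an equivalence of $\sP(\cD)[\cW^{-1}_S]$-enriched quasi-categories, exactly as in \cref{cor:equiv qcat s}. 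The identification of $\Fun(\C_X,\sP(\cD))[\cW^{-1}_S]$ with $\Fun_{\QCat_{\sP(\cD)[\cW^{-1}_S]}}(\C_X,\sP(\cD)[\cW^{-1}_S])$ is the equivalence recorded in \cref{rem:proj not fun} (analogous to \ref{eq:functor cat}), which again needs $\sP(\cD)[\cW^{-1}_S]$ to be Cartesian closed, hence $S \subseteq S_{CSS}$. I do not expect a genuine obstacle here, since all the real work sits in the general $S$-localized theory of \cref{subsec:localized groth}; the one point requiring care is the Cartesian dichotomy, which must not be overstated outside the range $S \subseteq S_{CSS}$ and whose proof is external, resting on \cite[Proposition 5.9]{bergnerrezk2020comparisonii} via \cref{rem:cartesian loc}.
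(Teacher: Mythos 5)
Your proposal is correct and matches the paper's approach exactly: the paper derives \cref{cor:qcat equiv ncat} with essentially a one-line justification ("Now \cref{cor:equiv qcat s} gives us the following result"), and your write-up supplies precisely the verifications that justify that specialization — that $\cD = \Theta_k \times \DD^{n-k}$ has a terminal object, that $X$ has weakly constant objects, that the $S$-localized $\cD$-covariant model structure is the $S$-localized $(\infty,n)$-covariant model structure so $\LFib^S_{\cD/X}$ unwinds to $\LFib^S_{(\infty,n)/X}$, and that the enrichment/functor-quasi-category upgrades hinge on $S \subseteq S_{CSS}$ being Cartesian (via \cref{rem:cartesian loc}, with $n=k$ forcing $S_{disc}=\emptyset$ hence $S_{CSS,disc}=S_{CSS}$). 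No gaps.
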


In particular, we get the following direct equivalences.

\begin{corone}  \label{cor:inftyn Grothendieck}
	Let $X$ be a $\cD$-simplicial space with weakly constant objects.
	There is an equivalence of quasi-categories
	\begin{center}
		\begin{tikzcd}[row sep=0.5in, column sep=0.9in]
			\Fun(\C_X,\sP(\cD))[\cW^{-1}_S] \arrow[r, shift left = 1.8] & 
			\LFib^S_{(\infty,n)/X} \arrow[l, shift left=1.8, "\bot"'] 
		\end{tikzcd}, 
	\end{center}
	Moreover, if $S \subseteq S_{CSS}$ (which holds in particular if $k=n$) then there is an equivalence $\sP(\cD)[\cW^{-1}_S]$-enriched quasi-categories 
	\begin{center}
		\begin{tikzcd}[row sep=0.5in, column sep=0.9in]
			\Fun_{\QCat_{\sP(\cD)[\cW^{-1}_S]}}(\C_X,\sP(\cD)[\cW^{-1}_S]) \arrow[r, shift left = 1.8] & 
			\LFib^S_{\cD/X} \arrow[l, shift left=1.8, "\bot"'] 
		\end{tikzcd}.
	\end{center}
\end{corone}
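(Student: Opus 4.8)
The plan is to deduce the corollary directly from \cref{cor:qcat equiv ncat} by composing the chain of equivalences it provides, in exactly the same manner as the proof of \cref{cor:equiv lfib qcat}. First I would recall from \cref{cor:qcat equiv ncat} the diagram of equivalences of quasi-categories: the horizontal equivalence $\sint_{\cD/\C_X}$ relating $\Fun(\C_X,\sP(\cD))[\cW^{-1}_S]$ to $\LFib^S_{(\infty,n)/N_\cD\C_X}$, together with the three vertical equivalences $(r_X)_!$, $(u_X)^*$ and $(c_X)_!$ connecting $\LFib^S_{(\infty,n)/N_\cD\C_X}$ down through $\LFib^S_{(\infty,n)/\cF_\cD X}$ and $\LFib^S_{(\infty,n)/\cC_\cD X}$ to $\LFib^S_{(\infty,n)/X}$.

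Next I would form the composite $(c_X)_! \circ (u_X)^* \circ (r_X)_! \circ \sint_{\cD/\C_X}\colon \Fun(\C_X,\sP(\cD))[\cW^{-1}_S] \to \LFib^S_{(\infty,n)/X}$, which is an equivalence of quasi-categories as a composite of equivalences. To upgrade it to an adjoint equivalence I would invoke \cite[Proposition 2.1.12]{riehlverity2018elements}: in the quasi-categorical setting a functor belonging to an adjoint equivalence is simultaneously a left and a right adjoint, so even though the factors have opposite ``natural variance'' ($\sint_{\cD/\C_X}$ and the $!$-functors arise as left adjoints of Quillen equivalences, while $(u_X)^*$ arises as a right adjoint), the composite is a genuine adjoint equivalence.

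For the enriched refinement, assume $S \subseteq S_{CSS}$ (which by \cref{rem:cartesian loc} always holds when $k=n$). Then the $S$-localized injective model structure on $\cD$-spaces is Cartesian, hence $\sP(\cD)[\cW^{-1}_S]$ is a Cartesian closed quasi-category and the $S$-localized $(\infty,n)$-covariant model structures are enriched over $\sP(\cD)^{inj_S}$ (\cref{the:cocart model}); under this hypothesis \cref{cor:qcat equiv ncat} provides the $\sP(\cD)[\cW^{-1}_S]$-enriched versions of every equivalence in the chain, along with the identification of $\Fun(\C_X,\sP(\cD))[\cW^{-1}_S]$ with the enriched functor quasi-category $\Fun_{\QCat_{\sP(\cD)[\cW^{-1}_S]}}(\C_X,\sP(\cD)[\cW^{-1}_S])$, as in \ref{eq:functor cat} and \cref{rem:proj not fun}. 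Composing the enriched equivalences then yields the second statement.

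I do not expect a genuine obstacle here: the argument is a formal composition. The only points requiring care are bookkeeping ones — ensuring that composing equivalences of differing ``natural variance'' produces a coherent adjoint equivalence, which is precisely what \cite[Proposition 2.1.12]{riehlverity2018elements} guarantees, and checking that every functor in the chain (in particular $(u_X)^*$) respects the $\sP(\cD)[\cW^{-1}_S]$-enrichment, which is already asserted in \cref{cor:qcat equiv ncat} and \cref{the:grothendieck infn}.
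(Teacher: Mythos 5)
Your proof is correct and takes essentially the same approach as the paper: the paper presents this corollary as following "in particular" from \cref{cor:qcat equiv ncat} by composing the chain of quasi-categorical equivalences, using \cite[Proposition 2.1.12]{riehlverity2018elements} to handle the mixed-variance composition exactly as in the earlier \cref{cor:equiv lfib qcat}, and using the Cartesianness of $S \subseteq S_{CSS}$ to get the enriched refinement.
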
 

\begin{corone}
	The diagram of Quillen equivalences in \cref{cor:qcat equiv ncat} and the direct equivalences \cref{cor:inftyn Grothendieck} hold in particular with the following substitutions.
	\begin{center}
		\begin{tabular}{c|c|c}
			$S$ & $\sP(\cD)[\cW^{-1}_S]$ & $\LFib^S_{(\infty,n)/X}$ \\ \hline 
			$\emptyset$ & $\sP(\cD)[\cW^{-1}]$ & $\LFib^S_{(\infty,n)/X}$ \\ 
			$S_{Seg}$ & $\Seg^n$ & $\SegLFib_{(\infty,n)}$ \\ 
			$S_{CSS}$ & $\CSS^n$ & $\CSLFib_{(\infty,n)}$
		\end{tabular}.
	\end{center}
\end{corone}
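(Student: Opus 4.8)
The plan is to specialize the general statements \cref{cor:qcat equiv ncat} and \cref{cor:inftyn Grothendieck}, which are valid for every $S \subseteq S_{CSS,disc}$, to the three distinguished choices $S = \emptyset$, $S = S_{Seg}$ and $S = S_{CSS}$, and then translate the resulting equivalences into the notation of \ref{eq:inftyn categories} and \cref{not:left fib}. First I would check that all three choices are admissible inputs: each is a subset of $S_{CSS,disc}$ (\cref{not:infn notation}), so by \cref{lemma:contractible thetan} each consists of geometrically contractible cofibrations of $\cD$-spaces, and moreover $\emptyset, S_{Seg}, S_{CSS} \subseteq S_{CSS}$, so by \cref{rem:cartesian loc} each of them is Cartesian. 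Hence the hypotheses of \cref{cor:qcat equiv ncat} and \cref{cor:inftyn Grothendieck}, including the ``$S \subseteq S_{CSS}$'' clause that upgrades the equivalences to $\sP(\cD)[\cW^{-1}_S]$-enriched equivalences of enriched quasi-categories, are met for every row of the table.

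The next step is the notational identification. By \cref{not:underlying qcat s} the object $\sP(\cD)[\cW^{-1}_S]$ is the underlying quasi-category of the $S$-localized injective model structure on $\cD$-spaces; for $S = \emptyset$ this is $\sP(\cD)[\cW^{-1}]$ by \cref{not:underlying qcat}, while for $S = S_{Seg}$ and $S = S_{CSS}$ it is the underlying quasi-category of the $n$-Segal space, resp. $n$-complete Segal space, model structure, which are $\Seg^n$ and $\CSS^n$ by \ref{eq:inftyn categories}. Likewise $\LFib^S_{(\infty,n)/X}$ is the underlying quasi-category of the $S$-localized $(\infty,n)$-covariant model structure over $X$, so by \cref{not:left fib} it equals $\LFib_{(\infty,n)/X}$, $\SegLFib_{(\infty,n)/X}$ and $\CSLFib_{(\infty,n)/X}$ for $S = \emptyset, S_{Seg}, S_{CSS}$ respectively (the table suppresses the slice over $X$). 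Substituting these identifications into \cref{cor:qcat equiv ncat} and \cref{cor:inftyn Grothendieck} reproduces exactly the claimed equivalences, row by row, together with the enriched refinement.

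I do not expect a genuine obstacle here: the content has already been established in \cref{the:grothendieck infn} and its quasi-categorical consequences, and what remains is bookkeeping. The only point needing a moment's care is confirming the Cartesian hypothesis for all three rows, which is precisely where \cref{rem:cartesian loc} combined with the inclusions $S_{Seg}, S_{CSS} \subseteq S_{CSS}$ is used; for the discreteness-augmented sets $S_{Seg,disc}$ and $S_{CSS,disc}$ this would fail unless $k = n$, which is why those rows are deliberately left out of the table.
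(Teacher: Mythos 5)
Your proposal is correct and matches the (implicit) argument in the paper, which states this corollary without proof precisely because it is a notational specialization of \cref{cor:qcat equiv ncat} and \cref{cor:inftyn Grothendieck}. You correctly identify the two things that actually need checking — that each $S$ in the table lies in $S_{CSS,disc}$ (so the general results apply) and in $S_{CSS}$ (so the Cartesian/enriched refinement applies via \cref{rem:cartesian loc}) — and your closing remark explaining why $S_{Seg,disc}$ and $S_{CSS,disc}$ are omitted unless $k=n$ is exactly the right observation.
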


We will make two further equivalences that follow from \cref{cor:inftyn Grothendieck} more explicit. Let $k=n$ and $S=S_{Seg}$. Then, $\sP(\cD)[\cW^{-1}_S]$ is the quasi-category of Segal $(\infty,n)$-categories $\segcat_{(\infty,n)}$ (\ref{eq:inftyn categories}) and $\LFib^S_{(\infty,n)/X}$ is the quasi-category of Segal $(\infty,n)$-coCartesian fibrations, $\SegcoCart_{(\infty,n)/X}$, and \cref{cor:inftyn Grothendieck} gives us the Segal $(\infty,n)$-category enriched equivalence 
\begin{center}
	\begin{tikzcd}[row sep=0.5in, column sep=0.9in]
		\Fun_{\Seg\cat_{(\infty,n)}}(\C_X,\segcat_{(\infty,n)}) \arrow[r, shift left = 1.8] & 
		\SegcoCart_{(\infty,n)/X} \arrow[l, shift left=1.8, "\bot"'] 
	\end{tikzcd}.
\end{center} 
The next case is even more interesting. Again, let $k =n$ and $S= S_{CSS}$. Then $\sP(\cD)[\cW^{-1}_S]$ is the quasi-category of $(\infty,n)$-categories $\cat_{(\infty,n)}$ (\ref{eq:inftyn categories}) and $\LFib^S_{(\infty,n)/X}$ is the quasi-category of $(\infty,n)$-coCartesian fibrations, $\coCart_{(\infty,n)/X}$. Moreover, by \cite[Definition 6.1.5]{gepnerhaugseng2015enrichedinftycat}, $\cat_{(\infty,n)}$-enriched categories are precisely $(\infty,n+1)$-categories. 

Now, for a given $(\infty,n+1)$-category $X$, the corresponding $\sP(\cD)$-enriched category $\C_X$ is an $\sP(\cD)$-enriched Dwyer-Kan equivalence of $\sP(\cD)$-enriched complete Segal spaces (\cref{lemma:zigzag strictification}), which correspond to fully faithful and essentially surjective morphisms of $(\infty,n+1)$-categories (in the sense of \cite[Definition 5.3.1, Definition 5.3.3]{gepnerhaugseng2015enrichedinftycat}) and so an equivalence of $(\infty,n+1)$-categories \cite[Corollary 5.5.4]{gepnerhaugseng2015enrichedinftycat}, giving us an equivalence of $(\infty,n+1)$-categories
\begin{equation}\label{eq:strict}
\Fun_{\cat_{(\infty,n+1)}}(\C_X,\cat_{(\infty,n)}) \simeq \Fun_{\cat_{(\infty,n+1)}}(X,\cat_{(\infty,n)}).
\end{equation}
Now, combining the equivalence in \ref{eq:strict} with the result \cref{cor:inftyn Grothendieck} with a focus on $(\infty,n+1)$-categories, we get the following result.

\begin{corone} \label{cor:common grothendieck}
	Let $k=n$, $S=S_{CSS}$ and $\C$ be an $(\infty,n+1)$-category. Then the equivalence in \cref{cor:inftyn Grothendieck} induces an equivalence of $(\infty,n+1)$-categories 
	\begin{center}
		\begin{tikzcd}[row sep=0.5in, column sep=0.9in]
			\Fun_{\cat_{(\infty,n+1)}}(\C,\cat_{(\infty,n)}) \arrow[r, shift left = 1.8] & 
			\coCart_{(\infty,n)/\C} \arrow[l, shift left=1.8, "\simeq"'] 
		\end{tikzcd}
	\end{center} 
  and similarly an equivalence of $(\infty,n+1)$-categories 
 \begin{center}
  	\begin{tikzcd}[row sep=0.5in, column sep=0.9in]
  		\Fun_{\cat_{(\infty,n+1)}}(\C^{op},\cat_{(\infty,n)}) \arrow[r, shift left = 1.8] & 
  		\Cart_{(\infty,n)/\C} \arrow[l, shift left=1.8, "\simeq"'] 
  	\end{tikzcd}.
  \end{center} 
\end{corone}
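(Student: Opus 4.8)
The plan is to specialize the chain of equivalences in \cref{cor:inftyn Grothendieck} to the case $k=n$, $S=S_{CSS}$, and then rewrite both sides using the comparison between $\sP(\cD)$-enriched homotopical objects and $(\infty,n+1)$-categories. First I would record that when $k=n$ the discreteness set $S_{disc}$ of \cref{not:infn notation} is empty, so $S_{CSS}=S_{CSS,disc}$; hence the underlying quasi-category $\sP(\cD)[\cW^{-1}_{S_{CSS}}]$ is the quasi-category $\cat_{(\infty,n)}$ of $(\infty,n)$-categories (see \ref{eq:inftyn categories}), and $\LFib^{S_{CSS}}_{(\infty,n)/\C}$ is the quasi-category $\coCart_{(\infty,n)/\C}$ of $(\infty,n)$-coCartesian fibrations over $\C$ (\cref{not:left fib}). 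Since $S_{CSS}\subseteq S_{CSS}$, the relevant model structure is Cartesian (\cref{rem:cartesian loc}), so \cref{cor:inftyn Grothendieck} supplies an equivalence of $\cat_{(\infty,n)}$-enriched quasi-categories
\[
\Fun_{\QCat_{\cat_{(\infty,n)}}}(\C_\C,\cat_{(\infty,n)}) \xrightarrow{\ \simeq\ } \coCart_{(\infty,n)/\C},
\]
where $\C_\C$ denotes the $\sP(\cD)$-enriched category associated to the $\cD$-simplicial space $\C$ via \cref{lemma:zigzag strictification}.

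Next I would feed this through the dictionary of \cite{gepnerhaugseng2015enrichedinftycat}. By \cite[Definition 6.1.5]{gepnerhaugseng2015enrichedinftycat} a $\cat_{(\infty,n)}$-enriched quasi-category is precisely an $(\infty,n+1)$-category, and under this identification the functor quasi-category $\Fun_{\QCat_{\cat_{(\infty,n)}}}(-,\cat_{(\infty,n)})$ becomes the internal mapping object $\Fun_{\cat_{(\infty,n+1)}}(-,\cat_{(\infty,n)})$; thus the display above is an equivalence of $(\infty,n+1)$-categories $\Fun_{\cat_{(\infty,n+1)}}(\C_\C,\cat_{(\infty,n)})\simeq\coCart_{(\infty,n)/\C}$. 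It then remains to replace $\C_\C$ by $\C$. Since $\C$ is an $(\infty,n+1)$-category, \cref{lemma:zigzag strictification} provides an $\sP(\cD)$-enriched Dwyer-Kan equivalence comparing $\C$ and $N_\cD\C_\C$, which by \cite[Definition 5.3.1, Definition 5.3.3, Corollary 5.5.4]{gepnerhaugseng2015enrichedinftycat} is an equivalence of $(\infty,n+1)$-categories; precomposition along it is the equivalence \ref{eq:strict}, namely $\Fun_{\cat_{(\infty,n+1)}}(\C_\C,\cat_{(\infty,n)})\simeq\Fun_{\cat_{(\infty,n+1)}}(\C,\cat_{(\infty,n)})$. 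Composing the two equivalences yields the first equivalence of the statement.

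For the contravariant equivalence I would apply the covariant result just obtained to the $(\infty,n+1)$-category $\C^{op}$, giving $\Fun_{\cat_{(\infty,n+1)}}(\C^{op},\cat_{(\infty,n)})\simeq\coCart_{(\infty,n)/\C^{op}}$, and then identify $\coCart_{(\infty,n)/\C^{op}}$ with $\Cart_{(\infty,n)/\C}$. The latter identification comes from applying $(-)^{op}$ (\ref{eq:op}) to the relevant model structures and using that, exactly as in \cref{lemma:opposite fib} together with the $\cD$-contravariant development summarized in \cref{rem:nrfib}, $p\mapsto p^{op}$ carries $(\infty,n)$-coCartesian fibrations over $\C^{op}$ to $(\infty,n)$-Cartesian fibrations over $\C$ and induces an equivalence of the underlying quasi-categories; alternatively one may simply rerun \cref{cor:inftyn Grothendieck} for the $\cD$-contravariant model structure, whose parallel theory is in place throughout \cref{sec:localized} and \cref{sec:infn fib}. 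The step that requires genuine care — and which I expect to be the main obstacle — is the second paragraph: one must check that the enriched equivalence produced by \cref{cor:inftyn Grothendieck} is compatible with the Gepner--Haugseng identification of $\cat_{(\infty,n)}$-enriched quasi-categories with model-independent $(\infty,n+1)$-categories, i.e.\ that passing to underlying quasi-categories and changing base from $\sP(\cD)[\cW^{-1}_{S_{CSS}}]$ to $\cat_{(\infty,n)}$ are coherently matched. Once this is pinned down, the remainder is a routine concatenation of equivalences.
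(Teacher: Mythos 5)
Your proof is correct and takes essentially the same route as the paper: specialize \cref{cor:inftyn Grothendieck} to $k=n$, $S=S_{CSS}$, translate via the Gepner--Haugseng dictionary of \cite[Definition 6.1.5]{gepnerhaugseng2015enrichedinftycat}, replace $\C_{\C}$ by $\C$ via \ref{eq:strict}, and dualize for the Cartesian statement. Your explicit observation that $S_{CSS}=S_{CSS,disc}$ when $k=n$ (so that $\sP(\cD)[\cW^{-1}_{S_{CSS}}]$ really is $\cat_{(\infty,n)}$), and your flagging of the implicit coherence between passing to underlying quasi-categories and the Gepner--Haugseng identification, are careful clarifications of points the paper leaves unstated but otherwise match what the paper does.
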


This result is in some sense the most common form of the Grothendieck construction that we would expect in the setting of $(\infty,n)$-categories. 

\begin{exone} \label{ex:twisted infn}
	Let $\C$ be an $(\infty,n+1)$-category. Then, following \cref{ex:twisted}, the twisted arrow $(\infty,n)$-coCartesian fibration (\cref{cor:twisted cocart}) corresponds (under the correspondence given in \cref{cor:common grothendieck}) to the mapping object functor
	$$\uMap_\C(-,-):\C^{op} \times \C \to \cat_{(\infty,n)}.$$
	 Similarly, using \cref{ex:source and target fib}, the target $(\infty,n+1)$-coCartesian fibration and the source $(\infty,n+1)$-Cartesian fibration (\cref{cor:target infn}) correspond, again via \cref{cor:common grothendieck}, to {\it over-category functors} and {\it under-category functors}
	 $$ \C_{/-}: \C \to \cat_{(\infty,n+1)},$$
	 $$ \C_{-/}: \C^{op} \to \cat_{(\infty,n+1)},$$
	 respectively.
\end{exone}

We will end this section with an analysis of {\it universal $S$-localized $\cD$-left fibration} and, in particular, the {\it universal $(\infty,n)$-coCartesian fibration}. Denote by $\LFib^S_{(\infty,n)}$ the sub-right fibration of $\sOall_{/\widehat{N_\cD\sP(\cD)^S}}$ consisting of $S$-localized $(\infty,n)$-left fibrations. Notice, by \cref{cor:lfib classifier s}, the map of quasi-categories $(\CSS_{\sP(\cD)^S})_{/\widehat{N_\cD\sP(\cD)^S}} \to \LFib^S_{(\infty,n)}$ is an equivalence. Applying this to the case $S= S_{CSS,disc}$ and using notation in \ref{eq:inftyn categories} we get the following special case.

\begin{corone}\label{cor:lfib classifier ninf}
	The map $(-)^*\pi_*: (\cat_{(\infty,n+1)})_{/\cat_{(\infty,n)}} \to \coCart_{(\infty,n)}$ is an equivalence of $(\infty,n+1)$-categories.
\end{corone}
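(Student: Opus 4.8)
The plan is to obtain the statement as the specialization of \cref{cor:lfib classifier s} to the diagram category $\cD = \Theta_k \times \DD^{n-k}$ and the localizing set $S = S_{CSS,disc}$ (\cref{not:infn notation}), followed by an unwinding of the three objects it produces. Recall that \cref{cor:lfib classifier s} gives an equivalence of quasi-categories $(\CSS_{\sP(\cD)^S})_{/\widehat{N_\cD\sP(\cD)^S}} \to \LFib^S_{(\infty,n)}$, realized by pulling back $\widehat{\pi_*}$; the work is to recognize each ingredient in the language of $(\infty,n)$-categories.

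First I would use \cref{the:cso in inftyn}. Since throughout \cref{subsec:infn fib} the $S_{CSS,disc}$-localized injective model structure on $\cD$-spaces is a model for $(\infty,n)$-categories, applying \cref{the:cso in inftyn} one level higher shows that the $S$-localized $\sP(\cD)$-enriched complete Segal space model structure (\cref{the:spd enriched css s}) is a model for $(\infty,n+1)$-categories, so $\CSS_{\sP(\cD)^S}$ is $\cat_{(\infty,n+1)}$. Next, by \cref{lemma:zigzag strictification}, $\widehat{N_\cD\sP(\cD)^S}$ is level-wise complete Segal space equivalent to the $\cD$-nerve of the $\sP(\cD)$-enriched category of $S$-local $\cD$-spaces, which is exactly the object of $\cat_{(\infty,n+1)}$ representing $\cat_{(\infty,n)}$; this identifies the slice $(\CSS_{\sP(\cD)^S})_{/\widehat{N_\cD\sP(\cD)^S}}$ with $(\cat_{(\infty,n+1)})_{/\cat_{(\infty,n)}}$ and $\widehat{\pi_*}$ with the universal $(\infty,n)$-coCartesian fibration $\pi_* \colon (\cat_{(\infty,n)})_{[0]/} \to \cat_{(\infty,n)}$. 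Finally, by the conventions of \cref{not:left fib} and the paragraph preceding the statement, $\LFib^S_{(\infty,n)} = \LFib^{S_{CSS,disc}}_{(\infty,n)}$ is precisely $\coCart_{(\infty,n)}$. Under these identifications the equivalence of \cref{cor:lfib classifier s} becomes the asserted $(-)^*\pi_*$.

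It then remains to promote this quasi-categorical equivalence to an equivalence of $(\infty,n+1)$-categories. Both sides carry a natural enrichment — $(\cat_{(\infty,n+1)})_{/\cat_{(\infty,n)}}$ as a slice of an $(\infty,n+1)$-category, and $\coCart_{(\infty,n)}$ through the $\sP(\cD)$-enriched Grothendieck equivalence of \cref{the:grothendieck infn}/\cref{cor:inftyn Grothendieck} — and $(-)^*\pi_*$ is assembled from pullbacks, hence compatible with them; so, exactly as in the proof of \cref{cor:common grothendieck}, a functor of $(\infty,n+1)$-categories which is an equivalence on underlying quasi-categories and respects the enrichment is an equivalence. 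I expect the main obstacle to be precisely this enriched upgrade: \cref{cor:lfib classifier s} is stated only at the level of quasi-categories (and, as the remark after \cref{cor:lfib classifier} flags, the enriched refinement of the universal fibration is genuinely subtle), and when $k < n$ the set $S_{CSS,disc}$ is not contained in $S_{CSS}$, so \cref{the:grothendieck infn} supplies only a plain, not an enriched, Quillen equivalence. Thus one cannot simply transport the enrichment and instead must reduce, via \cref{lemma:zigzag strictification} and the Quillen equivalences of \cref{the:strictification dleft fib s}, to a base of the form $N_\cD\C$, where the enriched mapping objects of $\coCart_{(\infty,n)/N_\cD\C}$ can be compared with those of $\Fun_{\cat_{(\infty,n+1)}}(\C,\cat_{(\infty,n)})$ directly. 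By contrast, the index bookkeeping in the identifications of the second paragraph is routine.
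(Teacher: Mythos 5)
Your first two paragraphs are exactly the paper's argument: \cref{cor:lfib classifier ninf} comes with no proof other than ``specialize \cref{cor:lfib classifier s} to $S = S_{CSS,disc}$ and rename via \ref{eq:inftyn categories},'' and your identifications of $\CSS_{\sP(\cD)^S}$ with $\cat_{(\infty,n+1)}$ (via \cref{the:cso in inftyn}), of $\widehat{N_\cD\sP(\cD)^S}$ with the object presenting $\cat_{(\infty,n)}$, and of $\LFib^{S_{CSS,disc}}_{(\infty,n)}$ with $\coCart_{(\infty,n)}$ are all correct and match the paper's intent. As far as the paper is concerned, that is the whole proof.

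The worry you raise in your third paragraph is, however, well-founded, and it identifies something the paper itself does not resolve rather than a step you should expect to find. \cref{cor:lfib classifier s} is only an equivalence of quasi-categories, and the remark immediately after \cref{cor:lfib classifier} explicitly concedes that the enriched upgrade of the classification map is not established in the paper and is deferred to future work. The paper never constructs an $(\infty,n+1)$-categorical enhancement of $\coCart_{(\infty,n)}$ compatible with $(-)^*\pi_*$, and, as you observe, when $k<n$ one has $S_{CSS,disc}\not\subseteq S_{CSS}$, so \cref{the:grothendieck infn} is only a plain Quillen equivalence and \cref{cor:common grothendieck} is stated only for $k=n$; the enrichment therefore cannot simply be transported from the Grothendieck construction. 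So the phrase ``equivalence of $(\infty,n+1)$-categories'' in \cref{cor:lfib classifier ninf} is not backed by the argument the paper gives, which only delivers an equivalence of the underlying quasi-categories. Your proposed workaround via \cref{lemma:zigzag strictification} and \cref{the:strictification dleft fib s} to a base $N_\cD\C$ is a plausible direction, but it would be new content the paper does not supply.
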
 

\begin{remone} \label{rem:universal cocart}
	Applying \cref{rem:universal lfib s} to \cref{cor:lfib classifier ninf} it follows that there is a {\it universal $(\infty,n)$-coCartesian fibration} of the form $\pi_*: (\cat_{(\infty,n)})_{D[0]/} \to \cat_{(\infty,n)}$. Moreover, for every $(\infty,n)$-left fibration $p:L \to \C$, there is a (homotopically unique) pullback 
	\begin{center}
		\begin{tikzcd}[row sep=0.5in, column sep=0.5in]
			L \arrow[r, dashed] \arrow[d] \arrow[dr, phantom, "\ulcorner", very near start] \arrow[rr, bend right=20]& (\cat_{(\infty,n)})_{D[0]/} \arrow[r] \arrow[d, "\pi_*"] \arrow[dr, phantom, "\ulcorner", very near start] & \widehat{N_\cD\sP(\cD)}_{D[0]/}  \arrow[d, "\pi_*"] \\
			N_\cD\C \arrow[r, dashed] \arrow[rr, bend right=20]&   \cat_{(\infty,n)} \arrow[r, hookrightarrow] & \widehat{N_\cD\sP(\cD)}
		\end{tikzcd}.
	\end{center}
	that factors through $\cat_{(\infty,n)}$ if and only if $p$ is $(\infty,n)$-coCartesian.
\end{remone} 

Again, \cite[Proposition 2.4]{rasekh2021univalence} implies the following result regarding univalence.

\begin{corone} \label{cor:universal cocart univalent}
	The universal $(\infty,n)$-coCartesian fibration is univalent.
\end{corone}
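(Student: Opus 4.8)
The plan is to invoke the representability criterion for univalence. By \cref{rem:universal cocart}, the universal $(\infty,n)$-coCartesian fibration is the projection $\pi_*: (\cat_{(\infty,n)})_{D[0]/} \to \cat_{(\infty,n)}$ from the $(\infty,n+1)$-category of pointed $(\infty,n)$-categories, obtained from the universal $S_{CSS,disc}$-localized $\cD$-left fibration of \cref{rem:universal lfib s} by restriction along the inclusion $\cat_{(\infty,n)} \hookrightarrow \widehat{N_\cD\sP(\cD)}$. To show $\pi_*$ is univalent, by \cite[Proposition 2.4]{rasekh2021univalence} it suffices to verify that pulling back $\pi_*$ yields a fully faithful functor $(\cat_{(\infty,n+1)})_{/\cat_{(\infty,n)}} \to \coCart_{(\infty,n)}$; that is, that the functor classifying $(\infty,n)$-coCartesian fibrations is fully faithful.

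First I would recall \cref{cor:lfib classifier ninf}, which asserts that this very functor $(-)^*\pi_*$ is not merely fully faithful but an equivalence of $(\infty,n+1)$-categories. In particular it is fully faithful, so the hypothesis of \cite[Proposition 2.4]{rasekh2021univalence} is satisfied and $\pi_*$ is univalent. This mirrors, at the level of $(\infty,n)$-categories, the argument already used for \cref{cor:universal lfib univalent s}: the classifying right fibration $\LFib_{(\infty,n)}^{S_{CSS,disc}} \to \CSS_{\sP(\cD)^{S_{CSS,disc}}}$ is equivalent over its base to the slice $(\cat_{(\infty,n+1)})_{/\cat_{(\infty,n)}}$, which is exactly the condition identifying $\pi_*$ as a univalent fibration.

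Alternatively, one may bypass \cref{cor:lfib classifier ninf} and deduce the statement from \cref{cor:universal lfib univalent s}: the inclusion $\cat_{(\infty,n)} \hookrightarrow \widehat{N_\cD\sP(\cD)}$ is the inclusion of the $S_{CSS,disc}$-local objects, and \cref{rem:universal lfib s} exhibits $\pi_*$ as the restriction of the (univalent) universal $\cD$-left fibration along this fully faithful inclusion, so \cite[Proposition 2.5]{rasekh2021univalence} yields univalence of the restriction, precisely as indicated in the remark following \cref{cor:universal lfib univalent s}. Either route involves essentially no computation; the only point requiring care is checking that the hypotheses of the cited propositions of \cite{rasekh2021univalence} apply verbatim in the present quasi-categorical setting, which is guaranteed by the construction of the classifying right fibrations in \cref{cor:lfib classifier s} and \cref{cor:lfib classifier ninf}. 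Accordingly there is no substantial obstacle: the result is a formal consequence of the equivalence established in \cref{cor:lfib classifier ninf}.
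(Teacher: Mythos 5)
Your proposal matches the paper's argument: the paper also derives univalence of the universal $(\infty,n)$-coCartesian fibration from \cite[Proposition 2.4]{rasekh2021univalence} applied to the equivalence of \cref{cor:lfib classifier ninf}, and the alternative route via \cite[Proposition 2.5]{rasekh2021univalence} restricting along the inclusion of $S$-local objects is exactly what the paper records in the remark following \cref{cor:universal lfib univalent s}. Both your main and alternative arguments are correct and coincide with the paper's.
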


\begin{remone} \label{rem:univalence connection to other work}
	The study of universal coCartesian fibrations and their relation to univalence has a very rich history. 
	\begin{enumerate}
		\item The underlying groupoid of the universal $(\infty,0)$-coCartesian fibration has been studied under names, such as {\it ``classifying space of spaces"}. It is in fact one of the first examples of univalent morphisms in higher category theory, due to the work by Kapulkin and Lumsdaine \cite{kapulkinlumsdaine2012kanunivalent}.
		\item The actual universal $(\infty,0)$-coCartesian fibration is commonly known as the {\it universal left fibration} \cite{cisinski2019highercategories,lurie2009htt,rasekh2021univalence}, again the univalence thereof was well-established.
		\item The universal $(\infty,1)$-coCartesian fibration was studied abstractly in \cite[Subsection 3.3.2]{lurie2009htt}. Moreover, it has been studied via internal $\infty$-categories (also known as {\it complete Segal objects}) in \cite{stenzel2020comprehension} and \cite{rasekh2017cartesian}.
	\end{enumerate}
\end{remone} 

Notice this notion of univalence is not necessarily appropriate (as discussed in \cite[Section 7]{rasekh2021univalence} for the universal $(\infty,1)$-Cartesian fibration) and we would expect a more refined definition of univalence more suitable for the $(\infty,n)$-categorical setting. See \cref{subsec:applications} for more details.
 
\subsection{Low-Dimensional Examples}\label{subsec:examples}
In this last subsection we consider several low-dimensional cases of the results in \cref{subsec:infn fib} more explicitly. We then use those examples to construct counter-examples, justifying the necessity of several conditions. 

Let us start with the lowest possible case. If $n=0$, then $\cD = \Theta_k \times \DD^{n-k}$ is necessarily the terminal category and $\sP(\cD)=\s$. In that case, the $\cD$-covariant model structure is simply the covariant model structure for simplicial spaces, studied extensively in \cite{rasekh2017left} and reviewed in \cref{subsec:lfib}. Notice, we can use this example in particular to observe that the Segal condition in \cref{cor:yoneda} and \cref{cor:yoneda contra} is necessary when deducing that $X^{D[1]} \times_X D[0]$ is the $(\infty,1)$-covariant fibrant replacement. Indeed a counter-example for the case $X= G[2]$ is given in \cite[Remark 3.48]{rasekh2017left}. As, according to \cref{not:infn notation}, the localizing set for $n=0$ is empty, we move on to the next level to get interesting examples.

Let us move on to the case $n=1$. In that case $\cD = \Theta_k \times \DD^{n-k}= \DD$ and so $\sP(\cD) = \ss$. Hence, the possible localizing sets (as described in \cref{not:infn notation}) are:
$$S_{Seg} = \{ G[n] \to F[n] : n \geq 2\}$$
$$S_{CSS} = S_{Seg} \cup \{ F[0] \to E[1]\}$$
$$S_{disc} = \emptyset$$
Let use the notation $\sss = \sP(\DD\times\DD)$ for the category of bisimplicial spaces. Then for every bisimplicial space $X$, we have two $S$-localized $(\infty,1)$-covariant model structure on bisimplicial spaces over $X$: The (Segal) $(\infty,1)$-coCartesian model structure. By \cref{the:cocart model} both model structures are left proper, combinatorial, simplicial and enriched over the (complete) Segal space model structures on simplicial spaces. 

Notice several results in \cref{sec:infn fib} rely on the assumption that $X$ has weakly constant objects. We want to show that this assumption is in fact necessary which we illustrate in the following example.

\begin{exone}
	Let $F[1]$ be the simplicial space embedded vertically in bisimplicial spaces. Then by \cref{the:diag local ncov s} and \cref{prop:dcov vs diag s} the adjunction 
	\begin{center}
		\adjun{(\sss_{/F[1]})^{coCart_{(\infty,1)}}}{(\ss_{/F[1]})^{CSS}}{\Diag^*}{\Diag_*}
	\end{center}
	is a Quillen equivalence. Let $G[2] \to F[1]$ be the map given as $G[2]= F[1] \coprod_{F[0]} F[1] \to F[0] \coprod_{F[0]}F[1] \cong F[1]$. Then we have pullback squares (using \cref{not:brackets})
	\begin{center}
		\pbsq{F[1]}{G[2]}{F[0]}{F[1]}{}{}{}{<0>}, 
		\pbsq{F[0]}{G[2]}{F[0]}{F[1]}{}{}{}{<1>}
	\end{center} 
	and so the map is a fiber-wise complete Segal space, however, is not a complete Segal space fibration as $F[1]$ is a complete Segal space, whereas $G[2]$ is not (and in fact not even a Segal space). This implies that $G[2] \to F[1]$ is a fiber-wise complete Segal space without being an $(\infty,1)$-coCartesian fibration justifying the weakly constant object assumption in \cref{prop:fibrancies nfib}.
	
	Next, notice the map $F[1] \coprod F[0] \to G[2]$ is a fiber-wise equivalence (in fact identity) over $F[1]$, but evidently not an equivalence and so gives a justification for the weakly constant object assumption in \cref{the:fib equiv}. Moreover, the two maps $<0>,<1>:F[0] \to F[1]$ are already $(\infty,1)$-coCartesian fibrations and so equal to their fibrant replacement, this example also applies to \cref{the:recognition principle inftyn}.
	
	Next, the adjunction 
	\begin{center}
		\adjun{(\sss_{/G[2]})^{coCart_{(\infty,1)}}}{(\sss_{/F[2]})^{coCart_{(\infty,1)}}}{p_!}{p^*}
	\end{center}
	induced by the equivalence $p:G[2] \to F[2]$ is not a Quillen equivalence as the derived unit map of $<02>: F[1] \to F[2]$ is $\partial F[1] \to F[1]$, which is not an equivalence justifying the weakly constant object requirement in \cref{the:invariance property}.
	
	We can use a similar argument for another counter-example. Letting $p$ be the $(\infty,1)$-right fibration $p = <02>: F[1] \to F[2]$ the adjunction 
	\begin{center}
		\adjun{(\sss_{/F[2]})^{coCart_{(\infty,1)}}}{(\sss_{/F[2]})^{coCart_{(\infty,1)}}}{p_!p^*}{p_*p^*}
	\end{center}
	is not a Quillen adjunction as it takes the equivalence $G[2] \to F[2]$ over $F[2]$ to the map $\partial F[1] \to F[1]$ over $F[2]$, which is certainly not an equivalence giving us a counter-example to \cref{the:rfib pb cov infn} without the weakly constant object assumption.
\end{exone} 

Having thoroughly covered the case $n=1$ we move on to the case $n=2$. In this case we either have $\cD = \Theta_2$ or $\cD=\DD^2$. If $\cD = \Theta_2$, then the resulting model structure for $(\infty,2)$-categories is in fact Cartesian (\cref{the:thetan model cat}) and hence this model structure behaves similarly to the complete Segal space model structure and we will not discuss it further. 

On the other hand, the $(\infty,2)$-categorical model structure on $\sP(\DD^2)$ is given via $2$-fold complete Segal spaces, which is in fact not Cartesian and we will use it to show that the assumption of Cartesian mapping spaces in \cref{the:cdcov s invariant s equiv}, and its $(\infty,n)$-categorical application in \cref{the:invariance property}, is necessary.

\begin{exone} \label{ex:noncartesian mapping spaces not invariant}
	Fix a bisimplicial space $A$ that is equivalent to the terminal object in the $2$-fold complete Segal space model structure and denote a choice of fibrant replacement by $A \to \hat{A}$. Let $\Sigma A$ be the category enriched over bisimplicial spaces with two objects $0,1$ and one non-trivial mapping object $\Map_{\Sigma A}(0,1) = A$. Define $\Sigma \hat{A}$ similarly and notice the equivalence $A \to \hat{A}$ gives us a localized equivalence of trisimplicial spaces $N_{\DD^2}(\Sigma A) \to N_{\DD^2}(\Sigma \hat{A})$.
	
	Let $B$ be a fibrant bisimplicial space in the $2$-fold complete Segal space model structure. Then the projection map $\hat{A} \times B \to\hat{A}$ is an $(\infty,2)$-coCartesian fibration over $\hat{A}$. If \cref{the:invariance property} was valid, then the derived counit map, which is the top map of the pullback diagram 
	\begin{center}
		\pbsq{N_{\DD^2}(\Sigma A) \times B}{N_{\DD^2}(\Sigma \hat{A}) \times B}{N_{\DD^2}(\Sigma A)}{N_{\DD^2}(\Sigma \hat{A})}{}{\pi_2}{\pi_2}{}
	\end{center}
	would be an $(\infty,2)$-coCartesian equivalence over $N_{\DD^2}\Sigma \hat{A}$, which would in particular imply that the diagonal is an equivalence in the $2$-fold complete Segal space model structure (\cref{the:diag local ncov s}).
	
	Now we have $N_{\DD^2}(\Sigma A)= D[1] \times A \coprod_{\partial D[1] \times A} \partial D[1] \simeq D[1] \times A \coprod_{\partial D[1] \times A} \partial D[1] \times \hat{A}$. Hence, 
	\begin{equation}\label{eq:pp one}
     \fDiag N_{\DD^2}(\Sigma A \times B) = \Delta[1] \times A \times B \coprod_{\partial D[1] \times A} \partial D[1] \times \hat{A}
    \end{equation} 
	and similarly 
	\begin{equation} \label{eq:pp two} 
	 \fDiag N_{\DD^2}(\Sigma \hat{A} \times B) = \Delta[1] \times \hat{A} \times B \coprod_{\partial D[1] \times \hat{A}} \partial D[1] \times \hat{A}= \Delta[1] \times \hat{A} \times B 
	\end{equation} 
	and so the map from \ref{eq:pp one} to \ref{eq:pp two} is simply the pushout product $(A \to \hat{A}) \square ( \emptyset \to B) \square (\partial \Delta[1] \to \Delta[1])$.  This pushout product is generally not a trivial cofibration. 
	
	Indeed, for a concrete counter-example let $A= F[0,1]$ and $B=F[1,0]$. By definition of the $2$-fold complete Segal space model structure (\cref{the:cso in inftyn}), the map $F[0,0] \to F[0,1]$ is an equivalence and $F[1,0]$ is fibrant, but the map $F[1,0] \to F[1,1]$, which is the pushout product of $F[0,0] \to F[0,1]$ with $\emptyset \to F[1,0]$ is not an equivalence.
\end{exone} 

\section{What needs to be done?}\label{sec:next}
 In this last section we discuss work that is left to be done. We use the notation and the results from the previous sections. 
 
 \begin{enumerate}
 	\item {\bf Comparison with other Models of Fibrations:} In the $(\infty,2)$-categorical setting there are several alternative studies of fibrations using marked (and scaled) simplicial sets \cite{lurie2009goodwillie,gagnaharpazlanari2021inftytwocartfib,garciastern2021twocat}. We do know that our approach to fibrations does compare appropriately when restricted to $(\infty,1)$-categories (this is proven in \cite[Appendix B]{rasekh2017left}), however, it remains to be seen whether this generalizes to a comparison between $(\infty,2)$-categorical fibrations given in \cref{subsec:infn fib} and scaled simplicial approach. This could be quite challenging as these fibrations use very different models of $(\infty,2)$-categories, which cannot be directly compared. However, there is the possibility of using recent results directly comparing $\Theta_2$-sets and $2$-complicial sets \cite{bergnerrovelliozornova2021comparisonthetatwo}.
 	
 	\item {\bf Fibrations for Lax Functors} As stated in \cref{the:grothendieck infn}, the notion of Cartesian fibration given here corresponds to {\it pseudo-functors} rather than {\it lax functors}. This in contrast to work in \cite{gagnaharpazlanari2021inftytwocartfib}, which consider fibrations with different levels of laxity. 
 	
	The notion $\cD$-left fibration given here (\cref{def:nleft}) is a direct generalization of left fibrations, which by its $(\infty,1)$-categorical nature is pseudo by definition. Hence, one valuable next step is to work out the appropriate generalization of level-wise left fibrations that is able to include the data of a lax functor. 
 
 	One interesting possibility is to use the $(\infty,n)$-categorical twisted arrow construction (\cref{cor:twisted cocart}/\cref{ex:twisted infn}) to study lax constructions, similar to what has been done in \cite{gepnerhaugsengnikolaus2017laxcolimits} in the context of lax limits of $(\infty,1)$-categories.
 	
 	\item {\bf Fibrations in $\Theta_n$-Spaces:}
 	Starting with a simplicial presheaf model of $(\infty,n)$-categories, there are generally two methods to generalize it to a model of $(\infty,n+1)$-categories: Either we apply the $\Theta$-construction to the diagram category, and applying this line of thinking to complete Segal spaces precisely gives us $\Theta_n$-spaces \cite{rezk2010thetanspaces}, or we take the product of the diagram with $\DD$, and applying this to complete Segal spaces gives us $n$-fold complete Segal spaces \cite{barwick2005nfoldsegalspaces}. 
 	
 	From this perspective the results of this paper can be understood as applying the product with $\DD$ to a given diagram $\cD$, leaving us with the question whether we can give an analogous construction on presheaf categories $\sP(\Theta\D)$. It is expected that such a fibration for $\Theta\cD$-spaces would require a very different construction, as for a given $\Theta\cD$-spaces $X$, the analogue to the value for $\cD$-simplicial spaces (\cref{def:val}) would just be a space $X[0]$ and hence could not be used to define a fibration with fibers in $\cD$-spaces. 
 	
 	\item {\bf Fibrations for Set-Valued Diagrams:} 
 	In \cite{ara2014highersegal} Ara uses the theory of {\it Cisinski model categories} \cite{cisinski2006cisinskimodelstructure} to construct a model structure on $\Theta_n$-sets, $\Fun(\Theta_n^{op},\set)$, and Quillen equivalences between $\Theta_n$-spaces and $\Theta_n$-sets, generalizing the Quillen equivalences between quasi-categories and complete Segal spaces due to Joyal and Tierney \cite{joyaltierney2007qcatvssegal}.
 	
 	We already know that we can adjust the Quillen equivalences by Joyal and Tierney to both prove an equivalence of $(\infty,0)$-coCartesian fibrations over quasi-categories and complete Segal spaces \cite[Appendix B]{rasekh2017left} and similarly an equivalence of $(\infty,1)$-coCartesian fibrations \cite{rasekh2021cartfibmarkedvscso}. 
 	
 	The results of this work suggest two natural next steps: First, can we use a similar approach to Ara (possibly again via results in \cite{cisinski2006cisinskimodelstructure}) to define fibrations for $\Theta_n$-set-enriched Segal spaces and prove it is equivalent to $\Theta_n$-space enriched Segal spaces studied here? Moreover, assuming we can develop a theory of fibrations of $\Theta_n$-spaces, as outlined in the previous item, can we construct an equivalent theory of fibrations of $\Theta_n$-sets, using similar methods?
 
 	\item {\bf Direct Grothendieck Construction:}
 	In \cref{subsec:enriched groth} we construct a zigzag of Quillen equivalences between functor categories and fibrations over $\cD$-simplicial spaces with weakly constant objects. This zigzag already enables us to gain deep insight into various properties of localized $\cD$-left fibrations and in particular $(\infty,n)$-coCartesian fibrations. 
 	
 	However, for many purposes it would be more advantageous to have a direct Quillen equivalence between functors and fibrations, similar to the {\it straightening construction} \cite[Theorem 3.2.0.1]{lurie2009htt}. The key difference is that the straightening construction takes two steps as once. For a given map of simplicial sets $X \to S$ it first constructs a simplicially enriched category $\mathfrak{C}[S]$, known as the {\it strictification} \cite[Definition 1.1.5.1]{lurie2009htt} and then a simplicial enriched functor from $\mathfrak{C}[S]$ valued in simplicial sets. Hence, this direct construction relies on the fact that there is a direct Quillen equivalence $\mathfrak{C}[-]$ from quasi-categories to Kan enriched categories \cite[Theorem 2.2.5.1]{lurie2009htt}. 
 	
 	In order to generalize this construction we would need strong strictification results that take $\cD$-simplicial spaces to  $\sP(\cD)$-enriched categories, which does not exist yet and might not be possible for a general category $\cD$. A possibly more feasible first step is to strictify (discrete) $\Theta_{k} \times \DD^{n-k+1}$-spaces to $\Theta_{k} \times \DD^{n-k+1}$-enriched categories in an efficient way that would then enable us to construct a direct Grothendieck construction. 
 	
 	One first advantage of a direct comparison is that we could better understand the relation between functors and universal $(\infty,n)$-coCartesian fibrations (\cref{rem:universal cocart}), rather than having to trace that relation through various equivalences (\cref{cor:qcat equiv ncat}). 

 	\item {\bf $(\infty,\infty)$-Cartesian Fibrations} We applied these results to simplicial presheaves on $\Theta_k \times \DD^{n-k}$, which model $(\infty,n)$-categories. However, the results in \cref{sec:localized} are general enough to apply to simplicial presheaves on $\Theta_\infty$ or other infinite versions of $\Theta_k \times \DD^{n-k}$. The reason we stayed with finite diagrams is that there is no agreed upon notion of {\it $(\infty,\infty)$-categories} using $\Theta_\infty$-spaces. Assuming that a theory of $(\infty,\infty)$-categories is developed using simplicial presheaves on some diagram, we expect the results here to immediately provide a theory of fibrations for $(\infty,\infty)$-categories.   

 	\item {\bf Flagged $(\infty,n)$-Cartesian Fibrations:} In \cite{ayalafrancis2018flagged} Ayala and Francis define {\it flagged $(\infty,n)$-categories} and prove these are equivalent to Segal $\Theta_n$-spaces. The notion of a flagged $(\infty,n)$-category can be readily generalized to a notion of {\it flagged $(\infty,n)$-Cartesian fibration}. Given the equivalence due to Ayala and Francis, we should expect a generalization to an equivalence between Segal $(\infty,n)$-Cartesian fibrations (\cref{not:left fib}) and flagged $(\infty,n)$-Cartesian fibrations. 
 
 	\item {\bf Target Fibrations and Limits:} In \cref{ex:twisted infn} we prove that for every $(\infty,n)$-category $\C$, there is a {\it target $(\infty,n)$-coCartesian fibration} with fiber over an object $c$ in $\C$ given by the over-category $\C_{/c}$. In the case of $(\infty,1)$-categories this coCartesian fibration is a Cartesian fibration if and only if $\C$ has finite limits \cite[Lemma 6.1.1.1]{lurie2009htt}. On the other hand the theory of limits of $(\infty,n)$-categories is still in its early stages \cite{gagnaharpazlanari2020inftytwolimits} and so this perspective suggests the following next step. 
 	
 	Assuming we have developed a theory of limits in a model of $(\infty,n)$-categories based on simplicial presheaves (such as $\Theta_n$-spaces), we want to prove that the target $(\infty,n)$-coCartesian fibration is a $(\infty,n)$-Cartesian fibration if and only if the $(\infty,n)$-category is finitely complete. Next, assuming we could prove a comparison theorem with fibrations in other models (as discussed above) we could then deduce the notion of being a finitely complete $(\infty,n)$-category is also model independent.
 
 	\item {\bf A model for the $(\infty,n)$-Category of $(\infty,n)$-Categories:}
 	One common way for constructing quasi-categories is to apply the {\it simplicial nerve} to a Kan enriched category, which, in particular, can be obtained as the sub-category of fibrant-cofibrant objects of a simplicial model category \cite[Proposition 1.1.5.10]{lurie2009htt}. Using this approach we can in particular obtain the quasi-category of spaces from the Kan model structure or the quasi-category of $(\infty,1)$-categories from the complete Segal space model structure. 
 	
 	However, the simplicial nerve construction relies on the {\it strictification functor} $\mathfrak{C}[-]$, which is computationally quite challenging (and a lot of effort has gone into a better understanding \cite{riehl2011simpcatofqcat,duggerspivak2011rigidification}). So, for some important examples, such as spaces and $(\infty,1)$-categories, it is beneficial to have more direct constructions. For example a key aspect in the construction of the {\it simplicial model of univalent foundations} due to Kapulkin and Lumsdaine is an explicit construction of the $(\infty,0)$-category of $(\infty,0)$-categories, which is necessary to prove the existence of {\it univalent universes} \cite{kapulkinlumsdaine2012kanunivalent}. 
 	
 	One established method for the explicit construction of such $(\infty,1)$-categories is via $(\infty,1)$-coCartesian fibrations, examples of which can be found in \cite{cisinski2019highercategories} for simplicial sets and \cite{kazhdanvarshvsky2014yoneda,rasekh2018model}  for simplicial spaces. Hence we expect that $(\infty,n)$-coCartesian fibrations can be used in a similar fashion to construct explicit models of $(\infty,n)$-categories by focusing on $(\infty,n)$-coCartesian fibrations over the appropriate diagram. 
 	
 	\item {\bf Internal $(\infty,n)$-Categories:} One of the benefits of $(\infty,1)$-Cartesian fibrations is that it enables us to effectively study {\it internal $\infty$-categories} via {\it complete Segal objects} \cite{rasekh2017cartesian}. This in particular allows us to internalize an important result by Rezk \cite[Proposition 7.6]{rezk2001css}, that characterizes equivalences of complete Segal objects via Dwyer-Kan equivalences \cite[Theorem 3.13]{rasekh2017cartesian}.
 	
 	Having a theory of $(\infty,n)$-Cartesian fibrations we should be able to study {\it internal $(\infty,n)$-categories} in a similar manner using {\it $\Theta_n$-objects}. This, in particular, could then play a crucial role in the study of univalence and $(\infty,n)$-topos theory (as already discussed in \cref{subsec:applications}).
 \end{enumerate}

 \bibliographystyle{alpha}
 \bibliography{main}
\end{document}